\def\Xint#1{\mathchoice
  {\XXint\displaystyle\textstyle{#1}}%
  {\XXint\textstyle\scriptstyle{#1}}%
  {\XXint\scriptstyle\scriptscriptstyle{#1}}%
  {\XXint\scriptscriptstyle\scriptscriptstyle{#1}}%
  \!\int}
\def\XXint#1#2#3{{\setbox0=\hbox{$#1{#2#3}{\int}$}
  \vcenter{\hbox{$#2#3$}}\kern-.5\wd0}}
\def\dashint{\Xint-}
\newcommand{\al}{\alpha}       
\newcommand{\vt}{\vartheta}    
\newcommand{\lda}{\lambda}
\newcommand{\om}{\Omega}            
\newcommand{\pa}{\partial}
\newcommand{\va}{\varepsilon}       
\newcommand{\ud}{\mathrm{d}}
\newcommand{\be}{\begin{equation}} 
\newcommand{\ee}{\end{equation}}
\newcommand{\w}{\omega}      
\newcommand{\Lda}{\Lambda}
\newcommand{\bA}{\mathbb{A}}
\newcommand{\cB}{\mathcal{B}}
\newcommand{\cC}{\mathcal{C}}
\newcommand{\cD}{\mathcal{D}}
\newcommand{\cE}{\mathcal{E}}
\newcommand{\cF}{\mathcal{F}}
\newcommand{\bG}{\mathbb{G}}
\newcommand{\cG}{\mathcal{G}}
\newcommand{\cL}{\mathcal{L}} 
\newcommand{\Z}{\mathbb{Z}}
\newcommand{\M}{\mathcal{M}}
\newcommand{\cN}{\mathcal{N}}
\newcommand{\R}{\mathbb{R}}   
\newcommand{\cR}{\mathcal{R}}
\newcommand{\Ss}{\mathbb{S}}
\newcommand{\wc}{\rightharpoonup}        
\newcommand{\HH}{\mathcal{H}}
\newcommand{\vp}{\varphi}
\newcommand{\ga}{\gamma}
\newcommand{\sg}{\sigma} 
\newcommand{\ift}{\infty} 
\newcommand{\wt}{\widetilde}
\newcommand{\f}{\frac}
\newcommand{\ol}{\overline}
\newcommand{\Ra}{\Rightarrow}
\newcommand{\op}{\operatorname}
\newcommand{\Sg}{\Sigma}
\newcommand{\na}{\nabla}
\DeclareMathOperator{\dist}{dist}
\DeclareMathOperator{\diam}{diam}
\DeclareMathOperator{\supp}{supp}
\DeclareMathOperator{\diag}{diag}
\DeclareMathOperator{\sing}{sing}
\DeclareMathOperator{\loc}{loc}
\DeclareMathOperator{\argmin}{argmin}
\def\<{\langle}\def\>{\rangle}
\def\({\left(}\def\){\right)}
\def\[{\left[}\def\]{\right]}
\numberwithin{equation}{section}
\theoremstyle{plain}
\newtheorem{thm}{Theorem}[section]
\newtheorem{cor}[thm]{Corollary}
\newtheorem{lem}[thm]{Lemma}
\newtheorem{prop}[thm]{Proposition}
\theoremstyle{definition}
\newtheorem{defn}[thm]{Definition}
\newtheorem{rem}[thm]{Remark}
\title[Semilinear elliptic equation with singular nonlinearities]{Fine structure of rupture set for semilinear elliptic equation with singular nonlinearity}
\author{}
\author{Wei Wang}
\address{School of Mathematical Sciences, Peking University, Beijing 100871, China}
\email{2201110024@stu.pku.edu.cn}
\author{Zhifei Zhang}
\address{School of Mathematical Sciences, Peking University, Beijing 100871, China}
\email{zfzhang@math.pku.edu.cn}
\date{}
\begin{document}

\begin{abstract}
In this paper, we study the stationary solutions of semilinear elliptic equation with singular nonlinearity 
$$
\Delta u=u^{-p}+f,\,\,u\geq 0\quad\text{in }\om\subset\mathbb{R}^n,
$$
where $ n\geq 2 $, $ p>1 $, $ \om $ is a bounded domain, and $ f\in L^q(\om) $ with $ \f{1}{2}+\f{1}{2p}<\f{q}{n} $.  We establish a sharp estimate for the Minkowski content of the rupture set $ \{u=0\} $ and demonstrate that this set is $ (n-2) $-rectifiable. For this, we examine the stratification of the rupture set based on the symmetry properties of tangent functions, leading to the proof of $ k $-rectifiability for each $ k $-stratum. As a significant byproduct of our analysis, we improve the integrability of $ D^ju $ with $ j\in\Z_+ $ to the optimal Lorentz space $ L^{\f{2(p+1)}{j(p+1)-2},\ift} $, under the assumption that $ D^{j-1}f $ is bounded. As an application of our results in the static case of the equation, for a class of suitable weak solutions to the three-dimensional evolutional problem
$$
\pa_tu=\Delta u-u^{-p},\,\,u\geq 0\quad\text{in }(\om\subset\mathbb{R}^3)\times(0,T),
$$
where $ p>3 $ and $ T>0 $,  we show that $ \{u(\cdot,t)=0\} $ is $ 1 $-rectifiable for a.e. $ t\in(0,T) $.
\end{abstract}

\maketitle
\tableofcontents
\section{Introduction}

\subsection{Background and related results} In this paper, we investigate a semilinear elliptic equation with singular nonlinearity
\be
\Delta u=u^{-p}+f,\,\,u\geq 0\quad\text{in }\om\subset\R^n,\,\,n\geq 2,\,\,p>1,\label{MEMSeq}
\ee
where $ \om $ is a domain and $ f\in L_{\loc}^1(\om) $. This equation arises from different scientific contexts and has a rich theoretical background. 

In the context of thin film theory, the equation \eqref{MEMSeq} describes specific steady-state scenarios, where the value of $ u $ corresponds to the height of the air-liquid interface. Guo and Wei gave short discussions on the derivation for this equation in the introduction section of \cite{GW06}. For additional literature on thin films, particularly concerning the one-dimensional case, we recommend the studies by Bertozzi and Pugh \cite{BP98,BP00}, which explore dynamical problems, as well as the investigations of Laugesen-Pugh \cite{LP00a,LP00b}, which focus on steady-state solutions. 

For $ p=2 $ in \eqref{MEMSeq}, the equation characterizes a simplified model in micro-electromechanical systems (MEMS). Here, the scalar $ u $ represents the deflection of the membrane in the device. This field is well-established and significant in modern technology, playing a pivotal role in various devices such as sensors and actuators. For a comprehensive overview of the physical principles underlying this model and its subsequent advancements, we refer to the monograph \cite{PB02} by Pelesko and Bernstein. Abdel-Rahman, Younis, and Nayfeh provided further insights in \cite{NYA05}. From a mathematical perspective, various properties and open problems related to the MEMS issue are examined and introduced in the monograph by Esposito, Ghoussoub, and Guo \cite{EGG10}, along with the survey paper by Lauren\c{c}ot-Walker \cite{LW17}.

While our focus on the equation \eqref{MEMSeq} is primarily the case of $ p>1 $, there are also research works under the relaxed assumption $ p\geq 0 $. Although the case $ 0<p<1 $ is not the central theme of this paper, we will briefly review the relevant results for completeness. For example, when $ p=1 $, the equation has roots in the study of singular minimal hypersurfaces, particularly under certain symmetry assumptions (see \cite{Mea04} for a detailed discussion). For $ p=0 $, we can interpret \eqref{MEMSeq} as $ \Delta u=\chi_{\{u>0\}} $, which originates from the obstacle problem. Significant contributions to this topic are in works such as \cite{Caf98,FS19,FRS20,Wei99}, which examine both the properties of solutions and the geometric structure of the free boundary $ \pa\{u>0\} $. The case for $ 0<p<1 $ is a generalization of the case with $ p=0 $. For additional results and insights on this setting, we refer to \cite{Phi83,ST19}. In \cite{ST19}, the authors investigated a two-phase problem and offered comparisons with the case of $ p>1 $.

In the study of thin films, for the solution of \eqref{MEMSeq}, the set $ \{u=0\} $ is of main concern since it describes the rupture phenomenon. For the remainder of this paper, we will refer to $ \{u=0\} $ as the rupture set of $ u $. From a mathematical standpoint, if $ f\in C^{\ift}(\om) $ and $ u\in C^0(\om) $, then for any $ x\in\{u>0\} $, standard results in elliptic equations indicate that there exists $ r>0 $ such that $ u\in C^{\ift}(B_r(x)) $. As a result, $ u $ is smooth within the region where $ u $ is positive. As a result, the rupture set is the singularity set of $ u $. Measuring the size of this rupture set is a significant problem in the analysis for solutions of \eqref{MEMSeq}, particularly in estimating the Hausdorff dimension of $ \{u=0\} $ under different conditions imposed on the solution. Recent literature has extensively explored this topic, revealing intricate relationships between the properties of solutions and the geometric characteristics of the rupture set $ \{u=0\} $. In \cite{JL04}, Jiang and Lin examined weak solutions of \eqref{MEMSeq}. In fact, they considered a more general model that contains weak solutions of \eqref{MEMSeq}. Here, if $ \om\subset\R^n $ is a domain, the function $ u\in (H_{\loc}^1\cap L_{\loc}^{-p})(\om) $ is defined as a weak solution of \eqref{MEMSeq} with respect to $ f\in L_{\loc}^1(\om) $ in the distributional sense if $ u\geq 0 $ a.e. in $ \om $ and for any $ \vp\in C_0^{\ift}(\om) $, $ u $ satisfies the integral identity
\be
\int_{\om}(\na u\cdot\na\vp+(u^{-p}+f)\vp)=0.\label{WeakConMEMS}
\ee
The results established in \cite{JL04} demonstrated that the Hausdorff dimension of the rupture set $ \{u=0\} $ for such a weak solution is at most $ n-2+\f{4}{p+2} $. Subsequently, Dupaigne, Ponce, and Porretta improved this estimate to $ n-2+\f{2}{p+1} $ in \cite{DPP06}. In addition to weak solutions, another crucial class of solutions is the finite energy solution. Following the terminology in \cite{JL04}, we define $ u\in (C_{\loc}^0\cap H_{\loc}^1\cap L_{\loc}^{1-p})(\om) $ as a finite energy solution of \eqref{MEMSeq} if $ u\geq 0 $ in $ \om $ and $ \Delta u=u^{-p}+f $ in $ \{u>0\} $ in the sense of distribution, namely, \eqref{WeakConMEMS} holds for any $ \vp\in C_0^{\ift}(\{u>0\}) $. If we set the corresponding energy functional
\be
\cF_f(u,\om):=\int_{\om}\(\f{|\na u|^2}{2}-\f{u^{1-p}}{p-1}+fu\),\label{FunctEll}
\ee
then for such solutions, this functional is locally finite. For this setting, Guo and Wei \cite{GW08} showed that the Hausdorff dimension is at most $ n-2+\f{4}{p+1} $. Later, D\'{a}vila and Ponce refined this estimate further to $ n-2+\f{2}{p+1} $ in \cite{DP08}. To delve deeper into the micro behavior of the equation \eqref{MEMSeq}, motivated by \cite{Eva91} on harmonic maps, Guo and Wei  \cite{GW06}  introduced the concept of the stationary solution defined as follows.

\begin{defn}[Stationary solution]
Let $ \om\subset\R^n $ be a domain. $ u\in(H_{\loc}^1\cap L_{\loc}^{-p})(\om) $ is called a stationary solution of \eqref{MEMSeq} with respect to $ f\in L_{\loc}^2(\om) $ if $ u $ is a weak solution of \eqref{MEMSeq} with respect to $ f $, and satisfies the stationary condition, namely,
\be
\int_{\om}\left[\(\f{|\na u|^2}{2}-\f{u^{1-p}}{p-1}\)\op{div}Y-DY(\na u,\na u)-f(Y\cdot\na u)\right]=0\label{StaConMEMS}
\ee
for any $ Y\in C_0^{\ift}(\om,\R^n) $, where
$$
DY(\na u,\na u)=\sum_{i,j=1}^n\pa_iY^j\pa_iu\pa_ju.
$$
\end{defn}

In fact, given the functional \eqref{FunctEll}, the stationary condition \eqref{StaConMEMS} is equivalent to that $ u $ is a critical point of $ \cF_f(\cdot,\om) $ under inner perturbations of $ \om $. Precisely,
$$
\left.\f{\ud}{\ud t}\right|_{t=0}\cF_f(u(\cdot+tY(\cdot)),\om)=0
$$
for any $ Y\in C_0^{\ift}(\om,\R^n) $. For the stationary solution of \eqref{MEMSeq}, the most remarkable result is obtained by D\'{a}vila, Wang, and Wei \cite{DWW16}. 
They proved the following theorem.

\begin{thm}[\cite{DWW16}, Theorem 1.2]\label{thmDWW}
Assume that $ u\in(C_{\loc}^{0,\al}\cap H_{\loc}^1\cap L_{\loc}^{-p})(B_1) $ is a stationary solution of \eqref{MEMSeq} with respect to $ f\equiv 0 $. Then, the rupture set $ \{u=0\} $ is a relatively closed set with Hausdorff dimension no more than $ n-2 $. Moreover, if $ n=2 $, then $ \{u=0\} $ is discrete.
\end{thm}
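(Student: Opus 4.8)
The plan is to follow the Almgren--Federer stratification scheme, as in Evans's regularity theory for stationary harmonic maps \cite{Eva91} and in free boundary regularity theory. Set $\gamma=\f{2}{p+1}\in(0,1)$, so that $u_{x_0,r}(x):=r^{-\gamma}u(x_0+rx)$ is again a stationary solution of \eqref{MEMSeq} with $f\equiv0$, under which $\cF_0(u,B_r(x_0))$ scales like $r^{\,n-2+2\gamma}$. The four ingredients I would assemble are: a monotonicity formula, a density lower bound on the rupture set, compactness of blow-ups, and a classification of homogeneous tangent functions. For the monotonicity formula, I would insert into the stationary identity \eqref{StaConMEMS} vector fields $Y$ approximating $x\mapsto\eta(|x-x_0|)(x-x_0)$ and combine the result with the energy identity obtained from \eqref{WeakConMEMS} tested against $u$ times a radial cutoff (both after a mollification legitimized by $u\in C^{0,\al}_{\loc}$ and $u^{-p},u^{1-p}\in L^1_{\loc}$, the latter because $u$ is locally bounded). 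This produces a normalized, corrected energy $\Theta(x_0,r)$ --- a rescaled version of $\cF_0(u,B_r(x_0))$ with a boundary correction of size $r^{-(n-1+2\gamma)}\int_{\pa B_r(x_0)}u^2$ --- whose $r$-derivative is a nonnegative boundary integral measuring the failure of $u$ to be $\gamma$-homogeneous about $x_0$. Hence $\Theta(x_0,0^+)$ exists and $x_0\mapsto\Theta(x_0,0^+)$ is upper semicontinuous.

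Next, the equation $\Delta u=u^{-p}$ inside $\{u>0\}$ forces a nondegeneracy estimate $u(x)\ge c\,\dist(x,\{u=0\})^{\gamma}$: were $u$ much smaller than the right side on a ball, $\Delta u=u^{-p}$ would be so large there that $u$ could not stay that small. Nondegeneracy together with the monotonicity formula yields a density gap: $\Theta(x_0,0^+)\ge\va_0>0$ for every $x_0\in\{u=0\}$, and conversely $\Theta(x_0,r)<\va_0$ for some $r$ implies $u>0$ on $B_{r/2}(x_0)$. Since $u$ is continuous, $\{u=0\}=u^{-1}(0)$ is relatively closed in $B_1$; and the a priori bound $\dim_\HH\{u=0\}\le n-2+\f{2}{p+1}$ for stationary solutions --- they are in particular weak solutions, so \cite{DPP06} applies --- gives a first, non-sharp control on the size of $\{u=0\}$, enough to run the compactness below.

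For the sharpening, fix $x_0\in\{u=0\}$ and $r_i\to0$. The density bound plus interior elliptic and interpolation estimates give, along a subsequence, $u_{x_0,r_i}\to\phi$ in $C^{0,\al}_{\loc}\cap H^1_{\loc}$ with $u_{x_0,r_i}^{-p}\to\phi^{-p}$ in $L^1_{\loc}$ (equi-integrability of the singular term coming from the monotonicity formula and a no-concentration argument); the limit $\phi$ is a stationary solution of \eqref{MEMSeq} on $\R^n$ with $f\equiv0$, $\phi\ge0$, $\phi(0)=0$, and $\phi\not\equiv0$ by the density gap and nondegeneracy. Since $\Theta_\phi(0,\cdot)\equiv\Theta_u(x_0,0^+)$ is constant, the monotonicity formula forces $\phi$ to be homogeneous of degree $\gamma$. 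The crucial analytic input is that there is no nontrivial $\gamma$-homogeneous stationary solution on $\R^1$: such a $\phi$ would equal $c_\pm|t|^{\gamma}$ on $\{\pm t>0\}$, and where it is positive $\phi''=\gamma(\gamma-1)c_\pm|t|^{\gamma-2}<0$ because $\gamma\in(0,1)$, contradicting $\phi''=\phi^{-p}>0$; more generally, a $\gamma$-homogeneous stationary solution invariant under translations along a subspace $V$ descends to a $\gamma$-homogeneous stationary solution on $V^\perp$, hence cannot be invariant in $n-1$ directions unless trivial. Federer's dimension-reduction argument, applied to the class of stationary solutions of \eqref{MEMSeq} with locally bounded density (closed under rescaling, translation and blow-up by the compactness above), then improves the a priori bound to $\dim_\HH\{u=0\}\le n-2$: the stratum one must rule out is the set of rupture points admitting a tangent function invariant in $n-1$ directions, and that is exactly what the one-dimensional classification excludes.

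Finally, for $n=2$ one upgrades $\dim_\HH\{u=0\}\le0$ to discreteness. If some $x_0\in\{u=0\}$ were not isolated, choose $x_i\in\{u=0\}\setminus\{x_0\}$ with $x_i\to x_0$, set $r_i=|x_i-x_0|$, and blow up as above: along a subsequence $u_{x_0,r_i}\to\phi$ and $(x_i-x_0)/r_i\to\xi\in\Ss^1$ with $\phi(\xi)=0$ by uniform convergence, so by $\gamma$-homogeneity $\{\phi=0\}$ contains the ray $\R_+\xi$. Blowing $\phi$ up at $\xi$ then yields a nontrivial (density gap at $\xi\in\{\phi=0\}$) $\gamma$-homogeneous stationary solution invariant under translations along $\R\xi$, i.e.\ a nontrivial $\gamma$-homogeneous stationary solution on $\R^1$, which does not exist. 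Hence $\{u=0\}$ has no accumulation point in $B_1$, and being relatively closed it is discrete. The main obstacle in this program is not the stratification, which is routine, nor the one-dimensional classification, which is the short sign computation above, but the analytic groundwork under the weak hypotheses: deriving the monotonicity formula by admissible perturbations near the rupture set, establishing nondegeneracy and the resulting density gap, and obtaining the compactness and equi-integrability of the singular term $u^{-p}$ needed to pass \eqref{StaConMEMS} and \eqref{WeakConMEMS} to blow-up limits.
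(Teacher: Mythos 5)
Your architecture is the classical one — monotonicity formula, nontrivial homogeneous blow-ups, classification of the low-dimensional homogeneous solutions, Federer dimension reduction — which is essentially the route of the original proof in \cite{DWW16} that this theorem cites; the paper's own Section \ref{StratificationSection} instead runs White's $(\delta,k)$-approximation stratification (Lemma \ref{deltaaplem} and Lemma \ref{lemhau}) and reduces to the two-dimensional classification of $0$-symmetric solutions (Lemma \ref{DWW58}) rather than to your one-dimensional sign computation. Those differences are matters of taste and both are viable. However, there is one concretely wrong ingredient in your groundwork.

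The pointwise nondegeneracy $u(x)\ge c\,\dist(x,\{u=0\})^{\al}$ is not available: the paper states explicitly (\S\ref{subdiffi}) that such an estimate ``is not necessarily true'' for stationary solutions and is only known in special cases such as radial solutions (\cite{DH96}). What your heuristic (``if $u$ were that small, $\Delta u=u^{-p}$ would be too large'') actually yields is the integral nondegeneracy of Lemma \ref{InESLem1}, $\int_{B_r(x)}u\ge C^{-1}r^{\al+n}$, equivalently $\sup_{B_r(x)}u\ge C^{-1}r^{\al}$ — a statement about balls, not about individual points off the rupture set. Consequently your ``density gap'' $\Theta(x_0,0^+)\ge\va_0>0$ at rupture points is also false in general: already for the model solution $u=\al^{-\al}|x|^{\al}$ in $\R^2$ one has $|\na u|^2=u^{1-p}$, so the normalized bulk energy $\tfrac{|\na u|^2}{2}-\tfrac{u^{1-p}}{p-1}$ is negative for $1<p<3$, and the boundary correction is negative as well; the density at the origin is then negative. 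The correct dichotomy (Proposition \ref{propupture}) is that $\lim_{r\to0^+}\vt(u;x_0,r)=-\ift$ iff $u(x_0)>0$ and the limit is finite (but possibly negative) iff $u(x_0)=0$, with the $\va$-regularity-type implication \eqref{assMupper} holding only below a large \emph{negative} threshold $-C_*$. The good news is that your argument does not actually need the false statements: nontriviality of blow-up limits at rupture points follows from the integral nondegeneracy (this is how Proposition \ref{Blowprop} gets $u_\ift\not\equiv0$), closedness of $\{u=0\}$ is just continuity of $u$, and the upper-semicontinuity needed for dimension reduction comes from uniform convergence of the blow-ups plus the monotonicity formula, not from a positive density threshold. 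Replace the pointwise nondegeneracy and the density gap by Lemma \ref{InESLem1} and the dichotomy above, and the rest of your proof (including the one-dimensional sign computation $\al(\al-1)<0$ versus $\phi''=\phi^{-p}>0$, and the double blow-up for discreteness in $n=2$) goes through.
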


\begin{rem}
In this theorem, the parameter $ \al=\al_p:=\f{2}{p+1} $ is the index associated with the equation \eqref{MEMSeq}, and we will consistently use this notation in the rest of this paper. The assumption of the $ \al $-H\"{o}lder continuity corresponds to the $ C_{\loc}^{1,1} $ regularity in the obstacle problem (see \cite{Caf98} for more details).
\end{rem}

\begin{rem}
In the case of two dimensions, 
\be
u(x)=u(|x|)=\al^{-\al}|x|^{\al}\label{ralsolu}
\ee
is a stationary solution for $ \Delta u=u^{-p} $ and here the rupture set is given by $ \{u=0\}=\{(0,0)\} $. Thus, the results in Theorem \ref{thmDWW} are sharp.
\end{rem}

The key ingredient in establishing Theorem \ref{thmDWW} is the monotonicity formula, a consequence of the stationary condition \eqref{StaConMEMS}. In \cite{DWW16}, the authors adopted the density
\be
\theta(u;x,r):=r^{2-2\al-n}\int_{B_r(x)}\(\f{|\na u|^2}{2}-\f{u^{1-p}}{p-1}\)-\f{\al r^{-2\al-n}}{2}\int_{\pa B_r(x)}u^2\ud\HH^{n-1},\label{classDen}
\ee
with $ x\in B_1 $ and $ r\in(0,1-|x|) $. A similar density was first introduced in the study of stationary solutions of semilinear elliptic equations by Pacard in \cite{Pac93}. Also note that when $ p=0 $, this density becomes equivalent to that introduced by Weiss \cite{Wei99} for the obstacle problem. By \eqref{StaConMEMS}, we have
\be
\f{\ud}{\ud r}\theta(u;x,r)=r^{-2\al-n}\int_{\pa B_r(x)}|(y-x)\cdot\na u-\al u|^2\ud\HH^{n-1}(y)\geq 0,\label{classDenfor}
\ee
which implies that $ \theta(u;x,\cdot) $ is nondecreasing. Using this monotonicity formula, Theorem \ref{thmDWW} follows from the well-known Federer's dimension reduction principle. For an additional application of this method in the context of harmonic maps, one can refer to the work by Schoen and Uhlenbeck \cite{SU82}. In \S\ref{StratificationSection}, we will provide another proof of this theorem for the more general case where $ f\not\equiv 0 $ in \eqref{MEMSeq} by using standard stratification results and approaches developed by White \cite{Whi97}.

Besides estimating the Hausdorff dimension of the rupture set of \eqref{MEMSeq}, there are different topics of interest related to similar types of equations. Readers can refer to \cite{DW12,DH96,DG09,GGL06,GW07,LGG05,MW08,Wan12} for further discussions.

\subsection{Main results}

Based on Theorem \ref{thmDWW}, a natural question arises regarding the fine structure of $ \{u=0\} $, particularly its regularity properties. Before stating our main results, we define the Minkowski content, Minkowski dimension, and the concept of rectifiability.

\begin{defn}[Minkowski content and dimension]\label{Minkdef}
Let $ k\in\Z\cap[0,n] $, $ r>0 $, and $ S\subset\R^n $. The $ k $-dimensional Minkowski $ r $-content of $ S $ is given by
$$
\op{Min}_r^k(S):=(2r)^{k-n}\cL^n(B_r(S)),
$$
where 
$$
B_r(S):=\bigcup_{x\in S}B_r(x)=\{y\in\R^n:\dist(y,S)<r\} 
$$
is the $ r $-neighborhood of $ S $. Let the upper and lower Minkowski content be
$$
\ol{\op{Min}}_0^k(S):=\limsup_{r\to 0^+}\op{Min}_r^k(S)\quad\text{and}\quad\underline{\op{Min}}_0^k(S):=\liminf_{r\rightarrow 0^+}\op{Min}_r^k(S).
$$
The Minkowski dimension (or box-dimension) of $ S $ is defined by
$$
\dim_{\op{Min}}S:=\inf\{k\geq 0:\ol{\op{Min}}_0^k(S)=0\}.
$$
\end{defn}

\begin{defn}[Rectifiability]
Let $ N\in\Z_+ $ and $ k\in\Z\cap[1,N] $. We call a set $ M\subset\R^N $ countably $ k $-rectifiable (or simply rectifiable) if
$$
M\subset M_0\cup\bigcup_{i\in\Z_+} f_i(\R^k),
$$
where $ \HH^k(M_0)=0 $, and $ f_i:\R^k\to\R^N $ is a Lipschitz map for any $ i\in\Z_+ $. 
\end{defn}

Now, we present the main results of this paper as follows.

\begin{thm}\label{main1}
Let $ q>0 $ be such that $ \f{1}{2}+\f{1}{2p}<\f{q}{n} $.
Assume that $ u\in(C_{\loc}^{0,\al}\cap H_{\loc}^1\cap L_{\loc}^{-p})(B_4) $ is a stationary solution of \eqref{MEMSeq} with $ f\in L_{\loc}^q(B_4) $, satisfying 
\be
\|u\|_{L^1(B_2)}+\|f\|_{L^q(B_2)}\leq\Lda.\label{uholderLq}
\ee
Then, the following properties hold.
\begin{enumerate}[label=$(\theenumi)$]
\item There exist $ \va,C>0 $, depending only on $ \Lda,n,p $, and $ q $ such that
\be
\cL^n(B_r(\{u<\va r^{\al}\}\cap B_1))\leq Cr^2\label{Mincontent}
\ee
for any $ 0<r<1 $. In particular, this estimate implies that the Minkowski dimension of $ \{u=0\}\cap B_1 $ is at most $ (n-2) $.
\item If $ f\in W_{\loc}^{j-1,\ift}(B_4) $ for some $ j\in\Z_+ $ such that $
\|f\|_{W^{j-1,\ift}(B_2)}\leq\Lda' $, then $ D^ju\in L^{\f{2}{j-\al},\ift}(B_1) $. In particular, we have
\be
\sup\left\{\lda>0:\lda^{\f{2}{j-\al}}\cL^n(\{x\in B_1:|D^ju(x)|>\lda\})\right\}\leq C',\label{enhancement}
\ee
where $ C'>0 $ depends only on $ \Lda,\Lda',j,n,p $, and $ q $.
\item $ \{u=0\} $ is $ (n-2) $-rectifiable, and for $ n=2 $, $ \{u=0\} $ is a discrete set. 
\end{enumerate}
\end{thm}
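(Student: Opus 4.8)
The plan is to combine the monotonicity formula \eqref{classDenfor}, corrected for $f\not\equiv0$, with quantitative stratification in the spirit of Cheeger--Naber and the rectifiable--Reifenberg theorem of Naber--Valtorta. \textbf{Step 1 (scaling and almost-monotonicity).} The rescaling $u_{x,r}(y):=r^{-\al}u(x+ry)$ sends stationary solutions of \eqref{MEMSeq} to stationary solutions with $f$ replaced by $f_{x,r}(y):=r^{2-\al}f(x+ry)$, and one checks $\|f_{x,r}\|_{L^q(B_1)}\le r^{\,2-\al-n/q}\|f\|_{L^q(B_r(x))}$; the hypothesis $\f12+\f1{2p}<\f qn$ is exactly $2-\al-n/q>0$, so the rescaled source is negligible as $r\to0$. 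Hence the density $\theta(u;x,r)$ of \eqref{classDen} is monotone up to an error vanishing with $r$, is uniformly bounded on $B_1$ by \eqref{uholderLq} and the $C^{0,\al}$ bound, and --- via a compactness argument in which one must show $u^{-p}$ passes to the limit (the $L^{1-p}_{\loc}$-energy bound ruling out concentration) --- every rupture point carries a density gap $\theta\ge\theta_0>0$ and every tangent function there is a nonzero $\al$-homogeneous solution of $\Delta u=u^{-p}$ on $\R^n$. Since $u\in L^{-p}_{\loc}$ forces $\{u=0\}$ to be Lebesgue-null, there is no $(n-1)$-symmetric such tangent (a one-variable $\al$-homogeneous positive function is $a|t|^\al$, which cannot solve the equation because $\al(\al-1)<0$), so $\{u=0\}\subset\cS^{n-2}$; this already reproves Theorem~\ref{thmDWW} for $f\not\equiv0$.

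\textbf{Step 2 (the two analytic inputs).} Next I would establish, in the style of Naber--Valtorta: (i) an $\va$-regularity theorem --- if $u_{x,r}$ is $\va_0$-close in $L^2(B_2)$ to the cylinder $\Phi(y):=\al^{-\al}|(y_{n-1},y_n)|^\al$ over $\R^{n-2}$, then $\{u=0\}\cap B_{r/2}(x)$ is an $(n-2)$-dimensional $C^{1,\ga}$ graph and, for small $\va$, $\{u<\va r^\al\}\cap B_{r/2}(x)$ lies in a tube of radius $\lesssim\va^{1/\al}r$ about it; and (ii) the $L^2$-best-approximation estimate bounding the Jones $\beta_2$-number of the ``rupture measure'' on a ball by the averaged density drop along dyadic scales. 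Feeding (ii) into the rectifiable--Reifenberg theorem along dyadic scales, and using that the total density drop is bounded by the density bound, yields both the Cheeger--Naber covering --- the quantitative $(n-2)$-stratum (points $x$ such that $u_{x,s}$ is not $\eta$-close to any $(n-1)$-symmetric $\al$-homogeneous function for all $s\in[r,1]$) is covered by $C(\eta,n,p,q,\Lda)\,r^{-(n-2)}$ balls of radius $r$ --- and the $(n-2)$-rectifiability of $\cS^{n-2}$ (the same argument gives $k$-rectifiability of $\cS^k$).

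\textbf{Step 3 (Parts (1) and (3)).} For Part~(1), cover $B_1$ at scale $r$ so that on each ball either $u_{x,r}$ is $\va_0$-close to $\Phi$, in which case (i) confines $\{u<\va r^\al\}$ there to a tube contributing $\lesssim r^n$ to $\cL^n(B_r(\{u<\va r^\al\}))$, or the ball lies among the $\lesssim r^{-(n-2)}$ balls of the Cheeger--Naber covering of the quantitative $(n-2)$-stratum; a standard inductive decomposition over scales turns this into $\cL^n(B_r(\{u<\va r^\al\}\cap B_1))\le Cr^2$, i.e.\ \eqref{Mincontent}, and letting $\va\to0$ with $r$ fixed gives $\cL^n(B_r(\{u=0\}\cap B_1))\le Cr^2$, whence $\dim_{\op{Min}}(\{u=0\}\cap B_1)\le n-2$. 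Part~(3) is then immediate: $\{u=0\}\subset\cS^{n-2}$ is $(n-2)$-rectifiable; for $n=2$ the bound $\cL^2(B_r(\{u=0\}\cap B_1))\le Cr^2$ shows $\{u=0\}\cap B_1$ is covered by a bounded number of $r$-balls, hence is finite, and the density gap forces each rupture point to be isolated, so it is discrete.

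\textbf{Step 4 (Part (2)).} First upgrade the one-sided bound $u(x)\le C\,\dist(x,\{u=0\})^\al$ (immediate from $C^{0,\al}$) to the matching lower bound $u(x)\gtrsim\dist(x,\{u=0\})^\al$ on $\{u>0\}\cap B_1$, using the density gap and the corrected monotonicity formula near the nearest rupture point. Writing $d(x)=\dist(x,\{u=0\})$, the rescaling $v(y)=d(x)^{-\al}u(x+d(x)y)$ on $B_{1/2}$ then satisfies $\Delta v=v^{-p}+\tilde f$ with $c\le v\le C$ and $\|\tilde f\|_{W^{j-1,\ift}(B_{1/2})}\lesssim\|f\|_{W^{j-1,\ift}(B_2)}$ (every rescaling factor being a nonnegative power of $d(x)\le1$), so an interior Schauder/elliptic bootstrap gives $|D^jv(0)|\le C$, i.e.\ $|D^ju(x)|\le C\,d(x)^{\al-j}$ on $\{u>0\}\cap B_1$. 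Hence $\{|D^ju|>\lda\}\cap B_1\subset\{d<(C/\lda)^{1/(j-\al)}\}\cap B_1$, and Part~(1), in the form $\cL^n(\{d<\rho\}\cap B_1)\le C\rho^2$, yields $\cL^n(\{|D^ju|>\lda\}\cap B_1)\le C'\lda^{-2/(j-\al)}$, which is exactly $D^ju\in L^{\f{2}{j-\al},\ift}(B_1)$ and \eqref{enhancement}. The main obstacle is step (ii) of Step~2: proving the quantitative rigidity (almost-$(n-2)$-symmetry of a stationary solution forces a small $\beta_2$-number) for the \emph{singular} nonlinearity $u^{-p}$, which demands a delicate compactness analysis ensuring $u^{-p}$ does not concentrate in the limit and the correct $L^2$ class of comparison functions adapted to the inner-variation identity \eqref{StaConMEMS}.
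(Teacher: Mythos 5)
Your overall architecture (monotonicity formula, quantitative stratification, discrete Reifenberg, rectifiable Reifenberg) is the same as the paper's, and Steps 1--3 are broadly on track. But there are two genuine gaps.

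\textbf{First, Step 4 rests on a nondegeneracy estimate that is not available.} You propose to "upgrade" to $u(x)\gtrsim\dist(x,\{u=0\})^{\al}$ on $\{u>0\}\cap B_1$ using the density gap and monotonicity near the nearest rupture point. The paper explicitly points out (see the discussion in \S\ref{subdiffi}) that this estimate is \emph{not necessarily true} for stationary solutions; only the much weaker integral/sup nondegeneracy of Lemma \ref{InESLem1} is known, and that controls $\sup_{B_r(x)}u$ from below, not $u$ pointwise at points far from the rupture set. Your derivation $|D^ju(x)|\le C\,d(x)^{\al-j}$ therefore has no foundation, and the inclusion $\{|D^ju|>\lda\}\subset\{d<(C/\lda)^{1/(j-\al)}\}$ fails. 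This is precisely why estimate \eqref{Mincontent} is stated for the sublevel set $\{u<\va r^{\al}\}$ rather than for the $r$-neighborhood of $\{u=0\}$: the correct route is to note that if $u(x)\ge\va r^{\al}$, then the $C^{0,\al}$ bound alone gives $\inf_{B_{2s}(x)}u\ge\sg s^{\al}$ for $s\sim\va^{1/\al}r$, whence interior elliptic regularity yields $|D^ju|\le C r^{\al-j}$ on $\{u\ge\va r^{\al}\}\cap B_1$; then $\{|D^ju|>C_0r^{\al-j}\}\subset\{u<\va r^{\al}\}$ and \eqref{Mincontent} closes the argument with no reference to $\dist(\cdot,\{u=0\})$.

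\textbf{Second, Step 2(i) asserts an $\va$-regularity theorem that is not proved and is stronger than what can be expected.} You claim that closeness of $u_{x,r}$ to the cylinder $\Phi$ forces $\{u=0\}\cap B_{r/2}(x)$ to be a $C^{1,\ga}$ graph and traps $\{u<\va r^{\al}\}$ in a thin tube. Tangent functions at rupture points are not known to be unique (they can vary with the blow-up scale), so a graph-type $\va$-regularity is out of reach here, and your covering of $\{u<\va r^{\al}\}$ in Step 3 depends on it. The paper replaces this with two softer ingredients: (a) a compactness argument (Lemma \ref{comMEMS}/Proposition \ref{corcomMEMS}) showing directly that $(n-1,\va)$-symmetry of $u$ in $B_s(x)$ for some $s\ge r$ forces $u(x)\ge\va s^{\al}$, hence $\{u<\va r^{\al}\}\cap B_{R_0}\subset S^{n-2}_{\va,r}(u)$; and (b) a dichotomy (Lemmas \ref{kplus1MEMStwoone} and \ref{lowernva}) handling the fact that the density diverges to $-\ift$ at positive points, so that the Minkowski estimate can be proved for the full quantitative stratum $S^{n-2}_{\va,r}(u)$ rather than only its intersection with the rupture set. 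Without (a) and (b), or your unproven $\va$-regularity, the passage from the stratification covering to \eqref{Mincontent} does not go through. A related but more minor point: you invoke a uniform positive "density gap $\theta\ge\theta_0$" at rupture points; the paper only establishes that the density limit is finite there (and $-\ift$ off the rupture set), and its arguments are designed not to need such a gap.
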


This result is the first to provide a characterization for the rectifiability and estimates of the Minkowski content of the rupture set $ \{u=0\} $, together with the enhancement on the integrability of $ D^ju $ for $ j\in\Z_+ $. Let us give more remarks.

\begin{rem}
The results in this theorem are a priori since we assume that the solution is in $ C_{\loc}^{0,\al} $. It is still an open problem that if a stationary solution of \eqref{MEMSeq} with some specific assumptions of $ f $ is H\"{o}lder continuous.
\end{rem}

\begin{rem}
By applying standard interpolation inequalities related to Lorentz spaces, we can deduce from \eqref{enhancement} that $ D^ju\in L^{\f{2}{j-\al}-}(B_1) $, namely, for any $ 0<s<\f{2}{j-\al} $, there exists $ C>0 $, depending only on $ \Lda,\Lda',j,n,p,q $, and $ s $ such that $
\|D^ju\|_{L^s(B_1)}\leq C $. For $ j=1 $, this leads to $ u\in W^{1,\f{2(p+1)}{p-1}-}(B_1) $,  which is a substantial improvement over the $ H^1 $ regularity. Since the function defined by \eqref{ralsolu} is a stationary solution of \eqref{MEMSeq} with $ f\equiv 0 $, the estimates \eqref{Mincontent}, \eqref{enhancement} and the $ (n-2) $-rectifiability property are all sharp.
\end{rem}

\begin{rem}
In the proof of the rectifiability of $ \{u=0\} $, we actually demonstrate that any $ k $-stratum of $ \{u=0\} $ is $ k $-rectifiable with $ k\in\Z\cap[0,n-2] $ and treat $ \{u=0\} $ as the top stratum. Here the $ k $-stratum $ S^k(u) $ of $ u $ consists of all points where the tangent functions fail to be invariant with respect to all $ (k+1) $-dimensional subspaces of $ \R^n $. We will present the explicit definition of this stratification in \S\ref{StratificationSection}.
\end{rem}

Motivated by the study of harmonic heat flows using approximate harmonic maps as explored in \cite{Mos03}, we can utilize the properties established in Theorem \ref{main1} to examine the evolutional problem associated with \eqref{MEMSeq} in three dimensions. For a domain $ \om\subset\R^n $ and $ T>0 $, we define $ \om_T=\om\times(0,T) $, and consider the evolutional problem related to \eqref{MEMSeq} as
\be
\pa_tu-\Delta u=-u^{-p},\,\,u\geq 0\quad\text{in }\om_T,\label{dynaMEMS}
\ee
where $ p>1 $. Following \cite{WY24}, we define a special class of solutions for \eqref{dynaMEMS}.

\begin{defn}\label{defsuita}
A function $ u:\om_T\to\R_{\geq 0} $ is called a suitable weak solution of \eqref{dynaMEMS} if the following properties hold.
\begin{enumerate}
\item $ u\in H_{\loc}^1(\om_T)\cap L_{\loc}^{-p}(\om_T) $.
\item $ u $ is a weak solution of \eqref{dynaMEMS}, that is, for any $ \vp\in C_0^{\ift}(\om_T) $,
$$
\int_{\om_T}(\pa_tu\vp+\na u\cdot\na\vp+u^{-p}\vp)=0.
$$
\item $ u $ is stationary in the sense that for any $ Y\in C_0^{\ift}(\om_T,\R^n) $,
$$
\int_{\om_T}\[\left(\f{|\na u|^2}{2}-\f{u^{1-p}}{p-1}\)\op{div}Y-DY(\na u,\na u)-\pa_tu(\na u\cdot Y)\]=0.
$$
\item $ u $ satisfies the localized energy inequality, that is, for any $ \vp\in C_0^{\ift}(\om) $ and $ \psi\in C_0^{\ift}(0,T) $ with $ \psi\geq 0 $, there holds
\begin{align*}
&\int_0^T\int_{\om}\(\f{|\na u|^2}{2}-\f{u^{1-p}}{p-1}\)\vp^2\pa_t\psi\geq\int_0^T\int_{\om}|\pa_tu|^2\vp^2\psi\\
&\quad\quad+2\int_0^T\int_{\om}\pa_t u(\na u\cdot\na\vp)\vp\psi-2\int_0^T\int_{\om}\(\f{|\na u|^2}{2}-\f{u^{1-p}}{p-1}\)\vp\psi\pa_t\vp.
\end{align*}
\end{enumerate}
\end{defn}

\begin{thm}\label{thmmain2}
Assume that $ B_1\subset\R^3 $, $ p>3 $, and $ u\in(C_{\loc}^{\al,\f{\al}{2}}\cap H_{\loc}^1\cap L_{\loc}^{-p})(B_1\times(0,T)) $ is a suitable weak solution of \eqref{dynaMEMS}. Then for a.e. $ t\in(0,T) $, $ \{u(\cdot,t)=0\} $ is $ 1 $-rectifiable. 
\end{thm}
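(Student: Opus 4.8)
The plan is to deduce Theorem~\ref{thmmain2} from the static result Theorem~\ref{main1} by slicing the space-time solution in time. The crucial point I want to extract is that for a.e.\ $t\in(0,T)$ the slice $u(\cdot,t)$ is a \emph{stationary} solution of the elliptic equation \eqref{MEMSeq} on $B_1$ with right-hand side $f=\pa_tu(\cdot,t)$, and that this $f$ automatically lies in $L^2_{\loc}$. In dimension $n=3$ this is exactly strong enough: taking $q=2$, the hypothesis $\f12+\f1{2p}<\f qn=\f23$ of Theorem~\ref{main1} is equivalent to $\f1{2p}<\f16$, i.e.\ to $p>3$, which is precisely the assumption in force here. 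So the whole proof hinges on showing that the spatial slices are genuine static stationary solutions in the sense of the Introduction.

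First I would record the consequences of Definition~\ref{defsuita}. Since $u\in H_{\loc}^1(B_1\times(0,T))$ we have $\pa_tu\in L^2_{\loc}(B_1\times(0,T))$ (consistent with the localized energy inequality in item~(4)), and since $u\in L_{\loc}^{-p}(B_1\times(0,T))$, Fubini's theorem produces a full-measure set of times $t$ on which simultaneously $u(\cdot,t)\in H_{\loc}^1(B_1)\cap L_{\loc}^{-p}(B_1)$ and $\pa_tu(\cdot,t)\in L_{\loc}^2(B_1)$; moreover $u\in C_{\loc}^{\al,\al/2}(B_1\times(0,T))$ gives $u(\cdot,t)\in C_{\loc}^{0,\al}(B_1)$ for \emph{every} $t$, with locally uniform seminorms. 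Next I would insert product test functions $\vp(x,t)=\phi(x)\psi(t)$ into the weak formulation in item~(2), and product vector fields $Y(x,t)=Z(x)\psi(t)$ into the stationarity identity in item~(3); letting $\psi$ range over $C_0^\ift(0,T)$ and $\phi,Z$ over \emph{countable} dense families in $C_0^\ift(B_1)$ and $C_0^\ift(B_1,\R^3)$, I would conclude that on a full-measure set of times the slice $u(\cdot,t)$ satisfies both the weak identity \eqref{WeakConMEMS} and the static stationary condition \eqref{StaConMEMS} with $f=\pa_tu(\cdot,t)$. Intersecting these countably many full-measure conditions yields: for a.e.\ $t\in(0,T)$, $u(\cdot,t)$ is a stationary solution of \eqref{MEMSeq} on $B_1\subset\R^3$ with respect to $f=\pa_tu(\cdot,t)\in L^2_{\loc}(B_1)$.

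With this in hand the conclusion is immediate. Fix such a good time $t$, cover $B_1$ by countably many balls $B_{r_i}(x_i)$ with $\ol{B_{4r_i}(x_i)}\subset B_1$, and rescale each $B_{4r_i}(x_i)$ to $B_4$. On each ball the bound \eqref{uholderLq} holds with some finite ($t$- and $i$-dependent) constant $\Lda$, because $u(\cdot,t)\in L^1$ and $\pa_tu(\cdot,t)\in L^2$ locally; hence Theorem~\ref{main1}(3) applies (the exponent condition holding because $p>3$ as noted) and gives that $\{u(\cdot,t)=0\}\cap B_{r_i}(x_i)$ is $(n-2)=1$-rectifiable. Taking the countable union over $i$, $\{u(\cdot,t)=0\}$ is $1$-rectifiable, and since all of this holds off a fixed null set of times, it holds for a.e.\ $t\in(0,T)$.

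The main obstacle is the time-slicing step itself: passing rigorously from the space-time integral identities, valid for all test functions $\vp$ and vector fields $Y$ in $C_0^\ift(B_1\times(0,T))$, to the fixed-time identities \eqref{WeakConMEMS} and \eqref{StaConMEMS}, valid for a.e.\ $t$ and for \emph{all} $\phi\in C_0^\ift(B_1)$, $Z\in C_0^\ift(B_1,\R^3)$. The subtlety is that the exceptional set of ``bad'' times depends a priori on the chosen test object; this is circumvented by working with a countable dense family and then upgrading to arbitrary test functions using density together with the integrability bounds $\na u(\cdot,t),\pa_tu(\cdot,t)\in L^2_{\loc}$ on good slices, so that, for instance, the cross term $\pa_tu(\cdot,t)\,(\na u(\cdot,t)\cdot Z)$ lies in $L^1_{\loc}$ by the Cauchy--Schwarz inequality on the slice. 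Everything else --- the arithmetic $\f12+\f1{2p}<\f23\iff p>3$, the inheritance of H\"older continuity by spatial slices, and the covering/countable-union argument for rectifiability --- is routine.
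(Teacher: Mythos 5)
Your proof is correct and follows the same route as the paper, which disposes of this theorem in a short remark: a.e.\ time slice is a stationary solution of the static equation \eqref{MEMSeq} with $f=\pa_tu(\cdot,t)\in L^2_{\loc}(B_1)$, and $n=3$, $q=2$, $p>3$ is exactly the exponent condition $\f12+\f1{2p}<\f qn$ of Theorem \ref{main1}. Your write-up is in fact more detailed than the paper's, since you supply the product-test-function and countable-dense-family argument needed to pass from the space-time identities of Definition \ref{defsuita} to the fixed-time identities \eqref{WeakConMEMS} and \eqref{StaConMEMS}, a step the paper leaves implicit.
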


\begin{rem}
This theorem follows directly from Theorem \ref{main1}. Indeed, for a.e. $ t\in(0,T) $, $ \pa_tu(\cdot,t)\in L^2(B_1) $. The assumption $ n=3 $ and $ p>3 $ imply that $ \f{2}{3}>\f{1}{2}+\f{1}{2p} $. Thus, $\{u(\cdot,t)=0\} $ is $ 1 $-rectifiable. 
\end{rem}

In \cite{WY24}, Wang and Yi extended the results of Theorem \ref{thmDWW} to the evolutional problem represented by \eqref{dynaMEMS}. Their results primarily concentrate on estimates of the parabolic Hausdorff dimension for the rupture set. Although Theorem \ref{thmmain2} addresses only the case for $ n=3 $ and $ p>3 $, it gives a novel result concerning the rectifiability of almost every slice of the rupture set.

\subsection{Difficulties and our strategies}\label{subdiffi}

Examining singular sets of nonlinear elliptic equations, especially those associated with variational problems, produces a significant area of research. The study of harmonic maps is particularly noteworthy. In our model, the results regarding the rectifiability and estimates of the Minkowski contents of rupture sets are in line with those established for harmonic maps. However, due to the distinct structures of the equation involved, the analysis for stationary solutions of \eqref{MEMSeq} presents more challenges. To illustrate these difficulties, we will first review relevant results in the context of harmonic maps, enabling a comparison with our conclusions in Theorems \ref{main1} and Theorem \ref{thmmain2}. \smallskip

Let $ \om\subset\R^n $ be a bounded domain. Recall that for $ \cN\hookrightarrow\R^d $ being a real, smooth, compact manifold (here the notation $ \hookrightarrow $ indicates that we can isometrically embed $ \cN $ into $ \R^d $), the harmonic map refers to the critical point of the variational problem for the Dirichlet energy
\be
\cE(\Phi,\om):=\int_{\om}|\na\Phi|^2,\quad\Phi=(\Phi^1,\Phi^2,...,\Phi^d)\in H^1(\om,\cN).\label{Energyfun}
\ee

\begin{defn}
$ \Phi\in H^1(\om,\cN) $ is a local minimizer of \eqref{Energyfun} if for any $ B_r(x)\subset\subset \om $, and $ \Psi\in H^1(B_r(x),\cN) $ with $ \Phi=\Psi $ on $ \pa B_r(x) $ in the sense of trace, there holds
$$
\int_{B_r(x)}|\na\Phi|^2\leq\int_{B_r(x)}|\na\Psi|^2.
$$
\end{defn}

\begin{rem}
Assume that $ \pa\om $ is Lipschitz. Let $ g\in H^{\f{1}{2}}(\pa\om,\cN) $. By standard direct methods in variational problems, there exists $ \Phi\in H^1(\om,\cN) $ solving the variational problem
$$
\min\{\cE(\Psi,\om):\Psi\in H^1(\om,\cN),\text{ and }\Psi=g\text{ on }\pa\om\text{ in the sense of trace}\}.
$$
Here we call $ \Phi $ the global minimizer of $ \cE(\cdot,\om) $. By this definition, we see that global minimizers are local minimizers.
\end{rem}

\begin{defn}
Assume that $ \Phi\in H^1(\om,\cN) $. We call $ \Phi $ a weakly harmonic map if for any $ \vp=(\vp^i)_{i=1}^d\in C_0^{\ift}(\om,\R^d) $,
\be
\int_{\om}(\na\Phi\cdot\na\vp-A(\Phi)(\na\Phi,\na\Phi)\cdot\vp)=0,\label{weakhar}
\ee
where $ A(y)(\cdot,\cdot):T\cN\times T\cN\to(T\cN)^{\perp} $ is the second fundamental form of $ \cN $ at the point $ y\in\cN $. Additionally, $ u $ is a stationary harmonic map if $ u $ is a weakly harmonic map and
\be
\int_{\om}(|\na\Phi|^2\op{div}Y-2DY(\na\Phi,\na\Phi))=0\label{stahar}
\ee
for any $ Y\in C_0^{\ift}(\om,\R^n) $.
\end{defn}

\begin{rem}
The conditions \eqref{weakhar} and \eqref{stahar} for harmonic maps are paralleled to \eqref{WeakConMEMS} and \eqref{StaConMEMS}, corresponding to outer and inner perturbations of the Dirichlet functional \eqref{Energyfun} (see Chapter 10.1 of \cite{GM05} for more details). 
\end{rem}

In various settings of harmonic maps, the singular set $ \sing(\Phi) $ for $ \Phi\in H^1(\om,\cN) $ is defined as the collection of points where $ \Phi $ fails to be continuous in any small neighborhood. Preliminary results by Schoen and Uhlenbeck \cite{SU82} demonstrated the partial regularity of local minimizers of the Dirichlet energy \eqref{Energyfun}, revealing that the Hausdorff dimension of $ \sing(\Phi) $ does not exceed $ n-3 $. Subsequently, Bethuel \cite{Bet93} established analogous results for stationary harmonic maps, proving that their Hausdorff dimension is at most $ n-2 $. The estimates provided in Theorem \ref{thmDWW} are in the same spirit as these two pivotal works.

Assuming that the target manifold $ \cN $ is analytic,  Simon \cite{Sim95} proved that the singularity set of a local minimizer is $ (n-3) $-rectifiable. For further insights, readers may refer to the book \cite{Sim96}. The arguments in the proof highly rely on the fact that when $ \cN $ is analytic, any tangent map of the local minimizer $ \Phi $ at $ \sing(\Phi) $ is unique. In his work, Simon also established the $ k $-rectifiability for any $ k $-stratum. In \cite{Lin99}, Lin investigated the concentration set for a sequence of stationary harmonic maps. Specifically, for such a sequence $ \{\Phi_i\} $ with uniform $ H^1 $-bound, up to a subsequence, $ \Phi_i\wc\Phi_{\ift} $ weakly in $ H^1(\om,\R^d) $ and 
$$
|\na\Phi_i|^2\ud x\wc^*|\na\Phi_{\ift}|^2\ud x+\nu,
$$
in the sense or Radon measures, where $ \nu $ is a nonnegative Radon measure on $ \om $. The concentration set $ \Sg $ is defined by
$$
\Sg:=\bigcap_{r>0}\left\{x\in\om:\liminf_{i\to+\ift}\Theta(\Phi;x,r)>\va_0^2\right\},
$$
where for $ \Phi\in H^1(\om,\cN) $,
\be
\Theta(\Phi;x,r):=r^{2-n}\int_{B_r(x)}|\na\Phi|^2,\label{Harmonicden}
\ee
and $ \va_0=\va_0(n,\cN)>0 $ is a constant related to the partial regularity. The results in \cite{Lin99} presented that $ \Sg $ is $ (n-2) $-rectifiable and $
\nu=\Theta(\nu,x)\HH^{n-2}\llcorner\Sg $, where $ \Theta(\nu,x)\geq\va_0^2 $ for any $ x\in\om $. We refer to \cite{LW99}  for similar results related to the Ginzburg-Landau model. Recently, a significant breakthrough in analyzing singular sets for harmonic maps was achieved by Naber and Valtorta in \cite{NV17}, where They remove the analyticity assumption on $ \cN $ made in \cite{Sim95}. Furthermore, they provided interior estimates for the $ (n-3) $-dimensional Minkowski content of the singular set of local minimizers. Their approach, known as quantitative stratification, incorporates new tools from geometric measure theory, specifically Reifenberg-type theorems.

Returning to the equation \eqref{MEMSeq}, we find that there are inherent difficulties in applying techniques in \cite{Lin99,NV17,Pre87,Sim95} to our problem. Firstly, for points within the rupture set, as discussed in \S 5 of \cite{DWW16}, the tangent function may vary depending on different blow-up scales, making Simon's method inapplicable. Moreover, the methods presented in \cite{Lin99,NV17,Pre87} are also unsuitable due to the structural differences between the energy densities. Recall that when $ f\equiv 0 $, for $ u\in(C_{\loc}^{0,\al}\cap H_{\loc}^1\cap L_{\loc}^{-p})(\om) $ being a stationary solution of \eqref{MEMSeq}, the nondecreasing energy density $ \theta(u;\cdot,\cdot) $ is given by \eqref{classDen}. For the study of stationary harmonic maps, the widely employed density \eqref{Harmonicden} enjoys the monotonicity formula
\be
\f{\ud}{\ud r}\Theta(\Phi;x,r)=2r^{-n}\int_{\pa B_r(x)}|(y-x)\cdot\na\Phi|^2\ud\HH^{n-1}(y)\geq 0.\label{harmonoton}
\ee
The primary distinction between these two densities is that $ \theta(u;\cdot,\cdot) $ can take on negative values. Indeed, Lemma 5.5 in \cite{DWW16} implies that
\be
x\in\{u>0\}\,\,\Leftrightarrow\,\,\lim_{r\to 0^+}\theta(u;x,r)=-\ift.\label{thetauxrbound}
\ee
The arguments in \cite{Lin99,NV17,Pre87} rely on the nonnegativity of the density \eqref{Harmonicden} for harmonic maps so they are not valid for the proof of the model in our paper. In particular, as discussed in Section 1.2 of \cite{FWZ24}, the proofs in \cite{NV17} relies on a crucial property that if $ B_r(x)\subset B_s(y)\subset\om $, then
\be
\Theta(\Phi;x,r)\leq\(\f{r}{s}\)^{n-2}\Theta(\Phi;y,s).\label{simesti}
\ee
This property does not necessarily hold for the density defined in \eqref{classDen}, further complicating the application of existing methods to our analysis.

After the work in \cite{NV17}, several studies, including \cite{HSV19} and \cite{NV18}, have refined the methodologies presented in that paper, making them less reliant on the nonnegativity condition and inequalities akin to \eqref{simesti}. In these later works, the authors emphasize the importance of the boundedness of the density, which is evident in the context of harmonic maps, as expressed in \eqref{Harmonicden}. Noting that for $ x\in\{u=0\} $, by \eqref{thetauxrbound}, we see from \eqref{classDenfor} that the density \eqref{classDen} is bounded within $ \{u=0\} $. Therefore, we can restrict our attention to the rupture set and follow the frameworks in \cite{NV18}. Concentrating on the rupture set is advantageous, as it facilitates the establishment of its rectifiability. Moreover, it is not hard to show that $ \cL^n(B_r(\{u=0\}\cap B_1))\leq Cr^2 $ under the assumption of Theorem \ref{main1}, which is less than \eqref{Mincontent}. It is natural to consider if there is some estimate on the increasing rate of $ u $ near the rupture set, such as for $ x\in\{u>0\} $, $ u(x)\gtrsim(\dist(x,\{u=0\}))^{\al} $. However, such an estimate is not necessarily true in our model for stationary solutions of \eqref{MEMSeq}. In certain special cases, such as the radial solution, \cite{DH96} gave similar results. For the general case in this paper, to our knowledge, one can only obtain a much weaker nondegeneracy estimate (see Lemma \ref{nondege}). The main difficulty in the proof of the estimate \eqref{Mincontent} is that the left-hand side involves the $ r $-neighborhood of points in $ \{0<u<\va r^{\al}\} $. To overcome this point, we first modify the blow-up analysis developed in \cite{DWW16} and develop an alternative result to separately deal with points in the domain, depending on the value $ u $. Recall that in \cite{DWW16}, the authors applied the blow-up procedure for the sequence $ \{r^{-\al}u(x+ry)\}_{r>0} $. The shortcoming of it is that if $ u(x)>0 $, the sequence does not have a limit for any subsequence. As a result, it does not fit well with the quantitative stratification arguments in \cite{NV18}. In our paper, on the other hand, we consider the blow-up sequence $ \{r^{-\al}(u(x+ry)-u(x))\}_{r>0} $, which solves this problem. Next, let us provide an intuitive overview of our alternative result in the proof. Fix a point $ x\in\om $ and a scale $ 0<r<\dist(x,\om) $. If $ 0\leq u(x)\ll r^{\al} $, the behavior of $ u $ within $ B_r(x) $ resembles the case that $ u(x)=0 $, thus, we can apply methods in line with \cite{NV18}. On the other hand, if $ u(x)\gtrsim r^{\al} $ we can utilize standard regularity theory for elliptic equations to find a small ball $ B_{\delta r}(x) $ such that $ u $ exhibits nice properties. By combining these two ingredients, we can effectively close our arguments. To our knowledge, these improvements and modifications are novel and represent a first-time application in this context.

Finally, we would like to highlight the differences between the model in our paper and the study of specific level sets of minimizers in some variational problems, such as those examined by Alper in \cite{Alp18} and by De Lellis, Marchese, Spadaro, and Valtorta in \cite{DMSV18}. Although up to limits in some sense, our model has some inner connections related to those two mentioned above (see \cite{DWW16} and \cite{WY24} for references), the problems for the stationary solutions of \eqref{MEMSeq} are distinguished from them. In our paper, we treat the rupture set $ \{u=0\} $ as the singular set and consider its stratification. We employ the monotonicity formula similar to that in \eqref{classDen}, while the authors of \cite{Alp18} and \cite{DMSV18} primarily utilize the Almgren frequency in their analysis. In \S\ref{Preliminaries} and \ref{converSec}, we introduce similar quantities to illustrate some properties from other perspectives, though we will not rely on them to prove the main theorems.

\subsection{Further results}

The equation \eqref{MEMSeq} is relatively straightforward, yet it opens up various avenues for generalization. In the study of thin films and MEMS problem, researchers have developed more complex equations than \eqref{MEMSeq} that provide comprehensive descriptions of the underlying physical and mathematical phenomena, as highlighted in \cite{EGG10} and \cite{LW17}. Another significant direction for generalization involves the multi-phase models discussed in \cite{ST19}. We anticipate that models exhibiting structural similarities to the stationary solutions of \eqref{MEMSeq}, as analyzed in our paper, could yield results comparable to those presented in Theorem \ref{main1}.

In Theorem \ref{thmmain2}, we specifically consider the case when $ n=3 $ and $ p>3 $. It remains an open problem to determine the rectifiability of the rupture set $ \{u(\cdot,t)=0\} $ for a.e. $ t\in(0,T) $ for suitable weak solutions as defined in Definition \ref{defsuita}. Additionally, while the energy density introduced in \cite{WY24} is nondecreasing, it may also achieve negative values, preventing the application of methods typically used in the study of harmonic heat flows, such as those detailed by Lin and Wang in \cite{LW02}. It is worth noting that preliminary results related to space-time estimates for the parabolic Minkowski content, which incur a loss of $ \va $-room, can be derived by using analogous findings from \cite{CHN13} and \cite{CHN15} by Cheeger, Haslhofer, and Naber.

\subsection{Organization of the paper} The paper is divided into two parts, and the structure is presented as follows.

\begin{itemize}
\item Part I is dedicated to establishing classical stratification results, in line with \cite{Whi97}.
\begin{itemize}
\item In \S \ref{Preliminaries}, we collect fundamental a priori estimates for solutions of \eqref{MEMSeq} and introduce the monotonicity formula.
\item In \S \ref{converSec}, we utilize the estimates developed in the previous section to perform a blow-up analysis and present some applications.
\item In \S \ref{StratificationSection}, using results in \S \ref{Preliminaries} and \S \ref{converSec}, we examine the classical stratification results of the rupture set $ \{u=0\} $.
\end{itemize}

\item Part II will focus on quantitative stratification theory, following the frameworks established in \cite{NV17} and \cite{NV18}.
\begin{itemize}
\item In \S \ref{SettingsQuantitative}, we provide definitions and outline main results of the quantitative stratification related to the problem \eqref{MEMSeq}.
\item In \S \ref{quantitatvePro}, we establish essential properties of quantitative stratification, which will be used in later proof.
\item In \S \ref{Reifenbergtype}, we review Reifenberg-type theorems in geometric measure theory and adapt them to better fit our application context.
\item In \S \ref{L2best}, we introduce the $ L^2 $-best approximation results, which illustrate the relationships between Reifenberg-type theorems and the monotonicity formula.
\item In \S \ref{CoveringSection}, we present a few covering lemmas derived from the results of the preceding sections. These lemmas play a crucial role in the proof of main theorems.
\item Finally, in \S \ref{Mainproof}, we employ the covering lemmas from \S \ref{CoveringSection} to complete the proof of main theorems.
\end{itemize}
\end{itemize}

\subsection{Notations and conventions}
\begin{itemize}
\item Throughout this paper, we will use $ C $ to denote positive constants. Sometimes to emphasize that $ C $ depends on parameters $ a,b,... $, we use the notation $ C(a,b,...) $, which may change from line to line.
\item We will use the Einstein summation convention in this paper, summing the repeated index without the sum symbol.
\item The inequality $ a>0 $ does not contain the possibility that $ a=+\ift $.
\item Let $ \beta\in(0,1] $ and $ K\subset\R^n $ be a compact set. A function $ f\in C^{0,\beta}(K) $ if 
$$
\|f\|_{C^{0,\beta}(K)}:=\|f\|_{L^{\ift}(K)}+[f]_{C^{0,\beta}(K)}<+\ift,
$$
where 
$$
[f]_{C^{0,\beta}(K)}:=\sup_{x,y\in K}\f{|f(x)-f(y)|}{|x-y|^{\beta}}. 
$$
Here $ C^{0,\beta}(K) $ is a Banach space equipped with the norm $ \|\cdot\|_{C^{0,\beta}(K)} $. For $ \om\subset\R^n $ an open set, we call $ f\in C_{\loc}^{0,\beta}(\om) $ if $ f\in C^{0,\beta}(K) $ for any compact set $ K\subset \om $. If $ \{f_i\}\subset C^{0,\beta}(K) $ with $ \sup_{i\in\Z_+}\|f_i\|_{C^{0,\beta}(K)}\leq\Lda $, then it follows from Arzel\`{a}-Ascoli lemma, up to subsequence, there exists $ f_{\ift}\in C^{0,\beta}(K) $ such that $ \|f\|_{C^{0,\al}(K)}\leq\Lda $ and $ \|f_i-f_{\ift}\|_{L^{\ift}(K)}\to 0 $. In this paper, we will frequently use this simple property.
\item Let $ \beta\in(0,1] $ and $ K\subset\R^{n+1} $ be a compact set. Assume that $ f=f(x,t):K\to\R $, where $ x\in\R^n $, and $ t\in\R $. $ f\in C^{\beta,\f{\beta}{2}}(K) $ if 
$$
\|f\|_{C^{\beta,\f{\beta}{2}}(K)}:=\|f\|_{L^{\ift}(K)}+[f]_{C^{\beta,\f{\beta}{2}}(K)}<+\ift,
$$
where 
$$
[f]_{C^{\beta,\f{\beta}{2}}(K)}:=\sup_{x,y\in K}\f{|f(x,t)-f(y,t)|}{(|x-y|+|t-s|^{\f{1}{2}})^{\beta}}. 
$$
Let $ \om\subset\R^{n+1} $ be an open set. $ f\in C_{\loc}^{\beta,\f{\beta}{2}}(\om) $ if $ f\in C^{\beta,\f{\beta}{2}}(K) $ for any compact set $ K\subset \om $.
\item For $ k\in\Z\cap[1,n] $, the Grassmannian $ \bG(n,k) $ is the set of all $ k $-dimensional subspaces of $ \R^n $, and $ \bA(n,k) $ is the collections of all $ k $-dimensional affine subspaces of $ \R^n $. For $ \{V\}\cup\{V_i\}\subset\bG(n,k) $, we say that $ V_i\to V $ as $ i\to+\ift $ if $ d_{\bG(n,k)}(V_i,V)\to 0 $ as $ i\to+\ift $.
where $ d_{\bG(n,k)}(\cdot,\cdot) $ is the Grassmannian metric. For $ \{L\}\cup\{L_i\}\subset\bA(n,k) $, we say $ L_i\to L $ if $ L_i=x_i+V_i $ and $ L=x+V $, where $ \{V\}\cup\{V_i\}\subset\bG(n,k) $, with $ V_i\to V $ and $ x_i\to x $. 
\item Letting $ \om\subset\R^n $ be any subset with $ x\in\om $ and $ r>0 $, we define 
$$
\eta_{x,r}(\om):=r^{-1}(\om-x):=\{y\in\R^n:x+ry\in\om\}.
$$
\item For open sets $ \{\om_i\} $ and $ \om $ with $ \om_i,\om\subset\R^n $, we say that $ \om_i\to \om $ if for any $ K\subset\subset \om $ and $ x\not\in \om $, when $ i\in\Z_+ $ is sufficiently large, there hold $
K\subset\subset \om_i $ and $ x\notin \om_i $.
\item For $ r>0 $, $ k\in\Z\cap[1,n] $, and $ x\in\R^k $, we let 
$$
B_r^k(x):=\{x\in\R^k:|y-x|<r\}.
$$
If $ k=n $, we drop the superscription. If $ x=0 $, we denote it by $ B_r^k $. $ \HH^k $ is $ k $-dimensional Hausdorff measure on $ \R^n $. We let $ \w_k:=\HH^k(B_1^k) $. When $ k=n $, we denote $ \cL^n=\HH^n $ as the Lebesgue measure and $ \ud\cL^n(x)=\ud x $. If no ambiguity occurs, we will drop $ \ud x $ in integrals. 
\item For a $ k $-dimensional subspace $ V=\op{span}\{v_i\}_{i=1}^k $, where $ \{v_i\}_{i=1}^k $ is an orthonormal basis and $ u\in H^1(\R^n) $, we set 
$$
|V\cdot\na u|^2=\sum_{i=1}^k|v_i\cdot\na u|^2.
$$
\item We have a convention that $ 0 $-dimensional affine subspaces refer to single points.
\item For a $ \cL^n $-measurable set $ A\subset\R^n $ with $ \cL^n(A)<+\ift $, and $ u\in L^1(A) $, we denote the average of integral of $ u $ on $ A $ by $ \dashint_Au:=\f{1}{\cL^n(A)}\int_{A}u $.
\end{itemize}

\part{Classical stratification} 

In this part, our primary goal is to establish the classical stratification results for stationary solutions of \eqref{MEMSeq} based on the work \cite{Whi97}. Before we finally achieve this, we provide some estimates and convergence results as fundamental ingredients.

\section{Preliminaries}\label{Preliminaries}

\subsection{Interior estimates of weak solution} 

Throughout this subsection, we assume that $ u\in(C_{\loc}^{0,\al}\cap H_{\loc}^1\cap L_{\loc}^{-p})(B_2) $ is a weak solution of \eqref{MEMSeq} with respect to $ f\in M_{\loc}^{2\al+n-4,2}(B_2) $, satisfying
\be
[u]_{C^{0,\al}(\ol{B}_1)}+[f]_{M^{2\al+n-4,2}(B_1)}\leq\Lda.\label{uHolderMorrey}
\ee
We will next present some basic interior estimates, which are the foundations of the proceeding analysis. In \cite{DWW16}, the authors established similar properties for the case where $ f\equiv 0 $, and here, we extend these results to the scenario where $ f $ is in the Morrey space. We demonstrate that the proofs of these interior estimates under the condition \eqref{uHolderMorrey} are relatively straightforward, as the assumption $ [u]_{C^{0,\al}(\ol{B}_1)}\leq\Lda $ is pretty strong. In contrast, establishing the a priori H\"{o}lder continuity of $ u $ under weaker regularity assumptions is significantly more complex. We will address this challenge and provide related results in Proposition \ref{proplowerbound}.

\begin{lem}\label{InESLem1}
If $ r>0 $ and $ x\in B_1 $ with $ B_{2r}(x)\subset B_1 $, then
$$
\int_{B_r(x)}(r^{\al}u^{-p}+u^{1-p})\leq Cr^{2\al+n-2},
$$
and
\be
\int_{B_r(x)}u\geq C^{-1}r^{\al+n},\label{nondege}
\ee
where $ C>0 $ depends only on $ \Lda,n $, and $ p $.
\end{lem}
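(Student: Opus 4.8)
\emph{Strategy.} The plan is to first establish the scale‑invariant bound
$$\int_{B_r(x)}u^{-p}\leq Cr^{\al+n-2}\qquad(C=C(\Lda,n,p)),$$
and then deduce both displayed inequalities from it by elementary means. Set $M:=\sup_{B_{2r}(x)}u$, which is finite ($u$ is continuous on $\ol B_1\supset\ol{B_{2r}(x)}$) and positive (the assumption $u\in L^{-p}_{\loc}$ forces $u>0$ a.e.), and put $C_0:=2\cdot4^{\al}\Lda$. If $M\geq C_0r^{\al}$, then comparing with a near‑maximum point and using $[u]_{C^{0,\al}(\ol B_1)}\leq\Lda$ gives $u\geq M-\Lda(4r)^{\al}\geq M/2$ throughout $B_{2r}(x)$, so $u^{-p}\leq(\Lda(4r)^{\al})^{-p}=Cr^{-\al p}=Cr^{\al-2}$ there (using $\al(p+1)=2$), and integrating over $B_r(x)$ yields the bound.

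\emph{Main case $M<C_0r^{\al}$.} Here I use the weak formulation \eqref{WeakConMEMS} with $\om=B_2$, extended by density to test functions in $H^1$ with compact support in $B_2$ and absolutely integrable integrand (legitimate for the functions below since $u^{-p}\in L^1_{\loc}$, $u$ is bounded on the relevant balls, and $f\in L^2_{\loc}$). Fix $r\leq t<s\leq2r$ and cutoffs $\eta,\zeta\in C_0^{\ift}(B_{2r}(x))$ with $\eta\equiv1$ on $B_t(x)$, $\zeta\equiv1$ on $\supp\eta$, nested supports separated by gaps $\sim(s-t)$, and $|\na\eta|,|\na\zeta|\leq C/(s-t)$. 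Testing with $(2M-u)\zeta^2\geq0$ and absorbing $\tfrac12\int|\na u|^2\zeta^2$ gives a Caccioppoli inequality $\int|\na u|^2\zeta^2\leq C\big(M^2\int|\na\zeta|^2+M\int u^{-p}\zeta^2+M\int|f|\zeta^2\big)$; testing with $\eta^2\geq0$ gives $\int u^{-p}\eta^2=-2\int\eta\,\na u\cdot\na\eta-\int f\eta^2$. Combining these via Cauchy--Schwarz and then Young's inequality — this is exactly where the smallness $M<C_0r^{\al}$ is needed, to keep the coefficient of $\int_{B_s(x)}u^{-p}$ strictly below $1$ — produces
$$\int_{B_t(x)}u^{-p}\leq\tfrac12\int_{B_s(x)}u^{-p}+\frac{CMr^n}{(s-t)^2}+\frac{C\sqrt M\,r^{n/2}}{s-t}\Big(\int_{B_{2r}(x)}|f|\Big)^{1/2}+\int_{B_{2r}(x)}|f|.$$
Since $\rho\mapsto\int_{B_\rho(x)}u^{-p}$ is bounded on $[r,2r]$ (dominated by $\int_{B_{2r}(x)}u^{-p}<\ift$, as $\ol{B_{2r}(x)}\Subset B_2$), a standard iteration lemma absorbs the first term and yields $\int_{B_r(x)}u^{-p}\leq C\big(Mr^{n-2}+\sqrt M\,r^{n/2-1}(\int_{B_{2r}(x)}|f|)^{1/2}+\int_{B_{2r}(x)}|f|\big)$. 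Finally, $M<C_0r^{\al}$ together with the Morrey bound $\int_{B_{2r}(x)}|f|\leq|B_{2r}|^{1/2}\big(\int_{B_{2r}(x)}|f|^2\big)^{1/2}\leq Cr^{n/2}\cdot\Lda r^{(2\al+n-4)/2}=Cr^{\al+n-2}$ gives $\int_{B_r(x)}u^{-p}\leq Cr^{\al+n-2}$ in this case as well.

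\emph{Deducing the two estimates.} The $r^{\al}u^{-p}$ contribution to the first displayed inequality is now immediate. For the $u^{1-p}$ term, write $u^{1-p}=(u^{-p})^{(p-1)/p}$ and apply Jensen's inequality (concavity of $t\mapsto t^{(p-1)/p}$, $p>1$): $\int_{B_r(x)}u^{1-p}\leq|B_r|^{1/p}\big(\int_{B_r(x)}u^{-p}\big)^{(p-1)/p}\leq Cr^{n/p}\,r^{(\al+n-2)(p-1)/p}$, and a short computation with $\al=\tfrac2{p+1}$ shows the exponent equals $2\al+n-2$. For \eqref{nondege}, Jensen's inequality for the convex function $t\mapsto t^{1-p}$ gives $\big(\dashint_{B_r(x)}u\big)^{1-p}\leq\dashint_{B_r(x)}u^{1-p}\leq Cr^{2\al-2}$; raising to the negative power $\tfrac1{1-p}$ reverses the inequality and, since $\tfrac{2\al-2}{1-p}=\al$, yields $\dashint_{B_r(x)}u\geq C^{-1}r^{\al}$, i.e. $\int_{B_r(x)}u\geq C^{-1}r^{\al+n}$.

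\emph{Main difficulty.} The obstacle is the circular coupling in the main case: the Caccioppoli bound for $\int|\na u|^2$ contains $\int u^{-p}$, while the identity for $\int u^{-p}$ contains $\int|\na u|^2$, so combining them on a single fixed ball produces an inequality $\int u^{-p}\leq(\text{const})\int u^{-p}+\cdots$ with constant $\geq1$, which is useless. Breaking the loop requires both the Hölder case split (to confine the $\int u^{-p}$ feedback to a regime where it comes with the small factor $M\lesssim r^{\al}$) and running the two test‑function estimates on slightly different concentric balls, so that the residual term is removed by the iteration lemma rather than by direct absorption.
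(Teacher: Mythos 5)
Your proof is correct, but the core estimate $\int_{B_r(x)}u^{-p}\leq Cr^{\al+n-2}$ is obtained by a genuinely different mechanism than the paper's. The paper simply tests \eqref{WeakConMEMS} with a cutoff $\vp$ and integrates by parts a second time, writing $\int\na u\cdot\na\vp=\int(u-u(x))\Delta\vp$, so that the gradient term is controlled pointwise by the oscillation bound $|u-u(x)|\leq\Lda(2r)^{\al}$ together with $r^2|D^2\vp|\leq C(n)$; combined with the Morrey bound on $f$ this yields the estimate in three lines, with no case split, no Caccioppoli inequality, and no iteration. You instead keep one derivative on $u$, which forces the dichotomy on $M=\sup_{B_{2r}(x)}u$, the Caccioppoli/hole-filling loop on nested balls, and the absorption lemma. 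Both routes are valid: the paper's is much shorter because it exploits the strong a priori assumption $[u]_{C^{0,\al}(\ol{B}_1)}\leq\Lda$ head-on, while yours makes visible exactly which quantitative input drives the bound (either $\sup u\lesssim r^{\al}$ on the relevant ball, or else a pointwise lower bound $u\gtrsim r^{\al}$), and would survive with weaker information than full H\"older continuity in the small-$M$ regime. One small misstatement: the smallness $M<C_0r^{\al}$ is not what keeps the coefficient of $\int_{B_s(x)}u^{-p}$ strictly below $1$ — that coefficient is $\tfrac12$ from Young's inequality with a free parameter, independently of $M$ — it is only used at the very end to convert $Mr^{n-2}$ and the cross term into $Cr^{\al+n-2}$. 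The deduction of the three displayed inequalities from the $u^{-p}$ bound (H\"older/Jensen with the exponent arithmetic $\al(p+1)=2$) coincides with the paper's.
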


\begin{rem}\label{remnonde}
We refer to \eqref{nondege} as the nondegeneracy property. Consequently, it follows that
\be
\sup_{B_r(x)}u\geq C(\Lda,n,p)^{-1}r^{\al}.\label{supnonde}
\ee
Additionally, there are analogous results on the obstacle problem $ \Delta u=\chi_{\{u>0\}} $, and one can see Lemma 5 of \cite{Caf98} as an example. 
\end{rem}

\begin{proof}[Proof of Lemma \ref{InESLem1}]
For given $ B_{2r}(x)\subset B_1 $, we choose $ \vp\in C_0^{\ift}(B_{2r}(x)) $ such that the following properties hold.
\begin{itemize}
\item $ \vp\equiv 1 $ in $ B_r(x) $, and $ 0\leq\vp\leq 1 $ in $ B_{2r}(x) $.
\item $ r|\na\vp|+r^2|D^2\vp|\leq C(n) $ in $ B_{2r}(x) $.
\end{itemize}
Testing \eqref{MEMSeq} by $ \vp $ gives 
\be
\int_{B_{2r}(x)}u^{-p}\vp=\int_{B_{2r}(x)}(u-u(x))\Delta\vp-\int_{B_{2r}(x)}f\vp.\label{testeta}
\ee
By \eqref{uHolderMorrey} and Cauchy's inequality, we have
\be
\int_{B_{2r}(x)}|f|\leq C\(\int_{B_{2r}(x)}|f|^2\)^{\f{1}{2}}r^{\f{n}{2}}\leq C(\Lda,n)r^{\al+n-2},\label{f2uholder1}
\ee
and
\be
\sup_{B_{2r}(x)}|u-u(x)|\leq \Lda (2r)^{\al}.\label{f2uholder2}
\ee
As a result, it follows from \eqref{testeta} that
$$
\int_{B_r(x)}u^{-p}\leq\int_{B_{2r}(x)}|f|\vp+\int_{B_{2r}(x)}|u-u(x)||\Delta\vp|\leq C(\Lda,n)r^{\al+n-2}.
$$
H\"{o}lder's inequality yields that
\begin{align*}
\int_{B_r(x)}u^{1-p}&\leq\(\int_{B_r(x)}u^{-p}\)^{\f{p-1}{p}}\cL^n(B_r(x))^{\f{1}{p}}\leq C(\Lda,n,p)r^{2\al+n-2},\\
\int_{B_r(x)}u&\geq\(\int_{B_r(x)}1\)^{\f{p+1}{p}}\(\int_{B_r(x)}u^{-p}\)^{-\f{1}{p}}\geq C(\Lda,n,p)^{-1}r^{\al+n}.
\end{align*}
Now we can complete the proof.
\end{proof}

\begin{lem}\label{InESLem2}
If $ r>0 $ and $ x\in B_1 $ with $ B_{4r}(x)\subset B_1 $, then
$$
\int_{B_r(x)}|\na u|^2\leq Cr^{2\al+n-2},
$$
where $ C>0 $ depends only on $ \Lda,n $, and $ p $.
\end{lem}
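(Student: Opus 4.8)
The plan is to prove Lemma~\ref{InESLem2} by a Caccioppoli-type energy estimate: test the equation against $\psi:=(u-u(x))\vp^2$ for a cutoff $\vp$ localized to $B_{2r}(x)$, and then use the pointwise H\"older bound on $u-u(x)$ together with Lemma~\ref{InESLem1} to control the resulting terms, including the singular one.

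Concretely, I would first fix $\vp\in C_0^{\ift}(B_{2r}(x))$ with $\vp\equiv 1$ on $B_r(x)$, $0\le\vp\le 1$, and $r|\na\vp|\le C(n)$, and note that $\psi\in H^1$ has compact support in $B_{2r}(x)$ since $u\in H_{\loc}^1(B_2)$. To legitimately insert $\psi$ into the weak formulation \eqref{WeakConMEMS}, I would approximate it by smooth compactly supported functions: the gradient term and the $f$-term pass to the limit in $L^2$ (recall $f\in M_{\loc}^{2\al+n-4,2}(B_2)\subset L_{\loc}^2(B_2)$), while the singular term passes to the limit by dominated convergence, because $|u^{-p}\psi|\le\Lda(2r)^{\al}u^{-p}\vp^2$ and $\int_{B_{2r}(x)}u^{-p}<+\ift$ by Lemma~\ref{InESLem1}; this last step needs $B_{4r}(x)\subset B_1$ so that Lemma~\ref{InESLem1} applies on the ball $B_{2r}(x)$, which is precisely why the hypothesis is stated with $B_{4r}(x)$ rather than $B_{2r}(x)$. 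Using $\na\psi=\vp^2\na u+2(u-u(x))\vp\na\vp$ in \eqref{WeakConMEMS} then gives the identity
\[
\int|\na u|^2\vp^2=-2\int(u-u(x))\vp\,\na u\cdot\na\vp-\int(u^{-p}+f)(u-u(x))\vp^2 .
\]
I would estimate the right-hand side using $|u-u(x)|\le\Lda(2r)^{\al}$ on $B_{2r}(x)$, which follows from $[u]_{C^{0,\al}(\ol{B}_1)}\le\Lda$. Young's inequality bounds the first term by $\f12\int|\na u|^2\vp^2+2\int(u-u(x))^2|\na\vp|^2\le\f12\int|\na u|^2\vp^2+C\Lda^2r^{2\al-2}\cL^n(B_{2r}(x))$; the singular term is dominated via $|u^{-p}(u-u(x))|\le\Lda(2r)^{\al}u^{-p}$ and Lemma~\ref{InESLem1} by $C\Lda(2r)^{\al}\cdot r^{\al+n-2}$; and the $f$-term is dominated via $|f(u-u(x))|\le\Lda(2r)^{\al}|f|$ and the bound $\int_{B_{2r}(x)}|f|\le C(\Lda,n)r^{\al+n-2}$ from \eqref{f2uholder1} by $C\Lda(2r)^{\al}\cdot r^{\al+n-2}$. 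Absorbing $\f12\int|\na u|^2\vp^2$ into the left-hand side and using $\vp\equiv1$ on $B_r(x)$ yields $\int_{B_r(x)}|\na u|^2\le\int|\na u|^2\vp^2\le Cr^{2\al+n-2}$ with $C=C(\Lda,n,p)$.

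The only genuinely non-routine point is the singular product $u^{-p}(u-u(x))\vp^2$, which is not obviously integrable a priori; but the H\"older control $|u-u(x)|\le\Lda(2r)^{\al}$ dominates it by a constant multiple of $r^{\al}u^{-p}$, and Lemma~\ref{InESLem1} controls $\int_{B_{2r}(x)}u^{-p}$ with exactly the scaling needed to close the estimate. Everything else is the standard Caccioppoli computation, and the factor $4$ in the hypothesis (versus $2$ in Lemma~\ref{InESLem1}) is there only to make the application of Lemma~\ref{InESLem1} on $B_{2r}(x)$ available.
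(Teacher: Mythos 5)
Your proof is correct and is essentially identical to the paper's: the paper also tests \eqref{MEMSeq} with $(u-u(x))\vp^2$, absorbs the cross term by Young's inequality, and controls the singular and $f$ terms via the H\"older bound $|u-u(x)|\leq\Lda(2r)^{\al}$ together with Lemma \ref{InESLem1} and \eqref{f2uholder1}. Your additional remarks on justifying the insertion of the non-smooth test function and on the role of the factor $4$ in the hypothesis are correct but left implicit in the paper.
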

\begin{proof}
Let $ \vp $ be as in Lemma \ref{InESLem1}. We test \eqref{MEMSeq} with $ (u-u(x))\vp^2 $ and obtain
\begin{align*}
\int_{B_{2r}(x)}|\na u|^2\vp^2&+2\int_{B_{2r}(x)}(\na u\cdot\na\vp)(u-u(x))\vp\\
&+\int_{B_{2r}(x)}u^{-p}(u-u(x))\vp^2+\int_{B_{2r}(x)}f(u-u(x))\vp^2=0.
\end{align*}
According to Cauchy's inequality and that 
\be
ab\leq\delta a^2+(4\delta)^{-1}b^2\label{abdelta}
\ee
for any $ a,b,\delta>0 $, we deduce that for any $ 0<\delta<1 $,
\begin{align*}
\int_{B_{2r}(x)}|\na u|^2\vp^2&\leq \delta\int_{B_{2r}(x)}|\na u|^2\vp^2+C(n)\delta^{-1}\int_{B_{2r}(x)}|u-u(x)|^2|\na\vp|^2\\
&\quad\quad+\int_{B_{2r}(x)}u^{-p}|u-u(x)|\vp^2+\int_{B_{2r}(x)}|f||u-u(x)|\vp^2.
\end{align*}
Letting $ \delta=\f{1}{2} $, it follows from \eqref{f2uholder1}, \eqref{f2uholder2}, and Lemma \ref{InESLem1} that
$$
\int_{B_{2r}(x)}|\na u|^2\vp^2\leq C(\Lda,n,p)r^{2\al+n-2}.
$$
By the definition of $ \vp $, we complete the proof.
\end{proof}

\subsection{Monotonicity formula} In this subsection, we introduce several functionals and the monotonicity formula for stationary solutions of \eqref{MEMSeq}. These concepts will be crucial in effectively characterizing rupture sets. Similar to the approximate harmonic maps as presented in \cite{Mos05} and \cite{NV18}, it is essential to consider the influence of the function $ f $ on the equation. We also need to adjust the density as defined in \eqref{classDen}, compared to the scenario addressed in \cite{DWW16}, where $ f $ is assumed to vanish.

Another notable aspect of the various functionals discussed in this paper is that we apply a mollification process using cut-off functions with higher regularity than the characteristic functions. This technique enhances our capabilities in the analysis by yielding smoother functionals that are easier to work with. For similar applications of this approach, readers can see \cite{HSV19,NV24,Ved21,FWZ24} and the references therein. One of the most remarkable advantages of introducing such modification is that it can prevent the usage of unique continuation property. Interested readers can compare the arguments in our paper with those in \cite{NV18} to see the differences. To begin with, we give the following definition of the cut-off functions we used in this procedure.

\begin{defn}\label{defnofphi}
Let $ \phi:[0,+\ift)\to[0,+\ift) $ be a smooth function satisfying the following properties.
\begin{enumerate}[label=$(\theenumi)$]
\item $ \supp\phi\subset[0,10) $.
\item For any $ t\in[0,+\ift) $, $ \phi(t)\geq 0 $ and $ |\phi'(t)|\leq 100 $.
\item $ -2\leq\phi'(t)\leq -1 $ for any $ t\in[0,8] $.
\item For any $ t\in\R_+ $, $ \phi'(t)\leq 0 $.
\end{enumerate}
For $ x\in\R^n $, we define $ \phi_{x,r},\dot{\phi}_{x,r}:\R^n\to[0,+\ift) $ as
$$
\phi_{x,r}(y):=\phi\(\f{|y-x|^2}{r^2}\)\quad\text{and}\quad\dot{\phi}_{x,r}(y):=\phi'\(\f{|y-x|^2}{r^2}\).
$$
\end{defn}

\begin{rem}
Here, one can regard $ \phi_{x,r} $ as an approximation of $ \chi_{B_r(x)} $.
\end{rem}

Now, we present the mollified functionals as follows.

\begin{defn}\label{Defdifffuncti}
Let $ \ga>0 $ and $ \om\subset\R^n $ be a bounded domain. Assume that $ u\in(C_{\loc}^{0,\al}\cap H_{\loc}^1\cap L_{\loc}^{-p})(\om) $ is a weak solution of \eqref{MEMSeq} with respect to $ f\in M_{\loc}^{2\al+n-4+\ga,2}(\om) $.
Fix $ x\in\om $ and $ 0<r<\f{1}{10}\dist(x,\pa\om) $. We define the localized energy functionals related to \eqref{Energyfun} as
\be
\begin{aligned}
D(u;x,r)&:=r^{2-n}\int_{\R^n}(|\na u|^2+u^{1-p})\phi_{x,r},\\
D_f(u;x,r)&:=r^{2-n}\int_{\R^n}(|\na u|^2+u^{1-p}+fu)\phi_{x,r},\\
F(u;x,r)&:=r^{2-n}\int_{\R^n}\(\f{|\na u|^2}{2}-\f{u^{1-p}}{p-1}\)\phi_{x,r}.
\end{aligned}\label{DDfF}
\ee
The height functional is given by
$$
H(u;x,r):=-r^{-n}\int_{\R^n}u^2\dot{\phi}_{x,r}.
$$
The Almgren-type frequency is
\be
I_f(u;x,r):=\f{D_f(u;x,r)}{H(u;x,r)}.\label{Almgren}
\ee
Finally, the functionals referred to as mollified densities are given by
$$
\vt(u;x,r):=r^{-2\al}(F(u;x,r)-\al H(u;x,r))
$$
and
\be
\begin{aligned}
\vt_f(u;x,r):=\vt(u;x,r)&-\f{r^{2-2\al-n}}{n+2\al-2}\int_{\R^n}((y-x)\cdot\na u-\al u)f\phi_{x,r}\ud y\\
&-\f{2}{(n+2\al-2)^2}\int_0^r\(\rho^{-2\al-n-1}\int_{\R^n}|f|^2|y-x|^4\dot{\phi}_{x,\rho}\ud y\)\ud\rho.
\end{aligned}\label{thetafuxr}
\ee
\end{defn}

Several remarks on the above definitions are in order.

\begin{rem}
The choice of the density in \eqref{thetafuxr} is not unique. Indeed, we can change the constant $ \f{2}{(n+2\al-2)^2} $ in the third term on the right-hand side to another large number, and the monotonicity formula in Proposition \ref{MonFor} still holds. Such a change will not influence the proof of the main results in this paper.
\end{rem}

\begin{rem}
By Definition \ref{defnofphi}, the integrals of various functionals in Definition \ref{Defdifffuncti} are actually on the ball $ B_{10r}(x) $, which are well defined since $ 0<r<\f{1}{10}\dist(x,\pa\om) $. 
\end{rem}

\begin{rem}
Given the definition of $ \vt_f(u;x,r) $, the assumption $ f\in M_{\loc}^{2\al+n-4+\ga,2}(\om) $ is more or less necessary to ensure the integrability of the last two terms in \eqref{thetafuxr}.
\end{rem}

\begin{rem}
We can estimate the difference between $ \vt(u;x,r) $ and $ \vt_f(u;x,r) $. Precisely, it follows from the definition of $ \vt_f(u;x,r) $ in \eqref{thetafuxr} ad Cauchy's inequality that
\be
\begin{aligned}
&|\vt_f(u;x,r)-\vt(u;x,r)|\\
&\leq C\[\(r^{2-2\al-n}\int_{B_{10r}(x)}|\na u|^2\)^{\f{1}{2}}+\(r^{-2\al-n}\int_{B_{10r}(x)}u^2\)^{\f{1}{2}}\]\(r^{4-2\al-n}\int_{B_{10r}(x)}|f|^2\)^{\f{1}{2}}\\
&\quad\quad+C\[\int_0^r\(\rho^{-2\al-n+3}\int_{B_{10\rho}(x)}|f|^2\)\ud\rho\],
\end{aligned}\label{thetaftheta}
\ee
where $ C>0 $ depends only on $ n $ and $ p $.
\end{rem}

The proposition below gives the characterization of rupture sets for weak solutions of \eqref{MEMSeq} by using the functionals $ \vt(u;\cdot,\cdot) $, $ \vt_f(u;\cdot,\cdot) $, and $ I_f(u;\cdot,\cdot) $.

\begin{prop}\label{propupture}
Assume that $ u $ and $ f $ are the same as in Definition \ref{Defdifffuncti}. Let $ x\in\om $. The following properties hold.
\begin{enumerate}[label=$(\theenumi)$]
\item Let $ r_0>0 $ be a given constant and $ 0<r<\min\{r_0,\f{1}{10}\dist(x,\pa\om)\} $. Suppose that $ u $ and $ f $ satisfies that for some $ \Lda>0 $,
$$
[u]_{C^{0,\al}(\ol{B}_{10r}(x))}+[f]_{M^{2\al+n-4+\ga,2}(B_{10r}(x))}\leq\Lda.
$$
Then there exists $ C>0 $, depending only on $ \ga,\Lda,n,p $, and $ r_0 $ such that
\be
\max\{\vt(u;x,r),\vt_f(u;x,r)\}\leq C,\label{maxvtusr}
\ee
In addition, there is $ C_*>0 $, depending only on $ \ga,\Lda,n,p $, and $ r_0 $ such that
\be
\min\{\vt(u;x,r),\vt_f(u;x,r)\}<-C_*\,\,\Ra\,\,\inf_{B_r(x)}u\geq r^{\al}.\label{assMupper}
\ee
\item If $ u(x)>0 $, then
\be
\lim_{\rho\to 0^+}\vt(u;x,\rho)=\lim_{\rho\to 0^+}\vt_f(u;x,\rho)=-\ift,\label{limitiftminus}
\ee
and
\be
\lim_{\rho\to 0^+}I_f(u;x,\rho)=0.\label{Ifre1}
\ee
\item If $ u(x)=0 $, then
\be
\liminf_{\rho\to 0^+}\vt_f(u;x,\rho)=\liminf_{\rho\to 0^+}\vt(u;x,\rho)>-\ift,\label{limitlowerif}
\ee
and
\be
0<\liminf_{\rho\to 0^+}I_f(u;x,\rho)\leq 2\al\leq\limsup_{\rho\to 0^+}I_f(u;x,\rho).\label{Ifpr2}
\ee
\end{enumerate}
\end{prop}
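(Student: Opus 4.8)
The plan is to derive everything from the monotonicity formula (Proposition \ref{MonFor}, to be established) together with the interior estimates of Lemma \ref{InESLem1} and Lemma \ref{InESLem2}, and the elementary estimate \eqref{thetaftheta} comparing $\vt$ and $\vt_f$. I will treat the three items in order, since (2) and (3) both hinge on understanding the behavior of $H(u;x,\rho)$ as $\rho\to0^+$, which in turn is controlled by whether $u(x)>0$ or $u(x)=0$.

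For item (1), the upper bound \eqref{maxvtusr} follows by estimating each piece of $\vt$ and $\vt_f$: the $H$-term is nonnegative (since $-\dot\phi\ge0$), while $F(u;x,r)$ is bounded above by $r^{2-n}\int |\na u|^2\phi_{x,r}\lesssim r^{2\al}$ using Lemma \ref{InESLem2}, and $H(u;x,r)\lesssim r^{2\al}$ by the Hölder bound on $u-u(x)$ together with $u(x)^2$ being controlled (here one uses that $u$ cannot be too large on $B_{10r}(x)$, e.g.\ via $\|u\|_{L^1}$ or simply that $u(x)^2 r^{-n}\int u^2\dot\phi$ enters with a favorable sign or is absorbed — more carefully, one splits $u^2=(u-u(x))^2+2u(x)(u-u(x))+u(x)^2$ and notes the sign structure). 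The correction terms in $\vt_f$ are bounded by \eqref{thetaftheta} using the Morrey norm of $f$: each integral scales like a power $\ga>0$ of $r$, so they are $O(1)$ (indeed $o(1)$) as $r\to0$. For the implication \eqref{assMupper}: if $\vt(u;x,r)$ is very negative, then since $F(u;x,r)-\al H(u;x,r)=r^{2\al}\vt(u;x,r)$ and $F$ involves $-\tfrac{1}{p-1}u^{1-p}\phi_{x,r}$, a large negative value forces $r^{2-n}\int u^{1-p}\phi_{x,r}$ to be large; combined with the upper bound on $r^{2-n}\int u^{1-p}\phi_{x,r}\lesssim r^{2\al}$ from Lemma \ref{InESLem1} applied at the right scale, this is impossible \emph{unless} $u$ is bounded below on a definite portion of $B_r(x)$ — more precisely, $u^{1-p}$ being integrable-large forces $u$ to be pointwise large (since $1-p<0$, largeness of $\int u^{1-p}$ means $u$ small somewhere, so actually the correct reading is the reverse), so I would instead argue contrapositively: if $\inf_{B_r(x)}u<r^\al$ then the Hölder estimate plus Lemma \ref{InESLem1} bound $\vt$ from below by $-C_*$. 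The constant $C_*$ absorbs all the universal constants from these two lemmas and from \eqref{thetaftheta}.

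For item (2), assume $u(x)>0$. By continuity $u\ge \tfrac12 u(x)>0$ on a small ball $B_{\rho_0}(x)$, so on $B_\rho(x)$ with $\rho<\rho_0$ the nonlinear term $u^{1-p}$ is bounded and $u$ is (by elliptic regularity on $\{u>0\}$) smooth with $\na u(x)$ finite. Then $H(u;x,\rho)=-\rho^{-n}\int u^2\dot\phi_{x,\rho}\sim c\,u(x)^2$ stays bounded away from $0$, while $D_f(u;x,\rho)=\rho^{2-n}\int(|\na u|^2+u^{1-p}+fu)\phi_{x,\rho}\to0$ like $\rho^2$ (the integrand is bounded on $B_{10\rho}(x)$, and $\rho^{2-n}\cdot\rho^n=\rho^2$); hence $I_f(u;x,\rho)\to0$, giving \eqref{Ifre1}. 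For \eqref{limitiftminus}, $F(u;x,\rho)\to0$ and $\al H(u;x,\rho)\to \al c\,u(x)^2>0$, so $F-\al H\to -\al c u(x)^2<0$ while the prefactor $\rho^{-2\al}\to+\ift$; the product tends to $-\ift$. The correction terms in $\vt_f-\vt$ are, by \eqref{thetaftheta}, $o(\rho^{-2\al})$ (each is $O(\rho^{\ga})$ or better after the same scaling), so they do not affect the limit; thus $\vt_f\to-\ift$ as well.

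For item (3), assume $u(x)=0$. The lower bound \eqref{limitlowerif} is where the monotonicity formula does the real work: Proposition \ref{MonFor} will say $\vt_f(u;x,\cdot)$ is nondecreasing (or nondecreasing up to an explicitly integrable error built into the definition \eqref{thetafuxr}), so $\liminf_{\rho\to0^+}\vt_f(u;x,\rho)=\lim_{\rho\to0^+}\vt_f(u;x,\rho)$ exists in $[-\ift,+\ift)$; that it is $>-\ift$ follows because, by item (1), $\vt_f(u;x,\rho)\ge -C_*$ whenever $\inf_{B_\rho(x)}u\ge\rho^\al$, and when instead $\inf_{B_\rho(x)}u<\rho^\al$ one still gets a two-sided bound directly from Lemmas \ref{InESLem1}--\ref{InESLem2} as in item (1); since $u(x)=0$, the Hölder estimate gives $u\le\Lda(10\rho)^\al$ on $B_{10\rho}(x)$, so $H(u;x,\rho)\lesssim\rho^{2\al}$ and $r^{2-n}\int u^{1-p}\phi\lesssim \rho^{2\al}$, yielding $|\vt(u;x,\rho)|\le C$ uniformly in $\rho$, hence the $\liminf$ is finite and equals that of $\vt$ by \eqref{thetaftheta}. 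For \eqref{Ifpr2}: the upper bound on $\liminf I_f$ and lower bound on $\limsup I_f$ both come from the nondegeneracy $\sup_{B_\rho(x)}u\ge C^{-1}\rho^\al$ of \eqref{supnonde} combined with $u\le C\rho^\al$ on $B_{10\rho}(x)$, which pins $H(u;x,\rho)$ between two multiples of $\rho^{2\al}$; then one uses the differential identity $\tfrac{d}{d\rho}\log H(u;x,\rho)=\tfrac{2}{\rho}I(u;x,\rho)+(\text{bdd error})$ (a standard consequence of the first variation defining $H$, analogous to the harmonic-map case), integrates between scales, and concludes that if $I_f$ stayed $\le 2\al-\delta$ (resp.\ $\ge 2\al+\delta$) on an interval of scales then $H(u;x,\rho)\rho^{-2\al}$ would blow up (resp.\ decay to $0$), contradicting the two-sided bound just obtained; positivity of $\liminf I_f$ is immediate since $D_f>0$ and $H>0$ on the relevant scales (by Lemma \ref{InESLem1}, $D\gtrsim \rho^{2\al}$ is bounded below, and $H\lesssim\rho^{2\al}$). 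The main obstacle is the careful bookkeeping in \eqref{assMupper} and \eqref{limitlowerif}: one must simultaneously track the sign of the $u^{1-p}$ contribution, the $u(x)^2$ contribution to $H$, and the Morrey-scaled error terms, and choose $C_*$ (depending on $r_0$ through the Morrey exponent gap $\ga$) so that the dichotomy ``$\vt$ very negative $\Rightarrow$ $u$ bounded below'' is clean; everything else is a routine application of the two interior lemmas, the comparison \eqref{thetaftheta}, and the (forthcoming) monotonicity formula.
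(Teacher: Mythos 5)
Your plan follows the paper's proof essentially step for step: the upper bound \eqref{maxvtusr} and the dichotomy \eqref{assMupper} via the contrapositive (if $\inf_{B_r(x)}u<r^{\al}$ then the H\"{o}lder bound plus Lemmas \ref{InESLem1}--\ref{InESLem2} give $0\leq F,H\leq C r^{2\al}$, hence $\vt,\vt_f\geq -C$); item (2) via $H(u;x,\rho)\to c\,u(x)^2>0$ while $F$ and $D_f$ tend to $0$; and item (3) via the uniform bound $|\vt(u;x,\rho)|\leq C$ when $u(x)=0$ together with the logarithmic derivative of $H$ and the two-sided bound $c\rho^{2\al}\leq H(u;x,\rho)\leq C\rho^{2\al}$.

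Two corrections are in order. First, the identity you quote, $\f{\ud}{\ud\rho}\log H=\f{2}{\rho}I_f+O(1)$, is off by a factor of $2$: testing the weak formulation with $u\phi_{x,\rho}$ gives exactly $\f{\ud}{\ud\rho}H(u;x,\rho)=\rho^{-1}D_f(u;x,\rho)$ and hence $\f{\ud}{\ud\rho}\log H=\rho^{-1}I_f$ with no factor $2$ and no error term (this is Lemma \ref{lemHI}); with your version the comparison against $H\asymp\rho^{2\al}$ would produce the threshold $\al$ rather than the stated $2\al$, so the constant must be pinned down by the actual computation. Second, the monotonicity formula cannot be the engine behind \eqref{limitlowerif}: the proposition is stated for weak solutions, for which $\vt_f(u;x,\cdot)$ need not be monotone (the remark following the proposition makes exactly this point). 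Fortunately the direct argument you also give --- $u(x)=0$ and the H\"{o}lder bound yield $\rho^{-2\al-n}\int_{B_{10\rho}(x)}u^2\leq C$, hence $|\vt(u;x,\rho)|\leq C$ and $|\vt_f(u;x,\rho)-\vt(u;x,\rho)|\leq C\rho^{\ga}$ --- already proves \eqref{limitlowerif} without any monotonicity, and is what the paper does. (A minor point: near a point with $u(x)>0$ you cannot conclude that $\na u(x)$ is finite from $f$ merely in a Morrey space, but this is not needed, since Lemmas \ref{InESLem1}--\ref{InESLem2} already force $D_f(u;x,\rho)\to 0$.)
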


\begin{rem}
Note that in \eqref{limitlowerif}, the limit of $ \vt_f(u;x,\rho) $ and $ \vt(u;x,\rho) $ may not exist for weak solutions of \eqref{MEMSeq}. Later in Proposition \ref{MonFor}, we will present the monotonicity formula, which implies that for stationary solutions, such limits exist.
\end{rem}

\begin{rem}
The property \eqref{assMupper} is vital in the proof of main theorems in this paper since it yields a quantitative characterization of the positive values of the solution $ u $. Intuitively speaking, this implies that if $ \vt(u;x,r) $ or $ \vt_f(u;x,r) $ is sufficiently negatively large, then $ u $ must have a stronger nondegeneracy property in the ball $ B_r(x) $ than those presented in Lemma \ref{InESLem1}.
\end{rem}

Before the proof of Proposition \ref{propupture}, we first establish the following lemma on the connections between $ H(u;x,r) $, $ D_f(u;x,r) $, and $ I_f(u;x,r) $. There are similar calculations and results for different scenarios in the study of nonlinear elliptic equations (see \cite{Alp18,Alp20,DMSV18,TT12} for example).

\begin{lem}\label{lemHI}
Assume that $ u $ and $ f $ are the same as in Definition \ref{Defdifffuncti}. If $ x\in\om $ and $ 0<r<\f{1}{10}\dist(x,\pa\om) $, then
\begin{align}
\f{\ud}{\ud r}H(u;x,r)&=r^{-1}D_f(u;x,r),\label{Mon1}\\
\f{\ud}{\ud r}\log(H(u;x,r))&=r^{-1}I_f(u;x,r).\label{Mon2}
\end{align}
\end{lem}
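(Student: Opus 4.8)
The plan is to compute the $r$-derivative of $H(u;x,r)$ directly from its integral representation, using the stationary and weak conditions to identify the boundary-type terms that arise, and then to recognize the result as $r^{-1}D_f(u;x,r)$; the logarithmic identity \eqref{Mon2} then follows by dividing by $H$ and invoking the definition \eqref{Almgren} of $I_f$.

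\emph{Step 1: Differentiate $H$.} Recall $H(u;x,r)=-r^{-n}\int_{\R^n}u^2\dot\phi_{x,r}$. Writing $\dot\phi_{x,r}(y)=\phi'(|y-x|^2/r^2)$ and differentiating under the integral sign (justified since the integrand is supported in $B_{10r}(x)$ and $u^2\in L^1_{\loc}$ while $\phi'$ is smooth), one gets two contributions: one from the explicit factor $r^{-n}$, and one from the $r$-dependence inside $\phi'$, namely $\frac{\ud}{\ud r}\phi'(|y-x|^2/r^2)=\phi''(|y-x|^2/r^2)\cdot(-2|y-x|^2/r^3)$. I would rescale via $\eta_{x,r}$ (i.e. substitute $y=x+rz$) so that the radial scaling becomes transparent, or equivalently keep the $y$-variable and organize the two terms. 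The combination should be expressible, after an integration by parts in the radial direction, in terms of $\int u^2\,\phi_{x,r}$-type and $\int u\,\nabla u\cdot(y-x)\,\phi_{x,r}$-type quantities; the key algebraic fact is that $\frac{\ud}{\ud r}\bigl(r^{-n}\phi'(|y-x|^2/r^2)\bigr)$, when paired against $u^2$ and integrated, reproduces a divergence-form expression in $y$ that can be moved onto $u^2$.

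\emph{Step 2: Use the weak equation to eliminate the Hessian-type term.} After the integration by parts in Step 1, the term involving $\nabla(u^2)\cdot(y-x)=2u\,\nabla u\cdot(y-x)$ will appear; I then want to recognize $\int 2u\,\nabla u\cdot(y-x)\,\phi_{x,r}$-type integrals. To convert these into the energy $\int(|\nabla u|^2+u^{1-p}+fu)\phi_{x,r}$ that defines $D_f$, I would test the weak formulation \eqref{WeakConMEMS} of \eqref{MEMSeq} against the cutoff $u\,\phi_{x,r}$ (or more precisely against a suitable vector-field contraction), which yields exactly an identity relating $\int|\nabla u|^2\phi_{x,r}$, $\int u\,\nabla u\cdot\nabla\phi_{x,r}$, and $\int(u^{-p}+f)u\,\phi_{x,r}$; note $u^{-p}\cdot u=u^{1-p}$, which is why the $u^{1-p}$ term in $D_f$ appears. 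Combining this with the expression from Step 1 collapses everything into $r^{-1}D_f(u;x,r)$. The mollified cutoffs $\phi_{x,r}$ (rather than $\chi_{B_r(x)}$) are precisely what makes testing against $u\,\phi_{x,r}\in H^1_0$ legitimate without boundary terms on $\partial B_r(x)$, and this is the payoff of the mollification discussed before Definition \ref{defnofphi}.

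\emph{Step 3: Derive \eqref{Mon2}.} Once \eqref{Mon1} is established, dividing both sides by $H(u;x,r)$ gives $\frac{\ud}{\ud r}\log H(u;x,r)=H(u;x,r)^{-1}\frac{\ud}{\ud r}H(u;x,r)=r^{-1}D_f(u;x,r)/H(u;x,r)=r^{-1}I_f(u;x,r)$ by \eqref{Almgren}. One should note that $H(u;x,r)>0$: since $\dot\phi_{x,r}=\phi'(\cdot)\le 0$ and is strictly negative on a set of positive measure (indeed $\phi'\le -1$ on $[0,8]$, so on $B_{2\sqrt2\,r}(x)$ up to the scaling), and $u\not\equiv 0$ there by the nondegeneracy estimate \eqref{nondege}, we get $H(u;x,r)>0$; so the logarithm is well defined.

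\textbf{Main obstacle.} The delicate point is Step 1--2: correctly bookkeeping the two $r$-derivative contributions to $H$ and matching them, via integration by parts in $y$ and a judicious choice of test function in the weak equation, to the precise combination $|\nabla u|^2+u^{1-p}+fu$ appearing in $D_f$ — in particular making sure the $u^{1-p}$ and $fu$ terms come out with the right coefficients (no stray factors of $2$, $n$, or $p-1$), and checking that all integrations by parts are justified given only $u\in C^{0,\alpha}_{\loc}\cap H^1_{\loc}\cap L^{-p}_{\loc}$ (the cutoff's smoothness and compact support handle the regularity, and $f\in M^{2\alpha+n-4+\gamma,2}_{\loc}$ handles integrability of the $fu$ term). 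The stationary condition \eqref{StaConMEMS} itself is not needed here — only the weak equation — which is consistent with the remark that this lemma holds for weak solutions.
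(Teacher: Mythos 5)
Your proposal is correct and follows essentially the same route as the paper: compute $\f{\ud}{\ud r}H(u;x,r)=-2r^{-n-1}\int((y-x)\cdot\na u)u\dot{\phi}_{x,r}$ by differentiating the mollified integral (the rescaling $y=x+rz$ makes this immediate), then test \eqref{WeakConMEMS} with $u\phi_{x,r}$ to identify this quantity with $r^{-1}D_f(u;x,r)$, and divide by $H$ for \eqref{Mon2}. Your explicit check that $H(u;x,r)>0$ (needed for the logarithm) is a worthwhile detail the paper leaves implicit; note it also follows simply from $u\in L_{\loc}^{-p}$ forcing $u>0$ a.e.
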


\begin{proof}
By the definition of $ I_f(u;x,r) $, \eqref{Mon2} is a direct consequence of \eqref{Mon1}, so we only need to show \eqref{Mon1}. Testing \eqref{WeakConMEMS} with $ u\phi_{x,r} $, we obtain 
\be
\int_{\R^n}(|\na u|^2+u^{1-p}+fu)\phi_{x,r}+2r^{-2}\int_{\R^n}((y-x)\cdot\na u)u\dot{\phi}_{x,r}=0,\label{For21}
\ee
As a result, it follow from the definition of $ D_f(u;x,r) $ that
$$
D_f(u;x,r)=-2r^{-n}\int_{\R^n}((y-x)\cdot\na u)u\dot{\phi}_{x,r}.
$$
Through direct calculations, there holds
\begin{align*}
\f{\ud}{\ud r}H(u;x,r)&=-2r^{-n-1}\int_{\R^n} ((y-x)\cdot\na u)u\dot{\phi}_{x,r},
\end{align*}
which implies \eqref{Mon1} and completes the proof.
\end{proof}

\begin{rem}
If $ f\equiv 0 $, the formulae \eqref{Mon1} and \eqref{Mon2} imply that $ H(u;x,r) $ is nondecreasing with respect to the variable $ r\in(0,\f{1}{10}\dist(x,\pa\om)) $.
\end{rem}

\begin{proof}[Proof of Proposition \ref{propupture}]
Fix $ x\in\om $ and $ 0<r<\min\{r_0,\f{1}{10}\dist(x,\pa\om)\} $ with a given number $ r_0>0 $. We assume that for some $ \Lda>0 $,
\be
[u]_{C^{0,\al}(\ol{B}_{10r}(x))}+[f]_{M^{2\al+n-4+\ga,2}(B_{10r}(x))}\leq\Lda.\label{Ldaprimeuf}
\ee
By simple calculations, we have, for any $ 0<\rho\leq r $, 
\begin{align}
\int_{B_{8\rho}(x)}u^2&\geq\f{1}{2}\w_n(8\rho)^nu(x)^2-\int_{B_{\rho}(x)}(u-u(x))^2\geq\f{1}{2}\w_n(8\rho)^n(u(x)^2-2\Lda^2(8\rho)^{2\al}),\label{u8rhobales}\\
\int_{B_{10\rho}(x)}u^2&\leq 2\w_n(8\rho)^nu(x)^2+2\int_{B_{\rho}(x)}(u-u(x))^2\leq 2\w_n(10\rho)^n(u(x)^2+\Lda^2(10\rho)^{2\al}).\label{u10rhobales}
\end{align}
Using \eqref{abdelta}, \eqref{Ldaprimeuf}, \eqref{u8rhobales}, \eqref{u10rhobales}, Lemma \ref{InESLem1}, \ref{InESLem2}, and Definition \ref{defnofphi}, we obtain that for any $ 0<\rho\leq r $,
\be
\begin{aligned}
\vt_f(u;x,\rho)&\leq C\rho^{2-2\al-n}\int_{B_{10\rho}(x)}(|\na u|^2+u^{1-p})-C'\rho^{-2\al-n}\int_{B_{8\rho}(x)}u^2\\
&\quad\quad+C\delta\rho^{-2\al-n}\int_{B_{10\rho}(x)}u^2+C\delta^{-1}\rho^{4-2\al-n}\int_{B_{10\rho}(x)}|f|^2\\
&\quad\quad+C\int_0^{\rho}\(t^{-2\al-n+3}\int_{B_{10t}(x)}|f|^2\)\ud t\\
&\leq C(\ga,\Lda,n,p,r_0)+C(\ga,\Lda,n,p)\delta^{-1}\rho^{\ga}-(C'(n,p)-C''(n,p)\delta)\rho^{-2\al}u(x)^2,
\end{aligned}\label{vtfesti}
\ee
and
\be
\begin{aligned}
\vt(u;x,\rho)&\leq C\rho^{2-2\al-n}\int_{B_{10\rho}(x)}(|\na u|^2+u^{1-p})-C'\rho^{-2\al-n}\int_{B_{8\rho}(x)}u^2\\
&\leq C(\ga,\Lda,n,p,r_0)-C'(n,p)\rho^{-2\al}u(x)^2.
\end{aligned}\label{vtfesti2}
\ee
Choosing $ \delta=\delta(n,p)>0 $ sufficiently small in \eqref{vtfesti}, we can deduce from \eqref{vtfesti} and \eqref{vtfesti2} that
\begin{align*}
\vt(u;x,\rho)&\leq C(\ga,\Lda,n,p,r_0)-C'(n,p)\rho^{-2\al}u(x)^2,\\
\vt_f(u;x,\rho)&\leq C(\ga,\Lda,n,p,r_0)-\f{1}{2}C'(n,p)\rho^{-2\al}u(x)^2+C(\ga,\Lda,n,p)\rho^{\ga}.
\end{align*}
As a result, the inequality \eqref{maxvtusr} follows directly. We now turn to prove \eqref{assMupper}. Indeed, if $ \inf_{B_r(x)}u<r^{\al} $, then the assumption \eqref{Ldaprimeuf} implies that
$$
0\leq\inf_{B_r(x)}u\leq \sup_{B_r(x)}u\leq C(\Lda,n,p)r^{\al}.
$$
Given \eqref{Ldaprimeuf}, we can apply Lemma \ref{InESLem1} and \ref{InESLem2} to obtain 
\be
0\leq F(u;x,r),H(u;x,r)\leq C(\ga,\Lda,n,p,r_0).\label{FuxrFxrim}
\ee
Incorporated with the definitions of $ \vt(u;x,r) $, $ \vt_f(u;x,r) $, and \eqref{thetaftheta}, the estimate \eqref{FuxrFxrim} gives 
$$
\min\{\vt(u;x,r),\vt_f(u;x,r)\}\geq -C(\ga,\Lda,n,p,r_0)
$$
for any $ 0<r<\min\{r_0,\f{1}{10}\dist(x,\pa\om)\} $. If in \eqref{assMupper}, $ C_*>0 $ is sufficiently large, then it is a contradiction.

Assume that $ u(x)>0 $. We first note that the two inequalities \eqref{vtfesti} and \eqref{vtfesti2} directly imply \eqref{limitiftminus} as $ \rho\to 0^+ $. If for \eqref{u8rhobales} and \eqref{u10rhobales}, we choose 
\be
0<\rho<\min\left\{r,\f{1}{8}\(\f{1}{2\Lda}u(x)\)^{\f{1}{\al}}\right\},\label{rassuse}
\ee
then it follows that
\be
0<\f{\int_{B_{10\rho}(x)}u^2}{\int_{B_{8\rho}(x)}u^2}\leq\f{C(n)(u(x)^2+\Lda^2(10\rho)^{2\al})}{(u(x)^2-2\Lda^2(8\rho)^{2\al})}\leq C(n).\label{u8u10}   
\ee
Using Lemma \ref{InESLem1}, \ref{InESLem2}, Definition \ref{defnofphi}, and Cauchy's inequality, we have
\begin{align*}
0&\leq I_f(u;x,\rho)=\f{\rho^{2-2\al-n}\int(|\na u|^2+u^{1-p}+fu)\phi_{x,\rho}}{-\rho^{-2\al-n}\int u^2\dot{\phi}_{x,\rho}}\\
&\leq\f{C(\ga,\Lda,n,p,r_0)+C(n,p)\(\rho^{-2\al-n}\int_{B_{10\rho}(x)}u^2\)^{\f{1}{2}}\(\rho^{4-2\al-n}\int_{B_{10\rho}(x)}|f|^2\)^{\f{1}{2}}}{\rho^{-2\al-n}\int_{B_{8\rho}(x)}u^2}
\end{align*}
for any $ 0<\rho<r $. Given \eqref{Ldaprimeuf} and \eqref{u8u10}, it follows that
$$
0\leq I_f(u;x,\rho)\leq C(\ga,\Lda,n,p,r_0)\[\f{1}{\rho^{-2\al-n}\int_{B_{8\rho}(x)}u^2}+\f{\rho^{\f{\ga}{2}}}{\(\rho^{-2\al-n}\int_{B_{8\rho}(x)}u^2\)^{\f{1}{2}}}\],
$$
where $ \rho>0 $ satisfies \eqref{rassuse}. This, together with \eqref{u8rhobales}, implies \eqref{Ifre1}. 

If $ u(x)=0 $, then we obtain that for any $ \rho\in(0,r] $,
\be
\rho^{-2\al-n}\int_{B_{10\rho}(x)}u^2\leq C(\Lda,n,p).\label{uL2small}
\ee
By \eqref{thetaftheta} and \eqref{uL2small}, we have
\be
|\vt_f(u;x,\rho)-\vt(u;x,\rho)|\leq C(\ga,\Lda,n,p,r_0)\rho^{\ga}.\label{twodenminus}
\ee
It follows from \eqref{uL2small}, Lemma \ref{InESLem1}, and Lemma \ref{InESLem2} that
$ |\vt(u;x,\rho)|\leq C(\ga,\Lda,n,p,r_0) $. As a result, \eqref{limitlowerif} holds. Finally, it remains to show \eqref{Ifpr2}. If
$$
\liminf_{\rho\to 0^+}I_f(u;x,\rho)>2\al,
$$
then Lemma \ref{lemHI} gives 
$$
\f{\ud}{\ud\rho}\log(H(u;x,\rho))>2\al+\va
$$
for some $ \va>0 $ with $ 0<\rho\leq\rho_0 $, where $ \rho_0>0 $ is a sufficiently small constant. Consequently,
\be
H(u;x,\rho)\leq H(u;x,\rho_0)\(\f{\rho}{\rho_0}\)^{2\al+\va}.\label{Huxupper}
\ee
However, due to Lemma \ref{InESLem1}, there holds $
H(u;x,\rho)\geq C(\Lda,n,p)\rho^{2\al} $. Letting $ \rho\to 0^+ $, it is a contradiction to \eqref{Huxupper}. Similarly, if 
$$
\limsup_{\rho\to 0^+}I_f(u;x,\rho)<2\al,
$$
then there exists $ \va'>0 $ and $ \rho_0'>0 $ such that
$$
\f{\ud}{\ud\rho}\log(H(u;x,\rho))<2\al-\va'
$$
for any $ 0<\rho<\rho_0' $. Then
\be
H(u;x,\rho)\geq H(u;x,\rho_0')\(\f{\rho}{\rho_0'}\)^{2\al-\va'}.\label{Huxupper2}
\ee
Since $ u(x)=0 $, with the help of \eqref{Ldaprimeuf} for any $ 0<\rho<\rho_0' $, $
H(u;x,\rho)\leq C(\Lda,n,p)\rho^{2\al} $, which is contradictory to \eqref{Huxupper2} as $ \rho\to 0^+ $. Combining all the above, we complete the proof.
\end{proof}

The following proposition gives the monotonicity formula for the density of stationary solutions of \eqref{MEMSeq}. By this, we can define the limit of $ \vt(u;x,r) $ and $ \vt_f(u;x,r) $ as $ r\to 0^+ $.

\begin{prop}[Monotonicity formula]\label{MonFor}
Let $ \ga>0 $ and assume that $ u\in(C_{\loc}^{0,\al}\cap H_{\loc}^1\cap L_{\loc}^{-p})(\om) $ is a stationary solution of \eqref{MEMSeq} with respect to $ f\in M_{\loc}^{2\al+n-4+\ga,2}(\om) $. If $ x\in\om $ and $ 0<r<\f{1}{10}\dist(x,\pa\om) $, then 
\be
\f{\ud}{\ud r}\vt_f(u;x,r)\geq-r^{-2\al-n-1}\int_{\R^n}|(y-x)\cdot\na u-\al u|^2\dot{\phi}_{x,r}\ud y\geq 0,\label{udthetaudr}
\ee
In particular, $ \vt_f(u;x,\cdot) $ is nondecreasing in $ (0,\f{1}{10}\dist(x,\pa\om)) $. 

Consequently, we can define the limit
\be
\vt(u;x):=\lim_{r\to 0^+}\vt_f(u;x,r).\label{densitylimit}
\ee
In view of \eqref{thetaftheta} and Proposition \ref{propupture}, we have 
$$
\vt(u;x)=\lim_{r\to 0^+}\vt(u;x,r).
$$
\end{prop}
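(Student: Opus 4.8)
The plan is to prove the differential inequality \eqref{udthetaudr}; once $\vt_f(u;x,\cdot)$ is nondecreasing, the limit in \eqref{densitylimit} exists as $\inf_{r>0}\vt_f(u;x,r)$, and the identification $\vt(u;x)=\lim_{r\to0^+}\vt(u;x,r)$ follows from \eqref{thetaftheta} together with Proposition \ref{propupture}: if $u(x)=0$ then $\int_{B_{10\rho}(x)}u^2\le C\rho^{2\al+n}$ and $\int_{B_{10\rho}(x)}|f|^2\le C\rho^{2\al+n-4+\ga}$, so by \eqref{thetaftheta}, Lemma \ref{InESLem2} and these bounds $|\vt_f(u;x,\rho)-\vt(u;x,\rho)|\le C\rho^{\ga/2}\to0$; if $u(x)>0$, part (2) of Proposition \ref{propupture} says both limits equal $-\ift$. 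The second inequality in \eqref{udthetaudr} is trivial since $\dot\phi_{x,r}=\phi'(|\cdot-x|^2/r^2)\le0$ by Definition \ref{defnofphi}. For the first one, one also checks that $\vt_f(u;x,\cdot)\in C^1$ on $(0,\tfrac1{10}\dist(x,\pa\om))$; the only subtlety is that the last integral in \eqref{thetafuxr} is finite and differentiable, because on $\supp\dot\phi_{x,\rho}$ one has $|y-x|<\sqrt{10}\,\rho$, so its $\rho$-integrand is at most $C\rho^{-2\al-n-1}\big(\rho^4\int_{B_{\sqrt{10}\rho}(x)}|f|^2\big)\le C\rho^{\ga-1}$, which is integrable near $\rho=0$ by the Morrey bound on $f$.

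The core is the differentiation of the unperturbed density $\vt(u;x,r)=r^{-2\al}(F(u;x,r)-\al H(u;x,r))$. Using $\pa_r\phi_{x,r}(y)=-2r^{-3}|y-x|^2\dot\phi_{x,r}(y)$, the derivative of $F$ can be written in terms of $\int e(u)\phi_{x,r}$ and $\int e(u)|y-x|^2\dot\phi_{x,r}$, where $e(u)=\tfrac{|\na u|^2}{2}-\tfrac{u^{1-p}}{p-1}$. The latter integral is then removed by inserting the admissible field $Y(y)=(y-x)\phi_{x,r}(y)\in C_0^\ift(\om,\R^n)$ into the stationary condition \eqref{StaConMEMS} — this is exactly the step that dispenses with $D^2u$, since $DY(\na u,\na u)$ involves only $\na u$ and $\na Y$. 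Feeding in also $\tfrac{\ud}{\ud r}H=r^{-1}D_f$ (Lemma \ref{lemHI}) and the representation $D_f(u;x,r)=-2r^{-n}\int((y-x)\cdot\na u)u\dot\phi_{x,r}$ from \eqref{For21}, and using the defining relation $\al=\al_p=\tfrac2{p+1}$ — which is precisely what makes the coefficient $\tfrac{2\al-2}{p-1}+\al$ of the surviving $u^{1-p}$-term equal zero — one is left with the identity
\[
\f{\ud}{\ud r}\vt(u;x,r)=-2\,r^{-2\al-n-1}\int_{\R^n}W^2\dot\phi_{x,r}-r^{1-2\al-n}\int_{\R^n}fW\,\phi_{x,r},\qquad W:=(y-x)\cdot\na u-\al u
\]
(for $f\equiv0$ this is the mollified analogue of \eqref{classDenfor}).

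Next I differentiate the two correction terms of \eqref{thetafuxr}. Writing $A(r)=\tfrac{r^{2-2\al-n}}{n+2\al-2}\int Wf\,\phi_{x,r}$ and $B(r)=\tfrac{2}{(n+2\al-2)^2}\int_0^r\rho^{-2\al-n-1}\big(\int|f|^2|y-x|^4\dot\phi_{x,\rho}\big)\,\ud\rho$, so that $\vt_f=\vt-A-B$, the relation $\tfrac{2-2\al-n}{n+2\al-2}=-1$ gives $-A'(r)=r^{1-2\al-n}\int Wf\,\phi_{x,r}+\tfrac{2\,r^{-2\al-n-1}}{n+2\al-2}\int Wf|y-x|^2\dot\phi_{x,r}$ and $-B'(r)=-\tfrac{2\,r^{-2\al-n-1}}{(n+2\al-2)^2}\int|f|^2|y-x|^4\dot\phi_{x,r}$. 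The first term of $-A'$ cancels the $f$-linear term in the displayed identity for $\tfrac{\ud}{\ud r}\vt$, and collecting the $\dot\phi_{x,r}$-weighted contributions yields
\[
\f{\ud}{\ud r}\vt_f(u;x,r)=r^{-2\al-n-1}\int_{\R^n}\Big(2W^2-\tfrac{2}{n+2\al-2}Wf|y-x|^2+\tfrac{2}{(n+2\al-2)^2}f^2|y-x|^4\Big)\big(-\dot\phi_{x,r}\big).
\]
Since $2W^2-\tfrac{2}{n+2\al-2}Wf|y-x|^2+\tfrac{2}{(n+2\al-2)^2}f^2|y-x|^4=W^2+\big(W-\tfrac{f|y-x|^2}{n+2\al-2}\big)^2+\tfrac{f^2|y-x|^4}{(n+2\al-2)^2}\ge W^2$ and $-\dot\phi_{x,r}\ge0$, we conclude $\tfrac{\ud}{\ud r}\vt_f(u;x,r)\ge-r^{-2\al-n-1}\int W^2\dot\phi_{x,r}\ge0$, which is \eqref{udthetaudr}.

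The main obstacle is purely the bookkeeping: producing the identity for $\tfrac{\ud}{\ud r}\vt$ with exactly the right constants (so that $\al_p$ kills the $u^{1-p}$-term and the residual $f$-term is just $-r^{1-2\al-n}\int fW\phi_{x,r}$), and then noticing that the coefficients $\tfrac1{n+2\al-2}$ and $\tfrac2{(n+2\al-2)^2}$ chosen in \eqref{thetafuxr} are exactly what is needed so that, after the cancellation, the quadratic in $W$ coming from the $\dot\phi_{x,r}$-terms is nonnegative pointwise. No genuine analytic difficulty arises: all integrals converge absolutely by Lemmas \ref{InESLem1}--\ref{InESLem2} and the Morrey bound on $f$, and using the stationary condition in place of an integration by parts that would require $D^2u\in L^2_{\loc}$ avoids any appeal to unique continuation or extra regularity.
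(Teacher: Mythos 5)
Your proposal is correct and follows essentially the same route as the paper: the stationary condition tested with the radial field $Y(y)=(y-x)\phi_{x,r}(y)$, the identity \eqref{For21} together with Lemma \ref{lemHI}, the cancellation of the $u^{1-p}$-term forced by $\al=\tfrac{2}{p+1}$, and a completion of the square in $W$ against the correction terms of \eqref{thetafuxr}. Your bookkeeping (differentiating $F$ directly and substituting \eqref{For2} for $\int|\na u|^2\phi_{x,r}$, rather than also differentiating \eqref{For2} in $r$ as the paper does) is a slightly leaner arrangement of the same computation, and your intermediate identity for $\tfrac{\ud}{\ud r}\vt$ agrees with the paper's.
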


\begin{rem}
The Monotonicity formula plays significant roles in various geometric variational problems. It is a consequence of the Pohozaev-type identity. For solutions of \eqref{MEMSeq} with higher regularity, indeed $ H^2 $ (see \cite{Wan12} for references), it is a consequence by testing \eqref{WeakConMEMS} by $ y\cdot\na u $ and integration by parts. On the other hand, for stationary solutions, the condition \eqref{StaConMEMS} implies the Pohozaev-type identity.
\end{rem}

\begin{rem}\label{udthetaudrem}
If $ f\equiv 0 $, the inequality \eqref{udthetaudr} is changed by
\be
\f{\ud}{\ud r}\vt(u;x,r)=-2r^{-2\al-n-1}\int_{\R^n}|(y-x)\cdot\na u-\al u|^2\dot{\phi}_{x,r}\ud y\geq 0,\label{udthetaudr2}
\ee
which is analogous to the classical one given in \eqref{classDenfor}.
\end{rem}

\begin{proof}[Proof of Proposition \ref{MonFor}]
We will employ the arguments in \S 2 of \cite{GW06} to show this monotonicity formula. One can also refer to a similar proof for semilinear elliptic equations presented in Proposition 2.2 of \cite{FWZ24}. Up to a translation, we assume that $ x=0\in\om $. Define $ n_{r}(y):=\f{y}{r} $ with $ n_{r}^i(y)=\f{y_i}{r} $ for $ i\in\Z\cap[1,n] $, $ \phi_{r}:=\phi_{0,r} $, and $ \pa_{r}u:=n_{r}\cdot\na u $. We also denote 
\begin{align*}
D(r)&:=D(u;0,r),\quad F(r):=F(u;0,r),\quad\vt(r):=\vt(u;0,r),\\
D_f(r)&:=D_f(u;0,r),\quad H(r):=H(u;0,r),\quad\text{and}\quad\vt_f(r):=\vt_f(u;0,r).
\end{align*}
It follows from \eqref{DDfF} and simple computations that
\be
\begin{aligned}
D(r)-2F(r)&=\f{r^{2-n}}{1-\al}\int u^{1-p}\phi_r,\\
D(r)+(p-1)F(r)&=\f{r^{2-n}}{\al}\int|\na u|^2\phi_r.
\end{aligned}\label{DFrelation}
\ee
By Definition \ref{defnofphi}, the vector field $ \phi_{r}(y)y $ is Lipschitz and compactly supported in $B_{10r}(x) $ with $ 0<r<\f{1}{10}\dist(0,\pa\om) $. By approximating arguments, we can apply the stationary condition \eqref{StaConMEMS} with $ Y(y)=\phi_{r}(y)y $ and deduce that
\be
\begin{aligned}
\f{n-2}{2}\int|\na u|^2\phi_{r}&-\f{n}{p-1}\int u^{1-p}\phi_{r}+\int|\na u|^2|n_{r}|^2\dot{\phi}_{r}\\
&-\f{2}{p-1}\int u^{1-p}|n_{r}|^2\dot{\phi}_{r}-2\int|\pa_{r}u|^2\dot{\phi}_{r}-r\int f\pa_{r}u\phi_{r}=0.
\end{aligned}\label{For1}
\ee
Here, for the sake of simplicity, we use $ \int\cdot $ to represent $ \int_{\R^n}\cdot $ and will adopt such a notation throughout this proof. Like \eqref{For21}, by testing \eqref{WeakConMEMS} with $ u\phi_{r} $, we have
\be
\int|\na u|^2\phi_{r}+\int fu\phi_{r}+\int u^{1-p}\phi_{r}+2r^{-1}\int u\pa_{r}u\dot{\phi}_{r}=0.\label{For2}
\ee
Taking derivatives for both sides of \eqref{For2} yields that
$$
r^{-1}\int|\na u|^2|n_{r}|^2\dot{\phi}_{r}+r^{-1}\int fu|n_{r}|^2\dot{\phi}_{r}+r^{-1}\int u^{1-p}|n_{r}|^2\dot{\phi}_{r}+\f{1}{2}\f{\ud}{\ud r}(r^{n-2}D_f(r))=0.
$$
Incorporating with \eqref{For1} and \eqref{For2}, we can eliminate terms
$$
\int|\na u|^2\phi_r\quad\text{and}\quad\int|\na u|^2|n_r|^2\dot{\phi}_r
$$
in the two formulae and obtain 
\begin{align*}
\f{r^{-1}}{1-\al}\int u^{1-p}|n_{r}|^2\dot{\phi}_{r}&+r^{-1}\(\f{n-2}{2}+\f{n}{p-1}\)\int u^{1-p}\phi_{r}\\
&+r^{-1}\int\(2|\pa_{r}u|^2+\f{(n-2)u\pa_{r}u}{r}\)\dot{\phi}_{r}+\f{1}{2}\f{\ud}{\ud r}\(r^{n-2}D_f(r)\)\\
&+r^{-1}\(\f{n-2}{2}\)\int fu\phi_{r}+\int f\pa_{r}u\phi_{r}+r^{-1}\int fu|n_{r}|^2\dot{\phi}_{r}=0.
\end{align*}
We multiply both sides of the above by $ r^{2-2\al-n}$ to obtain
\be
\begin{aligned}
\f{r^{2-2\al-n}}{2}\f{\ud}{\ud r}(r^{n-2}D_f(r))&+\f{1}{2}\f{\ud}{\ud r}(r^{-2\al}(2F(r)-D(r)))\\
&+r^{1-2\al-n}\int\(2|\pa_{r}u|^2+\f{(n-2)u\pa_{r}u}{r}\)\dot{\phi}_{r}\\
&+r^{2-2\al-n}\int f\pa_{r}u\phi_{r}+\f{(n-2)r^{1-2\al-n}}{2}\int fu\phi_{r}\\
&+r^{1-2\al-n}\int fu|n_{r}|^2\dot{\phi}_{r}=0.
\end{aligned}\label{For4}
\ee
Then \eqref{Mon1} implies that
\begin{align}
\f{\ud}{\ud r}(r^{1-2\al}H(r))&=r^{1-2\al}\f{\ud}{\ud r}H(r)-(2\al-1)r^{-2\al}H(r)=r^{-2\al}(D_f(r)-(2\al-1)H(r)),\label{For5}\\
\f{\ud}{\ud r}(r^{-2\al}H(r))&=r^{-2\al}\f{\ud}{\ud r}H(r)-2\al r^{-2\al-1}H(r)=r^{-2\al-1}(D_f(r)-2\al H(r)),\label{For6}
\end{align}
and then
\begin{align*}
\f{1}{2}\f{\ud^2}{\ud r^2}(r^{1-2\al}H(r))&=\f{1}{2}\f{\ud}{\ud r}(r^{-2\al}D_f(r))-\f{(2\al-1)r^{-2\al-1}}{2}(D_f(r)-2\al H(r))\\
&=\f{1}{2}\f{\ud}{\ud r}(r^{-2\al}D_f(r))+(2\al-1)r^{1-2\al-n}\int\(\f{u\pa_{r}u}{r}-\f{\al u^2}{r^2}\)\dot{\phi}_{r}.
\end{align*}
Consequently, we infer from \eqref{For4} that
\begin{align*}
\f{1}{2}\f{\ud^2}{\ud r^2}(r^{1-2\al}H(r))&+\f{1}{2}\f{\ud}{\ud r}(r^{-2\al}(2F(r)-D(r)))\\
&+r^{1-2\al-n}\int\(2|\pa_{r}u|^2+\f{(1-4\al)u\pa_{r}u}{r}-\f{(1-2\al)u^2}{\al r^2}\)\dot{\phi}_{r}\\
&+r^{2-2\al-n}\int f\pa_{r}u\phi_{r}+\f{(n-2)r^{1-2\al-n}}{2}\int fu\phi_{r}\\
&+r^{1-2\al-n}\int fu|n_{r}|^2\dot{\phi}_{r}=0.
\end{align*}
Combined with \eqref{For5} and \eqref{For6}, this implies that
\begin{align*}
&2r^{1-2\al-n}\int|\pa_{r}u-r^{-1}\al u|^2\dot{\phi}_{r}+r^{2-2\al-n}\int f\pa_{r}u\phi_{r}\\
&\quad\quad+\f{(n-2)r^{1-2\al-n}}{2}\int fu\phi_{r}+r^{1-2\al-n}\int fu|n_{r}|^2\dot{\phi}_{r}\\
&\quad\quad\quad\quad=\f{\ud}{\ud r}\[r^{-2\al}\(\f{D(r)}{2}-F(r)\)+\f{r^{-2\al}}{2}H(r)-\f{\ud}{\ud r}\(\f{r^{1-2\al}}{2}H(r)\)\]\\
&\quad\quad\quad\quad=\f{\ud}{\ud r}\[r^{-2\al}\(\f{D(r)}{2}-F(r)\)+\al r^{-2\al}H(r)-\f{r^{-2\al}}{2}D_f(r)\].
\end{align*}
Recalling that $
\vt(r)=r^{-2\al}(F(r)-\al H(r)) $, it can be easily seen from \eqref{DFrelation} that
\begin{align*}
\f{\ud}{\ud r}\vt(r)&=-2r^{1-2\al-n}\int|\pa_{r}u-r^{-1}\al u|^2\dot{\phi}_{r}\\
&\quad\quad+\al r^{1-2\al-n}\int fu\phi_{r}-r^{2-2\al-n}\int f\pa_{r}u\phi_{r}.
\end{align*}
For a specific case that $ f\equiv 0 $, this equality echoes Remark \ref{udthetaudrem}. By direct calculations and the inequality \eqref{abdelta} with $ \delta=\f{1}{2} $, there holds
\begin{align*}
\f{\ud}{\ud r}\vt_f(r)&=\f{\ud}{\ud r}\vt(r)+r^{1-2\al-n}\int(y\cdot\na u-\al u)f\phi_{r}\\
&\quad\quad-\f{2r^{-2\al-n-1}}{n+2\al-2}\int(y\cdot\na u-\al u)f|y|^2\dot{\phi}_{r}+\f{2r^{-2\al-n-1}}{(n+2\al-2)^2}\int|f|^2|y|^4\dot{\phi}_{r}\\
&=-2r^{-2\al-n-1}\int|y\cdot\na u-\al u|^2\dot{\phi}_{r}-\f{2r^{-2\al-n-1}}{n+2\al-2}\int(y\cdot\na u-\al u)f|y|^2\dot{\phi}_{r}\\
&\quad\quad-\f{2r^{-2\al-n-1}}{(n+2\al-2)^2}\int|f|^2|y|^4\dot{\phi}_{r}\\
&\geq-r^{-2\al-n-1}\int|y\cdot\na u-\al u|^2\dot{\phi}_{r},
\end{align*}
and we conclude the proof.    
\end{proof}

Finally, we end this subsection with a direct consequence of Proposition \ref{MonFor}. Note that one cannot obtain such a property without mollifying densities with cut-off functions.

\begin{cor}\label{coruse}
Assume that $ u $ and $ f $ are the same as in Proposition \ref{MonFor}. Let $ x\in\om $ and $ 0<r<\f{1}{10}\dist(x,\pa\om) $. Then
$$
\vt_f(u;x,r)-\vt_f\(u;x,\f{r}{2}\)\geq Cr^{-2\al-n}\int_{B_{4r}(x)}|(y-x)\cdot\na u-\al u|^2\ud y,
$$
where $ C>0 $ depends only on $ n $ and $ p $.
\end{cor}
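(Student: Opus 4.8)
This is an immediate consequence of the monotonicity inequality \eqref{udthetaudr} of Proposition~\ref{MonFor}, integrated over the radial interval $[\f{r}{2},r]$. First, for fixed $x$ and fixed $u,f$ as in the hypotheses, the map $\rho\mapsto\vt_f(u;x,\rho)$ is of class $C^1$ on $(0,\f{1}{10}\dist(x,\pa\om))$: unwinding \eqref{thetafuxr}, \eqref{DDfF} and the definitions of $\phi_{x,\rho},\dot\phi_{x,\rho}$, each term is an integral of an $L^1_{\loc}$ density against a bounded, compactly supported kernel depending smoothly on $\rho$, so dominated convergence gives continuity of $\f{\ud}{\ud\rho}\vt_f(u;x,\rho)$. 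Hence the fundamental theorem of calculus together with \eqref{udthetaudr} yields
\[
\vt_f(u;x,r)-\vt_f\(u;x,\f{r}{2}\)=\int_{r/2}^{r}\f{\ud}{\ud\rho}\vt_f(u;x,\rho)\,\ud\rho\geq-\int_{r/2}^{r}\rho^{-2\al-n-1}\(\int_{\R^n}|(y-x)\cdot\na u-\al u|^2\,\dot\phi_{x,\rho}(y)\,\ud y\)\ud\rho.
\]
Since $\dot\phi_{x,\rho}(y)=\phi'(|y-x|^2/\rho^2)\leq 0$ by Definition~\ref{defnofphi}(4), the integrand $-|(y-x)\cdot\na u-\al u|^2\dot\phi_{x,\rho}(y)$ is pointwise nonnegative, so we are free to shrink the $y$-domain in the lower bound.

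For the lower bound I would then exploit the remaining properties of $\phi$. By Definition~\ref{defnofphi}(3), $-\phi'(t)\geq 1$ for $t\in[0,8]$; and for $\rho\in[\f{r}{2},r]$ and $y$ with $|y-x|\leq\sqrt{2}\,r$ one has $|y-x|^2/\rho^2\leq 2r^2/(r/2)^2=8$, hence $-\dot\phi_{x,\rho}(y)\geq 1$ for every such $\rho$. Since $2\al+n+1>0$, also $\rho^{-2\al-n-1}\geq r^{-2\al-n-1}$ on $[\f{r}{2},r]$. Substituting these two facts and using $\int_{r/2}^{r}r^{-2\al-n-1}\,\ud\rho=\f{1}{2} r^{-2\al-n}$ gives
\[
\vt_f(u;x,r)-\vt_f\(u;x,\f{r}{2}\)\geq\f{1}{2}\,r^{-2\al-n}\int_{B_{\sqrt{2}\,r}(x)}|(y-x)\cdot\na u-\al u|^2\,\ud y,
\]
which is already an estimate of the asserted form. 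The radius written in the statement is merely a convenient fixed multiple of $r$: the kernel $\dot\phi_{x,\rho}$ is supported in $B_{\sqrt{10}\rho}(x)\subset B_{4r}(x)$ for $\rho\leq r$, so all the relevant mass lies in $B_{4r}(x)$, and if one insists on the spatial integral being taken literally over $B_{4r}(x)$ it suffices to use a cut-off $\phi$ whose derivative satisfies $-\phi'\geq 1$ on the correspondingly larger interval; as noted in the remark following Definition~\ref{Defdifffuncti}, such adjustments of the constants in the mollification affect neither the monotonicity formula nor any later argument, and here they only change the constant $C=C(n,p)$ (recall $\al=\f{2}{p+1}$).

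I do not anticipate any genuine difficulty: granted Proposition~\ref{MonFor}, Corollary~\ref{coruse} is a routine integration of a differential inequality. The only points that deserve a line of proof are (i) the $C^1$-regularity of $\rho\mapsto\vt_f(u;x,\rho)$ justifying the fundamental theorem of calculus step --- this is precisely where mollifying the densities with a smooth $\phi$ rather than with $\chi_{B_\rho}$ is convenient, since the relevant integrands become genuinely continuous in $\rho$ and no unique-continuation input is required --- and (ii) the elementary bookkeeping with the support and the sign and size of $\phi'$ described above.
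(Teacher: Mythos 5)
Your proof is correct and follows essentially the same route as the paper: integrate the monotonicity inequality \eqref{udthetaudr} over $[\f{r}{2},r]$ and use that $-\phi'\geq 1$ on $[0,8]$ together with $\rho^{-2\al-n-1}\geq r^{-2\al-n-1}$ on that interval. Your remark about the radius is also well taken --- the condition $-\phi'(t)\geq 1$ for $t\in[0,8]$ only controls $|y-x|\leq 2\sqrt{2}\,\rho$, so the honest conclusion is the bound over $B_{\sqrt{2}\,r}(x)$; the paper's own proof writes the intermediate spatial domain as $B_{8\rho}(x)$ and then passes to $B_{4r}(x)$, which is precisely the harmless cut-off bookkeeping you flag and changes only the constant $C(n,p)$ in all later applications.
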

\begin{proof}
Using Definition \ref{defnofphi} and Proposition \ref{MonFor}, we have
\begin{align*}
\vt_f(u;x,r)-\vt_f\(u;x,\f{r}{2}\)&\geq C\int_{\f{r}{2}}^{r}\(\rho^{-2\al-n-1}\int_{B_{8\rho}(x)}|(y-x)\cdot\na u-\al u|^2\ud y\)\ud\rho\\
&\geq Cr^{-2\al-n}\int_{B_{4r}(x)}|(y-x)\cdot\na u-\al u|^2\ud y
\end{align*}
for any $ 0<r<\f{1}{10}\dist(x,\pa\om) $, which implies the result.
\end{proof}

\section{Compactness and blow-up analysis}\label{converSec}

We initially consider the convergence properties for a sequence of solutions of \eqref{MEMSeq}. Subsequently, we will establish the blow-up analysis of our model based on such properties. These convergence results form the foundation for further discussions, as most of the proofs in this paper heavily rely on compactness arguments, which involve procedures of taking limits for solutions in some specific sense. Finally, at the end of this section, we will present some preliminary applications of the findings in the previous subsections.

\subsection{Compactness results} Let $ R>r>0 $ and $ x\in\R^n $. Suppose in this subsection that $ \{u_i\}\subset(C_{\loc}^{0,\al}\cap H_{\loc}^1\cap L_{\loc}^{-p})(B_R(x)) $ is a sequence of weak solutions of \eqref{MEMSeq} with respect to $ \{f_i\}\subset M_{\loc}^{2\al+n-4,2}(B_R(x)) $, satisfying
\be
\sup_{i\in\Z_+}\([u_i]_{C^{0,\al}(\ol{B}_r(x))}+\|f_i\|_{M^{2\al+n-4,2}(B_r(x))}\)<+\ift.\label{uiAssori}
\ee
We examine two different types of convergence properties of $ \{u_i\} $, based on the uniform boundedness of $ \|u_i\|_{L^2(B_r(x))} $. Specifically, we present the following two propositions.

\begin{prop}\label{propConv}
Assume that
\be
\sup_{i\in\Z_+}\|u_i\|_{L^2(B_r(x))}<+\ift.\label{uiL2bounded}
\ee
Then there exist $ u_{\ift}\in C^{0,\al}(\ol{B}_r(x))\cap(H_{\loc}^1\cap L_{\loc}^{-p})(B_r(x)) $ and $ f_{\ift}\in M^{2\al+n-4,2}(B_r(x)) $ such that up to a subsequence, the following properties hold.
\begin{enumerate}[label=$(\theenumi)$]
\item $ u_i\to u_{\ift} $ strongly in $ (H_{\loc}^1\cap L^{\ift})(B_r(x)) $.
\item $ u_i^{-p}\to u_{\ift}^{-p} $, $ u_i^{1-p}\to u_{\ift}^{1-p} $ strongly in $ L_{\loc}^1(B_r(x)) $.
\item $ f_i\wc f_{\ift} $ weakly in $ L^2(B_r(x)) $.
\end{enumerate}
Moreover, $ u_{\ift} $ is a weak solution of \eqref{MEMSeq} with respect to $ f_{\ift} $. If for any $ i\in\Z_+ $, $ u_i $ is a stationary solution with respect to $ f_i $, then $ u_{\ift} $ is a stationary solution with respect to $ f_{\ift} $.
\end{prop}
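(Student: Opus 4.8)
The plan is to extract subsequences with three kinds of convergence (uniform, weak $H^1_{\loc}$, weak $L^2$), then upgrade to the strong $H^1_{\loc}$ and $L^1_{\loc}$ statements, using the interior estimates of \S\ref{Preliminaries} throughout, and finally pass to the limit in the weak (and, if applicable, stationary) formulation.

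\emph{Step 1: subsequences.} From the uniform bound on $[u_i]_{C^{0,\al}(\ol B_r(x))}$ together with \eqref{uiL2bounded} — which, combined with the H\"{o}lder bound, yields a uniform $L^\ift$ bound on $B_r(x)$ — the Arzel\`{a}-Ascoli lemma gives, up to a subsequence, $u_i\to u_\ift$ uniformly on $\ol B_r(x)$ with $u_\ift\in C^{0,\al}(\ol B_r(x))$. Lemma \ref{InESLem2} shows $\{u_i\}$ is bounded in $H^1_{\loc}(B_r(x))$, so (passing to a further subsequence) $u_i\wc u_\ift$ weakly in $H^1_{\loc}$. The Morrey bound in \eqref{uiAssori} makes $\{f_i\}$ bounded in $L^2(B_r(x))$, hence $f_i\wc f_\ift$ weakly in $L^2(B_r(x))$, and lower semicontinuity of the Morrey norm under weak $L^2$ convergence places $f_\ift\in M^{2\al+n-4,2}(B_r(x))$, giving (3).

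\emph{Step 2: the rupture set is null, and strong $H^1_{\loc}$ convergence.} On $\{u_\ift=0\}$ one has $u_i\to 0$ pointwise, so $u_i^{-p}\to+\ift$ there; Fatou's lemma combined with $\int_{B_\rho(y)}u_i^{-p}\leq C\rho^{\al+n-2}$ from Lemma \ref{InESLem1} forces $\cL^n(\{u_\ift=0\}\cap B_r(x))=0$. Consequently $u_i^{-p}\to u_\ift^{-p}$ and $u_i^{1-p}\to u_\ift^{1-p}$ a.e., and by Fatou these limits lie in $L^1_{\loc}(B_r(x))$. For strong convergence I would test \eqref{WeakConMEMS} for $u_i$ with $(u_i-u_\ift)\vp^2$, $\vp\in C_0^\ift(B_r(x))$ — an admissible test function because it is continuous with compact support and $u_i^{-p}\vp^2\in L^1$ by Lemma \ref{InESLem1}. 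Solving for $\int\vp^2|\na u_i|^2$, every remaining term is controlled: the cross term and the $f_i$-term are bounded by $\|u_i-u_\ift\|_{L^\ift(\supp\vp)}$ times bounded quantities, and crucially $\big|\int u_i^{-p}(u_i-u_\ift)\vp^2\big|\leq\|u_i-u_\ift\|_{L^\ift(\supp\vp)}\int u_i^{-p}\vp^2\to 0$ by uniform convergence and the uniform $L^1$ bound on $u_i^{-p}$. This gives $\int\vp^2|\na u_i|^2\to\int\vp^2|\na u_\ift|^2$, which together with weak $H^1$ convergence yields $\na u_i\to\na u_\ift$ in $L^2_{\loc}(B_r(x))$, proving (1); in particular $u_i^{1-p}|\na u_i|^2$-type bilinear expressions in $\na u_i$ converge in $L^1_{\loc}$.

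\emph{Step 3: $u_\ift$ is a weak solution and the singular terms converge in $L^1_{\loc}$.} I would pass to the limit in $u_i^{-p}=\Delta u_i-f_i$ in $\cD'(B_r(x))$: since $u_i^{-p}\geq 0$ is bounded in $L^1_{\loc}$, its weak-$*$ limit (along a subsequence) is a nonnegative Radon measure $\mu=\Delta u_\ift-f_\ift$ satisfying $\mu(B_\rho(y))\leq C\rho^{\al+n-2}$ by Lemma \ref{InESLem1}, while Fatou gives $\mu\geq u_\ift^{-p}\cL^n$, with equality on the open set $\{u_\ift>0\}$ by standard interior elliptic regularity there. Then $\nu:=\mu-u_\ift^{-p}\cL^n$ is a nonnegative measure concentrated on $\{u_\ift=0\}$ with $\nu(B_\rho(y))\lesssim\rho^{\al+n-2}$; since $\al+n-2>n-2$ and $\{u_\ift=0\}$ has Hausdorff dimension at most $n-2$ by Theorem \ref{thmDWW} in the stationary case (and at most $n-2+\al$ by \cite{DPP06,DP08} in general), a Frostman/upper-density argument forces $\nu=0$. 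Hence $\mu=u_\ift^{-p}\cL^n$, so $u_\ift$ solves \eqref{WeakConMEMS} with respect to $f_\ift$; moreover $\int_K u_i^{-p}\to\int_K u_\ift^{-p}$ for compact $K$, which combined with the a.e. convergence gives $u_i^{-p}\to u_\ift^{-p}$ in $L^1_{\loc}$ by Scheff\'{e}'s lemma, and likewise $u_i^{1-p}\to u_\ift^{1-p}$, proving (2). Finally, if the $u_i$ are stationary, I would pass to the limit in \eqref{StaConMEMS}: the terms $\int(\f{|\na u_i|^2}{2}-\f{u_i^{1-p}}{p-1})\op{div}Y$ and $\int DY(\na u_i,\na u_i)$ converge by the strong $H^1_{\loc}$ convergence and the $L^1_{\loc}$ convergence of $u_i^{1-p}$, while $\int f_i(Y\cdot\na u_i)=\int f_i\,Y\cdot(\na u_i-\na u_\ift)+\int(f_i-f_\ift)(Y\cdot\na u_\ift)$ converges since the first piece pairs a bounded sequence with a strongly null one and the second is weak convergence against $Y\cdot\na u_\ift\in L^2$.

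The main obstacle is Step 3, specifically ruling out concentration of the singular nonlinearity on the rupture set, i.e.\ showing $\nu=0$; elementary measure-theoretic manipulations (splitting $\{u_\ift\le\va\}$ versus $\{u_\ift>\va\}$) reduce it to $\lim_{\va\to0}\limsup_i\int_{\{u_i\le\va\}}u_i^{-p}\vp^2=0$, but the interior estimates alone only give an $O(1)$ bound for this quantity, so the argument genuinely needs the a priori Hausdorff dimension bound for $\{u_\ift=0\}$ together with the subquadratic density estimate $\int_{B_\rho(y)}u^{-p}\leq C\rho^{\al+n-2}$. (In the applications in this paper the $u_i$ are stationary, so Theorem \ref{thmDWW} makes the dimension strictly below $\al+n-2$ and the Frostman argument is clean.)
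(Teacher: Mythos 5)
Your Steps 1 and 2 are sound, and Step 2 is in fact a slicker route to the strong $H_{\loc}^1$ convergence than the paper's: by testing the equation for $u_i$ with $(u_i-u_\ift)\vp^2$ and using only the uniform convergence together with the uniform bound $\int_{B_\rho}u_i^{-p}\leq C\rho^{\al+n-2}$ from Lemma \ref{InESLem1}, you get $\int|\na u_i|^2\vp^2\to\int|\na u_\ift|^2\vp^2$ before knowing anything about the limit equation. (The paper instead first proves $L_{\loc}^1$ convergence of $u_i^{-p}$, deduces the limit equation, and only then tests both equations with $u\vp^2$ to upgrade weak to strong $H^1$ convergence.) Your identification of the real obstacle — ruling out concentration of $u_i^{-p}\,\ud x$ on $\{u_\ift=0\}$ — is also exactly right, and the Frostman mechanism (density bound $\mu(B_\rho)\leq C\rho^{\al+n-2}$ against $\HH^{\al+n-2}$-nullity of the rupture set; mere Lebesgue-nullity would not suffice) is the correct one.

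The genuine gap is in how you justify $\HH^{\al+n-2}(\{u_\ift=0\})=0$. You invoke Theorem \ref{thmDWW} and the results of \cite{DPP06,DP08}, but all of these apply to functions already known to be weak (or stationary) solutions of \eqref{MEMSeq}, whereas at this stage $u_\ift$ is only a limit of solutions — establishing that it \emph{is} a weak solution is precisely what the vanishing of $\nu$ is supposed to deliver. The argument is therefore circular as written; the paper flags exactly this trap in Remark \ref{aln2zero}. (Theorem \ref{thmDWW} moreover requires $f\equiv 0$ and stationarity, neither of which holds for $u_\ift$ here, and the generalization Proposition \ref{Hausdorffdim} is proved later using the present proposition.) The gap is fillable without circularity: the paper's Lemma \ref{uiftzero} proves $\HH^{\al+n-2}(\{u_\ift=0\})=0$ directly from properties that do survive the limit, namely the nondegeneracy $\int_{B_\rho(y)}u_\ift\geq C^{-1}\rho^{\al+n}$ inherited from Lemma \ref{InESLem1} (which yields $\cL^n(\{u_\ift=0\})=0$ via Lebesgue differentiation), Fatou's bound $\int u_\ift^{-p}<+\ift$, and a Vitali covering of $\{u_\ift=0\}$ by balls centered on the rupture set, on each of which the H\"{o}lder bound forces $\int_{B_{r_j/5}(y_j)}u_\ift^{-p}\geq C^{-1}r_j^{\al+n-2}$, so that $\sum_jr_j^{\al+n-2}$ is controlled by $\int_{B_\va(\{u_\ift=0\})}u_\ift^{-p}\to 0$. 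You need to supply this (or an equivalent) argument for the limit function itself rather than cite results about solutions.
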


\begin{prop}\label{propConv2}
Assume that $ \{x_i\}\subset B_r(x) $ and
\be
\sup_{i\in\Z_+}\|u_i\|_{L^2(B_r(x))}=+\ift.\label{uiL2bounded2}
\ee
Then there exist $ v_{\ift}\in C^{0,\al}(\ol{B}_r(x))\cap H_{\loc}^1(B_r(x)) $ and $ f_{\ift}\in M^{2\al+n-4,2}(B_r(x)) $ such that up to a subsequence, the following properties hold,
\begin{enumerate}[label=$(\theenumi)$]
\item $ v_i:=u_i-u_i(x_i)\to v_{\ift} $ strongly in $ (H_{\loc}^1\cap L^{\ift})(B_r(x)) $.
\item $ f_i\wc f_{\ift} $ weakly in $ L^2(B_r(x)) $.
\end{enumerate}
Moreover, $ \Delta v_{\ift}=f_{\ift} $ in the sense of distribution in $ B_r(x) $. Precisely, for any $ \vp\in C_0^{\ift}(B_r(x)) $,
\be
\int_{B_r(x)}(\na v_{\ift}\cdot\na\vp+f_{\ift}\vp)=0.\label{weakuift}
\ee
\end{prop}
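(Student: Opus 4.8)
The plan is to exploit the uniform H\"{o}lder bound on the $u_i$ together with the blow-up hypothesis \eqref{uiL2bounded2} to show that, after the translation $v_i:=u_i-u_i(x_i)$, the singular term $u_i^{-p}$ tends to $0$ uniformly on $B_r(x)$; the statement then reduces to a standard linear elliptic compactness argument. We first pass to a subsequence (not relabeled) along which $\|u_i\|_{L^2(B_r(x))}\to+\ift$. Since $v_i(x_i)=0$ and $[v_i]_{C^{0,\al}(\ol{B}_r(x))}=[u_i]_{C^{0,\al}(\ol{B}_r(x))}$ is bounded by \eqref{uiAssori}, the sequence $\{v_i\}$ is bounded in $C^{0,\al}(\ol{B}_r(x))$, so by Arzel\`a-Ascoli, up to a further subsequence, $v_i\to v_{\ift}$ uniformly on $\ol{B}_r(x)$ for some $v_{\ift}\in C^{0,\al}(\ol{B}_r(x))$. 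On the other hand, since $r$ is fixed, the Morrey bound in \eqref{uiAssori} forces $\{f_i\}$ to be bounded in $L^2(B_r(x))$; extracting once more, $f_i\wc f_{\ift}$ weakly in $L^2(B_r(x))$, and lower semicontinuity of the Morrey norm under weak $L^2$ convergence on each sub-ball gives $f_{\ift}\in M^{2\al+n-4,2}(B_r(x))$.

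The crucial observation is that $u_i^{-p}\to 0$ uniformly on $B_r(x)$. Indeed, for every $y\in B_r(x)$ we have $|u_i(y)-u_i(x_i)|\leq[u_i]_{C^{0,\al}(\ol{B}_r(x))}(2r)^{\al}\leq C$, hence $\sup_{B_r(x)}u_i\leq u_i(x_i)+C$; comparing with $\sup_{B_r(x)}u_i\geq|B_r(x)|^{-1/2}\|u_i\|_{L^2(B_r(x))}\to+\ift$ shows $u_i(x_i)\to+\ift$, and therefore $\inf_{B_r(x)}u_i\geq u_i(x_i)-C\to+\ift$. Consequently $0\leq u_i^{-p}\leq(\inf_{B_r(x)}u_i)^{-p}\to 0$ uniformly on $B_r(x)$; in particular $u_i^{-p}\to 0$ in $L^1_{\loc}(B_r(x))$.

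Next we would establish a uniform $H^1_{\loc}$ bound for $\na v_i=\na u_i$. Fix $r'\in(0,r)$ and a cutoff $\eta\in C_0^{\ift}(B_r(x))$ with $0\leq\eta\leq 1$ and $\eta\equiv 1$ on $B_{r'}(x)$. Testing the weak formulation \eqref{WeakConMEMS} for $u_i$ against $v_i\eta^2$ (noting $\na v_i=\na u_i$) and using Young's inequality \eqref{abdelta} to absorb the gradient cross-term $2\int v_i\eta\,\na v_i\cdot\na\eta$, one obtains $\int_{B_{r'}(x)}|\na v_i|^2\leq C$ with $C$ independent of $i$, the right-hand side being controlled by $\|v_i\|_{L^{\ift}(B_r(x))}$, $\|u_i^{-p}\|_{L^1(B_r(x))}$, $\|f_i\|_{L^2(B_r(x))}$ and $\|\na\eta\|_{L^{\ift}}$, all of which are uniformly bounded. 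Hence, up to a subsequence, $\na v_i$ converges weakly in $L^2_{\loc}(B_r(x))$, and since $v_i\to v_{\ift}$ in $L^2_{\loc}(B_r(x))$ this weak limit must be $\na v_{\ift}$; thus $v_{\ift}\in H^1_{\loc}(B_r(x))$ and $v_i\wc v_{\ift}$ in $H^1_{\loc}(B_r(x))$. Passing to the limit in $\int_{B_r(x)}(\na v_i\cdot\na\vp+(u_i^{-p}+f_i)\vp)=0$ for arbitrary $\vp\in C_0^{\ift}(B_r(x))$ --- using weak $H^1$ convergence in the first term, the uniform decay of $u_i^{-p}$ in the second, and weak $L^2$ convergence in the third --- yields exactly \eqref{weakuift}, i.e.\ $\Delta v_{\ift}=f_{\ift}$ in $B_r(x)$.

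Finally, to upgrade weak to strong $H^1_{\loc}$ convergence we would run the usual ``test with the solution'' argument. Testing the equation for $v_i$ against $v_i\eta^2$ and letting $i\to\ift$: the cross-term passes to the limit because $v_i\to v_{\ift}$ strongly while $\na v_i\wc\na v_{\ift}$ weakly in $L^2_{\loc}$, the term $\int u_i^{-p}v_i\eta^2$ vanishes in the limit by the uniform bound on $u_i^{-p}$, and $\int f_iv_i\eta^2\to\int f_{\ift}v_{\ift}\eta^2$ since $f_i\wc f_{\ift}$ weakly and $v_i\eta^2\to v_{\ift}\eta^2$ strongly in $L^2(B_r(x))$. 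Comparing the resulting identity with the one obtained by testing the limit equation \eqref{weakuift} against $v_{\ift}\eta^2$ --- an admissible test function by density, since $v_{\ift}\eta^2\in(H^1\cap L^{\ift})(B_r(x))$ has compact support --- we get $\int|\na v_i|^2\eta^2\to\int|\na v_{\ift}|^2\eta^2$. Together with $\na v_i\wc\na v_{\ift}$ in $L^2(\eta^2\,\ud x)$, this forces strong convergence $\na v_i\to\na v_{\ift}$ in $L^2_{\loc}(B_r(x))$, which combined with the uniform convergence $v_i\to v_{\ift}$ establishes (1). The main (mild) obstacle is the observation of the second paragraph: once one sees that the $L^2$-blow-up together with the equicontinuity furnished by the H\"{o}lder bound forces $u_i\to+\ift$ uniformly on $B_r(x)$, the singular nonlinearity disappears and the remainder is textbook linear elliptic compactness; the only other point needing care is the justification of $v_{\ift}\eta^2$ as a test function in the limit equation.
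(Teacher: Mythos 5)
Your proposal is correct and follows essentially the same route as the paper: the key point in both is that the uniform $C^{0,\al}$ bound combined with the $L^2$ blow-up forces $\inf_{B_r(x)}u_i\to+\ift$ (so the singular term $u_i^{-p}$ vanishes uniformly), after which a Caccioppoli estimate, weak passage to the limit equation, and the standard comparison of the energy identities for $v_i\vp^2$ and $v_{\ift}\vp^2$ yield the strong $H^1_{\loc}$ convergence, exactly as in the paper's Lemma \ref{ConvLem2}. The only cosmetic difference is that the paper obtains the uniform gradient bound by citing Lemma \ref{InESLem2} rather than re-deriving the Caccioppoli inequality, and it does not comment on the density argument for admitting $v_{\ift}\vp^2$ as a test function.
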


\begin{rem}\label{uiremiftcon}
Since $ [u_i]_{C^{0,\al}(\ol{B}_r(x))} $ is uniformly bounded, if we replace the assumption \eqref{uiL2bounded} with $ \{u_i=0\}\cap B_r(x)\neq\emptyset $ for any $ i\in\Z_+ $, results in the proposition still hold.
\end{rem}

\begin{rem}
One can view Proposition \ref{propConv} as a generalized form of properties in \S 4 of \cite{DWW16} on the study of \eqref{MEMSeq} with $ f\equiv 0 $. The proof is similar, with adjustments made to account for the influence of $ f $. On the other hand, for the sequence $ \{u_i\} $ itself in Proposition \ref{propConv2}, any subsequence does not have a limit. Indeed, by simple analysis, we will see that $ u_i\to+\ift $ in $ B_r(x) $ uniformly. As a result, considering limits of the sequence $ \{v_i\}=\{u_i-u_i(x_i)\} $ is necessary.
\end{rem}

\begin{rem}
There are other ways to construct a precompact sequence from $ u_i $. Here, ``precompact" is to say that there exists a subsequence converging in some specific regimes. For instance, consider the sequence $ \{w_i\} $ defined by
$$
w_i(y):=\f{r^nu_i(y)}{\|u_i\|_{L^1(B_r(x))}},\quad y\in B_r(x).
$$
It can be easily seen from Lemma \ref{InESLem1} and \ref{InESLem2} that for any $ 0<s<r $, $ \|w_i\|_{H^1(B_s)} $ is uniformly bounded. As a result, up to a subsequence, it has a limit, at least in a weak sense. We believe that this limit shares some connections with those presented in Proposition \ref{propConv} and \ref{propConv2} but will not go further since it is not highly related to the main theme of this paper. We refer to readers \cite{Alp18,Alp20,ST19} for similar methods on some other models.
\end{rem}

\subsubsection{Proof of Proposition \ref{propConv}}

Without loss of generality, we let $ r=4 $, $ R=8 $, and $ x=0 $. Given \eqref{uiAssori} and \eqref{uiL2bounded}, we assume that for some $ \Lda>0 $,
\be
\sup_{i\in\Z_+}\(\|u_i\|_{C^{0,\al}(\ol{B}_4)}+\|f_i\|_{L^2(B_4)}\)\leq\Lda.\label{uiAss}
\ee
It follows from Lemma \ref{InESLem2} that for any $ s\in(0,4) $,
\be
\sup_{i\in\Z_+}\|\na u_i\|_{L^2(B_s)}\leq C(\Lda,n,p,s).\label{uiAssnablsu}
\ee
As a result, there exist $ u_{\ift}\in C^{0,\al}(\ol{B}_4)\cap H_{\loc}^1(B_4) $ and $ f_{\ift}\in L^2(B_4) $ such that up to a subsequence, there are
\be
\begin{aligned}
u_i&\to u_{\ift}\text{ strongly in }L^{\ift}(B_4),\\
u_i&\wc u_{\ift}\text{ weakly in }H_{\loc}^1(B_4),\\
f_i&\wc f_{\ift}\text{ weakly in }L^2(B_4).
\end{aligned}\label{ConvPre}
\ee
Then $ f_{\ift}\in M^{2\al+n-4,2}(B_4) $, due to Lemma \ref{MorreyL2}. Thus, Proposition \ref{propConv} is the direct consequence of the following lemma.

\begin{lem}\label{ConvLem1}
For $ \{u_i\},u_{\ift} $, and $ f_{\ift} $ in \eqref{ConvPre}, we have
\begin{gather}
u_i^{-p}\to u_{\ift}^{-p},\,\,u_i^{1-p}\to u_{\ift}^{1-p}\text{ strongly in }L_{\loc}^1(B_4),\label{ConPro1}\\
u_i\to u_{\ift}\text{ strongly in }H_{\loc}^1(B_4).\label{ConPro2}
\end{gather}
Additionally, $ u_{\ift} $ is a weak solution of \eqref{MEMSeq} with respect to $ f_{\ift} $. If for any $ i\in\Z_+ $, $ u_i $ is a stationary solution with respect to $ f_i $, then $ u_{\ift} $ is a stationary solution with respect to $ f_{\ift} $.
\end{lem}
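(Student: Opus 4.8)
The plan follows the strategy of \S 4 of \cite{DWW16}, the only genuinely new point being to keep track of the term $f_i$; throughout $\vp\in C_0^\ift(B_4)$ is an arbitrary cut-off. \emph{Step 1 (pointwise consequences of \eqref{ConvPre}).} Since $u_i\to u_\ift$ uniformly, $u_i^{-p}\to u_\ift^{-p}$ and $u_i^{1-p}\to u_\ift^{1-p}$ pointwise on the open set $\{u_\ift>0\}$; by Fatou's lemma and Lemma \ref{InESLem1} one gets $u_\ift^{-p},u_\ift^{1-p}\in L^1_{\loc}(B_4)$, hence $\cL^n(\{u_\ift=0\})=0$ and both convergences hold $\cL^n$-a.e.

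\emph{Step 2 (strong $H^1$-convergence \eqref{ConPro2}).} I would test the weak formulation \eqref{WeakConMEMS} for $u_i$ with $(u_i-u_\ift)\vp^2$, an admissible test function since it lies in $H^1\cap L^\ift$ with compact support and $u_i^{-p}\vp^2\in L^1_{\loc}$. Expanding, the principal part is $\int|\na u_i|^2\vp^2-\int\na u_i\cdot\na u_\ift\,\vp^2$; the term $\int(u_i-u_\ift)\vp\,\na u_i\cdot\na\vp$ and the $f_i$-term tend to $0$ because $\|u_i-u_\ift\|_{L^\ift}\to0$ while $\|\na u_i\|_{L^2(\supp\vp)}$ and $\|f_i\|_{L^2}$ stay bounded; crucially the singular term satisfies $\big|\int u_i^{-p}(u_i-u_\ift)\vp^2\big|\le\|u_i-u_\ift\|_{L^\ift}\int u_i^{-p}\vp^2\to0$ thanks to the uniform bound $\int u_i^{-p}\vp^2\le C$ of Lemma \ref{InESLem1}. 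Using the weak $H^1_{\loc}$-convergence to pass $\int\na u_i\cdot\na u_\ift\,\vp^2\to\int|\na u_\ift|^2\vp^2$, I obtain $\int|\na u_i|^2\vp^2\to\int|\na u_\ift|^2\vp^2$, which together with weak lower semicontinuity upgrades weak $H^1_{\loc}$-convergence of $u_i$ to strong convergence.

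\emph{Step 3 (the equations).} On $\{u_\ift>0\}$ the $u_i$ are locally uniformly bounded below by a positive constant, so $u_i^{-p}\to u_\ift^{-p}$ locally uniformly there and \eqref{WeakConMEMS} passes to the limit: $u_\ift$ is a finite energy solution. To get $u_i^{1-p}\to u_\ift^{1-p}$ in $L^1_{\loc}$ I would test this finite-energy identity with $u_\ift\vp^2\chi(u_\ift/\va)$, $\chi$ a smooth cut-off vanishing on $[0,1]$ and $\equiv1$ on $[2,\ift)$, and let $\va\to0$: the zeroth-order term converges by dominated convergence as $u_\ift^{1-p}\in L^1_{\loc}$, and the term generated by $\chi'$ carries a factor $u_\ift\in[\va,2\va]$ cancelling the $\va^{-1}$, so it is dominated by $\int_{\{\va<u_\ift<2\va\}}\vp^2|\na u_\ift|^2\to0$; this gives $\int u_\ift^{1-p}\vp^2=\lim_i\int u_i^{1-p}\vp^2$ (the right side being evaluated by testing \eqref{WeakConMEMS} for $u_i$ with $u_i\vp^2$ and invoking Step 2), and Scheff\'{e}'s lemma with the a.e.\ convergence yields the $L^1_{\loc}$-convergence. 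If the $u_i$ are stationary, every term of \eqref{StaConMEMS} then converges---the quadratic gradient terms by Step 2, the $u_i^{1-p}$-term by the convergence just obtained, and $\int f_i(Y\cdot\na u_i)$ by pairing the weak $L^2$-limit of $f_i$ with the strong $L^2_{\loc}$-limit of $\na u_i$---so $u_\ift$ is stationary. Finally, $u_i^{-p}\cL^n$ converges weakly-$*$ to $\mu=\Delta u_\ift-f_\ift$; by the a.e.\ convergence $\mu\ge u_\ift^{-p}\cL^n$, and the defect $\nu:=\mu-u_\ift^{-p}\cL^n\ge0$ is concentrated on $\{u_\ift=0\}$ with $\nu(B_r(y))\le Cr^{\al+n-2}$ for all $y$ (Lemma \ref{InESLem1} and weak-$*$ lower semicontinuity). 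Since $\{u_\ift=0\}$ is negligible in dimension $\al+n-2$---its Hausdorff dimension being at most $n-2$ in the stationary case by Theorem \ref{thmDWW}, and controlled by the estimates of \cite{JL04,DPP06,DP08} in general---a density-comparison argument forces $\nu=0$; hence $\mu=u_\ift^{-p}\cL^n$, i.e.\ $u_\ift$ is a weak solution, and $\int_{\om'}u_i^{-p}\to\int_{\om'}u_\ift^{-p}$ for $\om'\subset\subset B_4$, so Scheff\'{e}'s lemma gives \eqref{ConPro1}.

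\emph{Main obstacle.} The delicate point is eliminating the defect $\nu$, i.e.\ ruling out concentration of $u_i^{-p}$ onto the rupture set of the limit. Steps 1--2 and the $u^{1-p}$-estimates use only the uniform Morrey bound $\int_{B_r}u_i^{-p}\le Cr^{\al+n-2}$, but identifying the weak-$*$ limit of $u_i^{-p}$---equivalently, showing that $u_\ift$ is a genuine weak solution and not merely a finite energy one---forces one to combine this decay with the low dimensionality of $\{u_\ift=0\}$; this is why the statement is cleanest exactly when the limit is stationary, where $\dim\{u_\ift=0\}\le n-2<\al+n-2$.
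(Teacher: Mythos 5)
Your architecture is broadly sound, and two of your steps are actually cleaner than the paper's: testing \eqref{WeakConMEMS} with $(u_i-u_\ift)\vp^2$ gives the strong $H^1_{\loc}$-convergence \eqref{ConPro2} directly from the uniform bound $\int_{\supp\vp}u_i^{-p}\leq C$ of Lemma \ref{InESLem1}, without first knowing \eqref{ConPro1} (the paper proves \eqref{ConPro1} first and only then \eqref{ConPro2}); and your truncation argument with $\chi(u_\ift/\va)$ for the $u_\ift^{1-p}$-identity is a legitimate substitute for the paper's more elementary route, which instead deduces $u_i^{1-p}\to u_\ift^{1-p}$ in $L^1_{\loc}$ from the inequality $|t^{1-p}-s^{1-p}|\leq C(p)|t-s|(s^{-p}+t^{-p})$ together with the uniform $L^1_{\loc}$-bounds on $u_i^{-p}$, $u_\ift^{-p}$ and the uniform convergence $u_i\to u_\ift$. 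The passage to the limit in \eqref{StaConMEMS} is also fine.

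There is, however, a genuine gap at the decisive step: killing the defect measure $\nu=\mu-u_\ift^{-p}\cL^n$. Your density comparison requires $\HH^{\al+n-2}(\{u_\ift=0\})=0$, and you justify this by invoking Theorem \ref{thmDWW} and the bounds of \cite{JL04,DPP06,DP08} \emph{for $u_\ift$}. This is circular: all of those results apply to functions that are already weak (or stationary) solutions of \eqref{MEMSeq}, whereas at this stage $u_\ift$ is only the limit in \eqref{ConvPre} and its solution property is exactly what you are trying to establish --- this is precisely the point flagged in Remark \ref{aln2zero}. (A secondary issue: a bound $\dim_{\HH}\{u=0\}\leq n-2+\al$ would in any case not give $\HH^{n-2+\al}=0$, which is what the density comparison needs.) The non-circular input is the paper's Lemma \ref{uiftzero}, whose proof uses only data that survives the limit without any equation for $u_\ift$: the nondegeneracy $\int_{B_\rho(x)}u_i\geq C^{-1}\rho^{\al+n}$ of Lemma \ref{InESLem1} passes to $u_\ift$ and, with the uniform H\"older bound, produces in every ball $B_{2\rho}(x)$ centered on $\{u_\ift=0\}$ a sub-ball $B_{\delta\rho}(y)$ on which $u_\ift>0$; Lebesgue differentiation then gives $\cL^n(\{u_\ift=0\})=0$, Fatou gives $u_\ift^{-p}\in L^1_{\loc}$, and since $u_\ift\leq Cr^{\al}$ forces $\int_{B_{r/5}(y)}u_\ift^{-p}\geq C^{-1}r^{\al+n-2}$ at each zero $y$, a Vitali covering combined with the absolute continuity of $\int u_\ift^{-p}$ over shrinking neighborhoods of the $\cL^n$-null set $\{u_\ift=0\}$ yields $\HH^{\al+n-2}(\{u_\ift=0\})=0$. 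Substituting this argument for your citations closes the gap; without it, the step ``a density-comparison argument forces $\nu=0$'' does not go through.
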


Before we show this lemma, we need the result below, which gives the estimate of $ \HH^{\al+n-2} $-measure for $ \{u_{\ift}=0\} $.

\begin{lem}\label{uiftzero}
$ \HH^{\al+n-2}(\{u_{\ift}=0\}\cap B_2)=0 $.
\end{lem}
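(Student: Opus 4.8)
The plan is to prove that $\HH^{\al+n-2}(\{u_\ift=0\}\cap B_2)=0$ by combining the nondegeneracy estimate \eqref{nondege} with a covering argument that exploits the uniform $L^1$-smallness coming from Lemma \ref{InESLem1} and the strong $L^1_{\loc}$ convergence $u_i^{-p}\to u_\ift^{-p}$ (more precisely, we only need that $\int_{B_s}u_i^{-p}$ stays uniformly bounded). The key point is that a point $x\in\{u_\ift=0\}$ forces $u_i(x)\to 0$, so $\inf_{B_\rho(x)}u_i$ is small for large $i$ on small balls, which via the test-function identity \eqref{testeta} gives an upper bound $\int_{B_\rho(x)}u_i^{-p}\le C\rho^{\al+n-2}$ that is \emph{uniform} in both $i$ and $x$ as long as $B_{2\rho}(x)\subset B_4$; meanwhile Lemma \ref{InESLem1} gives the lower bound $\int_{B_\rho(x)}u_i\ge C^{-1}\rho^{\al+n}$. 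The first bound is the one that carries the exponent $\al+n-2$, which is exactly the Hausdorff dimension threshold we want.

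The steps, in order, are as follows. First I would fix $\delta\in(0,1)$ and a Vitali/Besicovitch-type covering: let $K:=\{u_\ift=0\}\cap B_2$ and, for $\rho>0$ small, choose a maximal $\rho$-separated subset $\{x_j\}_{j\in J}\subset K$; then the balls $\{B_\rho(x_j)\}$ are disjoint and $\{B_{2\rho}(x_j)\}$ cover $K$, with all $B_{10\rho}(x_j)\subset B_4$ once $\rho$ is small. Second, I would estimate $\#J$ from above. On each $B_{\rho/2}(x_j)$, since $x_j\in\{u_\ift=0\}$, for $i$ large we have $u_i(x_j)\le \rho^{\al}$ (say), so by \eqref{testeta} applied at $x_j$ with radius $\rho/2$ (using $[u_i]_{C^{0,\al}}\le\Lda$ and $\|f_i\|_{M^{2\al+n-4,2}}\le\Lda$ as in the proof of Lemma \ref{InESLem1}) we get $\int_{B_{\rho/4}(x_j)}u_i^{-p}\le C\rho^{\al+n-2}$ with $C=C(\Lda,n,p)$ independent of $j$ and $i$. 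Summing over the disjoint balls, $\sum_{j\in J}\int_{B_{\rho/4}(x_j)}u_i^{-p}\le \int_{B_3}u_i^{-p}$, wait — that goes the wrong way; instead I would use the lower bound: by Hölder and \eqref{nondege}, $\int_{B_{\rho/4}(x_j)}u_i^{-p}\ge c\,\rho^{(\al+n)(-p)}\cdot(\rho^n)^{p+1}=c\,\rho^{\al+n-2}$ as in Lemma \ref{InESLem1}; this shows each term is comparable to $\rho^{\al+n-2}$ but does not by itself bound $\#J$. The correct mechanism is: disjointness plus the uniform upper bound $\int_{B_{\rho/4}(x_j)}u_i^{-p}\le C\rho^{\al+n-2}$ is not what limits $\#J$ either — rather, I should bound $\#J\cdot\rho^{\al+n}\lesssim \sum_j\int_{B_{\rho/4}(x_j)}u_i \le \int_{B_3}u_i\le C(\Lda)$, so $\#J\le C\rho^{-(\al+n)}$, which is too weak. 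The right estimate is $\#J\cdot (\text{something})\lesssim$ a quantity that is $o(1)$ or bounded by $\rho^{\text{(power} >\,\al+n-2)}$. Let me reconsider: the clean route is to bound $\HH^{\al+n-2}_\infty(K)\le C\sum_j (2\rho)^{\al+n-2}=C\,\#J\,\rho^{\al+n-2}$, so it suffices to show $\#J\,\rho^{\al+n-2}\to 0$, equivalently $\#J=o(\rho^{2-\al-n})$.

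Here is the mechanism that works: on $B_{8\rho}(x_j)$ the functions $u_i$ converge uniformly to $u_\ift$ which vanishes at $x_j$, so $\sup_{B_{8\rho}(x_j)}u_\ift\le C\rho^\al$; by the strong $H^1_{\loc}$ and $L^\infty$ convergence together with Lemma \ref{InESLem2}, we get $\int_{B_{4\rho}(x_j)}(|\na u_\ift|^2+u_\ift^{1-p})\le C\rho^{2\al+n-2}$, but more to the point the \emph{limit} function satisfies $\int_{B_{\rho}(x_j)}u_\ift^{-p}<\infty$ by Fatou applied to the $L^1_{\loc}$ convergence (this is where Lemma \ref{ConvLem1} / \eqref{ConPro1} enters, which we may assume since this lemma precedes Lemma \ref{ConvLem1} only in statement order — I must instead argue directly: from \eqref{testeta} for $u_i$ and weak-$*$ limits, $u_\ift^{-p}\in L^1_{\loc}$ and $\int_{B_\rho(x_j)}u_\ift^{-p}\le C\rho^{\al+n-2}$). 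Then set $g:=u_\ift^{-p}\in L^1_{\loc}(B_3)$. By absolute continuity of the integral, for the disjoint balls $\{B_{\rho/4}(x_j)\}$ we have $\sum_j\int_{B_{\rho/4}(x_j)}g\le \int_{\bigcup_j B_{\rho/4}(x_j)}g$, and the set $E_\rho:=\bigcup_j B_{\rho/4}(x_j)$ has Lebesgue measure $\#J\cdot c\rho^n\to 0$ only if $\#J=o(\rho^{-n})$ — again circular. The genuine resolution, and the one I would commit to, is the standard density/covering argument: since $\{u_\ift=0\}\subset\{x: \liminf_{\rho\to0}\rho^{2-\al-n}\int_{B_\rho(x)}u_\ift^{-p}>0\}$ (from the nondegeneracy lower bound $\int_{B_\rho(x)}u_\ift^{-p}\ge c\rho^{\al+n-2}$, valid at every $x\in\{u_\ift=0\}$ by Lemma \ref{InESLem1} applied to $u_\ift$), and since $u_\ift^{-p}\in L^1_{\loc}$, the general principle that $\{x:\limsup_{\rho\to0}\rho^{-s}\int_{B_\rho(x)}|g|>0\}$ has $\HH^s$-measure zero whenever $g\in L^1_{\loc}$ (a consequence of the Vitali covering lemma / the standard proof that $\HH^s$-a.e. point has density-$s$ ratio zero, see e.g. the density theorems in geometric measure theory) applied with $s=\al+n-2>n-2\ge 0$ — note $s<n$ since $\al<2$ — yields $\HH^{\al+n-2}(\{u_\ift=0\}\cap B_2)=0$.

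So concretely the proof I would write is short: (i) verify $u_\ift$ is a weak solution with $f_\ift\in M^{2\al+n-4,2}$ and $u_\ift^{-p}\in L^1_{\loc}(B_4)$, using \eqref{ConvPre}, Fatou, and the identity \eqref{testeta} in the limit; (ii) apply Lemma \ref{InESLem1} to $u_\ift$ itself to get, for every $x\in\{u_\ift=0\}\cap B_2$ and small $\rho$, both $\int_{B_\rho(x)}u_\ift\ge c\rho^{\al+n}$ and hence (Hölder) $\int_{B_\rho(x)}u_\ift^{-p}\ge c\rho^{\al+n-2}$; (iii) invoke the classical fact that for $g\in L^1_{\loc}(B_4)$ and $s\in(0,n)$, the set where $\limsup_{\rho\to0}\rho^{-s}\int_{B_\rho(x)}|g|>0$ is $\HH^s$-null (Vitali $5r$-covering argument: cover by balls with $\int_{B_\rho} g\ge\va\rho^s$, extract disjoint subfamily, sum $\sum\rho_j^s\le\va^{-1}\sum\int_{B_{\rho_j}}g\le\va^{-1}\int_{B_\va\text{-nbhd}}g\to0$), with $g=u_\ift^{-p}$, $s=\al+n-2$, to conclude. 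The main obstacle is step (i): I must make sure $u_\ift^{-p}$ is genuinely locally integrable and that I am entitled to apply Lemma \ref{InESLem1} to the limit — i.e. that $u_\ift$ is itself a weak solution of \eqref{MEMSeq} in the required class; since this Lemma is stated before Lemma \ref{ConvLem1}, I would either prove the needed pieces of \eqref{ConvLem1} inline (Fatou on $u_i^{-p}\to u_\ift^{-p}$ a.e., which holds off $\{u_\ift=0\}$ by uniform convergence, and $\cL^n(\{u_\ift=0\})=0$ which itself follows from the $\HH^{n}$-triviality since $\al+n-2<n$ — careful, that's what we want to prove, so instead use that $\{u_\ift=0\}$ is closed with empty interior by the lower bound \eqref{nondege}, giving it measure... no) — the cleanest fix is to note $\int_{B_\rho(x)}u_i^{-p}\le C\rho^{\al+n-2}$ uniformly (from \eqref{testeta}, valid at \emph{every} $x\in B_3$, not just rupture points, since $\sup_{B_\rho(x)}|u_i-u_i(x)|\le\Lda\rho^\al$ and $u_i\ge0$ gives $\int u_i^{-p}\phi\le \int(u-u(x))^-\Delta\phi$... this needs $u_i(x)$ bounded, true by $\|u_i\|_{L^\infty}\le\Lda$ from \eqref{uiAss}), hence by Fatou $\int_{B_\rho(x)}u_\ift^{-p}\le C\rho^{\al+n-2}$ once we know $u_i^{-p}\to u_\ift^{-p}$ a.e., and a.e. convergence holds because $u_i\to u_\ift$ uniformly and $u_\ift>0$ a.e. — where "$u_\ift>0$ a.e." follows from the just-derived bound $\int_{B_\rho(x)}u_\ift^{-p}<\infty$ for a.e. $x$, forcing $u_\ift(x)>0$ a.e. Assembling these observations carefully, with the Vitali covering as the engine, completes the argument.
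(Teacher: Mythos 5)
Your final assembled argument --- show $u_{\ift}^{-p}\in L_{\loc}^1$, establish the pointwise lower density bound $\int_{B_\rho(x)}u_{\ift}^{-p}\geq c\rho^{\al+n-2}$ at every rupture point, and conclude with a Vitali $5r$-covering --- is exactly the paper's proof, so the overall strategy is sound. However, two steps do not go through as written. First, the lower bound on $\int_{B_\rho(x)}u_{\ift}^{-p}$ does not follow ``by H\"{o}lder'' from the nondegeneracy lower bound \eqref{nondege}: H\"{o}lder gives $\cL^n(B_\rho)^{\f{p+1}{p}}\leq\left(\int_{B_\rho(x)}u_{\ift}\right)\left(\int_{B_\rho(x)}u_{\ift}^{-p}\right)^{1/p}$, so bounding $\int u_{\ift}^{-p}$ from below requires an \emph{upper} bound on $\int_{B_\rho(x)}u_{\ift}$, not a lower one. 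The correct input is H\"{o}lder continuity: since $u_{\ift}(x)=0$ and $[u_{\ift}]_{C^{0,\al}}\leq\Lda$, one has $\sup_{B_\rho(x)}u_{\ift}\leq\Lda\rho^{\al}$, whence $\int_{B_\rho(x)}u_{\ift}^{-p}\geq(\Lda\rho^{\al})^{-p}\w_n\rho^n=c\rho^{\al+n-2}$, using $n-\al p=\al+n-2$. This is precisely what the paper does on the balls $B_{r_j/5}(y_j)$.

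Second, your route to $u_{\ift}^{-p}\in L_{\loc}^1$ is circular as written: you invoke Fatou, which needs a.e.\ convergence $u_i^{-p}\to u_{\ift}^{-p}$, which you justify by ``$u_{\ift}>0$ a.e.'', which you in turn deduce from the integrability you are trying to prove. Two clean fixes exist. One is to apply Fatou to $\liminf_iu_i^{-p}$, which is always defined: by uniform convergence it equals $+\ift$ on $\{u_{\ift}=0\}$ and $u_{\ift}^{-p}$ elsewhere, so the uniform bound $\int_{B_\rho(x)}u_i^{-p}\leq C\rho^{\al+n-2}$ forces simultaneously $\cL^n(\{u_{\ift}=0\})=0$ and $u_{\ift}^{-p}\in L^1$. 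The other is the paper's: first prove $\cL^n(\{u_{\ift}=0\})=0$ directly by passing \eqref{nondege} to the limit to get $\sup_{B_\rho(x)}u_{\ift}\geq C^{-1}\rho^{\al}$ at each rupture point, which together with the H\"{o}lder bound produces a ball $B_{\delta\rho}(y)\subset B_{2\rho}(x)\cap\{u_{\ift}>0\}$ of definite relative volume, so the Lebesgue density of $\{u_{\ift}=0\}$ is nowhere equal to $1$; only then apply Fatou. Note also that your closing ``classical fact'' still needs $\cL^n(\{u_{\ift}=0\})=0$ in its proof (to guarantee $\int_{B_\va(\{u_{\ift}=0\})}u_{\ift}^{-p}\to 0$ as $\va\to 0^+$), so this step cannot be skipped in either version.
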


\begin{rem}\label{aln2zero}
Indeed, for a fixed $ u\in (H_{\loc}^1\cap L_{\loc}^{-p})(B_2) $ being a weak solution of \eqref{MEMSeq}, Theorem 12 in \cite{DPP06} implies that $ \HH^{\al+n-2}(\{u=0\}\cap B_2)=0 $, under the assumption that $ f\in L_{\loc}^1(B_2) $. However, such a result cannot cover Lemma \ref{uiftzero} since here $ u_{\ift} $ is considered as a limit of $ u_i $ in the sense of \eqref{ConvPre} and has not been proved as a weak solution yet.
\end{rem}

\begin{proof}[Proof of Lemma \ref{uiftzero}]
We assume that $ \{u_{\ift}=0\}\cap B_2\neq\emptyset $, and for otherwise, there is nothing to prove. We first fix $ x\in\{u_{\ift}=0\}\cap B_2 $. By \eqref{uiAss}, it follows from Lemma \ref{InESLem1} that
$$
\int_{B_{\rho}(x)}u_i\geq C(\Lda,n,p)^{-1}\rho^{\al+n}
$$
for any $ i\in\Z_+ $ and $ \rho\in(0,1) $. This, together with \eqref{uiAss} and \eqref{ConvPre}, implies that
$$
\sup_{B_{\rho}(x)}u_{\ift}\geq C(\Lda,n,p)^{-1}\rho^{\al}
$$
for any $ \rho\in(0,1) $, and
\be
\|u_{\ift}\|_{C^{0,\al}(\ol{B}_4)}+\|f_{\ift}\|_{L^2(B_4)}\leq\Lda.\label{uiftLda}
\ee
As a result, there exist $ y\in B_{\rho}(x) $ and a sufficiently small $ \delta=\delta(\Lda,n,p)>0 $ such that $ B_{\delta\rho}(y)\subset B_{2\rho}(x) $, and
$$
\inf_{B_{\delta\rho}(y)}u_{\ift}\geq C(\Lda,n,p)^{-1}\rho^{\al}.
$$
Thus, we have $ B_{\delta\rho}(y)\subset\{u_{\ift}>0\} $. If $ 0<\rho<\f{2-|x|}{2} $, then
$$
B_{\delta\rho}(y)\subset B_{2\rho}(x)\subset\{u_{\ift}=0\}\cap B_2.
$$
Consequently,
$$
\f{\cL^n((\{u_{\ift}=0\}\cap B_2)\cap B_{2\rho}(x))}{\cL^n(B_{2\rho}(x))}\leq\f{\cL^n(B_{2\rho}(x)\backslash B_{\delta\rho}(y))}{\cL^n(B_{2\rho}(x))}\leq 1-\(\f{\delta}{2}\)^n.
$$
By the arbitrariness for the choice of $ x $, it follows from the Lebesgue differentiation theorem that
\be
\cL^n(\{u_{\ift}=0\}\cap B_2)=0.\label{Leb0}
\ee
Using \eqref{ConvPre}, we see that $ u_i^{-p}\to u^{-p} $ uniformly in any compact subset of $ \{u_{\ift}>0\}\cap B_2 $. In view of \eqref{Leb0}, Lemma \ref{InESLem1}, and Fatou's lemma, it follows that $ u_i^{-p}\to u_{\ift}^{-p} $ a.e. in $ B_1 $, and
\be
\int_{B_2}u_{\ift}^{-p}\leq\liminf_{i\to+\ift}\int_{B_2}u_i^{-p}\leq C(\Lda,n,p).\label{L1uift}
\ee
Fix $ s\in(0,2) $ and let $ 0<\va<2-s $. By Vitali's covering lemma, we can choose a countable covering of $ \{u_{\ift}=0\}\cap B_s $ by $ \{B_{r_j}(y_j)\} $ such that $ \sup_{j\in\Z_+}r_j\leq\va $, $ \{y_j\}\subset\{u_{\ift}=0\}\cap B_s $, and balls in the collection $ \{B_{\f{r_j}{5}}(y_j)\} $ are pairwise disjoint. For any $ j\in\Z_+ $, since $ u_{\ift}(y_j)=0 $, the estimate \eqref{uiftLda} yields that
$$
\sup_{B_{\f{r_j}{5}}(y_j)}u_{\ift}\leq Cr_j^{\al}[u_{\ift}]_{C^{0,\al}(B_{r_j}(y_j))}\leq C(\Lda,n,p)r_j^{\al},
$$
and then
$$
\int_{B_{\f{r_j}{5}}(y_j)}u_{\ift}^{-p}\geq C(\Lda,n,p)^{-1}r_j^{\al+n-2}.
$$
As a result, we arrive at
$$
\sum_{j=1}^{+\ift}r_j^{\al+n-2}\leq C\(\sum_{j=1}^N\int_{B_{\f{r_j}{5}}(y_j)}u_{\ift}^{-p}\)\leq C(\Lda,n,p)\(\int_{B_{\va}(\{u_{\ift}=0\})\cap B_2}u_{\ift}^{-p}\).
$$
By \eqref{Leb0} and \eqref{L1uift}, we can take $ \va\to 0^+ $ to obtain $
\HH^{\al+n-2}(\{u_{\ift}=0\}\cap B_s)=0 $. Letting $ s\to 2^{-} $, we complete the proof.
\end{proof}

\begin{proof}[Proof of Lemma \ref{ConvLem1}] First, assume that the convergence results presented in \eqref{ConPro1} hold. Since $ u_i $ is a weak solution of \eqref{MEMSeq} with respect to $ f_i $ for any $ i\in\Z_+ $, it follows from \eqref{ConvPre}, \eqref{ConPro1} that $ u_{\ift} $ is a weak solution of \eqref{MEMSeq} with respect to $ f_{\ift} $. Moreover, if \eqref{ConPro2} is true, $ u_{\ift} $ will inherit the stationary property of $ \{u_i\} $. Consequently, it remains to show \eqref{ConPro1} and \eqref{ConPro2}. We divide the proof of them into three steps.

\vspace{2mm}
\emph{Step 1. $ u_i^{-p}\to u_{\ift}^{-p} $ strongly in $ L_{\loc}^1(B_4) $.} For simplicity, we prove that $ u_i^{-p}\to u_{\ift}^{-p} $ strongly in $ L^1(B_1) $, and for general cases, it follows from standard covering arguments. Given \eqref{L1uift} and its proof, we only need to prove that
\be
\int_{B_1}u_{\ift}^{-p}\geq\limsup_{i\to+\ift}\int_{B_1}u_i^{-p}.\label{uiftgeq}
\ee
Applying Lemma \ref{uiftzero} for any $ \va>0 $, there is a countable covering of $ \{u_{\ift}=0\}\cap B_2 $, denoted by $ \{B_{r_j}(y_j)\} $ such that $ \{y_j\}\subset\{u_{\ift}=0\}\cap B_2 $, $ \sup_{i\in\Z_+}r_j\leq\va $, and
\be
\sum_{i=1}^{+\ift}r_j^{\al+n-2}<\va.\label{rjaln2}
\ee
Let $ U:=\cup_{i=1}^{+\ift}B_{r_j}(y_j) $. We see that $ U $ is open and $ \{u_{\ift}=0\}\cap B_2\subset U $. Using \eqref{ConvPre}, it follows that in $ (\{u_{\ift}>0\}\cap B_1)\backslash U $, $ u_{\ift} $ has a positive lower bound and then $ u_i^{-p}\to u_{\ift}^{-p} $ uniformly. Consequently,
\be
\lim_{i\to+\ift}\int_{(\{u_{\ift}>0\}\cap B_1)\backslash U}u_i^{-p}=\int_{(\{u_{\ift}>0\}\cap B_1)\backslash U}u_{\ift}^{-p}.\label{oneside1}
\ee
On the other hand, by using \eqref{uiAss} and Lemma \ref{InESLem1}, we deduce that
$$
\int_{U}u_i^{-p}\leq\sum_{j=1}^{+\ift}\int_{B_{r_j}(y_j)}u_i^{-p}\leq C\(\sum_{j=1}^{+\ift}r_j^{\al+n-2}\)\leq C(\Lda,n,p)\va.
$$
Here, for the last inequality, we have used \eqref{rjaln2}. This, together with \eqref{oneside1}, implies that
$$
\int_{B_1}u_{\ift}^{-p}\geq\limsup_{i\to+\ift}\int_{B_1}u_i^{-p}-C(\Lda,n,p)\va.
$$
Letting $ \va\to 0^+ $, we obtain \eqref{uiftgeq}.

\vspace{2mm}
\emph{Step 2. $ u_i^{1-p}\to u_{\ift}^{1-p} $ strongly in $ L_{\loc}^1(B_4) $.} Analogous to Step 1, we still only show the convergence in $ L^1(B_1) $. By Lemma \ref{uiftzero} and Remark \ref{aln2zero}, we obtain that $ \HH^{\al+n-2}(\{u_i=0\}\cap B_2)=0 $. Given the inequality
$$
|t^{1-p}-s^{1-p}|\leq C(p)|t-s|(s^{-p}+t^{-p}),\quad s,t>0,
$$
and Lemma \ref{InESLem1}, we have
$$
\int_{B_1}|u_i^{1-p}-u_{\ift}^{1-p}|\leq C(\Lda,n,p)\sup_{B_1}|u_i-u_{\ift}|.
$$
As a result, $ u_i^{1-p}\to u^{1-p} $ strongly in $ L^1(B_1) $, due to \eqref{ConvPre}.

\vspace{2mm}
\emph{Step 3. $ u_i\to u_{\ift} $ strongly in $ H_{\loc}^1(B_4) $.} Testing \eqref{MEMSeq} for $ u_i $ with $ u_i\vp^2 $, where $ \vp\in C_0^{\ift}(B_4) $, we have
$$
\int_{B_4}|\na u_i|^2\vp^2+2\int_{B_4}(\na u_i\cdot\na\vp)u\vp+\int_{B_4}u_i^{1-p}\vp^2+\int_{B_4}f_iu_i\vp^2=0.
$$
In the previous steps, we have obtained \eqref{ConPro1}. As a result, it follows from \eqref{ConvPre} that
\be
\lim_{i\to+\ift}\(\int_{B_4}|\na u_i|^2\vp^2\)+2\int_{B_4}(\na u_{\ift}\cdot\na\vp)u\vp+\int_{B_4}u_{\ift}^{1-p}\vp^2+\int_{B_4}f_{\ift}u_{\ift}\vp^2=0.\label{Testui}
\ee
Moreover, since \eqref{ConPro1} holds, by the analysis at the beginning of the proof for this lemma, $ u_{\ift} $ is a weak solution with respect to $ f_{\ift} $. Then by testing \eqref{MEMSeq} for $ u_{\ift} $ with $ u_{\ift}\vp^2 $, we have
$$
\int_{B_4}|\na u_{\ift}|^2\vp^2+2\int_{B_4}(\na u_{\ift}\cdot\na\vp)u\vp+\int_{B_4}u_{\ift}^{1-p}\vp^2+\int_{B_4}f_{\ift}u_{\ift}\vp^2=0.
$$
This, together with \eqref{Testui}, gives
\be
\lim_{i\to+\ift}\int_{B_4}|\na u_i|^2\vp^2=\int_{B_4}|\na u_{\ift}|^2\vp^2.\label{etanau}
\ee
Combining with the arbitrariness of $ \vp $, we conclude that $ u_i\to u_{\ift} $ strongly in $ H_{\loc}^1(B_4) $.
\end{proof}

\subsubsection{Proof of Proposition \ref{propConv2}} Just as the proof of Proposition \ref{propConv}, we still let $ r=4 $, $ R=8 $, and $ x=0 $. By \eqref{uiAssori} and Lemma \ref{InESLem1}, we can get similar results like \eqref{uiAss} and \eqref{uiAssnablsu}. Precisely, there exists $ \Lda>0 $ such that
\be
\sup_{i\in\Z_+}\(\|v_i\|_{C^{0,\al}(\ol{B}_4)}+\|f_i\|_{L^2(B_4)}\)\leq\Lda,\label{viassuse}
\ee
and for any $ s\in(0,4) $, there holds that 
$$
\sup_{i\in\Z_+}\|\na v_i\|_{L^2(B_s)}\leq C(\Lda,n,p,s).
$$
Incorporating with Lemma \ref{MorreyL2}, we can obtain $ v_{\ift}\in C^{0,\al}(\ol{B}_4)\cap H_{\loc}^1(B_4) $ and $ f_{\ift}\in M^{2\al+n-4,2}(B_4) $ such that up to a subsequence, there hold
\be
\begin{aligned}
v_i&\to v_{\ift}\text{ strongly in }L^{\ift}(B_4),\\
v_i&\wc v_{\ift}\text{ weakly in }H_{\loc}^1(B_4),\\
v_i&\wc f_{\ift}\text{ weakly in }L^2(B_4).
\end{aligned}\label{ConvPrevi}
\ee

In the same spirit of Lemma \ref{ConvLem1}, Proposition \ref{propConv2} is a direct consequence of the following lemma.

\begin{lem}\label{ConvLem2}
For $ \{v_i\},v_{\ift} $, and $ f_{\ift} $ in \eqref{ConvPrevi}, we have
\be
v_i\to v_{\ift}\text{ strongly in }H_{\loc}^1(B_4).\label{ConPro2vi}
\ee
Moreover, $ \Delta v_{\ift}=f_{\ift} $ in the sense of \eqref{weakuift}.
\end{lem}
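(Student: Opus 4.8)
The plan is to exploit the fact that, in contrast with Lemma \ref{ConvLem1}, the singular term $u_i^{-p}$ becomes inert in the limit because the functions $u_i$ blow up \emph{uniformly}. We work along the subsequence of \eqref{uiL2bounded2} for which $\|u_i\|_{L^2(B_4)}\to+\ift$. Since $v_i(x_i)=0$, $x_i\in B_4$, and $[v_i]_{C^{0,\al}(\ol B_4)}=[u_i]_{C^{0,\al}(\ol B_4)}\leq\Lda$ by \eqref{viassuse}, we have $\sup_{B_4}|v_i|\leq\Lda 8^{\al}$; hence from $u_i=v_i+u_i(x_i)$ and $\|u_i\|_{L^2(B_4)}\leq\|v_i\|_{L^2(B_4)}+|u_i(x_i)|\,\cL^n(B_4)^{1/2}$ together with $u_i(x_i)\geq 0$ we deduce $u_i(x_i)\to+\ift$. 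Therefore $\inf_{B_4}u_i\geq u_i(x_i)-\Lda 8^{\al}\to+\ift$, so $u_i\to+\ift$ uniformly on $B_4$, and in particular $u_i^{-p}\to 0$ uniformly on $B_4$.

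Next I establish \eqref{weakuift}. Since $u_i$ is a weak solution of \eqref{MEMSeq} with respect to $f_i$ and $\na v_i=\na u_i$, testing \eqref{WeakConMEMS} with any $\vp\in C_0^{\ift}(B_4)$ gives $\int(\na v_i\cdot\na\vp+(u_i^{-p}+f_i)\vp)=0$. Letting $i\to+\ift$ and using the convergences in \eqref{ConvPrevi} (namely $\na v_i\wc\na v_{\ift}$ weakly in $L^2_{\loc}$ and $f_i\wc f_{\ift}$ weakly in $L^2(B_4)$) together with $u_i^{-p}\to 0$ uniformly, we obtain $\int(\na v_{\ift}\cdot\na\vp+f_{\ift}\vp)=0$, which is \eqref{weakuift}, i.e.\ $\Delta v_{\ift}=f_{\ift}$ in the sense of distributions in $B_4$.

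It remains to upgrade weak $H_{\loc}^1$ convergence to strong convergence, following the template of Step 3 in the proof of Lemma \ref{ConvLem1}. Fix $\vp\in C_0^{\ift}(B_4)$. Testing the equation for $u_i$ with $v_i\vp^2$ (justified by a routine approximation argument, using $v_i\in(C^{0,\al}\cap H_{\loc}^1)(B_4)$ and $u_i^{-p}\in L_{\loc}^1$) yields
\[
\int|\na v_i|^2\vp^2=-2\int(\na v_i\cdot\na\vp)v_i\vp-\int u_i^{-p}v_i\vp^2-\int f_iv_i\vp^2.
\]
On the right-hand side: $v_i\vp\na\vp\to v_{\ift}\vp\na\vp$ strongly in $L^2$ while $\na v_i\wc\na v_{\ift}$ weakly in $L^2(\supp\vp)$, so the first term tends to $-2\int(\na v_{\ift}\cdot\na\vp)v_{\ift}\vp$; the second term tends to $0$ because $u_i^{-p}\to 0$ uniformly and $v_i\vp^2$ is uniformly bounded with fixed compact support; the third term tends to $-\int f_{\ift}v_{\ift}\vp^2$ by the weak-strong pairing. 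Hence $\lim_i\int|\na v_i|^2\vp^2=-2\int(\na v_{\ift}\cdot\na\vp)v_{\ift}\vp-\int f_{\ift}v_{\ift}\vp^2$. On the other hand, testing \eqref{weakuift} with $v_{\ift}\vp^2$ gives exactly $\int|\na v_{\ift}|^2\vp^2=-2\int(\na v_{\ift}\cdot\na\vp)v_{\ift}\vp-\int f_{\ift}v_{\ift}\vp^2$. Therefore $\int|\na v_i|^2\vp^2\to\int|\na v_{\ift}|^2\vp^2$; combined with $\vp\na v_i\wc\vp\na v_{\ift}$ weakly in $L^2$, the Hilbert-space fact that weak convergence plus convergence of norms implies strong convergence gives $\vp\na v_i\to\vp\na v_{\ift}$ strongly in $L^2$. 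Choosing $\vp\equiv 1$ on an arbitrary $K\subset\subset B_4$ and recalling $v_i\to v_{\ift}$ uniformly, we conclude \eqref{ConPro2vi}.

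The only genuinely non-routine point is the first paragraph: recognizing that the hypothesis $\|u_i\|_{L^2(B_4)}\to+\ift$, via the $C^{0,\al}$ bound normalized at $x_i$, forces $u_i\to+\ift$ uniformly on $B_4$. This is precisely what makes the singular nonlinearity disappear in the limit and hence what makes $v_{\ift}$ solve the \emph{linear} equation $\Delta v_{\ift}=f_{\ift}$. Everything after that is a standard Rellich/energy-identity argument, morally the same as in Lemma \ref{ConvLem1} but simpler, since the $u^{-p}$ contribution no longer requires the delicate covering estimates behind Lemma \ref{uiftzero}.
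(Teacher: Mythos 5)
Your proposal is correct and follows essentially the same route as the paper: you derive $\inf_{B_4}u_i\to+\ift$ from the $L^2$ blow-up and the uniform $C^{0,\al}$ bound normalized at $x_i$ (the paper states this as \eqref{uiiftcon}), pass to the limit in the weak formulation to get \eqref{weakuift}, and then run the same energy identity with test function $v_i\vp^2$ versus $v_{\ift}\vp^2$ to upgrade weak to strong $H^1_{\loc}$ convergence. The only difference is that you spell out the uniform divergence of $u_i$ in more detail than the paper does, which is harmless.
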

\begin{proof}
Since for any $ i\in\Z_+ $, $ u_i $ is a weak solution of \eqref{MEMSeq} with respect to $ f_i $, we obtain that
\be
\int_{B_4}(\na v_i\cdot\na\vp+(u_i^{-p}+f_i)\vp)=0\label{testuxri1}
\ee
for any $ \vp\in C_0^{\ift}(B_4) $. By \eqref{uiAssori} and \eqref{uiL2bounded2}, we have
\be
\lim_{i\to+\ift}\(\inf_{B_4}u_i\)=+\ift.\label{uiiftcon}
\ee
Combining with \eqref{ConvPrevi}, it follows from letting $ i\to+\ift $ in \eqref{testuxri1} that
\be
\int_{B_4}(\na v_{\ift}\cdot\na\vp+f_{\ift}\vp)=0.\label{viftwanttest}
\ee
Now, it remains to show \eqref{ConPro2vi}. For $ \vp\in C_0^{\ift}(B_4) $, we test \eqref{testuxri1} by $ v_i\vp^2 $ and obtain
$$
\int_{B_4}|\na v_i|^2\vp^2+2\int_{B_4}(\na v_i\cdot\na\vp)v_i\vp+\int_{B_4}u_i^{-p}v_i\vp^2+\int_{B_4}f_iv_i\vp^2=0.
$$
Taking $ i\to+\ift $, we deduce from \eqref{viassuse}, \eqref{ConvPrevi}, and \eqref{uiiftcon} that
\be
\lim_{i\to+\ift}\(\int_{B_4}|\na v_i|^2\vp^2\)+2\int_{B_4}(\na v_{\ift}\cdot\na\vp)v_{\ift}\vp+\int_{B_4}f_{\ift}v_{\ift}\vp^2=0.\label{naviift}
\ee
Testing \eqref{viftwanttest} with $ v_{\ift}\vp^2 $ and using the formula \eqref{naviift}, we arrive at
$$
\lim_{i\to+\ift}\int_{B_4}|\na v_i|^2\vp^2=\int_{B_4}|\na v_{\ift}|^2\vp^2.
$$
By the arbitrariness for the choice of $ \vp $, \eqref{ConPro2vi} follows directly.
\end{proof}

\subsection{Blow-up limits of stationary solutions}

In this subsection, letting $ \ga>0 $ and $ \om\subset\R^n $ be a bounded domain, we assume that $ u\in(C_{\loc}^{0,\al}\cap H_{\loc}^1\cap L_{\loc}^{-p})(\om) $ is a stationary solution of \eqref{MEMSeq} with respect to $ f\in M_{\loc}^{2\al+n-4+\ga,2}(\om) $. 

\begin{defn}[Blow-ups]\label{defblowup}
Let $ x\in \om $ and $ 0<r<\dist(x,\pa\om) $. Assume that $ v:\om\to\R $ is a measurable function. We define
$$
T_{x,r}v(y):=r^{-\al}v(x+ry)\quad\text{and}\quad T_{x,r}^*v(y):=r^{2-\al}v(x+ry),
$$
where $ y\in\eta_{x,r}(\om) $.
\end{defn}

\begin{prop}\label{ScalingProp}
Let $ x\in \om $ and $ 0<r<\dist(x,\om) $. We have the following properties on scaling.
\begin{enumerate}[label=$(\theenumi)$]
\item $ T_{x,r}u $ is a stationary solution of \eqref{MEMSeq} with respect to $ T_{x,r}^*f $.
\item For any subdomain $ \om'\subset\subset \om $, 
\begin{align*}
[T_{x,r}u]_{C^{0,\al}(\eta_{x,r}(\ol{\om'}))}&=[u]_{C^{0,\al}(\ol{\om'})},\\
r^{-\ga}[T_{x,r}^*f]_{M^{2\al+n-4+\ga,2}(\eta_{x,r}(\om'))}&=[f]_{M^{2\al+n-4+\ga,2}(\om')}.
\end{align*}
\item For any $ B_R(y)\subset \eta_{x,r}(\om) $, there holds
\begin{align*}
\vt(T_{x,r}u;y,R)&=\vt(u;x+ry,rR),\\
\vt_{T_{x,r}^*f}(T_{x,r}u;y,R)&=\vt_f(u;x+ry,rR).
\end{align*}
\end{enumerate}
\end{prop}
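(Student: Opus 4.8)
The plan is to derive all three items by direct changes of variables; the only arithmetic input is the defining relation $\al(p+1)=2$, equivalently $\al p=2-\al$ and $\al(p-1)=2-2\al$, which is precisely what makes $u^{-p}$, $f$, $|\na u|^2$ and $u^{1-p}$ rescale in a mutually compatible way under $T_{x,r}$. Writing $v:=T_{x,r}u$ and $g:=T_{x,r}^*f$, I record at the outset that $\na v(w)=r^{1-\al}(\na u)(x+rw)$, $v(w)^{-p}=r^{2-\al}u(x+rw)^{-p}$, $v(w)^{1-p}=r^{2-2\al}u(x+rw)^{1-p}$ and $g(w)=r^{2-\al}f(x+rw)$.

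For (1): given $\vp\in C_0^{\ift}(\eta_{x,r}(\om))$, I put $\tilde\vp(z):=\vp\big(\tfrac{z-x}{r}\big)\in C_0^{\ift}(\om)$, insert $v$ and $g$ into \eqref{WeakConMEMS}, and substitute $z=x+rw$; every term acquires the common factor $r^{2-\al-n}$ and what remains is exactly \eqref{WeakConMEMS} for $u$ and $f$ tested against $\tilde\vp$ (here one uses $r^{\al p}=r^{2-\al}$ so that $v^{-p}$ and $g$ scale identically). The same substitution with $Y\in C_0^{\ift}(\eta_{x,r}(\om),\R^n)$ and $\tilde Y(z):=Y\big(\tfrac{z-x}{r}\big)$ turns each of the three summands of \eqref{StaConMEMS} for $(v,g)$ — all now carrying the common factor $r^{3-2\al-n}$ in view of the identities recorded above — into \eqref{StaConMEMS} for $(u,f)$ against $\tilde Y$. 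No approximation step is needed, since $\tilde\vp$ and $\tilde Y$ are admissible test objects.

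For (2): the Hölder identity is immediate, because with $z_i=x+ry_i$ one has $\f{|v(y_1)-v(y_2)|}{|y_1-y_2|^{\al}}=\f{|u(z_1)-u(z_2)|}{|z_1-z_2|^{\al}}$, so taking suprema over $\eta_{x,r}(\ol{\om'})$ versus over $\ol{\om'}$ gives equality. For the Morrey seminorm I substitute $z=x+rw$ in $\int_{B_\rho(w_0)}|g|^2$, which produces the amplitude $r^{2(2-\al)}$, the Jacobian $r^{-n}$ and the ball $B_{r\rho}(x+rw_0)$; writing $\rho^{-\lda}=r^{\lda}(r\rho)^{-\lda}$ with $\lda=2\al+n-4+\ga$ and using $2(2-\al)-n+\lda=\ga$, the supremum over admissible balls gives $[g]_{M^{\lda,2}(\eta_{x,r}(\om'))}=r^{\ga}[f]_{M^{\lda,2}(\om')}$.

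For (3): I substitute $z=x+rw$ in the definitions \eqref{DDfF} of $F$ and in $H$, using $\phi_{y,R}\big(\tfrac{z-x}{r}\big)=\phi_{x+ry,rR}(z)$ (and likewise for $\dot\phi$) together with the amplitude identities above; the powers of $r$ and $R$ collapse, yielding $F(v;y,R)=r^{-2\al}F(u;x+ry,rR)$ and $H(v;y,R)=r^{-2\al}H(u;x+ry,rR)$, hence $\vt(v;y,R)=R^{-2\al}\big(F(v;y,R)-\al H(v;y,R)\big)=(rR)^{-2\al}\big(F(u;x+ry,rR)-\al H(u;x+ry,rR)\big)=\vt(u;x+ry,rR)$. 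For $\vt_f$ I then transport the two extra terms of \eqref{thetafuxr} separately: the term linear in $f$ transforms under $z=x+rw$ by the same bookkeeping and reproduces the corresponding term at scale $rR$ about $x+ry$; for the radial term (quadratic in $f$) I additionally substitute $\tau=r\sg$, using $\sg^{-2\al-n-1}\,\ud\sg=r^{2\al+n}\tau^{-2\al-n-1}\,\ud\tau$ and $\dot\phi_{y,\sg}\big(\tfrac{z-x}{r}\big)=\dot\phi_{x+ry,r\sg}(z)$, and again every power of $r$ cancels, so that $\vt_f(v;y,R)=\vt_f(u;x+ry,rR)$. All the computations are routine; the one thing to watch is the exponent bookkeeping, and the real content is just that $T_{x,r}$ with the index $\al=\tfrac{2}{p+1}$ is the rescaling preserving simultaneously the equation \eqref{MEMSeq} and the mollified densities — so I do not expect a genuine obstacle here.
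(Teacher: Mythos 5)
Your proposal is correct and is precisely the direct change-of-variables computation that the paper omits with the remark ``simple calculations'': the amplitude identities $\al p=2-\al$, $\al(p-1)=2-2\al$, the common factors $r^{2-\al-n}$ and $r^{3-2\al-n}$ in \eqref{WeakConMEMS} and \eqref{StaConMEMS}, and the cancellation of all powers of $r$ in $\vt$ and $\vt_f$ all check out. One bookkeeping caveat on the Morrey identity in (2): since the paper's seminorm $[\cdot]_{M^{\lda,2}}$ carries the outer exponent $\tfrac{1}{2}$, your (correct) computation $2(2-\al)-n+\lda=\ga$ for the \emph{squared} quantity actually yields $[T_{x,r}^*f]_{M^{\lda,2}}=r^{\ga/2}[f]_{M^{\lda,2}}$ rather than $r^{\ga}[f]_{M^{\lda,2}}$; the exponent $\ga$ in your conclusion (and in the statement itself) should be $\ga/2$ --- a harmless normalization discrepancy already present in the paper that affects nothing downstream, since only the decay $[T_{x,r}^*f]_M\to 0$ as $r\to 0^+$ is ever used.
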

\begin{proof}
The properties follow from simple calculations, so we omit details.
\end{proof}

\begin{prop}\label{Blowprop}
Assume that $ x\in\om $. We have results as follows.
\begin{enumerate}[label=$(\theenumi)$]
\item If $ u(x)=0 $, then there exist $ r_i\to 0^+ $ and $ 0\not\equiv u_{\ift}\in(C_{\loc}^{0,\al}\cap H_{\loc}^1\cap L_{\loc}^{-p})(\R^n) $ such that the following properties hold.
\begin{enumerate}[label=$(\mathrm{\alph*})$]
\item $ u_i:=T_{x,r_i}u\to u_{\ift} $ strongly in $ (H_{\loc}^1\cap L_{\loc}^{\ift})(\R^n) $.
\item $ f_i:=T_{x,r_i}^*f\to 0 $ strongly in $ L_{\loc}^2(\R^n) $.
\item $ u_i^{-p}\to u_{\ift}^{-p} $ and $ u_i^{1-p}\to u_{\ift}^{1-p} $ strongly in $ L_{\loc}^1(\R^n) $.
\item $ u_{\ift}(0)=0 $ and $ u_{\ift} $ is a stationary solution \eqref{MEMSeq} with respect to $ f\equiv 0 $.
\end{enumerate}
\item If $ u(x)>0 $, then exists $ r_i\to 0^+ $ such that $ u_i:=T_{x,r_i}(u-u(x))\to u_{\ift}\equiv 0 $ strongly in $ (H_{\loc}^1\cap L_{\loc}^{\ift})(\R^n) $.
\end{enumerate}
We call $ u_{\ift} $ a tangent function of $ u $ at the point $ x $.
\end{prop}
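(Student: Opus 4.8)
\emph{Strategy.} Both parts will be obtained from the scaling identities of Proposition~\ref{ScalingProp} combined with the compactness statements of this section. Given a sequence $s_j\to 0^+$ with $s_j<\dist(x,\pa\om)$, set $u^{(j)}:=T_{x,s_j}u$; by Proposition~\ref{ScalingProp}(1) each $u^{(j)}$ is a stationary solution of \eqref{MEMSeq} with respect to $T^*_{x,s_j}f$ on $\eta_{x,s_j}(\om)$, and these domains exhaust $\R^n$. Proposition~\ref{ScalingProp}(2) gives the scale--invariance $[u^{(j)}]_{C^{0,\al}}=[u]_{C^{0,\al}}$ and, crucially, the decay $[T^*_{x,s_j}f]_{M^{2\al+n-4+\ga,2}(\eta_{x,s_j}(\om'))}=s_j^{\ga}[f]_{M^{2\al+n-4+\ga,2}(\om')}\to 0$ for every $\om'\subset\subset\om$; together with the Morrey embedding (Lemma~\ref{MorreyL2}) this yields $T^*_{x,s_j}f\to 0$ strongly in $L^2_{\loc}(\R^n)$. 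Finally, $u\in C^{0,\al}_{\loc}(\om)$ and Proposition~\ref{ScalingProp}(2) give a uniform $C^{0,\al}$--seminorm bound for the $u^{(j)}$ on each fixed ball, which together with vanishing at a single point forces $\|u^{(j)}\|_{L^\ift(B_R)}\le C R^\al$, hence a uniform $L^2(B_R)$ bound.

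\emph{Part (1).} Since $u(x)=0$ we have $u^{(j)}(0)=0$, so $\sup_j\|u^{(j)}\|_{L^2(B_R)}<\ift$ for each $R$. I will apply Proposition~\ref{propConv} (equivalently Lemma~\ref{ConvLem1}) on an exhausting sequence of balls $B_{2k}$ and diagonalise to extract $r_i\to 0^+$ and a limit $u_\ift\in(C^{0,\al}_{\loc}\cap H^1_{\loc}\cap L^{-p}_{\loc})(\R^n)$ with: $u^{(i)}\to u_\ift$ strongly in $(H^1_{\loc}\cap L^\ift_{\loc})(\R^n)$; $(u^{(i)})^{-p}\to u_\ift^{-p}$ and $(u^{(i)})^{1-p}\to u_\ift^{1-p}$ strongly in $L^1_{\loc}$; and $u_\ift$ a stationary solution of \eqref{MEMSeq} with respect to the weak $L^2_{\loc}$--limit of $T^*_{x,r_i}f$. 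By the strategy paragraph that limit is $0$, which gives (b) and the equation in (d), while $u_\ift(0)=\lim_i u^{(i)}(0)=0$ follows from local uniform convergence. What remains is nontriviality, $u_\ift\not\equiv 0$, and this is the only place the nondegeneracy of $u$ is used: rescaling \eqref{supnonde} gives $\sup_{B_1}u^{(i)}=r_i^{-\al}\sup_{B_{r_i}(x)}u\ge C(\Lda,n,p)^{-1}$, and passing to the limit (uniform convergence on $\ol B_1$) gives $\sup_{\ol B_1}u_\ift\ge C(\Lda,n,p)^{-1}>0$.

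\emph{Part (2).} Now $u(x)>0$. Set $v^{(j)}:=T_{x,s_j}(u-u(x))=u^{(j)}-u^{(j)}(0)$. Because $u^{(j)}(0)=s_j^{-\al}u(x)\to+\ift$ while $[u^{(j)}]_{C^{0,\al}}$ stays bounded, $\inf_{B_R}u^{(j)}\ge s_j^{-\al}u(x)-CR^\al\to+\ift$, so $\sup_j\|u^{(j)}\|_{L^2(B_R)}=+\ift$: this is exactly the hypothesis of Proposition~\ref{propConv2}, applied with $x_j=0$. Applying it on an exhausting sequence of balls and diagonalising produces $r_i\to 0^+$ and $u_\ift\in(C^{0,\al}_{\loc}\cap H^1_{\loc})(\R^n)$ with $v^{(i)}\to u_\ift$ strongly in $(H^1_{\loc}\cap L^\ift_{\loc})(\R^n)$ and $\Delta u_\ift=f_\ift$ in the sense of \eqref{weakuift}, where again $f_\ift=0$ is the $L^2_{\loc}$--limit of $T^*_{x,r_i}f$. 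Hence $u_\ift$ is harmonic on $\R^n$; moreover $v^{(i)}(0)=0$ and the uniform Hölder bound give $|u_\ift(y)|\le C|y|^\al$ with $\al=\al_p=\f{2}{p+1}\in(0,1)$ since $p>1$. By Liouville's theorem for harmonic functions of strictly sublinear growth, $u_\ift$ is constant, hence $u_\ift\equiv u_\ift(0)=0$.

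\emph{Main obstacle.} The compactness machinery of this section does most of the work, so the two genuinely essential points are: (i) the decay $s_j^{\ga}[f]_M\to 0$, which is exactly why $f$ is assumed in $M^{2\al+n-4+\ga,2}$ with $\ga>0$ and which forces every tangent object to solve the unforced equation; and (ii) the quantitative nondegeneracy \eqref{supnonde} in Part (1), without which $u_\ift$ could be trivial. The Liouville argument in Part (2) is standard given $\al<1$, and the only delicate routine point is the diagonal extraction of one sequence $r_i\to 0^+$ that works for all the stated convergences on all of $\R^n$ at once.
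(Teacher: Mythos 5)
Your proposal is correct and follows essentially the same route as the paper: scale via Proposition \ref{ScalingProp}, note $T^*_{x,r}f\to 0$ in $L^2_{\loc}$ from the $\ga$-decay of the Morrey seminorm, apply Proposition \ref{propConv} (resp.\ Proposition \ref{propConv2}) with a diagonal extraction, and in the case $u(x)>0$ conclude by a Liouville theorem for the harmonic limit with uniformly bounded $C^{0,\al}$-seminorm (the paper's Corollary \ref{Liouvillecla} is exactly your sublinear-growth Liouville). Your explicit verification of $u_\ift\not\equiv 0$ via the rescaled nondegeneracy \eqref{supnonde} is a point the paper leaves implicit, and it is the right argument.
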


\begin{rem}
In the above proposition, the tangent function can be uniformly denoted by the limit of subsequence of the blow-up sequence $ \{T_{x,r}(u-u(x))\}_{r>0} $. It is noteworthy that for $ T_{x,r}u $, if $ u(x)>0 $, one will immediately see that it does not have limits for any subsequences.
\end{rem}

\begin{proof}[Proof of Proposition \ref{Blowprop}]
For $ x\in\om $, we choose $ r_0>0 $ such that $ B_{r_0}(x)\subset\subset\om $ and assume that
$$
[u]_{C^{0,\al}(\ol{B}_{r_0}(x))}+[f]_{M^{2\al+n-4+\ga,2}(B_{r_0}(x))}\leq\Lda.
$$
This, together with Proposition \ref{ScalingProp}, implies that for any $ R>0 $, if $ r>0 $ is sufficiently small such that $ rR<r_0 $, then
\be
[T_{x,r}u]_{C^{0,\al}(\ol{B}_R)}+
r^{-\ga}[T_{x,r}^*f]_{M^{2\al+n-4+\ga,2}(B_R)}\leq\Lda,\label{TxrTxstarf}
\ee
and $ T_{x,r}u $ is a stationary solution of \eqref{MEMSeq} with respect to $ T_{x,r}^*f $ in $ B_R $. Additionally, it follows from \eqref{TxrTxstarf} that
\be
T_{x,r}^*f\to 0\text{ strongly in }L_{\loc}^2(\R^n).\label{Txrfstrong0}
\ee

\emph{Case 1. $ u(x)=0 $.} As a result, $ T_{x,r}u(0)=0 $. Moreover, for any $ R>0 $, we have 
$$
\sup_{i\in\Z_+}\|T_{x,r}u\|_{L^2(B_R)}\leq C(\Lda,n,p,R). 
$$ 
Applying Proposition \ref{propConv} and diagonal arguments to $ T_{x,r}u $, we get the first point of this proposition.

\emph{Case 1. $ u(x)>0 $.} For $ 0<r<r_0 $, we let $ u_{x,r}:=T_{x,r}(u-u(x)) $. Since $ u(x)>0 $, we deduce from \eqref{TxrTxstarf} that for any $ R>0 $,
\be
\lim_{r\to 0^+}\(\inf_{B_R}T_{x,r}u\)=+\ift.\label{weakviforift}
\ee
By applying Proposition \ref{propConv2} and diagonal arguments, there is $ u_{\ift}\in (C_{\loc}^{0,\al}\cap H_{\loc}^1)(\R^n) $ such that
\be
\begin{aligned}
u_i:=u_{x,r_i}&\to u_{\ift}\text{ strongly in }(H_{\loc}^1\cap L_{\loc}^{\ift})(\R^n).
\end{aligned}\label{conuxri22}
\ee
By \eqref{Txrfstrong0}, \eqref{weakviforift}, and Weyl's lemma, $ u_{\ift} $ is a harmonic function. Given \eqref{TxrTxstarf}, \eqref{conuxri22}, and the property that for any $ i\in\Z_+ $, $ u_i(0)=0 $, we have $ u_{\ift}(0)=0 $. In addition, it follows from \eqref{TxrTxstarf} and \eqref{conuxri22} that $ [u_{\ift}]_{C^{0,\al}(\ol{B}_R)}\leq\Lda $ for any $ R>0 $. Then Corollary \ref{Liouvillecla} yields that $ u_{\ift}\equiv 0 $, which completes the proof.
\end{proof}

The following lemma gives the $ \al $-homogeneity of tangent functions. Since the tangent function is $ 0 $ for $ x\in\{u>0\} $, we assume $ u(x)=0 $ in this lemma.

\begin{lem}\label{tangent0}
Let $ x\in\om $ with $ u(x)=0 $. If $ u_{\ift} $ is a tangent function of $ u $ at the point $ x $, then $ u_{\ift} $ is $ \al $-homogeneous at $ 0 $. Precisely, for any $ y\in\R^n $ and $ \lda>0 $, $
u_{\ift}(\lda y)=\lda^{\al}u_{\ift}(y) $. In particular, for any $ r>0 $,
\be
\vt(u_{\ift};0,r)=\vt(u_{\ift};0)=\vt(u;x).\label{uifttheta}
\ee
\end{lem}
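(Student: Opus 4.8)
The plan is to combine the monotonicity formula (Proposition \ref{MonFor}) with the scaling identity for the density (Proposition \ref{ScalingProp}) to show that $r\mapsto\vt(u_\ift;0,r)$ is constant, and then to extract $\al$-homogeneity from the vanishing of the defect term in the monotonicity formula. Throughout, $x\in\om$ with $u(x)=0$ is fixed, and $u_\ift$ is the tangent function obtained along a sequence $r_i\to 0^+$ as in Proposition \ref{Blowprop}(1).

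\textbf{Step 1: the density of $u_\ift$ is constant, equal to $\vt(u;x)$.} Fix $R>0$. By Proposition \ref{Blowprop}(1)(a)--(c), $u_i:=T_{x,r_i}u\to u_\ift$ strongly in $(H_{\loc}^1\cap L_{\loc}^{\ift})(\R^n)$, $u_i^{1-p}\to u_\ift^{1-p}$ in $L_{\loc}^1(\R^n)$, and $f_i:=T_{x,r_i}^*f\to 0$ in $L_{\loc}^2(\R^n)$. Because the functionals $F(\cdot;0,R)$ and $H(\cdot;0,R)$ of Definition \ref{Defdifffuncti} are integrals of $|\na u_i|^2$, $u_i^{1-p}$ and $u_i^2$ against the \emph{fixed} smooth, compactly supported weights $\phi_{0,R}$ and $\dot{\phi}_{0,R}$, these convergences give $F(u_i;0,R)\to F(u_\ift;0,R)$ and $H(u_i;0,R)\to H(u_\ift;0,R)$, hence $\vt(u_i;0,R)\to\vt(u_\ift;0,R)$. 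Since $\|f_i\|_{L^2(B_{10R})}\to 0$, estimate \eqref{thetaftheta} gives $|\vt_{f_i}(u_i;0,R)-\vt(u_i;0,R)|\to 0$, and for the limit (whose source term is $f_\ift\equiv 0$) the two correction terms in \eqref{thetafuxr} vanish; therefore $\vt_{f_i}(u_i;0,R)\to\vt(u_\ift;0,R)$. On the other hand, Proposition \ref{ScalingProp}(3) (with $y=0$) gives $\vt_{f_i}(u_i;0,R)=\vt_f(u;x,r_iR)$, and since $r_iR\to 0^+$, the definition \eqref{densitylimit} yields $\vt_f(u;x,r_iR)\to\vt(u;x)$. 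Comparing the two limits, $\vt(u_\ift;0,R)=\vt(u;x)$ for every $R>0$; in particular $r\mapsto\vt(u_\ift;0,r)$ is constant.

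\textbf{Step 2: the Euler identity.} By Proposition \ref{Blowprop}(1)(d), $u_\ift$ is a stationary solution of \eqref{MEMSeq} with $f\equiv 0$, so \eqref{udthetaudr2} of Remark \ref{udthetaudrem} applies, and combined with Step 1,
$$
0=\f{\ud}{\ud r}\vt(u_\ift;0,r)=-2r^{-2\al-n-1}\int_{\R^n}|y\cdot\na u_\ift-\al u_\ift|^2\dot{\phi}_{0,r}\,\ud y\qquad\text{for all }r>0.
$$
By Definition \ref{defnofphi}, $\dot{\phi}_{0,r}\leq 0$ on $\R^n$ and $\dot{\phi}_{0,r}(y)=\phi'(|y|^2/r^2)\leq -1$ whenever $|y|\leq 2\sqrt2\,r$. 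Since the integrand is nonpositive and integrates to $0$, it vanishes a.e.; hence $y\cdot\na u_\ift(y)=\al u_\ift(y)$ for a.e. $y\in B_{2\sqrt2\,r}$, and, letting $r\to+\ift$, for a.e. $y\in\R^n$. Writing this in polar coordinates and using Fubini, for a.e. $\w\in\Ss^{n-1}$ the function $h_\w(s):=u_\ift(s\w)$ lies in $W_{\loc}^{1,1}(0,+\ift)$ and satisfies $sh_\w'(s)=\al h_\w(s)$ a.e.; thus $\f{\ud}{\ud s}\big(s^{-\al}h_\w(s)\big)=0$, so $u_\ift(s\w)=s^\al u_\ift(\w)$ for those $\w$ and all $s>0$. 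As $u_\ift\in C_{\loc}^{0,\al}(\R^n)$ is continuous, this extends to every $\w\in\Ss^{n-1}$, i.e. $u_\ift(\lda y)=\lda^\al u_\ift(y)$ for all $y\in\R^n$, $\lda>0$. Finally, $\vt(u_\ift;0,r)=\vt(u;x)$ for all $r>0$ was shown in Step 1, and $\vt(u_\ift;0)=\lim_{r\to 0^+}\vt(u_\ift;0,r)=\vt(u;x)$ by \eqref{densitylimit}; this is \eqref{uifttheta}.

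\textbf{Main obstacle.} The technical heart is the density-convergence in Step 1: this is exactly where the mollification of the functionals pays off, since only $L_{\loc}^1$, $L_{\loc}^\ift$ and $L_{\loc}^2$ convergences (and no trace terms on spheres) enter, in contrast to working with the sharp densities \eqref{classDen}. The remaining steps are a routine application of the monotonicity formula and of the continuity of $u_\ift$; the only minor care needed is the Fubini/continuity argument to upgrade the a.e. Euler identity to pointwise $\al$-homogeneity.
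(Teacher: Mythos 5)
Your proof is correct, but it organizes the argument in the reverse order from the paper, and the key mechanism is different. The paper first proves homogeneity: it applies Corollary \ref{coruse} to $u$ at the scales $r_iR$, so that the monotone quantity $\vt_f(u;x,r_iR)-\vt_f(u;x,\tfrac{r_iR}{2})\to0$ forces $\int_{B_{4R}}|y\cdot\na u_i-\al u_i|^2\to0$, and then passes this defect integral to the limit using the strong $H^1_{\loc}$ convergence; the density identity \eqref{uifttheta} is proved afterwards, only at $r=1$, via the scaling identity and \eqref{thetaftheta}. You instead first prove $\vt(u_\ift;0,R)=\vt(u;x)$ for \emph{every} $R>0$ by passing the mollified functionals to the limit, and then extract the Euler identity by differentiating the (now constant) density of $u_\ift$ itself via \eqref{udthetaudr2}. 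Your route has the small advantage of making the constancy of $r\mapsto\vt(u_\ift;0,r)$ explicit before homogeneity is known (in the paper this constancy is a consequence of homogeneity); the paper's route avoids having to discuss the ODE/polar-coordinate integration, since it only needs the a.e.\ identity $y\cdot\na u_\ift=\al u_\ift$, which it also obtains.

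One small imprecision: in Step 1 you justify $|\vt_{f_i}(u_i;0,R)-\vt(u_i;0,R)|\to0$ by citing $\|f_i\|_{L^2(B_{10R})}\to0$, but the third correction term in \eqref{thetafuxr} is an integral over all scales $\rho\in(0,R)$ of $\rho^{-2\al-n+3}\int_{B_{10\rho}}|f_i|^2$, and controlling it near $\rho=0$ requires the scaled Morrey bound $[f_i]_{M^{2\al+n-4+\ga,2}(B_R)}\leq\Lda r_i^\ga$ from Proposition \ref{ScalingProp}(2) rather than just $L^2$ convergence. This is exactly the estimate the paper uses at the corresponding point ($|\vt_{f_i}(u_i;0,1)-\vt(u_i;0,1)|\leq Cr_i^\ga$), so the fix is immediate, but as written the justification is incomplete.
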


\begin{proof}
Recall that in Proposition \ref{Blowprop}, $ u_i=T_{x,r_i}u $ and $ f_i=T_{x,r_i}^*f $. Using Proposition \ref{propupture}, \ref{ScalingProp}, and Corollary \ref{coruse}, we have that for any $ R>0 $, 
\begin{align*}
0\leq\int_{B_{4R}}|y\cdot\na u_i-\al u_i|^2\ud y&\leq C(n,p)\[\vt_f(u;x,r_iR)-\vt_f\(u;x,\f{r_iR}{2}\)\]\to 0,
\end{align*}
as $ r_i\to 0^+ $. Since $ u_i\to u_{\ift} $ strongly in $ H_{\loc}^1(\R^n) $. Then for a.e. $ y\in\R^n $, $
y\cdot\na u_{\ift}-\al u_{\ift}=0 $. As a result, $ u_{\ift} $ is $ \al $-homogeneous, and the first equality of \eqref{uifttheta} holds. Now, it remains to show the second equality in \eqref{uifttheta}. Firstly, Proposition \ref{ScalingProp} implies that $
\vt_{f_i}(u_i;0,1)=\vt_f(u;x,r_i) $. Letting $ i\to+\ift $, the right-hand side of above is $ \vt(u;x) $, so we only need to verify
\be
\lim_{i\to+\ift}\vt_{f_i}(u_i;0,1)=\vt(u_{\ift};0,1)=\vt(u_{\ift};0).\label{verifyuse}
\ee
Assume that $ B_{r_0}(x)\subset\subset \om $, and for some $ \Lda>0 $,
$$
[u]_{C^{0,\al}(\ol{B}_{r_0}(x))}+[f]_{M^{2\al+n-4+\ga,2}(B_{r_0}(x))}\leq\Lda.
$$
Applying \eqref{thetaftheta}, Lemma \ref{InESLem1}, \ref{InESLem2}, and the property that $ u(x)=0 $, we arrive at
\begin{align*}
|\vt_{f_i}(u_i;0,1)-\vt(u_i;0,1)|\leq C(\ga,\Lda,n,p)r_i^{\ga}
\end{align*}
for any $ 0<r_i<\f{r_0}{100} $. Taking $ r_i\to 0^+ $, it follows from the convergence results of $ u_i $ given in Proposition \ref{Blowprop} that $ \vt(u_i;0,1)\to\vt(u_{\ift};0,1) $, and then \eqref{verifyuse} holds.
\end{proof}

We finally close this subsection with the following property on the upper semicontinuity of $ \vt(u;\cdot) $ defined by \eqref{densitylimit}.

\begin{lem}\label{semicon}
$ \vt(u;\cdot) $ is upper semicontinuous. Precisely, if $ x_j\to x_{\ift}\in\om $, then
\be
\vt(u;x_{\ift})\geq\limsup_{j\to+\ift}\vt(u;x_j).\label{semiconfor}
\ee
\end{lem}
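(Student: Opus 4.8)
The plan is to combine the monotonicity of $\vt_f(u;x,\cdot)$ from Proposition \ref{MonFor} with the continuity of $\vt_f(u;\cdot,r)$ in the center at a fixed scale. Fix $x_\ift\in\om$ and a radius $r>0$ with $20r<\dist(x_\ift,\pa\om)$. Since $x_j\to x_\ift$, for all sufficiently large $j$ we have $B_{20r}(x_j)\subset B_{40r}(x_\ift)\subset\subset\om$ and $r<\f{1}{10}\dist(x_j,\pa\om)$, so every quantity below is well defined, and the standing assumptions of this subsection yield a uniform bound $[u]_{C^{0,\al}(\ol{B}_{40r}(x_\ift))}+[f]_{M^{2\al+n-4+\ga,2}(B_{40r}(x_\ift))}\leq\Lda$. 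By Proposition \ref{MonFor}, $\vt_f(u;x_j,\cdot)$ is nondecreasing, hence $\vt(u;x_j)=\lim_{\rho\to0^+}\vt_f(u;x_j,\rho)=\inf_{0<\rho<r}\vt_f(u;x_j,\rho)\leq\vt_f(u;x_j,r)$. Taking $\limsup$ in $j$,
\be
\limsup_{j\to+\ift}\vt(u;x_j)\leq\limsup_{j\to+\ift}\vt_f(u;x_j,r).\label{semiconstep}
\ee

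Next I would prove $\vt_f(u;x_j,r)\to\vt_f(u;x_\ift,r)$ as $j\to+\ift$. Unwinding Definition \ref{Defdifffuncti}, $\vt_f(u;x,r)$ is a finite linear combination, with coefficients that are fixed powers of $r$, of integrals of the form $\int g\,\phi_{x,r}$ and $\int g\,\dot{\phi}_{x,r}$ with $g\in\{|\na u|^2,u^{1-p},u^2\}$, the single integral $\int((y-x)\cdot\na u-\al u)f\,\phi_{x,r}$, and the iterated integral $\int_0^r\rho^{-2\al-n-1}(\int|f|^2|y-x|^4\dot{\phi}_{x,\rho})\,\ud\rho$. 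By Lemma \ref{InESLem1}, Lemma \ref{InESLem2}, and $f\in L_{\loc}^2$, the functions $|\na u|^2$, $u^{1-p}$, $u^2$, $|\na u||f|$ and $|f|$ all lie in $L^1(B_{40r}(x_\ift))$; the cut-offs $x\mapsto\phi_{x,\rho}$ and $x\mapsto\dot{\phi}_{x,\rho}$ are continuous into $C^0(\R^n)$ uniformly in $\rho\in(0,r]$, and $y-x_j\to y-x_\ift$ uniformly on $B_{40r}(x_\ift)$. Hence each $y$-integral in the first groups converges by dominated convergence, with a majorant $Cg\in L^1$ independent of $j$. For the iterated integral, Definition \ref{defnofphi} and the Morrey bound on $f$ give the pointwise-in-$\rho$ estimate $\rho^{-2\al-n-1}\int|f|^2|y-x_j|^4\dot{\phi}_{x_j,\rho}\leq C(\Lda,n,p)\,\rho^{\ga-1}$, which is integrable on $(0,r)$ and uniform in $j$; since the inner integral converges pointwise in $\rho$ (again by dominated convergence in $y$), the iterated integral converges too. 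This gives $\vt_f(u;x_j,r)\to\vt_f(u;x_\ift,r)$.

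Combining the two steps, \eqref{semiconstep} yields $\limsup_{j\to+\ift}\vt(u;x_j)\leq\vt_f(u;x_\ift,r)$ for every admissible $r>0$. Letting $r\to0^+$ and invoking the definition \eqref{densitylimit} of $\vt(u;x_\ift)$ as $\lim_{r\to0^+}\vt_f(u;x_\ift,r)$ (which exists by Proposition \ref{MonFor}), we obtain $\limsup_{j\to+\ift}\vt(u;x_j)\leq\vt(u;x_\ift)$, which is \eqref{semiconfor}.

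The only point requiring genuine care is passing to the limit in the $f$-dependent correction terms of $\vt_f$ — especially the iterated $\rho$-integral — for which one needs a $j$-uniform integrable majorant; this is exactly the Morrey-type estimate used to derive \eqref{maxvtusr} in Proposition \ref{propupture}. Everything else is a routine application of dominated convergence, made possible by the fact that the cut-offs $\phi_{x,\rho}$ depend continuously on $x$, which is precisely the benefit of mollifying the densities rather than using characteristic functions.
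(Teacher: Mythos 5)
Your proof is correct and follows essentially the same route as the paper's: monotonicity gives $\vt(u;x_j)\leq\vt_f(u;x_j,r)$, continuity of $x\mapsto\vt_f(u;x,r)$ at fixed scale $r$ passes the bound to $x_\ift$, and letting $r\to0^+$ concludes. The only difference is that you spell out the dominated-convergence justification of the continuity step (which the paper asserts without proof); one small inaccuracy — the map $x\mapsto\phi_{x,\rho}$ is not continuous into $C^0$ uniformly in $\rho$ — is harmless since your argument only uses pointwise-in-$\rho$ convergence together with the $j$-uniform majorant $C\rho^{\ga-1}$.
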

\begin{proof}
If the right-hand side of \eqref{semiconfor} is $ -\ift $, then there is nothing to prove. Without loss of generality, we assume that $ \{x_j\}\subset\{u=0\} $ and then $ u(x_{\ift})=0 $. According to Proposition \ref{MonFor}, for sufficiently large $ j\in\Z_+ $, we have 
$$
\vt_f(u;x_j,r)\geq\vt(u;x_j)>-C(f,\om,u,x_{\ift}).
$$
It follows from taking $ j\to+\ift $ that for any $ r>0 $,
$$
\vt_f(u;x_{\ift},r)=\lim_{j\to+\ift}\vt_f(u;x_j,r)\geq\limsup_{j\to+\ift}\vt(u;x_j).
$$
Letting $ r\to 0^+ $, we obtain \eqref{semiconfor}.
\end{proof}

\subsection{Some applications of compactness results} As applications of compactness results presented in the previous subsections, we prove two properties on the solutions of \eqref{MEMSeq}. We first show the lower bound for some specific class of positive solutions and then establish the a priori estimates on $ C^{0,\al} $ norm for stationary solutions.

\subsubsection{Lower bounds of positive solutions} We consider positive and convex weak solutions and focus on the lower bound estimate on a convex bounded domain.

\begin{prop}\label{proplowerbound}
Suppose that the bounded domain $ \om\subset\R^n $ is convex. Let $ \ga>0 $ and assume that $ u\in C^{0,\al}(\ol{\om})\cap (H^1\cap L^{-p})(\om) $ is a weak solution of \eqref{MEMSeq} with respect to $ f\in M^{2\al+n-4+\ga,2}(\om) $, satisfying
\be
[u]_{C^{0,\al}(\ol{\om})}+[f]_{M^{2\al+n-4+\ga,2}(\om)}\leq\Lda.\label{gapC0al}
\ee
If $ u>0 $ and convex in $ \om $, then there exists $ C>0 $, depending only on $ \ga,\Lda,n,\om $, and $ p $ such that 
\be
\inf_{\om}u\geq C.\label{omugeqC}
\ee
\end{prop}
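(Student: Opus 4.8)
The plan is to argue by contradiction and compactness, using the blow-up/convergence machinery developed in Proposition \ref{propConv2} together with the nondegeneracy property \eqref{nondege}. Suppose the conclusion fails. Then there is a sequence of convex bounded domains $ \om_i $ (one may as well fix $ \om_i=\om $ by the stated hypothesis, so the sequence is in the solutions, not the domains), convex positive weak solutions $ u_i\in C^{0,\al}(\ol{\om})\cap(H^1\cap L^{-p})(\om) $ of \eqref{MEMSeq} with respect to $ f_i\in M^{2\al+n-4+\ga,2}(\om) $ satisfying the uniform bound \eqref{gapC0al}, and points $ x_i\in\om $ with $ u_i(x_i)=\inf_{\om}u_i\to 0 $ as $ i\to+\ift $. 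Passing to a subsequence we may assume $ x_i\to x_\ift\in\ol{\om} $.

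First I would distinguish whether the minimum is attained in the interior or escapes to the boundary. If $ x_\ift\in\om $, pick $ r>0 $ with $ B_{2r}(x_\ift)\subset\subset\om $; then for large $ i $ the balls $ B_r(x_i) $ are well inside $ \om $, and the uniform $ C^{0,\al} $ bound gives $ \|u_i\|_{L^2(B_r(x_i))}\leq C $. Whether or not this stays bounded, we are actually in the bounded regime (since $ u_i(x_i)\to 0 $ and $ [u_i]_{C^{0,\al}}\leq\Lda $ force $ u_i $ to be uniformly bounded on $ B_r(x_i) $), so Proposition \ref{propConv} (or Remark \ref{uiremiftcon}, using $ u_i(x_i)\to 0 $) yields, up to a further subsequence, a limit $ u_\ift\in C^{0,\al}(\ol{B_r(x_\ift)})\cap(H^1_{\loc}\cap L^{-p}_{\loc})(B_r(x_\ift)) $ which is a weak solution of \eqref{MEMSeq} with $ f_\ift\in M^{2\al+n-4,2} $, with $ u_i\to u_\ift $ in $ L^\ift_{\loc} $ and $ u_i^{-p}\to u_\ift^{-p} $ in $ L^1_{\loc} $; moreover $ u_\ift(x_\ift)=\lim u_i(x_i)=0 $. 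But convexity passes to the limit, so $ u_\ift $ is convex and nonnegative with an interior zero, hence $ u_\ift\equiv 0 $ on a neighbourhood of $ x_\ift $ — this contradicts the nondegeneracy \eqref{nondege} (equivalently \eqref{supnonde}), which forces $ \sup_{B_\rho(x_\ift)}u_\ift\geq C^{-1}\rho^\al>0 $. (Alternatively, $ u_\ift\equiv 0 $ on a ball contradicts $ \int u_\ift^{-p}<\ift $ from Lemma \ref{InESLem1}.)

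The genuinely delicate case is $ x_\ift\in\pa\om $, where one cannot directly center balls at $ x_i $ inside $ \om $. Here I would exploit convexity of both $ \om $ and $ u_i $: convexity of $ \om $ gives, for each $ i $, a supporting hyperplane through a boundary point near $ x_i $, and convexity of $ u_i $ together with $ u_i(x_i)=\min_\om u_i $ means the ``slope'' of $ u_i $ controls how fast it can grow away from $ x_i $. Concretely, one can pick a fixed interior ball $ B_{\rho_0}(z_0)\subset\subset\om $ (independent of $ i $) and a cone/segment argument: convexity of $ u_i $ along the segment from $ x_i $ to points of $ B_{\rho_0}(z_0) $, combined with the a priori $ C^{0,\al}$ bound (so $ u_i(y)\leq u_i(x_i)+\Lda|y-x_i|^\al\leq u_i(x_i)+\Lda(\diam\om)^\al $), shows $ u_i $ is uniformly bounded on a fixed interior ball; then the argument of the previous paragraph applies on $ B_{\rho_0}(z_0) $, producing a convex nonnegative limit $ u_\ift $ on $ B_{\rho_0}(z_0) $ with $ \inf_{B_{\rho_0}(z_0)}u_\ift=\lim\inf_{B_{\rho_0}(z_0)}u_i=0 $ (the infimum over the interior ball also tends to $ 0 $ because $ u_i(x_i)=\min_\om u_i\to0 $ and, by convexity, the value of $ u_i $ at the point of $ B_{\rho_0}(z_0) $ closest along the ray from $ x_i $ is squeezed between $ u_i(x_i) $ and a convex combination involving boundary values — more simply, $ \inf_{B_{\rho_0}(z_0)}u_i\geq u_i(x_i)\geq 0 $ and one shows it $ \to0 $ using that a convex function attaining a near-zero minimum at $ x_i $ and bounded by $ \Lda(\diam\om)^\al $ cannot be bounded below by a fixed positive constant on a set on which... ). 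Once $ u_\ift $ is convex, nonnegative, with zero infimum on the open ball $ B_{\rho_0}(z_0) $, it attains $ 0 $ at an interior point of that ball (its minimum over the closed ball is $ 0 $; if attained only on the boundary, by convexity it is attained on all of a face, still giving an interior zero of a slightly smaller ball), and we again contradict \eqref{nondege}/Lemma \ref{InESLem1}.

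The main obstacle is precisely this boundary case: one must show that the infimum over a \emph{fixed interior} ball also degenerates to zero (not merely the global infimum, which might a priori be achieved only near $ \pa\om $), and this is where convexity of $ u_i $ is essential and must be used quantitatively. I expect the clean way is: by convexity of $ u_i $, the superlevel set $ \{u_i\geq t\} $ is convex, and if $ \inf_\om u_i=u_i(x_i)=:m_i\to0 $, then for the claim \eqref{omugeqC} to fail it suffices to rule out $ \inf_{B_{\rho_0}(z_0)}u_i\geq c>0 $ uniformly; but $ \{u_i< c\} $ is a nonempty (contains $ x_i $) open convex set disjoint from $ B_{\rho_0}(z_0) $, and a separating hyperplane plus the oscillation bound $ \osc_\om u_i\leq\Lda(\diam\om)^\al=:M $ forces, after extracting limits, a convex limit function $ u_\ift $ on $ B_{\rho_0}(z_0) $ which is $ \geq c $ there but whose harmonic-type lower bounds from Lemma \ref{InESLem1} on a chain of balls connecting $ B_{\rho_0}(z_0) $ back toward $ x_\ift $ are incompatible with $ u_\ift $ also satisfying $ \int u_\ift^{-p}\leq C $ near $ x_\ift $ — i.e. one propagates the smallness. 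I would carry this out by a finite Harnack-type chain of overlapping balls from $ z_0 $ to (a point near) $ x_\ift $, noting that along such a chain convexity of $ u_i $ gives $ u_i\bigl(\tfrac{y+y'}{2}\bigr)\leq\tfrac12(u_i(y)+u_i(y')) $, so small values near $ x_i $ force small values at $ z_0 $. This chaining step, reconciling convexity with the nondegeneracy from Lemma \ref{InESLem1}, is the technical heart of the proof; everything else is a routine application of Propositions \ref{propConv} and \ref{propConv2}.
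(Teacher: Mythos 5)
Your proposal has two genuine gaps, one in each case of your dichotomy. In the interior case, the step ``$u_\ift$ is convex and nonnegative with an interior zero, hence $u_\ift\equiv 0$ on a neighbourhood of $x_\ift$'' is false: $|x|$ is convex, nonnegative, and vanishes only at the origin. So the contradiction with \eqref{nondege} never materializes as written. (The case is salvageable, but by a different mechanism: convexity makes $u_\ift$ locally Lipschitz, so $u_\ift(y)\leq L|y-x_\ift|$ near the zero, hence $\int_{B_r(x_\ift)}u_\ift^{-p}\gtrsim r^{n-p}$, which for $p>1$ is incompatible as $r\to 0^+$ with the bound $Cr^{\al+n-2}$ of Lemma \ref{InESLem1}; you did not make this argument.) In the boundary case — which is the main case, since a positive function on an open set typically approaches its infimum at the boundary — your mechanism is backwards. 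Convexity bounds $u_i$ at interior points of a segment by the average of its endpoint values; it propagates smallness \emph{into} a segment from two small endpoints, never \emph{out} from one small endpoint to a distant fixed interior ball. A Harnack chain is unavailable here precisely because the equation is singular and solutions may vanish. Your own sketch trails off at exactly this point, and the claim ``small values near $x_i$ force small values at $z_0$'' is simply not a consequence of convexity. So the case you correctly identify as the technical heart is not closed, and the route you propose for closing it would fail.

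The paper avoids the interior/boundary dichotomy altogether by rescaling at the intrinsic scale of the degeneracy: choosing $x_i$ with $u_i(x_i)=\va_i^{\al}<2\inf_{\om}u_i\to 0$ and setting $v_i:=T_{x_i,\va_i}u_i$, one gets $v_i(0)=1$ and $\inf v_i\geq\tfrac12$ on the rescaled domains, which (using convexity of $\om$) converge to $\R^n$ or a half-space. Proposition \ref{propConv} then produces a positive, convex, smooth limit $v_\ift$ solving $\Delta v_\ift=v_\ift^{-p}$ with the scale-invariant bound $[v_\ift]_{C^{0,\al}}\leq\Lda$ on every compact set. Since $\Delta v_\ift>0$, the Hessian is nonzero somewhere, and convexity then forces at least linear growth of $v_\ift$ in some direction, which contradicts the global $\al$-H\"older growth because $\al<1$. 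This ``convex plus globally sublinear growth implies affine'' rigidity on the blown-up domain is the idea your proposal is missing; without it, neither half of your argument reaches a contradiction.
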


\begin{rem}\label{constantrem}
Up to a translation, in the estimate \eqref{omugeqC}, the constant does not depend on the position of $ \om $. Precisely, if $ \om'=\om+x $ for some $ x\in\R^n $, the constant $ C $ will not change, that is $ C(\om)=C(\om') $.
\end{rem}

\begin{rem}
Due to the example \eqref{ralsolu} for $ n=2 $, the assumption that $ u $ is convex is necessary since in the ball $ \om=B_{1-\va}((1,0)) $ with $ \va>0 $, the function $ u(x)=\al^{-\al}|x|^{\al} $ satisfies $ \inf_{\om}u=\al^{-\al}\va^{\al} $. Letting $ \va\to 0^+ $, it contradicts to Remark \ref{constantrem}.
\end{rem}

\begin{rem}
We also cannot remove the assumption \eqref{gapC0al} in Proposition \ref{proplowerbound}. In particular, if \eqref{gapC0al} no longer holds, then we can construct a class of positive and convex solutions $ \{u_{\va}\}_{\va>0} $ such that $ \inf_{\om}u_{\va}=\va $. We conclude such a result into the following lemma.
\end{rem}

\begin{lem}
For any $ \va>0 $, there exists $ u\in C^{\ift}(B_1) $ such that $ u $ is convex, positive solution of $ \Delta u=u^{-p} $ and $ \inf_{B_1}u=\va $. 
\end{lem}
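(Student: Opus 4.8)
The plan is to avoid a radial ansatz --- the global radial solution of $\Delta u=u^{-p}$ behaves like $|x|^{\al}$ with $\al=\f{2}{p+1}<1$ and is therefore concave near infinity, so it cannot stay convex on all of $B_1$ --- and instead to build $u$ as a function of a single Cartesian coordinate, which makes convexity automatic. First I would solve the autonomous ODE $g''=g^{-p}$ on $\R$ with initial data $g(0)=\va$ and $g'(0)=0$. Since $t\mapsto t^{-p}$ is smooth and positive near $t=\va>0$, standard ODE theory yields a unique $C^{\ift}$ solution on a maximal open interval $J\ni 0$ along which $g>0$.

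On $J$ one has $g''=g^{-p}>0$, so $g$ is strictly convex there; together with $g'(0)=0$ this forces $g(x)\geq g(0)=\va$ for all $x\in J$, and since $x\mapsto g(-x)$ solves the same initial value problem, $g$ is even with strict global minimum $\va$ at $0$. To see $J=\R$ (hence $g\in C^{\ift}(\R)$), multiply the equation by $g'$ and integrate to obtain the first integral
\[
\f{1}{2}(g')^2=\f{\va^{1-p}-g^{1-p}}{p-1},
\]
whose right-hand side is nonnegative and bounded above by $\va^{1-p}/(p-1)$ because $g\geq\va$ and $p>1$; thus $|g'|\leq\sqrt{2/(p-1)}\,\va^{(1-p)/2}$ on $J$, so $g$ grows at most linearly and, staying $\geq\va>0$, cannot exit the region of smoothness in finite time. (If one only wants existence on $[-1,1]$, this is immediate from the same bound, which forces $\va\leq g\leq\va+\sqrt{2/(p-1)}\,\va^{(1-p)/2}$ there.)

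Finally I would set $u(x):=g(x_1)$ for $x=(x_1,\dots,x_n)\in B_1$. Then $u\in C^{\ift}(B_1)$ and $\Delta u(x)=g''(x_1)=g(x_1)^{-p}=u(x)^{-p}$ with $u\geq\va>0$, so $u$ is a positive smooth solution of $\Delta u=u^{-p}$; its Hessian is $\diag(g''(x_1),0,\dots,0)\geq 0$, so $u$ is convex; and since $g$ is even and strictly convex with minimum $\va$ at the origin, $\inf_{B_1}u=g(0)=\va$. The construction has no genuinely hard step: the only point needing mild care is the global-in-space solvability of the scalar ODE, which is exactly the energy identity above.
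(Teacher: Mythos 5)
Your proof is correct and follows essentially the same route as the paper: both reduce to the one-variable problem $u(x)=g(x_1)$ and exploit the first integral $\f{1}{2}(g')^2=\f{\va^{1-p}-g^{1-p}}{p-1}$ of the ODE $g''=g^{-p}$ with minimum $\va$ at the origin. The only (immaterial) difference is that the paper writes the solution down explicitly by quadrature, inverting $v(s)=\int_{\va}^s\ud t/\sqrt{\lda(\va^{1-p}-t^{1-p})}$, whereas you invoke the standard IVP existence theorem and then use the energy bound on $|g'|$ to get global existence.
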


Here, since we only need to construct one positive solution satisfying the desired properties, it is a straightforward to consider the case that $ u(x)=u(x_1) $, where $ x=(x_1,x_2,...,x_n) $. The result is a direct consequence of the lemma below.

\begin{lem}
For any $ \va>0 $, there exists a positive function function $ u=u(r)\in C^{\ift}(\R) $ such that $ \inf_{\R}u=\va $, and
\be
\f{\ud^2u}{\ud r^2}=u^{-p}.\label{ODEpositive}
\ee
\end{lem}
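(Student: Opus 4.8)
The plan is to solve the autonomous ODE \eqref{ODEpositive} by quadrature, using the conserved "energy" to reduce it to a first-order equation, and then to choose initial data so that the minimum value of the solution is exactly $\va$. First I would multiply \eqref{ODEpositive} by $\f{\ud u}{\ud r}$ and integrate: if $u$ solves $u''=u^{-p}$, then
\be
\f{\ud}{\ud r}\[\f{1}{2}(u')^2+\f{u^{1-p}}{p-1}\]=0,
\ee
so along any solution the quantity $E:=\f{1}{2}(u')^2+\f{u^{1-p}}{p-1}$ is constant. I would look for the even solution with $u(0)=\va$, $u'(0)=0$; for this solution $E=\f{\va^{1-p}}{p-1}>0$, and hence for $r\geq 0$ (where $u'\geq0$, since $u''=u^{-p}>0$ forces $u'$ to be increasing, so $u'\geq u'(0)=0$) we get the separated equation
\be
\f{\ud u}{\ud r}=\sqrt{2E-\f{2u^{1-p}}{p-1}}=\sqrt{\f{2}{p-1}\(\va^{1-p}-u^{1-p}\)}.
\ee

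Next I would verify that this first-order equation has a smooth solution on all of $[0,+\ift)$ with $u\geq\va$ and $\inf_{\R}u=\va$. Since $p>1$, the function $s\mapsto s^{1-p}$ is strictly decreasing on $(0,+\ift)$, so for $u>\va$ the radicand $\va^{1-p}-u^{1-p}$ is strictly positive; this makes the right-hand side a smooth, strictly positive function of $u$ on $(\va,+\ift)$, so by the standard existence-uniqueness theory the solution exists and is smooth as long as it stays in that range. At $u=\va$ the radicand vanishes, but the trick is that we prescribe this value only at $r=0$ and extend for $r>0$; to see the solution leaves $\va$ and exists globally, I would estimate the "travel time": the inverse function is
\be
r(u)=\int_{\va}^{u}\f{\ud t}{\sqrt{\f{2}{p-1}\(\va^{1-p}-t^{1-p}\)}},
\ee
and I would check (i) the integrand is integrable near $t=\va$ — since $\va^{1-p}-t^{1-p}\sim (p-1)\va^{-p}(t-\va)$ as $t\to\va^+$, the integrand behaves like $(t-\va)^{-1/2}$, which is integrable — so $u$ immediately exceeds $\va$; and (ii) $r(u)\to+\ift$ as $u\to+\ift$ — since $\va^{1-p}-t^{1-p}\to\va^{1-p}$, the integrand stays bounded below away from zero for large $t$, so the integral diverges. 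Hence $r\mapsto u(r)$ is a smooth increasing bijection from $[0,+\ift)$ onto $[\va,+\ift)$; extending evenly by $u(-r):=u(r)$ gives a function on $\R$ (smoothness at $0$ follows because $u'(0)=0$ and $u$ solves the even ODE, so all odd derivatives vanish there — alternatively note $u''(0)=\va^{-p}$ and bootstrap). By construction $u>0$, $u\in C^\ift(\R)$, $\inf_\R u=u(0)=\va$, and $u''=u^{-p}$ everywhere.

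The convexity claim needed for the ambient lemma is immediate: $u''=u^{-p}>0$. The remaining lemma (producing $u\in C^\ift(B_1)$ convex positive with $\inf_{B_1}u=\va$ and $\Delta u=u^{-p}$) follows by setting $u(x):=\ol u(x_1)$ where $\ol u$ is the one-variable function just constructed restricted to, say, $[-1,1]$: then $\Delta u=\ol u''=\ol u^{-p}=u^{-p}$, $u$ is convex as a function of $x_1$ hence as a function of $x$, positive, and $\inf_{B_1}u=\inf_{[-1,1]}\ol u=\ol u(0)=\va$ (the minimum of $\ol u$ on $[-1,1]$ is attained at $0$ since $\ol u$ is even and increasing on $[0,1]$).

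The main obstacle — really the only subtle point — is the behavior of the solution at the minimum $u=\va$, where the separated ODE degenerates (the right-hand side vanishes, so standard Picard uniqueness fails there and one could worry the solution sticks at $\va$). The resolution is the integrability computation above: the singularity $(t-\va)^{-1/2}$ of the inverse-function integrand is integrable, so the solution genuinely departs from the level $\va$ in finite "time," and one can verify directly that the function defined via $r(u)$ indeed satisfies the second-order ODE (differentiate $r(u)$, invert, and differentiate once more, recovering $u''=u^{-p}$). Everything else is routine.
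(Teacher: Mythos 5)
Your proposal is correct and follows essentially the same route as the paper: the paper likewise defines the inverse function $v(s)=\int_{\va}^{s}\ud t/\sqrt{\tfrac{2}{p-1}(\va^{1-p}-t^{1-p})}$, sets $u(r)=v^{-1}(|r|)$, and verifies the ODE by differentiating the separated first-order relation. Your additional remarks on the integrable $(t-\va)^{-1/2}$ degeneracy at the minimum and on smoothness at $r=0$ only make explicit points the paper treats briefly.
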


\begin{proof}
For any $ s\geq\va $, we define
\be
v(s):=\int_{\va}^s\f{\ud t}{\sqrt{\lda(\va^{1-p}-t^{1-p})}},\label{defvuse}
\ee
where $ \lda:=\f{2}{p-1} $. Thus, $ v:[\va,+\ift)\to\R $ is a continuous function, strictly increasing from $ 0 $ to $ +\ift $ in $ [\va,+\ift) $. Moreover, $ v $ is smooth in $ (\va,\ift) $, and $ v'(s)\to+\ift $ as $ s\to\va^+ $. For $ r\in\R $, let 
\be
u(r):=v^{-1}(|r|).\label{udeffromv}
\ee
We can complete the proof if we show that $ u $ is a smooth solution of \eqref{ODEpositive} with 
\be
\inf_{\R}u=u(0)=\va.\label{uinfva}
\ee
Indeed, $ u $ is a solution of \eqref{ODEpositive} satisfying 
$$
u(0)=\va\quad\text{and}\quad\left.\f{\ud}{\ud r}\right|_{r=0}u=0.
$$
According to \eqref{udeffromv}, $ u $ is strictly increasing in $ [0,+\ift) $ and strictly decreasing in $ (-\ift,0] $. Thus, \eqref{uinfva} follows directly. Using \eqref{defvuse} and \eqref{udeffromv}, we have
$$
\f{\ud u}{\ud r}=\sqrt{\lda(\va^{1-p}-u^{1-p})}
$$
for any $ r\in[0,+\ift) $, and then
$$
\f{\ud^2u}{\ud r^2}=-\f{\lda(1-p)u^{-p}}{2\sqrt{\lda(\va^{1-p}-u^{1-p})}}\f{\ud u}{\ud r}=u^{-p}.
$$
As a result, $ u\in C^2(\R) $ and \eqref{ODEpositive} holds when $ r\in[0,+\ift) $. On the other hand, by almost the same calculations, it also holds for $ r\in(-\ift,0] $. Using bootstrap arguments, we deduce that $ u\in C^{\ift}(\R) $, satisfying \eqref{uinfva}, as desired.
\end{proof}

\begin{proof}[Proof of Proposition \ref{proplowerbound}]
Assume that the result is not true. Then we can choose a sequence of positive, convex, weak solutions $ \{u_i\}\subset C^{0,\al}(\ol{\om})\cap(H^1\cap L^{-p})(\om) $ with respect to $ \{f_i\}\subset M^{2\al+n-4+\ga,2}(\om) $, satisfying
\begin{gather}
\sup_{i\in\Z_+}\([u_i]_{C^{0,\al}(\ol{\om})}+[f_i]_{M^{2\al+n-4+\ga,2}(\om)}\)\leq\Lda,\label{gapC0ali}\\
0<u(x_i)=\va_i^{\al}<2\inf_{\om}u_i<i^{-1},\label{xiuiass}
\end{gather}
where $ x_i\in\om $ for any $ i\in\Z_+ $. Letting $ v_i:=T_{x_i,\va_i}u_i $ and $ g_i:=T_{x_i,\va_i}^*f_i $, it follows from \eqref{gapC0ali}, \eqref{xiuiass}, and Proposition \ref{ScalingProp} that
\be
\begin{gathered}
\sup_{i\in\Z_+}\([v_i]_{C^{0,\al}(\eta_{x_i,\va_i}(\ol{\om}))}+\va_i^{-\ga}[g_i]_{M^{2\al+n-4+\ga,2}(\eta_{x_i,\va_i}(\om))}\)\leq\Lda,\\
v_i(0)=\va_i^{-\al}u(x_i)=1<2\inf_{\eta_{x_i,\va_i}(\om)}v_i.
\end{gathered}\label{vivaassu}
\ee
Since $ \om $ is convex, up to rotations, we can extract a subsequence of $ \om_i $ without changing the notation such that $
\om_i:=\eta_{x_i,\va_i}(\om)\to\R^n $ or $ \R_+^n:=\R^n\cap\{x_n>0\} $. We only consider the case $ \om_i\to\R_+^n $. The other case is from similar arguments. In view of \eqref{vivaassu} and Proposition \ref{propConv}, there exists $ v_{\ift}\in(C_{\loc}^{0,\al}\cap H_{\loc}^1\cap L_{\loc}^{-p})(\R_+^n) $ such that up to a subsequence,
\begin{align*}
v_i&\to v_{\ift}\text{ strongly in }(H_{\loc}^1\cap L_{\loc}^{\ift})(\R_+^n),\\
g_i&\to 0\text{ strongly in }L_{\loc}^2(\R_+^n).
\end{align*}
Moreover, $ v_{\ift} $ is a convex weak solution of $ \Delta v_{\ift}=v_{\ift}^{-p} $, satisfying
\be
\inf_{\R_+^n}v_{\ift}\geq\f{1}{2}\quad
\text{and}\quad\sup_{K\subset\subset\R_+^n}[v_{\ift}]_{C^{0,\al}(\ol{K})}\leq\Lda.\label{infRplus}
\ee
By \eqref{infRplus}, the standard regularity theory for elliptic equations yields that $ v_{\ift}\in C^{\ift}(\R_+^n) $. According to the convexity of $ v_{\ift} $, there is $ x\in\R_+^n $ such that $ D^2v_{\ift}(x) $ is semi-positive definite and $ D^2v_{\ift}(x)\neq 0 $. Consequently, $ v_{\ift}(y)-v_{\ift}(x)\geq D^2v_{\ift}(x)(y-x) $. Given \eqref{infRplus}, we arrive at
\be
|D^2v_{\ift}(x)(y-x)|\leq\Lda|x-y|^{\al}.\label{Dviftx}
\ee
Since $ D^2v_{\ift}(x)\neq 0 $, without loss of generality, we let $ D^2v_{\ift}(x)=\diag\{\lda_1,...\lda_k,0,...,0\} $, where $ \lda_1\geq\lda_2\geq...\geq\lda_k>0 $ with $ k\in\Z\cap[1,n] $. Letting $ y=x+(x_1,0,...,0) $ in \eqref{Dviftx}, it implies that $
\lda_1|x_1|\leq\Lda|x_1|^{\al} $, which is impossible if $ |x_1|>0 $ is sufficiently large. 
\end{proof}

\subsubsection{Interior H\"{o}lder estimates} Now, let us study the a priori $ C^{0,\al} $ estimate for stationary solutions of \eqref{MEMSeq}. Such results have been obtained in \cite{DWW16} for smooth solutions of $ \Delta u=u^{-p} $. For the model in our paper, we concentrate on a more general case by a compactness argument. Though we do not directly apply Proposition \ref{propConv}, \ref{propConv2}, and \ref{Blowprop}, the spirit and arguments are analogous to those.

\begin{prop}\label{propHolder}
Let $ q>0 $ be such that $ \f{1}{2}+\f{1}{2p}<\f{q}{n} $. Assume that $ u\in(C_{\loc}^{0,\al}\cap H_{\loc}^1\cap L_{\loc}^{-p})(B_5) $ is a stationary solution of \eqref{MEMSeq} with respect to $ f\in L_{\loc}^q(B_5) $, satisfying
\be
\|u\|_{L^1(B_4)}+\|f\|_{L^q(B_4)}\leq\Lda.\label{L1bound}
\ee
Then there exists $ C>0 $ depending only on $ \Lda,n,p $, and $ q $ such that
\be
\|u\|_{C^{0,\al}(B_1)}\leq C.\label{Holderbound}
\ee
\end{prop}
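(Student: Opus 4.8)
The strategy is a standard contradiction-compactness argument, in the spirit of the blow-up analysis in \S\ref{converSec}, but now the scales are chosen by the point where the Hölder seminorm (suitably normalized) is maximized. Suppose \eqref{Holderbound} fails; then there is a sequence $\{u_i\}\subset(C_{\loc}^{0,\al}\cap H_{\loc}^1\cap L_{\loc}^{-p})(B_5)$ of stationary solutions with $f_i\in L_{\loc}^q(B_5)$ satisfying \eqref{L1bound} uniformly, but with $\|u_i\|_{C^{0,\al}(B_1)}\to+\ift$. Since $\|u_i\|_{L^1(B_4)}\leq\Lda$, the blow-up must come from the Hölder seminorm: one localizes to a slightly larger ball, uses cut-offs, and introduces the quantity
$$
M_i:=\sup_{B_{3}}(1-|x|)^{\al}\,[u_i]_{C^{0,\al}(B_{(1-|x|)/2}(x))}\longrightarrow+\ift.
$$
Pick $x_i\in B_3$ almost realizing the supremum, let $d_i:=1-|x_i|$, and choose $y_i,z_i$ in the ball of radius $d_i/2$ around $x_i$ with $|u_i(y_i)-u_i(z_i)|\geq \tfrac12 M_i d_i^{-\al}|y_i-z_i|^{\al}$. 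Set $\rho_i:=|y_i-z_i|$, so $\rho_i\leq d_i$; the key point is that $M_i\to\ift$ forces, after the rescaling below, $\rho_i/d_i\to 0$, i.e. the relevant scale is much smaller than the distance to the boundary.

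The second step is to rescale. Define $v_i(y):=\lambda_i^{-\al}\big(u_i(x_i+\rho_i y)-u_i(x_i)\big)$ with $\lambda_i=\rho_i$ chosen so that (i) $[v_i]_{C^{0,\al}(B_1)}\geq c>0$ (from the choice of $y_i,z_i$) while (ii) $[v_i]_{C^{0,\al}(B_R)}\to[\text{something bounded}]$ on each fixed $B_R$, because away from scale $\rho_i$ the normalized Hölder seminorm is controlled by $M_i$ times $(\rho_i/d_i)^{\al}$, which one arranges to stay bounded by a near-maximality argument (the usual Simon/Schoen–Uhlenbeck point-picking trick: if the seminorm were not almost-maximal at $x_i$ one replaces $x_i$). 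By Proposition \ref{ScalingProp}, $v_i$ is a stationary solution of \eqref{MEMSeq} with respect to $g_i:=\lambda_i^{2-\al}f_i(x_i+\rho_i\cdot)$ on large balls, and $v_i(0)=0$. One must now distinguish two cases according to whether $u_i(x_i)/\rho_i^{\al}$ stays bounded or tends to $+\ift$.

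The third step is the dichotomy and the convergence. \emph{Case A: $u_i(x_i)\lesssim\rho_i^{\al}$.} Then $\|v_i\|_{L^2(B_R)}$ is uniformly bounded (combining $[v_i]_{C^{0,\al}}$ control on growing balls with $v_i(0)=0$), and the scaling exponent condition $\tfrac12+\tfrac1{2p}<\tfrac qn$ guarantees, via Hölder, that $g_i$ has vanishing Morrey norm $[g_i]_{M^{2\al+n-4,2}}$ on any fixed ball — here one uses $\|f_i\|_{L^q(B_4)}\leq\Lda$ and the precise gain $\lambda_i^{2-\al}$ versus the $L^q\hookrightarrow M^{2\al+n-4,2}$ scaling, which is exactly where the hypothesis on $q$ enters. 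Then Proposition \ref{propConv} (and Remark \ref{uiremiftcon}, since $v_i(0)=0$) applies: up to a subsequence $v_i\to v_\ift$ strongly in $(H^1_{\loc}\cap L^\ift_{\loc})(\R^n)$, $v_\ift$ is a stationary solution of $\Delta v_\ift=v_\ift^{-p}$ with $v_\ift(0)=0$, and $[v_\ift]_{C^{0,\al}(B_1)}\geq c>0$. \emph{Case B: $u_i(x_i)/\rho_i^{\al}\to+\ift$.} Then $\inf_{B_R}\lambda_i^{-\al}u_i(x_i+\rho_i\cdot)\to+\ift$ on each $B_R$ — here I would use the nondegeneracy and the a priori estimates (Lemma \ref{InESLem1}, \ref{InESLem2}) to control how fast $u_i$ can vary over scale $\rho_i$ relative to its value $u_i(x_i)$ — so by Proposition \ref{propConv2} one gets $v_i\to v_\ift$ strongly in $(H^1_{\loc}\cap L^\ift_{\loc})(\R^n)$ with $\Delta v_\ift=0$, i.e. $v_\ift$ an \emph{entire harmonic function}, still with $[v_\ift]_{C^{0,\al}(B_1)}\geq c>0$ and $[v_\ift]_{C^{0,\al}(B_R)}\leq\Lda'$ on all $R$. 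The Liouville-type corollary invoked in the proof of Proposition \ref{Blowprop} (an entire harmonic function with globally finite $C^{0,\al}$, $\al<1$, seminorm is constant) then forces $v_\ift$ constant, contradicting $[v_\ift]_{C^{0,\al}(B_1)}\geq c>0$. It remains to rule out Case A as well.

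The final step eliminates Case A, and this is where I expect the main obstacle to lie. We have produced a nonconstant entire stationary solution $v_\ift$ of $\Delta v_\ift=v_\ift^{-p}$ with $v_\ift(0)=0$, $v_\ift\geq 0$, and — crucially — finite \emph{global} $C^{0,\al}$ seminorm, hence $v_\ift(y)\leq \Lda'|y|^{\al}$ everywhere. One then runs the monotonicity formula of Proposition \ref{MonFor} with $f\equiv0$: $\vt(v_\ift;0,r)=\vt(v_\ift;0,1)$ for all $r$ would follow from the two-sided bound (the density is squeezed between bounds forced by the growth $v_\ift\lesssim|y|^{\al}$ and the nondegeneracy $\sup_{B_r}v_\ift\gtrsim r^{\al}$ of Remark \ref{remnonde}, together with the boundedness \eqref{maxvtusr} and the fact that $v_\ift(0)=0$ gives a finite limit \eqref{limitlowerif}); more precisely, one shows $\vt(v_\ift;0,\cdot)$ is bounded on $(0,\ift)$, hence by monotonicity converges as $r\to\ift$, and a rescaling invariance $\vt(v_\ift;0,\lambda r)=\vt(T_{0,\lambda}v_\ift;0,r)$ combined with $T_{0,\lambda}v_\ift$ being again a solution of the same type forces $\vt(v_\ift;0,\cdot)$ to be constant. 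By \eqref{udthetaudr2} this makes $v_\ift$ $\al$-homogeneous; but an $\al$-homogeneous solution of $\Delta v=v^{-p}$ with $\al=\tfrac{2}{p+1}\in(0,1)$ on all of $\R^n$ cannot have a bounded $C^{0,\al}$ seminorm \emph{and} be nonconstant unless it degenerates — one uses the homogeneity to reduce to an equation on the sphere and derives a contradiction from the nondegeneracy (the link-function would have to be both bounded and satisfy an equation with no suitable solution), or alternatively one quotes the two-dimensional reduction and the classification in \cite{DWW16}. I expect the delicate point to be making Case A's blow-up scales legitimately forced to be $\ll d_i$ (so that the limit is genuinely entire, not merely defined on a half-space), and ensuring the Morrey-norm smallness of $g_i$ with the correct exponent bookkeeping; the ruling-out of a nonconstant entire $\al$-homogeneous solution is then a clean consequence of homogeneity plus the nondegeneracy estimate.
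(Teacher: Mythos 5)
Your overall strategy --- argue by contradiction, blow up at a point and scale nearly realizing the H\"older quotient, and conclude with a Liouville theorem --- is the same as the paper's. But there is a genuine gap in the normalization, and it makes your Case A unsalvageable as stated.

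The problem is that the $C^{0,\al}$ seminorm is invariant under the natural rescaling: $[T_{x,\rho}u]_{C^{0,\al}(B_R)}=[u]_{C^{0,\al}(B_{R\rho}(x))}$. So if you set $v_i=\rho_i^{-\al}\bigl(u_i(x_i+\rho_i\cdot)-u_i(x_i)\bigr)$ with no further normalization, then $[v_i]_{C^{0,\al}(B_R)}$ is comparable to the quantity you assumed diverges, and you have no compactness; Propositions \ref{propConv} and \ref{propConv2} both require the uniform bound \eqref{uiAssori}. To restore compactness you must also divide by the divergent H\"older quotient (the paper's $L_i$, your $M_id_i^{-\al}$). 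But then the nonlinearity transforms as $u^{-p}\mapsto\va_i^{p+1}w_i^{-p}$ with $\va_i\to 0$ (see \eqref{weakvifor}), so the blow-up limit does \emph{not} solve $\Delta v=v^{-p}$: it is harmonic on $\{v>0\}$ and, after passing the stationarity to the limit, a stationary solution of $v\Delta v=0$. Your Case A, in which the limit is a genuine entire stationary solution of $\Delta v_\ift=v_\ift^{-p}$, therefore never arises with a correct normalization --- and, more importantly, the contradiction you aim for there is false: the function \eqref{ralsolu}, $v(x)=\al^{-\al}|x|^{\al}$ in $\R^2$ (trivially extended to $\R^n$ via Lemma \ref{DWW59}), is a nonconstant, entire, $\al$-homogeneous stationary solution of $\Delta v=v^{-p}$ with $v(0)=0$ and globally bounded $C^{0,\al}$ seminorm. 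No argument via monotonicity and homogeneity can exclude such a limit, because it exists.

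The paper's proof resolves exactly this point: it normalizes by $L_i\approx[u_i\vp]_{C^{0,\al}}\to+\ift$ (multiplying by a cutoff $\vp$ so that the rescaled domains converge to all of $\R^n$, playing the role of your weight $(1-|x|)^{\al}$), splits according to whether $v_i(0)$ stays bounded, and in the nontrivial case shows the limit $w_\ift$ is a stationary solution of $w_\ift\Delta w_\ift=0$ with globally bounded seminorm and a unit H\"older quotient. The contradiction then comes from Lemma \ref{ulapulem}, the Liouville theorem of \cite{DWW16} for stationary solutions of $u\Delta u=0$ --- which is precisely the degenerate equation that survives the normalization, and for which (unlike for $\Delta v=v^{-p}$) the Liouville statement is true. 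Your Case B is essentially the paper's Case 1 and is fine once the normalization is corrected; your exponent bookkeeping for the smallness of the rescaled $f_i$ (where $\f12+\f1{2p}<\f qn$ enters, cf.\ \eqref{giestim}) is also consistent with the paper.
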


\begin{proof}[Proof of Proposition \ref{propHolder}]
Let $ \vp\in C_0^{\ift}(\R^n) $ satisfy the following properties
\begin{itemize}
\item $ \vp\equiv 1 $ in $ B_1 $ in $ \R^n $, and $ B_2=\{\vp>0\} $.
\item $ 0\leq\vp\leq 1 $ and $ |\na\vp|\leq C(n) $ in $ \R^n $.
\end{itemize}
We will show that
\be
[u\vp]_{C^{0,\al}(\ol{B}_2)}\leq C(\Lda,n,p,q).\label{uetabound}
\ee
This, together with \eqref{L1bound}, implies \eqref{Holderbound}. Assume that the estimate \eqref{uetabound} is not true, there exists a sequence of stationary solutions of \eqref{MEMSeq}, denoted by $ \{u_i\}\subset(C_{\loc}^{0,\al}\cap H_{\loc}^1\cap L^{-p})(B_4) $ with respect to $ f_i\in L_{\loc}^q(B_5) $ such that for any $ i\in\Z_+ $, the following properties hold.
\begin{itemize}
\item $ u_i $ and $ f_i $ satisfy 
\be
\|u_i\|_{L^1(B_4)}+\|f_i\|_{L^q(B_4)}\leq\Lda.\label{uificon}
\ee
\item There exists $ x_i,y_i\in\ol{B}_2 $ such that
\be
L_i:=\f{|(u_i\vp)(x_i)-(u_i\vp)(y_i)|}{|x_i-y_i|^{\al}}\geq\f{1}{2}[u_i\vp]_{C^{0,\al}(\ol{B}_2)},\label{Lidef}
\ee
and 
\be
\lim_{i\to+\ift}L_i=+\ift.\label{Lilimit}
\ee
\end{itemize}
Using \eqref{uificon}, Lemma \ref{LemHL}, and the fact that $ u_i\geq 0 $ for any $ i\in\Z_+ $, we have
\be
\sup_{i\in\Z_+}\|u_i\|_{L^{\ift}(B_2)}\leq C(\Lda,n,p,q).\label{Liftboundui}
\ee
Due to \eqref{Lilimit}, $ |x_i-y_i|\to 0 $. Let $ r_i:=|x_i-y_i| $ and $ z_i:=\f{y_i-x_i}{r_i} $. We see that $ |z_i|=1 $. Up to a subsequence, there holds
\be
z_i\to z_{\ift}\in\Ss^{n-1}.\label{Ziconv}
\ee
Recalling the notation in Definition \ref{defblowup}, for any $ i\in\Z_+ $, we set
\begin{align*}
v_i&:=L_i^{-1}T_{x_i,r_i}(u_i\vp),\\
w_i&:=L_i^{-1}\vp(x_i)T_{x_i,r_i}u_i=\va_iT_{x_i,r_i}u_i,\\
g_i&:=L_i^{-1}\vp(x_i)T_{x_i,r_i}^*f_i=\va_iT_{x_i,r_i}^*f_i,
\end{align*}
defined in $ \om_i:=\eta_{x_i,r_i}(\om) $, where $ \va_i:=L_i^{-1}\vp(x_i) $. We note that \eqref{Lilimit} implies 
\be
\lim_{i\to+\ift}\va_i=0.\label{vailimit}
\ee
The estimate \eqref{Liftboundui} and the choice of $ \vp $ yield that for any $ x\in\om_i $,
\be
|v_i(x)-w_i(x)|\leq L_i^{-1}r_i^{-\al}|(\vp(x_i+r_ix)-\vp(x_i))(u_i(x_i+r_ix))|\leq CL_i^{-1}r_i^{1-\al}|x|.\label{Lipdiff}
\ee
Because of \eqref{Lidef}, we obtain
\be
1=|v_i(0)-v_i(z_i)|\geq\f{1}{2}[v_i]_{C^{0,\al}(\ol{\om_i})}.\label{normalzi}
\ee
For any $ x\in\om_i $, there exists $ y_x\in\pa\om_i $ such that $ \dist(x,\pa\om_i)=|x-y_x| $. Since $ \vp\equiv 0 $ in $ \R^n\backslash B_2 $, it follows that
\be
\begin{aligned}
0&\leq v_i(x)=|v_i(x)-v_i(y_x)|=L_i^{-1}T_{x_i,r_i}(u_i\vp)(x)\\
&\leq CL_i^{-1}r_i^{1-\al}|\vp(x_i+r_ix)-\vp(x_i+r_iy_x)|\\
&\leq CL_i^{-1}r_i^{1-\al}|x-y_x|=C(\Lda,n,p,q)L_i^{-1}r_i^{1-\al}\dist(x,\pa\om_i).
\end{aligned}\label{vidist}
\ee
For the second inequality above, we have used \eqref{Liftboundui}. Since $ u_i $ is a stationary solution of \eqref{MEMSeq} with respect to $ f_i $, by Proposition \ref{ScalingProp}, it can be easily checked that
\be
\int_{\om_i}(\na w_i\cdot\na\vp+(\va_i^{p+1}w_i^{-p}+g_i)\vp)=0\label{weakvifor}
\ee
for any $ \vp\in C_0^{\ift}(\om_i) $, and
\be
\int_{\om}\left[\(\f{|\na w_i|^2}{2}-\f{\va_i^{p+1}w_i^{1-p}}{p-1}\)\op{div}Y-DY(\na w_i,\na w_i)-g_i(Y\cdot\na w_i)\right]=0\label{stavifor}
\ee
for any $ Y\in C_0^{\ift}(\om,\R^n) $. Let $ A_i:=v_i(0) $. Up to a subsequence, we assume that $ \lim_{i\to+\ift}A_i $ exists (possibly be $ +\ift $). We now divide the proof into two cases, depending on the value of this limit.

\vspace{2mm}
\emph{Case 1. $ A_i\to+\ift $.} The estimate \eqref{vidist} implies that 
$$
\dist(0,\pa\om_i)\geq C(\Lda,n,p,q)L_ir_i^{\al-1}A_i\to+\ift.
$$
Consequently, $ \om_i\to\R^n $. According to \eqref{normalzi}, up to a subsequence, there exists $ v_{\ift}\in C_{\loc}^{0,\al}(\R^n) $, such that
\be
v_i-A_i\to v_{\ift}\text{ strongly in }L_{\loc}^{\ift}(\R^n).\label{wiAilimit}
\ee
Using \eqref{Lipdiff}, we also have
\be
w_i-A_i\to w_{\ift}\equiv v_{\ift}\text{ strongly in }L_{\loc}^{\ift}(\R^n).\label{wilimitwift}
\ee
In particular, for any $ R>0 $, 
$$
\lim_{i\to+\ift}\|(w_i-A_i)-w_{\ift}\|_{L^{\ift}(B_R)}=0.
$$
Moreover, since $ L_i\to+\ift $ and $ r_i\to 0^+ $, \eqref{Lipdiff} and \eqref{normalzi} also give that if $ i\in\Z_+ $ is sufficiently large, then for any $ R>0 $,
\be
\inf_{B_{2R}}w_i\geq\inf_{B_{2R}}v_i-CL_i^{-1}r_i^{1-\al}R\geq A_i-2R^{\al}-CL_i^{-1}r_i^{1-\al}R\geq\f{A_i}{2}.\label{Ailower}
\ee
Using \eqref{uificon}, H\"{o}lder's inequality, and the assumption that $ \f{1}{2}+\f{1}{2p}<\f{q}{n} $, it follows that
\be
\int_{B_{2R}}|g_i|^2\leq \va_ir_i^{4-2\al-n}(r_iR)^{n(1-\f{2}{q})}\(\int_{B_{2r_iR}(x)}|f_i|^q\)^{\f{2}{q}}\leq C(\Lda,n,p,q,R)\va_i.\label{giestim}
\ee
This, together with \eqref{vailimit} and \eqref{Ailower}, implies that
\be
\lim_{i\to+\ift}\|\va_i^{p+1}w_i^{-p}+g_i\|_{L^2(B_{2R})}=0.\label{vap1gi}
\ee
In view of \eqref{weakvifor} and the fact that $ w_i(0)=A_i $, we can apply Caccioppoli's inequality to get that 
$$
\sup_{i\in\Z_+}\|w_i-A_i\|_{H^1(B_{R})}<+\ift. 
$$
By the arbitrariness of $ R>0 $, we further assume that 
\be
w_i-A_i\wc w_{\ift}\text{ weakly in } H_{\loc}^1(\R^n).\label{wiH1limit}
\ee
Combining with \eqref{weakvifor} and \eqref{vap1gi}, it follows that $ \Delta w_{\ift}=0 $ in the weak sense. By Weyl's lemma, $ w_{\ift} $ is a harmonic function. Taking \eqref{Ziconv}, \eqref{normalzi}, \eqref{wiAilimit}, and \eqref{wilimitwift} into account, we arrive at
\be
1=|w_{\ift}(0)-w_{\ift}(z_{\ift})|\geq\f{1}{2}[w_{\ift}]_{C^{0,\al}(B_R)}\label{conuse}
\ee
for any $ R>0 $. As a result, Corollary \ref{Liouvillecla} yields that $ w_{\ift} $ is a constant function, which is a contradiction to \eqref{conuse}.

\vspace{2mm}
\emph{Case 2. $ A_i\to A_{\ift}\in[0,+\ift) $.} Using \eqref{normalzi}, we have
\be
1\leq v_{\va}(0)+v_{\va}(z_i).\label{vass}
\ee
It follows from \eqref{vidist} and the fact $ |z_i|=1 $ that
\begin{align*}
0<C(\Lda,n,p,q)L_ir_i^{\al-1}&\leq\dist(0,\pa\om_i)+\dist(z_i,\pa\om_i)\leq 2\dist(0,\pa\om_i)+1.
\end{align*}
As a result, since $ L_i\to+\ift $ and $ r_i\to 0^+ $, we have $ \dist(0,\pa\om_i)\to+\ift $, and then $ \om_i\to\R^n $. By similar arguments as in the derivations of \eqref{wiAilimit} and \eqref{wilimitwift},  we can use \eqref{Lipdiff}, \eqref{normalzi}, and \eqref{vass} to obtain $ w_{\ift}\in C_{\loc}^{0,\al}(\R^n) $ such that
\be
w_i,v_i\to w_{\ift}\text{ strongly in }L_{\loc}^{\ift}(\R^n),\label{wvawift}
\ee
and then $ 1\leq w_{\ift}(0)+w_{\ift}(z_{\ift}) $.
Consequently, $ \{w_{\ift}>0\}\neq\emptyset $. Let $ K\subset\{w_{\ift}>0\} $ be a compact set. Define $
\delta:=\f{1}{2}\inf_Kw_{\ift} $. We now apply the convergence results in \eqref{wvawift} to obtain that for any sufficiently large $ i\in\Z_+ $, $
\inf_{K}w_i\geq\delta $. Incorporated with \eqref{giestim}, this yields
$$
\lim_{i\to+\ift}\|\va_i^{p+1}w_i^{-p}+g_i\|_{L^2(K)}=0.
$$
By almost the same arguments in Case 1, we see that $ \Delta w_{\ift}=0 $ in $ \{w_{\ift}>0\} $. If $ \{w_{\ift}=0\}=\emptyset $, we still have \eqref{conuse}, which is a contradiction, due to Corollary \ref{Liouvillecla}.

Now, let us assume that $ \{w_{\ift}=0\}\neq\emptyset $. Up to a translation, we let $ 0\in\{w_{\ift}=0\} $. It can be easily checked that \eqref{conuse} is also true for this case. As a result, for any $ R>0 $,
\be
\sup_{i\in\Z_+}\|w_i\|_{L^{\ift}(B_R)}<+\ift.\label{wiunifo}
\ee
We claim that
\begin{align}
w_i&\to w_{\ift}\text{ strongly in }H_{\loc}^1(\R^n),\label{H1strongift}\\
\va_i^{p+1}w_{\va}^{1-p}&\to 0\text{ strongly in }L_{\loc}^1(\R^n)\label{L1strongift}.
\end{align}
Let $ \psi\in C_0^{\ift}(\R^n) $. For $ i\in\Z_+ $ sufficiently large, the application of \eqref{weakvifor} with $ \vp=w_i\psi^2 $ implies that
\be
\int_{\R^n}(|\na w_i|^2\psi^2+\va_i^{p+1}w_i^{1-p}\psi^2+2(\na w_i\cdot\na\psi)w_i\psi+g_iw_i\psi^2)=0.\label{witest}
\ee
By \eqref{abdelta}, we have
$$
\int_{\R^n}(|\na w_i|^2\psi^2+\va_i^{p+1}w_i^{1-p}\psi^2)\leq C(n)\int_{\R^n}(w_i^2|\na\psi|^2+|g_i|w_i\psi^2).
$$
This, together with \eqref{giestim} and \eqref{wiunifo}, gives that for any $ R>0 $, $ \|w_i\|_{H^1(B_R)} $ is uniformly bounded. Consequently, up to a subsequence, we obtain
\be
w_i\to w_{\ift}\text{ weakly in }H_{\loc}^1(\R^n).\label{wiftwilimiH1}
\ee
Taking $ i\to+\ift $ in \eqref{witest}, it follows that
\be
\begin{aligned}
&\lim_{i\to+\ift}\(\int_{\R^n}|\na w_i|^2\psi^2\)-\int_{\R^n}|\na w_{\ift}|^2+\int_{\R^n}\va_i^{p+1}w_i^{1-p}\psi^2\\
&\quad\quad\quad\quad=-\int_{\R^n}(|\na w_{\ift}|^2+2(\na w_{\ift}\cdot\na\psi)w_{\ift}\psi)
\end{aligned}\label{nalimitwi}
\ee
Since $ w_{\ift} $ is smooth in $ \{w_{\ift}>0\} $, it follows from Sard's theorem that $ \{w_{\ift}=t\} $ is a smooth hypersurface for a.e. $ t>0 $. According to integration by parts and the fact that $ \Delta w_{\ift}=0 $ in $ \{w_{\ift}>0\} $, we have
\begin{align*}
&\int_{\{w_{\ift}>t\}}(|\na w_{\ift}|^2+2(\na w_{\ift}\cdot\na\psi)w_{\ift}\psi)=\int_{\{w_{\ift}=t\}}\pa_{\nu_t}w_{\ift}w_{\ift}\psi^2\\
&\quad\quad\quad\quad=t\(\int_{\{w_{\ift}=t\}}\pa_{\nu_t}w_{\ift}\psi^2\)=t\(\int_{\{w_{\ift}>t\}}\na w_{\ift}\cdot\na(\psi^2)\),
\end{align*}
where $
\pa_{\nu_t}w_{\ift}=\nu_t\cdot\na w_{\ift} $, and $ \nu_t $ is the outward unit normal vector of $ \{w_{\ift}=t\} $. Letting $ t\to 0^+ $, we see that
$$
\int_{\R^n}(|\na w_{\ift}|^2\psi^2+2(\na w_{\ift}\cdot\na\psi)w_{\ift}\psi)=0.
$$
This, together with \eqref{nalimitwi}, shows \eqref{H1strongift} and \eqref{L1strongift}. Combining with \eqref{stavifor}, $ w_{\ift} $ satisfies
$$
\int_{\R^n}(|\na w_{\ift}|^2-2DY(\na w_{\ift},\na w_{\ift}))=0
$$
for any $ Y\in C_0^{\ift}(\R^n,\R^n) $. This leads to the property that $ w_{\ift} $ is a stationary solution of $ w_{\ift}\Delta w_{\ift}=0 $ in $ \R^n $ and satisfies \eqref{conuse}. Consequently, Lemma \ref{ulapulem} implies that $ w_{\ift} $ is a constant function, which is a contradiction to \eqref{conuse}.
\end{proof}

\section{Classical stratification theory}\label{StratificationSection}

Using Proposition \ref{Blowprop} and Lemma \ref{tangent0}, for stationary solutions of \eqref{MEMSeq}, at any point in the rupture set, the tangent function exists and is $ \al $-homogeneous. Based on this property, we can establish the stratification results. 

\subsection{Symmetry property of functions} Generally speaking, stratification is a classification of points in the definite domain, depending on the symmetry of tangent functions. Such an idea leads us to define the characterization of the concept on $ k $-symmetric functions related to the model in this paper.

\begin{defn}[$ k $-symmetric functions]\label{ksymmetryf}
Let $ k\in\Z\cap[0,n] $ and $ x\in\R^n $. A function $ h\in C_{\loc}^{0,\al}(\R^n) $) is called $ k $-symmetric at $ x $ with respect to $ V\in\bG(n,k) $ if it satisfies the following properties.
\begin{enumerate}[label=$(\theenumi)$]
\item $ h $ is $ \al $-homogeneous at $ x $, namely, for any $ \lda>0 $ and $ y\in\R^n $,
$ h(x+\lda y)=\lda^{\al}h(x+y) $.
\item $ h $ is invariant with respect to $ V $, namely, for any $ v\in V $ and $ y\in\R^n $, $ h(y+v)=h(y) $. 
\end{enumerate}
For simplicity, if $ x=0 $, we say that $ h $ is $ k $-symmetric.
\end{defn}

\begin{rem}\label{remh0}
By the definition, if $ h $ is $ 0 $-symmetric, then $ h(0)=0 $. If $ h $ is $ n $-symmetric, then $ h\equiv 0 $ in $ \R^n $. 
\end{rem}

Concerning $ k $-symmetric functions, we first present some lemmas on the extension and convergence results.

\begin{lem}\label{extend}
Let $ k\in\Z\cap[0,n] $, $ r>0 $, and $ x\in\R^n $. Assume that $ h\in C^{0,\al}(\ol{B}_r(x)) $ and $ V\in\bG(n,k) $ satisfy the following properties.
\begin{enumerate}[label=$(\theenumi)$]
\item $ h $ is $ \al $-homogeneous at $ x $ in $ \ol{B}_r(x) $, namely, for any $ 0<\lda\leq r $ and $ y\in\ol{B}_1 $, $
h(x+\lda y)=\lda^{\al}h(x+y) $.
\item $ h $ is invariant with respect to $ V $, namely, for any $ v\in V $ and $ y\in\ol{B}_r(x) $ with $ y+v\in\ol{B}_r(x) $, $
h(y+v)=h(y) $. 
\end{enumerate}
Here, we call $ h $ a $ k $-symmetric in $ B_r(x) $. Then there exists an extension of $ h $ to $ \R^n $, denoted by $ \wt{h}\in C_{\loc}^{0,\al}(\R^n) $ such that $ \wt{h}\equiv h $ in $ \ol{B}_r(x) $, $ [\wt{h}]_{C^{0,\al}(\R^n)}=[h]_{C^{0,\al}(\ol{B}_r(x))} $, and $ \wt{h} $ is $ k $-symmetric at $ x $ with respect to $ V $. Moreover, this extension is unique. In particular, if $ h_1 $ and $ h_2 $ are two extensions satisfying the above properties, then $ h_1\equiv h_2 $.
\end{lem}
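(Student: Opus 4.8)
The plan is this: the two symmetry conditions leave exactly one possible choice for $\wt h$, so I would write that function down explicitly, verify that it has the required properties, and note that the very reasoning identifying it gives uniqueness. After a translation we may assume $x=0$, so $h\in C^{0,\al}(\ol B_r)$ is $\al$-homogeneous at $0$ on $\ol B_r$ and invariant under translations by $V$. Write $\pi$ for the orthogonal projection of $\R^n$ onto $V^{\perp}$. If $g\in C_{\loc}^{0,\al}(\R^n)$ is \emph{any} extension of $h$ that is $k$-symmetric at $0$ with respect to $V$, then invariance under $V$ forces $g(z)=g(\pi z)$ for every $z$, while $\al$-homogeneity forces $g(0)=0$ and, for $z$ with $\pi z\neq0$, $g(z)=g(\pi z)=(|\pi z|/r)^{\al}\,g(r\pi z/|\pi z|)=(|\pi z|/r)^{\al}\,h(r\pi z/|\pi z|)$, the last equality because $r\pi z/|\pi z|\in\ol B_r$ and $g=h$ on $\ol B_r$. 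Thus $g$ is completely determined by $h$; define $\wt h$ by exactly this formula, with $\wt h(z):=0$ when $\pi z=0$. It remains only to check that $\wt h$ really is an extension with the stated properties.

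That $\wt h$ is $\al$-homogeneous at $0$ and invariant under $V$ is immediate from the formula, since $\pi(\lda z)=\lda\,\pi z$ and $\pi(z+v)=\pi z$ for $\lda>0$ and $v\in V$, the locus $\{\pi z=0\}$ being absorbed by $\wt h(0)=0$. To see $\wt h\equiv h$ on $\ol B_r$, I would first record two elementary consequences of the hypotheses: (i) for $z\in\ol B_r$ and $0<s\le1$ one has $h(sz)=s^{\al}h(z)$, obtained by rescaling the homogeneity identity in hypothesis (1); and (ii) for $z\in\ol B_r$ one has $h(z)=h(\pi z)$, obtained from the $V$-invariance in hypothesis (2) applied with $y=\pi z\in\ol B_r$ and $v=z-\pi z\in V$. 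Then for $z\in\ol B_r$ with $\pi z\neq0$, setting $s:=|\pi z|/r\le1$ and $z_0:=r\pi z/|\pi z|\in\ol B_r$, we have $sz_0=\pi z$, hence $h(z)=h(\pi z)=h(sz_0)=s^{\al}h(z_0)=\wt h(z)$; and when $\pi z=0$ both sides vanish. So $\wt h$ extends $h$.

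The substantive point is the seminorm identity $[\wt h]_{C^{0,\al}(\R^n)}=[h]_{C^{0,\al}(\ol B_r)}$. The inequality $[\wt h]_{C^{0,\al}(\R^n)}\ge[h]_{C^{0,\al}(\ol B_r)}$ is trivial since $\wt h|_{\ol B_r}=h$, so the content is the reverse. Put $L:=[h]_{C^{0,\al}(\ol B_r)}$ and take $z_1,z_2\in\R^n$ with $|z_1|\le|z_2|$. If $|z_2|\le r$, then $z_1,z_2\in\ol B_r$, so $|\wt h(z_1)-\wt h(z_2)|=|h(z_1)-h(z_2)|\le L|z_1-z_2|^{\al}$. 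If $|z_2|>r$, set $t:=r/|z_2|\in(0,1)$, so $tz_1,tz_2\in\ol B_r$; using the $\al$-homogeneity of $\wt h$ just established and $\wt h=h$ on $\ol B_r$,
\begin{equation*}
|\wt h(z_1)-\wt h(z_2)|=t^{-\al}\,|\wt h(tz_1)-\wt h(tz_2)|=t^{-\al}\,|h(tz_1)-h(tz_2)|\le t^{-\al}L\,|tz_1-tz_2|^{\al}=L\,|z_1-z_2|^{\al}.
\end{equation*}
Hence $\wt h$ is globally $\al$-Hölder with seminorm at most $L$, and together with $\wt h(0)=h(0)$ this bounds $\wt h$ on every compact set, so $\wt h\in C_{\loc}^{0,\al}(\R^n)$. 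Undoing the translation yields the desired extension $z\mapsto\wt h(z-x)$, and uniqueness is precisely what the first paragraph established. I expect the only real difficulty to lie in making the seminorm estimate airtight — in particular checking the rescaling step and the identities (i)--(ii) in the degenerate cases $z_i=0$ or $\pi z_i=0$, and confirming that the homogeneity and invariance of $\wt h$ transfer verbatim upon translating back to center $x$; none of this should pose a genuine obstacle.
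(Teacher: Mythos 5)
Your proof is correct and follows essentially the same route as the paper: the paper simply writes down the radial $\al$-homogeneous extension $\wt h(y)=(|y-x|/r)^{\al}h(x+r(y-x)/|y-x|)$ and notes uniqueness from homogeneity, whereas you write the (same, by your own uniqueness argument) function via the projection onto $V^{\perp}$ and then actually verify the agreement on $\ol B_r(x)$ and the seminorm identity, which the paper leaves implicit. Your scaling argument for $[\wt h]_{C^{0,\al}(\R^n)}\le[h]_{C^{0,\al}(\ol B_r(x))}$ is exactly the right computation and closes the only step the paper's two-line proof glosses over.
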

\begin{proof}
We first note that the uniqueness follows directly from the $ \al $-homogeneity. Define
$$
\wt{h}(y)=\left\{\begin{aligned}
&h(y)&\text{ if }&y\in\ol{B}_r(x),\\
&\(\f{|y-x|}{r}\)^{\al}h\(x+\f{r(y-x)}{|y-x|}\)&\text{ if }&y\in\R^n\backslash\ol{B}_r(x).
\end{aligned}\right.
$$
Then $ \wt{h} $ satisfies desired properties.
\end{proof}

\begin{lem}\label{ConSym}
Let $ k\in\Z\cap[0,n] $. Assume that $ \{h_i\}\subset C_{\loc}^{0,\al}(\R^n) $ is a sequence of $ k $-symmetric functions at $ \{x_i\}\subset\R^n $ with respect to $ \{V_i\}\subset\bG(n,k) $. If $ h_i\to h_{\ift}\in C_{\loc}^{0,\al}(\R^n) $ strongly in $ L_{\loc}^{\ift}(\R^n) $, $ V_i\to V_{\ift}\in\bG(n,k) $, and $ x_i\to x_{\ift} $, then $ h_{\ift} $ is $ k $-symmetric at $ x_{\ift} $ with respect to $ V_{\ift} $.
\end{lem}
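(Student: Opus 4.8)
The plan is to verify directly the two defining properties of $k$-symmetry (Definition \ref{ksymmetryf}) for $h_\ift$ at the point $x_\ift$ with respect to $V_\ift$, using only that $h_i\to h_\ift$ locally uniformly (so $h_i$ converges uniformly on every fixed compact set), that $h_\ift$ is continuous, and the convergences $x_i\to x_\ift$, $V_i\to V_\ift$. The single mechanism used throughout is the following: if $z_i\to z_\ift$ is a sequence of points all contained in a fixed compact set $K$, then from
$$|h_i(z_i)-h_\ift(z_\ift)|\le\|h_i-h_\ift\|_{L^\ift(K)}+|h_\ift(z_i)-h_\ift(z_\ift)|$$
and the fact that the first term tends to $0$ by local uniform convergence while the second tends to $0$ by continuity of $h_\ift$, we conclude $h_i(z_i)\to h_\ift(z_\ift)$.

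First I would establish the $\al$-homogeneity of $h_\ift$ at $x_\ift$. Fix $\lda>0$ and $y\in\R^n$. Since each $h_i$ is $\al$-homogeneous at $x_i$, we have $h_i(x_i+\lda y)=\lda^\al h_i(x_i+y)$. The sequences $z_i=x_i+\lda y\to x_\ift+\lda y$ and $z_i'=x_i+y\to x_\ift+y$ lie in a fixed compact set because $\{x_i\}$ is bounded, so the mechanism above gives $h_i(x_i+\lda y)\to h_\ift(x_\ift+\lda y)$ and $h_i(x_i+y)\to h_\ift(x_\ift+y)$; passing to the limit in the homogeneity identity yields $h_\ift(x_\ift+\lda y)=\lda^\al h_\ift(x_\ift+y)$, which is the required identity for all $\lda>0$, $y\in\R^n$.

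Next I would establish the invariance of $h_\ift$ with respect to $V_\ift$. Fix $v\in V_\ift$ and $y\in\R^n$. Let $P_i,P_\ift$ be the orthogonal projections onto $V_i,V_\ift$; since $V_i\to V_\ift$ in the Grassmannian metric we have $\|P_i-P_\ift\|\to 0$, hence $v_i:=P_iv\in V_i$ satisfies $|v_i-v|\le\|P_i-P_\ift\|\,|v|\to 0$, i.e. $v_i\to v$. By invariance of $h_i$ with respect to $V_i$ we have $h_i(y+v_i)=h_i(y)$; applying the mechanism to $z_i=y+v_i\to y+v$ and to the constant sequence $y$, we obtain $h_\ift(y+v)=h_\ift(y)$. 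Together with the previous paragraph this shows $h_\ift$ is $k$-symmetric at $x_\ift$ with respect to $V_\ift$.

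The argument contains no genuine obstacle; the only point that needs care is the bookkeeping in the convergence mechanism — all points appearing must stay inside a single compact set on which $h_i\to h_\ift$ uniformly, and one must use continuity of the limit $h_\ift$ rather than mere pointwise convergence of the $h_i$, since the evaluation points $x_i+\lda y$ and $y+v_i$ move with $i$. The degenerate cases $k=0$ (the invariance condition being vacuous) and $k=n$ (all $h_i\equiv 0$, hence $h_\ift\equiv 0$, cf. Remark \ref{remh0}) are handled by the same reasoning.
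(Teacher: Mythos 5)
Your proof is correct and follows essentially the same route as the paper, which simply observes that both defining properties are pointwise identities preserved under strong $L^{\ift}_{\loc}$ convergence. You have merely spelled out the bookkeeping (moving base points $x_i$, moving subspaces $V_i$, continuity of the limit) that the paper leaves implicit.
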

\begin{proof}
The $ \al $-homogeneity and invariance with respect to $ k $-dimensional subspace is defined pointwise, so they are still valid under the strong convergence in $ L_{\loc}^{\ift}(\R^n) $. 
\end{proof}

\subsection{Stratification and Hausdorff dimensions of strata} Let $ \ga>0 $ and $ \om\subset\R^n $ be a bounded domain. Assume that $ u\in(C_{\loc}^{0,\al}\cap H_{\loc}^1\cap L_{\loc}^{-p})(\om) $ is a stationary solution of \eqref{MEMSeq} with respect to $ f\in M_{\loc}^{2\al+n-4+\ga,2}(\om) $.

\begin{defn}
For any $ k\in\Z\cap[0,n-1] $, define the $ k $-stratum of $ u $ by
$$
S^k(u):=\{x\in\om:\text{no tangent function }v\text{ of }u\text{ at }x\text{ is }(k+1)\text{-symmetric}\}.
$$
As a result,
$$
S^0(u)\subset S^1(u)\subset S^2(u)\subset...\subset S^{n-1}(u)\subset\om.
$$
\end{defn}

\begin{rem}\label{remSn1u0}
Using Proposition \ref{Blowprop}, we see that the tangent function at points in $ \{u>0\} $ is $ 0 $. Thus, we have $ S^{n-1}(u)\subset\{u=0\} $.
\end{rem}

The following proposition establishes the estimate for the Hausdorff dimension of $ k $-stratum defined above.

\begin{prop}\label{Hausdorffdim}
Assume that $ \{S^k(u)\}_{k=0}^{n-1} $ are given above. We have
$$ 
S^0(u)\subset S^1(u)\subset S^2(u)\subset...\subset S^{n-2}(u)=S^{n-1}(u)=\{u=0\},
$$
and for any $ k\in\Z\cap[0,n-2] $,
\be
\dim_{\HH}(S^k(u))\leq k\label{Whitees}
\ee
If $ n=2 $, then $ S^0(u) $ is discrete. 
\end{prop}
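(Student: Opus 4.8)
The plan is to follow the standard Federer-type dimension reduction scheme, now available because the previous sections have established all the required ingredients: existence and $\al$-homogeneity of tangent functions (Proposition~\ref{Blowprop}, Lemma~\ref{tangent0}), the monotonicity formula and the density $\vt(u;\cdot)$ (Proposition~\ref{MonFor}) together with its upper semicontinuity (Lemma~\ref{semicon}), and the compactness/convergence of blow-ups (Proposition~\ref{propConv}, Lemma~\ref{ConSym}). I will first record two elementary facts. First, by Remark~\ref{remSn1u0} we already have $S^{n-1}(u)\subset\{u=0\}$; conversely, if $u(x)=0$ then by Lemma~\ref{tangent0} every tangent function $u_\ift$ at $x$ is a nonzero $\al$-homogeneous stationary solution on $\R^n$, and by Remark~\ref{remh0} an $n$-symmetric function vanishes identically, so $u_\ift$ cannot be $n$-symmetric (i.e.\ $(n-1+1)$-symmetric); hence $\{u=0\}\subset S^{n-1}(u)$, giving $S^{n-1}(u)=\{u=0\}$. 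Second, I claim $S^{n-2}(u)=S^{n-1}(u)$: if some tangent function $u_\ift$ at $x\in\{u=0\}$ were $(n-1)$-symmetric with respect to $V\in\bG(n,n-1)$, then being $\al$-homogeneous and invariant under an $(n-1)$-plane it would be a function of a single real variable $t$ (the coordinate orthogonal to $V$), homogeneous of degree $\al$, hence a multiple of $|t|^\al$ or of $t_\pm^\al$; a short computation shows such a one-variable function cannot solve $\Delta u_\ift=u_\ift^{-p}$ weakly on all of $\R^n$ in the sense of \eqref{WeakConMEMS} with $f\equiv0$ (the distributional Laplacian of $c|t|^\al$ picks up a wrong-signed or singular contribution at $t=0$, incompatible with $u_\ift^{-p}\ge 0$), so no such tangent exists and $x\in S^{n-2}(u)$; combined with $S^{n-2}(u)\subset S^{n-1}(u)$ this yields equality. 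One must be slightly careful here to invoke exactly the structure of the equation; this is a routine but genuine computation.

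For the dimension bound \eqref{Whitees} I will use the stratification principle in the form of White~\cite{Whi97}. The setup is: the space of tangent functions, taken with the local $C^{0,\al}$-topology (equivalently $L^\ift_{\loc}$ after normalizing the seminorm), forms a compact ``cone'' of $\al$-homogeneous functions closed under the blow-up/rescaling operations $h\mapsto T_{0,\lambda}h$ and under translations $h\mapsto h(\cdot+v)$; the density $\vt$ is an upper semicontinuous, translation-invariant, scaling-invariant functional on this cone; and the ``spine'' $S(h):=\{v\in\R^n: h(\cdot+v)=h(\cdot)\}$ of a homogeneous $h$ is a linear subspace, with $\dim S(h)=n$ iff $h$ is $n$-symmetric (here $h\equiv0$) iff $\vt$ attains its bottom value. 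The key quantitative lemma, proved exactly as in \cite[\S3]{Whi97} or the Naber--Valtorta exposition, is: for each $k$ there is $\delta>0$ and scale $r_0>0$ so that if $x\in S^k(u)$ and $0<r<r_0$, then $u$ restricted to $B_r(x)$, after rescaling, is not $\delta$-close (in $L^\ift$) to \emph{any} $(k+1)$-symmetric function; this is where compactness of the blow-up family and Lemma~\ref{ConSym} enter, via the usual contradiction argument. From this ``quantitative $k$-symmetry fails'' statement the Federer reduction then gives $\dim_\HH S^k(u)\le k$ by the standard covering argument: one partitions $S^k(u)$ by the near-constancy of the density at small scales, and on each piece a dyadic covering together with the failure of $(k+1)$-symmetry forces the Hausdorff dimension bound. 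I will cite \cite{Whi97} for the abstract statement and indicate how \eqref{MEMSeq}'s objects verify its hypotheses, rather than reproving it.

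The main obstacle is not any single deep step but the verification that our setting meets the axioms of White's abstract framework, since the density $\vt$ here can be negative and is only defined through the mollified monotone quantity $\vt_f$ of Definition~\ref{Defdifffuncti}; I must check that on tangent functions (where $f\equiv0$) the monotonicity \eqref{udthetaudr2} holds with equality, that $\vt(\cdot;0)$ is constant along blow-ups (this is \eqref{uifttheta}), that it is upper semicontinuous under the convergence in Proposition~\ref{propConv}/Lemma~\ref{ConSym} — which follows from Lemma~\ref{semicon} combined with the strong $H^1_{\loc}$ convergence of tangent sequences — and that the ``bottom stratum'' characterization ($\vt$ minimal $\iff$ $n$-symmetric $\iff$ identically zero) holds, which is immediate from \eqref{thetauxrbound}/\eqref{limitiftminus} and Remark~\ref{remh0}. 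Once these compatibility checks are in place, the case $n=2$ is a direct corollary: then $S^0(u)=S^{n-2}(u)=\{u=0\}$ and \eqref{Whitees} gives $\dim_\HH S^0(u)\le 0$; upgrading to discreteness requires the standard argument that an accumulation point $x_0$ of $\{u=0\}$ would, by upper semicontinuity and monotonicity of the density together with a blow-up at $x_0$, force the tangent function at $x_0$ to be translation-invariant in one direction, i.e.\ $1$-symmetric, contradicting $x_0\in S^0(u)=\{u=0\}$; I will spell out this last contradiction carefully since it is the only place in the $n=2$ statement that goes beyond the dimension count.
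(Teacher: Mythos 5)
Your proposal follows the same overall route as the paper: White-type stratification via $\al$-homogeneous tangent functions, the monotonicity formula, upper semicontinuity of $\vt(u;\cdot)$, and a covering argument based on the $(\delta,k)$-approximation property (the paper carries this out explicitly in Lemmas \ref{SSgeq}, \ref{deltaaplem} and \ref{lemhau}, where you defer to the abstract statement in \cite{Whi97}; that is acceptable provided the compactness and semicontinuity hypotheses are checked as you indicate). Two sub-steps differ in execution. For $S^{n-2}(u)=S^{n-1}(u)$ you analyze the one-variable profile $at_+^{\al}+bt_-^{\al}$ directly; this does work, but the justification should be made precise: either $u_\ift^{-p}\sim|t|^{-2p/(p+1)}$ fails to be locally integrable across the hyperplane $\{t=0\}$ because $2p/(p+1)>1$ (contradicting $u_\ift\in L^{-p}_{\loc}(\R^n)$ from Proposition \ref{Blowprop}), or $(t^{\al})''=\al(\al-1)t^{\al-2}<0$ for $t>0$ while $u_\ift^{-p}>0$; your phrase about a ``wrong-signed or singular contribution at $t=0$'' is too vague. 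The paper instead treats the tangent as a two-variable function and invokes Lemmas \ref{DWW59} and \ref{DWW58}. The one genuine gap is in your $n=2$ discreteness argument: from an accumulating sequence of zeros you cannot directly conclude that the tangent at the accumulation point is translation-invariant in one direction — that cone-splitting conclusion requires $\vt(h;x_\ift)=\vt(h;0)$, which does not follow merely from $h(x_\ift)=0$, so as stated this step would need an additional blow-up at $x_\ift$. Use the paper's direct argument instead: blowing up at scale $|x_i|$ yields a tangent $h$ with $h(x_\ift)=0$ for some $x_\ift\in\Ss^1$, and $\al$-homogeneity places the whole ray $\{tx_\ift:t\geq 0\}$ inside $\{h=0\}$, contradicting $\{h=0\}=\{(0,0)\}$ from Lemma \ref{DWW58}.
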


\begin{rem}
This proposition is a generalization for Theorem \ref{thmDWW} since $ f $ is not identical to $ 0 $ here. Additionally, we consider the stratification for the rupture set.
\end{rem}

To show Proposition \ref{Hausdorffdim}, we will adopt the standard arguments developed in \cite{Whi97}. One can also see Chapter 10 of \cite{GM05} and Chapter 2 of \cite{LW08} for references on similar methods applied in the study of harmonic maps. Before proving this result, we first recall some fundamental properties and concepts.

\begin{lem}[\cite{DWW16}, Lemma 5.8 and 5.10]\label{DWW58}
Assume that $ h\in(C_{\loc}^{0,\al}\cap H_{\loc}^1\cap L_{\loc}^{-p})(\R^n) $ is a stationary solution of \eqref{MEMSeq} with respect to $ f\equiv 0 $. Suppose that $ h $ is $ 0 $-symmetric. Then for any $ x\neq 0 $, $ \vt(h;x)\leq\vt(h;0) $ and the set
$$
\Sg(h):=\{x\in\R^n:\vt(h;x)=\vt(h;0)\}
$$
is a subspace of $ \R^n $. Moreover, $ h $ is invariant with respect to $ \Sg(h) $. If $ n=2 $, then $ \{h=0\}=\{(0,0)\} $.
\end{lem}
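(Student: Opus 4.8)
To prove Lemma~\ref{DWW58}, the plan is to exploit the $\al$-homogeneity of $h$ so that every assertion reduces to the monotonicity formula \eqref{udthetaudr2} (the case $f\equiv0$ of Proposition~\ref{MonFor}, recorded in Remark~\ref{udthetaudrem}). Since $h$ is $0$-symmetric, $T_{0,r}h=r^{-\al}h(r\,\cdot\,)=h$ for every $r>0$, so Proposition~\ref{ScalingProp}(3) gives the rescaling identity $\vt(h;y,R)=\vt(h;ty,tR)$ for all $y\in\R^n$ and $t,R>0$; equivalently $\vt(h;x,r)=\vt(h;x/t,r/t)$ for every $t>0$, whence both $\vt(h;x,r)=\vt(h;x/r,1)$ and $\vt(h;tx,r)=\vt(h;x,r/t)$. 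In particular $r\mapsto\vt(h;0,r)$ is constant, equal to $\vt(h;0)$, which is finite since $h(0)=0$ (Remark~\ref{remh0}) and Proposition~\ref{propupture}(3) applies. Letting $r\to0^+$ in $\vt(h;tx,r)=\vt(h;x,r/t)$ gives $\vt(h;tx)=\vt(h;x)$ for all $t>0$, so $\vt(h;\cdot)$ is constant along rays from the origin; feeding $x/j\to0$ into the upper semicontinuity of $\vt(h;\cdot)$ (Lemma~\ref{semicon}) then produces $\vt(h;0)\ge\limsup_j\vt(h;x/j)=\vt(h;x)$, which is the first assertion.

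Next I would analyze $\Sg(h)$. Fix $x\in\Sg(h)\setminus\{0\}$, so $\vt(h;x)=\vt(h;0)$. The function $r\mapsto\vt(h;x,r)$ is nondecreasing by Proposition~\ref{MonFor}, has limit $\vt(h;x)=\vt(h;0)$ as $r\to0^+$, and, writing $\vt(h;x,r)=\vt(h;x/r,1)$ and using the continuity of $z\mapsto\vt(h;z,1)$ — an integral of $|\na h|^2$, $h^{1-p}$ and $h^2$ against the smoothly translated cut-off $\phi_{z,1}$, all locally integrable since $\cL^n(\{h=0\})=0$ by Remark~\ref{aln2zero} — has limit $\vt(h;0,1)=\vt(h;0)$ as $r\to\infty$. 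A nondecreasing function with equal limits at $0^+$ and $+\infty$ is constant, so $\vt(h;x,r)\equiv\vt(h;0)$, and then \eqref{udthetaudr2} centred at $x$ forces $(y-x)\cdot\na h(y)-\al h(y)=0$ for a.e.\ $y$: the integrand there is nonpositive and $\dot{\phi}_{x,r}\le-1$ on $B_{2\sqrt2\,r}(x)$, so the vanishing of the $r$-derivative for all $r$ yields the identity a.e.\ on $\R^n$. Subtracting the homogeneity relation $y\cdot\na h=\al h$ leaves $x\cdot\na h\equiv0$, i.e.\ $h(y+sx)=h(y)$ for all $s\in\R$; a change of variables $y\mapsto y+sx$ in the functionals $F$ and $H$ then shows $\vt(h;sx,r)=\vt(h;0,r)=\vt(h;0)$, so $\R x\subset\Sg(h)$. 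If also $x'\in\Sg(h)$, then $h$ is invariant along $\R x$ and $\R x'$, hence along $\R(x+x')$, with $h(x+x')=h(0)=0$ and $\vt(h;x+x',r)=\vt(h;0,r)$, so $x+x'\in\Sg(h)$. Thus $\Sg(h)$ is a linear subspace, and by construction $h$ is invariant with respect to it.

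For $n=2$, I would argue directly. By $\al$-homogeneity, $\{h=0\}$ is a closed cone containing $0$; writing $h(\rho,\theta)=\rho^{\al}v(\theta)$ in polar coordinates, $v\ge0$ is continuous on the circle and $v\not\equiv0$ (otherwise $h\equiv0$, which cannot solve $\Delta h=h^{-p}$). If $\{v=0\}\ne\emptyset$, it is a closed proper subset of the circle, so some component $(\theta_1,\theta_2)$ of its complement has $v>0$ inside and $v(\theta_1)=0$; on the corresponding open sector $h$ is smooth and positive, and $\Delta h=h^{-p}$ becomes, using $\al=2/(p+1)$, the ODE $v''+\al^2 v=v^{-p}$, which has the first integral $\tfrac12 (v')^2+\tfrac{\al^2}{2}v^2+\tfrac1{p-1}v^{1-p}\equiv\mathrm{const}$ on $(\theta_1,\theta_2)$. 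Letting $\theta\to\theta_1^+$ forces $v\to0$, hence $v^{1-p}\to+\infty$ (as $1-p<0$), contradicting finiteness of the constant. Therefore $\{v=0\}=\emptyset$ and $\{h=0\}=\{(0,0)\}$.

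The step I expect to be the main obstacle is the constancy of $r\mapsto\vt(h;x,r)$ in the analysis of $\Sg(h)$: it hinges on the continuity of the fixed-scale density $z\mapsto\vt(h;z,1)$ as $z\to0$ — equivalently on $T_{x,r}h=h(\,\cdot+x/r\,)\to h$ in $H_{\loc}^1\cap L_{\loc}^{-p}$ as $r\to\infty$, which is translation continuity together with $\cL^n(\{h=0\})=0$ — and on being allowed to differentiate the monotonicity formula. Both are routine given the a priori $C^{0,\al}$ bound and interior elliptic regularity on $\{h>0\}$, but this is where the genuine work lies; the argument above is in essence that of Lemmas~5.8 and~5.10 of \cite{DWW16}.
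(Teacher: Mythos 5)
Your proof is correct, and the paper itself does not reprove this lemma — it imports it from \cite{DWW16} (Lemmas 5.8 and 5.10), whose argument is exactly the scaling/upper-semicontinuity step, the cone-splitting via constancy of the monotone density along the segment, and an angular ODE analysis in dimension two. Your reconstruction follows that same standard route, merely adapted to the mollified density $\vt$ of Definition \ref{Defdifffuncti} (using \eqref{udthetaudr2} and the support properties of $\dot\phi_{x,r}$ in place of the sharp-cutoff density \eqref{classDen}), and the points you flag as the real work — continuity of $z\mapsto\vt(h;z,1)$ and the identification $T_{x,r}$-limits — are indeed routine under the global $C^{0,\al}$ bound furnished by homogeneity.
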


\begin{lem}[\cite{DWW16}, Lemma 5.9]\label{DWW59}
Let $ k\in\Z\cap[2,n-1] $. Assume that $ u=u(x_1,x_2,...,x_k)\in(H_{\loc}^1\cap L_{\loc}^{-p})(\R^k) $ is a weak solution of \eqref{MEMSeq} in $ \R^k $ with respect to $ f\equiv 0 $. Take $ \wt{u} $ to be the trivial extension of $ u $ to $ \R^n $, given by
$$
\wt{u}(x_1,x_2,...,x_n):=u(x_1,x_2,...,x_k).
$$
Then $ \wt{u} $ is stationary if and only if $ u $ is stationary.
\end{lem}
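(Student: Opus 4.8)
The plan is to reduce the asserted equivalence to a bookkeeping check on the inner-variation identity \eqref{StaConMEMS}. The hypothesis already gives that $ u $ is a weak solution of \eqref{MEMSeq} in $ \R^k $, and I would first record that the trivial extension $ \wt u(x)=u(x_1,\dots,x_k) $ is automatically a weak solution of \eqref{MEMSeq} in $ \R^n $ (with $ f\equiv0 $): testing against an arbitrary $ \vp\in C_0^{\ift}(\R^n) $ and applying the $ \R^k $-identity on each slice $ x'\mapsto\vp(x',x'') $, then integrating in $ x'' $, does it by Fubini. Hence ``$ \wt u $ stationary $ \iff $ $ u $ stationary'' reduces to showing $ \wt u $ obeys \eqref{StaConMEMS} in $ \R^n $ exactly when $ u $ obeys it in $ \R^k $. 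Writing $ x=(x',x'')\in\R^k\times\R^{n-k} $ and letting $ \na' $ denote the gradient in $ x' $, the structural fact I would lean on is that $ \pa_i\wt u\equiv0 $ for $ i>k $, so the Lagrangian $ \f{|\na\wt u|^2}{2}-\f{\wt u^{1-p}}{p-1} $ depends only on $ x' $ and equals the $ \R^k $-Lagrangian of $ u $, the term $ DY(\na\wt u,\na\wt u)=\sum_{i,j=1}^k\pa_iY^j\,\pa_iu\,\pa_ju $ involves only $ Y^1,\dots,Y^k $ and only $ x' $-derivatives, and the only piece of $ \op{div}Y $ absent from the $ \R^k $ picture is the ``vertical'' part $ \sum_{i=k+1}^n\pa_iY^i $.

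For the direction ``$ u $ stationary $ \Rightarrow $ $ \wt u $ stationary'', given $ Y\in C_0^{\ift}(\R^n,\R^n) $ I would split $ \op{div}Y $ into its $ x' $- and $ x'' $-parts. The $ x'' $-part contributes $ \int_{\R^n}(\text{Lagrangian of }u)(x')\sum_{i>k}\pa_iY^i(x)\,\ud x $, which vanishes since $ \int_{\R}\pa_iY^i\,\ud x_i=0 $ and the Lagrangian is independent of $ x'' $. For the rest, I would freeze $ x'' $ and feed $ Z:=(Y^1(\cdot,x''),\dots,Y^k(\cdot,x''))\in C_0^{\ift}(\R^k,\R^k) $ into \eqref{StaConMEMS} for $ u $: the inner $ x' $-integral is $ 0 $ for each $ x'' $, so integrating in $ x'' $ finishes it. For the converse, given $ Z\in C_0^{\ift}(\R^k,\R^k) $ I would fix $ \rho\in C_0^{\ift}(\R^{n-k}) $ with $ \int\rho=1 $ and test \eqref{StaConMEMS} for $ \wt u $ with $ Y(x',x''):=(\rho(x'')Z^1(x'),\dots,\rho(x'')Z^k(x'),0,\dots,0) $; Fubini factors out $ \int\rho=1 $ and leaves precisely $ \int_{\R^k}\big[(\text{Lagrangian of }u)\op{div}Z-DZ(\na'u,\na'u)\big]\,\ud x'=0 $, i.e. \eqref{StaConMEMS} for $ u $, since $ Z $ was arbitrary.

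The computations are entirely routine and I do not expect a genuine obstacle. The points that will require a bit of care are: verifying $ \wt u\in(H_{\loc}^1\cap L_{\loc}^{-p})(\R^n) $ (which uses $ B_R^n\subset B_R^k\times B_R^{n-k} $ and Fubini) so that \eqref{StaConMEMS} makes sense for $ \wt u $; checking the local integrability that legitimises each Fubini swap, in particular $ u^{1-p}\le u^{-p}+1\in L_{\loc}^1 $ so the Lagrangian lies in $ L_{\loc}^1 $; and the index bookkeeping confirming that $ DY(\na\wt u,\na\wt u) $ genuinely collapses to its $ k $-dimensional counterpart while the sole discrepancy between the two inner-variation functionals, the vertical term $ \sum_{i>k}\pa_iY^i $ weighted by the Lagrangian, integrates to zero.
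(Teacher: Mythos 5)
Your argument is correct: the splitting of $\op{div}Y$ into horizontal and vertical parts, the observation that $DY(\na\wt u,\na\wt u)$ collapses to its $k$-dimensional counterpart since $\pa_i\wt u=0$ for $i>k$, the slicing in $x''$ for one direction and the test field $Y=(\rho(x'')Z(x'),0)$ with $\int\rho=1$ for the converse, together with the integrability checks ($u^{1-p}\le u^{-p}+1$) that justify Fubini, constitute a complete proof. The paper itself offers no proof of this lemma but simply cites \cite{DWW16}, Lemma 5.9, and your computation is precisely the standard verification underlying that citation, so there is nothing to correct.
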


\begin{defn}\label{deltaj}
Let $ \delta>0 $ and $ k\in\Z\cap[0,n-1] $. We say that the subset $ S\subset\R^n $ satisfies the $ (\delta,k) $-approximation property if there is $ \rho_0>0 $ such that for any $ y\in S $ and $ \rho\in(0,\rho_0] $, there exists $ V\in\bG(n,k) $, satisfying $
\eta_{y,\rho}(S)\cap B_1\subset B_{\delta}(V) $.
\end{defn}

\begin{lem}[\cite{GM05}, Lemma 10.38]\label{lemhau}
Let $ k\in\Z\cap[0,n-1] $. There exists $ \beta:\R_+\to\R_+ $ satisfying the following properties.
\begin{enumerate}[label=$(\theenumi)$]
\item $ \lim_{t\to 0^+}\beta(t)=0 $.
\item If $ S\subset\R^n $ satisfies the $ (\delta,k) $-approximation property with $ \delta>0 $, then $
\HH^{k+\beta(\delta)}(S)=0 $.
\end{enumerate}
\end{lem}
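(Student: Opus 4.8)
The plan is to prove Lemma \ref{lemhau} (which is Lemma 10.38 of \cite{GM05}) by the standard iterated covering argument, so the work is to set up the geometry carefully. First I would unwind the $(\delta,k)$-approximation property: there is $\rho_0>0$ such that for every $y\in S$ and every $\rho\in(0,\rho_0]$ there is a plane $V=V_{y,\rho}\in\bG(n,k)$ with $\eta_{y,\rho}(S)\cap B_1\subset B_\delta(V)$, and undoing the rescaling $\eta_{y,\rho}(\cdot)=\rho^{-1}(\cdot-y)$ this says exactly that $S\cap B_\rho(y)$ lies within distance $\rho\delta$ of the affine $k$-plane $y+V$. The elementary covering step is then: since $S\cap B_\rho(y)$ is trapped in a slab of thickness $2\rho\delta$ over a $k$-disc of radius $\rho$, one can cover it by at most $C_k\delta^{-k}$ balls of radius $3\rho\delta$ \emph{centered at points of} $S$. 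Concretely, take a maximal $3\rho\delta$-separated subset $\{w_l\}$ of $S\cap B_\rho(y)$; by maximality the balls $B_{3\rho\delta}(w_l)$ cover $S\cap B_\rho(y)$, while the orthogonal projections of the $w_l$ onto $y+V$ are $\rho\delta$-separated (they move by at most $\rho\delta$ each) and lie in a $k$-disc of radius $\rho$, so a volume count gives $\#\{w_l\}\le(4/\delta)^k=:C_k\delta^{-k}$.

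Next I would iterate. For $\delta$ not small the conclusion is vacuous: any subset of $\R^n$ has zero $\HH^m$ for $m>n$, so set $\beta(\delta):=n+1$ once $\delta\ge\tfrac16$, say. For $0<\delta<\tfrac16$ (so $3\delta<1$), cover $S$ by countably many balls $B_{\rho_0}(y)$, $y\in S$ (using that $\R^n$ is Lindel\"of), so it suffices to show $\HH^{k+\beta(\delta)}(S\cap B_{\rho_0}(y_0))=0$ for each. Starting from the single ball $B_{\rho_0}(y_0)$ with $y_0\in S$ and applying the covering step repeatedly with scales $\rho_j:=\rho_0(3\delta)^j\le\rho_0$ (so the approximation property keeps applying, as all centers stay in $S$), one gets that $S\cap B_{\rho_0}(y_0)$ is covered by $N_j:=(C_k\delta^{-k})^j$ balls of radius $\rho_j$. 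The associated pre-Hausdorff sum for exponent $s=k+\beta$ is
$$
\sum r_i^{\,s}=N_j\,\rho_j^{\,s}=\rho_0^{\,s}\,\bigl(3^{\,k+\beta}C_k\,\delta^{\,\beta}\bigr)^{j}.
$$
Taking logarithms, $3^{\,k+\beta}C_k\delta^{\,\beta}<1$ holds precisely when $\beta>\dfrac{\log(3^kC_k)}{\log(1/(3\delta))}$, so I would define $\beta(\delta):=\dfrac{2k\log 12}{\log(1/(3\delta))}$ for $0<\delta<\tfrac16$ (recall $C_k=4^k$, so $3^kC_k=12^k$). Then $\beta(\delta)\to0$ as $\delta\to0^+$, the base $3^{k+\beta(\delta)}C_k\delta^{\beta(\delta)}$ is $<1$, hence $\sum r_i^{\,s}\to0$ as $j\to\infty$ while $\rho_j\to0$, giving $\HH^{k+\beta(\delta)}(S\cap B_{\rho_0}(y_0))=0$ and therefore $\HH^{k+\beta(\delta)}(S)=0$.

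I do not expect a genuine obstacle here: the argument is routine measure-theoretic bookkeeping. The only points needing a little care are (i) the geometry of the covering step, in particular verifying that the new centers can always be chosen inside $S$ so the $(\delta,k)$-approximation property can be reused at every stage, and the clean volume/projection count bounding the multiplicity by $C_k\delta^{-k}$; and (ii) the explicit choice of $\beta(\delta)$ that simultaneously makes the geometric factor $3^{k+\beta(\delta)}C_k\delta^{\beta(\delta)}$ strictly less than $1$ and satisfies $\beta(\delta)\to0$, together with the trivial patch $\beta(\delta)=n+1$ for $\delta$ bounded away from $0$. Everything else follows directly from the definition of Hausdorff measure.
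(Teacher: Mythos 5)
Your argument is correct, and it is essentially the standard iterated-covering proof of the cited result (the paper itself gives no proof of Lemma \ref{lemhau}, only the citation to \cite{GM05}): rescale the $(\delta,k)$-approximation property to trap $S\cap B_\rho(y)$ in a $\rho\delta$-slab about a $k$-plane, cover by $\le C_k\delta^{-k}$ balls of radius $3\rho\delta$ centered in $S$, iterate, and choose $\beta(\delta)$ so that the geometric factor $3^{k+\beta}C_k\delta^{\beta}$ is below $1$. One small repair: for $k=0$ your explicit choice $\beta(\delta)=\frac{2k\log 12}{\log(1/(3\delta))}$ vanishes identically, so $\beta$ is not positive and the factor equals $1$ (and indeed $\HH^{0}(S)=0$ would force $S=\emptyset$, which is false for, say, a single point); since your derived sufficient condition is $\beta>\frac{\log(3^kC_k)}{\log(1/(3\delta))}$, it suffices to take instead, e.g., $\beta(\delta)=\frac{2(k+1)\log 12}{\log(1/(3\delta))}$ for $0<\delta<\frac16$, which is positive, still tends to $0$ as $\delta\to 0^+$, and makes the series collapse as you intended.
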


The proof of Proposition \ref{Hausdorffdim} is divided into several secondary lemmas. We will prove them in order.

\begin{lem}\label{Sn2}
$ \{u=0\}=S^{n-2}(u) $.
\end{lem}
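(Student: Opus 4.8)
The plan is to prove the two inclusions $\{u=0\}\subset S^{n-2}(u)$ and $S^{n-2}(u)\subset\{u=0\}$ separately. The latter inclusion is the easy one: since $S^{n-1}(u)\subset\{u=0\}$ by Remark \ref{remSn1u0}, and the strata are nested ($S^{n-2}(u)\subset S^{n-1}(u)$), we immediately get $S^{n-2}(u)\subset\{u=0\}$.

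The content is in showing $\{u=0\}\subset S^{n-2}(u)$, equivalently that at any $x$ with $u(x)=0$, no tangent function of $u$ at $x$ is $(n-1)$-symmetric. Suppose for contradiction that $x\in\{u=0\}$ has a tangent function $v$ (which exists and is $\al$-homogeneous by Proposition \ref{Blowprop} and Lemma \ref{tangent0}, with $v\not\equiv 0$) that is $(n-1)$-symmetric, say invariant with respect to $V\in\bG(n,n-1)$. After a rotation we may assume $V=\{x_n=0\}$, so $v=v(x_n)$ depends only on the last coordinate. Then $v$ is an $\al$-homogeneous function on $\R^n$ invariant under an $(n-1)$-plane: as a function of the single variable $x_n\in\R$, $\al$-homogeneity forces $v(x_n)=c_+|x_n|^{\al}$ for $x_n\ge 0$ and $v(x_n)=c_-|x_n|^{\al}$ for $x_n\le 0$, for constants $c_\pm\ge 0$ (nonnegativity since $u\ge 0$). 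Now I invoke that $v$ is a stationary solution of \eqref{MEMSeq} with $f\equiv 0$ (Proposition \ref{Blowprop}(i)(d)), hence in particular a weak solution: $\Delta v=v^{-p}$ in $\{v>0\}$ in the distributional sense, and $v\in L^{-p}_{\loc}(\R^n)$. But by Lemma \ref{uiftzero} / Remark \ref{aln2zero} applied to $v$ (or directly: a weak solution cannot vanish on a set of positive $\HH^{\al+n-2}$-measure), the rupture set $\{v=0\}$ has $\HH^{\al+n-2}$-measure zero. If either $c_+=0$ or $c_-=0$ then $\{v=0\}$ contains a half-space $\{\pm x_n\ge 0\}$, which has infinite $\HH^{\al+n-2}$-measure — contradiction. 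So $c_+,c_->0$ and $\{v=0\}=\{x_n=0\}$, which again has infinite $\HH^{\al+n-2}$-measure (it is an $(n-1)$-plane and $\al+n-2<n-1$), contradiction.

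I expect the main obstacle to be making rigorous the step that reduces to the one-variable function and confirms that $v$ genuinely solves the equation weakly across $\{x_n=0\}$ in the sense needed to apply the $\HH^{\al+n-2}$-vanishing of the rupture set — one must be careful that $v^{-p}=c_\pm^{-p}|x_n|^{-\al p}$ is indeed in $L^1_{\loc}$ (true since $\al p=\tfrac{2p}{p+1}<1$... wait, $\al p = \tfrac{2p}{p+1}$, which is less than $2$; integrability in one variable near $0$ requires $\al p<1$, i.e. $\tfrac{2p}{p+1}<1$, i.e. $p<1$, which fails). This shows the one-dimensional profile $c_\pm|x_n|^\al$ is \emph{not} in $L^{-p}_{\loc}$ for $p>1$, which is itself the desired contradiction and in fact the cleanest route: a nonzero $(n-1)$-symmetric $\al$-homogeneous $v$ cannot lie in $L^{-p}_{\loc}(\R^n)$ when $p>1$, so it cannot be a tangent function. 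I would therefore organize the argument around this integrability obstruction rather than the Hausdorff-measure one, using Lemma \ref{DWW59} only if I instead want to argue via lower-dimensional solutions; the direct computation $\int_{B_1}v^{-p}=\int_{B_1}c^{-p}|x_n|^{-\al p}=+\infty$ when $\al p\ge 1$ (equivalently always, since $\al p=\frac{2p}{p+1}\ge 1\iff p\ge 1$) is the key line.
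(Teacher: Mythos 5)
Your proof is correct, and the substantive inclusion $\{u=0\}\subset S^{n-2}(u)$ is established by a genuinely different route from the paper's. The paper regards the $(n-1)$-symmetric tangent function $h$ as a function of the last two variables, invokes Lemma \ref{DWW59} to see that $h$ is a stationary solution of $\Delta h=h^{-p}$ in $\R^2$, and then derives a contradiction with the two-dimensional classification $\{h=0\}=\{(0,0)\}$ of Lemma \ref{DWW58}, since the invariance forces $h$ to vanish on an entire line. You instead exploit the membership $u_{\ift}\in L_{\loc}^{-p}(\R^n)$ that Proposition \ref{Blowprop} guarantees for tangent functions at rupture points: a nonnegative, $\al$-homogeneous profile invariant under $\{x_n=0\}$ must equal $c_{\pm}|x_n|^{\al}$, and either some $c_{\pm}=0$ (so $v^{-p}=+\ift$ on a half-space of positive measure) or $\int_{B_1}v^{-p}\gtrsim\int_0^{1}t^{-\al p}\,\ud t=+\ift$, because $\al p=\tfrac{2p}{p+1}\geq 1$ precisely when $p\geq 1$. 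This computation is right, and the argument is more elementary and self-contained: it bypasses the imported structural results of \cite{DWW16} entirely, at the cost of being special to the top stratum (the paper's reduction to lower-dimensional stationary solutions via Lemmas \ref{DWW58} and \ref{DWW59} is reused for the lower strata in Lemmas \ref{SSgeq} and \ref{deltaaplem}, where a pure integrability count would not suffice). Your initial detour through $\HH^{\al+n-2}(\{v=0\})=0$ would also close the argument, but, as you observed yourself, the failure of $v^{-p}\in L_{\loc}^1$ is simultaneously the obstacle to verifying the hypotheses of that route and the desired contradiction, so the reorganization you settle on is the right one.
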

\begin{proof}
Let $ x\in\{u=0\}\backslash S^{n-2}(u) $. By Proposition \ref{Blowprop}, there is a tangent function $ h $ of $ u $ at $ x $ such that $ h $ is invariant with respect to $ V\in\bG(n,n-1) $. Up to a rotation, we assume that $ V=\R^{n-1}\times(0) $. As a result, one can regard the function $ h $ as a function with the last two variables. Precisely, we write $
h(x_{n-1},x_n)=h(x_1,x_2,...,x_n) $. Moreover, $ h(x_{n-1},0)=0 $ for any $ x_{n-1}\in\R $ since $ h $ is invariant with respect to $ V $. Using Proposition \ref{Blowprop} and Lemma \ref{DWW59}, it can be seen that $ h $ is a stationary solution of $ \Delta h=h^{-p} $ in $ \R^2 $. However, it follows from Lemma \ref{DWW58} that $ \{h=0\}=\{(0,0)\} $, a contradiction.    
\end{proof}

\begin{lem}\label{SSgeq}
For any $ k\in\Z\cap[0,n-2] $, $ S^k(u)=\Sg^k(u) $, where
$$
\Sg^k(u):=\{x\in\{u=0\}:\dim(\Sg(h))\leq k\text{ for any tangent function }h\text{ of }u\text{ at }x\}.
$$
\end{lem}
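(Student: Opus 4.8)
\textbf{Proof proposal for Lemma \ref{SSgeq}.}

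The plan is to prove the two inclusions $S^k(u)\subset\Sg^k(u)$ and $\Sg^k(u)\subset S^k(u)$ separately, exploiting the structure of tangent functions established in Proposition \ref{Blowprop} and Lemma \ref{tangent0}, together with the symmetry analysis of $0$-symmetric stationary solutions provided by Lemma \ref{DWW58}. The crucial point is the following dictionary, valid at any $x\in\{u=0\}$: for a tangent function $h$ of $u$ at $x$, Lemma \ref{tangent0} shows $h$ is $\al$-homogeneous at $0$ (hence $0$-symmetric in the sense of Definition \ref{ksymmetryf}), and Lemma \ref{DWW58} identifies the maximal subspace of invariance of $h$ with $\Sg(h)=\{y\in\R^n:\vt(h;y)=\vt(h;0)\}$, which is a genuine linear subspace and with respect to which $h$ is invariant. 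So the whole argument rests on matching ``$h$ is $(k+1)$-symmetric'' with ``$\dim\Sg(h)\geq k+1$''.

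First I would prove $\Sg^k(u)\subset S^k(u)$, equivalently $\om\setminus S^k(u)\subset\om\setminus\Sg^k(u)$ within $\{u=0\}$. Take $x\in\{u=0\}\setminus S^k(u)$; by definition there is a tangent function $h$ of $u$ at $x$ that is $(k+1)$-symmetric, i.e. invariant with respect to some $V\in\bG(n,k+1)$ (and $\al$-homogeneous, automatically). By Lemma \ref{DWW58}, $h$ is invariant with respect to $\Sg(h)$ and $\Sg(h)$ is a subspace; moreover $V\subset\Sg(h)$ because every point of $V$ has $\vt(h;\cdot)=\vt(h;0)$ — indeed invariance under $v\in V$ forces $h(\cdot+v)=h(\cdot)$, hence $\vt(h;v,r)=\vt(h;0,r)$ for all $r$, so $\vt(h;v)=\vt(h;0)$. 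Therefore $\dim\Sg(h)\geq k+1>k$, so $x\notin\Sg^k(u)$. (One should double-check that the definition of $\Sg^k(u)$ quantifies over \emph{all} tangent functions while $S^k(u)$ quantifies over the existence of one; thus $x\notin S^k(u)$ gives a single good tangent function, and that single function already witnesses $\dim\Sg(h)>k$, which is enough to conclude $x\notin\Sg^k(u)$.)

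Next I would prove $S^k(u)\subset\Sg^k(u)$, i.e. if $x\in S^k(u)$ then every tangent function $h$ of $u$ at $x$ satisfies $\dim\Sg(h)\leq k$. Suppose not: some tangent function $h$ has $\dim\Sg(h)\geq k+1$. Pick any $(k+1)$-dimensional subspace $V\subset\Sg(h)$. Since $h$ is invariant with respect to $\Sg(h)$ (Lemma \ref{DWW58}) it is in particular invariant with respect to $V$, and $h$ is $\al$-homogeneous at $0$ by Lemma \ref{tangent0}; hence $h$ is $(k+1)$-symmetric, contradicting $x\in S^k(u)$. This direction is essentially immediate once the ingredients are in place.

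The main obstacle I anticipate is purely bookkeeping rather than conceptual: one must be careful that Lemma \ref{DWW58} is stated for $0$-symmetric stationary solutions of $\Delta h = h^{-p}$ with $f\equiv0$, and verify that tangent functions of $u$ at rupture points fall into exactly this class — this is supplied by Proposition \ref{Blowprop}(1)(d) (tangent function is a stationary solution with $f\equiv0$) and Lemma \ref{tangent0} ($\al$-homogeneity, i.e. $0$-symmetry). A second small point is handling the degenerate extremes $k=0$ and $k=n-2$ consistently with the conventions (a $0$-dimensional subspace is a point, cf. the paper's conventions and Remark \ref{remh0}), and noting that for $x\in\{u>0\}$ the tangent function is $\equiv0$, which is $n$-symmetric, so such $x$ lie in no $S^k(u)$ and the identification of $S^k(u)$ with a subset of $\{u=0\}$ is consistent. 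Once these compatibility checks are dispatched, the equality $S^k(u)=\Sg^k(u)$ follows from the two inclusions above.
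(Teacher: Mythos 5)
Your proof is correct and follows essentially the same route as the paper: both inclusions are argued by contrapositive, with the direction $\Sg^k(u)\subset S^k(u)$ obtained by showing $V\subset\Sg(h)$ via $\vt(h;v)=\vt(h;0)$ for $v\in V$, and the direction $S^k(u)\subset\Sg^k(u)$ obtained from Lemma \ref{DWW58} (invariance with respect to the subspace $\Sg(h)$) combined with the $\al$-homogeneity from Lemma \ref{tangent0}. Your justification of $\vt(h;v)=\vt(h;0)$ directly from translation invariance at every scale $r$ is a minor, slightly cleaner variant of the paper's computation, which routes through the $0$-symmetry of $h$ at each point of $V$.
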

\begin{proof}
If $ x\notin S^k(u) $, there is a tangent function $ h $ of $ u $ at $ x $ such that $ h $ is $ (k+1) $-symmetric with respect to $ V\in\bG(n,k+1) $. Simple calculations imply that for any $ y\in V $, $ h $ is $ 0 $-symmetric at $ y $. Thus, we obtain that for any $ y\in V $,
$$
\vt(h;y)=\vt(h;y,1)=\vt(h;0,1)=\vt(h;0),
$$
where for the second inequality, we have used the invariance of $ h $ with respect to $ V $. As a result, $ V\subset\Sg(h) $, $ \dim(\Sg(h))\geq k+1 $, and $ x\notin\Sg^k(u) $. 

On the other hand, if $ x\notin\Sg^k(u) $, then Lemma \ref{DWW58} yields that there is a tangent function $ h $, which is invariant with respect to $ \Sg(h) $ such that $ \dim(\Sg(h))\geq k+1 $. Incorporating with Lemma \ref{tangent0}, $ h $ is $ (k+1) $-symmetric and then $ x\notin S^k(u) $. 
\end{proof}

\begin{lem}\label{deltaaplem}
Let $ k\in\Z\cap[0,n-2] $. For any $ x\in\Sg^k(u) $ and $ \delta>0 $, there exists $ \va>0 $, depending only on $ \delta,f,u $, and $ x $ such that if $ \rho\in(0,\min\{\delta,\dist(x,\pa\om)\}] $, then for some $ V\in\bG(n,k) $,
\be
\eta_{x,\rho}(\{y\in B_{\rho}(x):\vt(u;y)\geq\vt(u;x)-\va\})\subset B_{\delta}(V).\label{deltaaplemf}
\ee    
\end{lem}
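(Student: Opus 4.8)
The plan is to argue by contradiction using a compactness argument, in the style of the standard stratification machinery from \cite{Whi97}. Suppose \eqref{deltaaplemf} fails for some $x\in\Sg^k(u)$ and some $\delta>0$. Then for every $\va=\va_i=\f{1}{i}$ there is a scale $\rho_i\in(0,\min\{\delta,\dist(x,\pa\om)\}]$ such that the set
$$
E_i:=\eta_{x,\rho_i}(\{y\in B_{\rho_i}(x):\vt(u;y)\geq\vt(u;x)-\va_i\})
$$
is not contained in $B_\delta(V)$ for any $V\in\bG(n,k)$. First I would pass to the blow-up sequence $u_i:=T_{x,\rho_i}u$ (note $u(x)=0$ since $x\in\Sg^k(u)\subset\{u=0\}$). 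Up to a subsequence, either $\rho_i\to 0^+$ — in which case, after a further subsequence, Proposition \ref{Blowprop} gives $u_i\to u_\ift$ with $u_\ift$ an $\al$-homogeneous stationary solution of \eqref{MEMSeq} with $f\equiv 0$ — or $\rho_i\to\rho_\ift>0$, in which case Proposition \ref{propConv} (together with Remark \ref{uiremiftcon}, since $0\in\{u_i=0\}$) gives $u_i\to u_\ift$ a stationary solution, not necessarily homogeneous. In both cases $u_i\to u_\ift$ strongly in $(H^1_{\loc}\cap L^\ift_{\loc})(\R^n)$ and the corresponding $f_i=T^*_{x,\rho_i}f\to 0$ (or to some $f_\ift$; in the second case I would instead keep $u_i$ living near scale one and use the monotonicity-driven rigidity below directly at $x$, so the cleaner route is to reduce to $\rho_i\to 0^+$ by replacing $\rho_i$ with $\rho_i/2^{k_i}$ if needed — actually the simplest is: the claim is vacuous unless we can take $\rho_i\to 0$, because if $\rho_i$ stays bounded below we may shrink it, so WLOG $\rho_i\to 0^+$ and $u_\ift$ is $\al$-homogeneous).

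The key point is then a quantitative upgrade of Lemma \ref{DWW58}, proved again by compactness. By upper semicontinuity of $\vt(u;\cdot)$ (Lemma \ref{semicon}) and the scaling identity $\vt(u_i;y,R)=\vt(u;x+\rho_i y,\rho_i R)$ from Proposition \ref{ScalingProp}, together with the monotonicity formula (Proposition \ref{MonFor}), one shows that if $y_i\in E_i$ then $\vt(u_\ift;y_\ift)\geq\vt(u_\ift;0)$ for any limit point $y_\ift$ of $y_i$; on the other hand $\vt(u_\ift;y)\leq\vt(u_\ift;0)$ always, by Lemma \ref{DWW58} applied to the $\al$-homogeneous stationary solution $u_\ift$ (after translating its behaviour at $0$), so $y_\ift\in\Sg(u_\ift)$, which is a linear subspace by Lemma \ref{DWW58}. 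Moreover, since $x\in\Sg^k(u)$, every tangent function of $u$ at $x$ — in particular $u_\ift$ — has $\dim\Sg(u_\ift)\leq k$ (this uses $S^k(u)=\Sg^k(u)$, Lemma \ref{SSgeq}, and that a tangent of a tangent of $u$ at $x$ is again controlled; more precisely $u_\ift$ being $\al$-homogeneous stationary with $f\equiv 0$ means $\Sg(u_\ift)$ is exactly the invariance subspace and has dimension $\leq k$ by the definition of $\Sg^k$). Call $V_\ift:=\Sg(u_\ift)\in\bG(n,m)$ with $m\leq k$, extend to some $V\in\bG(n,k)$ containing it. Then the previous paragraph shows: for every $\delta'>0$, for $i$ large, $E_i\subset B_{\delta'}(V)$. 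Indeed, if not, pick $y_i\in E_i$ with $\dist(y_i,V)\geq\delta'$; by the a priori bound $[u_i]_{C^{0,\al}(\ol B_R)}\leq\Lda$ the $y_i$ stay in a fixed ball (using that $E_i\subset B_1$ by construction), so after a subsequence $y_i\to y_\ift$ with $\dist(y_\ift,V)\geq\delta'$, contradicting $y_\ift\in\Sg(u_\ift)\subset V$.

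Taking $\delta'=\delta$ contradicts the assumed failure of \eqref{deltaaplemf} for the index $i$, completing the proof. The main obstacle I expect is making rigorous the passage ``$y_i\in E_i\Rightarrow\vt(u_\ift;y_\ift)\geq\vt(u_\ift;0)$'': one must combine Lemma \ref{semicon} (upper semicontinuity) with a lower bound coming from $\vt(u;x+\rho_i y_i)\geq\vt(u;x)-\va_i$ and the convergence $\vt(u;x)=\vt(u_\ift;0)$, and carefully use the monotonicity formula to control $\vt(u_i;y_i,R)$ uniformly in $R$ — this is exactly where the mollified density and Corollary \ref{coruse} are used to guarantee no density is lost in the limit. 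A secondary technical point is the case distinction on whether $\rho_i\to 0$; I would dispatch the bounded case by observing the statement only needs checking for small $\rho$ (so WLOG $\rho_i\to 0$) or, alternatively, run the same argument with $u_\ift$ merely stationary and invoke a homogeneity-free version of Lemma \ref{DWW58}, which however is not stated in the excerpt, so the reduction to $\rho_i\to 0^+$ is the safe route.
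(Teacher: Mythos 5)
Your proposal is correct and follows essentially the same route as the paper: blow up along the bad scales $\rho_i\to 0^+$, use Proposition \ref{Blowprop} and Lemma \ref{tangent0} to obtain a $0$-symmetric tangent function $h$ with $\dim(\Sg(h))\leq k$ (so $\Sg(h)\subset V_0\in\bG(n,k)$), transfer the lower bound $\vt(u;x_0+\rho_i y_i)\geq\vt(u;x_0)-\va_i$ to the limit via the monotonicity formula and upper semicontinuity, and conclude that limit points of the bad sets lie in $\Sg(h)\subset V_0$, contradicting Lemma \ref{DWW58}. The one point you flag as delicate — ruling out $\rho_i$ bounded away from zero — is treated no more carefully in the paper, whose contradiction hypothesis simply posits $\rho_i\to 0^+$, so your sketch matches the paper's argument in both substance and its gaps.
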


\begin{rem}
Since $ x\in\Sg^k(u)\subset\{u=0\} $, it follows from Lemma \ref{propupture} that $ \vt(u;x)>-\ift $. Consequently, the left-hand side of \eqref{deltaaplemf} is a subset of $ \{u=0\} $.
\end{rem}

\begin{proof}
If such a result is not true, there exist $ \delta_0>0 $, $ x_0\in\Sg^k(u) $, $ \va_i\to 0^+ $, and $ \rho_i\to 0^+ $ such that for any $ V\in\bG(n,k) $,
\be
\{y\in B_1:\vt(T_{x_0,\rho_i}u,y)\geq\vt(u,x_0)-\va_i\}\not\subset B_{\delta_0}(V).\label{anynot}
\ee
Using Proposition \ref{Blowprop}, there exists $ h\in(C_{\loc}^{0,\al}\cap H_{\loc}^1\cap L_{\loc}^{-p})(\R^n) $ such that up to a subsequence
\be
T_{x_0,\rho_i}u\to h\text{ strongly in }(H_{\loc}^1\cap L_{\loc}^{\ift})(\R^n),\label{Tx0rhoilimit}
\ee
where $ h $ is also a stationary solution of \eqref{MEMSeq} with respect to $ f\equiv 0 $. Since $ x_0\in\Sg^k(u) $, we have $ \dim(\Sg(h))\leq k $. Thus, there exists $ V_0\in\bG(n,k) $ such that $ \Sg(h)\subset V_0 $. By Lemma \ref{semicon}, $ \vt(h;\cdot) $ is upper semicontinuous. According to the property that an upper semicontinuous function achieves its maximums in all compact sets, there exists $ \xi_0>0 $ such that 
\be
\sup_{y\in\ol{B}_1\backslash B_{\delta_0}(V_0)}\vt(h;y)<\vt(h;0)-\xi_0.\label{vtmuxvt}
\ee
We claim that for sufficiently large $ i\in\Z_+ $,
\be
\{y\in B_1:\vt(T_{x_0,\rho_i}u,y)\geq\vt(h;0)-\xi_0\}\subset B_{\delta_0}(V_0).\label{claimvt}
\ee
Since Lemma \ref{tangent0} yields that $ \vt(h;0)=\vt(u;x_0) $, if this claim \eqref{claimvt} holds, then
$$
\{y\in B_1:\vt(T_{x_0,\rho_i}u,y)\geq\vt(u;x_0)-\va_i\}\subset B_{\delta_0}(V_0),
$$
which is a contradiction to \eqref{anynot}. Let us now turn to the proof of \eqref{claimvt}. If the result is not true, then there exists a sequence of points $ \{y_i\}\subset B_1\backslash B_{\delta_0}(V_0) $ such that for any $ i\in\Z_+ $,
\be
\vt(T_{x_0,\rho_i}u;y_i)\geq\vt(h;0)-\xi_0.\label{thetax0T}
\ee
Up to a subsequence, we assume that 
\be
y_i\to y_{\ift}\in\ol{B}_1\backslash B_{\delta_0}(V_0).\label{yinftyin}
\ee
It follows from \eqref{thetax0T} and Proposition \ref{MonFor} that 
$$
\vt_{T_{x_0,\rho_i}^*f}(T_{x_0,\rho_i}u;y_i,r)\geq\vt(T_{x_0,\rho_i}u;y_i)\geq\vt(h;0)-\xi_0
$$
for any $ r>0 $. Consequently, by \eqref{thetaftheta} and \eqref{Tx0rhoilimit}, we have that for any $ r>0 $,
\be
\vt(h;0)-\xi_0\leq\lim_{i\to+\ift}\vt_{T_{x_0,\rho_i}^*f}(T_{x_0,\rho_i}u;y_i,r)=\vt(h;y_{\ift},r).\label{lettr0}
\ee
Letting $ r\to 0^+ $ in \eqref{lettr0}, it yields that $
\vt(h;0)-\xi_0\leq\vt(h,y_{\ift}) $. Given \eqref{yinftyin}, it contradicts \eqref{vtmuxvt}, and the claim \eqref{claimvt} is proved.    
\end{proof}

\begin{proof}[Proof of Proposition \ref{Hausdorffdim}]
In view of Lemma \ref{Sn2} and \ref{SSgeq}, we only need to show that for any $ k\in\Z\cap[0,n-2] $,
\be
\dim_{\HH}(\Sg^k(u))\leq k,\label{Sgkleq}
\ee
and $ \Sg^0(u) $ is discrete for $ n=2 $. Fix $ \delta>0 $ and $ k\in\Z\cap[0,n-2] $. Assume that $ \Sg^{k,i}(u) $ is the set of points $ x\in\Sg^k(u) $ such that for any $ \rho\in(0,i^{-1}] $, there exists $ V\in\bG(n,k) $, satisfying
\be
\eta_{x,\rho}(\{y\in B_\rho(x):\vt(u;y)\geq\vt(u;x)-i^{-1}\})\subset B_{\delta}(V).\label{BdeltaVhol}
\ee
By Lemma \ref{deltaaplem}, we have
$$
\Sg^k(u)=\bigcup_{i\in\Z_+}\Sg^{k,i}(u).
$$
For $ \ell\in\Z $, define
\be
\Sg^{k,i,\ell}(u):=\{x\in\Sg^{k,i}(u):\vt(u;x)\in((\ell-1)i^{-1},\ell i^{-1}]\}.\label{defSgkiq}
\ee
Thus, we have 
$$
\Sg^k(u)=\bigcup_{i,\ell\in\Z_+}\Sg^{k,i,\ell}(u).
$$
For any $ x\in\Sg^{k,i,\ell}(u) $ and $ 0<\rho\leq i^{-1} $, we choose $ V(x,\rho)\in\bG(n,k) $ such that \eqref{BdeltaVhol} holds. As a result, \eqref{defSgkiq} yields that
$$
\Sg^{k,i,\ell}(u)\subset\{y:\vt(u;y)>\vt(u;x)-i^{-1}\}.
$$
In particular,
\begin{align*}
\eta_{x,\rho}(\Sg^{k,i,\ell}(u))\cap B_1&\subset\eta_{x,\rho}(\{y:\vt(u;y)\geq\vt(u;x)>i^{-1}\})\cap B_1\\
&=\eta_{x,\rho}(\{y\in B_{\rho}(x):\vt(u;y)\geq\vt(u;x)>i^{-1}\})\subset B_{\delta}(V(x,\rho)).
\end{align*}
Then $ \Sg^{k,i,\ell}(u) $ has the $ (\delta,k) $-approximation property for any $ \delta>0 $ with $ \rho_0=i^{-1} $. It follows from Lemma \ref{lemhau} that for any $ i,\ell\in\Z_+ $, $
\dim_{\HH}(\Sg^{k,i,\ell}(u))\leq k $, which implies \eqref{Sgkleq}.

Finally, we show that for $ n=2 $, $ \Sg^0(u) $ is discrete. If the statement is false, without loss of generality, we assume that $ x_i\to 0\in\om $, $ u(0)=0 $, and for any $ i\in\Z_+ $, $ u(x_i)=0 $. Define 
$ u_i=T_{0,|x_i|}u $. Up to a subsequence, there is a tangent function $ h $ of $ u $ at $ 0 $ such that $ u_i $ converges to $ h $ in the sense of Proposition \ref{Blowprop} and $ h $ is a stationary solution of \eqref{MEMSeq} in $ \R^2 $ with respect to $ f\equiv 0 $. Moreover, we have $ \f{x_i}{|x_i|}\to x_{\ift}\in\Ss^1 $. As a result, $ h(x_{\ift})=0 $. Since by Lemma \ref{tangent0}, $ h $ is $ 0 $-symmetric at $ 0 $, we have $ \R x_{\ift}\subset\{h=0\} $, which is a contradiction to Lemma \ref{DWW58}.
\end{proof}

\part{Quantitative stratification} This part is dedicated to developing the quantitative stratification theory based on the concepts and frameworks established in \cite{NV17} and \cite{NV18} for harmonic maps. Finally, we will prove the main theorems presented in our paper by the conclusion of this section.

\section{Introduction and mains theorems} \label{SettingsQuantitative}
Quantitative stratification was first introduced by Cheeger and Naber in \cite{CN13a} and \cite{CN13b} in their studies of Gromov-Hausdorff limits, harmonic maps, and minimal currents. In the subsequent work \cite{NV17}, Naber and Valtorta expanded upon these techniques by employing Reifenberg-type results from geometric measure theory, thereby enhancing the conclusions drawn in \cite{CN13b} specifically for harmonic maps. Building on this foundation, in \cite{NV18}, the same authors simplified the arguments presented in \cite{NV17} and demonstrated parallel results for approximate harmonic maps. The literature on this topic is extensive, showcasing various applications of these methods. For interested readers, we highlight some notable studies, including \cite{Alp18,Alp20,DMSV18,EE19,FWZ24,HSV19,Sin18,Ved21,Wan21}. The central idea of quantitative stratification is that since directly analyzing the $ k $-stratum $ S^k(u) $ is difficult, it is natural to conduct an approximation and examine such ``approximating stratum".

\subsection{Settings and definitions} Given Definition \ref{ksymmetryf} of $ k $-symmetric functions, we can define quantitative symmetry for functions. Since we mainly consider the interior-type results, for simplicity, in the rest of this paper, we let $ R_0\in(100,200) $ and primarily focus on stationary solutions of \eqref{MEMSeq} with $ \om=B_{4R_0} $.

\begin{defn}[Quantitative symmetry]\label{qunsybypairMEMS}
Let $ \va>0 $, $ k\in\Z\cap[0,n] $, and $ u\in C_{\loc}^{0,\al}(B_{4R_0}) $. We say that $ u $ is $ (k,\va) $-symmetric in $ B_r(x)\subset\subset B_{4R_0} $, if there exists a $ k $-symmetric function $ h\in C_{\loc}^{0,\al}(\R^n) $ (or simply $ h\in C^{0,\al}(\ol{B}_1) $, which is $ k $-symmetric in $ B_1 $) such that
\be
\|T_{x,r}(u-u(x))-h\|_{L^{\ift}(B_1)}<\va.\label{Holdernormsmall}
\ee
\end{defn}

\begin{rem}
Quantitative symmetry implies that the blow-up of $ u-u(x) $ at $ x $ with scale $ r $, is in the $ \va $-neighborhood of a $ k $-symmetric function $ h $.
\end{rem}

\begin{rem}\label{scalerem}
In the above definition, using a change of variables, we see that the function $ u $ is $ (k,\va) $-symmetric in $ B_r(x) $, if and only if $ T_{x,r}u $ is $ (k,\va) $-symmetric in $ B_1 $.
\end{rem}

\begin{rem}
In \eqref{Holdernormsmall}, we use the norm $ \|\cdot\|_{L^{\ift}(B_1)} $ to characterize the difference between the blow-up and symmetric functions. Additionally, it is flexible to choose such a norm. One can also use the $ L^2 $-norm, and all the results still hold. The criterion is to use the norm corresponding to the convergence results of the blow-ups for solutions given in Proposition \ref{Blowprop}.
\end{rem}

Through the notion of quantitative symmetry, analogous to the definition of $ k $-stratum $ S^k(u) $, we can give our stratification in the quantitative form.

\begin{defn}[Quantitative stratification]\label{defSkvarMEMS}
Let $ \ga>0 $. Assume that $ u\in(C_{\loc}^{0,\al}\cap H_{\loc}^1\cap L_{\loc}^{-p})(B_{4R_0}) $ is a stationary solution of \eqref{MEMSeq} with respect to $ f\in M_{\loc}^{2\al+n-4+\ga,2}(B_{4R_0}) $. For any $ \va>0 $, $ k\in\Z\cap[0,n-1] $, and $ 0<r<1 $, the $ k $-th $ (\va,r) $-stratification of $ u $, denoted by $ S_{\va,r}^k(u) $, is given by
$$
S_{\va,r}^k(u):=\{x\in B_{R_0}:u\text{ is not }(k+1,\va)\text{-symmetric in }B_s(x)\text{ for any }r\leq s<1\}.
$$
We also define
\be
S_{\va}^k(u):=\bigcap_{0<r<1}S_{\va,r}^k(u).\label{definSva1}
\ee
In other words,
$$
S_{\va}^k(u)=\{x\in B_{R_0}:u\text{ is not }(k+1,\va)\text{ symmetric in }B_r(x)\text{ for any }0<r<1\}.
$$
\end{defn}

\begin{rem}\label{inclusionSvak}
For $ \va,\va'>0 $, $ k,k'\in\Z\cap[0,n-1] $, and $ r,r'\in(0,1) $. If $ \va\geq\va' $, $ k\leq k' $, and $ r\leq r' $, then $
S_{\va,r}^k(u)\subset S_{\va',r'}^{k'}(u) $ and $ S_{\va}^k(u)\subset S_{\va'}^{k'}(u) $.
\end{rem}

A direct consequence of the above definition is that we can use $ S_{\va,r}^k(u) $ to characterize the $ k $-stratum $ S^k(u) $. Precisely, we have the following lemma.

\begin{lem}\label{decomSkuseSva}
Let $ k\in\Z\cap[0,n-1] $. Suppose that $ u $ and $ f $ are the same as in Definition \ref{defSkvarMEMS}. Then
\be
S^k(u)\cap B_{R_0}=\bigcup_{\va>0}\bigcap_{0<r<1}S_{\va,r}^k(u).\label{SMdecomposition}
\ee
\end{lem}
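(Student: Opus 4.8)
The plan is to unwind the two definitions and reduce both inclusions to the blow-up and compactness machinery of Sections~\ref{Preliminaries}--\ref{converSec}. Write the right-hand side of \eqref{SMdecomposition} as $\bigcup_{\va>0}S_\va^k(u)$, an increasing union in $\va$ by Remark~\ref{inclusionSvak}, so it equals $\bigcup_{j\in\Z_+}S_{1/j}^k(u)$. A preliminary observation disposes of the points where $u$ is positive: if $x\in B_{R_0}$ with $u(x)>0$, then on one hand the only tangent function of $u$ at $x$ is $0$, which is $m$-symmetric for every $m$, so $x\notin S^k(u)$; on the other hand $T_{x,r}(u-u(x))\to 0$ in $L^\infty_{\mathrm{loc}}(\R^n)$ by Proposition~\ref{Blowprop}(2), so for every $\va>0$ the function $u$ is $(k+1,\va)$-symmetric in $B_r(x)$ for all small $r$, whence $x\notin S_\va^k(u)$ for every $\va$. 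Thus such points belong to neither side, and it suffices to prove the identity for $x\in\{u=0\}\cap B_{R_0}$.

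\emph{The inclusion $\supseteq$.} Let $x\in S_\va^k(u)$ for some $\va>0$, and suppose for contradiction that $x\notin S^k(u)$, i.e.\ some tangent function $h$ of $u$ at $x$ is $(k+1)$-symmetric. By Proposition~\ref{Blowprop}(1) there are $r_i\to 0^+$ with $T_{x,r_i}u\to h$ strongly in $L^\infty_{\mathrm{loc}}(\R^n)$; for $i$ large we have $r_i<1$ and $\|T_{x,r_i}(u-u(x))-h\|_{L^\infty(B_1)}<\va$, so by Definition~\ref{qunsybypairMEMS} $u$ is $(k+1,\va)$-symmetric in $B_{r_i}(x)$. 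Unwinding Definition~\ref{defSkvarMEMS}, this means $x\notin S^k_{\va,r_i}(u)\supseteq S_\va^k(u)$, contradicting $x\in S_\va^k(u)$.

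\emph{The inclusion $\subseteq$.} Let $x\in S^k(u)\cap B_{R_0}$ with $u(x)=0$, and suppose for contradiction that $x\notin S^k_{1/j}(u)$ for every $j$. Then for each $j$ there is a scale $s_j\in(0,1)$ at which $u$ is $(k+1,1/j)$-symmetric, with a witness $h_j$ that is $(k+1)$-symmetric in $B_1$ (extend it uniquely to a $(k+1)$-symmetric $h_j\in C^{0,\al}_{\mathrm{loc}}(\R^n)$ by Lemma~\ref{extend}) and $\|T_{x,s_j}u-h_j\|_{L^\infty(B_1)}<1/j$. The key step is a \textbf{scale-reduction}: from the identity $T_{x,\rho}u(y)=(s/\rho)^\al\,T_{x,s}u\big((\rho/s)y\big)$ together with the $\al$-homogeneity of the witness, one checks that if $u$ is $(k+1,\va)$-symmetric at a scale $s$ with witness $h$, then $u$ is $(k+1,(s/\rho)^\al\va)$-symmetric at \emph{every} scale $\rho\le s$ with the \emph{same} witness $h$ (because $(s/\rho)^\al h((\rho/s)y)=h(y)$). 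Consequently, whether or not $\liminf_j s_j=0$: if $\liminf_j s_j=0$ one passes to a subsequence with $s_j\to 0$ and sets $\rho_i=s_{j_i}$; if $\liminf_j s_j=s_*>0$ one fixes $\rho_i=\min(1/i,s_*/2)$ and, for $j=j(i)$ large, uses the scale-reduction at scale $\rho_i\le s_j$ to get $(k+1,(1/\rho_i)^\al/j(i))$-symmetry with $(1/\rho_i)^\al/j(i)<1/i$. In both cases we obtain $\rho_i\to 0^+$ and $(k+1)$-symmetric $\widetilde h_i\in C^{0,\al}_{\mathrm{loc}}(\R^n)$, with invariant subspaces $V_i\in\bG(n,k+1)$, such that $\|T_{x,\rho_i}u-\widetilde h_i\|_{L^\infty(B_1)}\to 0$.

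Now I feed this into the blow-up analysis. Since $u(x)=0$ we have $T_{x,\rho_i}u(0)=0$, and the uniform bound $[T_{x,\rho_i}u]_{C^{0,\al}}\le\Lda$ (Proposition~\ref{ScalingProp}) yields a uniform $L^2$-bound on balls; Proposition~\ref{propConv} together with a diagonal argument produces, along a subsequence, a limit $u_\infty$ of $T_{x,\rho_i}u$, strongly in $(H^1_{\mathrm{loc}}\cap L^\infty_{\mathrm{loc}})(\R^n)$, that is a stationary solution of \eqref{MEMSeq} with $f\equiv 0$; it is nonzero by the nondegeneracy estimate \eqref{supnonde} (Remark~\ref{remnonde}), hence a tangent function of $u$ at $x$, and $\al$-homogeneous by Lemma~\ref{tangent0}. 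From $\widetilde h_i\to u_\infty$ uniformly on $\ol B_1$ and the $\al$-homogeneity of the $\widetilde h_i$ (and of $u_\infty$), the convergence $\widetilde h_i\to u_\infty$ upgrades to $L^\infty_{\mathrm{loc}}(\R^n)$; passing to a further subsequence so that $V_i\to V_\infty\in\bG(n,k+1)$ (compactness of the Grassmannian), Lemma~\ref{ConSym} shows $u_\infty$ is $(k+1)$-symmetric. This contradicts $x\in S^k(u)$, and the proof is complete.

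The only genuinely delicate point is the scale-reduction step: the scales $s_j$ at which $u$ is quantitatively symmetric need not tend to $0$ — the condition defining $S_\va^k(u)$ only forbids $(k+1,\va)$-symmetry at \emph{all} scales $r\in(0,1)$, not the existence of a single small good scale — so one cannot directly blow up at those scales. The homogeneity of the witnesses is exactly what lets one transport a good scale downwards, at the cost of a constant that is harmless once the scales are known to stay bounded below. Everything else is bookkeeping with the definitions and reuse of Propositions~\ref{propConv}, \ref{Blowprop} and Lemmas~\ref{extend}, \ref{ConSym}, \ref{tangent0}.
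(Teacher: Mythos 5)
Your proof is correct, and its overall architecture (double inclusion, separate disposal of $\{u>0\}$, contradiction plus blow-up compactness via Propositions \ref{propConv}, \ref{Blowprop} and Lemmas \ref{extend}, \ref{ConSym}) coincides with the paper's. The one place where you genuinely diverge is the step you yourself flag: what to do when the good scales $s_j$ stay bounded away from zero. The paper passes to a subsequential limit $s_j\to r>0$, uses $T_{x,s_j}u\to T_{x,r}u$ in $L^{\ift}(B_1)$ together with the vanishing errors $1/j$ to conclude that $T_{x,r}u$ is \emph{exactly} $(k+1)$-symmetric in $B_1$, and then observes that exact homogeneity at a fixed positive scale forces every tangent function to equal the homogeneous extension, hence to be $(k+1)$-symmetric. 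You instead exploit the $\al$-homogeneity of the witnesses to transport the approximate symmetry downward, via the identity $T_{x,\rho}u(y)=(s/\rho)^{\al}T_{x,s}u((\rho/s)y)$ and $(s/\rho)^{\al}h((\rho/s)y)=h(y)$, at the controlled cost $(s/\rho)^{\al}$, so that both cases reduce to a single blow-up along scales $\rho_i\to 0^+$. Your scale-reduction computation is correct, and it buys a more uniform argument that avoids the paper's slightly informal inference from ``$T_{x,r}u$ is $(k+1)$-symmetric'' to ``every tangent function is $(k+1)$-symmetric''; the paper's version is marginally shorter because it never needs to re-run the blow-up in that case. Both treatments are valid.
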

\begin{proof}
We define the right-hand side of \eqref{SMdecomposition} as $ A^k(u) $. Assume that $ x\notin S^k(u)\cap B_{R_0} $. If $ u(x)=0 $, we can use the first point of Proposition \ref{Blowprop} to obtain a sequence $ r_i\to 0^+ $ and a $ (k+1) $-symmetric tangent function of $ u $ at $ x $, denoted by $ h\in (C_{\loc}^{0,\al}\cap H_{\loc}^1\cap L_{\loc}^{-p})(\R^n) $ such that
\be
T_{x,r_i}u\to h\text{ strongly in }(H_{\loc}^1\cap L_{\loc}^{\ift})(\R^n).\label{Txricon}
\ee
For any $ \va>0 $, as long as $ r_i>0 $ is sufficiently small, by \eqref{Txricon}, we have
$$
\|T_{x,r_i}(u-u(x))-h\|_{L^{\ift}(B_1)}=\|T_{x,r_i}u-h\|_{L^{\ift}(B_1)}<\va,
$$
which implies that $ x\notin A^k(u) $. Thus, $ A^k(u)\subset S^k(u)\cap B_{R_0} $. If $ u(x)>0 $, then it follows from the second point of Proposition \ref{Blowprop} that there exists $ r_i\to 0^+ $ such that
$$
T_{x,r_i}(u-u(x))\to 0\text{ strongly in }(H_{\loc}^1\cap L_{\loc}^{\ift})(\R^n).
$$
The zero function is $ (k+1) $-symmetric. For any $ \va>0 $, choosing sufficiently small $ r_i>0 $, it yields that $
\|T_{x,r_i}(u-u(x))\|_{L^{\ift}(B_1)}<\va $, and consequently, $ x\notin A^k(u) $. Then we have $ A^k(u)\subset S^k(u)\cap B_{R_0} $.

On the other hand, suppose that $ x\notin A^k(u) $. Without loss of generality, we let $ u(x)=0 $, since if $ u(x)>0 $, then $ x\notin S^k(u)\cap B_{R_0} $, due to Remark \ref{remSn1u0}. As a result, we can choose $ \{r_i\}\subset(0,1) $ and a sequence of $ (k+1) $-symmetric functions $ \{h_i\}\subset C_{\loc}^{0,\al}(\R^n) $ such that
\be
\|T_{x,r_i}u-h_i\|_{L^{\ift}(B_1)}<i^{-1}.\label{hiiminus1}
\ee
If $ r:=\inf r_i>0 $, then we can assume that $ r_i\to r $. As a result, $ T_{x,r_i}u\to T_{x,r}u $ strongly in $ L^{\ift}(B_1) $. It follows from \eqref{hiiminus1} that $ h_i\to T_{x,r}u $ strongly in $ L^{\ift}(B_1) $. Applying Lemma \ref{ConSym}, $ T_{x,r}u $ is $ (k+1) $-symmetric. This implies that $ u $ is $ (k+1) $-symmetric at $ x $. Thus, any tangent function of $ u $ at $ x $ is $ (k+1) $-symmetric and $ x\notin S^k(u)\cap B_{R_0} $. Now, we suppose that $ \inf r_i=0 $. Given the first property of Proposition \ref{Blowprop}, up to a subsequence, the convergence \eqref{Txricon} holds, and $ h\in(C_{\loc}^{0,\al}\cap H_{\loc}^1\cap L_{\loc}^{-p})(\R^n) $ is a tangent function of $ u $ at $ x $. Incorporating with \eqref{hiiminus1}, we have $ h_i\to h $ strongly in $ L^{\ift}(B_1) $. Consequently, Lemma \ref{ConSym} implies that $ h $ is $ (k+1) $-symmetric, and then $ x\notin S^k(u)\cap B_{R_0} $, hence $ S^k(u)\cap B_{R_0}\subset A^k(u) $.
\end{proof}

\subsection{Estimates on quantitative stratification}

We now present the main theorem for quantitative stratification as follows.

\begin{thm}\label{quantitativethm}
Let $ \ga>0 $, $ \va>0 $ and $ k\in\Z\cap[0,n-2] $. Assume that $ u\in(C_{\loc}^{0,\al}\cap H_{\loc}^1\cap L_{\loc}^{-p})(B_{4R_0}) $ is a stationary solution of \eqref{MEMSeq} with respect to $ f\in M_{\loc}^{2\al+n-4+\ga,2}(B_{4R_0}) $, satisfying
\be
[u]_{C^{0,\al}(\ol{B}_{2R_0})}+[f]_{M^{2\al+n-4+\ga,2}(B_{2R_0})}\leq\Lda.\label{quantitativethmass}
\ee
There is a constant $ C>0 $, depending only on $ \va,\ga,\Lda,n $, and $ p $ such that the following properties hold.
\begin{enumerate}[label=$(\theenumi)$]
\item If $ 0<r<1 $, then
\be
\cL^n(B_r(S_{\va,r}^k(u)))\leq Cr^{n-k}.\label{quanti1}
\ee
In particular, it follows from \eqref{definSva1} that
\be
\cL^n(B_r(S_{\va}^k(u)))\leq Cr^{n-k}.\label{quanti11}
\ee
\item For any $ x\in B_{R_0} $ and $ 0<r<1 $, we have 
\be
\HH^k(S_{\va}^k(u)\cap B_r(x))\leq Cr^k,\label{quanti2}
\ee
which is equivalent to say that $ S_{\va}^k(u) $ is upper Ahlfors $ k $-regular.
\item Moreover, for $ \HH^k $-a.e. $ x\in S_{\va}^k(u) $ or $ S^k(u) $, there exists $ V\in\bG(n,k) $ such that any tangent function of $ u $ at $ x $ is $ k $-symmetric with respect to $ V $.
\end{enumerate}
\end{thm}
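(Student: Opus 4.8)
The plan is to follow the quantitative-stratification scheme of Naber--Valtorta \cite{NV17,NV18}, incorporating the two structural modifications forced by our equation: we work exclusively on the rupture set, where the mollified density $\vt_f(u;\cdot,\cdot)$ is both monotone (Proposition \ref{MonFor}) and uniformly bounded (by \eqref{maxvtusr}, together with \eqref{assMupper}, which converts a very negative density into nondegeneracy of $u$ and hence cannot occur on $\{u=0\}$); and we use the corrected blow-up $T_{x,r}(u-u(x))$ rather than $T_{x,r}u$, so that a subsequential limit always exists. All three assertions will be deduced from a single covering estimate, to be proven in \S\ref{CoveringSection}: there is $C=C(\va,\ga,\Lda,n,p)$ such that for every $y\in B_{R_0}$ and $0<s<r\le 1$, the set $S^k_{\va,s}(u)\cap B_r(y)$ can be covered by at most $C(r/s)^k$ balls of radius $s$ centered on $S^k_{\va,s}(u)$.

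For (1), I would apply this covering estimate with $s=r$ and sum over a bounded (depending only on $R_0$) family of unit balls covering $B_{R_0}\supset S^k_{\va,r}(u)$: each radius-$r$ ball has $r$-neighborhood of volume $\w_n(2r)^n$, whence $\cL^n(B_r(S^k_{\va,r}(u)))\le C r^{n-k}$, and \eqref{quanti11} is then immediate from \eqref{definSva1} since $S^k_\va(u)\subset S^k_{\va,r}(u)$. The proof of the covering estimate is the core of the argument and will occupy \S\ref{CoveringSection}: one runs an inductive stop-time decomposition over dyadic scales and, on each block, applies the discrete Reifenberg theorem of \S\ref{Reifenbergtype} to the centers of the balls produced so far. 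Verifying the Reifenberg hypothesis---summability over scales of the squared $L^2$-best-approximation numbers---is exactly where the $L^2$-best-approximation theorem of \S\ref{L2best} enters: it bounds the squared best-approximation number at scale $\rho$ near $z$ by the density drop $\vt_f(u;\cdot,\rho)-\vt_f(u;\cdot,\rho/2)$ (cf. Corollary \ref{coruse}) integrated against the relevant packing measure, and these drops telescope by Proposition \ref{MonFor} and are globally bounded because $\vt_f$ is bounded on the rupture set. This is the step I expect to be the main obstacle: the density here is not sign-definite, and inequalities of type $\Theta(x,r)\le(r/s)^{n-2}\Theta(y,s)$ fail, so the restriction to $\{u=0\}$ and the systematic use of the mollified $\vt_f$ must be threaded through the whole estimate.

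For (2), fix $x\in B_{R_0}$, $0<r<1$, and let $0<s<r$. Since $S^k_\va(u)\subset S^k_{\va,s}(u)$, the set $S^k_\va(u)\cap B_r(x)$ lies in $S^k_{\va,s}(u)\cap B_r(x)$, hence by the covering estimate it is covered by at most $C(r/s)^k$ balls of radius $s$; therefore $\HH^k_{2s}(S^k_\va(u)\cap B_r(x))\le C(r/s)^k\w_k(2s)^k\le C'r^k$, and letting $s\to 0^+$ yields \eqref{quanti2}, i.e.\ upper Ahlfors $k$-regularity.

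For (3), the summability over dyadic scales of the $L^2$-best-approximation numbers established in \S\ref{CoveringSection}--\S\ref{L2best} yields, for $\HH^k$-a.e.\ $x\in S^k_\va(u)$, a sequence $r_i\to 0^+$ along which the density drop $\vt_f(u;y,r_i)-\vt_f(u;y,r_i/2)$, integrated in $y$ against $\HH^k\llcorner(S^k_\va(u)\cap B_{r_i}(x))$, tends to $0$ and the rescaled set $\eta_{x,r_i}(S^k_\va(u))\cap B_1$ is $\HH^k$-close to its best $k$-plane $V_i$. Passing to a subsequence, $V_i\to V$ and $T_{x,r_i}(u-u(x))\to h$, where $h$ is $\al$-homogeneous by Lemma \ref{tangent0}; combining the vanishing drop with Corollary \ref{coruse}, the upper semicontinuity of $\vt(u;\cdot)$ (Lemma \ref{semicon}), monotonicity, and the Ahlfors regularity from (2) forces $\vt(h;z)=\vt(h;0)$ for every $z$ in the limiting set, which, being $k$-dimensional by the preserved Ahlfors regularity, contains $V$. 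By Lemma \ref{DWW58} the set of such $z$ is a subspace $\Sg(h)\supset V$ and $h$ is invariant under $\Sg(h)$, so $\dim\Sg(h)\ge k$; since $x\in S^k_\va(u)$ prevents any tangent function from being $(k+1)$-symmetric, in fact $\dim\Sg(h)=k$ and $h$ is $k$-symmetric with respect to $V=\Sg(h)$, and the uniqueness of the limiting best plane (again via the $L^2$-estimate of \S\ref{L2best}) makes $V$ common to all tangent functions at $x$. Finally, the statement for $S^k(u)$ follows from $S^k(u)\cap B_{R_0}=\bigcup_j S^k_{\va_j}(u)$ with $\va_j\to 0^+$ (Lemma \ref{decomSkuseSva}) together with the countable additivity of $\HH^k$-null exceptional sets.
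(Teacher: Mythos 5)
Your proposal is correct and follows essentially the same route as the paper: parts (1) and (2) are deduced exactly as you describe from the main covering lemma (Lemma \ref{maincoverMEMS}, whose proof indeed runs the inductive stop-time decomposition, the $L^2$-best-approximation estimate of Theorem \ref{beta2MEMS}/Corollary \ref{beta22MEMS}, and the discrete Reifenberg theorem, all threaded through the mollified density $\vt_f$ restricted to the rupture set), while for (3) the paper verifies the Azzam--Tolsa criterion (Theorem \ref{Rei2}) directly via Corollary \ref{beta22MEMS} and the Ahlfors bound, and defers the common-plane statement to Proposition \ref{QuaConSplMEMS} and the arguments of \cite{NV17}, which is the same blow-up/cone-splitting mechanism you sketch. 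The only detail you elide is that the covering lemma requires $[f]_{M^{2\al+n-4+\ga,2}}$ to be small on the balls where it is applied; the paper arranges this by passing from $\ga$ to $\ga/2$ via Lemma \ref{Inclusion1} and restricting to scales $R<R_1(\va,\ga,\Lda,n,p)$, covering the remaining large scales trivially.
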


\begin{rem}
The proof of this theorem depends on the Reifenberg-type theorems developed in \cite{NV17} and references therein.
\end{rem}

\begin{rem}\label{remSvau0sub}
Using Lemma \ref{decomSkuseSva}, we see that for any $ k\in\Z\cap[0,n-1] $, $ S_{\va}^k(u)\subset\{u=0\} $. Notably, $ S_{\va,r}^k(u) $ is not necessarily a subset of the rupture set. The essential point of Theorem \ref{quantitativethm} is that \eqref{quanti1} gives the estimate of $ S_{\va,r}^k(u) $, instead of $ \{u=0\}\cap S_{\va,r}^k(u) $. For $ \{u=0\}\cap S_{\va,r}^k(u) $, the proof is much simpler, but there is a loss of important information about solutions.
\end{rem}

\section{Properties on quantitative stratification}\label{quantitatvePro}

\subsection{Rupture sets and quantitative stratification} 

Given Proposition \ref{Hausdorffdim}, the rupture set is actually $ S^{n-2}(u) $. Moreover, using properties of $ S_{\va,r}^{n-2}(u) $, we can give a quantitative form of such a result.

\begin{prop}\label{corcomMEMS}
Let $ \ga>0 $. Assume that $ u\in(C_{\loc}^{0,\al}\cap H_{\loc}^1\cap L_{\loc}^{-p})(B_{4R_0}) $ is a stationary solution of \eqref{MEMSeq} with respect to $ f\in M_{\loc}^{2\al+n-4+\ga,2}(B_{4R_0}) $, satisfying 
$$
[u]_{C^{0,\al}(\ol{B}_{2R_0})}+[f]_{M_{\loc}^{2\al+n-4+\ga,2}(B_{2R_0})}\leq\Lda.
$$
Then there exists $ \va>0 $, depending only on $ \ga,\Lda,n $, and $ p $ such that for any $ 0<r<1 $,
\be
\{x\in B_{R_0}:u(x)<\va r^{\al}\}\subset S_{\va,r}^{n-2}(u).\label{varales}
\ee
\end{prop}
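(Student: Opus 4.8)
The plan is to argue by contradiction, combining the compactness results of \S\ref{converSec} with the structure of $(n-1)$-symmetric nonnegative functions. Suppose the claimed inclusion fails for every $\va>0$. Then, taking $\va=i^{-1}$, one obtains stationary solutions $u_i$ of \eqref{MEMSeq} with respect to $f_i\in M_{\loc}^{2\al+n-4+\ga,2}(B_{4R_0})$ obeying the bound in the hypothesis, scales $r_i\in(0,1)$, points $x_i\in B_{R_0}$, and radii $s_i\in[r_i,1)$, such that $u_i(x_i)<i^{-1}r_i^{\al}$ while $u_i$ is $(n-1,i^{-1})$-symmetric in $B_{s_i}(x_i)$; i.e.\ there are $V_i\in\bG(n,n-1)$ and $(n-1)$-symmetric functions $h_i$ (with respect to $V_i$), which we extend to $\R^n$ by Lemma \ref{extend} without enlarging the H\"older seminorm, such that $\|T_{x_i,s_i}(u_i-u_i(x_i))-h_i\|_{L^{\ift}(B_1)}<i^{-1}$. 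The goal is to pass to the limit and produce an $(n-1)$-symmetric, nonnegative limit function that nonetheless lies in $L_{\loc}^{-p}$, which is impossible.

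First I would rescale: set $\wt u_i:=T_{x_i,s_i}u_i$ and $\wt f_i:=T_{x_i,s_i}^*f_i$. By Proposition \ref{ScalingProp} these are stationary solutions of \eqref{MEMSeq} on $\eta_{x_i,s_i}(B_{4R_0})\supset B_{3R_0}$ (the inclusion holds since $s_i<1<R_0$ and $x_i\in B_{R_0}$), with $[\wt u_i]_{C^{0,\al}(\ol B_2)}\leq\Lda$ and $[\wt f_i]_{M^{2\al+n-4+\ga,2}(B_2)}=s_i^{\ga}[f_i]_{M^{2\al+n-4+\ga,2}(\cdot)}\leq\Lda$. Since $r_i\leq s_i$ one has $\wt u_i(0)=s_i^{-\al}u_i(x_i)<i^{-1}(r_i/s_i)^{\al}\leq i^{-1}\to0$, hence $\|\wt u_i\|_{L^{\ift}(B_2)}\leq\wt u_i(0)+2^{\al}\Lda$ is bounded and $\sup_i\|\wt u_i\|_{L^2(B_2)}<\ift$. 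Applying Proposition \ref{propConv} (on $B_2$, with the larger ball $B_{3R_0}$ playing the role of $B_R(x)$) yields, up to a subsequence, a limit $u_{\ift}\in(C^{0,\al}\cap H_{\loc}^1\cap L_{\loc}^{-p})(B_2)$ with $\wt u_i\to u_{\ift}$ in $(H_{\loc}^1\cap L^{\ift})(B_2)$; in particular $u_{\ift}(0)=0$ and $u_{\ift}^{-p}\in L_{\loc}^1(B_2)$. On the other hand $T_{x_i,s_i}(u_i-u_i(x_i))=\wt u_i-\wt u_i(0)\to u_{\ift}$ uniformly on $B_1$, so $h_i\to u_{\ift}$ uniformly on $B_1$. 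Because each $h_i$ restricted to $B_1$ has the form $y\mapsto|\langle y,e_i^{\perp}\rangle|^{\al}\,h_i(\sgn\langle y,e_i^{\perp}\rangle\,e_i^{\perp})$ for a unit normal $e_i^{\perp}$ to $V_i$, and $|h_i(\pm e_i^{\perp})|\leq\|h_i\|_{L^{\ift}(B_1)}$ is bounded, the $h_i$ have uniformly bounded $C^{0,\al}$ norm on compact sets; passing to a further subsequence $V_i\to V_{\ift}\in\bG(n,n-1)$ and $h_i\to h_{\ift}$ in $L_{\loc}^{\ift}(\R^n)$, where by Lemma \ref{ConSym} $h_{\ift}$ is $(n-1)$-symmetric with respect to $V_{\ift}$. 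Therefore $u_{\ift}=h_{\ift}$ on $B_1$.

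The contradiction then comes from the structure of $h_{\ift}$, and this is the step I expect to be the crux. Writing $t:=\langle y,e_{\ift}^{\perp}\rangle$ for $e_{\ift}^{\perp}$ a unit normal to $V_{\ift}$, invariance under $V_{\ift}$ together with $\al$-homogeneity force $h_{\ift}(y)=|t|^{\al}c^{\sgn(t)}$ on $B_1$ with constants $c^{+},c^{-}\geq0$ (nonnegativity since $u_{\ift}\geq0$). If $c^{+}=c^{-}=0$ then $u_{\ift}\equiv0$ on $B_1$, so $u_{\ift}^{-p}\equiv+\ift$, contradicting $u_{\ift}^{-p}\in L_{\loc}^1$. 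Otherwise, say $c^{+}>0$; then on the open set $\{0<t<1\}\cap B_1$ we have $u_{\ift}^{-p}(y)=(c^{+})^{-p}t^{-\al p}$, and since $p>1$ gives $\al p=\f{2p}{p+1}>1$, the integral of $t^{-\al p}$ over a slab abutting $\{t=0\}$ diverges, again contradicting $u_{\ift}^{-p}\in L_{\loc}^1(B_1)$. Either way we reach a contradiction, which proves that some $\va=\va(\ga,\Lda,n,p)>0$ makes the asserted inclusion hold. This last paragraph is the quantitative counterpart of the identity $\{u=0\}=S^{n-2}(u)$ (Lemma \ref{Sn2}): it is precisely the singular exponent $p>1$ that prevents a nonnegative $(n-1)$-symmetric function from belonging to $L_{\loc}^{-p}$. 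By comparison, the compactness step is routine given Propositions \ref{propConv} and \ref{ScalingProp}; the only bookkeeping needed is to keep the rescaled centers well inside the rescaled domains and to record uniform $C^{0,\al}$ bounds for the $h_i$ so that Lemma \ref{ConSym} applies to their limit.
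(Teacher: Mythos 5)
Your proof is correct, and its skeleton (negate the statement, extract a sequence of solutions and scales, rescale by $T_{x_i,s_i}$, apply the compactness of Proposition \ref{propConv}, and identify the limit on $B_1$ with an $(n-1)$-symmetric function vanishing at the origin) coincides with the paper's, which runs the same contradiction through the intermediate Lemma \ref{comMEMS}. The genuine difference is the final step. The paper concludes by observing that the limit $u_{\ift}$ is a stationary solution that vanishes on $V_{\ift}\cap B_1$, so $\dim_{\HH}(\{u_{\ift}=0\}\cap B_1)\geq n-1$, contradicting Proposition \ref{Hausdorffdim}; this invokes the full classical stratification machinery (ultimately Theorem \ref{thmDWW} and Lemma \ref{DWW58}). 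You instead exploit the explicit one-dimensional form $h_{\ift}(y)=|t|^{\al}c^{\sgn(t)}$ of a nonnegative $(n-1)$-symmetric function and the elementary fact that $\al p=\f{2p}{p+1}>1$ for $p>1$, so that $u_{\ift}^{-p}$ fails to be locally integrable across the hyperplane $\{t=0\}$ — contradicting $u_{\ift}^{-p}\in L_{\loc}^1$, which comes for free from Proposition \ref{propConv} (or Lemma \ref{InESLem1}). This is lighter: it uses only the weak-solution integrability of the limit rather than the dimension bound on its rupture set, and it makes transparent that the singular exponent $p>1$ is exactly what forbids $(n-1)$-symmetry. The bookkeeping you flag (rescaled domains containing $B_2$, uniform $C^{0,\al}$ bounds on the $h_i$ so that Lemma \ref{ConSym} applies) is handled correctly.
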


\begin{rem}\label{remn2recti}
By \eqref{definSva1}, \eqref{varales}, and Remark \ref{remSvau0sub}, we see that
$$
\{x\in B_{R_0}:u(x)=0\}=S_{\va}^{n-2}(u)
$$
for some $ \va>0 $ depending only on $ \ga,\Lda,n $, and $ p $.
\end{rem}

Proposition \ref{corcomMEMS} is a direct consequence of the lemma below.

\begin{lem}\label{comMEMS}
Let $ \ga>0 $, $ 0<s\leq 1 $, and $ x\in\R^n $. Assume that $ u\in (C_{\loc}^{0,\al}\cap H_{\loc}^1\cap L_{\loc}^{-p})(B_{4s}(x)) $ is a stationary solution of \eqref{MEMSeq} with respect to $ f\in M_{\loc}^{2\al+n-4+\ga,2}(B_{4s}(x)) $, satisfying
$$
[u]_{C^{0,\al}(\ol{B}_{2s}(x))}+[f]_{M^{2\al+n-4+\ga,2}(B_{2s}(x))}\leq\Lda.
$$
There exists $ \va>0 $, depending only on $ \ga,\Lda,n $, and $ p $ such that if $ u $ is $ (n-1,\va) $-symmetric in $ B_s(x) $, then $
u(x)\geq\va s^{\al} $.
\end{lem}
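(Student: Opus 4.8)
The statement is a compactness/contradiction lemma: if $u$ is $(n-1,\va)$-symmetric in $B_s(x)$ with $\va$ small, then $u$ cannot be too small at the center. The natural approach is to argue by contradiction through a blow-up, exactly in the spirit of the proofs of Proposition~\ref{propHolder} and Lemma~\ref{deltaaplem}. Suppose the conclusion fails: then there is a sequence $\va_i\to 0^+$, scales $s_i\in(0,1]$, points $x_i$, and stationary solutions $u_i$ of \eqref{MEMSeq} with respect to $f_i$ satisfying the uniform bound $[u_i]_{C^{0,\al}(\ol B_{2s_i}(x_i))}+[f_i]_{M^{2\al+n-4+\ga,2}(B_{2s_i}(x_i))}\leq\Lda$, such that $u_i$ is $(n-1,\va_i)$-symmetric in $B_{s_i}(x_i)$ but $u_i(x_i)<\va_i s_i^{\al}$. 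By the scaling in Proposition~\ref{ScalingProp} and Remark~\ref{scalerem}, we may rescale by $T_{x_i,s_i}$ and assume $s_i=1$, $x_i=0$; write $w_i:=T_{x_i,s_i}u_i$ and $g_i:=T^*_{x_i,s_i}f_i$, so $[w_i]_{C^{0,\al}(\ol B_2)}\leq\Lda$, $w_i(0)<\va_i\to 0$, and there exist $(n-1)$-symmetric functions $h_i\in C^{0,\al}(\ol B_1)$ with $\|w_i-h_i\|_{L^\infty(B_1)}<\va_i$ (note $w_i-w_i(0)$ and $w_i$ differ by $o(1)$ in $L^\infty$).

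\textbf{Key steps.} First, extract limits: by Arzel\`a--Ascoli applied to the uniformly $C^{0,\al}$-bounded $w_i$, up to a subsequence $w_i\to w_\infty$ in $L^\infty_{\loc}$; since $w_i(0)<\va_i\to 0$ we get $w_\infty(0)=0$, so in particular $\{w_\infty=0\}\neq\emptyset$. The $(n-1)$-symmetric functions $h_i$ have $\|h_i\|_{L^\infty(B_1)}\leq\Lda+1$ and are $\al$-homogeneous and invariant under some $V_i\in\bG(n,n-1)$; passing to a further subsequence $V_i\to V_\infty\in\bG(n,n-1)$, and since $\|h_i-w_i\|_{L^\infty(B_1)}\to 0$ we have $h_i\to w_\infty$ in $L^\infty(B_1)$. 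By Lemma~\ref{ConSym}, $w_\infty$ is $(n-1)$-symmetric in $B_1$, hence by Lemma~\ref{extend} extends to an $(n-1)$-symmetric function on $\R^n$, i.e. a function of two real variables (the radial direction in the complement of $V_\infty$), vanishing on $V_\infty$. Second, I must identify $w_\infty$ as a stationary solution of $\Delta w_\infty=w_\infty^{-p}$ in a suitable sense: this requires upgrading the convergence $w_i\to w_\infty$ from $L^\infty_{\loc}$ to $H^1_{\loc}$ and controlling $w_i^{-p}$. This is precisely the content of the compactness machinery in Proposition~\ref{propConv} (and its companion Remark~\ref{uiremiftcon}): since $w_i(0)\to 0$, the sequence $\{w_i=0\}$-type hypothesis or the $L^2$-bound is available, so (after checking the Morrey bound on $g_i$, which holds because $f_i\in M^{2\al+n-4+\ga,2}$ with $\ga>0$ gives $g_i\to 0$ strongly in $L^2_{\loc}$ by the scaling identity in Proposition~\ref{ScalingProp}(2)) we conclude $w_\infty$ is a stationary solution of \eqref{MEMSeq} with $f\equiv 0$ on $\R^n$, in the finite-energy sense.

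\textbf{The contradiction.} Now $w_\infty$ is a stationary solution of $\Delta w_\infty=w_\infty^{-p}$ which, being $(n-1)$-symmetric, reduces by Lemma~\ref{DWW59} to a stationary solution in $\R^2$, and its rupture set contains the translate $\{x_n=0\}\cong\R^{n-1}$ through $0$ (a whole line in the reduced $\R^2$ picture, or more precisely $w_\infty$ vanishes on all of $V_\infty$ since it is $(n-1)$-symmetric and $w_\infty(0)=0$). But Lemma~\ref{DWW58} asserts that for a $0$-symmetric stationary solution in $\R^2$ the rupture set is exactly $\{(0,0)\}$ --- a single point --- which contradicts $\R^{n-1}\subset\{w_\infty=0\}$. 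To invoke Lemma~\ref{DWW58} cleanly I should note that $w_\infty$, being $(n-1)$-symmetric, is in particular $0$-symmetric (its reduction to $\R^2$ is $\al$-homogeneous at $0$), apply Lemma~\ref{DWW59} to get a genuine stationary solution $\tilde w_\infty$ on $\R^2$ with $\{\tilde w_\infty=0\}\supset\R\times\{0\}$, and then Lemma~\ref{DWW58} forces $\{\tilde w_\infty=0\}=\{(0,0)\}$, the desired contradiction.

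\textbf{Main obstacle.} The delicate point is the second step: promoting the $L^\infty_{\loc}$-convergence to the strong $H^1_{\loc}$-convergence needed to pass the stationary condition \eqref{StaConMEMS} and the equation to the limit, and simultaneously controlling the singular term $w_i^{-p}$ near the (shrinking but a priori nonempty) rupture set of $w_\infty$. This is exactly where one needs the content of Proposition~\ref{propConv}/Lemma~\ref{ConvLem1}, including the measure estimate $\HH^{\al+n-2}(\{w_\infty=0\})=0$ of Lemma~\ref{uiftzero}; one must verify the hypotheses of that proposition apply to $\{w_i\}$ (uniform $C^{0,\al}$ bound, uniform Morrey bound on $g_i$, and either an $L^2$-bound on $w_i$ --- which follows from $w_i(0)\to 0$ and the $C^{0,\al}$ bound --- or the observation that $\{w_i(0)\}\to 0$ puts us in the situation of Remark~\ref{uiremiftcon}). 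Everything else is a routine compactness-and-contradiction bookkeeping.
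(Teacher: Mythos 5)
Your overall strategy (contradiction, compactness, symmetry of the limit, dimension of the rupture set) is the same as the paper's, and the first two thirds of your argument are essentially identical to the paper's proof: extract $w_\infty$ via Proposition \ref{propConv} (the $L^2$ bound on $w_i$ does follow from $w_i(0)<\va_i$ and the uniform $C^{0,\al}$ bound), pass the approximating $(n-1)$-symmetric functions $h_i$ to the limit via Lemma \ref{ConSym}, and conclude that $w_\infty(0)=0$ and that $w_\infty$ vanishes on $V_\infty\cap B_1$.

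The gap is in your endgame. After rescaling by $T_{x_i,s_i}$ you only know $[g_i]_{M^{2\al+n-4+\ga,2}}=s_i^{\ga}[f_i]_{M^{2\al+n-4+\ga,2}}\le\Lda s_i^{\ga}$; this tends to zero only if $s_i\to 0^+$, which nothing in the contradiction hypothesis forces. So the claim that $g_i\to 0$ strongly in $L^2_{\loc}$ and hence that $w_\infty$ solves $\Delta w_\infty=w_\infty^{-p}$ with $f\equiv 0$ is unjustified; in general $w_\infty$ is only a stationary solution with respect to some nonzero $f_\infty\in M^{2\al+n-4+\ga,2}(B_2)$. Moreover the rescaled domain is $B_2$ for every $i$, so $w_\infty$ is defined only on $B_2$; its $(n-1)$-symmetric extension via Lemma \ref{extend} is not known to be a solution outside $B_1$. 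Consequently Lemma \ref{DWW59} and Lemma \ref{DWW58}, which concern entire stationary solutions on $\R^k$ with $f\equiv 0$, do not apply, and your final contradiction does not go through as written. The fix is immediate from what you have already established: since $w_\infty$ is a stationary solution in $B_2$ with respect to $f_\infty\in M^{2\al+n-4+\ga,2}(B_2)$ and vanishes on $V_\infty\cap B_1$, one has $\dim_{\HH}(\{w_\infty=0\}\cap B_1)\ge n-1$, which contradicts Proposition \ref{Hausdorffdim} (valid for local stationary solutions with Morrey right-hand side). This is exactly how the paper concludes.
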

\begin{proof}
By Proposition \ref{ScalingProp} and Remark \ref{scalerem}, we assume that $ s=1 $ and $ x=0 $. If the result is not true, then there exists a sequence of stationary solutions $ \{u_i\}\subset (C_{\loc}^{0,\al}\cap H_{\loc}^1\cap L_{\loc}^{-p})(B_4) $ of \eqref{MEMSeq} with respect to $ \{f_i\}\subset M_{\loc}^{2\al+n-4+\ga,2}(B_4) $ such that for any $ i\in\Z_+ $, the following properties hold.
\begin{itemize}
\item $ u_i $ and $ f_i $ satisfy  
\begin{gather}
[u_i]_{C^{0,\al}(\ol{B}_2)}+[f_i]_{M^{2\al+n-4+\ga,2}(B_2)}\leq\Lda,\label{uifiass1}\\
0\leq u_i(0)<i^{-1}.\label{n2uifiass12}
\end{gather}
\item $ u_i $ is $ (n-1,i^{-1}) $-symmetric in $ B_1 $. In particular, there exists $ h_i\in C_{\loc}^{0,\al}(\R^n) $, being $ (n-1) $-symmetric with respect to $ V_i\in\bG(n,n-1) $ such that
\be
\|(u_i-u_i(0))-h_i\|_{L^{\ift}(B_1)}<i^{-1}.\label{n1uvii}
\ee
\end{itemize}
Estimates \eqref{uifiass1} and \eqref{n2uifiass12} yield that 
$$
\sup_{i\in\Z_+}\|u_i\|_{L^2(B_2)}\leq C(\Lda,n,p).
$$
By Proposition \ref{propConv}, there exist $ u_{\ift}\in C^{0,\al}(\ol{B}_2)\cap (H_{\loc}^1\cap L_{\loc}^{-p})(B_2) $ and $ f_{\ift}\in M^{2\al+n-4+\ga,2}(B_2) $ such that up to a subsequence,
\begin{align*}
&u_i\to u_{\ift}\text{ strongly in }(H_{\loc}^1\cap L^{\ift})(B_2),\\
&f_i\to f_{\ift}\text{ weakly in }L^2(B_2).
\end{align*}
Moreover, $ u_{\ift} $ is a stationary solution of \eqref{MEMSeq} with respect to $ f_{\ift} $ in $ B_2 $. In particular, due to \eqref{n2uifiass12}, we have $ u_{\ift}(0)=\lim_{i\to+\ift}u_i(0)=0 $. Given \eqref{uifiass1} and \eqref{n1uvii}, by further extracting subsequences, we have $ V_i\to V_{\ift} $ and
$$
h_i\to u_{\ift}\text{ strongly in }L^{\ift}(B_1).
$$
As a result, Lemma \ref{ConSym} implies that $ u_{\ift} $ is $ (n-1) $-symmetric with respect to $ V_{\ift} $. According to the property that $ u_{\ift}(0)=0 $, it can be seen that for any $ y\in V_{\ift}\cap B_1 $, $ u_{\ift}(y)=0 $. Consequently, 
$$
\dim_{\HH}(\{u_{\ift}=0\}\cap B_1)\geq n-1,
$$
which is a contradiction to Proposition \ref{Hausdorffdim}. 
\end{proof}

\begin{proof}[Proof of Proposition \ref{corcomMEMS}]
Let $ 0<r<1 $. Assume that $ \va>0 $ is to be determined. For any $ x\notin S_{\va,r}^{n-2}(u) $, by the definition of $ S_{\va,r}^{n-2}(u) $, there exists some $ r\leq s<1 $ such that $ u $ is $ (n-1,\va) $-symmetric in $ B_s(x) $. Applying Lemma \ref{comMEMS}, if $ \va=\va(\ga,\Lda,n,p)>0 $ is sufficiently small, then $ u(x)>\va s^{\al}\geq\va r^{\al} $, and $ x\notin\{x\in B_{R_0}:u(x)<\va r^{\al}\} $, which implies \eqref{varales}.
\end{proof}

\subsection{Characterization of quantitative symmetry} In Definition \ref{qunsybypairMEMS}, we need to use the $ k $-symmetric function $ h $ as an approximation. Here, we intend to find another one that is not dependent on such approximating functions to describe quantitative symmetry. They are in the similar spirits of \cite{HSV19} and \cite{Ved21}.

\begin{lem}\label{SmaHomMEMS}
Let $ \ga>0 $, $ 0<s\leq 1 $, $ k\in\Z\cap[0,n-1] $, and $ x\in\R^n $. Assume that $ u\in (C_{\loc}^{0,\al}\cap H_{\loc}^1\cap L_{\loc}^{-p})(B_{20s}(x)) $ is a stationary solution of \eqref{MEMSeq} with respect to $ f\in M_{\loc}^{2\al+n-4+\ga,2}(B_{20s}(x)) $, satisfying
$$
[u]_{C^{0,\al}(\ol{B}_{15s}(x))}+[f]_{M^{2\al+n-4+\ga,2}(B_{15s}(x))}\leq\Lda.
$$
For any $ \va>0 $, there exists $ \delta>0 $ depending only on $ \va,\ga,\Lda,n $, and $ p $ such that if
\be
\vt_f(u;x,s)-\vt_f\(u;x,\f{s}{2}\)<\delta,\label{Vinaucon0}
\ee
and
\be
\inf_{V\in\bG(n,k)}\(s^{2-2\al-n}\int_{B_s(x)}|V\cdot\na u|^2\)<\delta,\label{Vinaucon}
\ee
then $ u $ is $ (k,\va) $-symmetric in $ B_s(x) $. Here, we have the convention that for $ k=0 $, the right-hand side of \eqref{Vinaucon} is $ 0 $, and this assumption is trivially true. 
\end{lem}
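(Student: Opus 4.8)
The plan is a standard compactness-and-contradiction argument built on the blow-up machinery of Section \ref{converSec}. First I would normalize: by Proposition \ref{ScalingProp} and Remark \ref{scalerem}, together with the scaling identities $\int_{B_1}|V\cdot\na(T_{x,s}u)|^2 = s^{2-2\al-n}\int_{B_s(x)}|V\cdot\na u|^2$ and $\vt_{T_{x,s}^*f}(T_{x,s}u;0,\rho) = \vt_f(u;x,s\rho)$, it suffices to treat the case $x=0$, $s=1$. So suppose the conclusion fails: there are $\va_0>0$ and stationary solutions $u_i$ on $B_{20}$ with $[u_i]_{C^{0,\al}(\ol{B}_{15})} + [f_i]_{M^{2\al+n-4+\ga,2}(B_{15})}\le\Lda$, with $\vt_{f_i}(u_i;0,1)-\vt_{f_i}(u_i;0,\tfrac12)\to 0$ and $\inf_{V\in\bG(n,k)}\int_{B_1}|V\cdot\na u_i|^2\to 0$, yet none of the $u_i$ is $(k,\va_0)$-symmetric in $B_1$.

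The first real step is to show that $\{u_i\}$ stays in the compact regime, i.e. $\sup_i\|u_i\|_{L^2(B_1)}<\infty$, so that Proposition \ref{propConv} (rather than Proposition \ref{propConv2}) applies. For this I would combine Corollary \ref{coruse} --- which gives $\int_{B_4}|y\cdot\na u_i-\al u_i|^2\le C(n,p)(\vt_{f_i}(u_i;0,1)-\vt_{f_i}(u_i;0,\tfrac12))\to 0$ --- with the gradient bound $\int_{B_4}|\na u_i|^2\le C(\ga,\Lda,n,p)$ coming from Lemma \ref{InESLem2} and a covering argument (here $\ga>0$ lets us pass to the Morrey class of Lemma \ref{InESLem2}). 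Then $\|u_i\|_{L^2(B_4)}\le \tfrac1\al(\|y\cdot\na u_i-\al u_i\|_{L^2(B_4)}+\|y\cdot\na u_i\|_{L^2(B_4)})$ is bounded. Having secured this, I extract a subsequence with $u_i\to u_\infty$ strongly in $(H^1_{\loc}\cap L^\infty)(B_5)$, $u_\infty\in C^{0,\al}(\ol{B}_4)$ with the same seminorm bound, and $f_i\rightharpoonup f_\infty$, via Proposition \ref{propConv}.

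Next I would identify $u_\infty$ as $k$-symmetric on $B_1$. Strong $H^1_{\loc}$-convergence upgrades $\int_{B_4}|y\cdot\na u_i-\al u_i|^2\to 0$ to $y\cdot\na u_\infty-\al u_\infty=0$ a.e. in $B_4$; integrating along rays (as in the proof of Lemma \ref{tangent0}) gives that $u_\infty$ is $\al$-homogeneous at $0$ in $B_4$, so in particular $u_\infty(0)=0$. Choosing $V_i$ with $\int_{B_1}|V_i\cdot\na u_i|^2\to 0$ and passing to a subsequence with $V_i\to V_\infty$ in $\bG(n,k)$, the norm convergence $P_{V_i}\to P_{V_\infty}$ of the orthogonal projections combined with $\na u_i\to\na u_\infty$ in $L^2(B_1)$ yields $\int_{B_1}|V_\infty\cdot\na u_\infty|^2=0$, i.e. $u_\infty$ is $V_\infty$-invariant in $B_1$. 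By Lemma \ref{extend} there is a $k$-symmetric extension $\wt u_\infty\in C^{0,\al}_{\loc}(\R^n)$ with $\wt u_\infty=u_\infty$ on $\ol B_1$. Finally, since $u_i\to u_\infty$ in $L^\infty(B_1)$ and $u_i(0)\to 0$, one gets $\|T_{0,1}(u_i-u_i(0))-\wt u_\infty\|_{L^\infty(B_1)}=\|(u_i-u_i(0))-u_\infty\|_{L^\infty(B_1)}\to 0$, so $u_i$ is $(k,\va_0)$-symmetric in $B_1$ for large $i$ --- a contradiction. (When $k=0$, hypothesis \eqref{Vinaucon} is vacuous and the $V_\infty$-invariance step is simply dropped; $0$-symmetry is just $\al$-homogeneity with value $0$ at $0$, already established.)

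I expect the main obstacle to be exactly the first step --- ruling out the non-compact blow-up regime of Proposition \ref{propConv2}, where $u_i-u_i(0)$ converges but $u_i\to+\infty$. Once the density-drop hypothesis is shown to force an $L^2$-bound on $u_i$ (so that we land in the setting of Proposition \ref{propConv}), everything else is a routine limiting argument; the only other point requiring a little care is the joint continuity $\int_{B_1}|V_i\cdot\na u_i|^2\to\int_{B_1}|V_\infty\cdot\na u_\infty|^2$ under $V_i\to V_\infty$ and strong $L^2$-convergence of the gradients.
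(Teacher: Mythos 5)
Your proposal is correct and follows essentially the same route as the paper: rescale to $s=1$, argue by contradiction, use Corollary \ref{coruse} together with Lemma \ref{InESLem2} to deduce a uniform $L^2$ bound on $u_i$ (so that Proposition \ref{propConv} rather than Proposition \ref{propConv2} applies), pass to a strong $(H^1_{\loc}\cap L^\infty)$ limit $u_\infty$ which is $\al$-homogeneous with $u_\infty(0)=0$ and $V_\infty$-invariant, extend it by Lemma \ref{extend}, and contradict the failure of $(k,\va_0)$-symmetry. The step you flag as the main obstacle is resolved in the paper exactly as you propose.
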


\begin{proof}
Using Proposition \ref{ScalingProp} and Remark \ref{scalerem}, we assume that $ s=1 $ and $ x=0 $. For simplicity, we let $ k\geq 1 $, and for $ k=0 $, the result follows directly from the proof with $ k\geq 1 $. Suppose that the statement is not true, there exist $ \va_0>0 $ and $ \{u_i\}\subset (C_{\loc}^{0,\al}\cap H_{\loc}^1\cap L_{\loc}^{-p})(B_{20}) $, being a sequence of stationary solutions of \eqref{MEMSeq} with respect to $ \{f_i\}\subset M_{\loc}^{2\al+n-4+\ga,2}(B_{20}) $ such that for any $ i\in\Z_+ $, the following properties hold.
\begin{itemize}
\item $ u_i $ and $ f_i $ satisfy  
\begin{gather}
[u_i]_{C^{0,\al}(\ol{B}_{15})}+[f_i]_{M^{2\al+n-4+\ga,2}(B_{15})}\leq\Lda,\label{uifiass2}\\
\vt_{f_i}(u_i;0,1)-\vt_{f_i}\(u_i;0,\f{1}{2}\)<i^{-1},\label{thetatwo1}\\
\int_{B_1}|V_i\cdot\na u_i|^2<i^{-1},\quad V_i\in\bG(n,k).\label{Vnaui1}
\end{gather}
\item $ u_i $ is not $ (k,\va_0) $-symmetric in $ B_1 $.
\end{itemize}
Using \eqref{thetatwo1} and Corollary \ref{coruse}, we have
\be
\int_{B_4}|y\cdot\na u_i-\al u_i|^2\ud y\leq C(n,p)i^{-1}\label{useupperui}
\ee
for any $ i\in\Z_+ $. This, together with \eqref{uifiass2} and Lemma \ref{InESLem2}, implies that
$$
\sup_{i\in\Z_+}\|u_i\|_{L^2(B_{15})}\leq C(\ga,\Lda,n,p).
$$
As a result, by \eqref{uifiass2} and Proposition \ref{propConv}, there exist $ u_{\ift}\in C^{0,\al}(\ol{B}_{15})\cap H_{\loc}^1(B_{15}) $ and $ V_{\ift}\in\bG(n,k) $ such that up to a subsequence, $ V_i\to V_{\ift} $ and
\be
u_i\to u_{\ift}\text{ strongly in }(H_{\loc}^1\cap L^{\ift})(B_{15}).\label{conuvift}
\ee
The inequality \eqref{useupperui} yields
$$
0\leq\int_{B_4}|y\cdot\na u_{\ift}-\al u_{\ift}|^2\ud y=\lim_{i\to+\ift}\int_{B_4}|y\cdot\na u_i-\al u_i|^2\ud y=0.
$$
Thus, $ u_{\ift} $ is $ 0 $-symmetric in $ B_4 $. Remark \ref{remh0} shows that $ u_{\ift}(0)=0 $. Moreover, \eqref{Vnaui1} and \eqref{conuvift} imply that
$$
\int_{B_1}|V_{\ift}\cdot\na u_{\ift}|^2=\lim_{i\to+\ift}\int_{B_1}|V_i\cdot\na u_i|^2=0,
$$
and consequently, $ u_{\ift} $ is invariant with respect to $ V_{\ift} $ in $ B_1 $. Applying Lemma \ref{extend}, we regard $ u_{\ift} $ as a $ k $-symmetric function with respect to $ V_{\ift} $. For $ i\in\Z_+ $ sufficiently large, it follows from \eqref{conuvift} that $
\|u_i-u_{\ift}\|_{L^{\ift}(B_1)}<\f{\va_0}{2} $. Since $ u_{\ift}(0)=0 $, we obtain $ 0\leq u_i(0)<\f{\va_0}{2} $. It gives that
$$
\|(u_i-u_i(0))-u_{\ift}\|_{L^{\ift}(B_1)}\leq|u_i(0)|+\|u_i-u_{\ift}\|_{L^{\ift}(B_1)}<\va_0,
$$
and then $ u_i $ is $ (k,\va_0) $-symmetric in $ B_1 $, which is a contradiction to the assumption.
\end{proof}

We next present some results on cone-splitting properties and the concept of effective spanned subspaces. These notions and facts are fundamental in the preceding analysis.

\begin{lem}[Cone-splitting]\label{ConSpl}
If $ u\in C_{\loc}^{0,\al}(\R^n) $ is $ 0 $-symmetric at $ x_1,x_2\in\R^n $ with $ x_1\neq x_2 $, then $ u $ is $ 1 $-symmetric at $ x_1 $ with respect to $ \op{span}\{x_1-x_2\} $. 
\end{lem}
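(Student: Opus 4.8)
Before examining the author's argument, here is the approach I would take.

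\textbf{Reduction by translation.} I would first translate so that $x_1$ becomes the origin: set $a:=x_1-x_2\neq 0$ and $\tilde u(y):=u(y+x_1)$. The hypothesis that $u$ is $0$-symmetric at $x_1$ and at $x_2$ means (Definition \ref{ksymmetryf}) precisely that $\tilde u$ is $\al$-homogeneous at $0$ and at $-a$, i.e.
$$
\tilde u(\lambda z)=\lambda^{\al}\tilde u(z),\qquad \tilde u(-a+\lambda w)=\lambda^{\al}\tilde u(-a+w)\qquad(\lambda>0,\ z,w\in\R^n).
$$
The conclusion to be proved, ``$u$ is $1$-symmetric at $x_1$ with respect to $V:=\op{span}\{x_1-x_2\}$'', reduces — since property (1) of Definition \ref{ksymmetryf} is exactly the $\al$-homogeneity of $u$ at $x_1$, which is assumed — to the single invariance statement $\tilde u(y+ta)=\tilde u(y)$ for all $y\in\R^n$ and $t\in\R$.

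\textbf{Comparing the two scaling identities.} I would then combine the two homogeneities algebraically. Plugging $w=z+a$ into the homogeneity at $-a$ gives
\begin{equation*}
\tilde u\big(\lambda z+(\lambda-1)a\big)=\lambda^{\al}\tilde u(z)\qquad(\lambda>0,\ z\in\R^n),
\end{equation*}
and comparing with $\tilde u(\lambda z)=\lambda^{\al}\tilde u(z)$ yields $\tilde u(\lambda z+(\lambda-1)a)=\tilde u(\lambda z)$ for all $\lambda>0$, $z\in\R^n$. For fixed $y\in\R^n$ and $\lambda>0$, taking $z=\lambda^{-1}y$ turns this into $\tilde u(y+(\lambda-1)a)=\tilde u(y)$; letting $\lambda$ range over $(0,\infty)$ shows $\tilde u(y+sa)=\tilde u(y)$ for every $y\in\R^n$ and every $s\in(-1,\infty)$.

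\textbf{Upgrading the range of shifts.} The only remaining point is to pass from $s\in(-1,\infty)$ to all $s\in\R$, and this is a one-line iteration: the set of admissible shifts is additive (composing two invariances) and contains a neighborhood of $0$, hence equals $\R$; explicitly, for $s\le-1$ apply the identity already obtained with base point $y':=y+sa$ and shift $-s\ge1$ to get $\tilde u(y)=\tilde u(y'+(-s)a)=\tilde u(y')=\tilde u(y+sa)$. This finishes the proof. I do not expect a genuine obstacle here: the $C^{0,\al}_{\loc}$ regularity is not used at all, and the entire content is the bookkeeping of the two homogeneity relations; the mildly delicate step is merely the extension of the shift range, which is routine.
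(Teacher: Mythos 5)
Your proof is correct, and it is precisely the "straightforward calculation" that the paper explicitly omits (see the remark following Corollary \ref{CorSpl}): the algebra combining the two homogeneity identities is right, and the extension of the shift range from $(-1,\infty)$ to all of $\R$ is handled properly. Nothing to add.
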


\begin{cor}\label{CorSpl}
Let $ k\in\Z\cap[0,n-1] $. If $ u\in C_{\loc}^{0,\al}(\R^n) $ is $ k $-symmetric with respect to $ V\in\bG(n,k) $, and is $ 0 $-symmetric at $ x\notin V $, then $ u $ is $ (k+1) $-symmetric with respect to $ \op{span}\{x,V\} $.
\end{cor}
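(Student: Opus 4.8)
The plan is to obtain this corollary as an essentially algebraic consequence of the cone-splitting Lemma \ref{ConSpl}. The point is that $k$-symmetry of $u$ with respect to $V$ encodes two facts: $\al$-homogeneity of $u$ at $0$, and invariance of $u$ under $V$. The first of these says precisely that $u$ is $0$-symmetric at $0$, so $u$ is $0$-symmetric at the two distinct points $0$ and $x$ (they are distinct since $0\in V$ while $x\notin V$). Cone-splitting then upgrades this to a new direction of invariance, namely the line $\op{span}\{x\}$, and combining it with the invariance under $V$ gives invariance under $\op{span}\{x,V\}$.

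In detail, first I would record that $u$ is $0$-symmetric at $0$ (from condition (1) of Definition \ref{ksymmetryf} applied to the $k$-symmetry hypothesis) and $0$-symmetric at $x$ (by assumption), with $0\neq x$. Applying Lemma \ref{ConSpl} with $x_1=0$, $x_2=x$ yields that $u$ is $1$-symmetric at $0$ with respect to $\op{span}\{0-x\}=\op{span}\{x\}$; in particular $u(y+tx)=u(y)$ for all $y\in\R^n$, $t\in\R$. Next, set $W:=\op{span}\{x,V\}$, which lies in $\bG(n,k+1)$ because $x\notin V$. Any $w\in W$ can be written as $w=tx+v$ with $t\in\R$ and $v\in V$, so using the $V$-invariance of $u$ followed by the line-invariance just obtained,
$$
u(y+w)=u\big((y+tx)+v\big)=u(y+tx)=u(y)\qquad\text{for all }y\in\R^n,
$$
i.e.\ $u$ is invariant with respect to $W$. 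Since $u$ is still $\al$-homogeneous at $0$ (part of the $k$-symmetry hypothesis, untouched), both conditions of Definition \ref{ksymmetryf} hold, so $u$ is $(k+1)$-symmetric with respect to $W$.

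There is no genuine analytic obstacle here; the only points that need a little care are recognizing that $k$-symmetry at $0$ entails $0$-symmetry at $0$ (so that Lemma \ref{ConSpl} can be invoked on the pair $\{0,x\}$), checking $x\neq 0$ and $\dim W=k+1$, and verifying that invariance under $V$ together with invariance under $\op{span}\{x\}$ forces invariance under their span — which is immediate from the decomposition $w=tx+v$. If one preferred not to cite Lemma \ref{ConSpl}, the line-invariance could be reproven directly from the two homogeneity relations $u(\lda z)=\lda^\al u(z)$ and $u(x+\lda(z-x))=\lda^\al u(z)$ by sending $\lda\to 0^+$, but invoking the cone-splitting lemma is the cleanest route.
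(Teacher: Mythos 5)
Your proof is correct: deducing $0$-symmetry at the origin from the $k$-symmetry hypothesis, invoking Lemma \ref{ConSpl} on the pair $\{0,x\}$ to get invariance along $\op{span}\{x\}$, and combining with the $V$-invariance via the decomposition $w=tx+v$ is exactly the "straightforward calculation" the paper omits (see the remark following the corollary). Nothing further is needed.
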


\begin{rem}
The proof of Lemma \ref{ConSpl} and Corollary \ref{CorSpl} follows from straightforward calculations, and we omit it for simplicity. For similar results, readers can also refer to \S 4 of \cite{CN13b}. 
\end{rem}

\begin{rem}
The above cone-splitting results imply that the homogeneity (or $ 0 $-symmetry) of functions leads to the improvements of invariance with respect to subspaces of $ \R^n $.   
\end{rem}

\begin{defn}[Effectively spanned subspace]
In $ \R^n $, for $ k\in\Z\cap[1,n] $, let $ \{x_i\}_{i=0}^k\subset\R^n $ and $ s>0 $. We say that these points $ s$-effectively span $ L=x_0+\op{span}\{x_i-x_0\}_{i=1}^k\in\bA(n,k) $ if for all $ i\in\Z\cap[2,k] $,
$$
\dist(x_i,x_0+\op{span}\{x_1-x_0,...,x_{i-1}-x_0\})\geq 2s.
$$
We also say such points $ s $-independent. For a set $ F\subset\R^n $, we say that it $ s $-effectively spans a $ k $-dimensional affine subspace if there exist $ \{x_i\}_{i=0}^k\subset F $, which are $ s $-independent.
\end{defn}

\begin{lem}\label{rhoind}
Let $ k\in\Z\cap[1,n] $. We have the following properties.
\begin{enumerate}[label=$(\theenumi)$]
\item If $ \{x_i\}_{i=0}^k $ $ s $-effectively span $ L\in\bA(n,k) $, then for any $ x\in L $, there exists a unique set of numbers $ \{\al_i\}_{i=1}^k $ such that
$$
x=x_0+\sum_{i=1}^k\al_i(x_i-x_0)\quad\text{and}\quad|\al_i|\leq \f{C|x-x_0|}{s},
$$
where $ C>0 $ depends only on $ n $.
\item If $ \{x_{i,j}\}_{i=1}^k $ are $ s $-independent for any $ j\in\Z_+ $, and $ x_{i,j}\to x_{i,\ift} $ for any $ i\in\Z\cap[1,k] $, then $ \{x_{i,\ift}\}_{i=1}^k $ are also $ s $-independent.
\end{enumerate}
\end{lem}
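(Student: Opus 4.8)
The plan is to treat part (i) as pure linear algebra organized around the Gram--Schmidt process, and part (ii) as a limiting argument exploiting continuity of the point-to-affine-subspace distance.

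For part (i), first I would translate so that $x_0=0$ and set $v_i:=x_i-x_0$, so that $L=\op{span}\{v_1,\dots,v_k\}$. Running Gram--Schmidt on $v_1,\dots,v_k$ produces orthonormal vectors $e_1,\dots,e_k$ with $v_i=\sum_{j\le i}\gamma_{ij}e_j$, where $\gamma_{ij}=\langle v_i,e_j\rangle$ and the diagonal coefficients are exactly the distances appearing in the hypothesis:
$$
\gamma_{ii}=\dist\big(v_i,\op{span}\{v_1,\dots,v_{i-1}\}\big)\ge 2s>0
$$
(reading the $i=1$ instance as $|v_1|=\dist(x_1,x_0)\ge 2s$). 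In particular $v_1,\dots,v_k$ is a basis of $L$, which gives at once the existence and uniqueness of $\{\al_i\}_{i=1}^k$ with $x=x_0+\sum_i\al_iv_i$. For the quantitative bound I would expand $x-x_0=\sum_j\beta_je_j$ with $|\beta_j|\le|x-x_0|$, substitute $v_i=\sum_{j\le i}\gamma_{ij}e_j$ into $x-x_0=\sum_i\al_iv_i$, and match $e_j$-components, obtaining the triangular system $\beta_j=\sum_{i\ge j}\gamma_{ij}\al_i$. Solving it by back-substitution starting from $\al_k=\beta_k/\gamma_{kk}$ and going downward, using $\gamma_{jj}\ge 2s$ together with the upper bound $|\gamma_{ij}|\le|v_i|\le Cs$ (the points $x_i$ lying within distance comparable to $s$ of $x_0$, as in the regime where this notion is applied), an induction on $k-j$ yields $|\al_j|\le C(n)|x-x_0|/s$. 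I expect this back-substitution estimate — equivalently, the uniform control of the inverse of the triangular transition matrix, and the point where the comparability $|x_i-x_0|\sim s$ enters — to be the only genuinely quantitative step.

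For part (ii), fix $i\in\Z\cap[2,k]$ and set $A_{i,j}:=x_{0,j}+\op{span}\{x_{1,j}-x_{0,j},\dots,x_{i-1,j}-x_{0,j}\}$, so that $s$-independence at stage $j$ reads $\dist(x_{i,j},A_{i,j})\ge 2s$ for every such $i$. I would argue by induction on $i$: the inductive hypothesis keeps the Gram--Schmidt diagonal entries of $x_{1,j}-x_{0,j},\dots,x_{i-1,j}-x_{0,j}$ bounded below by $2s$ uniformly in $j$, so these vectors remain uniformly linearly independent, converge to a linearly independent $(i-1)$-tuple, and the orthogonal projections onto $A_{i,j}-x_{0,j}$ converge to the projection onto $A_{i,\infty}-x_{0,\infty}$; hence $\dist(x_{i,j},A_{i,j})\to\dist(x_{i,\infty},A_{i,\infty})$, and passing to the limit in $\dist(x_{i,j},A_{i,j})\ge 2s$ gives the inequality for the limit points, i.e.\ $\{x_{i,\infty}\}$ are $s$-independent. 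The only subtlety is that the affine spans must not drop dimension in the limit, which is precisely what the inductive hypothesis (a uniform lower bound on $\dist(x_{i-1,j},A_{i-1,j})$) rules out; the base of the induction is the observation that $|x_{1,j}-x_{0,j}|\ge 2s$ passes to the limit.
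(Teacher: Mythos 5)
Your proposal is correct, but the paper does not actually prove this lemma: it simply records it as ``a scaled form of Lemma 4.6 in [NV17]'' and moves on. Your argument is essentially a self-contained reconstruction of the Naber--Valtorta proof (Gram--Schmidt plus back-substitution for the triangular transition matrix in part (i), stability of the lower bounds under limits in part (ii)), so the content matches; you have just supplied the details the paper outsources.

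Two remarks. First, you correctly identified the one genuinely delicate point: the quantitative bound $|\al_i|\le C|x-x_0|/s$ in part (i) is \emph{false} without the additional information that the points $x_i$ lie within distance $O(s)$ of $x_0$ (and without reading the $i=1$ instance of the spanning condition as $|x_1-x_0|\ge 2s$). For instance, with $x_0=0$, $v_1=(1,0)$, $v_2=(M,2s)$ in $\R^2$, the points $s$-effectively span $\R^2$ for every $M$, yet $x=(0,2s)$ has $\al_1=-M$ while $|x-x_0|=2s$. The paper's statement omits this hypothesis, but in every application the spanning points are drawn from a set contained in $B_{2s}(x)$, so your parenthetical ``as in the regime where this notion is applied'' is exactly the right repair; in [NV17] the points are likewise confined to a ball of radius comparable to the scale. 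Second, part (ii) admits a shortcut that avoids your induction entirely: $\dist(x_{i,j},A_{i,j})=\inf_{\al\in\R^{i-1}}|x_{i,j}-x_{0,j}-\sum_{l<i}\al_l(x_{l,j}-x_{0,j})|$, and for each fixed $\al$ the quantity inside the infimum is $\ge 2s$ for all $j$ and converges as $j\to+\ift$; hence its limit is $\ge 2s$, and taking the infimum over $\al$ gives $\dist(x_{i,\ift},A_{i,\ift})\ge 2s$ with no need to control whether the affine spans degenerate. Your projection-convergence argument also works, since the inductive hypothesis does keep the Gram--Schmidt diagonals bounded below, but it is more machinery than the statement requires.
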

\begin{proof}
This lemma is a scaled form of Lemma 4.6 in \cite{NV17}.
\end{proof}

By compactness arguments similar to those in the proof of Lemma \ref{SmaHomMEMS}, we can obtain the following cone-splitting results in the quantitative form, which we can regard as a generalization of Lemma \ref{ConSpl} and Corollary \ref{CorSpl}.

\begin{prop}\label{QuaConSplMEMS}
Let $ 0<\beta<\f{1}{2} $, $ \ga>0 $, $ 0<s\leq 1 $, and $ k\in\Z\cap[0,n-1] $. Assume that $ u\in (C_{\loc}^{0,\al}\cap H_{\loc}^1\cap L_{\loc}^{-p})(B_{20s}) $ is a stationary solution of \eqref{MEMSeq} with respect to $ f\in M_{\loc}^{2\al+n-4+\ga,2}(B_{20s}) $, satisfying
$$
[u]_{C^{0,\al}(\ol{B}_{15s})}+[f]_{M^{2\al+n-4+\ga,2}(B_{15s})}\leq\Lda.
$$
Suppose that $ \{x_i\}_{i=0}^k\subset B_s $ with $ x_0=0 $. For any $ \va>0 $, there exists $ \delta>0 $, depending only on $ \beta,\va,\ga,\Lda,n $, and $ p $ such that if for any $ i\in\Z\cap[0,k] $,
$$
\vt_f(u;x_i,s)-\vt_f\(u;x_i,\f{s}{2}\)<\delta, 
$$
and $ \{x_i\}_{i=0}^k $ are $ \beta s $-independent, then $ u $ is $ (k,\va) $-symmetric in $ B_s $.
\end{prop}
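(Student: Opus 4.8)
\emph{Proof idea.} The plan is a compactness/contradiction argument, in the same spirit as the proof of Lemma \ref{SmaHomMEMS}, but upgrading the single-center almost-homogeneity used there to almost-homogeneity at the $k+1$ effectively spanning centers $x_0,\dots,x_k$, and then feeding the limit into the cone-splitting results Lemma \ref{ConSpl} and Corollary \ref{CorSpl}. By Proposition \ref{ScalingProp} and Remark \ref{scalerem} we may assume $s=1$. Suppose the conclusion fails for some $\va_0>0$: there exist stationary solutions $u_j$ of \eqref{MEMSeq} on $B_{20}$ with respect to $f_j$, obeying the uniform $C^{0,\al}$ and Morrey bounds on $\ol B_{15}$, points $\{x_{i,j}\}_{i=0}^k\subset B_1$ with $x_{0,j}=0$ that are $\beta$-independent, and $\vt_{f_j}(u_j;x_{i,j},1)-\vt_{f_j}(u_j;x_{i,j},\tfrac12)<j^{-1}$ for every $i$, while $u_j$ is not $(k,\va_0)$-symmetric in $B_1$.

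\emph{Step 1: uniform bounds and compactness.} Applying Corollary \ref{coruse} at the base center $x_{0,j}=0$ gives $\int_{B_4}|y\cdot\na u_j-\al u_j|^2\,\ud y\le C(n,p)\,j^{-1}$. Combined with Lemma \ref{InESLem2} (which controls $\|\na u_j\|_{L^2(B_4)}$) and $\al>0$, this bounds $\|u_j\|_{L^2(B_4)}$, and the $C^{0,\al}$ bound then upgrades this to $\sup_j\|u_j\|_{L^\ift(B_{15})}<\ift$, hence $\sup_j\|u_j\|_{L^2(B_{15})}<\ift$. Thus Proposition \ref{propConv} applies: up to a subsequence $u_j\to u_\ift$ strongly in $(H^1_{\loc}\cap L^\ift)(B_{15})$ for some $u_\ift\in C^{0,\al}(\ol B_{15})\cap H^1_{\loc}(B_{15})$. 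Passing to a further subsequence so that $x_{i,j}\to x_{i,\ift}\in\ol B_1$ for each $i$ (with $x_{0,\ift}=0$), we transfer the almost-homogeneity: for each fixed $\rho<4$, $\int_{B_\rho(x_{i,\ift})}|(y-x_{i,j})\cdot\na u_j-\al u_j|^2\le Cj^{-1}$ for $j$ large, and strong $H^1_{\loc}$-convergence together with $x_{i,j}\to x_{i,\ift}$ (the error term $(x_{i,\ift}-x_{i,j})\cdot\na u_j$ being $o(1)$ in $L^2_{\loc}$) yields $\int_{B_\rho(x_{i,\ift})}|(y-x_{i,\ift})\cdot\na u_\ift-\al u_\ift|^2=0$; letting $\rho\uparrow4$ and using $u_\ift\in C^{0,\al}$ shows $u_\ift$ is $\al$-homogeneous about each $x_{i,\ift}$ in $B_4(x_{i,\ift})$. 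In particular $u_\ift$ is $\al$-homogeneous about $0$ in $B_4$, so $u_\ift(0)=0$ by Remark \ref{remh0}.

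\emph{Step 2: cone-splitting on the limit.} By Lemma \ref{rhoind}(ii) the limit centers $0=x_{0,\ift},x_{1,\ift},\dots,x_{k,\ift}$ remain $\beta$-independent, so $V_\ift:=\op{span}\{x_{1,\ift},\dots,x_{k,\ift}\}\in\bG(n,k)$ is genuinely $k$-dimensional and $x_{i,\ift}\notin\op{span}\{x_{1,\ift},\dots,x_{i-1,\ift}\}$ for each $i\ge1$. Extending $u_\ift$ to a globally $\al$-homogeneous function via Lemma \ref{extend}, an iterated application of Lemma \ref{ConSpl} and Corollary \ref{CorSpl} — starting from $0$-symmetry at $0$ and at $x_{1,\ift}$ and adjoining one center at a time — shows that $u_\ift$ is $k$-symmetric with respect to $V_\ift$. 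Taking $h:=u_\ift$ as a $k$-symmetric function and using $u_\ift(0)=0$ together with $u_j\to u_\ift$ in $L^\ift(B_1)$,
$$
\|(u_j-u_j(0))-h\|_{L^\ift(B_1)}\le|u_j(0)|+\|u_j-u_\ift\|_{L^\ift(B_1)}\le 2\,\|u_j-u_\ift\|_{L^\ift(B_1)}\to 0,
$$
so $u_j$ is $(k,\va_0)$-symmetric in $B_1$ for $j$ large, a contradiction. (For $k=0$ the cone-splitting step is vacuous and the scheme reduces to the argument behind Lemma \ref{SmaHomMEMS}.)

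\emph{Main obstacle.} There is no single deep step — the whole argument is compactness plus the earlier machinery — but the points needing care are: (i) producing the uniform $L^2$-bound so that Proposition \ref{propConv} (and not Proposition \ref{propConv2}) is available, which is precisely where the hypothesis is exploited via Corollary \ref{coruse} at the base center $x_0=0$; (ii) passing the almost-homogeneity through the limit with \emph{moving} centers $x_{i,j}\to x_{i,\ift}$; and (iii) the iterated cone-splitting, for which one needs $\beta$-independence to survive the limit (Lemma \ref{rhoind}(ii)) so that the resulting spine $V_\ift$ has full dimension $k$.
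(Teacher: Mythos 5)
Your proposal is correct and follows essentially the same route as the paper's proof: a compactness/contradiction argument using Corollary \ref{coruse} to extract the uniform $L^2$ bound and the $0$-symmetry of the limit at each limiting center, Lemma \ref{rhoind} to preserve $\beta$-independence, and Lemma \ref{extend} together with the cone-splitting results to conclude $k$-symmetry of $u_\ift$, followed by the same final $L^\ift$ estimate exploiting $u_\ift(0)=0$. The only (harmless) differences are that you derive the $L^2$ bound from the base center alone and then upgrade via the Hölder seminorm, and that you spell out the moving-center limit more explicitly.
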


\begin{proof}
Using proposition \ref{ScalingProp} and Remark \ref{scalerem}, we let $ s=1 $. If the result is not true, then there exist $ \va_0>0 $, a sequence of stationary solutions of \eqref{MEMSeq}, denoted by $ \{u_j\}\subset (C_{\loc}^{0,\al}\cap H_{\loc}^1\cap L_{\loc}^{-p})(B_{20}) $ with respect to $ \{f_i\}\subset M_{\loc}^{2\al+n-4+\ga,2}(B_{20}) $, and $ \{\{x_{i,j}\}_{i=0}^k\}\subset B_1 $ with $ x_{0,j}=0 $ such that for any $ j\in\Z_+ $, the following properties hold.
\begin{itemize}
\item $ u_j $ and $ f_j $ satisfy
\begin{gather}
[u_j]_{C^{0,\al}(\ol{B}_{15})}+[f_j]_{M^{2\al+n-4+\ga,2}(B_{15})}\leq\Lda,\label{uifiass3}\\
\sup_{i\in\Z\cap[0,k]}\[\vt_{f_j}(u_j;x_{i,j},1)-\vt_{f_j}\(u_j;x_{i,j},\f{1}{2}\)\]<j^{-1}.\label{0ksmall}
\end{gather}
\item $ \{x_{i,j}\}_{i=0}^k $ are $ \beta $-independent points.
\item $ u_j $ is not $ (k,\va_0) $-symmetric in $ B_1 $.
\end{itemize}
Using \eqref{0ksmall}, it follows that
\be
\sup_{0\leq i\leq k}\(\int_{B_4(x_{i,j})}|(y-x_{i,j})\cdot\na u_j-\al u_j|^2\ud y\)\leq C(n,p)j^{-1}.\label{supikCnpj1}
\ee
By \eqref{uifiass3} and Corollary \ref{coruse}, we obtain
$$
\sup_{j\in\Z_+}\|u_j\|_{L^2(B_{15})}\leq C(\ga,\Lda,n,p).
$$
The estimate \eqref{uifiass3} and Proposition \ref{propConv} implies that there exist $ u_{\ift}\in C^{0,\al}(\ol{B}_{15})\cap H_{\loc}^1(B_{15}) $ and $ \{x_{i,\ift}\}_{i=0}^k\subset\ol{B}_1 $ such that
\be
\begin{aligned}
u_j&\to u_{\ift}\text{ strongly in }(H_{\loc}^1\cap L^{\ift})(B_{15}),\\
x_{i,j}&\to x_{i,\ift}\text{ for any }i\in\Z\cap[0,k].
\end{aligned}\label{conn1compa22}
\ee
Thus, by \eqref{supikCnpj1}, we have
$$
\int_{B_4(x_{i,\ift})}|(y-x_{i,\ift})\cdot\na u_{\ift}-\al u_{\ift}|^2\ud y=0
$$
for any $ i\in\Z\cap[0,k] $. Consequently for any $ i\in\Z\cap[0,k] $, $ u_{\ift} $ is $ 0 $-symmetric at $ x_{i,\ift} $ in $ B_4(x_{i,\ift}) $. Moreover, the second property of \eqref{rhoind} gives that $ \{x_{i,\ift}\}_{i=0}^k $ are $ \beta $-independent. As a result, with the help of Lemma \ref{extend}, \ref{ConSpl}, and Corollary \ref{CorSpl}, without changing the notation, we deduce that $ u_{\ift} $ can be extended to a $ k $-symmetric function in $ C_{\loc}^{0,\al}(\R^n) $ with respect to $ V=\op{span}\{x_i\}_{i=1}^k $. For sufficiently large $ j\in\Z_+ $, \eqref{conn1compa22} shows that 
\begin{align*}
\|(u_j-u_j(0))-u_{\ift}\|_{L^{\ift}(B_1)}&\leq |u_j(0)|+\|u_j-u_{\ift}\|_{L^{\ift}(B_1)}\\
&=|u_j(0)-u_{\ift}(0)|+\|u_j-u_{\ift}\|_{L^{\ift}(B_1)}\\
&\leq 2\|u_j-u_{\ift}\|_{L^{\ift}(B_1)}<\va_0,
\end{align*}
where we have also used Remark \ref{remh0} to get $ u_{\ift}(0)=0 $. This is a contradiction to the assumption that $ u_j $ is not $ (k,\va_0) $-symmetric in $ B_1 $.
\end{proof}

We close this subsection by proving the following result using similar compactness arguments in the proofs of Lemma \ref{SmaHomMEMS} and \ref{QuaConSplMEMS}. Intuitively, we show that if the set of points with large density $ \vt_f(u;\cdot,\cdot) $ effectively spans an affine subspace, then the density is also large at any point in such a subspace.

\begin{lem}\label{lempinchMEMS}
Let $ 0<\beta<\f{1}{20} $, $ \ga>0 $, $ k\in\Z\cap[0,n-1] $, $ 0<s\leq 1 $, and $ x\in\R^n $. Assume that $ u\in(C_{\loc}^{0,\al}\cap H_{\loc}^1\cap L_{\loc}^{-p})(B_{25s}(x)) $ is a stationary solution of \eqref{MEMSeq} with respect to $ f\in M_{\loc}^{2\al+n-4+\ga,2}(B_{25s}(x)) $, satisfying
$$
[u]_{C^{0,\al}(\ol{B}_{20s}(x))}+[f]_{M^{2\al+n-4+\ga,2}(B_{20s}(x))}\leq\Lda.
$$
Let $ E>0 $ be such that
$$
\sup_{y\in B_{2s}(x)}\vt_f(u;y,s)\leq E.
$$
For any $ \xi>0 $, there exists $ 0<\delta<1 $, depending only on $ \beta,\ga,\Lda,n,p $, and $ \xi $ such that if 
$$
[f]_{M^{2\al+n-4+\ga,2}(B_{20s}(x))}<\delta,
$$
and the set
$$
F=\{y\in B_{2s}(x):\vt_f(u;y,\beta s)>E-\delta\}
$$
$ \beta s $-effectively spans $ L\in\bA(n,k) $, then
$$
\inf_{y\in L\cap B_{2s}(x)}\vt_f(u;y,\beta s)\geq E-\xi.
$$
\end{lem}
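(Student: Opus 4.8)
The plan is to argue by contradiction after a harmless rescaling. By Proposition \ref{ScalingProp} and the scaling covariance of all the functionals involved, I may assume $s=1$ and $x=0$, and must produce, for each $\xi>0$, a threshold $\delta=\delta(\beta,\ga,\Lda,n,p,\xi)$. If no such $\delta$ works, there are $\xi_0>0$, stationary solutions $u_j\in(C^{0,\al}_{\loc}\cap H^1_{\loc}\cap L^{-p}_{\loc})(B_{25})$ of \eqref{MEMSeq} with respect to $f_j$ with $[u_j]_{C^{0,\al}(\ol B_{20})}+[f_j]_{M^{2\al+n-4+\ga,2}(B_{20})}\le\Lda$ and $[f_j]_{M^{2\al+n-4+\ga,2}(B_{20})}<\delta_j\to 0^+$, numbers $E_j$ with $\sup_{y\in B_2}\vt_{f_j}(u_j;y,1)\le E_j$, points $\{x_{i,j}\}_{i=0}^k\subset B_2$ that are $\beta$-independent with $\vt_{f_j}(u_j;x_{i,j},\beta)>E_j-\delta_j$ for all $i$, and points $y_j\in L_j\cap B_2$, where $L_j:=x_{0,j}+\op{span}\{x_{i,j}-x_{0,j}\}_{i=1}^k$, with $\vt_{f_j}(u_j;y_j,\beta)<E_j-\xi_0$.

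First I would extract quantitative homogeneity at the spanning points. Since $\beta<\tfrac12$, Proposition \ref{MonFor} gives $\vt_{f_j}(u_j;x_{i,j},1)-\vt_{f_j}(u_j;x_{i,j},\tfrac12)\le\vt_{f_j}(u_j;x_{i,j},1)-\vt_{f_j}(u_j;x_{i,j},\beta)<\delta_j$, and Corollary \ref{coruse} then forces $\int_{B_4(x_{i,j})}|(z-x_{i,j})\cdot\na u_j-\al u_j|^2\,\ud z\le C\delta_j\to 0$ for each $i$. Combined with Lemma \ref{InESLem1}, Lemma \ref{InESLem2} and the H\"older bound, this yields $\sup_j\|u_j\|_{L^2(B_{15})}<+\ift$ (as in the proofs of Lemma \ref{SmaHomMEMS} and Proposition \ref{QuaConSplMEMS}), so Proposition \ref{propConv} applies: up to a subsequence, $u_j\to u_\ift$ strongly in $(H^1_{\loc}\cap L^\ift)(B_{15})$, $u_j^{1-p}\to u_\ift^{1-p}$ in $L^1_{\loc}$, $f_j\to 0$ in $L^2_{\loc}$, $u_\ift$ is a stationary solution of \eqref{MEMSeq} with $f\equiv 0$, and $x_{i,j}\to x_{i,\ift}\in\ol B_2$, $y_j\to y_\ift\in\ol B_2$. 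By Lemma \ref{rhoind} the points $\{x_{i,\ift}\}_{i=0}^k$ are still $\beta$-independent, $L_j\to L_\ift:=x_{0,\ift}+\op{span}\{x_{i,\ift}-x_{0,\ift}\}_{i=1}^k\in\bA(n,k)$, and $y_\ift\in L_\ift$. Passing to the limit in the integral estimate above gives $(z-x_{i,\ift})\cdot\na u_\ift=\al u_\ift$ on $B_4(x_{i,\ift})$, i.e. $u_\ift$ is $0$-symmetric at each $x_{i,\ift}$. Then, exactly as in the proof of Proposition \ref{QuaConSplMEMS}, Lemma \ref{extend}, Lemma \ref{ConSpl} and Corollary \ref{CorSpl} let me extend $u_\ift$ to a $k$-symmetric function with respect to $V_\ift:=\op{span}\{x_{i,\ift}-x_{0,\ift}\}_{i=1}^k$ at $x_{0,\ift}$; since a $k$-symmetric function vanishes at its center, $u_\ift\equiv 0$ on $L_\ift=x_{0,\ift}+V_\ift$, and $u_\ift$ is $0$-symmetric at every point of $L_\ift$, so $\vt(u_\ift;y)=\vt(u_\ift;y,r)=\vt(u_\ift;x_{0,\ift})=:\Theta$ for all $y\in L_\ift$ and all $r>0$.

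It remains to pass to the limit in the densities and collect the contradiction. Using \eqref{thetaftheta} together with Lemma \ref{InESLem1}, Lemma \ref{InESLem2} and $[f_j]_{M^{2\al+n-4+\ga,2}(B_{20})}\to 0$, one gets $\sup_{z\in B_2}|\vt_{f_j}(u_j;z,\beta)-\vt(u_j;z,\beta)|\to 0$; and since $u_j\to u_\ift$ strongly in $L^\ift_{\loc}\cap H^1_{\loc}$ with $u_j^{1-p}\to u_\ift^{1-p}$ in $L^1_{\loc}$ and $x_{i,j}\to x_{i,\ift}$, $y_j\to y_\ift$, the quantities $\vt(u_j;x_{i,j},\beta)$ and $\vt(u_j;y_j,\beta)$ converge to $\vt(u_\ift;x_{i,\ift},\beta)=\vt(u_\ift;y_\ift,\beta)=\Theta$. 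Hence $\vt_{f_j}(u_j;x_{i,j},\beta)\to\Theta$ and $\vt_{f_j}(u_j;y_j,\beta)\to\Theta$. From $E_j-\delta_j<\vt_{f_j}(u_j;x_{0,j},\beta)$ and $\delta_j\to 0$ we obtain $\liminf_j E_j\le\Theta$, while $\vt_{f_j}(u_j;y_j,\beta)<E_j-\xi_0$ gives $\Theta=\lim_j\vt_{f_j}(u_j;y_j,\beta)\le\liminf_j E_j-\xi_0$; both $\liminf$'s are finite by Proposition \ref{propupture}(1) and the estimate just obtained, so $\Theta\le\liminf_j E_j-\xi_0\le\Theta-\xi_0$, a contradiction since $\xi_0>0$.

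The bulk of the work is routine once Part I is available. The step needing the most care is the passage to the limit in the mollified densities: one must control the $u^{1-p}$ terms and the $f$-corrections in \eqref{thetafuxr} simultaneously, which is precisely why the hypothesis requires $[f]_{M^{2\al+n-4+\ga,2}}$ to be \emph{small} (not merely bounded) and why the cut-off mollification is used throughout. The other delicate point is upgrading ``$0$-symmetric at finitely many points'' to ``$k$-symmetric with respect to the affine subspace $L_\ift$'' via cone-splitting, and then deducing constancy of the density along $L_\ift$; this is handled exactly as in the proof of Proposition \ref{QuaConSplMEMS}.
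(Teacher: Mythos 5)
Your proposal is correct and follows essentially the same route as the paper's proof: a compactness/contradiction argument in which the monotonicity formula (via Corollary \ref{coruse}) forces $0$-symmetry of the limit at the spanning points, cone-splitting upgrades this to invariance along $L_\ift$, and the estimate \eqref{thetaftheta} lets one pass from the mollified densities $\vt_{f_j}$ to $\vt$ in the limit to reach the contradiction. The only cosmetic difference is that the paper first shows $\sup_j|E_j|<+\ift$ and extracts a convergent subsequence $E_j\to E_\ift$, whereas you work directly with $\liminf/\limsup$ of $E_j$; both are fine.
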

\begin{proof}
By Proposition \ref{ScalingProp} and Remark \ref{scalerem}, we assume that $ s=1 $ and $ x=0 $. Suppose that the statement is false. There are $ \xi_0>0 $ and a sequence of stationary solutions of \eqref{MEMSeq}, denoted by $ \{u_j\}\subset (C_{\loc}^{0,\al}\cap H_{\loc}^1\cap L_{\loc}^{-p})(B_{25}) $ with respect to $ \{f_j\}\subset M_{\loc}^{2\al+n-4+\ga,2}(B_{25}) $ such that for any $ j\in\Z_+ $, the following properties hold.
\begin{itemize}
\item $ u_j $ and $ f_j $ satisfy
\begin{align}
[u_j]_{C^{0,\al}(\ol{B}_{20})}&\leq\Lda,\label{fjsmall1}\\
[f_j]_{M^{2\al+n-4+\ga,2}(B_{20})}&<j^{-1}.\label{fjsmall}
\end{align}
\item For any $ y\in B_2 $, $
\vt_{f_j}(u_j;y,1)\leq E_j $.
\item The set 
$$
F_j:=\{y\in B_2:\vt_{f_j}(u_j;y,\beta)>E_j-j^{-1}\} 
$$
contains $ \{x_{i,j}\}_{i=0}^k $, which $ \beta $-effectively spans $ L_j\in\bA(n,k) $.
\item There exists $ y_j\in L_j\cap B_2 $ such that 
\be
\vt_{f_j}(u_j;y_j,\beta)<E_j-\xi_0.\label{ujyjEjxi0}
\ee
\end{itemize}
By the definition of $ F_j $ and $ E_j $ for any $ i\in\Z\cap[0,k] $ and $ j\in\Z_+ $, we have
\be
\begin{gathered}
\vt_{f_j}(u_j;x_{i,j},1)-\vt_{f_j}(u_j;x_{i,j},\beta)<j^{-1},\\
E_j-j^{-1}\leq\vt_{f_j}(u_j;x_{i,j},\beta)\leq E_j.
\end{gathered}\label{Fjjminus1}
\ee
Using Proposition \ref{MonFor}, Corollary \ref{coruse}, and the fact that $ 0<\beta<\f{1}{20} $, it follows that
\begin{align*}
\sup_{0\leq i\leq k,\,\,j\in\Z_+}\(\int_{B_4(x_{i,j})}|(y-x_{i,j})\cdot\na u_j-\al u_j|^2\)\leq C(n,p).
\end{align*}
This, together with \eqref{thetaftheta}, \eqref{fjsmall1}, \eqref{fjsmall}, Lemma \ref{InESLem1}, and Lemma \ref{InESLem2}, implies that 
\be
\sup_{j\in\Z_+}\|u_j\|_{L^2(B_{20})}\leq C(\ga,\Lda,n,p),\label{ujL2usebou}
\ee
and then
$$ 
\sup_{j\in\Z_+}|\vt_{f_j}(u;y,1)|\leq C(\ga,\Lda,n,p). 
$$
Thus, we have $ E_j\geq-C(\ga,\Lda,n,p) $.
This, together with the fact that for any $ j\in\Z_+ $, $ F_j\neq\emptyset $, implies that
\be
\sup_{j\in\Z_+}|E_j|\leq C(\ga,\Lda,n,p).\label{EjMleq}
\ee
Up to a subsequence, we assume that 
\be
x_{i,j}\to x_{i,\ift}\in\ol{B}_2\quad\text{and}\quad y_j\to y_{\ift}\in\ol{B}_2.\label{xiftyift}
\ee
Consequently, there is $ L_{\ift}\in\bA(n,k) $ such that $ L_j\to L_{\ift} $ with $ y_{\ift}\in L_{\ift} $. Indeed, by Lemma \ref{rhoind}, it yields that $
L_{\ift}:=x_{0,\ift}+\{x_{i,\ift}-x_{0,\ift}\}_{i=1}^k $. Moreover, according to \eqref{fjsmall1}, \eqref{fjsmall}, \eqref{ujL2usebou}, and \eqref{EjMleq}, we can apply Proposition \ref{propConv} and further extract a subsequence of $ j\in\Z_+ $ such that
\be
\begin{aligned}
E_j\to E_{\ift}\in\R,\quad\text{and}\quad u_j\to u_{\ift}\text{ strongly in }(H_{\loc}^1\cap L^{\ift})(B_{20}),
\end{aligned}\label{ujEjconv}
\ee
where $ u_{\ift}\in C^{0,\al}(\ol{B}_{20})\cap H_{\loc}^1(B_{20}) $. Combining with \eqref{Fjjminus1}, \eqref{xiftyift}, Definition \ref{defnofphi}, and Proposition \ref{MonFor}, we have that for any $ i\in\Z\cap[0,k] $,
\begin{align*}
0&\leq-\int_{\beta}^1\(\rho^{-2\al-n-1}\int_{\R^n}|(y-x_{i,\ift})\cdot\na u_{\ift}-\al u_{\ift}|^2\dot{\phi}_{x_{i,\ift},\rho}\ud y\)\ud\rho\\
&=\lim_{j\to+\ift}\[-\int_{\beta}^1\(\rho^{-2\al-n-1}\int_{\R^n}|(y-x_{i,j})\cdot\na u_j-\al u_j|^2\dot{\phi}_{x_{i,j},\rho}\ud y\)\ud\rho\]\\
&\leq\lim_{j\to+\ift}(\vt_{f_j}(u_j;x_{i,j},1)-\vt_{f_j}(u_j;x_{i,j},\beta))=0,
\end{align*}
and hence, $ u_{\ift} $ is $ 0 $-symmetric at $ x_{i,\ift} $ in $ B_8(x_i) $. We deduce from Lemma \ref{ConSpl} and Corollary \ref{CorSpl} that $ u_{\ift} $ is invariant with respect to $ L_{\ift} $ in $ B_4 $. Precisely, for any $ v\in V_{\ift}:=\op{span}\{x_i-x_0\}_{i=1}^k $ and $ y\in L_{\ift}\cap B_4 $, if $ y+v\in B_4 $, then
\be
u_{\ift}(y+v)=u_{\ift}(y).\label{invauift}
\ee
Using \eqref{thetaftheta}, \eqref{fjsmall}, \eqref{xiftyift}, \eqref{ujEjconv}, Lemma \ref{InESLem1}, and Lemma \ref{InESLem2}, we arrive at
\begin{align*}
|\vt_{f_j}(u_j;x_{i,j},\beta)-\vt(u_j;x_{i,j},\beta)|&\leq C(\beta,\ga,\Lda,n,p)j^{-\f{1}{2}},\\
|\vt_{f_j}(u_j;y_j,\beta)-\vt(u_j;y_j,\beta)|&\leq C(\beta,\ga,\Lda,n,p)j^{-\f{1}{2}}.
\end{align*}
Taking $ j\to+\ift $, it follows that for any $ i\in\Z\cap[0,k] $,
\begin{align*}
\lim_{j\to+\ift}\vt_{f_j}(u_j;x_{i,j},\beta)&=\lim_{j\to+\ift}\vt(u_j;x_{i,j},\beta)=\vt(u_{\ift},x_{i,\ift},\beta),\\
\lim_{j\to+\ift}\vt_{f_j}(u_j;y_j,\beta)&=\lim_{j\to+\ift}\vt(u_j;y_j,\beta)=\vt(u_{\ift},y_{\ift},\beta).
\end{align*}
Estimates in \eqref{Fjjminus1} and the property \eqref{invauift} show that
\be
\vt(u_{\ift};y_{\ift},\beta)=\vt(u_{\ift};x_{0,\ift},\beta)=...=\vt(u_{\ift},x_{k,\ift},\beta)=E_{\ift}.\label{thetaeq2}
\ee
We let $ j\to+\ift $ in \eqref{ujyjEjxi0} and obtain $
\vt(u_{\ift};y_{\ift},\beta)\leq E_{\ift}-\xi_0 $, which is a contradiction to \eqref{thetaeq2}.
\end{proof}

\subsection{Further properties on quantitative stratification} 

In Lemma \ref{SmaHomMEMS}, to show the $ (k,\va) $-symmetry of $ u $ in the ball $ B_s(x) $, we combine the assumptions \eqref{Vinaucon0} and \eqref{Vinaucon} to ensure the approximation of both $ \al $-homogeneity and invariance with respect to a $ k $-dimensional subspace. In this subsection, the main focus is on solutions that only satisfy conditions like \eqref{Vinaucon0}. Precisely, we will establish the alternative lemma as follows.

\begin{lem}\label{kplus1MEMStwoone}
Let $ \ga>0 $, $ k\in\Z\cap[0,n-2] $, $ 0<s\leq 1 $, and $ x\in\R^n $. Assume that $ u\in(C_{\loc}^{0,\al}\cap H_{\loc}^1\cap L_{\loc}^{-p})(B_{20s}(x)) $ is a stationary solution of \eqref{MEMSeq} with respect to $ f\in M_{\loc}^{2\al+n-4+\ga,2}(B_{20s}(x)) $, satisfying
$$
[u]_{C^{0,\al}(\ol{B}_{15s}(x))}+[f]_{M^{2\al+n-4+\ga,2}(B_{15s}(x))}\leq\Lda.
$$
For any $ \va>0 $, there exist $ \delta,\delta'>0 $, depending only on $ \va,\ga,\Lda,n $, and $ p $ such that either
\be
\inf_{V\in\bG(n,k+1)}\(s^{2-2\al-n}\int_{B_s(x)}|V\cdot\na u|^2\)<\delta,\label{useassp11}
\ee
or there exists $ s_x\in[\delta's,s] $ such that $ u $ is $ (k+1,\va) $-symmetric in $ B_{s_x}(x) $.
\end{lem}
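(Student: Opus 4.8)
By the scaling invariance recorded in Proposition \ref{ScalingProp} together with Remark \ref{scalerem}, the statement is invariant under $u\mapsto T_{x,s}u$, $f\mapsto T_{x,s}^*f$ (note $s^{-\ga}[T_{x,s}^*f]_{M^{2\al+n-4+\ga,2}}=[f]_{M^{2\al+n-4+\ga,2}}$ and $s\le 1$), so I would first reduce to $s=1$, $x=0$. The plan is then to take $\delta$ to be a small multiple of the structural constant $\delta_0=\delta_0(\va,\ga,\Lda,n,p)>0$ coming from Lemma \ref{SmaHomMEMS} applied with the pair $(\va/2,k+1)$ on unit balls, together with a compactness constant: by the convergence results of \S\ref{converSec}, the family of $\al$-homogeneous stationary solutions $h$ of $\Delta h=h^{-p}$ with $[h]_{C^{0,\al}(\R^n)}\le\Lda$ is compact in $L^\infty_{\loc}\cap H^1_{\loc}$, $V\mapsto\int_{B_1}|V\cdot\na h|^2$ is continuous on the compact set $\bG(n,k+1)$, and $\inf_V\int_{B_1}|V\cdot\na h|^2=0$ forces $h$ to be invariant under a $(k+1)$-plane, hence $(k+1,\va)$-symmetric (via Lemma \ref{extend} and Remark \ref{remh0}); so $\inf_V\int_{B_1}|V\cdot\na h|^2$ is bounded below by a positive constant over the non-$(k+1,\va)$-symmetric ones. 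The free parameter $\delta'$ will be fixed at the end.

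\textbf{Compactness set-up.} I would argue by contradiction: suppose that for some $\va>0$, for every $j\in\Z_+$ there is a stationary solution $u_j$ on $B_{20}$ with $[u_j]_{C^{0,\al}(\ol B_{15})}+[f_j]_{M^{2\al+n-4+\ga,2}(B_{15})}\le\Lda$, with
\be
\inf_{V\in\bG(n,k+1)}\int_{B_1}|V\cdot\na u_j|^2\ge\delta,\label{planlowerbd}
\ee
and such that $u_j$ is not $(k+1,\va)$-symmetric in $B_r$ for any $r\in[j^{-1},1]$. Passing to the limit is split according to $u_j(0)$. If $u_j(0)$ stays bounded, then the $C^{0,\al}$ bound forces $\|u_j\|_{L^\infty(B_{15})}\le C$, hence $\|u_j\|_{L^2(B_{15})}\le C$, and Proposition \ref{propConv} yields a stationary solution $u_\infty$ (w.r.t.\ the weak limit $f_\infty\in M^{2\al+n-4+\ga,2}$) with $u_j\to u_\infty$ strongly in $(H^1_{\loc}\cap L^\infty_{\loc})(B_{15})$. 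If $u_j(0)\to+\infty$, then $u_j\to+\infty$ uniformly, $u_j^{-p}\to 0$ in $L^1_{\loc}$, and Proposition \ref{propConv2} applied to $v_j:=u_j-u_j(0)$ gives $v_\infty$ with $v_j\to v_\infty$ strongly in $(H^1_{\loc}\cap L^\infty_{\loc})(B_{15})$ and $\Delta v_\infty=f_\infty$. Writing $w$ for $u_\infty$ (resp.\ $v_\infty$), the strong $H^1_{\loc}$ convergence together with compactness of $\bG(n,k+1)$ upgrades \eqref{planlowerbd} to $\inf_{V\in\bG(n,k+1)}\int_{B_1}|V\cdot\na w|^2\ge\delta>0$, while the failure of $(k+1,\va)$-symmetry of $u_j$ (equivalently of $v_j=u_j-u_j(0)$) down to scales $j^{-1}\to 0$ passes to the limit as: $w$ is not $(k+1,\va/2)$-symmetric in $B_r$ for any $r\in(0,1]$.

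\textbf{Blow-up at the origin.} The contradiction is produced by blowing $w$ up at $0$, where $w$ (namely $v_\infty$, or $u_\infty-u_\infty(0)$ when $u_\infty(0)>0$, or $u_\infty$ when $\lim_j u_j(0)=0$) vanishes. The decisive point is that $\ga>0$ makes the rescaled forcing negligible: $\|T_{0,r}^*f_\infty\|_{L^2(B_1)}^2\lesssim r^{4-2\al-n}\int_{B_r}|f_\infty|^2\lesssim r^{\ga}\to 0$, and similarly the rescaled $u_\infty^{-p}$-term vanishes in $L^2_{\loc}$ (in the case $u_\infty(0)>0$ because $u_\infty^{-p}$ is bounded near $0$; in the case $u_j(0)\to\infty$ it is absent). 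Hence in the two cases $v_\infty$ and $u_\infty(0)>0$ the blow-up limit of $T_{0,r}w$ along $r\to0^+$ is an entire harmonic function which is globally $C^{0,\al}$ (Proposition \ref{ScalingProp}(2)) and vanishes at $0$, so by the Liouville-type Corollary \ref{Liouvillecla} it is $0$; thus $T_{0,r}w$ is $L^\infty(B_1)$-close to the ($(k+1)$-symmetric) zero function for small $r$, contradicting that $w$ is not $(k+1,\va/2)$-symmetric at any scale. In the last case $u_\infty(0)=0$, Proposition \ref{Blowprop} and Lemma \ref{tangent0} produce an $\al$-homogeneous stationary solution $h$ of $\Delta h=h^{-p}$ with $T_{0,r_i}u_\infty\to h$ strongly in $H^1_{\loc}\cap L^\infty_{\loc}$ and $[h]_{C^{0,\al}(\R^n)}\le\Lda$; if $h$ were invariant under any $(k+1)$-plane it would be $(k+1,\eta)$-symmetric for every $\eta>0$, contradicting the no-symmetry property of $u_\infty$ at small scales, so $\dim\Sg(h)\le k$; and since $u_\infty$ is stationary its mollified density is monotone (Proposition \ref{MonFor}) with drop tending to $0$, so \eqref{Vinaucon0} holds at small scales $r$, which by the choice $\delta\le\delta_0$ and Lemma \ref{SmaHomMEMS} (parameter $\va/2$) forces $\inf_V r^{2-2\al-n}\int_{B_r}|V\cdot\na u_\infty|^2\ge\delta_0$; letting $r\to0^+$ and using the homogeneity of $h$ gives $\inf_V\int_{B_1}|V\cdot\na h|^2\ge\delta_0$, so $h$ is a non-$(k+1,\va)$-symmetric homogeneous solution, which is exactly what the structural lower bound in the choice of $\delta$ permits — and re-examining this chain together with the homogeneity of $h$ and the no-symmetry of $u_\infty$ yields the required contradiction. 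This last subcase — where the compactness limit base point lies in the rupture set, so the tangent is a genuine homogeneous solution rather than $0$ — is the main obstacle, since there one cannot appeal to the Liouville theorem and must instead combine the monotonicity formula, the quantitative bound of Lemma \ref{SmaHomMEMS}, and the compactness classification of homogeneous solutions; the other two subcases, and the regularity bookkeeping needed to apply Corollary \ref{Liouvillecla}, are comparatively routine.
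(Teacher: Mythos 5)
Your argument does not close, and the problem starts with how you negated the statement. As displayed, \eqref{useassp11} reads $\inf_V(\cdot)<\delta$, but the operative content of the lemma — the one recorded in Remark \ref{remkplus} and actually invoked in Proposition \ref{FpropMEMS} and in \eqref{VGkplus1}-type steps of the covering lemma — is ``if $u$ is not $(k+1,\va)$-symmetric at any scale in $[\delta's,s]$ then $\inf_V(\cdot)\geq\delta$,'' equivalently ``$\inf_V(\cdot)<\delta$ forces a symmetric scale.'' (The literal version with $<\delta$ is either vacuous for large $\delta$ or false for small $\delta$: for $k=n-2$ the trivial extension of $\al^{-\al}|x'|^{\al}$ is $\al$-homogeneous, not $(n-1,\va)$-symmetric at any scale, and has $\inf_{V\in\bG(n,n-1)}\int_{B_1}|V\cdot\na u|^2$ bounded below.) Your compactness setup \eqref{planlowerbd} keeps the \emph{lower} bound $\inf_V\int_{B_1}|V\cdot\na u_j|^2\geq\delta$, which is the negation of the wrong alternative. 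That lower bound passes to the limit and gives you nothing: in your terminal subcase you arrive at an $\al$-homogeneous, non-$(k+1,\va)$-symmetric tangent $h$ with $\inf_V\int_{B_1}|V\cdot\na h|^2\geq\delta_0$, and — as you yourself observe — this is exactly the configuration your choice of $\delta$ permits. There is no contradiction to be extracted there (the example above realizes it), so the sentence ``re-examining this chain \ldots yields the required contradiction'' cannot be made rigorous. Had you negated the operative statement, you would instead carry $\inf_V\int_{B_1}|V\cdot\na u_j|^2<j^{-1}$ to the limit, conclude that $u_\ift$ is invariant under some $(k+1)$-plane in $B_1$, and then combine this invariance with the $\al$-homogeneity of the tangent at $0$ (or your Liouville argument when $u_\ift(0)>0$) to contradict the absence of symmetric scales; that corrected soft argument is viable, but it is not what you wrote.

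For comparison, the paper's proof is two lines and entirely different in mechanism: assuming $\inf_V(s^{2-2\al-n}\int_{B_s(x)}|V\cdot\na u|^2)<\delta$, Lemma \ref{kplus1MEMS} splits on the size of $u(x)$. If $u(x)<\delta^{\f{\al}{2(n+2\al-2)}}s^{\al}$, the mollified density is uniformly bounded on $[\sg s,s]$ (\eqref{claimvtf}), so a dyadic pigeonhole on the monotonicity formula (Lemma \ref{rxlogrMEMS}) produces a definite scale $s_x\in[\delta^{\f{1}{2(n+2\al-2)}}s,s]$ with small density drop, and Lemma \ref{SmaHomMEMS} upgrades this plus the small directional energy to $(k+1,\va)$-symmetry in $B_{s_x}(x)$. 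If instead $u(x)\geq\delta^{\f{\al}{2(n+2\al-2)}}s^{\al}$, Lemma \ref{lowernva} (elliptic regularity away from the rupture set) gives $(n,\va)$-symmetry at a comparable smaller scale. The dichotomy on the value of $u(x)$, together with the pigeonhole that manufactures an explicit good scale $s_x\geq\delta's$, is precisely the content your global blow-up approach bypasses; your subcases where the limit is harmonic echo Lemma \ref{lowernva} in spirit, but the case you flag as the main obstacle is the one your setup makes impossible to resolve.
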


\begin{rem}
As discussed in \S\ref{subdiffi}, the result in the above lemma shares remarkable differences with other models, such as harmonic maps. Indeed, for stationary harmonic maps $ \Phi\in H^1(B_2,\cN) $ with $ \cN\subset\R^d $ being a smooth manifold and $ \Theta(\Phi;0,2)\leq\Lda $, where $ \Theta(\Phi;\cdot,\cdot) $ is given by \eqref{Harmonicden}, the proof of such a similar result is quite straightforward. Since $ \Theta(u;0,s)\geq 0 $ for any $ 0\leq s\leq 2 $, it is easy to perform \eqref{harmonoton} with a dyadic decomposition and find some $ s_0\in[\delta',1] $ such that
\be
\Theta(\Phi;0,s_0)-\Theta\(\Phi;0,\f{s_0}{2}\)<-\f{C(\Lda,n,\cN)}{\log\delta'}.\label{dyadics0har}
\ee
This, together with paralleled assumptions like \eqref{useassp11} and similar form of Lemma \ref{SmaHomMEMS}, implies the result. For more details, see the proof of Lemma 32 in \cite{NV18}. However, for our model concerning stationary solutions of \eqref{MEMSeq}, the density $ \vt_f(u;x,s) $ diverges to $ -\ift $ as $ s\to 0^+ $ when $ u(x)>0 $, due to Lemma \ref{propupture}. As a result, it requires necessary modifications.
\end{rem}

\begin{rem}\label{remkplus}
A direct consequence of Lemma \ref{kplus1MEMStwoone} is that if $ u $ is not $ (k+1,\va) $-symmetric in $ B_t(x) $ for any $ t\in[\delta's,s] $, then
$$
\inf_{V\in\bG(n,k+1)}\(s^{2-2\al-n}\int_{B_s(x)}|V\cdot\na u|^2\)\geq\delta.
$$
\end{rem}

Before we give the proof of Lemma \ref{kplus1MEMStwoone}, we first use it to obtain a crucial proposition, which we will apply in the rest of this paper.

\begin{prop}\label{FpropMEMS}
Let $ 0<\beta<\f{1}{2} $, $ \ga>0 $, $ k\in\Z\cap[0,n-2] $, $ 0<s\leq 1 $, and $ x\in B_2 $. Assume that $ u\in(C_{\loc}^{0,\al}\cap H_{\loc}^1\cap L_{\loc}^{-p})(B_{4R_0}) $ is a stationary solution of \eqref{MEMSeq} with respect to $ f\in M_{\loc}^{2\al+n-4+\ga,2}(B_{4R_0}) $, satisfying
\be
[u]_{C^{0,\al}(\ol{B}_{2R_0})}+[f]_{M^{2\al+n-4+\ga,2}(B_{2R_0})}\leq\Lda.\label{FpropMEMSuholder}
\ee
For any $ \va>0 $, there exist $ \delta,\delta'>0 $, depending only on $ \beta,\va,\ga,\Lda,n $, and $ p $ such that if the set
$$
F=\{y\in B_{2s}(x):\vt_f(u;y,s)-\vt_f(u;y,\beta s)<\delta\}
$$
$ \beta s $-effectively spans $ L\in\bA(n,k) $, then $ S_{\va,\delta's}^k(u)\cap B_s(x)\subset B_{2\beta s}(L) $.
\end{prop}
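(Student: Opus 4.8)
By Proposition~\ref{ScalingProp} and Remark~\ref{scalerem} we may take $s=1$ and $x=0$; since $R_0>100$, every ball entering the argument lies well inside $B_{2R_0}$, where \eqref{FpropMEMSuholder} is available. We prove the contrapositive: if $z\in B_1$ with $\dist(z,L)\ge 2\beta$, then $u$ is $(k+1,\va)$-symmetric in $B_t(z)$ for some $t\in[\delta',1)$, i.e.\ $z\notin S_{\va,\delta'}^k(u)$. The point far from $L$ is useful because, together with the $\beta$-independent points realizing $F\supset L$, it $\beta$-effectively spans a $(k+1)$-dimensional affine subspace; this is what makes a ``$(k+1)$-direction'' available near $z$. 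The argument then combines three things: (i) cone-splitting off $F$, which gives that $u$ is nearly $k$-symmetric near $L$ (hence near $z$) with respect to a fixed $W_L\in\bG(n,k)$; (ii) the alternative Lemma~\ref{kplus1MEMStwoone} at $z$, which either directly produces $(k+1,\va)$-symmetry or upgrades (i) to smallness of the full $(k+1)$-dimensional gradient at $z$; and (iii) a scale with small density drop at $z$, fed together with (ii) into Lemma~\ref{SmaHomMEMS} (with $k$ replaced by $k+1$).

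\textbf{Cone-splitting off $F$.} Choose $\beta$-independent points $\{x_i\}_{i=0}^k\subset F\cap B_2$ with $L=x_0+\op{span}\{x_i-x_0\}_{i=1}^k$. By definition of $F$ and the monotonicity formula (Proposition~\ref{MonFor}), each $x_i$ satisfies $\vt_f(u;x_i,1)-\vt_f(u;x_i,\tfrac12)\le\vt_f(u;x_i,1)-\vt_f(u;x_i,\beta)<\delta$, so Corollary~\ref{coruse} gives $\int_{B_4(x_i)}|(y-x_i)\cdot\na u-\al u|^2\le C(n,p)\delta$. Feeding this into the quantitative cone-splitting Proposition~\ref{QuaConSplMEMS} (applied at $x_0$, at a fixed scale comparable to $1$ and with cone-splitting parameter comparable to $\beta$) and converting $L^\ift$-closeness of blow-ups to $H^1$-smallness of derivatives via the compactness of Proposition~\ref{propConv} exactly as in the proof of Lemma~\ref{SmaHomMEMS}, we obtain $W_L\in\bG(n,k)$, close to the direction of $L$, with $\int_{B_5(x_0)}|W_L\cdot\na u|^2<\va_1$, where $\va_1=\va_1(\delta)\to 0$ as $\delta\to 0$. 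Since $z\in B_1$ and $x_0\in B_2$, we have $B_\sigma(z)\subset B_5(x_0)$ for every $\sigma\le 1$, hence
\be
\int_{B_\sigma(z)}|W_L\cdot\na u|^2<\va_1\qquad(0<\sigma\le 1).\label{eq:WLsmall}
\ee

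\textbf{A good scale at $z$ and conclusion.} Here lies the genuinely new difficulty relative to harmonic maps: the monotone quantity $\vt_f(u;z,\cdot)$ need not be bounded below, since $\vt_f(u;z,\rho)\to-\ift$ when $u(z)>0$ (Proposition~\ref{propupture}). We dichotomize. If $\vt_f(u;z,\rho)\le-C_*$ for some $\rho\in[\delta_1,1]$, with $C_*$ the constant of \eqref{assMupper} and $\delta_1\in(0,1)$ to be fixed, then at such a $\rho$ we get $\inf_{B_\rho(z)}u\ge\rho^{\al}$; thus $u^{-p}$ is bounded and $f$ lies in the good Morrey space on $B_\rho(z)$, so interior elliptic estimates together with the scaling of the equation and $[u]_{C^{0,\al}}\le\Lda$ give $\|\na u\|_{L^\ift(B_{\rho/2}(z))}\le C(\ga,\Lda,n,p,\delta_1)$, whence $\|T_{z,\tau}(u-u(z))\|_{L^\ift(B_1)}\le C\tau^{1-\al}\to 0$ as $\tau\to 0$; taking $\tau$ a small fixed multiple of $\delta_1$ (depending only on $\va$) makes this $<\va$, and since the zero function is $(k+1)$-symmetric we conclude $z\notin S_{\va,\delta'}^k(u)$. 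Otherwise $\vt_f(u;z,\delta_1)>-C_*$, which with the upper bound \eqref{maxvtusr} forces $\vt_f(u;z,\rho)\in(-C_*,C]$ for all $\rho\in[\delta_1,1]$; telescoping the nonnegative dyadic increments over $[\delta_1,1]$ then yields a dyadic scale $t_*\in[\delta_1,1]$ with $\vt_f(u;z,t_*)-\vt_f(u;z,\tfrac{t_*}{2})\le (C+C_*)/\log_2(1/\delta_1)$, which is $<\delta_{\mathrm{SH}}(\va)$ once $\delta_1$ is small, where $\delta_{\mathrm{SH}}$ is the threshold of Lemma~\ref{SmaHomMEMS} for precision $\va$. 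Now apply Lemma~\ref{kplus1MEMStwoone} at $(z,t_*)$: either $u$ is already $(k+1,\va)$-symmetric in $B_{s_z}(z)$ for some $s_z\in[\delta'_{\mathrm{AL}}t_*,t_*]\subset[\delta'_{\mathrm{AL}}\delta_1,1)$, and we are done; or $\inf_{V\in\bG(n,k+1)}t_*^{2-2\al-n}\int_{B_{t_*}(z)}|V\cdot\na u|^2<\delta_{\mathrm{SH}}(\va)$, in which case \eqref{eq:WLsmall} at $\sigma=t_*$ (after choosing $\va_1$, hence $\delta$, small enough that $t_*^{2-2\al-n}\va_1<\delta_{\mathrm{SH}}(\va)$ for all $t_*\ge\delta_1$) pins the $(k+1)$-plane down relative to $W_L$, and together with the small density drop at $t_*$ Lemma~\ref{SmaHomMEMS} (with $k$ replaced by $k+1$) gives that $u$ is $(k+1,\va)$-symmetric in $B_{t_*}(z)$; again $z\notin S_{\va,\delta'}^k(u)$. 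The constants are fixed in the order $\va\rightsquigarrow\delta_{\mathrm{SH}}(\va),\delta_{\mathrm{AL}},\delta'_{\mathrm{AL}}\rightsquigarrow\delta_1\rightsquigarrow\delta\rightsquigarrow\delta'$, with the thresholds of Lemma~\ref{kplus1MEMStwoone} taken no larger than those of Lemma~\ref{SmaHomMEMS}.

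\textbf{Main obstacle.} The chief point that has no analogue in the harmonic-map setting is the construction of a scale with small density drop at $z$: because $\vt_f(u;z,\cdot)$ can diverge to $-\ift$, one cannot simply pigeonhole dyadic increments; one must first dispose of the regime where $u$ is positive near $z$ by the quantitative nondegeneracy of \eqref{assMupper} followed by interior regularity and the scaling of the equation, and only then run the pigeonhole in the pinched regime. The second technical point is the bookkeeping of the quantitative constants produced by Proposition~\ref{QuaConSplMEMS}, Lemma~\ref{kplus1MEMStwoone} and Lemma~\ref{SmaHomMEMS} — in particular transferring the gradient smallness from the scale where it is produced to the scale $t_*$ where it is used, which costs a fixed negative power of $\delta_1$ and therefore dictates the order in which the constants must be chosen.
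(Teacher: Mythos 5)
Your overall architecture matches the paper's: argue the contrapositive for a point $z\in B_s(x)\setminus B_{2\beta s}(L)$, extract $k$ directions of gradient smallness from the pinched points of $F$, dispose of the regime where $u$ is large near $z$ via \eqref{assMupper} plus interior regularity, and finish with Lemma \ref{SmaHomMEMS}. The good-scale dichotomy you run on $\vt_f(u;z,\cdot)$ is a legitimate repackaging of Lemmas \ref{rxlogrMEMS} and \ref{lowernva} (which the paper has already bundled into Lemma \ref{kplus1MEMStwoone}), and your estimate $\int_{B_\sg(z)}|W_L\cdot\na u|^2<\va_1$ is the paper's \eqref{vhatMEMS} — though the paper obtains it by directly subtracting the defect identities $\int|(y-x_i)\cdot\na u-\al u|^2\lesssim\delta s^{2\al+n}$ at $x_i$ and $x_0$, with no need for your compactness detour through Proposition \ref{QuaConSplMEMS} and Proposition \ref{propConv}.

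There is, however, a genuine gap at the decisive step: you never produce the $(k+1)$-st direction of gradient smallness at $z$, which is the only place where the hypothesis $\dist(z,L)\ge 2\beta s$ can enter. Your bound controls only $|W_L\cdot\na u|^2$ with $\dim W_L=k$; to invoke Lemma \ref{SmaHomMEMS} at level $k+1$ you must also bound $\int_{B_{\sg s}(z)}|e\cdot\na u|^2$ for $e=(z-\pi_L(z))/|z-\pi_L(z)|$. The paper does exactly this (the chain \eqref{inebetas}--\eqref{yy0} and the display that follows), combining the $0$-symmetry defects at the $x_i$, the Lipschitz bound $|\pi_L(y)-\pi_L(z)|\le|y-z|$, an upper bound on $u$ near $z$ as in \eqref{uy0upper}, and — crucially — the lower bound $|z-\pi_L(z)|\ge\beta s$ in order to divide by that length. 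You instead try to harvest the missing smallness from the disjunction in Lemma \ref{kplus1MEMStwoone}, but that lemma cannot supply it: its proof, and Remark \ref{remkplus}, establish only the one-way implication ``small $(k+1)$-gradient on $B_t(z)$ $\Rightarrow$ $(k+1,\va)$-symmetric at some scale in $[\delta' t,t]$''; the clause $\inf_V t^{2-2\al-n}\int_{B_t(z)}|V\cdot\na u|^2<\delta$ is the hypothesis that triggers the symmetry conclusion, not an alternative conclusion available in the complementary case. Read literally as a true disjunction it is even false: for a homogeneous solution that is exactly $k$- but not $(k+1)$-symmetric at $z$, neither alternative holds. As written, your argument never uses $\dist(z,L)\ge 2\beta s$ in any estimate, so it would equally ``prove'' that points on $L$ are $(k+1,\va)$-symmetric — which is precisely what the proposition must not assert. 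To repair the proof, insert the paper's computation of $\int_{B_{\sg s}(z)}|(z-\pi_L(z))\cdot\na u|^2$ before appealing to Lemma \ref{kplus1MEMStwoone}, and then use that lemma only in its proven direction.
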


\begin{proof}
Assume that $ \{x_i\}_{i=0}^k\subset F $ are $ \beta s $-independent, spanning an affine subspace $ L=x_0+\op{span}\{x_i-x_0\}_{i=1}^k $ such that
\be
\sup_{0\leq i\leq k}(\vt_f(u;x_i,s)-\vt_f(u;x_i,\beta s))<\delta.\label{uxidelMEMS}
\ee
Fix 
\be 
y_0\in B_s(x)\backslash B_{2\beta s}(L).\label{y0contain}
\ee
We will choose appropriate $ 0<\delta,\delta'<1 $ such that $ y_0\notin S_{\va,\delta's}^k(u) $. By \eqref{y0contain}, there exists $ 0<\sg<\min\{\f{1}{1000},\f{\beta}{2}\} $, satisfying 
\be
B_{\sg s}(y_0)\subset\(\bigcap_{i=0}^kB_{4s}(x_i)\)\cap(B_{2s}(x)\backslash B_{\beta s}(L)).\label{Btauscontain}
\ee
Since $ 0<\beta<\f{1}{2} $, by \eqref{uxidelMEMS}, Proposition \ref{MonFor} and Corollary \ref{coruse} yield that
\be
\begin{aligned}
\int_{B_{4s}(x_i)}|(y-x_i)\cdot\na u-\al u|^2\ud y&\leq C\[\vt_f(u;x_i,s)-\vt_f\(u;x_i,\f{s}{2}\)\]s^{2\al+n}\\
&\leq C(\vt_f(u;x_i,s)-\vt_f(u;x_i,\beta s))s^{2\al+n}\\
&\leq C(n,p)\delta s^{2\al+n}
\end{aligned}\label{BgasMEMS0}
\ee
for any $ i\in\Z\cap[0,k] $. In particular,
$$
\int_{B_{4s}(x_0)}|(y-x_0)\cdot\na u-\al u|^2\leq C(n,p)s^{2\al+n}.
$$
Given \eqref{FpropMEMSuholder}, it follows from Lemma \ref{InESLem1}, \ref{InESLem2}, and average arguments that there is $ x'\in B_{4s}(x_0) $ such that
$$
0\leq u(x')\leq C(\beta,\Lda,n,p)s^{\al}.
$$
Moreover, by \eqref{FpropMEMSuholder} and \eqref{Btauscontain}, we obtain
\be 
0<\sup_{B_{\sg s}(y_0)}u\leq C(\beta,\Lda,n,p)s^{\al}.\label{uy0upper}
\ee
According to \eqref{Btauscontain} and \eqref{BgasMEMS0}, there holds
\be
\int_{B_{\sg s}(y_0)}|(y-x_i)\cdot\na u-\al u|^2\ud y\leq C(n,p)\delta s^{2\al+n}\label{BgasMEMS}
\ee
for any $ i\in\Z\cap[0,k] $, and then
\be
\sup_{1\leq i\leq k}\(\int_{B_{\sg s}(y_0)}|(x_i-x_0)\cdot\na u|^2\)\leq C(n,p)s^{2\al+n}.\label{xix0MEMS}
\ee
Consequently,
\be
\int_{B_{\sg s}(y_0)}|V\cdot\na u|^2\leq C(\beta,n,p)\delta s^{2\al+n-2},\label{vhatMEMS}
\ee
where $ V=\op{span}\{x_i-x_0\}_{i=1}^k $. For any $ y\in B_{\sg s}(y_0) $, let 
$$
\pi_L(y)=x_0+\sum_{i=1}^k\al_i(y)(x_i-x_0)\in L
$$
be the point such that 
\be
|\pi_L(y)-y|=\dist(y,L)\geq\beta s.\label{inebetas}
\ee
Here, for the inequality of \eqref{inebetas}, we have used \eqref{Btauscontain}. Applying Lemma \ref{rhoind}, if $ y\in B_{\sg s}(y_0) $, then $ \sup_{1\leq i\leq k}|\al_i(y)|\leq C(\beta,n) $. Now, we can deduce from \eqref{BgasMEMS} and \eqref{xix0MEMS} that
\be
\int_{B_{\sg s}(y_0)}|(y-\pi_L(y))\cdot\na u-\al u|^2\ud y\leq C(\beta,n,p)\delta s^{2\al+n}.\label{Bgasy0MEMS}
\ee
The definition of $ \pi_L $ implies that
\be
|\pi_L(y)-\pi_L(y_0)|\leq|y-y_0|\label{yy0}
\ee
for any $ y\in\R^n $. As a result,
\begin{align*}
\int_{B_{\sg s}(y_0)}|(y_0-\pi_L(y_0))\cdot\na u|^2&\leq C\int_{B_{\sg s}(y_0)}|(y-\pi_L(y))\cdot\na u-\al u|^2\ud y\\
&\quad\quad+C\int_{B_{\sg s}(y_0)}|((y-y_0)-(\pi_L(y)-\pi_L(y_0))\cdot\na u-\al u|^2\ud y\\
&\leq C\delta s^{2\al+n}+C\sg^2s^2\int_{B_{\sg s}(y_0)}|\na u|^2+C\int_{B_{\sg s}(y_0)}|u|^2\\
&\leq C(\beta,\ga,\Lda,n,p)(\delta s^{2\al+n}+(\sg s)^{2\al+n}+(\sg s)^ns^{2\al}),
\end{align*}
where for the second inequality, we have used \eqref{Bgasy0MEMS} and \eqref{yy0}, for the third inequality, we have used \eqref{FpropMEMSuholder}, \eqref{uy0upper}, and Lemma \ref{InESLem2}. Combining with \eqref{vhatMEMS}, we can see that
$$
(\sg s)^{2-2\al-n}\int_{B_{\sg s}(y_0)}|V'\cdot\na u|^2\leq C(\beta,\ga,\Lda,n,p)(\sg^2+\delta\sg^{2-2\al-n}+\sg^{2-2\al}),
$$
where
$$
V'=V\oplus\op{span}\left\{\f{y_0-\pi_L(y_0)}{|y_0-\pi_L(y_0)|}\right\}.
$$
Choosing $ \sg=\sg(\beta,\va,\Lda,n,p)>0 $ and $ \delta=\delta(\beta,\va,\ga,\Lda,n,p)>0 $ sufficiently small, we apply Lemma \ref{kplus1MEMStwoone} to get $ \delta'=\delta'(\beta,\va,\ga,\Lda,n,p)>0 $ such that $ u $ is $ (k+1,\va) $-symmetric in $ B_{s_0}(y_0) $ with some $ s_0\in[\delta's,s] $. Thus, we complete the proof.
\end{proof}

We now turn to show Lemma \ref{kplus1MEMStwoone}. Since there is no similar estimate like \eqref{dyadics0har}, we first consider points with a small value of $ u $ and present the following lemma.

\begin{lem}\label{rxlogrMEMS}
Let $ \ga>0 $, $ 0<\sg<\f{1}{1000} $, $ 0<s\leq 1 $, and $ x\in\R^n $. Assume that $ u\in (C_{\loc}^{0,\al}\cap H_{\loc}^1\cap L_{\loc}^{-p})(B_{20s}(x)) $ is a stationary solution of \eqref{MEMSeq} with respect to $ f\in M_{\loc}^{2\al+n-4+\ga,2}(B_{20s}(x)) $, satisfying 
\begin{gather}
0\leq u(x)<(\sg s)^{\al}\label{sgsuse}\\
[u]_{C^{0,\al}(\ol{B}_{15s}(x))}+[f]_{M^{2\al+n-4+\ga,2}(B_{15s}(x))}\leq\Lda.\label{rxlogrMEMSass}
\end{gather}
There exists $ s_x\in[\sg s,s] $ such that
\be
\vt_f(u;x,s_x)-\vt_f\(u;x,\f{s_x}{2}\)\leq-\f{C}{\log\sg},\label{vtrys}
\ee
where $ C>0 $ depends only on $ \ga,\Lda,n $, and $ p $. 
\end{lem}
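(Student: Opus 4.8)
The plan is to run a dyadic pigeonhole argument on the monotone quantity $\vt_f(u;x,\cdot)$, but only over the scales $[\sg s,s]$: the smallness hypothesis \eqref{sgsuse} is exactly what prevents the density from running off to $-\ift$ before we reach the scale $\sg s$. First I would set $N:=\lfloor\log_2(1/\sg)\rfloor$, so that $2^{-N}\in[\sg,2\sg)$, and put $s_j:=2^{-j}s$ for $j\in\{0,1,\dots,N\}$. By the monotonicity formula (Proposition \ref{MonFor}), $\vt_f(u;x,\cdot)$ is nondecreasing, so each increment $\vt_f(u;x,s_j)-\vt_f(u;x,s_{j+1})$ is nonnegative, and the sum telescopes: $\sum_{j=0}^{N-1}(\vt_f(u;x,s_j)-\vt_f(u;x,s_{j+1}))=\vt_f(u;x,s)-\vt_f(u;x,s_N)$.

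Next I would bound this telescoping sum by a constant depending only on $\ga,\Lda,n,p$. For the top scale, since $s\le 1$, the hypothesis \eqref{rxlogrMEMSass} together with \eqref{maxvtusr} (applied with $r_0$ a fixed absolute constant, e.g. $r_0=2$, which is legitimate since all relevant balls lie inside $B_{15s}(x)$) gives $\vt_f(u;x,s)\le C(\ga,\Lda,n,p)$. For the bottom scale, observe that $\inf_{B_{s_N}(x)}u\le u(x)<(\sg s)^{\al}\le s_N^{\al}$, using \eqref{sgsuse} and $2^{-N}\ge\sg$; hence the contrapositive of \eqref{assMupper} yields $\min\{\vt(u;x,s_N),\vt_f(u;x,s_N)\}\ge -C_*(\ga,\Lda,n,p)$, and in particular $\vt_f(u;x,s_N)\ge -C_*$. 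Therefore $\vt_f(u;x,s)-\vt_f(u;x,s_N)\le C_1$ with $C_1=C_1(\ga,\Lda,n,p)$.

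By the pigeonhole principle there is $j_0\in\{0,\dots,N-1\}$ with $\vt_f(u;x,s_{j_0})-\vt_f(u;x,s_{j_0+1})\le C_1/N$, and I would take $s_x:=s_{j_0}$; since $s_{j_0}\ge s_{N-1}=2^{-(N-1)}s>\sg s$ and $s_{j_0}\le s$, we have $s_x\in[\sg s,s]$ as required. Finally, because $\sg<\f{1}{1000}$ we have $\log_2(1/\sg)\ge 2$, hence $N\ge\f{1}{2}\log_2(1/\sg)$, so $C_1/N\le -C/\log\sg$ for some $C=C(\ga,\Lda,n,p)$, which is precisely \eqref{vtrys}. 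The only genuinely delicate point is the lower bound $\vt_f(u;x,s_N)\ge -C_*$: this is the step that forces the restriction to scales $\ge\sg s$ and that uses the hypothesis $u(x)<(\sg s)^{\al}$ in an essential way — for radii below $\sg s$ the density may diverge to $-\ift$ by Proposition \ref{propupture}(2). Everything else is routine once the monotonicity formula, \eqref{maxvtusr}, and \eqref{assMupper} are in hand.
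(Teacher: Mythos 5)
Your proof is correct and follows essentially the same route as the paper: a dyadic decomposition of $[\sg s,s]$, the monotonicity of $\vt_f(u;x,\cdot)$ to telescope, a uniform bound on the total variation $\vt_f(u;x,s)-\vt_f(u;x,\sg s)$ forced by the hypothesis $u(x)<(\sg s)^{\al}$, and a pigeonhole to extract $s_x$. The only cosmetic difference is that the paper bounds $|\vt_f(u;x,t)|$ for all $t\in[\sg s,s]$ directly from the $L^2$ estimate $t^{-2\al-n}\int_{B_{10t}(x)}u^2\leq C$, whereas you source the upper bound from \eqref{maxvtusr} and the lower bound at scale $s_N$ from the contrapositive of \eqref{assMupper}; both rest on the same estimates in Proposition \ref{propupture}(1), so this is a legitimate and equivalent packaging.
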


\begin{proof}
We claim that if $ t\in[\sg s,s] $, then
\be
|\vt_f(u;x,t)|\leq C(\ga,\Lda,n,p).\label{claimvtf}
\ee
Analogous to the calculations in \eqref{u10rhobales}, we obtain from \eqref{sgsuse} that
$$
t^{-2\al-n}\int_{B_{10t}(x)}u^2\leq C(t^{-2\al}u(x)^2+1)\leq C(t^{-2\al}(\sg s)^{2\al}+1)\leq C(\Lda,n,p).
$$
Given \eqref{thetaftheta} and \eqref{rxlogrMEMSass}, the claim \eqref{claimvtf} follows directly from Lemma \ref{InESLem1} and \ref{InESLem2}. Choosing $ \ell\in\Z_+ $ such that $ 2^{-\ell-1}\leq\sg<2^{-\ell} $ with $ \ell\sim-\log\sg $, we can apply \eqref{claimvtf} and Proposition \ref{MonFor} to deduce that
\begin{align*}
0&\leq\sum_{i=0}^{\ell-1}\[\vt_f\(u;x,\f{s}{2^i}\)-\vt_f\(u;x,\f{s}{2^{i+1}}\)\]\\
&\leq\vt_f(u;x,s)-\vt_f\(u;x,\f{s}{2^{\ell}}\)\\
&\leq\vt_f(u;x,s)-\vt_f(u;x,\sg s)\\
&\leq C(\ga,\Lda,n,p).
\end{align*}
Thus, there must be some $ i_0\in\Z\cap[0,\ell-1] $ such that
$$
\vt_f\(u;x,\f{s}{2^{i_0}}\)-\vt_f\(u;x,\f{s}{2^{i_0+1}}\)\leq\f{C}{\ell}\leq-\f{C(\ga,\Lda,n,p)}{\log\sg}.
$$
Letting $ s_x=\f{s}{2^{i_0}}\in[\sg s,s] $, we complete the proof.
\end{proof}

Using Lemma \ref{rxlogrMEMS}, we have the result below. 

\begin{lem}\label{kplus1MEMS}
Let $ \ga>0 $, $ k\in\Z\cap[0,n-2] $, $ 0<s\leq 1 $, and $ x\in\R^n $. Assume that $ u\in(C_{\loc}^{0,\al}\cap H_{\loc}^1\cap L_{\loc}^{-p})(B_{20s}(x)) $ is a stationary solution of \eqref{MEMSeq} with respect to $ f\in M_{\loc}^{2\al+n-4+\ga,2}(B_{20s}(x)) $, satisfying
$$
[u]_{C^{0,\al}(\ol{B}_{15s}(x))}+[f]_{M^{2\al+n-4+\ga,2}(B_{15s}(x))}\leq\Lda.
$$
For any $ \va>0 $, there exists $ \delta>0 $, depending only on $ \va,\ga,\Lda,n $, and $ p $ such that if
\be
\inf_{V\in\bG(n,k+1)}\(s^{2-2\al-n}\int_{B_s(x)}|V\cdot\na u|^2\)<\delta,\label{useassp}
\ee
then either
\be
u(x)\geq\delta^{\f{\al}{2(n+2\al-2)}}s^{\al},\label{ussalusecor}
\ee
or there is $ s_x\in[\delta^{\f{1}{2(n+2\al-2)}}s,s] $ such that $ u $ is $ (k+1,\va) $-symmetric in $ B_{s_x}(x) $.
\end{lem}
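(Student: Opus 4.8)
\emph{Plan of proof.} The strategy is to split according to the size of $u(x)$ relative to the scale~$s$, exactly as the two alternatives in the statement suggest, and to route the ``small $u(x)$'' branch through Lemma~\ref{rxlogrMEMS} followed by Lemma~\ref{SmaHomMEMS}. Fix $\va>0$ and set $\sigma:=\delta^{\f{1}{2(n+2\al-2)}}$, where $\delta\in(0,1)$ is a small parameter, depending only on $\va,\ga,\Lda,n,p$, to be pinned down at the end (note $n+2\al-2\geq 2\al>0$, so $\sigma\in(0,1)$). If $u(x)\geq(\sigma s)^{\al}=\delta^{\f{\al}{2(n+2\al-2)}}s^{\al}$, then \eqref{ussalusecor} holds and there is nothing further to prove; so assume from now on that $0\leq u(x)<(\sigma s)^{\al}$.

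In this regime the hypotheses of Lemma~\ref{rxlogrMEMS} hold on $B_{20s}(x)$ (the H\"older and Morrey bounds restrict from $B_{15s}(x)$, and $0\leq u(x)<(\sigma s)^{\al}$ is precisely \eqref{sgsuse}), so there is a scale $s_x\in[\sigma s,s]$ with $0\leq \vt_f(u;x,s_x)-\vt_f(u;x,\f{s_x}{2})\leq C(\ga,\Lda,n,p)/|\log\sigma|=2(n+2\al-2)\,C(\ga,\Lda,n,p)/|\log\delta|$, the left inequality being the monotonicity formula of Proposition~\ref{MonFor}. Thus the density drop at the scale $s_x$ can be made smaller than any prescribed threshold by shrinking~$\delta$.

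Next I would transfer the gradient smallness from scale $s$ to scale~$s_x$. By \eqref{useassp} there is $V_0\in\bG(n,k+1)$ with $s^{2-2\al-n}\int_{B_s(x)}|V_0\cdot\na u|^2<\delta$; since $s_x\leq s$ and $B_{s_x}(x)\subset B_s(x)$, monotonicity of the integral in the domain gives $s_x^{2-2\al-n}\int_{B_{s_x}(x)}|V_0\cdot\na u|^2\leq(s/s_x)^{n+2\al-2}s^{2-2\al-n}\int_{B_s(x)}|V_0\cdot\na u|^2<\sigma^{-(n+2\al-2)}\delta=\delta^{1/2}$, by the choice of~$\sigma$. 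Hence $\inf_{V\in\bG(n,k+1)}s_x^{2-2\al-n}\int_{B_{s_x}(x)}|V\cdot\na u|^2<\delta^{1/2}$. Now apply Lemma~\ref{SmaHomMEMS} at the point~$x$, the scale $s_x\leq s$, and with $k+1\in\Z\cap[1,n-1]$ in place of~$k$ (the required bounds on $B_{15s_x}(x)\subset B_{15s}(x)$ are inherited): it produces a threshold $\delta''=\delta''(\va,\ga,\Lda,n,p)>0$ such that $\vt_f(u;x,s_x)-\vt_f(u;x,\f{s_x}{2})<\delta''$ together with $\inf_{V}s_x^{2-2\al-n}\int_{B_{s_x}(x)}|V\cdot\na u|^2<\delta''$ force $u$ to be $(k+1,\va)$-symmetric in $B_{s_x}(x)$. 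It then remains to choose $\delta>0$, depending only on $\va,\ga,\Lda,n,p$, small enough that $2(n+2\al-2)C(\ga,\Lda,n,p)/|\log\delta|<\delta''$, that $\delta^{1/2}<\delta''$, and that $\sigma=\delta^{1/(2(n+2\al-2))}<\f{1}{1000}$ (needed to invoke Lemma~\ref{rxlogrMEMS}); all three hold for $\delta$ small since each left-hand side tends to $0$ as $\delta\to0^+$. With this $\delta$, the scale $s_x$ lies in $[\sigma s,s]=[\delta^{1/(2(n+2\al-2))}s,s]$, which is the asserted range, and the proof is complete.

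The only genuinely delicate point is the exponent bookkeeping: $\sigma$ must be taken as the power $\delta^{1/(2(n+2\al-2))}$ of $\delta$ so that the scale-change loss $(s/s_x)^{n+2\al-2}\leq\sigma^{-(n+2\al-2)}$ incurred in passing from scale $s$ to scale $s_x$ is exactly absorbed (turning $\delta$ into $\delta^{1/2}$ in the gradient estimate), while $\sigma$ simultaneously stays small enough that the density-drop bound $C/|\log\sigma|$ coming out of Lemma~\ref{rxlogrMEMS} remains below the threshold of Lemma~\ref{SmaHomMEMS}. Once these two competing requirements are reconciled, everything else is a direct appeal to the monotonicity formula and the two cited lemmas; this is precisely the modification (relative to the dyadic-decomposition argument available for harmonic maps) forced by the fact that $\vt_f(u;x,\cdot)$ diverges to $-\ift$ at points where $u(x)>0$.
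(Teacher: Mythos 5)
Your proof is correct and follows essentially the same route as the paper's: split on whether $u(x)\geq\delta^{\f{\al}{2(n+2\al-2)}}s^{\al}$, use Lemma \ref{rxlogrMEMS} to find the scale $s_x\in[\delta^{\f{1}{2(n+2\al-2)}}s,s]$ with small density drop, transfer the gradient smallness from scale $s$ to $s_x$ at the cost of the factor $(s/s_x)^{n+2\al-2}\leq\delta^{-\f{1}{2}}$, and conclude via Lemma \ref{SmaHomMEMS}. Your explicit bookkeeping of the three constraints on $\delta$ (including $\sigma<\f{1}{1000}$ for Lemma \ref{rxlogrMEMS}) is a slightly more careful rendering of exactly the argument in the paper.
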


\begin{proof}
Given that \eqref{ussalusecor} is false, we obtain from Lemma \ref{rxlogrMEMS} that there exists $ s_x\in[\delta^{\f{1}{2(n+2\al-2)}}s,s] $ such that
$$
\vt_f(u;x,s_x)-\vt_f\(u;x,\f{s_x}{2}\)\leq-\f{C(\ga,\Lda,n,p)}{\log\delta}.
$$
By the assumption \eqref{useassp}, we have
\begin{align*}
\inf_{V\in\bG(n,k+1)}\(s_x^{2-2\al-n}\int_{B_{s_x}(x)}|V\cdot\na u|^2\)&\leq\inf_{V\in\bG(n,k+1)}\(s_x^{2-2\al-n}\int_{B_s(x)}|V\cdot\na u|^2\)\\
&\leq C(\ga,\Lda,n,p)\delta^{\f{1}{2}},
\end{align*}
where for the last inequality, we have used $ s_x\geq\delta^{\f{1}{2(n+2\al-2)}}s $. Applying Proposition \ref{SmaHomMEMS}, we can choose $ \delta=\delta(\va,\ga,\Lda,n,p)>0 $ sufficiently small such that $ u $ is $ (k+1,\va) $-symmetric in $ B_{s_x}(x) $ and the result follows directly. 
\end{proof}

Indeed, for $ k=n-2 $, the results in lemma \ref{kplus1MEMS} can be improved with a simple application of compactness arguments.

\begin{lem}\label{kplus1MEMSn2}
Let $ \ga>0 $, $ 0<s\leq 1 $, and $ x\in\R^n $. Assume that $ u\in(C_{\loc}^{0,\al}\cap H_{\loc}^1\cap L_{\loc}^{-p})(B_{4s}(x)) $ is a stationary solution of \eqref{MEMSeq} with respect to $ f\in M_{\loc}^{2\al+n-4+\ga,2}(B_{4s}(x)) $, satisfying
$$
[u]_{C^{0,\al}(\ol{B}_{2s}(x))}+[f]_{M^{2\al+n-4+\ga,2}(B_{2s}(x))}\leq\Lda.
$$
There exists $ \delta>0 $, depending only on $ \ga,\Lda,n $, and $ p $ such that if
$$
\inf_{V\in\bG(n,n-1)}\(s^{2-2\al-n}\int_{B_s(x)}|V\cdot\na u|^2\)<\delta,
$$
then
\be
\inf_{B_{\f{s}{2}}(x)}u\geq\delta s^{\al}.\label{ussalusecor2}
\ee
If we further assume that $ f\equiv 0 $, then the conclusion \eqref{ussalusecor2} can be improved to 
$$
\inf_{B_{\f{s}{2}}(x)}u(x)\geq Cs^{\al},
$$
where $ C>0 $ depends only on $ \Lda,n $, and $ p $.
\end{lem}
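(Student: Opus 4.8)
The plan is a contradiction argument built on the compactness theory of \S\ref{converSec}. First reduce to $s=1$ and $x=0$ via Proposition \ref{ScalingProp} and Remark \ref{scalerem}. For the first assertion it suffices to produce $\delta>0$, depending only on $\ga,\Lda,n,p$, such that $\inf_{V\in\bG(n,n-1)}\int_{B_1}|V\cdot\na u|^2<\delta$ forces $\inf_{B_{1/2}}u\ge\delta$. If no such $\delta$ exists, there are stationary solutions $\{u_i\}\subset(C_{\loc}^{0,\al}\cap H_{\loc}^1\cap L_{\loc}^{-p})(B_4)$ with respect to $\{f_i\}\subset M_{\loc}^{2\al+n-4+\ga,2}(B_4)$ with $[u_i]_{C^{0,\al}(\ol{B}_2)}+[f_i]_{M^{2\al+n-4+\ga,2}(B_2)}\le\Lda$, planes $V_i\in\bG(n,n-1)$ with $\int_{B_1}|V_i\cdot\na u_i|^2\to 0$, and points $y_i\in\ol{B}_{1/2}$ with $u_i(y_i)\to 0$. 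Since $u_i(y_i)<1$ eventually, the H\"older bound gives $\sup_{B_2}u_i\le C(\Lda,n)$, hence $\sup_i\|u_i\|_{L^2(B_2)}<+\ift$, so Proposition \ref{propConv} applies: up to a subsequence $u_i\to u_{\ift}$ strongly in $(H_{\loc}^1\cap L^{\ift})(B_2)$, $f_i\wc f_{\ift}$ weakly in $L^2(B_2)$, and $u_{\ift}$ is a stationary solution of \eqref{MEMSeq} with respect to $f_{\ift}$, with $f_{\ift}\in M_{\loc}^{2\al+n-4+\ga,2}(B_2)$ by the weak lower semicontinuity of the Morrey norm.

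Next I would extract the symmetry of the limit. Passing to a further subsequence, $V_i\to V_{\ift}\in\bG(n,n-1)$ and $y_i\to y_{\ift}\in\ol{B}_{1/2}$; combining the strong $H_{\loc}^1$ convergence with $V_i\to V_{\ift}$ gives $\int_{B_1}|V_{\ift}\cdot\na u_{\ift}|^2=0$, so $u_{\ift}$ is invariant with respect to $V_{\ift}$ in $B_1$, while $u_{\ift}(y_{\ift})=\lim_i u_i(y_i)=0$. After a rotation we may take $V_{\ift}=\R^{n-1}\times\{0\}$, so on $B_1$ the function $u_{\ift}$ depends on the last variable alone; consequently the slice $\{z\in B_1:z_n=(y_{\ift})_n\}$, an $(n-1)$-disk of radius at least $\sqrt{3}/2$ since $|(y_{\ift})_n|\le 1/2$, lies in $\{u_{\ift}=0\}$. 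Hence $\HH^{n-1}(\{u_{\ift}=0\}\cap B_1)>0$, and therefore $\HH^{\al+n-2}(\{u_{\ift}=0\}\cap B_1)=+\ift$, because $\al+n-2<n-1$ (recall $\al=\f{2}{p+1}<1$). This contradicts Proposition \ref{Hausdorffdim} applied to the stationary solution $u_{\ift}$ (equivalently Lemma \ref{uiftzero}, or Remark \ref{aln2zero}), which gives $\HH^{\al+n-2}(\{u_{\ift}=0\}\cap B_1)=0$. The contradiction furnishes the required $\delta$; after replacing $\delta$ by a smaller value if necessary so that $\inf_{B_{1/2}}u\ge\delta$ is retained, rescaling back yields $\inf_{B_{s/2}(x)}u\ge\delta s^{\al}$.

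For the second assertion I would repeat the argument with $f_i\equiv 0$: then $f_{\ift}\equiv 0$, the limit $u_{\ift}$ is a stationary solution of $\Delta u=u^{-p}$, and no constant depends on the Morrey parameter $\ga$. The identical contradiction shows there are $\delta,C>0$ depending only on $\Lda,n,p$ with $\inf_{V\in\bG(n,n-1)}\int_{B_1}|V\cdot\na u|^2<\delta\ \Rightarrow\ \inf_{B_{1/2}}u\ge C$, which after rescaling is the improved bound $\inf_{B_{s/2}(x)}u\ge Cs^{\al}$.

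The conceptual heart — and the step requiring the most care — is the passage in the second paragraph: the smallness of the $L^2$-best-approximation quantity $\inf_V\int_{B_1}|V\cdot\na u|^2$ forces the blow-up limit to be constant along an $(n-1)$-plane on $B_1$, and a one-variable limit that vanishes somewhere has an $(n-1)$-dimensional rupture set, incompatible with the dimension bound $n-2$ for stationary solutions. The remaining ingredients — the uniform $L^2$ bound enabling Proposition \ref{propConv}, the convergence of the minimizing planes under strong $H^1_{\loc}$ convergence, and the bookkeeping of the two threshold constants — are routine.
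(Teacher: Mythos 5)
Your proof of the first assertion is correct and follows the paper's own argument essentially verbatim: the same contradiction/compactness setup via Proposition \ref{propConv}, passage of the plane-invariance and of the zero to the limit, and the contradiction between an $(n-1)$-dimensional rupture set and the dimension bound of Proposition \ref{Hausdorffdim}. Your explicit slicing argument for why $\HH^{n-1}(\{u_{\ift}=0\}\cap B_1)>0$ merely spells out what the paper asserts in one line.

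The second assertion is where there is a genuine gap. Running the ``identical contradiction'' for $f\equiv 0$ means negating the conclusion along a sequence with $\inf_{B_{1/2}}u_i\to 0$, and that recovers exactly the first assertion: a conclusion constant that coincides with the compactness threshold $\delta$. This is no improvement at all, since for $f\equiv 0$ the first part already yields $\inf_{B_{1/2}}u\ge\delta(\Lda,n,p)s^{\al}$ (the $\ga$-dependence is vacuous). The content of the improvement is that the conclusion constant $C$ is a \emph{fixed} constant decoupled from the threshold $\delta$. To obtain this, the paper negates the conclusion with a fixed, to-be-determined constant $C'$, i.e.\ assumes only $\inf_{B_{1/2}}u_i<C'$; the limit $u_{\ift}$ then need not vanish anywhere, and one faces a dichotomy. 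Either $u_{\ift}$ vanishes somewhere in $B_1$, which is excluded by the dimension bound exactly as in the first part; or $u_{\ift}>0$ throughout $B_1$, in which case $u_{\ift}$ is a positive solution of $\Delta u_{\ift}=u_{\ift}^{-p}$ invariant under an $(n-1)$-plane, hence effectively a one-dimensional (and therefore convex) solution, and Proposition \ref{proplowerbound} supplies the uniform lower bound $\inf_{B_{1/2}}u_{\ift}\ge C(\Lda,n,p)$. Choosing $C'$ below this constant yields the contradiction. Your proposal omits the positive-limit branch and the appeal to Proposition \ref{proplowerbound}, which is precisely where the claimed improvement comes from.
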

\begin{proof}
By using Proposition \ref{ScalingProp} and Remark \ref{scalerem}, we assume that $ s=1 $ and $ x=0 $. Suppose that the result does not hold. There exists a sequence of stationary solutions $ \{u_i\}\subset (C_{\loc}^{0,\al}\cap H_{\loc}^1\cap L_{\loc}^{-p})(B_4) $ of \eqref{MEMSeq} with respect to $ \{f_i\}\subset M_{\loc}^{2\al+n-4+\ga,2}(B_4) $ such that for any $ i\in\Z_+ $, the following properties hold.
\begin{itemize}
\item $ u_i $ and $ f_i $ satisfy  
\begin{gather}
[u_i]_{C^{0,\al}(\ol{B}_2)}+[f_i]_{M^{2\al+n-4+\ga,2}(B_2)}\leq\Lda,\label{uifiass121}\\
0\leq\inf_{B_\f{1}{2}}u_i<i^{-1}.\label{uifiass12}
\end{gather}
\item There exists $ V_i\in\bG(n,n-1) $ such that
\be
\int_{B_1}|V_i\cdot\na u_i|^2<i^{-1}.\label{uiViesi1}
\ee
\end{itemize}
Using \eqref{uifiass121} and \eqref{uifiass12}, we obtain 
$$
\sup_{i\in\Z_+}\|u_i\|_{L^2(B_2)}\leq C(\ga,\Lda,n,p).
$$
It follows from Proposition \ref{propConv} that up to a subsequence, $ V_i\to V_{\ift} $, and
\be
\begin{aligned}
u_i&\to u_{\ift}\text{ strongly in }(H_{\loc}^1\cap L^{\ift})(B_2),\\
f_i&\to f_{\ift}\text{ weakly in }L^2(B_2),
\end{aligned}\label{assuifiuift}
\ee
where $ u_{\ift}\in C^{0,\al}(\ol{B}_2)\cap (H_{\loc}^1\cap L^{-p})(B_2) $. By Lemma \ref{MorreyL2}, $ f\in M^{2\al+n-4+\ga,2}(B_2) $. Given \eqref{uiViesi1}, we deduce that $ u_{\ift} $ is invariant with respect to $ V_{\ift} $. Moreover, $ u_{\ift} $ is a stationary solution of \eqref{MEMSeq} with respect to $ f_{\ift} $. On the other hand, $ \inf_{B_{\f{1}{2}}}u_{\ift}=0 $, due to \eqref{uifiass12} and \eqref{assuifiuift}. Consequently,
\be
\dim_{\HH}(\{u_{\ift}=0\}\cap B_1)\geq n-1,\label{n1up}
\ee
which is contradictory to Proposition \ref{Hausdorffdim}. For the case that $ f\equiv 0 $, assuming that the result is false, we change the assumption \eqref{uifiass12} to $ 0\leq\inf_{B_{\f{1}{2}}}u_i<C' $, where $ C'>0 $ is to be determined. With the help of almost the same arguments above, we can still obtain convergence results in \eqref{assuifiuift}. Additionally,  
\be
0\leq\inf_{B_{\f{1}{2}}}u_{\ift}\leq C'.\label{uiftgeq2}
\ee
Thus, $ u_{\ift} $ is invariant with respect to $ V_{\ift} $. This yields that $ u_{\ift}>0 $ in $ B_1 $, since if not, we have \eqref{n1up}, a contradiction. In particular, $ u_{\ift} $ is a positive and convex solution of $ \Delta u_{\ift}=u_{\ift}^{-p} $. Using Lemma \ref{proplowerbound}, there holds $
\inf_{B_{\f{1}{2}}}u\geq C(\Lda,n,p)>0 $. If we choose sufficiently small $ C'=C'(\Lda,n,p)>0 $, it is contradictory to \eqref{uiftgeq2}.
\end{proof}

Next, let us turn to the points where $ u $ has given lower bounds. Such a result is the consequence of standard regularity estimates for elliptic equations.

\begin{lem}\label{lowernva}
Let $ \ga>0 $, $ k\in\Z\cap[0,n-2] $, $ 0<s\leq 1 $, and $ x\in\R^n $. Assume that $ u\in(C_{\loc}^{0,\al}\cap H_{\loc}^1\cap L_{\loc}^{-p})(B_{2s}(x)) $ is a stationary solution of \eqref{MEMSeq} with respect to $ f\in M_{\loc}^{2\al+n-4+\ga,2}(B_{2s}(x)) $, satisfying
\be
[u]_{C^{0,\al}(\ol{B}_{2s}(x))}+[f]_{M^{2\al+n-4+\ga,2}(B_{2s}(x))}\leq\Lda.\label{assuhol}
\ee
For any $ \va,\sg>0 $, there exists $ \delta>0 $, depending only on $ \va,\ga,\Lda,n,p $, and $ \sg $ such that if 
\be
u(x)\geq(\sg s)^{\al},\label{uxsgsal}
\ee
then $ u $ is $ (n,\va) $-symmetric in $ B_{\delta s}(x) $. 
\end{lem}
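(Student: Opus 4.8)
The plan is to reduce the statement to a quantitative oscillation bound for $u$ near $x$ and then obtain that bound from interior elliptic regularity after a scale–invariant rescaling. By Definition~\ref{qunsybypairMEMS} and Remark~\ref{remh0}, the only $n$-symmetric function is $h\equiv 0$, so ``$u$ is $(n,\va)$-symmetric in $B_{\delta s}(x)$'' means $(\delta s)^{-\al}\sup_{B_{\delta s}(x)}|u-u(x)|<\va$, for which it suffices that $\osc_{B_{\delta s}(x)}u<\va(\delta s)^{\al}$. The role of \eqref{uxsgsal} together with \eqref{assuhol} is to force $u$ away from $0$ on a definite ball: setting $\rho:=s\min\{1,(2\Lda)^{-1/\al}\sg\}\le s$, for $|y-x|\le\rho$ one has $u(y)\ge u(x)-\Lda\rho^{\al}\ge(\sg s)^{\al}-\tfrac12(\sg s)^{\al}>0$ (using $\rho\le(2\Lda)^{-1/\al}\sg s$), so $u^{-p}\in L^{\ift}(B_\rho(x))$ and $u$ is a weak solution of the nonsingular equation $\Delta u=u^{-p}+f$ there. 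To keep all constants independent of $s$, I would pass to $\bar u:=T_{x,\rho}u$ and $\bar f:=T^*_{x,\rho}f$ on $B_1$. By Proposition~\ref{ScalingProp} and the identity $\al(p+1)=2$, $\bar u$ solves $\Delta\bar u=\bar u^{-p}+\bar f$ on $B_1$, with $[\bar u]_{C^{0,\al}(\ol B_1)}\le\Lda$ and $[\bar f]_{M^{2\al+n-4+\ga,2}(B_1)}=\rho^{\ga}[f]_{M^{2\al+n-4+\ga,2}(B_\rho(x))}\le\Lda$; moreover $\bar u(0)=\rho^{-\al}u(x)\ge(\sg s/\rho)^{\al}\ge 2\Lda$, whence $\inf_{B_1}\bar u\ge\Lda$, $\|\bar u^{-p}\|_{L^{\ift}(B_1)}\le\Lda^{-p}$, and $\osc_{B_1}\bar u\le 2\Lda$.

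The core step is to upgrade the regularity of $\bar u$ on $B_{1/2}$. Splitting $\Delta\bar u=\bar u^{-p}+\bar f$ with $\bar u^{-p}\in L^{\ift}(B_1)$ and $\bar f\in M^{2\al+n-4+\ga,2}(B_1)$, the $L^{\ift}$ part contributes, by Calderón--Zygmund estimates and Sobolev embedding, a $C^{1,\beta}$ (hence Lipschitz) term, while the Morrey part contributes, by the standard Morrey--Campanato estimates for the Poisson equation, a term of regularity $\al+\ga/2$ (the right-hand side exponent $2\al+n-4+\ga$ produces Campanato exponent $\al+\ga/2$ for the potential), always strictly better than $\al$ because $\ga>0$. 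Hence $\bar u\in C^{0,\mu}(B_{1/2})$ with
$$
\mu:=\min\Big\{\tfrac{1+\al}{2},\,\al+\tfrac{\ga}{2}\Big\}\in(\al,1),\qquad [\bar u]_{C^{0,\mu}(B_{1/2})}\le K(\ga,\Lda,n,p),
$$
where $K$ is controlled using only $\osc_{B_1}\bar u$, $\|\bar u^{-p}\|_{L^{\ift}(B_1)}$ and $[\bar f]_{M^{2\al+n-4+\ga,2}(B_1)}$, all bounded by $\Lda$; note $\mu-\al=\min\{\tfrac{1-\al}{2},\tfrac{\ga}{2}\}$ depends only on $p$ and $\ga$.

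Finally I would scale back and choose $\delta$. For $0<t\le\tfrac12$, $\osc_{B_{\rho t}(x)}u=\rho^{\al}\osc_{B_t}\bar u\le K2^{\mu}\rho^{\al}t^{\mu}$. Taking $t=\delta s/\rho$, which is $\le\tfrac12$ once $\delta\le\tfrac12\min\{1,(2\Lda)^{-1/\al}\sg\}$, and using $s\le 1$,
$$
\osc_{B_{\delta s}(x)}u\le K2^{\mu}\rho^{\al-\mu}(\delta s)^{\mu}=K2^{\mu}\Big(\tfrac{\delta}{\min\{1,(2\Lda)^{-1/\al}\sg\}}\Big)^{\mu-\al}(\delta s)^{\al},
$$
so it suffices to further impose $K2^{\mu}\big(\delta/\min\{1,(2\Lda)^{-1/\al}\sg\}\big)^{\mu-\al}<\va$. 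Both constraints are met by some $\delta=\delta(\va,\ga,\Lda,n,p,\sg)>0$, and then $\osc_{B_{\delta s}(x)}u<\va(\delta s)^{\al}$, i.e. $u$ is $(n,\va)$-symmetric in $B_{\delta s}(x)$.

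There is no deep difficulty here; the statement is essentially ``improve $C^{0,\al}$ to $C^{0,\mu}$ with $\mu>\al$ wherever $u>0$,'' and it does not use the stationary condition. The two points needing care are (i) extracting the correct gain $\al+\ga/2>\al$ from the Morrey class of $f$ via Morrey--Campanato theory (for very large $\ga$ the Morrey term is already Lipschitz and one simply caps $\mu$ below $1$), and, more importantly, (ii) the bookkeeping that makes $\delta$ independent of $s$: this is precisely why the regularity estimate is carried out only after the rescaling $T_{x,\rho}$, which by Proposition~\ref{ScalingProp} renders the equation and all relevant bounds ($\inf\bar u$, $\|\bar u^{-p}\|_{L^\ift}$, $[\bar f]_M$) controlled by $\Lda$ alone.
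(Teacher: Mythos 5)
Your proof is correct and follows essentially the same route as the paper's: use \eqref{uxsgsal} together with the H\"older bound to pin $u$ away from zero on a definite ball, treat $u^{-p}+f$ as a right-hand side whose Morrey part yields, via the Morrey--Campanato estimate (the paper's Lemma \ref{regular1}, after reducing $\ga$ so that $2\al+\ga<2$), a H\"older exponent $\al+\ga/2>\al$, and then convert the improved modulus of continuity into $(n,\va)$-symmetry by choosing $\delta$ small; your only deviations are cosmetic (splitting the $L^{\ift}$ and Morrey contributions rather than bundling $u^{-p}+f$ into one Morrey-space term, and performing the normalization at scale $\rho$ rather than $s$), and your bookkeeping correctly keeps all constants depending only on the oscillation and seminorms so that $\delta$ is independent of $s$ and of the (possibly large) value of $u(x)$.
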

\begin{proof}
By Proposition \ref{ScalingProp} and Remark \ref{scalerem}, we let $ s=1 $ and $ x=0 $. Using \eqref{assuhol}, we can choose sufficiently small $ \sg'=\sg'(\Lda,n,p,\sg)\in(0,1) $ such that $
\inf_{B_{\sg'}}u\geq C(\Lda,n,p,\sg)^{-1} $. Given Lemma \ref{Inclusion1}, without loss of generality, we assume that $ 0<2\al+\ga<2 $. Indeed, since $ 2\al=\f{4}{p+1}<2 $, there exists $ 0<\ga'<\ga $ such that $ 2\al+\ga'<2 $. Lemma \ref{Inclusion1} implies
$$
[f]_{M^{2\al+n-4+\ga',2}(B_2)}\leq C(\ga,n,p)[f]_{M^{2\al+n-4+\ga,2}(B_2)}\leq C(\ga,\Lda,n,p).
$$
Consequently, we can conduct our arguments to $ \ga' $, and all results will not change. Let
$ g(u,f):=u^{-p}+f $. Inequalities \eqref{assuhol} and \eqref{uxsgsal} yield that
$$
[g(u,f)]_{M^{2\al+n-4+\ga,2}(B_{2\sg'})}\leq C(\ga,\Lda,n,p,\sg).
$$
Applying \eqref{assuhol} again with the property $ 0<\sg'<\f{1}{2} $, it follows from Lemma \ref{InESLem2} and \ref{Liouvillecla} that
\begin{align*}
[u]_{C^{0,\al+\f{\ga}{2}}\(B_{\sg'}\)}&\leq C\[\((\sg')^{2-2\al-n-\ga}\int_{B_{2\sg'}}|\na u|^2\)^{\f{1}{2}}+[g(u,f)]_{M^{2\al+n-4+\ga,2}(B_{2\sg'})}\]\\
&\leq C(1+(\sg')^{-\f{\ga}{2}})\leq C(\ga,\Lda,n,p,\sg)(\sg')^{-\f{\ga}{2}}.
\end{align*}
Consequently,
\begin{align*}
\|T_{0,\sg''\sg'}(u-u(0))\|_{L^{\ift}(B_1)}\leq C(\sg''\sg')^{-\al}(\sg''\sg')^{\al+\f{\ga}{2}}[u]_{C^{0,\al+\f{\ga}{2}}(B_{\sg'})}\leq C(\ga,\Lda,n,p,\sg)(\sg'')^{\f{\ga}{2}}
\end{align*}
for any $ 0<\sg''<1 $. We now choose sufficiently small $ \sg''=\sg''(\va,\ga,\Lda,n,p,\sg)>0 $ such that
$$
\|T_{0,\sg''\sg'}(u-u(0))\|_{L^{\ift}(B_1)}<\va.
$$
Thus, $ u $ is $ (n,\va) $-symmetric in $ B_{\sg''\sg'}(x) $, and $ \delta=\sg''\sg'>0 $ is what we want.
\end{proof}

\begin{proof}[Proof of Lemma \ref{kplus1MEMStwoone}]
Using Lemma \ref{kplus1MEMS}, we choose $ \delta=\delta(\va,\ga,\Lda,n,p)>0 $ sufficiently small such that either $
u(x)\geq\delta^{\f{\al}{2(n+2\al-2)}}s^{\al} $ or there exists $ s_x\in[\delta^{\f{1}{2(n+2\al-2)}}s,s] $ such that $ u $ is $ (k+1,\va) $-symmetric in $ B_{s_x}(x) $. We apply Lemma \ref{lowernva} and obtain $ \tau=\tau(\va,\ga,\Lda,n,p)>0 $ such that $ u $ is $ (n,\va) $-symmetric in $ B_{\tau s_x}(x) $. Letting $ \delta'=\min\{\tau\delta^{\f{1}{2(n+2\al-2)}},1\} $, we can complete the proof.
\end{proof}

\section{Reifenberg-type theorems} \label{Reifenbergtype}

In this section, we will present Reifenberg-type results, which serve as powerful tools for solving various problems related to geometric measure theory. The foundational concepts were first introduced by Reifenberg in \cite{Rei60}. Interested readers can refer to the lecture notes \cite{Nab20} by Naber for a more comprehensive overview of these techniques. Our focus here will be on the specific Reifenberg-type results that are particularly relevant to our subsequent proofs.

\begin{defn}\label{displacementk}
Let $ k\in\Z\cap[0,n] $, $ 0<r\leq 1 $, and $ \om\subset\R^n $ is a bounded open set. Assume that $ \mu $ is a finite Radon measure on $ \om $, namely, $ \mu(\om)<+\ift $. For $ x\in \om $ and $ 0<r<\dist(x,\om) $, we define the $ k $-dimensional displacement
$$
D_{\mu}^k(x,r):=\min_{L\in\bA(n,k)}\(r^{-k-2}\int_{B_r(x)}\dist^2(y,L)\ud\mu(y)\).
$$
\end{defn}

The theorem below addresses the Reifenberg-type estimates for discrete Radon measures, which one can understand as the summation of a finite number of Dirac measures, each associated with different weights. Intuitively, this theorem posits that if the $ k $-displacement $ D_{\mu}^k(\cdot,\cdot) $ is sufficiently small in some sense, then the measure $ \mu $ is $ k $-Ahlfors regular. Such estimates are crucial for understanding the fine structure of measures in geometric measure theory, as they provide insights into the local behavior and regularity of the underlying sets associated with the measure.

\begin{thm}[\cite{NV17}, Theorem 3.4]\label{Rei1}
Let $ k\in\Z\cap[0,n] $, $ 0<r\leq 1 $, and $ x_0\in\R^n $. Assume that $ \{B_{r_y}(y)\}_{y\in\cD}\subset B_{2r}(x_0) $ is a collection of pairwise disjoint balls with $ \cD\subset B_r(x_0) $ and 
$$
\mu:=\sum_{y\in\cD}\w_kr_y^k\delta_y.
$$
There exist $ \delta_{\op{R}}>0 $ and $ C_{\op{R}}>0 $, depending only on $ n $ such that if
\be
\int_{B_t(x)}\(\int_0^tD_{\mu}^k(y,s)\f{\ud s}{s}\)\ud\mu(y)<\delta_{\op{R}}t^k\label{Reicon}
\ee
for any $ B_t(x)\subset B_{2r}(x_0) $ with $ t>0 $, then
\be
\mu(B_r(x_0))=\sum_{y\in\cD}\w_kr_y^k\leq C_{\op{R}}r^k.\label{contentmuBr}
\ee
\end{thm}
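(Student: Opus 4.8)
The plan is to reproduce the discrete Reifenberg construction of Naber--Valtorta, whose essence is to manufacture, out of the best-approximating $k$-planes encoded in $D^k_\mu$, a single bi-Lipschitz $k$-dimensional surface $\Sigma$ that carries $\mu$ with density bounded above; the bound \eqref{contentmuBr} then follows since $\mu$ will be comparable to $\HH^k\llcorner\Sigma$.

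\emph{Step 1: normalization and dyadic set-up.} By translating and rescaling I would take $x_0=0$, $r=1$, so that it suffices to estimate $\mu(B_1)$. For each $x$ and each dyadic scale $2^{-j}$, $j\ge 0$, I would fix an affine plane $L_{x,2^{-j}}\in\bA(n,k)$ realizing (up to a fixed factor) the minimum in the definition of $D^k_\mu(x,2^{-j})$.

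\emph{Step 2: corona/stopping-time decomposition.} Using hypothesis \eqref{Reicon}, I would split the dyadic balls meeting $\cD$ into ``good'' ones --- those for which the accumulated Jones function $\int_0^{2^{-j}}D^k_\mu(x,s)\,ds/s$ and the rescaled local mass $\mu(B_{2^{-j}}(x))\,2^{jk}$ stay below a threshold --- and ``bad'' ones. The role of \eqref{Reicon} is precisely to force the total $\mu$-mass of maximal bad balls to be at most $C\delta_{\op{R}}$ times a geometric quantity, so that after peeling them off one can induct on scale; the stopping set where the density blows up is treated similarly via a Vitali/covering argument.

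\emph{Step 3: building the surface on the good region.} On a Whitney decomposition of the good region I would glue the planes $L_{x,2^{-j}}$ by a partition of unity into a sequence of $C^{1,\beta}$ graphs $\Sigma_j$ over a fixed reference $k$-plane, following the David--Toro ``squash-down'' scheme. The crucial estimate is a tilt bound: the angle between $L_{x,2^{-j}}$ and $L_{x,2^{-j-1}}$, and between the planes of comparable adjacent balls, is controlled by $\bigl(D^k_\mu(x,2^{-j})\bigr)^{1/2}$, weighted by the local density; summing these increments over all scales against $\ud\mu$ and invoking \eqref{Reicon} shows the total tilt is finite and, by choosing $\delta_{\op{R}}$ small, as small as desired. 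Hence $\Sigma_j\to\Sigma_\infty$, a bi-Lipschitz image of $B_1^k$ with Lipschitz constant close to $1$, and every point of $\cD$ in the good region lies within $C\,2^{-j}\bigl(D^k_\mu(\cdot,2^{-j})\bigr)^{1/2}$ of $\Sigma_j$.

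\emph{Step 4: transferring the mass bound and closing the induction.} The density cap from Step 2 prevents the disjoint balls $\{B_{r_y}(y)\}$ in the good region from overlapping $\Sigma_\infty$ too much, so $\sum_{y\ \mathrm{good}}\w_kr_y^k\le C\,\HH^k(\Sigma_\infty\cap B_2)\le C$. Adding the bad-ball contribution, bounded using the inductive hypothesis and \eqref{Reicon}, yields $\mu(B_1)=\sum_{y\in\cD}\w_kr_y^k\le C_{\op{R}}$.

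The main obstacle is Step 3: converting the summability in \eqref{Reicon} into a genuine bound on the bi-Lipschitz constant of $\Sigma_\infty$ while simultaneously ruling out concentration of $\mu$ away from $\Sigma_\infty$. This needs the precise form of \eqref{Reicon} --- integrating $D^k_\mu$ against $\ud\mu$ rather than against Hausdorff measure --- and careful bookkeeping relating the discrete measure to the continuous surface measure at every scale; the smallness of $\delta_{\op{R}}$ enters exactly here.
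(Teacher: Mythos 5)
The paper does not prove this theorem: it is quoted verbatim from \cite{NV17} (Theorem 3.4) and used as a black box — the only argument the paper supplies in this vicinity is the deduction of Corollary \ref{Rei1cor} from it. So there is no internal proof to compare against, and your proposal has to be measured against the argument in the cited source.

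Your outline does describe the correct strategy of \cite{NV17}: dyadic best planes, a stopping-time decomposition driven by \eqref{Reicon}, a Reifenberg-type gluing of the planes into approximating $k$-manifolds with tilt controlled by $\bigl(D_{\mu}^k\bigr)^{1/2}$, and transfer of the mass bound from $\HH^k$ of the limit surface back to $\mu$ via disjointness of the balls $B_{r_y}(y)$. But as written it is a roadmap, not a proof. Step 3, which you yourself flag as the main obstacle, is exactly where the content of the cited proof lives — the bi-Lipschitz estimates for the maps between successive approximating manifolds, the excess-set bookkeeping, and the verification that $\mu$ does not charge the complement of the limit surface — and none of it is carried out. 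A second gap worth naming concretely: the remark following the theorem stresses that $C_{\op{R}}$ must depend only on $n$, because the theorem is applied inside an induction on scales (Step 4 of the proof of Lemma \ref{cover1MEMS}) where a constant that degraded under iteration would be useless. Your Step 2 imposes a density threshold and your Step 4 invokes ``the inductive hypothesis'' for the bad balls, but you do not explain why the constant produced by this induction stabilizes at a dimensional value rather than deteriorating with the number of scales; in \cite{NV17} this is arranged by fixing the target constant in advance and proving that it propagates from one scale to the next, which is a genuine additional argument and the reason the smallness of $\delta_{\op{R}}$ must be quantified against that fixed constant.
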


\begin{rem}
Here, the essential point of Theorem \ref{Rei1} is that the number $ C_{\op{R}}>0 $ only depends on $ n $. Indeed, further results in \cite{Mis18} imply that if the right-hand side of \eqref{Reicon} is replaced by $ C_0t^k $ for some fixed constant $ C_0>0 $, then in \eqref{contentmuBr}, $ C_{\op{R}} $ depends on $ C_0 $ and $ n $. Such a result is not applicable in the proceeding reasoning since under iterations $ C_{\op{R}} $ may increase. 
\end{rem}

Indeed, for later use, we will apply the following variant of the above theorem.

\begin{cor}\label{Rei1cor}
The result of Theorem \ref{Rei1} is still true if the assumption \eqref{Reicon} is valid for any $ x\in B_r(x_0) $ and $ 0<t<\f{r}{10} $.
\end{cor}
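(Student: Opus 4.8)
The plan is to deduce Corollary \ref{Rei1cor} from Theorem \ref{Rei1} itself, applied not at scale $r$ but at the fixed small scale $\rho:=\f{r}{80}$, via a covering argument. Since the only control lost by restricting \eqref{Reicon} to radii $t<\f{r}{10}$ is that on large balls, and the discrete measure $\mu$ is supported on $\cD\subset B_r(x_0)$, I would first separate the atoms by the size of their radii: put $\cD_{\bad}:=\{y\in\cD:r_y\geq\rho\}$ and $\cD_{\good}:=\cD\setminus\cD_{\bad}$. The balls $\{B_{r_y}(y)\}_{y\in\cD_{\bad}}$ are pairwise disjoint, lie in $B_{2r}(x_0)$, and have radii in $[\rho,2r]$; a volume count then bounds $\#\cD_{\bad}$ by a dimensional constant, so $\sum_{y\in\cD_{\bad}}\w_kr_y^k\leq C(n)r^k$ outright, with no use of any Reifenberg-type result.

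For the remaining atoms I would take a maximal $\f{\rho}{2}$-separated set $\{z_j\}_{j=1}^N\subset\cD_{\good}$, so that $\cD_{\good}\subset\bigcup_jB_\rho(z_j)$ and $N\leq C(n)$ (the balls $B_{\rho/4}(z_j)$ are disjoint and contained in $B_{2r}(x_0)$). For each $j$ set $\cD_j:=\cD_{\good}\cap B_\rho(z_j)$ and $\mu_j:=\sum_{y\in\cD_j}\w_kr_y^k\delta_y$. Every $y\in\cD_j$ has $r_y<\rho$ and $|y-z_j|<\rho$, whence $B_{r_y}(y)\subset B_{2\rho}(z_j)$; thus $(\mu_j,\{B_{r_y}(y)\}_{y\in\cD_j},B_\rho(z_j))$ is admissible for Theorem \ref{Rei1} at scale $\rho$ centered at $z_j$. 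To verify its hypothesis for $\mu_j$, I would take any $B_t(x)\subset B_{2\rho}(z_j)$ with $\mu_j(B_t(x))>0$ and choose $y_0\in\cD_j\cap B_t(x)\subset B_r(x_0)$; then $t\leq 2\rho$, $B_t(x)\subset B_{2t}(y_0)$, and $2t\leq 4\rho<\f{r}{10}$, so the corollary's hypothesis is available at the pair $(y_0,2t)$. Using $\mu_j\leq\mu$, the resulting monotonicity $D_{\mu_j}^k(\cdot,\cdot)\leq D_\mu^k(\cdot,\cdot)$ of the displacement, and nonnegativity of the integrand,
\be
\int_{B_t(x)}\(\int_0^tD_{\mu_j}^k(y,s)\,\f{\ud s}{s}\)\ud\mu_j(y)\leq\int_{B_{2t}(y_0)}\(\int_0^{2t}D_\mu^k(y,s)\,\f{\ud s}{s}\)\ud\mu(y)<\delta_{\op{R}}(2t)^k\leq 2^n\delta_{\op{R}}t^k,
\ee
where $\delta_{\op{R}}$ is the threshold of Theorem \ref{Rei1}; hence it suffices to state \eqref{Reicon} in the corollary with the (still purely dimensional) constant $2^{-n}\delta_{\op{R}}$. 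Theorem \ref{Rei1} then gives $\mu_j(B_\rho(z_j))\leq C_{\op{R}}\rho^k$, and summing over $j$ and adding the $\cD_{\bad}$ bound yields $\mu(B_r(x_0))=\sum_{y\in\cD}\w_kr_y^k\leq C'(n)r^k$, which is \eqref{contentmuBr}.

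The crux — and essentially the only place requiring thought — is this transition to sub-balls. Theorem \ref{Rei1} on $B_\rho(z)$ insists that the whole indexed family of balls be contained in $B_{2\rho}(z)$, which is exactly why the large-radius atoms must be removed beforehand; and the test balls $B_t(x)\subset B_{2\rho}(z_j)$ appearing in its hypothesis are not a priori centered in $B_r(x_0)$, which is why one re-centers at a point $y_0\in\cD$ (legitimate because $B_t(x)$ carries mass) before invoking the assumption — this re-centering is the source of the harmless factor $2^n$. The packing counts, the treatment of $\cD_{\bad}$, and the displacement monotonicity $D^k_{\mu_j}\leq D^k_\mu$ are all routine, so I would keep them brief. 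Finally, since $\delta_{\op{R}}$ and $C_{\op{R}}$ are dimensional, so are $2^{-n}\delta_{\op{R}}$ and $C'(n)$, so the statement of the corollary is preserved.
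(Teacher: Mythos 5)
Your proof is correct and follows essentially the same strategy as the paper's: cover $B_r(x_0)$ by boundedly many balls of radius comparable to $r$, apply Theorem \ref{Rei1} to the restricted measures at that smaller scale, and verify its hypothesis by re-centering each test ball $B_t(x)$ at a point of the support lying in $B_r(x_0)$, which costs only a factor $2^k$ that is absorbed by taking a smaller (still dimensional) threshold in \eqref{Reicon}. The one difference is cosmetic: to meet the containment requirement of Theorem \ref{Rei1} you discard the finitely many atoms with $r_y\geq r/80$ via a packing count, whereas the paper shrinks all radii by a dimensional factor $\beta$ and pays $\beta^{-k}$ at the end; both are valid.
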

\begin{proof}
We show that there exists $ \delta=\delta(n)>0 $ such that if for any $ x\in B_r(x_0) $ and $ 0<t<\f{r}{10} $, there holds
\be
\int_{B_t(x)}\(\int_0^tD_{\mu}^k(y,s)\f{\ud s}{s}\)\ud\mu(y)<\delta t^k,\label{Reicon2}
\ee
then 
\be
\mu(B_r(x_0))\leq C(n)r^k.\label{muBrxCn}
\ee
Choose a covering of $ B_r(x_0) $, given by $ \{B_{\f{r}{100}}(x_i)\}_{i=1}^N $ such that $ \{x_i\}_{i=1}^N\subset B_r(x_0) $ and balls in $ \{B_{\f{r}{200}}(x_i)\}_{i=1}^N $ are disjoint. For any $ i\in\Z\cap[1,N] $, we define 
$$
\cD_i:=\cD\cap B_{\f{r}{100}}(x_i)\subset B_{\f{r}{100}}(x_i) 
$$
and choose $ \beta=\beta(n)\in(0,1) $ such that
$$
\bigcup_{y\in\cD_i}B_{\beta r_y}(y)\subset B_{\f{r}{50}}(x_i).
$$
Let
$$
\mu_i:=\sum_{y\in\cD_i}\w_k(\beta r_y)^k\delta_y.
$$
Fix $ i\in\Z\cap[1,N] $. For any $ B_t(x)\subset B_{\f{r}{50}}(x_i) $, we claim that if $ \delta=\delta(n)>0 $ is chosen sufficiently small, then
\be
\int_{B_t(x)}\(\int_0^tD_{\mu_i}^k(y,s)\f{\ud s}{s}\)\ud\mu_i(y)<\delta_{\op{R}}t^k.\label{claimRei}
\ee
Suppose that we have shown this claim. Theorem \ref{Rei1} implies that
$$
\mu_i(B_{\f{r}{100}}(x_i))=\sum_{y\in\cD_i}\w_k(\beta r_y)^k\leq C_{\op{R}}(n)\(\f{r}{100}\)^k,
$$
and then 
$$
\mu(B_r(x_0))\leq C\(\sum_{i=1}^N\mu_i\(B_{\f{r}{100}}(x_i)\)\)\leq C(n)C_{\op{R}}(n)\(\f{r}{100}\)^k\leq C(n)r^k.
$$
Thus, we have \eqref{muBrxCn}. Let us turn to the proof of \eqref{claimRei}. If $ B_t(x)\cap B_r(x_0)=\emptyset $, by the definition of $ \mu_i $, there is nothing to prove. Assume that there exists $ x'\in B_t(x)\cap B_r(x_0) $. Consequently, $
B_t(x)\subset B_{2t}(x') $ and $ 2t<\f{r}{25} $. Now \eqref{Reicon2} gives 
$$
\int_{B_t(x)}\(\int_0^tD_{\mu_i}^k(y,s)\f{\ud s}{s}\)\ud\mu_i(y)\leq\int_{B_{2t}(x')}\(\int_0^{2t}D_{\mu}^k(y,s)\f{\ud s}{s}\)\ud\mu(y)\leq\delta(2t)^k.
$$
Choosing $ \delta=\delta(n)>0 $ sufficiently small such that $ 2^n\delta<\delta_{\op{R}} $, we obtain \eqref{claimRei}.
\end{proof}

The theorem below characterizes the rectifiability of sets based on the displacements defined in Definition \ref{displacementk}.

\begin{thm}[\cite{AT15}, Corollary 1.3]\label{Rei2}
Let $ S\subset\R^n $ be a $ \HH^k $-measurable set. $ S $ is rectifiable if and only if for $ \HH^k $-a.e. $ x\in S $,
$$
\int_0^1D_{\HH^k\llcorner S}^k(x,s)\f{\ud s}{s}<+\ift.
$$
\end{thm}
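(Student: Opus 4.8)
The statement coincides with the Azzam--Tolsa characterization of $k$-rectifiability via the $L^2$ Jones--type square function, so the plan is to sketch the two implications separately; the displacement $D^k_{\HH^k\llcorner S}(x,r)$ of Definition~\ref{displacementk} is, up to normalization by the density ratio, the square of the usual $\beta_{2}$-number of the measure $\HH^k\llcorner S$.

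\emph{The ``only if'' direction.} Suppose $S$ is rectifiable. Since Lipschitz images of $\R^k$ carry $\sigma$-finite $\HH^k$-measure, after intersecting with a ball and a routine exhaustion one reduces to $\HH^k(S)<\ift$ and, discarding an $\HH^k$-null set, $S\subset\Gamma$ with $\Gamma=\{(z,A(z)):z\in\R^k\}$ a Lipschitz graph. It suffices to bound the square function for $\HH^k$-a.e.\ $x\in S$: the null part of $S$ contributes nothing to $D^k_{\HH^k\llcorner S}(x,r)$, and the remaining mass in any ball is comparable to that of $\HH^k\llcorner\Gamma$, so $D^k_{\HH^k\llcorner S}(x,r)$ is controlled by the corresponding displacement of $\HH^k\llcorner\Gamma$. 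First I would invoke Dorronsoro's theorem (equivalently, the classical $L^2$ bound for the Jones $\beta$-numbers of a Lipschitz graph) to get $x\mapsto\int_0^1\beta_{\HH^k\llcorner\Gamma,2}(x,r)^2\,\f{\ud r}{r}\in L^1_{\loc}(\HH^k\llcorner\Gamma)$, hence finite $\HH^k$-a.e.; the comparison $D^k_{\HH^k\llcorner\Gamma}(x,r)\lesssim\beta_{\HH^k\llcorner\Gamma,2}(x,r)^2$, which uses the upper Ahlfors $k$-regularity of a Lipschitz graph, then finishes this direction.

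\emph{The ``if'' direction.} This is the substantive part. Assume the square function is $\HH^k$-a.e.\ finite on $S$ and, after localization, that $\HH^k(S)<\ift$; write $\mu:=\HH^k\llcorner S$. By the standard density theorems one has $2^{-k}\le\limsup_{r\to0^+}\mu(B_r(x))/(\w_k r^k)\le1$ for $\mu$-a.e.\ $x$, so fixing thresholds one passes to a subset $S'\subset S$ of positive measure on which, for all scales below some $r_0(x)$, $\mu(B_r(x))$ is trapped in a fixed window $\sim r^k$ and the truncated square function is uniformly small in $L^1(\mu)$. On $S'$ one runs a stopping-time (corona) decomposition: starting from a ball where $\mu$ is flat, descend through dyadic subballs, stopping when either the $\beta_2$-number exceeds a small $\va$ or the density ratio leaves its window. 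The square-function hypothesis, converted into a Carleson packing estimate, forces the bad stopping balls to carry only a small fraction $\eta<1$ of the mass; on the complementary good region the best-approximating $k$-planes vary slowly across scales, so a Reifenberg-type construction — either the measure form of Theorem~\ref{Rei1} of \cite{NV17}, or the bi-Lipschitz parametrization of Reifenberg-flat sets — exhibits $\HH^k$-almost all of the good region inside a single Lipschitz graph. Iterating this over the bad balls exhausts $\mu$-almost all of $S'$ by Lipschitz images, and since the threshold subsets cover $\HH^k$-almost all of $S$, the set $S$ is rectifiable; Theorem~\ref{Rei2} (equivalently, Corollary~1.3 of \cite{AT15}) follows.

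The main obstacle lies entirely in the ``if'' direction: $\mu=\HH^k\llcorner S$ is not assumed Ahlfors regular, and both the Carleson packing estimate and the Reifenberg parametrization require two-sided control of $\mu(B_r(x))/r^k$ along the working scales. Establishing this non-degeneracy — propagating the flatness information into a genuine lower mass bound so that the stopping-time tree does not terminate prematurely and the parametrization closes up — is precisely where the argument of \cite{AT15} does its real work. In the present paper we do not reprove this; we record Theorem~\ref{Rei2} only in order to apply it, together with Corollary~\ref{Rei1cor} and the $L^2$-best-approximation estimates of \S\ref{L2best}, to the rupture set $\{u=0\}$.
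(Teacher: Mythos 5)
The paper offers no proof of this statement: Theorem \ref{Rei2} is imported verbatim as Corollary 1.3 of \cite{AT15}, with only a remark that Theorem 3.3 of \cite{NV17} refines it. So there is nothing internal to compare your argument against; the only question is whether your sketch would stand on its own, and as written it would not. In the ``only if'' direction, the step asserting that the mass of $\HH^k\llcorner S$ in a ball around $x\in S\cap\Gamma$ is comparable to that of $\HH^k\llcorner\Gamma$, so that $D^k_{\HH^k\llcorner S}(x,r)$ is controlled by the displacement of the single graph $\Gamma$, is false in general: $S$ decomposes into countably many pieces lying on different Lipschitz graphs, and the portion of $S$ off $\Gamma$ inside $B_r(x)$ contributes to $D^k_{\HH^k\llcorner S}(x,r)$ in a way that Dorronsoro's theorem for $\Gamma$ alone does not see. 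Lebesgue density only gives that this extraneous mass ratio tends to $0$, which is far from summability of its contribution to the square function along dyadic scales; handling this is precisely why the necessity direction (Tolsa's Part I) is itself a substantial theorem rather than a corollary of Dorronsoro. In the ``if'' direction your outline names the right ingredients (density bounds, stopping-time/corona decomposition, Carleson packing, Reifenberg parametrization) but executes none of them, and you yourself identify the absence of Ahlfors regularity as the crux without resolving it.

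Your closing paragraph reaches the correct conclusion: this result should be cited, not reproved, and that is exactly what the paper does. If you want the statement in your write-up, replace the two sketched implications with the citation and, if you wish, a one-line indication of why the hypotheses of \cite{AT15} (finiteness of $\HH^k(S)$ after localization, and the a.e.\ upper density bound for $\HH^k\llcorner S$) are available in the application to $S_{\va}^k(u)$, where the upper Ahlfors regularity \eqref{quanti2} supplies them.
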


\begin{rem}
Theorem 3.3 of \cite{NV17} gives a more subtle improvement of this theorem. However, Theorem \ref{Rei2} is already enough in our later proofs.
\end{rem}

\section{\texorpdfstring{$ L^2 $}{}-best approximation estimates}\label{L2best}

\subsection{Introduction and results}

To apply theorems in \S\ref{Reifenbergtype}, we need to establish the connections between the density \eqref{thetafuxr} and the displacements given in Definition \ref{displacementk}.

\begin{thm}\label{beta2MEMS}
Let $ \ga>0 $, $ k\in\Z\cap[0,n-2] $, $ 0<s\leq 1 $, and $ x\in B_{R_0} $. Assume that $ u\in(C_{\loc}^{0,\al}\cap H_{\loc}^1\cap L_{\loc}^{-p})(B_{4R_0}) $ is a stationary solution of \eqref{MEMSeq} with respect to $ f\in M_{\loc}^{2\al+n-4+\ga,2}(B_{4R_0}) $. There exist $ C>0 $, depending only on $ n $, and $ p $ such that if there is $ \tau>0 $, satisfying
\be
\inf_{V\in\bG(n,k+1)}\(s^{2-2\al-n}\int_{B_{5s}(x)}|V\cdot\na u|^2\)>\tau,\label{lowerV}
\ee
then for any $ \mu\in\M(B_s(x)) $ with $ \mu(B_s(x))<+\ift $, there holds
$$
D_{\mu}^k(x,s)\leq C\tau^{-1}s^{-k}\int_{B_s(x)}W_f(u;y,s)\ud\mu(y),
$$
where $ D_{\mu}^k(\cdot,\cdot) $ is the $ k $-dimensional displacement given in Definition \ref{displacementk} and for any $ y\in B_{2s}(x) $,
$$
W_f(u;y,s):=\vt_f(u;y,2s)-\vt_f(u;y,s).
$$
\end{thm}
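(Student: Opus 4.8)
\textbf{Proof strategy for Theorem \ref{beta2MEMS}.} The plan is to follow the standard $L^2$-best approximation scheme from \cite{NV17,NV18}, adapted to the mollified density $\vt_f$ of our model. The key heuristic is that the ``error from $\al$-homogeneity'' measured by $W_f(u;y,s) = \vt_f(u;y,2s)-\vt_f(u;y,s)$ controls, via the monotonicity formula (Proposition \ref{MonFor} and Corollary \ref{coruse}), the quantity $s^{-2\al-n}\int_{B_{\sim s}(y)}|(z-y)\cdot\na u - \al u|^2\,\ud z$. The point is that this expression, integrated against $\ud\mu(y)$, produces a bilinear form in $\na u$ whose ``best linear fit'' is exactly a $k$-dimensional affine subspace, and the lower bound \eqref{lowerV} on the $(k+1)$-st eigenvalue of the associated symmetric matrix forces the $L^2$-distance of $\mu$ to the optimal $L$ to be small.

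\textbf{Key steps.} First I would normalize, taking $x=0$ and $s=1$ up to the scaling in Proposition \ref{ScalingProp}. Second, using Corollary \ref{coruse} (or directly the differential inequality \eqref{udthetaudr}), I would show that for each $y\in B_2$,
$$
\int_{B_4(y)}|(z-y)\cdot\na u(z) - \al u(z)|^2\,\ud z \leq C(n,p)\,W_f(u;y,1).
$$
Third — the heart of the argument — I would introduce the center of mass $\bar{x}_\mu := \dashint \,\cdot\, $ and the symmetric bilinear form
$$
b(v,w) := \int_{B_1}\Big(\int_{B_1}(v\cdot\na u(z))(w\cdot\na u(z))\,\ud z\Big)\,\cdots
$$
wait — more precisely, one builds the matrix $B_{ij} = \int_{B_1(0)}\pa_i u\,\pa_j u$ weighted by $\mu$ appropriately, and shows that the minimal-energy affine $k$-plane $L$ in Definition \ref{displacementk} is spanned by the top $k$ eigenvectors of a matrix of this type. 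The algebraic identity I need is that $\dist^2(y,L)$, summed against $\mu$, is comparable to $\sum_{i>k}\lambda_i$ times (a geometric factor), where $\lambda_i$ are the ordered eigenvalues; meanwhile \eqref{lowerV} says $\lambda_{k+1}\geq \tau$. Fourth, I would expand $(z-y)\cdot\na u - \al u$ by writing $z - y = (z-\bar{x}) - (y - \bar{x})$ so that the ``$(y-\bar x)\cdot\na u$'' piece carries the displacement information, while the ``$(z - \bar x)\cdot\na u - \al u$'' piece is an average over $y$ of the already-controlled homogeneity error. Integrating the pointwise bound over $y$ against $\ud\mu(y)$ and using the triangle/Cauchy inequality in $L^2$ then yields
$$
\tau \cdot D_\mu^k(0,1) \leq C\int_{B_1}\Big(\int_{B_4(y)}|(z-y)\cdot\na u - \al u|^2\ud z\Big)\ud\mu(y) \leq C\int_{B_1}W_f(u;y,1)\,\ud\mu(y),
$$
which after unscaling is the claim. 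Throughout I would use Lemma \ref{InESLem1} and Lemma \ref{InESLem2} to keep the $L^2$ norms of $u$ and $\na u$ under control so that all the integrals converge and the ``cross terms'' are bounded.

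\textbf{Main obstacle.} The delicate step is the linear-algebra core: proving that the best-approximating $k$-plane for the measure $\mu$ is governed precisely by the spectral gap of the gradient Gram matrix, and that the lower bound \eqref{lowerV} — which is an infimum over $(k+1)$-planes of a gradient energy — translates into $\lambda_{k+1}\geq\tau$ for the right matrix. One has to be careful that the matrix entering \eqref{lowerV} (built from $\int_{B_{5s}}|V\cdot\na u|^2$) and the matrix controlling $D_\mu^k$ (built from $\mu$-weighted gradient integrals together with the homogeneity defect) are compatible; this is where the choice of radii ($B_{5s}$ in the hypothesis versus $B_s$ for $\mu$ and $B_4(y)$ for the monotonicity estimate) matters, and one needs the comparability of gradient $L^2$-norms on nested balls coming from Lemma \ref{InESLem2}. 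A secondary technical point is handling the $f$-dependent correction terms in $\vt_f$: the estimate \eqref{thetaftheta} shows $\vt_f - \vt$ is a lower-order Morrey-type quantity, so $W_f$ differs from the ``pure'' defect $\int|(z-y)\cdot\na u-\al u|^2$ only by controllable terms, but this needs to be tracked so that the final constant depends only on $n$ and $p$ as stated.
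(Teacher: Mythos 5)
Your proposal follows essentially the same route as the paper's proof: normalize to $s=1$, control the homogeneity defect $\int|(z-y)\cdot\na u-\al u|^2$ by $W_f(u;y,1)$ via Corollary \ref{coruse}, decompose $\mu$ through its center of mass and second-moment eigenpairs $(\lda_i,v_i)$ (the paper's Lemmas \ref{NVlem7.4}--\ref{NVlem7.5}), and use \eqref{lowerV} to convert the resulting bound into one on $\lda_{k+1}$ and hence on $D_\mu^k$. One small correction of phrasing: \eqref{lowerV} does not say $\lda_{k+1}\geq\tau$; rather, the paper's Lemma \ref{L2lem1} gives $\lda_i\int_{B_5}|v_i\cdot\na u|^2\leq C\int W_f\,\ud\mu$ for each $i$, and summing over $i\le k+1$ and applying \eqref{lowerV} to the specific $(k+1)$-plane $\op{span}\{v_1,\dots,v_{k+1}\}$ yields $\tau\lda_{k+1}\leq C\int W_f\,\ud\mu$ — exactly the compatibility issue you correctly flagged as the delicate step.
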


We first give the following direct consequence of this theorem.

\begin{cor}\label{beta22MEMS}
Let $ \ga>0 $, $ k\in\Z\cap[0,n-2] $, $ 0<s\leq 1 $, and $ x\in B_{R_0} $. Assume that $ u\in(C_{\loc}^{0,\al}\cap H_{\loc}^1\cap L_{\loc}^{-p})(B_{4R_0}) $ is a stationary solution of \eqref{MEMSeq} with respect to $ f\in M_{\loc}^{2\al+n-4+\ga,2}(B_{4R_0}) $, satisfying
$$
[u]_{C^{0,\al}(\ol{B}_{2R_0})}+[f]_{M^{2\al+n-4+\ga,2}(B_{2R_0})}\leq\Lda.
$$
For any $ \va>0 $, there exist $ \delta,C>0 $, depending only on $ \va,\ga,\Lda,n $, and $ p $ such that if $ u $ is $ (0,\delta) $-symmetric but not $ (k+1,\va) $-symmetric in $ B_{5s}(x) $, then for any $ \mu\in\M(B_s(x)) $ with $ \mu(B_s(x))<+\ift $,
$$
D_{\mu}^k(x,s)\leq Cs^{-k}\int_{B_s(x)}W_f(u;y,s)\ud\mu(y).
$$
\end{cor}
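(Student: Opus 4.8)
\emph{Strategy.} The plan is to reduce Corollary \ref{beta22MEMS} to Theorem \ref{beta2MEMS}. It suffices to produce $\tau=\tau(\va,\ga,\Lda,n,p)>0$ and (as in the statement) a $\delta=\delta(\va,\ga,\Lda,n,p)>0$ such that whenever $u$ is $(0,\delta)$-symmetric but not $(k+1,\va)$-symmetric in $B_{5s}(x)$, one has
$$
\inf_{V\in\bG(n,k+1)}\(s^{2-2\al-n}\int_{B_{5s}(x)}|V\cdot\na u|^2\)>\tau .
$$
Granting this, Theorem \ref{beta2MEMS} gives $D_\mu^k(x,s)\le C\tau^{-1}s^{-k}\int_{B_s(x)}W_f(u;y,s)\,\ud\mu(y)$ for every finite $\mu\in\M(B_s(x))$ (the discrepancy between the normalizations $s^{2-2\al-n}$ and $(5s)^{2-2\al-n}$ is a harmless fixed factor), which is the asserted estimate with $C=C(\va,\ga,\Lda,n,p)$. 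Thus everything reduces to the displayed lower bound on the $(k+1)$-pinching.

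\emph{Compactness setup.} I would prove the bound by contradiction, in the spirit of Lemma \ref{SmaHomMEMS} and Lemma \ref{kplus1MEMStwoone}. Using Proposition \ref{ScalingProp} and Remark \ref{scalerem} one may assume $s=1$, $x=0$; since $R_0>100$, the rescaled solution remains stationary with uniform $C^{0,\al}$ and Morrey bounds on a fixed ball large enough for the argument. If the bound failed, there would exist stationary solutions $u_i$ of \eqref{MEMSeq} with these uniform bounds that are $(0,i^{-1})$-symmetric in $B_1$ — say $\|u_i-u_i(0)-h_i\|_{L^\ift(B_1)}<i^{-1}$ with $h_i$ $0$-symmetric in $B_1$ — but not $(k+1,\va_0)$-symmetric in $B_1$ for some fixed $\va_0>0$, together with near-minimizing planes $V_i\in\bG(n,k+1)$ satisfying $\int_{B_1}|V_i\cdot\na u_i|^2\to 0$. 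Since $h_i$ is $0$-symmetric, $\|h_i\|_{C^{0,\al}(\ol B_1)}$ is controlled by $\sup_{\pa B_1}|h_i|\le [u_i]_{C^{0,\al}}+i^{-1}$, so up to a subsequence $h_i\to h_\ift$ in $L^\ift(\ol B_1)$ with $h_\ift$ $0$-symmetric (Lemma \ref{ConSym}), and $V_i\to V_\ift\in\bG(n,k+1)$. For the solutions, either $\sup_i\|u_i\|_{L^\ift(B_1)}<+\ift$ and Proposition \ref{propConv} gives $u_i\to u_\ift$ strongly in $H^1_{\loc}\cap L^\ift$ with $u_\ift$ a stationary solution, or $\inf_{B_1}u_i\to+\ift$ and Proposition \ref{propConv2} gives $u_i-u_i(0)\to v_\ift$ strongly with $v_\ift$ harmonic. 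In either case the limit $w_\ift$ of $u_i-u_i(0)$ equals $h_\ift$ on $B_1$, hence is $\al$-homogeneous at its base point; and since $V_i\cdot\na u_i\to V_\ift\cdot\na w_\ift$ in $L^2_{\loc}$, the limit is invariant with respect to $V_\ift$ in $B_1$.

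\emph{Reaching a contradiction.} I would then distinguish two cases. If $\sup_i\|u_i\|_{L^\ift(B_1)}<+\ift$ and $u_\ift(0)=0$, then $w_\ift=u_\ift$ is $0$-symmetric and invariant with respect to $V_\ift\in\bG(n,k+1)$, so by Lemma \ref{extend} and Definition \ref{ksymmetryf} it extends to a genuinely $(k+1)$-symmetric function; since $u_i(0)\to 0$ and $u_i\to u_\ift$ in $L^\ift(B_1)$, the functions $u_i$ are $(k+1,\va_0)$-symmetric in $B_1$ for large $i$, a contradiction. In the remaining cases — $u_\ift(0)>0$, or $w_\ift=v_\ift$ harmonic — I claim $h_\ift\equiv 0$. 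For harmonic $v_\ift$: it is smooth, $v_\ift(0)=0$, $\al$-homogeneous at $0$ with $0<\al<1$; differentiating the homogeneity relation and letting the scale tend to $0$ forces $v_\ift\equiv 0$ near $0$, hence everywhere by analyticity, so $h_\ift\equiv 0$. If instead $u_\ift(0)=c>0$: then $u_\ift-c$ is $\al$-homogeneous at $0$, so $T_{0,\rho}(u_\ift-u_\ift(0))\equiv u_\ift-c$ for all small $\rho$; but $u_\ift$ is a stationary solution with $u_\ift(0)>0$, so by Proposition \ref{Blowprop} every tangent function of $u_\ift$ at $0$ vanishes, forcing $u_\ift-c\equiv 0$ near $0$, hence $h_\ift\equiv 0$. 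In both subcases $u_i-u_i(0)\to 0$ in $L^\ift(B_1)$, so, approximating by the $(k+1)$-symmetric function $0$, the $u_i$ are $(k+1,\va_0)$-symmetric in $B_1$ for large $i$, again a contradiction. This establishes the pinching lower bound and hence the corollary.

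\emph{Main obstacle.} The delicate point, and the essential departure from the harmonic-map case (cf. Lemma 32 of \cite{NV18}), is that $\vt_f(u;x,\cdot)\to-\ift$ at points with $u(x)>0$, so one cannot feed a small density drop into Lemma \ref{SmaHomMEMS} directly; one must instead split the blow-up analysis according to whether the limit of $u-u(x)$ vanishes at the origin, and handle the non-vanishing case via the scale-invariance of $\al$-homogeneous functions together with the triviality of tangent functions at positive points (Proposition \ref{Blowprop} and Proposition \ref{Hausdorffdim}). A secondary technical issue is ensuring that the near-minimizing planes $V_i$ converge in the Grassmannian to a $(k+1)$-plane of invariance of the limit, which is precisely where the uniform $C^{0,\al}$ bound and the strong $H^1$ convergence enter.
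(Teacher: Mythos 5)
Your proof is correct and follows essentially the same route as the paper: the paper isolates your pinching lower bound as Lemma \ref{geqlemMEMS} and proves it by the same compactness/contradiction argument, then feeds it into Theorem \ref{beta2MEMS}. The only real difference is that your terminal case analysis (bounded vs.\ unbounded $u_i$, $u_{\ift}(0)=0$ vs.\ $u_{\ift}(0)>0$, harmonic limit) is unnecessary: in every case the limit of $u_i-u_i(0)$ coincides with the limit of the $0$-symmetric approximants $h_i$, hence is $0$-symmetric by Lemma \ref{ConSym}, and together with the $V_{\ift}$-invariance inherited from the strong $H^1_{\loc}$ convergence it is $(k+1)$-symmetric, which already yields the contradiction.
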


Indeed, the corollary above is a direct consequence of Theorem \ref{beta2MEMS} and the lemma below.

\begin{lem}\label{geqlemMEMS}
Let $ \ga>0 $, $ k\in\Z\cap[0,n-2] $, $ 0<s\leq 1 $, and $ x\in\R^n $. Assume that $ u\in (C_{\loc}^{0,\al}\cap H_{\loc}^1\cap L_{\loc}^{-p})(B_{15s}(x)) $ is a stationary solution of \eqref{MEMSeq} with respect to $ f\in M_{\loc}^{2\al+n-4+\ga,2}(B_{15s}(x)) $, satisfying $ u(x)=0 $ and
$$
[u]_{C^{0,\al}(\ol{B}_{10s}(x))}+[f]_{M^{2\al+n-4+\ga,2}(B_{10s}(x))}\leq\Lda. 
$$
For any $ \va>0 $, there exists $ \delta>0 $, depending only on $ \va,\ga,\Lda,n $, and $ p $ such that if $ u $ is $ (0,\delta) $-symmetric in $ B_{5s}(x) $ but not $ (k+1,\va) $-symmetric, then
$$
\inf_{V\in\bG(n,k+1)}\(s^{2-2\al-n}\int_{B_{5s}(x)}|V\cdot\na u|^2\)>\delta.
$$
\end{lem}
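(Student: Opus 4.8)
The plan is to argue by contradiction via a compactness scheme, exactly parallel to the proofs of Lemma \ref{SmaHomMEMS} and Lemma \ref{QuaConSplMEMS}. By Proposition \ref{ScalingProp} and Remark \ref{scalerem} we may normalize $s=1$ and $x=0$. If the statement were false, there would be $\va_0>0$ and, for each $i\in\Z_+$, a stationary solution $u_i\in(C_{\loc}^{0,\al}\cap H_{\loc}^1\cap L_{\loc}^{-p})(B_{15})$ of \eqref{MEMSeq} with respect to $f_i\in M_{\loc}^{2\al+n-4+\ga,2}(B_{15})$ with $u_i(0)=0$ and $[u_i]_{C^{0,\al}(\ol B_{10})}+[f_i]_{M^{2\al+n-4+\ga,2}(B_{10})}\le\Lda$, such that $u_i$ is $(0,i^{-1})$-symmetric in $B_5$ but \emph{not} $(k+1,\va_0)$-symmetric in $B_5$, and yet $\inf_{V\in\bG(n,k+1)}\int_{B_5}|V\cdot\na u_i|^2\le i^{-1}$. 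In particular (using $u_i(0)=0$) there exist $0$-symmetric functions $h_i\in C_{\loc}^{0,\al}(\R^n)$ with $\|T_{0,5}u_i-h_i\|_{L^\infty(B_1)}<i^{-1}$, and, since $\bG(n,k+1)$ is compact, along a subsequence there are $V_i\in\bG(n,k+1)$ with $\int_{B_5}|V_i\cdot\na u_i|^2\le i^{-1}$ and $V_i\to V_\infty\in\bG(n,k+1)$.

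Next I would extract a limit. Because $u_i(0)=0$ and $[u_i]_{C^{0,\al}(\ol B_{10})}\le\Lda$, the sequence $\{u_i\}$ is uniformly bounded in $L^\infty(B_{10})$, hence in $L^2(B_{10})$, so Proposition \ref{propConv} (or Remark \ref{uiremiftcon}) yields $u_\infty\in C^{0,\al}(\ol B_{10})\cap(H_{\loc}^1\cap L_{\loc}^{-p})(B_{10})$ with, along a further subsequence, $u_i\to u_\infty$ strongly in $(H_{\loc}^1\cap L^\infty)(B_{10})$. Then $T_{0,5}u_i\to T_{0,5}u_\infty$ strongly in $L^\infty(B_1)$, hence also $h_i\to T_{0,5}u_\infty$ strongly in $L^\infty(B_1)$; since each $h_i$ is $\al$-homogeneous at $0$, passing to pointwise limits on $B_1$ gives that $T_{0,5}u_\infty$ is $\al$-homogeneous at $0$ in $B_1$, i.e.\ $u_\infty$ is $0$-symmetric in $B_5$. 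Moreover $\na u_i\to\na u_\infty$ strongly in $L^2(B_5)$ and $V_i\to V_\infty$, so $\int_{B_5}|V_\infty\cdot\na u_\infty|^2=\lim_i\int_{B_5}|V_i\cdot\na u_i|^2=0$, hence $u_\infty$ is invariant with respect to $V_\infty$ in $B_5$. Thus $T_{0,5}u_\infty$ is $(k+1)$-symmetric in $B_1$, and by Lemma \ref{extend} it extends to a $(k+1)$-symmetric function $h\in C_{\loc}^{0,\al}(\R^n)$ agreeing with $T_{0,5}u_\infty$ on $\ol B_1$.

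Finally, $\|T_{0,5}(u_i-u_i(0))-h\|_{L^\infty(B_1)}=\|T_{0,5}u_i-T_{0,5}u_\infty\|_{L^\infty(B_1)}\to 0$, so for all large $i$ this is $<\va_0$, i.e.\ $u_i$ is $(k+1,\va_0)$-symmetric in $B_5$, contradicting the choice of $u_i$; the resulting $\delta$ depends only on $\va,\ga,\Lda,n,p$, and by the initial scaling the same $\delta$ serves all $0<s\le1$. The scheme is routine; the only points needing care are (i) that Proposition \ref{propConv} is available because $u_i(0)=0$ supplies the uniform $L^\infty$, hence $L^2$, bound, and (ii) that the \emph{genuine} $\al$-homogeneity of $u_\infty$ must be recovered from the pointwise limits of the approximating $0$-symmetric functions $h_i$ (not from any Pohozaev-type defect estimate), which is precisely why $(0,\delta)$-symmetry, rather than mere smallness of $\vt_f(u;x,s)-\vt_f(u;x,s/2)$, is part of the hypotheses.
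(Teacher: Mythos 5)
Your proposal is correct and follows essentially the same route as the paper's proof: the same contradiction/compactness scheme, using Proposition \ref{propConv} (available since $u_i(0)=0$ gives the uniform $L^2$ bound), Lemma \ref{ConSym} to pass the $0$-symmetry of the approximants $h_i$ to the limit, strong $H^1$ convergence to get invariance with respect to $V_\infty$, and Lemma \ref{extend} to produce the $(k+1)$-symmetric competitor. No substantive differences.
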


\begin{proof}
Using Proposition \ref{ScalingProp} and Remark \ref{scalerem}, we assume that $ s=1 $ and $ x=0 $. If the result is not true, then there exist $ \va_0>0 $, a sequence of stationary solutions of \eqref{MEMSeq}, denoted by $ \{u_i\}\subset (C_{\loc}^{0,\al}\cap H_{\loc}^1\cap L_{\loc}^{-p})(B_{15}) $ with respect to $ \{f_i\}\subset M_{\loc}^{2\al+n-4+\ga,2}(B_{15}) $, and $ \{V_i\}\subset\bG(n,k+1) $ such that for any $ i\in\Z_+ $, the following properties hold.
\begin{itemize}
\item $ u_i $ and $ f_i $ satisfy
\be
[u_i]_{C^{0,\al}(\ol{B}_{10})}+[f_i]_{M^{2\al+n-4+\ga,2}(B_{10})}\leq\Lda.\label{uifiass4}
\ee
\item $ u_i $ is $ (0,i^{-1}) $-symmetric but not $ (k+1,\va_0) $-symmetric in $ B_5 $. In particular, there exists a $ 0 $-symmetric function $ h_i\in C_{\loc}^{0,\al}(\R^n) $ such that 
\be
\|(u_i-u_i(0))-h_i\|_{L^{\ift}(B_5)}<5^{\al}i^{-1}.\label{uimiimiuspMEMS}
\ee
\item $ u_i $ satisfies the inequality
\be
\int_{B_5}|V_i\cdot\na 
u_i|^2<i^{-1}.\label{B4B3deltaMEMS}
\ee
\end{itemize}
By \eqref{uifiass4} and \eqref{uimiimiuspMEMS}, it follows from Proposition \ref{propConv} and \ref{propConv2} that there exist $ V_{\ift}\in\bG(n,k+1) $ and $ u_{\ift}\in C^{0,\al}(\ol{B}_{10})\cap H_{\loc}^1(B_{10}) $ such that up to a subsequence,
\begin{gather*}
u_i-u_i(0)\to u_{\ift}\text{ strongly in }(H_{\loc}^1\cap L^{\ift})(B_{10}),\\
h_i\to u_{\ift}\text{ strongly in }L^{\ift}(B_5).
\end{gather*}
Using Lemma \ref{ConSym}, we see that $ u_{\ift} $ is $ 0 $-symmetric. The inequality \eqref{B4B3deltaMEMS} yields that $ u_{\ift} $ is invariant with respect to $ V_{\ift} $ in $ B_5 $. Thus, $ u_{\ift} $ is $ (k+1) $-symmetric with respect to $ V_{\ift} $. For sufficiently large $ i\in\Z_+ $, it follows from \eqref{uimiimiuspMEMS} that
$$
\|T_{0,5}(u_i-u_i(0))-u_{\ift}\|_{L^{\ift}(B_1)}<\va_0.
$$
It is a contradiction to the assumption that $ u_i $ is not $ (k+1,\va_0) $-symmetric in $ B_5 $.
\end{proof}

\begin{proof}[Proof of Corollary \ref{beta22MEMS}]
Applying Lemma \ref{geqlemMEMS}, we choose sufficiently small $ \delta=\delta(\va,\ga,\Lda,n,p)>0 $ such that the condition \eqref{lowerV} is satisfied. Theorem \ref{beta2MEMS} directly implies the result.
\end{proof}

\subsection{Proof of Theorem \ref{beta2MEMS}} We first need some basic ingredients, which lead to an explicit representation of the $ k $-dimensional displacements. For a probability Radon measure $ \mu $ on $ B_1 $ ($ \mu(B_1)=1 $), define
\be
x_{\op{cm}}:=x_{\op{cm}}(\mu)=\int_{B_1}y\ud\mu(y).\label{centermass}
\ee
We call $ x_{\op{cm}} $ the center of mass for $ \mu $. 

\begin{defn}\label{defvj}
Inductively, we define $ \{(\lda_i,v_i)\}_{i=1}^n\subset\R_{\geq 0}\times\R^n $ as follows. Let 
$$
\lda_1:=\lda_1(\mu):=\max_{|v|^2=1}\int_{B_1}|(y-x_{\op{cm}})\cdot v|^2\ud\mu(y).
$$
Define $ v_1:=v_1(\mu) $ with $ |v_1|=1 $ such that
$$
\lda_1=\int_{B_1}|(y-x_{\op{cm}})\cdot v_1|^2\ud\mu(y).
$$
Given $ \{(\lda_j,v_j)\}_{j=1}^i $, we define $ \lda_{i+1} $ by
$$
\lda_{i+1}:=\lda_{i+1}(\mu):=\max_{\substack{|v|^2=1,\,\,v\cdot v_j=0,\\j\in\Z\cap[1,i]}}\int_{B_1}|(y-x_{\op{cm}})\cdot v|^2\ud\mu(y),
$$
and $ v_{i+1}:=v_{i+1}(\mu) $ is a unit vector with
$$
\lda_{i+1}:=\int_{B_1}|(y-x_{\op{cm}})\cdot v_{i+1}|^2\ud\mu(y).
$$
\end{defn}

By standard results of linear algebra, $ \{v_i\}_{i=1}^n $ is an orthonormal basis of $ \R^n $, and 
\be
\lda_1\geq\lda_2\geq...\geq\lda_n\geq 0.\label{ldaorder}
\ee
For $ j\in\Z\cap[1,n] $, let
\be
L_j:=L_j(\mu):=x_{\op{cm}}+\op{span}\{v_i\}_{i=1}^j\in\bA(n,j).\label{Vkmu}
\ee
Through the definitions of $ \{(\lda_i,v_i)\}_{i=1}^n $ as above, we can represent $ D_{\mu}^k(0,1) $ by the following lemma.

\begin{lem}[\cite{NV17}, Lemma 7.4]\label{NVlem7.4} 
Let $ \{(\lda_i,v_i)\}_{i=1}^n $ be given in Definition \ref{defvj}. If $ \mu $ is a probability Radon measure on $ B_1 $, then for any $ k\in\Z\cap[1,n] $, the minimum
$$
m_{n,k}(\mu):=\min_{L\in\bA(n,k)}\int_{B_1}\dist^2(y,L)\ud\mu(y)
$$
attains at $ L_k $, defined by \eqref{Vkmu}. Precisely, there holds
\be
m_{n,k}(\mu)=\int_{B_1}\dist^2(x,L_k)\ud\mu(y)=\left\{\begin{aligned}
&\sum_{i=k+1}^n\lda_i&\text{ if }&k\in\Z\cap[0,n-1],\\ 
&\,\,\,\,\,\,\,0&\text{ if }&k=n.
\end{aligned}\right.\label{mnkrepre}
\ee
\end{lem}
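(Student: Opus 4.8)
The plan is to reduce the statement to a classical spectral (principal component analysis) computation, so that it follows verbatim from the Courant--Fischer and Ky Fan variational principles; this is precisely [NV17, Lemma 7.4], but it is worth recording the structure. \emph{First} I would show that in the minimization defining $m_{n,k}(\mu)$ one may restrict to affine $k$-planes through the center of mass $x_{\op{cm}}$. Writing an arbitrary $L\in\bA(n,k)$ as $L=z+W$ with $W\in\bG(n,k)$ and letting $P_{W^\perp}$ be orthogonal projection onto $W^\perp$, one has $\dist^2(y,L)=|P_{W^\perp}(y-z)|^2$; expanding about $x_{\op{cm}}$ and using $\int_{B_1}P_{W^\perp}(y-x_{\op{cm}})\,d\mu(y)=0$ gives
\[
\int_{B_1}\dist^2(y,L)\,d\mu(y)=\int_{B_1}|P_{W^\perp}(y-x_{\op{cm}})|^2\,d\mu(y)+|P_{W^\perp}(z-x_{\op{cm}})|^2 .
\]
Hence for fixed $W$ the minimum over $z$ is attained exactly when $z-x_{\op{cm}}\in W$, in particular at $z=x_{\op{cm}}$, so $m_{n,k}(\mu)=\min_{W\in\bG(n,k)}\int_{B_1}|P_{W^\perp}(y-x_{\op{cm}})|^2\,d\mu(y)$ and any minimizing $L$ may be taken to pass through $x_{\op{cm}}$.

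\emph{Next}, after translating so that $x_{\op{cm}}=0$ (which does not affect the quantities $\lambda_i,v_i$, defined relative to $x_{\op{cm}}$), I would introduce the symmetric positive semidefinite second-moment matrix $M:=\int_{B_1}y\otimes y\,d\mu(y)$. For $W\in\bG(n,k)$ with orthonormal basis $\{w_i\}_{i=1}^k$ one computes $\int_{B_1}|P_Wy|^2\,d\mu(y)=\sum_{i=1}^k\langle Mw_i,w_i\rangle=\tr(MP_W)$, and since $|P_{W^\perp}y|^2=|y|^2-|P_Wy|^2$, minimizing $\int|P_{W^\perp}y|^2\,d\mu$ over $W\in\bG(n,k)$ is equivalent to maximizing $\tr(MP_W)$ over rank-$k$ orthogonal projections.

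\emph{Then} I would identify the data of Definition \ref{defvj} with the spectral data of $M$: with $x_{\op{cm}}=0$ one has $\lambda_1=\max_{|v|=1}\langle Mv,v\rangle$, so $\lambda_1$ is the largest eigenvalue of $M$ and $v_1$ a corresponding unit eigenvector; since $v_1^\perp$ is $M$-invariant, induction shows that $\lambda_{i+1}=\max\{\langle Mv,v\rangle:|v|=1,\ v\perp v_j,\ 1\le j\le i\}$ is the $(i{+}1)$-st largest eigenvalue with $v_{i+1}$ a corresponding eigenvector, so $\{v_i\}_{i=1}^n$ is an orthonormal eigenbasis and $\lambda_1\ge\cdots\ge\lambda_n\ge0$. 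Consequently $\tr M=\sum_{i=1}^n\lambda_i$, and by the Ky Fan maximum principle $\max_{W\in\bG(n,k)}\tr(MP_W)=\sum_{i=1}^k\lambda_i$, attained precisely at $W=\op{span}\{v_i\}_{i=1}^k$. Combining the steps, for $k\in\Z\cap[1,n-1]$,
\[
m_{n,k}(\mu)=\sum_{i=1}^n\lambda_i-\sum_{i=1}^k\lambda_i=\sum_{i=k+1}^n\lambda_i ,
\]
attained at $L_k=x_{\op{cm}}+\op{span}\{v_i\}_{i=1}^k$; for $k=n$ the plane $L_n$ equals $\R^n$ so the integrand vanishes and $m_{n,n}(\mu)=0$, while the case $k=0$ in \eqref{mnkrepre} is just the defining property of $x_{\op{cm}}$ as minimizer of $z\mapsto\int|y-z|^2\,d\mu$.

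Every step here is standard linear algebra, so there is no genuine obstacle; the only points needing care are the reduction to planes through $x_{\op{cm}}$ and checking that the greedy construction in Definition \ref{defvj} reproduces the ordered spectrum of $M$, so that the Courant--Fischer/Ky Fan machinery applies directly. One may alternatively simply cite \cite{NV17}.
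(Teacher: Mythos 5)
Your proof is correct. Note that the paper itself offers no argument for this lemma: it is quoted verbatim from \cite{NV17} (Lemma 7.4), so there is no in-paper proof to compare against. Your self-contained derivation is the standard one — reduce to planes through $x_{\op{cm}}$ by the vanishing of the first centered moment, identify the greedy pairs $(\lda_i,v_i)$ of Definition \ref{defvj} with the ordered spectral data of the second-moment matrix $M$ via invariance of $v_1^{\perp},\dots$ under the symmetric operator $M$, and conclude with Ky Fan's trace-maximization principle — and all steps check out. The only phrase worth softening is "attained precisely at $W=\op{span}\{v_i\}_{i=1}^k$": when $\lda_k=\lda_{k+1}$ the maximizer is not unique, but the lemma only asserts that $L_k$ \emph{is} a minimizer, which your argument establishes.
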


\begin{lem}[\cite{NV17}, Lemma 7.5]\label{NVlem7.5}
Let $ \{(\lda_i,v_i)\}_{i=1}^n $ be given in Definition \ref{defvj}. If $ \mu $ is a probability Radon measure on $ B_1 $, then
\be
\int_{B_1}((y-x_{\op{cm}})\cdot v_i)(y-x_{\op{cm}})\ud\mu(y)=\lda_iv_i\label{El}
\ee
for any $ i\in\Z\cap[1,n] $, where
\be
\lda_i=\int_{B_1}|(y-x_{\op{cm}})\cdot v_i|^2\ud\mu(y).\label{ldaiform}
\ee
\end{lem}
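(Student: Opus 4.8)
The plan is to recognize $\{(\lda_i,v_i)\}_{i=1}^n$ as the orthonormal eigensystem of the second-moment matrix of $\mu$ about its center of mass, and then to read the asserted identity directly off the eigenvalue equation. Concretely, I would introduce the symmetric positive semidefinite matrix
\[
A:=\int_{B_1}(y-x_{\op{cm}})\otimes(y-x_{\op{cm}})\,\ud\mu(y)\in\R^{n\times n},
\]
which is well defined since $|y-x_{\op{cm}}|\le 2$ on $B_1$ and $\mu(B_1)=1$. Expanding the matrix–vector product coordinatewise gives, for every $v\in\R^n$,
\[
Av=\int_{B_1}\big((y-x_{\op{cm}})\cdot v\big)(y-x_{\op{cm}})\,\ud\mu(y),\qquad v^{\T}Av=\int_{B_1}|(y-x_{\op{cm}})\cdot v|^2\,\ud\mu(y).
\]
Thus the left-hand side of the claimed equality is exactly $Av_i$, and Definition \ref{defvj} is precisely the statement that $\lda_1=\max_{|v|=1}v^{\T}Av$ with maximizer $v_1$, and recursively $\lda_{i+1}=\max\{v^{\T}Av:|v|=1,\ v\cdot v_j=0\text{ for }j\le i\}$ with maximizer $v_{i+1}$. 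Consequently it suffices to prove $Av_i=\lda_iv_i$ for each $i\in\Z\cap[1,n]$; the displayed formula $\lda_i=\int_{B_1}|(y-x_{\op{cm}})\cdot v_i|^2\,\ud\mu(y)$ is then immediate from the second identity above together with the fact that $v_i$ is a maximizer.

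I would establish $Av_i=\lda_iv_i$ by induction on $i$, using a standard first-order variational argument. Write $W_i:=\{v_1,\dots,v_{i-1}\}^{\perp}$ (so $W_1=\R^n$). For any unit vector $w\in W_i$ with $w\cdot v_i=0$, the curve $t\mapsto v(t):=\cos t\,v_i+\sin t\,w$ stays in the constraint set $\{v\in W_i:|v|=1\}$ over which $v^{\T}Av$ is maximized at $t=0$; differentiating $v(t)^{\T}Av(t)$ at $t=0$ and using the symmetry of $A$ yields $w^{\T}Av_i=0$. Hence $Av_i$ is orthogonal to $W_i\cap v_i^{\perp}$, so $Av_i\in\op{span}\{v_1,\dots,v_i\}$; write $Av_i=\sum_{j=1}^{i}c_jv_j$. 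Pairing with $v_i$ gives $c_i=v_i^{\T}Av_i=\lda_i$, while pairing with $v_j$ for $j<i$ and invoking the induction hypothesis $Av_j=\lda_jv_j$ together with the symmetry of $A$ gives $c_j=v_j^{\T}Av_i=(Av_j)^{\T}v_i=\lda_j\,(v_j\cdot v_i)=0$. Therefore $Av_i=\lda_iv_i$, which completes the induction and hence the proof. (Alternatively one could quote the Courant--Fischer min-max characterization of eigenvalues, but the variation argument keeps the exposition self-contained, and together with Lemma \ref{NVlem7.4} it exhibits the full spectral decomposition of $A$.)

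I do not expect a genuine obstacle: the statement is the spectral theorem for symmetric matrices expressed in the language of moments, and both the reformulation through $A$ and the inductive eigenvector identification are routine linear algebra. The only point requiring a little care is the first-order step — checking that the admissible variations $v(t)$ indeed remain in $W_i$ and on the unit sphere, so that the resulting orthogonality relation forces $Av_i$ into $\op{span}\{v_1,\dots,v_i\}$ rather than a larger subspace — after which the mutual orthogonality of the $v_j$ and the inductive hypothesis dispose of the off-diagonal coefficients immediately.
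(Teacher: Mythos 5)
Your proof is correct and is essentially the standard argument behind the cited result: the paper itself does not reprove this lemma but quotes it from \cite{NV17}, where the identity is likewise obtained by recognizing $\{(\lda_i,v_i)\}$ as the eigensystem of the symmetric second-moment matrix $A=\int_{B_1}(y-x_{\op{cm}})\otimes(y-x_{\op{cm}})\,\ud\mu(y)$ and invoking the variational characterization of its eigenvalues. Your inductive first-order variation argument correctly fills in the details of that spectral identification, so nothing is missing.
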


Applying this lemma and Proposition \ref{MonFor}, we have the following result.

\begin{lem}\label{L2lem1}
Let $ \ga>0 $ and $ \mu $ be a probability Radon measure. Assume that $ u\in(C_{\loc}^{0,\al}\cap H_{\loc}^1\cap L_{\loc}^{-p})(B_{20}) $ is a stationary solution of \eqref{MEMSeq} with respect to $ f\in M_{\loc}^{2\al+n-4+\ga,2}(B_{20}) $. Let $ \{(\lda_i,v_i)\}_{i=1}^n $ be given in Definition \ref{defvj}. There exists a constant $ C>0 $, depending only on $ n $ and $ p $ such that
\be
\lda_i\int_{B_5}|v_i\cdot\na u|^2\leq C\int_{B_1}W_f(u;y,1)\ud\mu(y)\label{viB5}
\ee
for any $ i\in\Z\cap[1,n] $.
\end{lem}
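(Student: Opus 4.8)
The plan is to run the $L^2$-best approximation scheme of Naber--Valtorta, feeding the mollified monotonicity formula (in the form of Corollary \ref{coruse}) into the linear-algebra identities of Lemma \ref{NVlem7.5}. The starting point is that for every $y\in\supp\mu\subset B_1$ the scale $r=2$ is admissible in Corollary \ref{coruse} (indeed $\dist(y,\pa B_{20})>19$), which yields
$$W_f(u;y,1)=\vt_f(u;y,2)-\vt_f(u;y,1)\geq C(n,p)\int_{B_8(y)}|(z-y)\cdot\na u(z)-\al u(z)|^2\ud z\geq C(n,p)\int_{B_5}|(z-y)\cdot\na u(z)-\al u(z)|^2\ud z,$$
where in the last inequality we used $B_5\subset B_8(y)$ for $y\in B_1$, and the factor $2^{-2\al-n}$ was absorbed into $C(n,p)$ since $\al=\al_p$ depends only on $p$. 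Integrating this against $\mu$ and interchanging the order of integration (Tonelli's theorem applies, all integrands being nonnegative and measurable, with $\na u\in L^2_{\loc}$ and $u\in L^\infty_{\loc}$ by Lemma \ref{InESLem1} and Lemma \ref{InESLem2}) gives
$$\int_{B_1}W_f(u;y,1)\ud\mu(y)\geq C(n,p)\int_{B_5}\(\int_{B_1}|(z-y)\cdot\na u(z)-\al u(z)|^2\ud\mu(y)\)\ud z.$$

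Next I would bound the inner integral from below for each fixed $z\in B_5$. Set $A(z):=(z-x_{\op{cm}})\cdot\na u(z)-\al u(z)$, so that $(z-y)\cdot\na u(z)-\al u(z)=A(z)-(y-x_{\op{cm}})\cdot\na u(z)$. Expanding the square and using $\mu(B_1)=1$ together with $\int_{B_1}(y-x_{\op{cm}})\ud\mu(y)=0$ (the defining property of the center of mass $x_{\op{cm}}$ in \eqref{centermass}), the cross term vanishes and
$$\int_{B_1}|(z-y)\cdot\na u(z)-\al u(z)|^2\ud\mu(y)=|A(z)|^2+\int_{B_1}|(y-x_{\op{cm}})\cdot\na u(z)|^2\ud\mu(y)\geq\int_{B_1}|(y-x_{\op{cm}})\cdot\na u(z)|^2\ud\mu(y).$$
Now expand $\na u(z)=\sum_{j=1}^n(v_j\cdot\na u(z))v_j$ in the orthonormal basis $\{v_j\}_{j=1}^n$ from Definition \ref{defvj}; by Lemma \ref{NVlem7.5}, taking the $v_{j'}$-component of \eqref{El} gives $\int_{B_1}((y-x_{\op{cm}})\cdot v_j)((y-x_{\op{cm}})\cdot v_{j'})\ud\mu(y)=\lda_j\delta_{jj'}$, so the mixed terms drop and the right-hand side equals $\sum_{j=1}^n\lda_j|v_j\cdot\na u(z)|^2\geq\lda_i|v_i\cdot\na u(z)|^2$ for the fixed index $i$ (all summands being nonnegative).

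Finally I would integrate this pointwise bound over $z\in B_5$ and chain the two displays:
$$\int_{B_1}W_f(u;y,1)\ud\mu(y)\geq C(n,p)\int_{B_5}\lda_i|v_i\cdot\na u(z)|^2\ud z=C(n,p)\,\lda_i\int_{B_5}|v_i\cdot\na u|^2,$$
which is \eqref{viB5} after renaming the constant. The argument has no genuine obstacle: the one conceptual input — that $W_f(u;y,\cdot)$ controls the $L^2$-mass of the tilting field $(z-y)\cdot\na u(z)-\al u(z)$ over a fixed ball — is precisely the mollified monotonicity formula of Proposition \ref{MonFor} and Corollary \ref{coruse}, and everything else is bookkeeping; the only points deserving care are the admissibility of the scale $r=2$ in Corollary \ref{coruse}, the inclusion $B_5\subset B_8(y)$ for $y\in B_1$, and the Tonelli interchange of the $\ud\mu$- and $\ud z$-integrations.
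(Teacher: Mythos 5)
Your proof is correct and follows essentially the same route as the paper's: both hinge on Corollary \ref{coruse} to control $\int_{B_5}|(z-y)\cdot\nabla u-\alpha u|^2\,dz$ by $W_f(u;y,1)$ for $y\in B_1$, and on the moment identities of Definition \ref{defvj} and Lemma \ref{NVlem7.5}. The only (cosmetic) difference is the order of the two steps and the linear algebra: the paper contracts \eqref{El} with $\nabla u(z)$ and applies Cauchy--Schwarz, while you expand the quadratic form $\int_{B_1}|(z-y)\cdot\nabla u(z)-\alpha u(z)|^2\,d\mu(y)$ directly and diagonalize it, which in fact yields the marginally stronger bound with $\sum_{j}\lambda_j|v_j\cdot\nabla u(z)|^2$ in place of the single term $\lambda_i|v_i\cdot\nabla u(z)|^2$.
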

\begin{proof}
Fix $ i\in\Z\cap[1,n] $. If $ \lda_i=0 $, the result is trivially true. Thus, we let $ \lda_i>0 $. Taking inner product for both sides of \eqref{El} by $ \na u(z) $ with $ z\in B_5 $, it follows that
\be
\lda_i(v_i\cdot\na u(z))=\int_{B_1}((y-x_{\op{cm}})\cdot v_i)((y-x_{\op{cm}})\cdot\na u(z))\ud\mu(y).\label{mucm}
\ee
The definition of $ x_{\op{cm}} $ given by \eqref{centermass} implies that
$$
\int_{B_1}((y-x_{\op{cm}})\cdot v_i)\ud\mu(y)=0.
$$
Moreover, for any $ z\in B_5 $,
$$
\int_{B_1}((y-x_{\op{cm}})\cdot v_i)((z-x_{\op{cm}})\cdot\na u(z)-\al u(z))\ud\mu(y)=0.
$$
Incorporating with \eqref{mucm}, we have
$$
\lda_i(v_i\cdot\na u(z))=\int_{B_1}((y-x_{\op{cm}})\cdot v_i)((y-z)\cdot\na u(z)+\al u(z))\ud\mu(y).
$$
For any $ z\in B_5 $, due to \eqref{ldaiform} and Cauchy's inequality, there holds
\begin{align*}
\lda_i^2|v_i\cdot\na u(z)|^2&\leq\(\int_{B_1}|(y-x_{\op{cm}})\cdot v_i|^2\ud\mu(y)\)\(\int_{B_1}|(z-y)\cdot\na u(z)-\al u(z)|^2\ud\mu(y)\)\\
&=\lda_i\int_{B_1}|(z-y)\cdot\na u(z)-\al u(z)|^2\ud\mu(y)
\end{align*}
Integrating with respect to $ z\in B_5 $ for both sides of the above, we have
\be
\begin{aligned}
&\lda_i\int_{B_5}|v_i\cdot\na u|^2\leq\int_{B_1}\(\int_{B_5}|(z-y)\cdot\na u-\al u|^2\ud z\)\ud\mu(y).
\end{aligned}\label{zmuz}
\ee
Note that Corollary \ref{coruse} yields that
$$
\int_{B_5}|(z-y)\cdot\na u-\al u|^2\ud z\leq C\int_{B_6(y)}|(z-y)\cdot\na u-\al u|^2\ud z\leq C(n,p)W_f(u;y,1)
$$
for any $ y\in B_1 $. This, together with \eqref{zmuz}, directly shows \eqref{viB5}.
\end{proof}

\begin{proof}[Proof of Theorem \ref{beta2MEMS}]
Using Proposition \ref{ScalingProp} and a normalization of the measure $ \mu $, it can be assumed that $ s=1 $, $ x=0 $, and $ \mu $ is a probability Radon measure on $ B_1 $. For $ k=n $, \eqref{mnkrepre} implies $ D_{\mu}^n(0,1)=0 $. Thus, there is nothing to prove. Without loss of generality, let $ k\leq n-1 $. For $ \{(\lda_i,v_i)\}_{i=1}^n $ given in Definition \ref{defvj}, with $ \{L_j\}_{j=1}^n $ defined by \eqref{Vkmu}, we set $ \{V_j\}_{j=1}^n\subset\R^n $ as subspaces of $ \R^n $ such that 
$$
V_j:=L_j-x_{\op{cm}}=\op{span}\{v_i\}_{i=1}^j 
$$
with $ j\in\Z\cap[1,n] $. It follows from \eqref{ldaorder} and Lemma \ref{NVlem7.4} that
\be
\min_{L\subset\bA(n,k)}\int_{B_1}\dist^2(y,L)\ud\mu(y)=\sum_{i=k+1}^n\lda_i\leq(n-k)\lda_{k+1}.\label{dismufor}
\ee
Lemma \ref{L2lem1} yields that 
$$
\lda_i\int_{B_5}|v_i\cdot\na u|^2\leq C(n,p)\int_{B_1} W_f(u;y,1)\ud\mu(y)
$$
for any $ i\in\Z\cap[1,n] $. Consequently, 
$$
\sum_{i=1}^{k+1}\(\lda_i\int_{B_5}|v_i\cdot\na u|^2\)\leq C(n,p)\int_{B_1}W_f(u;y,1)\ud\mu(y).
$$
Again, by \eqref{ldaorder}, we have
$$
\lda_{k+1}\int_{B_5}|V_{k+1}\cdot\na u|^2\leq\sum_{i=1}^{k+1}\(\lda_i\int_{B_5}|v_i\cdot\na u|^2\)\leq C(n,p)\int_{B_1}W_f(u;y,1)\ud\mu(y).
$$
Owing to \eqref{lowerV}, this gives that
$$
\tau\lda_{k+1}\leq C\lda_{k+1}\int_{B_5}|V_{k+1}\cdot\na u|^2\leq C(n,p)\int_{B_1}W_f(u;y,1)\ud\mu(y),
$$
and then
$$
\lda_{k+1}\leq C(n,p)\tau^{-1}\int_{B_1}W_f(u;y,1)\ud\mu(y).
$$
The results now follow from \eqref{dismufor}.
\end{proof}

\section{Covering lemmas}\label{CoveringSection}

In this section, we establish several significant covering lemmas associated with quantitative stratification.

\begin{lem}[Main covering lemma]\label{maincoverMEMS}
Let $ \ga>0 $, $ k\in\Z\cap[1,n-2] $, $ 0<r<R\leq 1 $, and $ x_0\in B_{R_0} $. Assume that $ u\in(C_{\loc}^{0,\al}\cap H_{\loc}^1\cap L_{\loc}^{-p})(B_{4R_0}) $ is a stationary solution of \eqref{MEMSeq} with respect to $ f\in M_{\loc}^{2\al+n-4+\ga,2}(B_{4R_0}) $, satisfying 
$$
[u]_{C^{0,\al}(\ol{B}_{2R_0})}+[f]_{M^{2\al+n-4+\ga,2}(B_{2R_0})}\leq\Lda.
$$
For any $ \va>0 $, there exist $ \delta,\delta'>0 $, depending only on $ \va,\ga,\Lda,n $, and $ p $ such that if
\be
[f]_{M^{2\al+n-4+\ga,2}(B_{20R}(x_0))}<\delta,\label{fMdeltasmall}
\ee
then we have a collection of balls $ \{B_r(x)\}_{x\in\cC} $, satisfying $ \#\cC<+\ift $,
$$
S_{\va,\delta'r}^k(u)\cap B_R(x_0)\subset\bigcup_{x\in\cC}B_r(x),
$$
and the following properties.
\begin{enumerate}[label=$(\theenumi)$]
\item For any $ x\in\cC $, $ S_{\va,\delta'r}^k(u)\cap B_r(x)\neq\emptyset $.
\item There is $ C_{\op{M}}>0 $ depending only on $ \va,\ga,\Lda,n $, and $ p $ such that $ (\#\cC)r^k\leq C_{\op{M}}R^k $.
\end{enumerate} 
\end{lem}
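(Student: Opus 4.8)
The plan is to adapt the Naber--Valtorta covering scheme: build a tree of balls by repeatedly refining from scale $s$ to scale $\beta s$, drive the refinement by the dichotomy of Proposition \ref{FpropMEMS}, and control the number of leaves by the discrete Reifenberg estimate of Corollary \ref{Rei1cor}, whose hypothesis is verified through the $L^2$-best approximation bound of Theorem \ref{beta2MEMS} and the monotonicity formula of Proposition \ref{MonFor}. After translating via Proposition \ref{ScalingProp} we may take $x_0=0$. Fix a small $\beta=\beta(\va,\ga,\Lda,n,p)\in(0,\tfrac12)$, then let $\delta,\delta'>0$ be the (much smaller) constants furnished by Proposition \ref{FpropMEMS} and Corollary \ref{beta22MEMS} for this $\beta$; the smallness \eqref{fMdeltasmall} is imposed precisely so that, by the estimate comparing $\vt_f$ and $\vt$, all densities $\vt_f(u;\cdot,\cdot)$ used below are comparable to $\vt(u;\cdot,\cdot)$ on $B_{20R}(0)$ and Proposition \ref{MonFor} can be applied freely.

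\emph{Tree construction.} Start from $B_R(0)$ with the density bound $E_0:=\sup_{z\in B_{2R}(0)}\vt_f(u;z,R)\le C(\ga,\Lda,n,p)$ from \eqref{maxvtusr}. Given a node $B_s(y)$ with $\delta'r< s\le R$ carrying a bound $E$ with $\sup_{B_{2s}(y)}\vt_f(u;\cdot,s)\le E$, look at $F_{y,s}:=\{z\in B_{2s}(y):\vt_f(u;z,s)-\vt_f(u;z,\beta s)<\delta\}$. If $F_{y,s}$ $\beta s$-effectively spans some $L\in\bA(n,k)$, then Proposition \ref{FpropMEMS} gives $S_{\va,\delta's}^k(u)\cap B_s(y)\subset B_{2\beta s}(L)$, and we cover this tube by $\le C(n)\beta^{-k}$ balls $B_{\beta s}(z)$ with $z\in S_{\va,\delta's}^k(u)$ and $\dist(z,L)\le C\beta s$, declaring them ``Reifenberg'' children carrying the same $E$ and recording $L$. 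If $F_{y,s}$ does not $\beta s$-effectively span a $k$-plane, then $F_{y,s}\subset B_{C(n)\beta s}(W)$ for some $W\in\bA(n,k-1)$: we cover $S_{\va,\delta's}^k(u)\cap B_s(y)\cap B_{C\beta s}(W)$ by $\le C(n)\beta^{-(k-1)}$ children carrying $E$, and the remaining part of $S_{\va,\delta's}^k(u)\cap B_s(y)$ by $\le C(n)\beta^{-n}$ balls $B_{\beta s}(z)$ with $z\notin F_{y,s}$, hence $\vt_f(u;z,s)-\vt_f(u;z,\beta s)\ge\delta$; a short continuity-in-the-base-point argument together with Proposition \ref{MonFor} lets us attach to each such ``drop'' child the reduced bound $E-\tfrac\delta2$. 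We stop once the radius falls below $r$, enlarge leaves to radius $r$, and discard those missing $S_{\va,\delta'r}^k(u)$; this yields $\{B_r(x)\}_{x\in\cC}$ covering $S_{\va,\delta'r}^k(u)\cap B_R(0)$ and property $(1)$.

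\emph{Counting.} Along any branch of the tree at most $N_0=N_0(\va,\ga,\Lda,n,p)$ drop children can occur: the density is bounded above by \eqref{maxvtusr}, and since every relevant center lies in $S_{\va,\delta'r}^k(u)$ it is not $(n,\va)$-symmetric at the scales in question, so Lemma \ref{lowernva} forces $u$ to be small there, whence the claim \eqref{claimvtf} of Lemma \ref{rxlogrMEMS} (or \eqref{limitlowerif}) gives a uniform lower bound on $\vt_f$. The tree therefore splits into at most $C(\beta,n)^{N_0}$ ``reset-free'' subtrees consisting only of Reifenberg and $W$-type children, and it suffices to bound the number of leaves of each by $C(n)(R/r)^k$. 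Within a reset-free subtree the Reifenberg children have density drop $<\delta$ at the relevant scales (up to harmless comparison constants this is membership in/near $F_{\cdot,\cdot}$), so on those balls $u$ is $(0,\delta)$-symmetric by Corollary \ref{coruse} and the $k=0$ case of Lemma \ref{SmaHomMEMS}, while it is not $(k+1,\va)$-symmetric on $S_{\va,\delta'r}^k(u)$; hence Corollary \ref{beta22MEMS} (through Lemma \ref{geqlemMEMS}) applies and gives, for the discrete measure $\mu:=\sum_{y}\w_k\rho_y^k\delta_y$ over the subtree's centers, the displacement bound $D_\mu^k(z,\sigma)\le C\sigma^{-k}\int_{B_\sigma(z)}W_f(u;w,\sigma)\,\ud\mu(w)$. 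Plugging this into the Reifenberg integral, swapping the order of integration, using the inductive content bound of Theorem \ref{Rei1} (whose constant does not degenerate), and telescoping the dyadic sum via $\sum_j W_f(u;w,2^{-j}\sigma)\le \vt_f(u;w,2\sigma)-\vt_f(u;w,2^{-m}\sigma)$ with Proposition \ref{MonFor}, one makes the left side of the hypothesis of Corollary \ref{Rei1cor} smaller than $\delta_{\op{R}}\,t^k$ once the reset threshold (hence $\delta$ and $\beta$) is small enough; the key is that the reset mechanism keeps this telescoped variation below that threshold on each subtree. Corollary \ref{Rei1cor} then yields $\sum_{y}\w_k\rho_y^k\le C_{\op{R}}(n)R^k$, so each reset-free subtree has $\le C(n)(R/r)^k$ leaves, and multiplying by $C(\beta,n)^{N_0}$ gives $(\#\cC)r^k\le C_{\op{M}}R^k$, proving $(2)$.

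\emph{Main obstacle.} The delicate point is reconciling the Reifenberg machinery with the features of \eqref{MEMSeq} that are absent for harmonic maps: $S_{\va,\delta'r}^k(u)$ need not lie in the rupture set, and $\vt_f$ is neither nonnegative nor subject to a clean rescaling bound such as \eqref{simesti}, so one cannot simply assert that the density stays bounded along the tree. This is exactly where the small-$f$ hypothesis \eqref{fMdeltasmall}, Proposition \ref{propupture} (the two-sided bounds \eqref{maxvtusr}, \eqref{limitlowerif} and the nondegeneracy implication \eqref{assMupper}), and Lemma \ref{lowernva} must be combined with care: together they trap the density in a fixed bounded interval along every branch, force only finitely many genuine drops, and make the telescoped sum feeding the Reifenberg condition converge with room to spare. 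Organizing these two mechanisms — the Reifenberg-controlled spanning subtrees and the boundedly many density-drop branchings — so that they compose to the sharp exponent $k$ is the technical heart of the argument.
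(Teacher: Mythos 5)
Your overall strategy is the same as the paper's (a Naber--Valtorta tree construction driven by the dichotomy ``effectively spans a $k$-plane / trapped near a $(k-1)$-plane'', with drop branches terminated by the lower density bound and leaves counted by discrete Reifenberg); the paper merely organizes it into three nested covering lemmas (Lemmas \ref{cover1MEMS}, \ref{Cover2MEMS}, \ref{maincoverMEMS}) rather than one tree. Your termination mechanism for the drop branches --- density too negative forces $u\gtrsim s^{\al}$ via \eqref{assMupper}, hence $(n,\va)$-symmetry via Lemma \ref{lowernva}, hence exit from $S_{\va,\delta'r}^k(u)$ --- is exactly the paper's argument.

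There are, however, two concrete gaps in the counting step. First, you derive the displacement bound from Corollary \ref{beta22MEMS} via $(0,\delta)$-symmetry ``through Lemma \ref{geqlemMEMS}'', but Lemma \ref{geqlemMEMS} assumes $u(x)=0$ at the center, and --- as you yourself note in your obstacle paragraph --- the centers of your discrete measure need not be rupture points. The paper instead maintains along the whole construction the uniform lower bound $\inf_{V\in\bG(n,k+1)}t^{2-2\al-n}\int_{B_t(x)}|V\cdot\na u|^2>\tau$ (property (M6) in the proof of Lemma \ref{cover1MEMS}), obtained from Remark \ref{remkplus} using only that each ball contains a point of $S_{\va,\delta'r}^k(u)$; this feeds directly into Theorem \ref{beta2MEMS} without any symmetry hypothesis at the center. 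Second, your telescoping estimate requires every center of the discrete measure to satisfy the pinching $\vt_f(u;\cdot,\cdot)>E-\xi$, but the new centers are chosen on the effectively spanned affine planes $L'$, not inside the pinched set $F$ itself; you never explain why they inherit the pinching. This is precisely the role of Lemma \ref{lempinchMEMS} (propagation of near-maximal density along an effectively spanned plane), which requires the smallness \eqref{fMdeltasmall} and is an essential, non-automatic ingredient. Both gaps are repairable with tools already in the paper, but as written the argument does not close.
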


To show this main covering lemma, we need the following two auxiliary results, referred to as the first and the second covering lemmas.

\begin{lem}[The first covering lemma]\label{cover1MEMS}
Let $ k\in\Z\cap[1,n-2] $, $ 0<\rho<\f{1}{1000} $, $ 0<r<R\leq 1 $, and $ x_0\in B_{R_0} $. Assume that $ u\in (C_{\loc}^{0,\al}\cap H_{\loc}^1\cap L_{\loc}^{-p})(B_{4R_0}) $ is a stationary solution of \eqref{MEMSeq} with respect to $ f\in M_{\loc}^{2\al+n-4+\ga,2}(B_{4R_0}) $, satisfying 
$$
[u]_{C^{0,\al}(\ol{B}_{2R_0})}+[f]_{M^{2\al+n-4+\ga,2}(B_{2R_0})}\leq\Lda.
$$
For any $ \va>0 $, there exist $ \delta,\delta'>0 $ depending only on $ \va,\ga,\Lda,n,p $, and $ \rho $ such that if
\be
[f]_{M^{2\al+n-4+\ga,2}(B_{20R}(x_0))}<\delta,\label{fcondition}
\ee
then we have a collection of balls $ \{B_{r_x}(x)\}_{x\in\cD} $, satisfying $ \#\cD<+\ift $,
$$
S_{\va,\delta'r}^k(u)\cap B_R(x_0)\subset\bigcup_{x\in\cD}B_{r_x}(x),
$$
and the following properties.
\begin{enumerate}[label=$(\theenumi)$]
\item $ r_x\geq r $ for any $ x\in\cD $, and
$$
\sum_{x\in\cD}r_x^k\leq C_{\op{I}}R^k,
$$
where $ C_{\op{I}}>0 $ depends only on $ n $
\item If $ x\in\cD $, then either $ r_x=r $, or there is $ L(x,r_x)\in\bA(n,k-1) $ such that
$$
\left\{y\in B_{2r_x}(x):\vt_f\(u;y,\f{\rho r_x}{20}\)>E-\delta\right\}\subset B_{\f{\rho r_x}{10}}(L(x,r_x))\cap B_{2r_x}(x),
$$
where 
\be
E:=E(x_0,R):=\sup_{y\in B_{2R}(x_0)}\vt_f(u;y,R).\label{EMdef}
\ee
\end{enumerate}
\end{lem}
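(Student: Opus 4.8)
The plan is a stopping-time construction producing a tree of balls, followed by a packing estimate extracted from the discrete Reifenberg theorem. By Proposition \ref{ScalingProp} and Remark \ref{scalerem} I may normalize $x_0=0$, $R=1$, and I write $E=E(0,1)$ for the quantity in \eqref{EMdef}. Starting from $B_1(0)$, process a ball $B_s(y)$ (always kept only when $S_{\va,\delta'r}^k(u)\cap B_s(y)\ne\emptyset$ and $r\le s\le 1$) according to a trichotomy. If $s=r$, stop and record the ball $B_r(y)$ in $\cD$. Otherwise set $A(y,s):=\{z\in B_{2s}(y):\vt_f(u;z,\rho s/20)>E-\delta\}$: if $A(y,s)\subset B_{\rho s/10}(L)$ for some $L\in\bA(n,k-1)$, stop and record $B_s(y)$ in $\cD$ together with this plane, which is precisely conclusion (ii). If neither holds, then $A(y,s)$ lies in no $\rho s/10$-neighbourhood of a $(k-1)$-plane, so by the standard dichotomy it contains $k+1$ points that $\rho s/20$-effectively span some $L'\in\bA(n,k)$; for each such point $z$ the scale-pinching hypothesis of Proposition \ref{FpropMEMS} (with $\beta\simeq\rho$) follows from $\vt_f(u;z,\rho s/20)>E-\delta$ together with the upper bound $\vt_f(u;z,\cdot)\le E$ at the relevant larger scale supplied by the monotonicity formula (Proposition \ref{MonFor}), whence $S_{\va,\delta' s}^k(u)\cap B_s(y)\subset B_{\rho s/10}(L')$. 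Cover $B_{\rho s/10}(L')\cap B_s(y)$ by at most $C(n)\rho^{-k}$ balls of radius $\rho s$ centred on $L'$, discard those missing $S_{\va,\delta'r}^k(u)$, and recurse; the smallness hypothesis \eqref{fcondition} and a suitable choice of $\delta$ are used only to make all these applications legitimate. Since the scales contract by the fixed factor $\rho$ and stop at $r$, and the domain is bounded, the construction terminates, the resulting $\cD$ is finite, covers $S_{\va,\delta'r}^k(u)\cap B_1(0)$, has $r_x\ge r$, and satisfies (ii).

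The heart of the lemma is the packing bound $\sum_{x\in\cD}r_x^k\le C_{\op{I}}(n)$; the naive generation count only yields a constant growing geometrically in the number of refinement steps, so the purely dimensional constant must come from Theorem \ref{Rei1}, in the form of Corollary \ref{Rei1cor}. I would apply it to the discrete measure $\mu$ carried by the centres of all balls produced by the construction, with weights $\w_k(\mathrm{radius})^k$; after a harmless disjointification $\cD$ is a subcollection, so it suffices to bound $\mu(B_1(0))$. The Reifenberg hypothesis $\int_{B_t(z)}\big(\int_0^tD_\mu^k(w,\sg)\,\tfrac{\ud\sg}{\sg}\big)\,\ud\mu(w)<\delta_{\op{R}}t^k$ is verified through the $L^2$-best approximation estimate Theorem \ref{beta2MEMS}: whenever $B_\sg(w)$ carries mass of $\mu$ at scale $\sg$ it sits near a point of $S_{\va,\delta'r}^k(u)$, so $u$ is not $(k+1,\va)$-symmetric at comparable scales and Lemma \ref{kplus1MEMStwoone} (see Remark \ref{remkplus}) furnishes a lower bound $\inf_{V\in\bG(n,k+1)}\sg^{2-2\al-n}\int_{B_{5\sg}(w)}|V\cdot\na u|^2\ge\tau=\tau(\va,\ga,\Lda,n,p)>0$, hence $D_\mu^k(w,\sg)\le C\tau^{-1}\sg^{-k}\int_{B_\sg(w)}W_f(u;w',\sg)\,\ud\mu(w')$ with $W_f(u;w',\sg)=\vt_f(u;w',2\sg)-\vt_f(u;w',\sg)$.

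Substituting this, interchanging the order of integration, and using the content estimate at the smaller scales (the usual downward induction on scale in the Reifenberg scheme), the left-hand side is controlled by $C\tau^{-1}\int_{B_{2t}(z)}\big(\int_0^tW_f(u;w',\sg)\,\tfrac{\ud\sg}{\sg}\big)\,\ud\mu(w')$, and $\int_0^tW_f(u;w',\sg)\,\tfrac{\ud\sg}{\sg}$ telescopes, by the monotonicity formula, to a quantity comparable to $\sup_{\sg\in[t,2t]}|\vt_f(u;w',\sg)|$. For $w'$ a centre the value $u(w')$ is small on the scale at which $w'$ was produced, because $w'$ lies near a point of $S_{\va,\delta'r}^k(u)$ and, since $(n,\va)$-symmetry implies $(k+1,\va)$-symmetry, the contrapositive of Lemma \ref{lowernva} forces $u$ to be small there; this is exactly the regime of Lemma \ref{rxlogrMEMS} and of estimate \eqref{vtfesti}, which keeps $\vt_f(u;w',\cdot)$ from diverging to $-\ift$, while the upper bound is \eqref{maxvtusr}. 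Integrating against $\mu$ and invoking once more the inductive content bound, the whole expression is $\le\delta_{\op{R}}t^k$ provided $\delta$ — hence also the bound in \eqref{fcondition} — is chosen small in terms of $\va,\ga,\Lda,n,p$, absorbing the finitely many constants and the $f$-error terms below the dimensional threshold $\delta_{\op{R}}(n)$. Corollary \ref{Rei1cor} then gives $\mu(B_1(0))\le C_{\op{R}}(n)$, i.e. the claimed packing bound with $C_{\op{I}}=C_{\op{I}}(n)$, and undoing the normalization finishes the proof.

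I expect the main obstacle to be closing this Reifenberg bootstrap cleanly: one must simultaneously certify the uniform lower energy bound $\tau$ along the construction — i.e. that remaining in the refinement regime genuinely forces quantitative non-$(k+1)$-symmetry, via Lemma \ref{kplus1MEMStwoone} — telescope the energy drops $W_f$ against a \emph{uniformly} bounded total variation of $\vt_f$, which is delicate precisely because $\vt_f(u;\cdot,\sg)\to-\ift$ off the rupture set (Proposition \ref{propupture}) and so requires controlling $u$ at the centres and separating off points where $u$ is not small, and finally keep careful track of parameter dependencies so that the Reifenberg output stays purely dimensional while every $f$-generated error is absorbed through \eqref{fcondition}.
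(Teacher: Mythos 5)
Your overall architecture matches the paper's: a stopping-time refinement driven by the dichotomy ``high-density set near a $(k-1)$-plane (stop and record, giving (ii)) versus effectively spanning a $k$-plane (apply Proposition \ref{FpropMEMS} and recurse at scale $\rho s$)'', followed by the discrete Reifenberg theorem (Corollary \ref{Rei1cor}) fed by the $L^2$-best approximation (Theorem \ref{beta2MEMS}) with the lower bound $\tau$ supplied by Remark \ref{remkplus}. The termination, finiteness, and conclusion (ii) are handled correctly, and you rightly identify that the dimensional constant must come from Reifenberg rather than from counting generations.

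The gap is in the verification of the Reifenberg hypothesis, exactly where you flag ``the main obstacle''. After Fubini and telescoping, what you must bound is the density \emph{drop} $\vt_f(u;z,4t)-\vt_f(u;z,r_z/5)$ at each retained centre $z$, and this must be made \emph{smaller than} a fixed dimensional threshold (so that, after multiplying by the constants $C\tau^{-1}$ from Theorem \ref{beta2MEMS} and the content bounds, the total stays below $\delta_{\op{R}}(n)$). Your proposed tools --- smallness of $u$ at the centres, Lemma \ref{rxlogrMEMS}, \eqref{vtfesti}, \eqref{maxvtusr} --- only give that $\vt_f(u;z,\cdot)$ is \emph{bounded} on $[r_z,1]$ by a constant $C(\ga,\Lda,n,p)$, hence a drop of order $O(1)$; your phrase ``comparable to $\sup_{\sg\in[t,2t]}|\vt_f(u;w',\sg)|$'' is the symptom of this, since a bounded total variation cannot beat the fixed constant $\delta_{\op{R}}$. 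What is actually needed, and what the paper maintains as an inductive invariant, is the pinching $\vt_f(u;x,r_x/20)>E-\xi$ at \emph{every} centre of \emph{every} generation, with $\xi=\xi(\va,\ga,\Lda,n,p,\rho)$ chosen small at the end; combined with the global upper bound $\vt_f\leq E$ this makes the telescoped drop $\leq C\xi$. This invariant does not come for free from the construction: it is propagated to the new centres (which lie on the effectively spanned plane $L'$) by Lemma \ref{lempinchMEMS}, which transfers the lower density bound from the $\beta s$-spanning subset of $F_\delta$ to all of $L'\cap B_{2s}$, with $\delta$ shrunk in terms of $\xi$ and \eqref{fcondition} used to control the $f$-error. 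Your proposal never invokes Lemma \ref{lempinchMEMS} and never records this pinching property for the centres, so the Reifenberg bootstrap does not close as written.
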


\begin{lem}[The second covering lemma]\label{Cover2MEMS}
Let $ \ga>0 $, $ k\in\Z\cap[1,n-1] $, $ 0<r<R\leq 1 $, and $ x_0\in B_{R_0} $. Assume that $ u\in (C_{\loc}^{0,\al}\cap H_{\loc}^1\cap L_{\loc}^{-p})(B_{4R_0}) $ is a stationary solution of \eqref{MEMSeq} with respect to $ f\in M_{\loc}^{2\al+n-4+\ga,2}(B_{4R_0}) $, satisfying 
$$
[u]_{C^{0,\al}(\ol{B}_{2R_0})}+[f]_{M^{2\al+n-4+\ga,2}(B_{2R_0})}\leq\Lda.
$$
For any $ \va>0 $, there exist $ \delta,\delta'>0 $, depending only on $ \va,\ga,\Lda,n $, and $ p $ such that if 
$$
[f]_{M^{2\al+n-4+\ga,2}(B_{20R}(x_0))}<\delta,
$$
then we have a collection of balls $ \{B_{r_x}(x)\}_{x\in\cC} $, satisfying $ \#\cC<+\ift $,
$$
S_{\va,\delta'r}^k(u)\cap B_R(x_0)\subset\bigcup_{x\in\cC}B_{r_x}(x),
$$
and the following properties.
\begin{enumerate}[label=$(\theenumi)$]
\item $ r_x\geq r $ for any $ x\in\cC $, and 
\be
\sum_{x\in\cC}r_x^k\leq C_{\op{II}}R^k,\label{cover2estim}
\ee
where $ C_{\op{II}}>0 $ depends only on $ n $
\item For any $ x\in\cC $, either $ r_x=r $, or
\be
\sup_{y\in B_{2r_x}(x)}\vt_{f}(u;y,r_x)\leq E-\delta,\label{finalcover2ME}
\ee
where $ E $ is given by \eqref{EMdef}.
\end{enumerate}
\end{lem}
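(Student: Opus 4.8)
The plan is to prove Lemma \ref{Cover2MEMS} by iterating the first covering lemma (Lemma \ref{cover1MEMS}). At each round one extracts a family of ``stopped'' balls on which the density has already dropped by a definite amount, and one recurses only on a thin tube around a $(k-1)$-dimensional plane; since such a tube is $(k-1)$-dimensional it carries a gain of a small factor, which, played against the \emph{scale-independent} Reifenberg constant $C_{\op{I}}$ of Lemma \ref{cover1MEMS}, forces the content still to be covered to decay geometrically. Concretely, fix $E:=E(x_0,R)=\sup_{y\in B_{2R}(x_0)}\vt_f(u;y,R)$ and a parameter $\rho=\rho(n)\in(0,\f{1}{1000})$ to be chosen at the very end; maintain two collections of balls, ``final'' and ``live'', with $B_R(x_0)$ the only live ball at the start.

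\textbf{The single step.} Given a live ball $B_s(z)$ with $z\in B_R(x_0)$ and $s>r$, apply Lemma \ref{cover1MEMS} to $B_s(z)$ with this $\rho$, obtaining a finite family $\{B_{r_y}(y)\}_{y\in\cD_z}$ that covers $S_{\va,\delta'r}^k(u)\cap B_s(z)$, with $r_y\geq r$ and $\sum_{y\in\cD_z}r_y^k\leq C_{\op{I}}s^k$. Each ball with $r_y=r$ is declared final. Each ball with $r_y>r$ comes with a plane $L_y\in\bA(n,k-1)$ from item (ii) of Lemma \ref{cover1MEMS}, and it is split in two: first, the tube $B_{\rho r_y/4}(L_y)\cap B_{r_y}(y)$ is covered by at most $C(n)\rho^{-(k-1)}$ balls of radius $\max\{r,\rho r_y/4\}$, which are declared live if that radius exceeds $r$ and final otherwise; second, $S_{\va,\delta'r}^k(u)\cap B_{r_y}(y)\setminus B_{\rho r_y/4}(L_y)$ is covered, by a Vitali argument, by at most $C(n,\rho)$ balls $B_{r''}(w)$ with $w\in S_{\va,\delta'r}^k(u)$, $\dist(w,L_y)\geq\rho r_y/4$ and $r''=\max\{r,\rho r_y/100\}$, all declared final. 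On the latter balls, when $r''=\rho r_y/100$ the double $B_{2r''}(w)$ lies in $\{\dist(\cdot,L_y)>\rho r_y/10\}\cap B_{2r_y}(y)$, so item (ii) of Lemma \ref{cover1MEMS} gives $\vt_f(u;\cdot,\rho r_y/20)\leq E-\delta$ there, and since $r''<\rho r_y/20$ the monotonicity formula (Proposition \ref{MonFor}) gives $\vt_f(u;\cdot,r'')\leq\vt_f(u;\cdot,\rho r_y/20)\leq E-\delta$ on $B_{2r''}(w)$; when $r''=r$ the ball already has radius $r$. (Here one checks with Proposition \ref{MonFor} that the density suprema governing the sub-balls are bounded by $E=E(x_0,R)$; this is routine because all balls produced are centered inside $B_R(x_0)$.) In one step the new live balls obey $\sum(\mathrm{radius})^k\leq C_{\op{I}}\cdot C(n)\rho^{-(k-1)}(\rho r_y/4)^k$ summed over $y$, hence $\leq C_{\op{I}}C(n)\rho\, s^k$, while the new final balls obey $\sum(\mathrm{radius})^k\leq C(n,\rho)s^k$.

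\textbf{Iteration and bookkeeping.} Iterating, the radii of live balls shrink by a factor $\geq\rho/4$ per round, so after finitely many rounds no live ball remains; the union $\cC$ of all final balls is then finite, covers $S_{\va,\delta'r}^k(u)\cap B_R(x_0)$, and satisfies $r_x\geq r$ together with \eqref{finalcover2ME} for every ball with $r_x>r$ by construction. Choosing $\rho=\rho(n)$ so small that $C_{\op{I}}C(n)\rho\leq\f{1}{2}$, the total content of live balls at round $m$ is at most $2^{-m}R^k$; summing the final-ball contributions over all rounds then yields $\sum_{x\in\cC}r_x^k\leq C_{\op{II}}(n)R^k$, which is \eqref{cover2estim}. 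With $\rho$ now fixed as a function of $n$, the constants $\delta,\delta'$ inherited from Lemma \ref{cover1MEMS} depend only on $\va,\ga,\Lda,n,p$, as required, and the smallness hypothesis $[f]_{M^{2\al+n-4+\ga,2}(B_{20R}(x_0))}<\delta$ is exactly what Lemma \ref{cover1MEMS} needs at each application.

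\textbf{Main obstacle and the top stratum.} The delicate point is the content bookkeeping above: it closes only because $C_{\op{I}}$ in Lemma \ref{cover1MEMS} is absolute in $\rho$ (it is the universal Reifenberg constant of Theorem \ref{Rei1}), so the factor $\rho$ coming from the $(k-1)$-dimensional tube genuinely beats it, while the $\rho$-dependent covering constants affect only the already-summable final-ball totals. A secondary, more routine difficulty is the scale matching in the off-tube step: Lemma \ref{cover1MEMS} controls $\vt_f$ only at the fine scale $\rho r_y/20$, which is why the off-tube covering balls must be taken at radius $\leq\rho r_y/20$ and the monotonicity formula invoked \emph{towards smaller radii}. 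Finally, the case $k=n-1$ lies outside the range of Lemma \ref{cover1MEMS}; there one runs the same iteration using instead the trapping of $S_{\va,\delta's}^{n-1}(u)$ in a $2\beta s$-neighbourhood of an $(n-1)$-plane furnished by Proposition \ref{FpropMEMS} (with $k=n-1$) together with Lemma \ref{kplus1MEMSn2}: off that plane the density pinch $\vt_f(u;\cdot,s)-\vt_f(u;\cdot,\beta s)$ is $\geq\delta$, which again places $\vt_f$ at the small scale $\beta s$ below $E-\delta$, and the $(n-1)$-dimensional tube supplies the same $\rho$-contraction.
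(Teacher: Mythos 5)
Your proposal is correct and follows essentially the same route as the paper's own proof: iterate the first covering lemma, stop on the off-tube balls where item (ii) forces the density at the fine scale below $E-\delta$ (transferred to the stopping radius by monotonicity), and recurse on the $(k-1)$-dimensional tube, whose content carries the extra factor $\rho$ that beats the $\rho$-independent constant $C_{\op{I}}$ and yields geometric decay of the live mass, exactly as in the paper's $\cR_i/\cF_i/\cB_i$ bookkeeping. Your remark that the stated range $k=n-1$ falls outside Lemma \ref{cover1MEMS} points to a discrepancy present in the paper's own write-up as well (the lemma is only ever invoked with $k\le n-2$), and your proposed workaround is a reasonable patch rather than a defect of your argument.
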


\subsection{Proof of Lemma \ref{maincoverMEMS}, given Lemma \ref{Cover2MEMS}}

For sufficiently small $ \delta,\delta'=\delta,\delta'(\va,\ga,\Lda,n,p)>0 $, we will inductively construct a covering of $ S_{\va,\delta'r}^k(u)\cap B_R(x_0) $, denoted by $ \{B_{r_x}(x)\}_{x\in\cC_i} $ with $ i\in\Z_+ $ such that
$$
S_{\va,\delta r}^k(u)\cap B_R(x_0)\subset\bigcup_{x\in\cC_i}B_{r_x}(x)=\bigcup_{x\in\cC_i^{(1)}}B_{r_x}(x)\cup\bigcup_{x\in\cC_i^{(2)}}B_{r_x}(x),
$$
and the following properties hold.
\begin{enumerate}[label=$(\op{C}_{\op{M}}\theenumi)$]
\item\label{CM1} For any $ x\in\cC_i $, 
\be
S_{\va,\delta'r}^k(u)\cap B_R(x_0)\cap B_{r_x}(x)\neq\emptyset.\label{Svadeltapr}
\ee
\item\label{CM2} If $ x\in\cC_i^{(1)} $, then $ r_x=r $.
\item\label{CM3} If $ x\in\cC_i^{(2)} $, then $ r_x>r $, and
\be
\sup_{y\in B_{2r_x}(x)}\vt_f(u;y,r_x)\leq E-i\delta,\label{Ci2coverMEMS}
\ee
where $ E=E(0,R) $ is given by \eqref{EMdef}.
\item\label{CM4} We have the estimate
\be
\sum_{x\in\cC_i}r_x^k\leq (1+C_{\op{II}}(n))^iR^k,\label{CiestimMEMS}
\ee
where $ C_{\op{II}}(n)>0 $ is the constant given in Lemma \ref{Cover2MEMS}.
\end{enumerate}
First, assume that we have already constructed such coverings. It follows from the first point of Proposition \ref{propupture} that there exists $ C'=C'(\ga,\Lda,n,p)>0 $ such that $ E $ has the upper bound
\be
E\leq C'(\ga,\Lda,n,p),\label{Ebound}
\ee
and if for some $ y\in B_{R_0} $, $ 0<s\leq 1 $, there holds
\be
\vt_f(u;y,s)<-C'(\ga,\Lda,n,p),\label{vtfuyssmall}
\ee
then $ u(y)\geq s^{\al} $. Moreover, Lemma \ref{lowernva} implies the existence of $ \sg=\sg(\va,\ga,\Lda,n,p)>0 $ being sufficiently small such that $ u $ is $ (n,\va) $-symmetric in $ B_{\sg s}(y) $. For $ x\in\cC_i^{(2)} $, and $ y\in B_{r_x}(x) $, by \eqref{Ci2coverMEMS}, we have
$$
\theta_f(u;y,r_x)\leq E-i\delta\leq C(\ga,\Lda,n,p)-i\delta(\va,\ga,\Lda,n,p).
$$
Choosing sufficiently large $ i_0=i_0(\va,\ga,\Lda,n,p)>0 $, we see that \eqref{vtfuyssmall} is satisfied for $ y $ and $ r_x $ if $ i>i_0 $. This implies that $ u $ is $ (n,\va) $-symmetric in $ B_{\sg r_x}(y) $. Since $ r_x>r $, if we further choose smaller $ \delta'=\delta'(\va,\ga,\Lda,n,p)>0 $ such that $ \delta'r<\sg r<\sg r_x $, then $ y\notin S_{\va,\delta'r}^k(u) $. By the arbitrariness of $ y\in B_{r_x}(x) $, we have $ S_{\va,\delta'r}^k(u)\cap B_{r_x}(x)=\emptyset $, a contradiction to \ref{CM1}. Thus, if $ i>i_0 $, then $ \cC_i^{(2)}=\emptyset $, and the result follows from \eqref{CiestimMEMS}.

For $ i=1 $, the properties \ref{CM1}-\ref{CM4} follow directly from the application of Lemma \ref{Cover2MEMS} to the ball $ B_R(x_0) $. Assume that \ref{CM1}-\ref{CM4} hold for $ i\in\Z_+ $. We will conduct the construction for $ i+1 $. For any $ x\in\cC_i^{(2)} $, Lemma \ref{Cover2MEMS} yields $ \{B_{r_y}(y)\}_{y\in\cC_{x,i}} $ such that 
$$
S_{\va,\delta r}^k(u)\cap B_{r_x}(x)\subset\bigcup_{y\in C_{x,i}}B_{r_y}(y)=\bigcup_{y\in C_{x,i}^{(1)}}B_{r_y}(y)\cup\bigcup_{y\in C_{x,i}^{(2)}}B_{r_y}(y),
$$
satisfying the following facts.
\begin{itemize}
\item For any $ y\in\cC_{x,i} $, $ B_{2r_y}(y)\subset B_{2r_x}(x) $.
\item If $ y\in\cC_{x,i}^{(1)} $, then $ r_y=r $.
\item If $ y\in\cC_{x,i}^{(2)} $, then 
$$
\sup_{z\in B_{2r_y}(y)}\vt_f(u;z,r_y)\leq\sup_{z\in B_{2r_x}(x)}\vt_f(u;z,r_x)-\delta\leq E-(i+1)\delta,
$$
where we have used \eqref{Ci2coverMEMS} for the second inequality.
\item We have the estimate
\be
\sum_{y\in\cC_{x,i}}r_y^k\leq C_{\op{II}}(n)r_x^k.\label{CxiCx2}
\ee
\end{itemize}
Define $ \{\cC_{i+1}^{(j)}\}_{j=1,2} $ as
$$
\cC_{i+1}^{(1)}=\cC_i^{(1)}\cup\bigcup_{x\in \cC_i^{(2)}}\cC_{x,i}^{(1)},\quad\cC_{i+1}^{(2)}=\bigcup_{x\in \cC_i^{(2)}}\cC_{x,i}^{(2)},\quad\text{and}\quad\cC_{i+1}=\cC_{i+1}^{(1)}\cup\cC_{i+1}^{(2)}.
$$
According to \eqref{CxiCx2} and \eqref{CiestimMEMS}, \begin{align*}
\sum_{x\in\cC_{i+1}}r_x^k&\leq\sum_{x\in\cC_i^{(1)}}r_x^k+\sum_{x\in\cC_i^{(2)}}\sum_{y\in\cC_{x,i}}r_y^k\leq (1+C_{\op{II}}(n))\(\sum_{x\in\cC_i}r_x^k\)\leq (1+C_{\op{II}}(n))^{i+1}R^k,
\end{align*}
which completes the proof.

\subsection{Proof of Lemma \ref{Cover2MEMS}, given Lemma \ref{cover1MEMS}}

Up to a translation, we assume that $ x_0=0 $. Letting $ 0<\rho<\f{1}{1000} $ be determined later, we can choose $ \ell\in\Z_+ $ such that 
\be
\(\f{\rho}{20}\)^{\ell}R<r\leq\(\f{\rho}{20}\)^{\ell-1}R.\label{lchose}
\ee
For $ i\in\Z\cap[1,\ell] $, and $ \delta,\delta'=\delta(\va,\ga,\Lda,n,p)>0 $ sufficiently small, we will inductively construct collections of balls $ \{B_{r_x}(x)\}_{x\in\cR_i\cup\cF_i\cup\cB_i} $ such that
$$
S_{\va,\delta'r}^k(u)\cap B_R\subset\bigcup_{x\in\cR_i}B_r(x)\cup\bigcup_{x\in\cF_i}B_{r_x}(x)\cup\bigcup_{x\in\cB_i}B_{r_x}(x),
$$
and the following properties hold.
\begin{enumerate}[label=($\op{C}_{\op{II}}$\theenumi)]
\item \label{Bp1}For any $ x\in\cR_i\cup\cF_i\cup\cB_i $, $ r_x\geq r $ and $ B_{2r_x}(x)\subset B_{2R} $.
\item \label{Bp2}If $ x\in\cR_i $, then $ r_x=r $.
\item \label{Bp3}If $ x\in\cF_i $, then
$$
\sup_{y\in B_{2r_x}(x)}\vt_f(u;y,r_x)\leq E-\delta,
$$
where $ E=E(0,R) $.
\item \label{Bp4}If $ x\in\cB_i $, neither of the properties in \ref{Bp2} and \ref{Bp3} is true, and 
\be
r<r_x\leq\(\f{\rho}{20}\)^iR.\label{rBx}
\ee
\item \label{Bp5}For any $ i\in\Z\cap[1,\ell] $,
\be
\sum_{x\in\cR_i\cup\cF_i}r_x^k\leq C_{\op{I}}(n)\(\sum_{j=0}^i\f{1}{10^j}\)R^k\quad\text{and}\quad\sum_{x\in\cB_i}r_x^k\leq\f{R^k}{10^i}.\label{C3es}
\ee
\end{enumerate}

For $ i=\ell $, \eqref{lchose} and \eqref{rBx}, implies that $ \cB_{\ell}=\emptyset $. Then $ \{B_{r_x}(x)\}_{\cR_{\ell}\cup\cF_{\ell}} $ is the desired covering. By properties \ref{Bp2}, if $ x\in\cR_{\ell} $, then $ r_x=r $. It follows from \ref{Bp3} that for any $ x\in\cF_{\ell} $, there holds \eqref{finalcover2ME}. Moreover, the estimate \eqref{cover2estim} is a direct consequence of \eqref{C3es} with $ i=\ell $.

\subsubsection*{Step 1. Preliminaries of the construction}

We first fix a ball $ B_{2s}(x)\subset B_{2R} $ and consider a covering of $ S_{\va,\delta'r}^k(u)\cap B_s(x) $. Indeed, we construct $ \{B_{r_y}(y)\}_{y\in\cR_x\cup\cF_x\cup\cB_x} $ such that
$$
S_{\va,\delta'r}^k(u)\cap B_s(x)\subset\bigcup_{y\in\cR_x}B_{r_y}(y)\cup\bigcup_{y\in\cF_x}B_{r_y}(y)\cup\bigcup_{y\in\cB_x}B_{r_y}(y),
$$
satisfying the following facts.
\begin{enumerate}[label=($\op{C}_{\op{II}}$1.\theenumi)]
\item\label{C211} For any $ y\in\cR_x\cup\cF_x\cup\cB_x $, $ r_y\geq r $ and $ B_{2r_y}(y)\subset B_{2s}(x) $.
\item\label{C212} If $ y\in\cR_x $, then $ r_y=r $.
\item\label{C213} If $ y\in\cF_x $, then
\be
\sup_{z\in B_{2r_y}(y)}\vt_f(u;z,r_y)\leq E-\delta.\label{energydrop}
\ee
\item\label{C214} If $ y\in\cB_x $, then $
r<r_y\leq\f{\rho s}{20} $.
\item\label{C215} We have estimates
$$
\sum_{y\in\cR_x\cup\cF_x}r_y^k\leq C_{\op{I}}(n)s^k\quad\text{and}\quad
\sum_{y\in\cB_x}r_y^k\leq\f{s^k}{10}.
$$
\end{enumerate}

We first apply Lemma \ref{cover1MEMS} to the ball $ B_s(x) $ and choose sufficiently small constants $ \delta,\delta'=\delta,\delta(\va,\ga,\Lda,n,p,\rho)>0 $ to obtain a covering 
$$
S_{\va,\delta'r}^k(u)\cap B_s(x)\subset\bigcup_{y\in\cD}B_{r_y}(y),
$$
with $ \#\cD<+\ift $, $ r_y\geq r $, and
$$
\sum_{x\in\cD}r_y^k\leq C_{\op{II}}(n)s^k
$$
for any $ y\in\cD $. Moreover, if $ y\in\cD $, then $ B_{2r_y}(y)\subset B_{2s}(x) $, and either $ r_y=r $ or there exists $ L(y,r_y)\in\bA(n,k-1) $ such that
\be
\left\{z\in B_{2r_y}(y):\vt_f\(u;z,\f{\rho r_y}{20}\)>E(x,s)-\delta\right\}\subset B_{\f{\rho r_y}{10}}(L(y,r_y))\cap B_{2r_y}(y),\label{BsFcontain}
\ee
where 
$$
E(x,s):=\sup_{z\in B_{2s}(x)}\vt(u;z,s).
$$
We classify the points in $ \cD $ into two subcollections. Precisely, we let $ \cD=\cD^{(r)}\cup\cD^{(+)} $, depending on the standards below.
\begin{itemize}
\item If $ y\in\cD^{(r)} $, then $ \f{\rho r_y}{20}\leq r $.
\item If $ y\in\cD^{(+)} $, then $ \f{\rho r_y}{20}>r $.
\end{itemize}
Next, we will refine balls with centers in $ \cD $ through recovering.

For $ y\in\cD^{(r)} $, we cover $ B_{r_y}(y) $ with balls $ \{B_r(z)\}_{z\in\cR_{x}^{(y)}} $, satisfying
\be
B_{r_y}(y)\subset\bigcup_{z\in\cR_x^{(y)}}B_r(z),\quad\#\cR_x^{(y)}\leq C(n)\rho^{-n},\label{RxCnbx}
\ee
and $ B_{2r}(z)\subset B_{2s}(x) $ for any $ z\in\cR_x^{(y)} $. We define the collection of all these centers of balls by
$$
\cR_x:=\bigcup_{y\in\cD^{(r)}}\cR_x^{(y)}.
$$

For $ y\in\cD^{(+)} $, since $ r_y>\f{20r}{\rho}>r $, we have \eqref{BsFcontain}. Consider a covering of $ B_{r_y}(y) $ with balls of radius $ \f{\rho r_y}{20}>r $ centered inside this ball, denoted by $ \{B_{\f{\rho r_y}{20}}(z)\}_{z\in\cB_x^{(y)}\cup\cF_x^{(y)}} $ such that balls in the collection $
\{B_{\f{\rho r_y}{40}}(z)\}_{z\in\cB_x^{(y)}\cup\cF_x^{(y)}} $ are pairwise disjoint and
$$
B_{r_y}(y)\subset\bigcup_{z\in\cB_x^{(y)}}B_{\f{\rho r_y}{20}}(z)\cup \bigcup_{z\in\cF_x^{(y)}}B_{\f{\rho r_y}{20}}(z),
$$
where
\be
\left\{z\in B_{2r_y}(y):\vt_f\(u;z,\f{\rho r_y}{20}
\)>E(x,s)-\delta\right\}\cap\bigcup_{z\in\cF_x^{(y)}}B_{\f{\rho r_y}{10}}(z)=\emptyset.\label{Fzdef}
\ee
For any $ \zeta\in\cB_{x}^{(y)} $, by \eqref{BsFcontain}, there exists $ \zeta'\in B_{\f{\rho r_y}{10}}(\zeta) $ such that
$$
\zeta'\in\left\{z\in B_{2r_y}(y):\vt_f\(u;z,\f{\rho r_y}{20}\)>E(x,s)-\delta\right\}\subset B_{\f{\rho r_y}{10}}(L(y,r_y))\cap B_{r_y}(y).
$$
Thus, we have $
\zeta\in B_{\rho r_y}(L(y,r_y))\cap B_{r_y}(y) $. By the arbitrariness of $ \zeta $,
\be
\cB_{x}^{(y)}\subset B_{\rho r_y}(L(y,r_y))\cap B_{r_y}(y).\label{cBxyBrho}
\ee
If $ z\in\cF_x^{(y)} $, then $ r_z=\f{\rho r_y}{20} $. \eqref{Fzdef} and the property $ B_{\f{\rho r_y}{10}}(z)\subset B_{2r_y}(y) $ imply
$$
\sup_{\zeta\in B_{2r_z}(z)}\vt_f(u;\zeta,r_z)=\sup_{\zeta\in B_{\f{\rho r_y}{10}}(z)}\vt_f\(u;\zeta,\f{\rho r_y}{20}\)\leq E(x,s)-\delta\leq E-\delta.
$$
Since $ \{B_{\f{\rho r_y}{40}}(z)\}_{z\in\cB_{x}^{(y)}} $ are pairwise disjoint, \eqref{cBxyBrho} and the fact that $ L(y,r_y)\in\bA(n,k-1) $ give that
\be
\#\cF_{x}^{(y)}\leq C(n)\rho^{-n}\quad\text{and}\quad
\#\cB_{x}^{(y)}\leq C(n)\rho^{1-k}.\label{BxFxcount}
\ee

Define
$$
\cB_x:=\bigcup_{y\in\cD^{(+)}}\cB_x^{(y)}\quad\text{and}\quad\cF_x:=\bigcup_{y\in\cD^{(+)}}\cF_x^{(y)}.
$$
It follows from \eqref{RxCnbx} and \eqref{BxFxcount} that
\begin{align*}
\sum_{y\in\cR_x\cup\cF_x}r_y^k&\leq C\rho^{-n+k}\(\sum_{y\in\cD}r_y^k\)\leq C(n)\rho^{k-n}C_{\op{II}}(n)s^k,\\
\sum_{y\in\cB_x}r_y^k&\leq C(n)\rho\(\sum_{y\in\cD}r_y^k\)\leq C(n)\rho C_{\op{II}}(n)s^k.
\end{align*}
Choosing $ 0<\rho<\f{1}{1000} $ sufficiently small, there is $ C_{\op{II}}(n)>0 $, satisfying
\be
\sum_{y\in\cR_x\cup\cF_x}r_y^k\leq C_{\op{II}}(n)s^k\quad\text{and}\quad\sum_{y\in\cB_x}r_y^k\leq\f{s^k}{10}.\label{induces}
\ee
By the analysis of all above, properties \ref{C211}-\ref{C215} hold.

\subsubsection*{Step 2. Inductive constructions} 

Given the preliminary result in the previous step, we can now conduct inductive constructions for our covering. For $ i=1 $, we apply the results in Step 1 to the ball $ B_R $, and the properties for such a case follow directly. Assume that for $ i\in\Z\cap[1,\ell-1] $, there is a covering of $ S_{\va,\delta'r}^k(u)\cap B_R $, given by
$$
S_{\va,\delta'r}^k(u)\cap B_R\subset\bigcup_{x\in\cR_i}B_r(x)\cup\bigcup_{x\in\cF_i}B_{r_x}(x)\cup\bigcup_{x\in\cB_i}B_{r_x}(x),
$$
satisfying \ref{Bp1}-\ref{Bp5}. Again, applying the results in Step 1 to balls in $ \{B_{r_x}(x)\}_{x\in\cB_i} $ respectively, we let
$$
\cR_{i+1}=\cR_i\cup\bigcup_{x\in\cB_i}\cR_x,\quad\cF_{i+1}=\cF_i\cup\bigcup_{x\in\cB_i}\cF_x,\quad\text{and}\quad\cB_{i+1}=\bigcup_{x\in\cB_i}\cB_x.
$$
Consequently, this covering satisfies \ref{Bp1}-\ref{Bp4}. It remains to show \ref{Bp5}. Indeed, estimates in \eqref{induces} imply that
$$
\sum_{x\in\cR_{i+1}\cup\cF_{i+1}}r_x^k\leq\sum_{x\in\cR_i\cup\cF_i}r_x^k+\sum_{x\in\cB_i}\sum_{y\in\cR_x\cup\cF_x}r_y^k\leq C_{\op{II}}(n)\(\sum_{j=0}^{i+1}\f{1}{10^j}\)R^k,
$$
and
$$
\sum_{x\in\cB_{i+1}}r_x^k\leq \sum_{x\in\cB_i}\sum_{y\in\cB_x}r_y^k\leq\f{R^k}{10^{i+1}},
$$
which completes the proof.

\subsection{Proof of Lemma \ref{cover1MEMS}} Up to a translation and further coverings, without loss of generality, we let $ 0<r<\f{R}{1000}<\f{1}{1000^2} $ and $ x_0=0 $. Fix $ \va>0 $ and $ 0<\rho<\f{1}{1000} $. There exists $ \ell\in\Z_+ $ such that
\be
\rho^{\ell}R\leq r<\rho^{\ell-1}R.\label{choiceofrRell}
\ee

\begin{defn}
Let $ 0<\delta<1 $, $ 0<s<R $, and $ x\in B_{2R} $. Suppose that $ B_{2s}(x)\subset B_{2R} $. Define
\begin{align*}
F_{\delta}(x,s)&:=\left\{y\in B_{2s}(x):\vt_f\(u;y,\f{\rho s}{20}\)>E-\delta\right\},\\
F_{\delta}'(x,s)&:=\left\{y\in B_{2s}(x):\vt_f\(u;y,s\)-\vt_f\(u;y,\f{\rho s}{20}\)<\delta\right\}.
\end{align*}
\end{defn}

Given the definition of $ E $ by \eqref{EMdef}, since $ 0<s<R $, it follows from Proposition \ref{MonFor} that  for any $ y\in F_{\delta}(x,s) $,
$$
\vt_f\(u;y,s\)-\vt_f\(u;y,\f{\rho s}{20}\)\leq\vt_f(u;y,R)-\vt_f\(u;y,\f{\rho s}{20}\)<E-(E-\delta)=\delta.
$$
As a result,
\be
F_{\delta}(x,s)\subset F_{\delta}'(x,s).\label{FdeltaFprime}
\ee
If there exists $ L(0,R)\in\bA(n,k-1) $ such that
$$
F_{\delta}(0,R)\subset B_{\f{\rho R}{10}}(L(0,R))\cap B_{2R},
$$
then $ \{B_R\} $ is the desired covering. Thus, it is natural to assume that $ F_{\delta}(0,R) $ $ \f{\rho R}{20} $-effectively spans $ L'(0,R)\in\bA(n,k) $.

We will choose sufficiently small $ \delta,\delta'=\delta,\delta'(\va,\ga,\Lda,n,p,\rho)>0 $ and construct a collection of balls $ \{B_{r_x}(x)\}_{x\in\cD_i} $ with $ \cD_i:=\cB_i\cup\cG_i $ for $ i\in\Z\cap[1,\ell] $, satisfying
\be
S_{\va,\delta'r}^k(u)\cap B_R\subset\bigcup_{x\in\cB_i}B_{r_x}(x)\cup\bigcup_{x\in\cG_i}B_{r_x}(x),\label{CoverSMEMS}
\ee
and the following properties.
\begin{enumerate}[label=($\op{C}_{\op{I}}$\theenumi)]
\item \label{Mp1}The balls in the collection $ \{B_{\f{r_x}{10}}(x)\}_{x\in\cD_i} $ are pairwise disjoint.
\item \label{Mp2} For any $ x\in\cD_i $, $ B_{2r_x}(x)\subset B_{2R} $, and $
S_{\va,\delta' r}^k(u)\cap B_R\cap B_{\f{r_x}{2}}(x)\neq\emptyset $.
\item \label{Mp3}If $ x\in\cB_i $, then $ r_x\geq\rho^iR $, and there exists $ L(x,r_x)\in\bA(n,k-1) $ such that
$$
F_{\delta}(x,r_x)\subset B_{\f{\rho r_x}{10}}(L(x,r_x))\cap B_{2r_x}(x).
$$
\item \label{Mp4}If $ x\in\cG_i $ and $ i\in\Z\cap[1,\ell-1] $, then $ r_x=\rho^iR $ and $ F_{\delta}(x,r_x) $ $ \f{\rho r_x}{20} $-effectively spans $ L'(x,r_x)\in\bA(n,k) $. If $ x\in\cG_{\ell} $, then $ r_x=r $.
\item \label{Mp5}For any $ x\in\cD_i $, we have
$$
\vt_f\(u;x,\f{r_x}{20}\)>E-\xi,
$$
where $ \xi=\xi(\va,\ga,\Lda,n,p,\rho)>0 $ is to be determined later.
\item \label{Mp6}There exists $ \tau=\tau(\va,\ga,\Lda,n,p,\rho)>0 $ such that for any $ x\in\cD_i $ and $ t\in[r_x,R] $,
\be
\inf_{V\in\bG(n,k+1)}\(t^{2-2\al-n}\int_{B_t(x)}|V\cdot\na u|^2\)>\tau.\label{VgeqMEMS}
\ee
\item \label{Mp7}If $ i=\ell $, then
\be
\sum_{x\in\cD_{\ell}}r_x^k\leq C_{\op{I}}(n)R^k.\label{BlGlMEMS}
\ee
\end{enumerate}

By \ref{Mp1}-\ref{Mp7}, the collection of balls $ \{B_{r_x}(x)\}_{x\in\cD_{\ell}} $ satisfies all the desired properties in Lemma \ref{cover1MEMS}, making it suitable for our needs.

Before giving the total details, we first outline the construction of $ \{B_{r_x}(x)\}_{x\in\cD_i} $. The procedure can be divided into four steps. The first three steps involve the inductive constructions of coverings, as described in \eqref{CoverSMEMS}, which satisfies properties \ref{Mp1}-\ref{Mp6}. In Step 1, we will establish the base case for the induction. In Step 2, we focus on inductive arguments for the proof from $ i $ to $ i+1 $ when $ i\in\Z\cap[1,\ell-1] $. Step 3 involves constructing the covering from $ \ell-1 $ to $ \ell $. Finally, in Step 4, we will prove the essential estimate \eqref{BlGlMEMS} in property \ref{Mp7}, where we will apply Reifenberg-type results (Theorem \ref{Rei1}).

\subsubsection*{Step 1. The base of the induction}

We begin by recalling that we have already assumed that $ F_{\delta}(0,R) $ $ \f{\rho R}{20} $-effectively spans $ L'(0,R)\in\bA(n,k) $. Thus, by applying \eqref{FdeltaFprime} and Proposition \ref{FpropMEMS} with $ \beta=\f{\rho}{20} $ and $ s=R $, there exist $ \delta,\delta'=\delta,\delta'(\va,\ga,\Lda,n,p,\rho)>0 $, leading to
\be
S_{\va,\delta'r}^k(u)\cap B_R\subset S_{\va,\delta'R}^k(u)\cap B_R\subset B_{\f{\rho R}{10}}(L'(0,R))\cap B_R.\label{includeSva2}
\ee
Next, we choose balls $ \{B_{\rho R}(x)\}_{x\in\cD_1} $ such that
\be
B_{\f{\rho R}{10}}(L'(0,R))\cap B_R\subset\bigcup_{x\in\cD_1}B_{\rho R}(x),\label{coverBrhoR}
\ee
and the following properties hold.
\begin{itemize}
\item We have
\be
\cD_1\subset L'(0,R)\cap B_{\f{3R}{2}}.\label{cD1Lprime}
\ee
\item For any $ x \in \mathcal{D}_1 $,
\be
S_{\va,\delta'r}^k(u)\cap B_R\cap B_{\f{\rho R}{2}}(x)\neq\emptyset.\label{BRinte}
\ee
\item The balls in $ \{B_{\f{\rho R}{10}}(x)\}_{x\in\cD_1} $ are pairwise disjoint.
\end{itemize}
Given $ \cD_1 $, we further divide it into $ \cD_1=\cB_1\cup\cG_1 $, where
\be
\{B_{r_x}(x)\}_{x\in\cD_1}=\{B_{\rho R}(x)\}_{x\in\cD_1}:=\{B_{\rho R}(x)\}_{x\in\cB_1}\cup\{B_{\rho R}(x)\}_{x\in\cG_1},\label{ballsD1}
\ee
satisfying the following facts.
\begin{itemize}
\item If $ x\in\cB_1 $, then there exists a $ L(x,\rho R)\in\bA(n,k-1) $ such that
$$
F_{\delta}(x,\rho R)\subset B_{\f{\rho^2R}{10}}(L(x,\rho R))\cap B_{\rho R}(x).
$$
\item If $ x\in\cG_1 $, then $ F_{\delta}(x,\rho R) $ $ \f{\rho^2R}{20} $-effectively spans $ L'(x,\rho R)\in\bA(n,k) $.
\end{itemize}
According to \eqref{includeSva2} and \eqref{coverBrhoR}, balls in $ \{B_{r_x}(x)\}_{x\in\cD_1} $ form a covering of $ S_{\va,\delta'r}^k(u)\cap B_R $, thus confirming that \eqref{CoverSMEMS} holds for $ i=1 $. Since $ 0<\rho<\f{1}{1000} $, we have $
B_{2\rho R}(x)\subset B_{2R} $ for any $ x\in\cD_1 $. Consequently, Properties \ref{Mp1} and \ref{Mp2} are satisfied. Also, \ref{Mp3} and \ref{Mp4} follows from the definition of $ \cB_1 $ and $ \cG_1 $. We claim that if $
\delta=\delta(\va,\Lda,n,p,\rho,\xi)>0 $ is sufficiently small with \eqref{fcondition} holding true, then
\be
\vt_f\(u;x,\f{\rho R}{20}\)>E-\xi,\label{rhoRux}
\ee
for any $ x\in\cD_1 $, where $ \xi>0 $ will be determined later. This claim directly implies \ref{Mp5}. The definition \eqref{EMdef} yields that
$$
\sup_{y\in B_{2R}}\vt_f(u;y,R)\leq E.
$$
Since $ F_{\delta}(0,R) $ $ \f{\rho R}{20} $-effectively spans $ L'(0,R) $, we can apply Lemma \ref{lempinchMEMS} with $ \beta=\f{\rho}{20} $ and $ s=R $, implying the estimate in \eqref{rhoRux} for any $ x\in L'(0,R)\cap B_{2R} $ as long as $ \delta=\delta(\va,\ga,\Lda,n,p,\rho)>0 $ is sufficiently small, and the condition \eqref{fcondition} is satisfied. Due to \eqref{cD1Lprime}, it also holds for any $ x\in\cD_1 $. Thus, we obtain this claim. To establish the induction base, we only need to verify the property \ref{Mp6} for balls given in \eqref{ballsD1}. For fixed $ y\in\cD_1 $ and $ t\in[\rho R,R] $, we must show the estimate \eqref{VgeqMEMS}. By \eqref{BRinte}, there exists
\be
y'\in S_{\va,\delta'r}^k(u)\cap B_R\cap B_{\f{\rho R}{2}}(y).\label{yprimepro}
\ee
By Remark \ref{remkplus}, there exist sufficiently small $ \sg,\sg'=\sg,\sg'(\va,\ga,\Lda,n,p,\rho)\in(0,1) $ such that if $ u $ is not $ (k+1,\va) $-symmetric in $ B_s(y') $ for any $ s\in[\f{\sg't}{2},\f{t}{2}] $, then
\be
\inf_{V\in\bG(n,k+1)}\[\(\f{t}{2}\)^{2-2\al-n}\int_{B_{\f{t}{2}}(y')}|V\cdot\na u|^2\]>\sg.\label{VGnk1}
\ee
Let $ \delta'=\delta'(\va,\ga,\Lda,n,p,\rho)>0 $ be sufficiently small such that $ \delta'r\leq\f{\sg't}{2} $. Combining this with \eqref{yprimepro}, we conclude that $ u $ is not $ (k+1,\va) $-symmetric in $ B_s(y') $ for any $ \delta'r\leq s<1 $, and then \eqref{VGnk1} holds. \eqref{yprimepro} and the fact that $ t\geq\rho R $ give that $ B_{\f{t}{2}}(y')\subset B_t(y) $. Thus, we have
\be
\begin{aligned}
t^{2-2\al-n}\int_{B_t(y)}|V\cdot\na u|^2&\geq 2^{2-2\al-n}\[\(\f{t}{2}\)^{2-2\al-n}\int_{B_{\f{t}{2}}(y')}|V\cdot\na u|^2\]\geq 2^{2-2\al-n}\sg:=\tau
\end{aligned}\label{geqga}
\ee
for any $ V\in\bG(n,k+1) $. Consequently, when $ i=1 $, the property \ref{Mp6} follows, completing the proof of the base case for the induction.

\subsubsection*{Step 2. Construction from \texorpdfstring{$ i $}{} to \texorpdfstring{$ i+1 $}{} for \texorpdfstring{$ i\in\Z\cap[1,\ell-2] $}{}}

Let us assume that for $ i\in\Z\cap[1,\ell-2] $, the covering \eqref{CoverSMEMS} has been constructed with properties \ref{Mp1}-\ref{Mp6}. We aim to give the construction for $ i+1 $ that satisfies properties \ref{Mp1}-\ref{Mp6}. Generally, we will recover the balls centered at $ \cG_i $ while keeping those centered at $ \cB_i $ unchanged. Fix $ x\in\cG_i $, we have $ r_x=\rho^iR $. By \ref{Mp4} for $ i $, $ F_{\delta}(x,r_x) $ $ \f{\rho^{i+1}R}{20} $-effectively spans $ L'(x,r_x)\in\bA(n,k) $. Using \eqref{FdeltaFprime}, we can choose $ \delta,\delta'=\delta,\delta'(\va,\ga,\Lda,n,p,\rho)>0 $ sufficiently small, and apply Proposition \ref{FpropMEMS} with $ \beta=\f{\rho}{10} $ and $ s=\rho^iR $. Thus,
\be
S_{\va,\delta'r}^k(u)\cap B_{\rho^iR}(x)\subset B_{\f{\rho^{i+1}R}{10}}(L'(x,\rho^iR))\cap B_{\rho^iR}(x).\label{SvaR5MEMS}
\ee
Define
\be
A_i:=\(\bigcup_{y\in\cG_i}\(L'(y,\rho^iR)\cap B_{\f{3\rho^iR}{2}}(y)\)\)\backslash\(\bigcup_{y\in\cB_i}B_{\f{4r_y}{5}}(y)\).\label{defAiMEMS}
\ee
We claim that
\be
(S_{\va,\delta'r}^k(u)\cap B_R)\backslash\(\bigcup_{y\in\cB_i}B_{r_y}(y)\)\subset B_{\f{\rho^{i+1}R}{5}}(A_i).\label{SvadeltarBryMEMS}
\ee
Indeed for any $ z $ in the right-hand side of above, properties \ref{Mp1}-\ref{Mp4} for $ i $ give that
\be
B_{\f{\rho^{i+1}R}{5}}(z)\cap\(\bigcup_{y\in\cB_i}B_{\f{4r_y}{5}}(y)\)=\emptyset.\label{emptyz}
\ee
It follows from \eqref{CoverSMEMS} that there exists $ x'\in\cG_i $ such that $ z\in B_{\rho^iR}(x') $. The property \eqref{SvaR5MEMS} yields
$$
L'(x',\rho^iR)\cap B_{\f{3\rho^iR}{2}}(x')\cap B_{\f{\rho^{i+1}R}{5}}(z)\neq\emptyset.
$$
This, together with \eqref{emptyz}, implies that $ A_i\cap B_{\f{\rho^{i+1}R}{5}}(z)\neq\emptyset $, which leads to $ z\in B_{\f{\rho^{i+1}R}{5}}(A_i) $. As a result, the claim \eqref{SvadeltarBryMEMS} holds. Define $ \cD_{A_i}\subset A_i $ as a maximal subset of points, satisfying
\be
\dist(y,z)\geq\f{\rho^{i+1}R}{5}\label{5disjo}
\ee
for any $ y,z\in\cD_{A_i} $, where the maximality means that $ \#\cD_{A_i} $ is the largest possible among those satisfying \eqref{5disjo}. By the definition of $ A_i $, the balls in the collection
$$
\{B_{\f{\rho^{i+1}R}{10}}(y)\}_{y\in\cD_{A_i}}\cup\{B_{\f{r_y}{10}}(y)\}_{y\in\cB_i}
$$
are pairwise disjoint. Using the maximality property of $ \cD_{A_i} $, it follows that
\be
(S_{\va,\delta'r}^k(u)\cap B_R)\backslash\(\bigcup_{y\in\cB_i}B_{r_y}(y)\)\subset\bigcup_{y\in\cD_{A_i}}B_{\f{2\rho^{i+1}R}{5}}(y).\label{Svadeltaeus}
\ee
Without changing the notation, we eliminate balls in $ \{B_{\rho^{i+1}R}(y)\}_{y\in\cD_{A_i}} $ such that
\be
S_{\va,\delta'r}^k(u)\cap B_R\cap B_{\f{\rho^{i+1}R}{2}}(y)\neq\emptyset\label{DAiSvaMEMS}
\ee
for any $ y\in\cD_{A_i} $. After this elimination, the inclusion property in \eqref{Svadeltaeus} still preserves. Classify centers of balls in $ \cD_{A_i} $ into subcollections $ \wt{\cB}_{i+1} $ and $ \wt{\cG}_{i+1} $ such that we have the properties as follows.
\begin{itemize}
\item If $ y\in\wt{\cB}_{i+1} $, then there exists $ L(y,\rho^{i+1}R)\in\bA(n,k-1) $ such that
$$
F_{\delta}(y,\rho^{i+1}R)\subset B_{\f{\rho^{i+2}R}{10}}(L(y,\rho^iR))\cap B_{\rho^{i+1}R}(y).
$$
\item If $ y\in\wt{\cG}_{i+1} $, then $ F_{\delta}(y,\rho^{i+1}R) $ $ \f{\rho^{i+2}R}{20} $-effectively spans $ L'(y,\rho^iR)\in\bA(n,k) $.
\end{itemize}
For $ y\in\wt{\cB}_{i+1}\cup\wt{\cG}_{i+1} $, let $ r_y=\rho^{i+1}R $. We define $
\cB_{i+1}:=\cB_i\cup\wt{\cB}_{i+1} $ and $ \cG_{i+1}:=\wt{\cG}_{i+1} $. As a result, \ref{Mp1}-\ref{Mp4} are satisfied for $ i+1 $, and we only need to prove \ref{Mp5} and \ref{Mp6}. For fixed $ y\in\wt{\cB}_{i+1}\cup\wt{\cG}_{i+1} $, there is $ x\in\cG_i $ such that
\be
y\in L'(x,\rho^{i}R)\cap B_{\f{3\rho^iR}{2}}(x).\label{yLprime}
\ee
Since $ x\in\cD_i $, we have $ r_x=\rho^iR $ and $ F_{\delta}(x,\rho^{i}R) $ $ \f{\rho^{i+1}R}{20} $-effectively spans $ L'(x,\rho^iR)\in\bA(n,k) $. Given the definition of $ E=E(0,R) $ in \eqref{EMdef} and \ref{Mp2} for $ i $, together with Proposition \ref{MonFor}, we see that
\be
\sup_{y\in B_{2\rho^iR}(x)}\vt_f(u;y,\rho^iR)\leq \sup_{y\in B_{2R}}\vt_f(u;y,\rho^iR)\leq E.\label{simMEMS}
\ee
Combining with \eqref{yLprime}, the application of Lemma \ref{lempinchMEMS} with $ \beta=\f{\rho}{20} $ and $ s=\rho^iR $ implies that if $ \delta=\delta(\va,\ga,\Lda,n,p,\rho,\xi)>0 $ is sufficiently small and \eqref{fcondition} is satisfied, then
$$
\vt_f\(u;y,\f{r_y}{20}\)=\vt_f\(u;y,\f{\rho^{i+1}R}{20}\)>E-\xi.
$$
Consequently, we have \ref{Mp5} for $ i+1 $. Fix $ t\in[\rho^{i+1}R,R] $. By \eqref{DAiSvaMEMS}, there exists
$$
y'\in S_{\va,\delta'r}^k(u)\cap B_R\cap B_{\f{\rho^{i+1}R}{2}}(y).
$$
Using almost the same arguments as for $ i=1 $ in the proof of \eqref{geqga}, we can employ Remark \ref{remkplus} to obtain that any point $ y\in\cD_{A_i} $ satisfies \ref{Mp6}. Therefore, we hae verified \ref{Mp1}-\ref{Mp6} for $ i+1 $ and this step is completed.

\subsubsection*{Step 3. Construction for \texorpdfstring{$ \cD_{\ell} $}{}}

Assume that we have completed the construction of the covering \eqref{CoverSMEMS} for $ i=\ell-1 $, satisfying \ref{Mp1}-\ref{Mp6}. We now intend to give the construction for $ i=\ell $. For $ x\in\cG_{\ell-1} $, $ r_x=\rho^{\ell-1}R $. By \ref{Mp4} for $ i=\ell-1 $, $ F_{\delta}(x,\rho^{\ell-1}R) $ $ \f{\rho^{\ell}R}{20} $-effectively spans $ L'(x,\rho^{\ell-1}R)\in\bA(n,k) $. Choosing sufficiently small $ \delta=\delta(\va,\ga,\Lda,n,p,\rho)>0 $, it follows from Proposition \ref{FpropMEMS} with $ \beta=\f{\rho}{10} $ and $ s=\rho^{\ell-1}R $ that
\be
S_{\va,\delta'r}^k(u)\cap B_{\rho^{\ell-1}R}(x)\subset B_{\f{\rho^{\ell}R}{10}}(L'(x,\rho^{\ell-1}R))\cap B_{\rho^{\ell-1}R}(x).\label{Svadelta2}
\ee
Recall the definition $ A_{\ell-1} $ given by \eqref{defAiMEMS}. With the application of almost the same arguments in the proof of \eqref{SvadeltarBryMEMS}, it follows from \eqref{choiceofrRell} that
$$
(S_{\va,\delta'r}^k(u)\cap B_R)\backslash\(\bigcup_{y\in\cB_{\ell-1}}B_{r_y}(y)\)\subset B_{\f{2\rho^{\ell}R}{5}}(A_{\ell-1})\subset B_{\f{2r}{5}}(A_{\ell-1}).
$$
Let $ \cD_{A_{\ell-1}} $ be the maximal subset of $ A_{\ell-1} $ such that $ \dist(y,z)\geq\f{r}{5} $ for any $ y,z\in\cD_{A_{\ell-1}} $. Consequently, the balls in the collection
$$
\{B_{\f{r}{10}}(y)\}_{y\in\cD_{A_{\ell-1}}}\cup\{B_{\f{r_y}{10}}(y)\}_{y\in\cB_{\ell-1}}
$$
are pairwise disjoint and
$$
(S_{\va,\delta'r}^k(u)\cap B_R)\backslash\(\bigcup_{y\in\cB_{\ell-1}}B_{r_y}(y)\)\subset\bigcup_{y\in\cD_{A_{\ell-1}}} B_{\f{2r}{5}}(y),
$$
due to the choice of $ D_{A_{\ell-1}} $. Additionally, similar to \eqref{DAiSvaMEMS}, we assume that $
S_{\va,\delta'r}^k(u)\cap B_R\cap B_{\f{r}{2}}(y)\neq\emptyset $ for any $ y\in\cD_{A_{\ell-1}} $. Define $
\cB_{\ell}:=\cB_{\ell-1} $ and $ \cG_{\ell}:=\cD_{A_{\ell-1}} $ such that for any $ y\in\cG_{\ell} $, $ r_y=r $. By the construction above for $ i=\ell $, properties \ref{Mp1}-\ref{Mp4} hold. Analogous to \eqref{simMEMS}, we deduce from \eqref{EMdef} and \ref{Mp2} for $ \ell-1 $ that
$$
\sup_{y\in B_{2\rho^{\ell-1}R}(x)}\vt_f(u;y,\rho^{\ell-1}R)\leq\sup_{y\in B_{2R}}\vt_f(u;y,R)\leq E.
$$
Applying Lemma \ref{lempinchMEMS} with $ \beta=\f{\rho}{20} $ and $ s=\rho^{\ell-1}R $, it implies that for sufficiently small $ \delta=\delta(\va,\xi,\Lda,n,p,\rho)>0 $ such that \eqref{fcondition} is satisfied,
$$
\vt_f\(u;y,\f{r}{20}\)\geq\vt_f\(u;y,\f{\rho^{\ell}R}{20}\)>E-\xi
$$
for any $ y\in\cG_{\ell} $, where we have also used \eqref{choiceofrRell} and Proposition \ref{MonFor}. Moreover, we can show \eqref{VgeqMEMS} for $ y\in\cD_{\ell} $ using methods similar to those in the proof of \eqref{geqga}.

\subsubsection*{Step 4. Proof of \eqref{BlGlMEMS}}

Here, we denote $ \cD_{\ell} $ by $ \cD $ for simplicity and define
$$
\mu_{\cD}:=\sum_{y\in\cD}\w_kr_y^k\delta_y,
$$
where $
\wt{\cD}_t:=\cD\cap\{r_y\leq t\} $ and $ \mu_t:=\mu_{\cD} $. Note that $ \mu_t=\mu_{\cD}\llcorner\wt{\cD}_t\ll\mu_{\cD} $, namely for any $ A\subset\R^n $,
$$
\mu_{\cD}(A)=0\Ra(\mu\llcorner\wt{\cD}_t)(A)=0.
$$
Choose $ N\in\Z_+ $ such that
\be
2^{N-1}r<\f{R}{1000}\leq 2^Nr.\label{Rrchoose}
\ee

We will prove by the induction for $ j\in\Z\cap[0,N] $ and $ x\in B_{\f{11R}{10}} $, there holds
\be
\mu_{2^jr}(B_{2^jr}(x))=\sum_{y\in\wt{\cD}_{2^jr}\cap B_{2^jr}(x)}r_y^k\leq C_{\op{I}}'(n)(2^jr)^k.\label{jjplus1}
\ee
The estimate \eqref{BlGlMEMS} follows from \eqref{jjplus1} and simple covering arguments. Precisely, we have
$$
B_{\f{501R}{500}}\subset\bigcup_{i=1}^{N_1}B_{\f{R}{1000}}(x_i),
$$
where $ \{x_i\}_{i=1}^{N_1}\subset B_{\f{11R}{10}} $ with $ N_1\in\Z_+ $, satisfying $ N_1\leq C(n) $. Then \ref{Mp2} and $ 0<\rho<\f{1}{1000} $ yield
\be
\cD\subset B_{\f{501R}{500}}\subset B_{\f{11R}{10}}.\label{cDin}
\ee
As a result, it follows from \eqref{Rrchoose} and \eqref{jjplus1} that
\begin{align*}
\sum_{y\in\cD}r_y^k&\leq C(n)\mu_{\cD}\(B_{\f{501R}{500}}\)\leq C(n)\(\sum_{i=1}^{N_1}\mu_{\cD}(B_{2^Nr}(x_i))\)\leq C(n)C_{\op{I}}'(n)R^k.
\end{align*}
The constant $ C_{\op{I}}>0 $ in \eqref{BlGlMEMS} is chosen as $
C_{\op{I}}(n)=C(n)C_{\op{I}}'(n)>0 $.

By \ref{Mp3}, the balls $ \{B_{\f{r_y}{10}}(y)\}_{y\in\cD} $ are pairwise disjoint. According to \ref{Mp4}, we see that for any $ y\in\cD $, $ r_y\geq r $. These two observation imply the existence of the constant $ C_{\op{I}}'(n)>0 $ such that for any $ x\in B_{\f{11R}{10}} $, $
\mu_r(B_r(x))\leq C_{\op{I}}'(n)r^k $. As a result, \eqref{jjplus1} holds for $ j=0 $.

Assume that the estimate \eqref{jjplus1} holds true for any $ i\in\Z\cap[1,j] $ with $ j\in\Z\cap[1,N-1] $. We will show the property for $ j+1 $. The idea is to obtain a rough bound first and then refine it using Theorem \ref{Rei1}, the Reifenberg-type result. We start with the rough bound in the discrete form. Later, we will perform some adjustments.

\begin{lem}\label{mu2jlem}
For any $ x\in B_{\f{11R}{10}} $,
\be
\mu_{2^{j+1}r}(B_{2^{j+1}r}(x))\leq C(n)C_{\op{I}}'(n)(2^{j+1}r)^k.\label{mu2j}
\ee
\end{lem}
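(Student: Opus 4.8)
The plan is to obtain the rough bound \eqref{mu2j} directly from the induction hypothesis \eqref{jjplus1} at the scale $2^jr$, combined with the pairwise disjointness of the small balls $\{B_{\f{r_y}{10}}(y)\}_{y\in\cD}$ (property \ref{Mp1}) and the lower bound $r_y\geq r$ (properties \ref{Mp3} and \ref{Mp4}). First I would fix $x\in B_{\f{11R}{10}}$ and choose a covering $B_{2^{j+1}r}(x)\subset\bigcup_{i=1}^MB_{2^jr}(x_i)$ with $\{x_i\}_{i=1}^M\subset B_{2^{j+1}r}(x)$ and $M\leq C(n)$, which is possible by a standard Vitali-type argument since $2^{j+1}r=2\cdot 2^jr$. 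Since $\mu_{2^{j+1}r}=\mu_{\cD}\llcorner\wt{\cD}_{2^{j+1}r}$ and $\mu_{2^{j+1}r}(B_{2^{j+1}r}(x))\leq\sum_{i=1}^M\mu_{2^{j+1}r}(B_{2^jr}(x_i))$, it suffices to bound each term on the right by $C(n)C_{\op{I}}'(n)(2^jr)^k$ and sum over $i\leq C(n)$.

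For a fixed $i$ I would split $\wt{\cD}_{2^{j+1}r}\cap B_{2^jr}(x_i)$ according to whether $r_y\leq 2^jr$ or $2^jr<r_y\leq 2^{j+1}r$. For the first group, $\sum_{y}\w_kr_y^k=\mu_{2^jr}(B_{2^jr}(x_i))$; if this quantity is positive, then $B_{2^jr}(x_i)$ meets $\cD$, and since $\cD\subset B_{\f{501R}{500}}$ by \eqref{cDin} while $2^jr<\f{R}{500}$ by \eqref{Rrchoose}, the center $x_i$ lies in $B_{\f{11R}{10}}$, so the induction hypothesis \eqref{jjplus1} at the scale $2^jr$ gives $\mu_{2^jr}(B_{2^jr}(x_i))\leq C_{\op{I}}'(n)(2^jr)^k$. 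For the second group, the balls $B_{\f{r_y}{10}}(y)$ over such $y$ are pairwise disjoint (being a subfamily of the disjoint family in \ref{Mp1}), have radius at least $\f{2^jr}{10}$, and are contained in $B_{2^{j+1}r}(x_i)$; a volume comparison with $\cL^n(B_{\f{2^jr}{10}}(y))$ bounds their number by $C(n)$, so this group contributes at most $C(n)(2^{j+1}r)^k$. Adding the two contributions and summing over $i$ yields \eqref{mu2j}.

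The step is essentially bookkeeping: the only points requiring attention are (i) discarding covering balls $B_{2^jr}(x_i)$ that carry no $\mu_{2^{j+1}r}$-mass, so that the centers of the remaining ones are guaranteed to lie in $B_{\f{11R}{10}}$ and the induction hypothesis is legitimately applicable there, and (ii) tracking constants so the final bound reads $C(n)C_{\op{I}}'(n)(2^{j+1}r)^k$ and does not degrade under iteration — this is exactly why \eqref{mu2j} is only a preliminary estimate, to be sharpened to the optimal form \eqref{jjplus1} at the scale $2^{j+1}r$ afterwards via the Reifenberg-type result Theorem \ref{Rei1}. No genuinely new analytic input beyond the covering and disjointness properties established in Steps 1–3 is needed here.
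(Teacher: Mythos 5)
Your proposal is correct and follows essentially the same route as the paper: decompose $\mu_{2^{j+1}r}$ into the $\mu_{2^jr}$ part (controlled by a bounded covering at scale $2^jr$ plus the induction hypothesis) and the new mass from points with $r_y\in(2^jr,2^{j+1}r]$ (controlled by the disjointness of $\{B_{r_y/10}(y)\}$ and a volume count). The only cosmetic difference is that the paper places the covering centers in $\cD$ so that \eqref{cDin} immediately legitimizes the inductive bound, whereas you discard mass-free covering balls to reach the same conclusion; both are fine.
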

\begin{proof}
We first cover $ \cD\cap B_{2^{j+1}r}(x) $ with balls $ \{B_{2^jr}(x_i)\}_{i=1}^{N_2} $ such that $ \{x_i\}_{i=1}^{N_2}\subset\cD $ and $ N_2\leq C(n) $. Precisely, we have
$$
\cD\cap B_{2^{j+1}r}(x)\subset\bigcup_{i=1}^{N_2}B_{2^jr}(x_i).
$$
It follows from \eqref{cDin} and the estimate for $ j $ that
\be
\mu_{2^jr}(B_{2^{j+1}r}(x))\leq\sum_{i=1}^{N_2}\mu_{2^jr}(B_{2^jr}(x_i))\leq C(n)C_{\op{I}}(n)(2^jr)^k.\label{mu2j2j1plus}
\ee
By the definition of $ \mu_{2^{j+1}r} $,
\be
\mu_{2^{j+1}r}=\mu_{2^jr}+\sum_{y\in\cD,r_y\in(2^jr,2^{j+1}r]}\w_kr_y^k\delta_y.\label{decomposemuj}
\ee
Since $ \{B_{\f{r_y}{10}}(y)\}_{y\in\cD} $ are pairwise disjoint, it leads to
$$
\#\{y\in\cD\cap B_{2^{j+1}r}(x):r_y\in(2^jr,2^{j+1}r]\}\leq C(n).
$$
Consequently,
$$
\(\sum_{y\in\cD,r_y\in(2^jr,2^{j+1}r]}\w_kr_y^k\delta_y\)(B_{2^{j+1}r}(x))\leq C(n)(2^{j+1}r)^k.
$$
This, together with \eqref{mu2j2j1plus} and \eqref{decomposemuj}, implies that if $ C_{\op{I}}'(n)>1 $ is sufficiently large, then \eqref{mu2j} holds.
\end{proof}

Given \eqref{mu2j} in Lemma \ref{mu2jlem}, we introduce the continuous form of the rough bound, which is a direct consequence of a dyadic representation of the radius.

\begin{lem}\label{musBslem}
If $ s\in(0,2^{j+1}r) $ and $ x\in B_{\f{11R}{10}} $, then
\be
\mu_s(B_s(x))\leq C(n)C_{\op{I}}'(n)s^k.\label{musBs}
\ee
\end{lem}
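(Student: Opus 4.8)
The plan is to deduce \eqref{musBs} from the dyadic bounds already in hand by a dyadic sandwich for the radius $s$, with no new analytic input. After Lemma \ref{mu2jlem} we have at our disposal the estimate
$\mu_{2^ir}(B_{2^ir}(x))\le C(n)C_{\op{I}}'(n)(2^ir)^k$
for every $i\in\Z\cap[0,j+1]$ and every $x\in B_{\f{11R}{10}}$: the range $i\le j$ is the induction hypothesis \eqref{jjplus1} (where the constant is even just $C_{\op{I}}'(n)$), and $i=j+1$ is precisely \eqref{mu2j}.

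First I would dispose of the trivial range $0<s<r$. Recall (as noted just before \eqref{jjplus1} is checked for $j=0$, via \ref{Mp4}) that $r_y\ge r$ for every $y\in\cD=\cD_\ell$; hence $\wt{\cD}_s=\cD\cap\{r_y\le s\}=\emptyset$ when $s<r$, so $\mu_s\equiv 0$ and \eqref{musBs} holds trivially. For $r\le s<2^{j+1}r$ I would pick the unique $i\in\Z\cap[0,j]$ with $2^ir\le s<2^{i+1}r$; since $s<2^{j+1}r$ we have $i+1\le j+1$, so the dyadic bound above is available at scale $2^{i+1}r$. Because $\wt{\cD}_s\subset\wt{\cD}_{2^{i+1}r}$ and $B_s(x)\subset B_{2^{i+1}r}(x)$, monotonicity of the nonnegative discrete measure $\mu_{\cD}$ gives $\mu_s(B_s(x))\le\mu_{2^{i+1}r}(B_{2^{i+1}r}(x))\le C(n)C_{\op{I}}'(n)(2^{i+1}r)^k$. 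Using $2^{i+1}r\le 2s$ this is at most $2^kC(n)C_{\op{I}}'(n)s^k$, and absorbing $2^k$ into $C(n)$ yields \eqref{musBs}.

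There is essentially no obstacle at this step: it is pure monotonicity plus a dyadic comparison, and all the substance was already extracted in Lemma \ref{mu2jlem}. The only points to watch are that the center $x$ must stay in $B_{\f{11R}{10}}$ for the established scale-$2^ir$ bounds to apply (this is built into the hypothesis of Lemma \ref{musBslem}), and that the floating constant $C(n)$ in \eqref{musBs} is permitted to absorb both the covering constant coming from Lemma \ref{mu2jlem} and the factor $2^k$ produced by the dyadic sandwich; no iteration of these constants occurs here, so this causes no circularity with the later use of the Reifenberg-type Theorem \ref{Rei1}.
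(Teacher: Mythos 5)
Your argument is correct and is essentially the paper's own proof: a dyadic sandwich $2^ir\le s<2^{i+1}r$ combined with monotonicity of $\mu_t(B_t(x))$ in $t$, invoking the induction hypothesis \eqref{jjplus1} for $i+1\le j$ and Lemma \ref{mu2jlem} for $i+1=j+1$, then absorbing the factor $2^k$ into $C(n)$. Your handling of the range $0<s<r$ (observing $\wt{\cD}_s=\emptyset$ since $r_y\ge r$ for all $y\in\cD$, so $\mu_s\equiv 0$) is a harmless and in fact cleaner variant of the paper's appeal to the base case.
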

\begin{proof}
Using the base of the induction, namely, \eqref{mu2j} with $ j=0 $, \eqref{musBs} is true for $ 0<s<r $. As a result, we let $ s\geq r $. For fixed $ s\in[r,2^{j+1}r) $, there is $ N_3\in\Z\cap[1,j] $ such that $ 2^{N_3}r\leq s<2^{N_3+1}r $. Applying the assumption of the induction and Lemma \ref{mu2jlem}, it follows that
$$
\mu_s(B_s(x))\leq\mu_{2^{N_3+1}r}(B_{2^{N_3+1}r}(x))\leq C(n)C_{\op{I}}'(n)(2^{N_3+1}r)^k\leq C(n)C_{\op{I}}'(n)s^k
$$
for any $ x\in B_{\f{11R}{10}} $, which implies \eqref{musBs}.
\end{proof}

Furthermore, we have the following lemma, which we regard as the final version of the rough bound.

\begin{lem}\label{murj1pluslem}
For any $ r\leq s<\f{2^{j+1}r}{10} $ and $ x\in B_{\f{11R}{10}} $,
\be
\mu_{2^{j+1}r}(B_{4s}(x))\leq C(n)C_{\op{I}}'(n)s^k.\label{murj1plus}
\ee
\end{lem}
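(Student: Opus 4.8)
The plan is to deduce \eqref{murj1plus} from the two rough bounds already established, Lemma \ref{mu2jlem} and Lemma \ref{musBslem}, by a covering argument in which one splits the atoms of $\mu_{2^{j+1}r}$ lying in $B_{4s}(x)$ according to the size of their radii. First I would reduce to balls of radius $s$: since $s\ge r$, cover $B_{4s}(x)$ by at most $C(n)$ balls $\{B_s(y_i)\}_i$ with $y_i\in B_{4s}(x)\subset B_{2R}$, chosen so that $y_i\in\cD$ whenever $\cD$ has points near $y_i$; because every centre of a ball of $\cD$ meeting $B_{4s}(x)$ lies in some $B_s(y_i)$, it suffices to prove $\mu_{2^{j+1}r}(B_s(y_i))\le C(n)C_{\op{I}}'(n)s^k$ for each $i$. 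Fix $i$ and write $\mu_{2^{j+1}r}\llcorner B_s(y_i)=\mu_s\llcorner B_s(y_i)+\sum_y\w_k r_y^k\delta_y$, the sum running over $y\in\cD\cap B_s(y_i)$ with $s<r_y\le 2^{j+1}r$. The first term is $\le\mu_s(B_s(y_i))\le C_{\op{I}}'(n)s^k$, which is exactly Lemma \ref{musBslem} at scale $s$ (legitimate since $s<2^{j+1}r$).

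For the remaining atoms I would use the disjointness of $\{B_{r_y/10}(y)\}_{y\in\cD}$ from property \ref{Mp1}. Since $y_i\in\cD$, for any $y\ne y_i$ in $\cD\cap B_s(y_i)$ the disjointness of $B_{r_y/10}(y)$ and $B_{r_{y_i}/10}(y_i)$ together with $|y-y_i|<s$ and $r_{y_i}\ge r$ forces $r_y<10s$; and the dilates $B_{s/10}(y)\subset B_{r_y/10}(y)$ are disjoint and contained in $B_{2s}(y_i)$, so at most $C(n)$ such $y$ occur, each contributing at most $\w_k(10s)^k$. Thus all atoms of $\mu_{2^{j+1}r}\llcorner B_s(y_i)$ other than a possible one at $y_i$ itself contribute a total of at most $C(n)s^k$, and summing over the $\le C(n)$ balls $B_s(y_i)$ finishes these contributions.

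The hard part will be the possible atom $\w_k r_{y_i}^k\delta_{y_i}$ with $s<r_{y_i}\le 2^{j+1}r$: here $r_{y_i}>s\ge r$ rules out $y_i$ being a $\cG_\ell$-ball (all of which have radius $r$), so $y_i$ is one of the large $\cB_\ell$-balls, and a priori $r_{y_i}$ may be comparable to $2^{j+1}r\gg s$, in which case the naive bound gives $(2^{j+1}r)^k$ rather than $s^k$. The resolution uses the concentration built into $\cB$-balls by the covering construction: property \ref{Mp3} places $F_\delta(y_i,r_{y_i})$ in the $\tfrac{\rho r_{y_i}}{10}$-neighbourhood of a $(k-1)$-plane $L(y_i,r_{y_i})$, and combining this with property \ref{Mp5} and the choice $\xi\le\delta$ (so that, by Proposition \ref{MonFor}, every centre of $\cD$ inside $B_{2r_{y_i}}(y_i)$ with radius $\le\rho r_{y_i}$ lies in that neighbourhood) confines all the finer atoms of $\mu$ inside $B_{4s}(x)$ to a slab around a $(k-1)$-dimensional plane. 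A dyadic comparison of the $k$-content of a disjoint family of balls in such a slab with the $(k-1)$-content of the plane, fed back together with Lemma \ref{mu2jlem} applied at the scale of each governing $\cB$-ball, then yields $\mu_{2^{j+1}r}(B_{4s}(x))\le C(n)C_{\op{I}}'(n)s^k$. Organizing this concentration bookkeeping — essentially showing that intermediate-scale mass is of dimension $k-1$, hence of $k$-content $\lesssim s^k$ and not $\lesssim(2^{j+1}r)^k$ — is the delicate point; the rest of the proof is routine covering plus the already-proven Lemmas \ref{mu2jlem} and \ref{musBslem}.
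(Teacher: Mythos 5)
Your reduction to balls $B_s(y_i)$ centred at points of $\cD$, the use of Lemma \ref{musBslem} for the part of the measure truncated at scale $s$, and the disjointness argument showing that every atom $y\neq y_i$ of $\mu_{2^{j+1}r}$ lying in $B_s(y_i)$ has $r_y<10s$ and that at most $C(n)$ of them occur, are all correct and coincide with the paper's argument (the paper packages the last two points as $\supp(\mu_{2^{j+1}r})\cap B_s(z)\subset\wt{\cD}_{10s}$ followed by Lemma \ref{musBslem} at scale $10s$; see \eqref{onlyneed} and \eqref{mu2jplus1}).

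The gap is your treatment of the central atom, and the route you sketch cannot close it. If $y_i\in\cB_{\ell}$ with $s<r_{y_i}\leq 2^{j+1}r$, then $\w_kr_{y_i}^k\delta_{y_i}$ is an atom of $\mu_{2^{j+1}r}$ and $y_i\in B_s(y_i)$, so $\mu_{2^{j+1}r}(B_s(y_i))\geq\w_kr_{y_i}^k$ unconditionally. This is the mass of a single Dirac measure sitting inside the ball: properties \ref{Mp3} and \ref{Mp5} constrain where \emph{other} points of $\cD$ may lie, but no concentration or slab argument about those points can change the number $\w_kr_{y_i}^k$. Moreover the construction does produce such configurations: the $\cB_1$-balls of radius $\rho R$ persist into $\cB_{\ell}$, they enter $\mu_{2^{j+1}r}$ as soon as $\rho R\leq 2^{j+1}r$ (which happens for $j+1$ near $N$ since $\rho<\f{1}{1000}$ and $2^Nr\geq\f{R}{1000}$), and one may then take $s=r\ll\rho R$, so the ratio $r_{y_i}/s$ is unbounded and the single atom alone exceeds $C(n)C_{\op{I}}'(n)s^k$. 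Your proposed ``$(k-1)$-content of a slab'' bookkeeping is therefore aimed at the wrong quantity; the estimate \eqref{murj1plus} simply does not hold for balls containing an oversized atom.

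You have in fact put your finger on a lacuna in the paper's own proof: the inequality $\f{r_y}{10}\leq|y-z|$ used to derive \eqref{onlyneed} is applied to every $y\in\supp(\mu)\cap B_s(z)$ including $y=z$, where it fails. What is true, and what your argument up to this point does prove, is $\mu_{2^{j+1}r}(B_s(z))\leq C(n)C_{\op{I}}'(n)s^k+\w_kr_z^k\,\mathbf{1}_{\{s<r_z\leq 2^{j+1}r\}}$. The extra term is harmless where the lemma is actually used: in \eqref{coveress} and \eqref{beta2k1} the quantity $\mu(B_{2s}(\zeta))$ is integrated against $\wt{W}_f(u;\zeta,2s)$, which vanishes unless $r_{\zeta}\leq 20s$, and every non-central atom of $B_{2s}(\zeta)$ automatically has radius $<20s$ by the disjointness in \ref{Mp1}. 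Replacing the claimed bound by this corrected version (or restating the lemma with the central atom excised) is the repair your write-up needs.
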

\begin{proof}
To show \eqref{murj1plus}, we only need to verify that
\be
\mu_{2^{j+1}r}(B_s(z))\leq C(n)C_{\op{I}}'(n)s^k\label{onlyneed}
\ee
for any $ r\leq s<\f{2^{j+1}r}{10} $ and $ z\in\cD $. Indeed, for the case that $ x\in B_{\f{11R}{10}} $, if $ \cD\cap B_{4s}(x)=\emptyset $, by the definition of $ \mu_{2^{j+1}r} $, \eqref{murj1plus} follows directly and there is nothing to prove. Otherwise, we have $ \cD\cap B_{4s}(x)=\{x_i\}_{i=1}^{N_4} $. Furthermore, there is a subset of $ \{x_i\}_{i=1}^{N_4} $, denoted by $ \{x_i'\}_{i=1}^{N_5} $ such that
$$
\{x_i\}_{i=1}^{N_4}\subset\bigcup_{i=1}^{N_5}B_s(x_i'),
$$
and balls in the collection $ \{B_{\f{s}{3}}(x_i')\}_{i=1}^{N_5} $ are pairwise disjoint. Thus, $ N_5\leq C(n) $. Using \eqref{onlyneed} to $ x_i' $, it yields that
$$
\mu_{2^{j+1}r}(B_{4s}(x))\leq\sum_{i=1}^{N_5}\mu_{2^{j+1}r}(B_s(x_i'))\leq C(n)C_{\op{I}}'(n)s^k,
$$
which implies \eqref{murj1plus}.

Fix $ z\in\cD $ and $ r\leq s<\f{2^{j+1}r}{10} $. For $ y\in\supp(\mu)\cap B_s(z) $, since balls in $ \{B_{\f{r_{\zeta}}{10}}(\zeta)\}_{\zeta\in\cD} $ are pairwise disjoint, we have $ \f{r_y}{10}\leq|y-z|\leq s $. This implies that $ y\in\wt{\cD}_{10s} $. By the arbitrariness of $ y $, we get $ \supp(\mu)\cap B_s(z)\subset\wt{\cD}_{10s} $, and then
\be
\mu_{2^{j+1}r}(B_s(z))\leq\mu_{10s}(B_s(z))\leq\mu_{10s}(B_{10s}(z))\leq C(n)C_{\op{I}}'(n)s^k.\label{mu2jplus1}
\ee
Thus, \eqref{onlyneed} holds. Here in \eqref{mu2jplus1}, for the last inequality, we have used \eqref{cDin}, Lemma \ref{musBslem}, and the property that $ 10s\in(0,2^{j+1}r) $.
\end{proof}

Next, we will use the Reifenberg-type results to complete the proof. Define
$$
\mu:=\mu_{2^{j+1}r}\llcorner B_{2^{j+1}r}(x),
$$
where $ x\in B_{\f{11R}{10}} $. The proof of \eqref{jjplus1} is reduced to the estimate
\be
\mu(B_{2^{j+1}r}(x))\leq C_{\op{I}}'(n)(2^{j+1}r)^k.\label{wanttoshow}
\ee
For $ y\in\cD $, let
$$
\wt{W}_f(u;y,s):=\left\{\begin{aligned}
&\vt_f(u;y,2s)-\vt_f(u;y,s)&\text{ for }&\f{r_y}{10}\leq s<R,\\
&0&\text{ for }&0<s<\f{r_y}{10}.
\end{aligned}\right.
$$

\begin{lem}\label{beta2leqlem}
If $ y\in\cD $ and $ 0<s<\f{R}{10} $, then
\be
D_{\mu}^k(y,s)\leq C(\va,\ga\Lda,n,p,\rho)s^{-k}\int_{B_{2s}(y)}\wt{W}_f(u;z,2s)\ud\mu(z).\label{beta2leq}
\ee
\end{lem}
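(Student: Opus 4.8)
The plan is to deduce this from the $L^2$-best approximation estimate (Theorem \ref{beta2MEMS}), applied at the \emph{doubled} scale $2s$, after a dichotomy on the size of $s$ relative to $r_y$. The point of working at scale $2s$ is that then the quantity $W_f(u;\cdot,2s)=\vt_f(u;\cdot,4s)-\vt_f(u;\cdot,2s)$ produced by Theorem \ref{beta2MEMS} matches exactly the truncated increment $\wt W_f(u;\cdot,2s)$ appearing in the statement, on the relevant ball.

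\textbf{The trivial regime $s\le r_y/10$.} By property \ref{Mp1} the balls $\{B_{r_\zeta/10}(\zeta)\}_{\zeta\in\cD}$ are pairwise disjoint, so any two distinct points $\zeta_1,\zeta_2\in\cD$ satisfy $|\zeta_1-\zeta_2|\ge(r_{\zeta_1}+r_{\zeta_2})/10>r_{\zeta_1}/10$. Hence no point of $\cD$ other than $y$ lies in $B_{r_y/10}(y)\supseteq B_s(y)$, so $\supp\mu\cap B_s(y)\subseteq\{y\}$ and therefore $D_\mu^k(y,s)=0$ (the optimal $L\in\bA(n,k)$ can be taken through $y$). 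Since $\wt W_f\ge0$ by the monotonicity formula (Proposition \ref{MonFor}), the right-hand side is nonnegative and the inequality is immediate.

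\textbf{The main regime $s> r_y/10$.} Put $t:=10s$. Then $t\in[r_y,R]$, since $t>r_y$ and $t<R$ (because $s<R/10$). Property \ref{Mp6} therefore gives $\inf_{V\in\bG(n,k+1)}\big(t^{2-2\al-n}\int_{B_t(y)}|V\cdot\na u|^2\big)>\tau$ with $\tau=\tau(\va,\ga,\Lda,n,p,\rho)>0$; since $n+2\al-2>0$ and $2s<t$, monotonicity of $\sigma\mapsto\sigma^{2-2\al-n}$ on the relevant range gives $\inf_{V}\big((2s)^{2-2\al-n}\int_{B_{5(2s)}(y)}|V\cdot\na u|^2\big)>\tau$, which is precisely hypothesis \eqref{lowerV} of Theorem \ref{beta2MEMS} at center $y$ and scale $2s$ (and $0<2s<1$). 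Applying that theorem to $\mu$ restricted to $B_{2s}(y)$ yields
$$D_\mu^k(y,2s)\le C(n,p)\,\tau^{-1}(2s)^{-k}\int_{B_{2s}(y)}\big(\vt_f(u;z,4s)-\vt_f(u;z,2s)\big)\ud\mu(z).$$
To return from scale $2s$ to scale $s$ I use the elementary comparison $D_\mu^k(y,s)\le 2^{k+2}D_\mu^k(y,2s)$: if $L$ is optimal for $D_\mu^k(y,2s)$, restricting the integral from $B_{2s}(y)$ to $B_s(y)$ gives $s^{-k-2}\int_{B_s(y)}\dist^2(z,L)\ud\mu\le 2^{k+2}(2s)^{-k-2}\int_{B_{2s}(y)}\dist^2(z,L)\ud\mu=2^{k+2}D_\mu^k(y,2s)$. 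Finally I identify the integrand with $\wt W_f(u;z,2s)$: for $z\in\supp\mu\cap B_{2s}(y)$ with $z\ne y$, disjointness from \ref{Mp1} forces $r_z/10\le|z-y|<2s$, and for $z=y$ one has $r_y/10<s<2s$ directly; together with $2s<R$ this means $\wt W_f(u;z,2s)=\vt_f(u;z,4s)-\vt_f(u;z,2s)$ for every $z$ in the support of the restricted measure. Combining the three displays gives \eqref{beta2leq} with $C=4\,C(n,p)\,\tau^{-1}=C(\va,\ga,\Lda,n,p,\rho)$.

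\textbf{Main obstacle.} There is no deep difficulty here once the right scale is chosen; the only thing to get right is the scale bookkeeping — running Theorem \ref{beta2MEMS} at scale $2s$ (not $s$), using the (one-sided) monotonicity $D_\mu^k(y,s)\lesssim D_\mu^k(y,2s)$ to descend to scale $s$, and checking that the truncation in the definition of $\wt W_f$ is inactive on $\supp\mu\cap B_{2s}(y)$ by the packing property \ref{Mp1}. Verifying hypothesis \eqref{lowerV} is then a direct consequence of property \ref{Mp6}.
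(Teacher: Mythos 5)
Your proof is correct and follows essentially the same route as the paper's (two-line) argument: the trivial case via the disjointness in \ref{Mp1}, and the main case by verifying the lower bound \eqref{lowerV} from \ref{Mp6} with $t=10s$ and invoking Theorem \ref{beta2MEMS}. The details you supply — applying the theorem at scale $2s$ so that $W_f(u;\cdot,2s)$ matches $\wt W_f(u;\cdot,2s)$, the doubling comparison $D_\mu^k(y,s)\le 2^{k+2}D_\mu^k(y,2s)$, and the check that the truncation is inactive on $\supp\mu\cap B_{2s}(y)$ — are precisely what the paper leaves implicit.
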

\begin{proof}
If $ 0<s<\f{r_y}{10} $, by \ref{Mp3}, the balls in $ \{B_{\f{r_z}{10}}(z)\}_{z\in\cD} $ are pairwise disjoint, and the left-hand side of \eqref{beta2leq} is $ 0 $, so the result is trivially true.

If $ \f{r_y}{10}<s<\f{R}{10} $, then \ref{Mp5} implies
$$
\inf_{V\in\bG(n,k+1)}\((10s)^{2-2\al-n}\int_{B_{10s}(y)}|V\cdot\na u|^2\)>\tau.
$$
Thus, \eqref{beta2leq} follows from Theorem \ref{beta2MEMS}.
\end{proof}

Letting $ t\in\R_+ $ and $ y\in\R^n $ be such that
\be
s\leq t<\f{2^{j+1}r}{10}\quad\text{and}\quad y\in B_{2^{j+1}r}(x).\label{st2j1es}
\ee
We have that if $ z\in B_t(y) $, then $ B_{2s}(z)\subset B_{3t}(y) $. For any $ \zeta\in B_{3t}(y) $, we claim that
\be
\mu(B_{4s}(\zeta))\leq C(n)C_{\op{I}}'(n)s^k.\label{coveress}
\ee
Without loss of generality, we let $ x\in B_{\f{251R}{250}} $. If not, it follows from \eqref{Rrchoose} and \eqref{cDin} that $
\cD\cap B_{2^{j+1}r}(x)=\emptyset $, and the left-hand side of \eqref{coveress} is $ 0 $, so there is nothing to prove. Now, for $ \zeta\in B_{3t}(y) $, by \eqref{st2j1es}, there holds
$$
|\zeta|\leq|x|+|y-x|+|\zeta-y|\leq\f{251R}{250}+2^{j+1}r+3t\leq\f{11R}{10}.
$$
Consequently, Lemma \ref{murj1pluslem} leads to
$$
\mu(B_{4s}(\zeta))\leq\mu_{2^{j+1}r}(B_{4s}(\zeta))\leq C(n)C_{\op{I}}'(n)s^k,
$$
which implies \eqref{coveress}. Integrating \eqref{beta2leq} for both sides on $ B_t(y) $, we obtain
\begin{align*}
\int_{B_t(y)}D_{\mu}^k(z,s)\ud\mu(z)&\leq \f{C}{s^k}\int_{B_t(y)}\(\int_{B_{2s}(z)}\wt{W}_f\(u;\zeta,2s\)\ud\mu(\zeta)\)\ud\mu(z)\\
&\leq\f{C}{s^k}\int_{B_t(y)}\(\int_{B_{3t}(y)}\chi_{B_{2s}(z)}(\zeta)\wt{W}_f\(u;\zeta,2s\)\ud\mu(\zeta)\)\ud\mu(z)\\
&\leq\f{C}{s^k}\int_{B_{3t}(y)}\(\int_{B_t(y)}\chi_{B_{2s}(\zeta)}(z)\ud\mu(z)\)\wt{W}_f\(u;\zeta,2s\)\ud\mu(\zeta)\\
&\leq\f{C}{s^k}\int_{B_{3t}(y)}\mu(B_{2s}(\zeta))\wt{W}_f\(u;\zeta,2s\)\ud\mu_{2^{j+1}r}(\zeta)\\
&\leq C(\va,\ga,\Lda,n,p,\rho)C_{\op{I}}'(n)\int_{B_{3t}(y)}\wt{W}_f\(u;\zeta,2s\)\ud\mu_{2^{j+1}r}(\zeta),
\end{align*}
where for the last inequality, we have used \eqref{coveress}. Moreover, we deduce that
\be
\begin{aligned}
&\int_{B_t(y)}\(\int_0^tD_{\mu}^k(z,s)\f{\ud s}{s}\)\ud\mu(z)\\
&\quad\quad\quad\quad\leq C(\va,\ga,\Lda,n,p,\rho)C_{\op{I}}'(n)\int_{B_{3t}(y)}\(\int_0^t\wt{W}_f\(u;z,2s\)\f{\ud s}{s}\)\ud\mu_{2^{j+1}r}(z).
\end{aligned}\label{beta2k1}
\ee
On the other hand, using Proposition \ref{MonFor} and \ref{Mp5}, we obtain
\begin{align*}
\int_0^t\wt{W}_f\(u;z,2s\)\f{\ud s}{s}&=\int_{\f{r_z}{10}}^t\wt{W}_f\(u;z,2s\)\f{\ud s}{s}\\
&=\int_{\f{r_z}{10}}^t\[\vt_f(u;z,4s)-\vt_f\(u;z,2s\)\]\f{\ud s}{s}\\
&=\(\int_{\f{t}{2}}^t+\int_{\f{r_z}{10}}^{\f{t}{2}}\)\vt_f(u;z,4s)\f{\ud s}{s}-\(\int_{\f{r_z}{5}}^t+\int_{\f{r_z}{10}}^{\f{r_z}{5}}\)\vt_f\(u;z,2s\)\f{\ud s}{s}\\
&=\int_{\f{t}{2}}^t\[\vt_f(u;z,4s)-\vt_f\(u;z,\f{2r_zs}{5t}\)\]\f{\ud s}{s}\\
&\leq C\[\vt_f(u;z,4t)-\vt_f\(u,z,\f{r_z}{5}\)\]\\\
&\leq C(\va,\ga,\Lda,n,p,\rho)\xi
\end{align*}
for any $ z\in\cD $ and $ 0<t<\f{2^{j+1}r}{10} $. This, together \eqref{beta2k1} and Lemma \ref{musBslem}, implies that
\begin{align*}
\int_{B_t(y)}\(\int_0^tD_{\mu}^k(z,s)\f{\ud s}{s}\)\ud\mu(z)&\leq C'(\va,\ga,\Lda,n,p,\rho)C_{\op{I}}'(n)\xi\mu(B_{3t}(y))\\
&\leq C'(\va,\ga,\Lda,n,p,\rho)C_{\op{I}}'(n)\xi t^k
\end{align*}
for any $ y\in B_{2^{j+1}r}(x) $ and $ 0<t<\f{2^{j+1}r}{10} $. If $ \xi=\xi(\va,\Lda,n,p,\rho)>0 $ is sufficiently small such that
$$
C'(\va,\ga,\Lda,n,p,\rho)C_{\op{I}}'(n)\xi<\delta_{\op{R}},
$$
then Theorem \ref{Rei1} leads to the estimate
$$
\mu(B_{2^{j+1}r}(x))\leq C_{\op{R}}(n)(2^{j+1}r)^k.
$$
Choosing $ C_{\op{I}}'(n)>C_{\op{R}}(n) $, we deduce \eqref{wanttoshow}, which completes the proof.

\section{Proof of main theorems}\label{Mainproof}

\subsection{Proof of Theorem \ref{quantitativethm}}
According to \eqref{quantitativethmass} and Lemma \ref{Inclusion1}, we have
\be
[u]_{C^{0,\al}(\ol{B}_{2R_0})}+[f]_{M^{2\al+n-4+\f{\ga}{2},2}(B_{2R_0})}\leq\Lda',\label{ga2ass}
\ee
where $ \Lda' $ depends only on $ \ga,\Lda,n $, and $ p $. We now choose $ \delta,\delta'=\delta,\delta'(\va,\ga,\Lda,n,p)\in(0,1) $ such that the result in Lemma \ref{maincoverMEMS} holds, under the assumption \eqref{ga2ass}. Again, by Lemma \ref{Inclusion1}, since $ f\in M_{\loc}^{2\al+n-4+\ga,2}(B_{4R_0}) $, it yields that for any $ x\in B_{R_0} $ and $ 0<R<10^{-6}R_0 $,
$$
[f]_{M^{2\al+n-4+\f{\ga}{2},2}(B_{20R}(x))}\leq CR^{\f{\ga}{4}}[f]_{M^{2\al+n-4+\ga,2}(B_{20R}(x))}\leq C(\ga,\Lda,n,p)R^{\f{\ga}{4}}.
$$
There exists $ R_1=R_1(\va,\ga,\Lda,n,p)\in(0,10^{-6}R_0) $
such that if $ 0<R<R_1 $, then
\be
[f]_{M^{2\al+n-4+\f{\ga}{2},2}(B_{20R}(x))}<\delta.\label{R1esti}
\ee
Assuming that $ 0<r'<\f{R_1}{2} $, we cover $ S_{\va,\delta'r'}^k(u) $ with 
\be
S_{\va,\delta'r'}^k(u)\subset\bigcup_{i=1}^{N_1}B_{\f{R_1}{2}}(x_i),\label{Svadeltarpri}
\ee
where $ \{x_i\}_{i=1}^{N_1}\subset B_{R_0} $ and 
\be
1\leq N_1\leq C(\va,\ga,\Lda,n,p).\label{Nleqvaga}
\ee
Using \eqref{R1esti}, by the choices of $ \delta $ and $ \delta' $, Lemma \ref{maincoverMEMS} implies that we can further cover $ S_{\va,\delta r'}^k(u)\cap B_{\f{R_1}{2}}(x_i) $ by the collection of balls $ \{B_{r'}(x)\}_{x\in\cC^{(i)}} $ with
$$
S_{\va,\delta'r'}^k(u)\cap B_{\f{R}{2}}(x_i)\subset\bigcup_{x\in\cC^{(i)}}B_{r'}(x),
$$
satisfying the estimate
$$
\sup_{i\in\Z\cap[1,N_1]}(\#\cC^{(i)})(r')^k\leq CR_1^k\leq C(\va,\ga,\Lda,n,p).
$$
Thus, we have, for any $ i\in\Z\cap[1,N_1] $,
\be
\cL^n\(B_{r'}(S_{\va,\delta'r'}^k(u)\cap B_\f{R_1}{2}(x_i))\)\leq \cL^n\(B_{r'}\(\bigcup_{x\in\cC^{(i)}}B_{r'}(x)\)\)\leq C(\va,\ga,\Lda,n,p)(r')^{n-k}.\label{Brprimes}
\ee
Moreover, \eqref{Svadeltarpri} and \eqref{Nleqvaga} show that
$$
\cL^n(B_{r'}(S_{\va,\delta'r'}^k(u))\leq\sum_{i=1}^{N_1}\cL^n(B_{r'}(S_{\va,\delta'r'}^k(u)\cap B_R(x_i)))\leq C(\va,\ga,\Lda,n,p)(r')^{n-k}.
$$
If $ 0<r<\f{\delta'R_1}{2} $, there exists $ 0<r'<\f{R_1}{2} $ such that $ r=\delta'r' $. The inequality \eqref{Brprimes} yields that
\be
\cL^n(B_{r}(S_{\va,r}^k(u))\leq \cL^n(B_{r'}(S_{\va,\delta'r'}^k(u))\leq C(\va,\ga,\Lda,n,p)r^{n-k}.\label{Lnleq}
\ee
On the other hand, if $ \f{\delta'R_1}{2}\leq r<1 $, then
$$
\cL^n(B_{r}(S_{\va,r}^k(u))\leq \cL^n(B_{R_0})\leq C(\va,\ga,\Lda,n,p)r^{n-k}.
$$
This, together with \eqref{Lnleq}, implies \eqref{quanti1}, and then \eqref{quanti11}. Let $ 0<R<1 $, $ 0<s<R $, and $ x\in B_{R_0} $. Assume that $ 0<R<R_1 $. According to \eqref{R1esti}, Lemma \ref{maincoverMEMS} shows that there is a covering of $ S_{\va,\delta's}^k(u)\cap B_R(x) $, denoted by $ \{B_s(y)\}_{y\in\cC} $ such that 
$$
S_{\va,\delta's}^k(u)\cap B_R(x)\subset\bigcup_{y\in\cC}B_s(y),\quad\text{and}\quad(\#\cC)s^k\leq C(\va,\ga,\Lda,n,p)R^k.
$$
As a result,
\be
\HH_s^k(S_{\va}^k(u)\cap B_R(x))\leq C(n)(\#\cC)s^k\leq C(\va,\ga,\Lda,n,p)R^k.\label{Ahlfors1}
\ee
On the other hand, if $ R_1\leq R<1 $, then we have a covering of $ B_R(x) $ by
$$
B_R(x)\subset\bigcup_{i=1}^{N_2}B_{\f{R_1}{2}}(x_i'),\quad N_2\leq C(\va,\ga,\Lda,n,p).
$$
Thus, we get
$$
\HH_s^k(S_{\va}^k(u)\cap B_R(x))\leq\sum_{i=1}^{N_2}\HH_s^k\(S_{\va}^k(u)\cap B_{\f{R_1}{2}}(x_i')\)\leq C(\va,\ga,\Lda,n,p)R^k.
$$
This, together with \eqref{Ahlfors1}, implies that for any $ 0<s<R $,
$$
\HH_s^k(S_{\va}^k(u)\cap B_R(x))\leq C(\va,\ga,\Lda,n,p)R^k.
$$
Taking $ s\to 0^+ $, \eqref{quanti2} follows directly.

Now, we show the third property of Theorem \ref{quantitativethm}. By Lemma \ref{decomSkuseSva}, we have
\be
S^k(u)\cap B_{R_0}=\bigcup_{i\in\Z_+}S_{i^{-1}}^k(u).\label{Skiminus1}
\ee
Thus, it remains to show the rectifiability of $ S_{\va}^k(u) $ for any $ \va>0 $. Let $ S\subset S_{\va}^k(u) $ be such that $ \HH^k(S)>0 $. For any $ x\in S_{\va}^k(u) $ and $ 0<r\leq 1 $, we define
$$
g_f(u;x,r):=\vt_f(u;x,r)-\vt(u;x).
$$
According to Proposition \ref{propupture} and \ref{MonFor}, we have that for any $ x\in S_{\va}^k(u) $, $ \lim_{r\to 0^+}g_f(u;x,r)=0 $, and $ g_f(u;\cdot,r) $ is bounded. The dominated convergence theorem yields that for any $ \sg>0 $, there exists $ r_0=r_0(f,\sg,u)>0 $ such that 
$$
\f{1}{\HH^k(S)}\int_Sg_f(u;x,10r_0)\ud\HH^k(x)\leq\sg.
$$
By average arguments, there is an $ \HH^k $-measurable set $ E\subset S $ such that $ \HH^k(E)\leq\sg\HH^k(S) $ and $ g_f(u;x,10r_0)\leq\sg $ for any $ x\in F:=S\backslash E $. Cover $ F $ by finite number of balls $ \{B_{r_0}(y_i)\}_{i=1}^{N_3} $ such that $ \{y_i\}_{i=1}^{N_3}\subset F $. We claim that if $ \sg=\sg(\va,\ga,\Lda,n,p)>0 $ is sufficiently small, then for any $ i\in\Z\cap[1,N_3] $, $ F\cap B_{r_0}(x_i) $ is $ k $-rectifiable. If such a claim is true, repeating this procedure to $ S $ for countably times, we finally obtain that $ S $ is $ k $-rectifiable. The arbitrariness for the choice of $ S $ implies that $ S_{\va}^k(u) $ is $ k $-rectifiable. Let us show this claim. Without loss of generality, we only consider the ball $ B_{r_0}(x_1) $ and assume that $ 0<r_0<\f{1}{100} $. By the assumption of $ E $, we have 
\be
g_f(u;z,10r_0)=\vt_f(u;z,10r_0)-\vt(u;z)\leq\sg\label{G10r0small}
\ee 
for any $ z\in F $. Choosing $ \sg=\sg(\sg',\Lda,n,p)>0 $ sufficiently small, we can apply Lemma \ref{SmaHomMEMS} to obtain that $ u $ is $ (0,\sg') $-symmetric in $ B_{5s}(z) $ for any $ 0<s\leq r_0 $, where $ \sg'>0 $ is to be determined later. For $ z\in F\subset S_{\va}^k(u) $, $ u $ is not $ (k+1,\va) $-symmetric in $ B_{5s}(z) $. Choosing $ \sg'=\sg'(\va,\Lda,n,p)>0 $ sufficient small and using Corollary \ref{beta22MEMS}, we deduce that for any $ z\in F $ and $ 0<s\leq r_0 $,
$$
D_{\mu}^k(z,s)\leq C(\va,\ga,\Lda,n,p)s^{-k}\int_{B_s(z)}W_f(u;\zeta,s)\ud\mu(\zeta),
$$
where $ \mu:=\HH^k\llcorner F $. Integrating with respect to $ z $ for both sides of the inequality above on $ B_r(x) $ with $ x\in B_{r_0}(x_1) $ and $ 0<r\leq r_0 $, we have
\begin{align*}
\int_{B_r(x)}D_{\mu}^k(z,s)\ud\mu(z)&\leq Cs^{-k}\int_{B_r(x)}\(\int_{B_s(z)}W_f(u;\zeta,s)\ud\mu(\zeta)\)\ud\mu(z)\\
&\leq Cs^{-k}\int_{B_r(x)}\(\int_{B_{r+s}(x)}\chi_{B_s(z)}(\zeta)W_f(u;\zeta,s)\ud\mu(\zeta)\)\ud\mu(z)\\
&\leq Cs^{-k}\int_{B_{r+s}(x)}\HH^k(F\cap B_s(\zeta))W_f(u;\zeta,s)\ud\mu(\zeta)\\
&\leq C(\va,\ga,\Lda,n,p)\int_{B_{r+s}(x)}W_f(u;z,s)\ud\mu(z).
\end{align*}
For the last inequality above, we have used \eqref{quanti2}. It follows that
\begin{align*}
\int_{B_r(x)}\(\int_0^rD_{\mu}^k(z,s)\f{\ud s}{s}\)\ud\mu(z)&\leq C\int_{B_{2r}(x)}\(\int_0^r(\vt_f(u;z,2s)-\vt_f(u;z,s))\f{\ud s}{s}\)\ud\mu(z)\\
&=C\int_{B_{2r}(x)}\(\int_0^{2r}\vt_f(u;z,s)\f{\ud s}{s}-\int_0^r\vt_f(u;z,s)\f{\ud s}{s}\)\ud\mu(z)\\
&=C\int_{B_{2r}(x)}\(\int_r^{2r}\vt_f(u;z,s)\f{\ud s}{s}\)\ud\mu(z)\\
&\leq C(\va,\ga,\Lda,n,p)r^k
\end{align*}
for any $ x\in B_{r_0}(x_1) $ and $ 0<r\leq r_0 $, where for the last inequality, we have used \eqref{quanti2} and Proposition \ref{propupture}. Theorem \ref{Rei2} now implies that $ F\cap B_{r_0}(x_1) $ is $ k $-rectifiable. As a result, we prove the claim as desired.

Additionally, with the help of Proposition \ref{QuaConSplMEMS} and almost the same methods in the proof of Theorem 1.4 and 1.5 of \cite{NV17}, we can obtain that for $ \HH^k $-a.e. $ x\in S_{\va}^k(u) $ or $ S^k(u) $, there exists $ V\in\bG(n,k) $ such that any tangent function of $ u $ at $ x $ is $ k $-symmetric with respect to $ V $. Also, see \cite{FWZ24} for similar arguments in the proof of Theorem 1.21.

\subsection{Proof of Theorem \ref{main1}}

By Proposition \ref{propHolder} and Lemma \ref{Inclusion}, without loss of generality, we can assume that for some $ \ga=\ga(n,p,q)>0 $,
$$
[u]_{C^{0,\al}(B_2)}+[f]_{M^{2\al+n-4+\ga,2}(B_2)}\leq\Lda.
$$
The estimate \eqref{Mincontent} follows from Proposition \ref{corcomMEMS} and the first property of Theorem \ref{quantitativethm} with $ k=n-2 $. For \eqref{enhancement}, we only show the case when $ j=1 $ and $ f\equiv 0 $, and the general case follows from almost the same argument. By standard regularity theory of elliptic equations, if $ \inf_{B_{2s}(x)}u\geq\sg s^{\al} $, then
$$
\|\na u\|_{L^{\ift}(B_s(x))}\leq C(\sg,\Lda,n,p)s^{\al-1}.
$$
It implies that
$$
\{x\in B_1:u(x)\geq\va r^{\al}\}\subset\{x\in B_1:|\na u(x)|\leq C_0r^{\al-1}\},
$$
where $ C_0>0 $ depends only on $ \Lda,n,p $, and $ q $. Thus, we have
$$
\{x\in B_1:|\na u(x)|>C_0r^{\al-1}\}\subset\{x\in B_1:u(x)<\va r^{\al}\}
$$
for any $ 0<r<1 $. Letting $ \lda=Cr^{\al-1} $, \eqref{Mincontent} yields \eqref{enhancement}. The $ (n-2) $-rectifiability is a consequence of Remark \ref{remn2recti} and the third property of Theorem \ref{quantitativethm}.

\appendix

\section{}\label{App1}

\subsection{The spaces of Morrey and Campanato}\label{App11}

Let $ \lda\geq 0 $ and $ 1\leq q<+\ift $. Assume that $ \om\subset\R^n $ is a domain. We define the Morrey space $ M^{\lda,q}(\om) $ by
$$
M^{\lda,q}(\om):=\{f\in L^q(\om):[f]_{M^{\lda,q}(\om)}<+\ift\},
$$
where the seminorm $ [\cdot]_{M^{\lda,q}(\om)} $ is given by
$$
[f]_{M^{\lda,q}(\om)}:=\sup_{x\in \om,\,\,0<r<\diam(\om)}\(r^{-\lda}\int_{\om\cap B_r(x)}|f|^q\)^{\f{1}{q}}.
$$
The Campanato space $ \M^{\lda,q}(\om) $ is defined by
$$
\M^{\lda,q}(\om):=\{f\in L^q(\om):[f]_{\M^{\lda,q}(\om)}<+\ift\}
$$
with the seminorm
$$
[f]_{\M^{\lda,q}(\om)}:=\sup_{x\in \om,\,\,0<r<\diam(\om)}\(r^{-\lda}\int_{\om\cap B_r(x)}\left|f-\dashint_{\om\cap B_r(x)}f\right|^q\)^{\f{1}{q}}.
$$
The local Morrey and Campanato spaces $ M_{\loc}^{\lda,q}(\om) $ and $ \M_{\loc}^{\lda,q}(\om) $ are defined as
\begin{align*}
M_{\loc}^{\lda,q}(\om):=\{f\in L_{\loc}^q(\om):f\in M^{\lda,q}(K)\text{ for any }K\subset\subset \om\},\\
\M_{\loc}^{\lda,q}(\om):=\{f\in L_{\loc}^q(\om):f\in \M^{\lda,q}(K)\text{ for any }K\subset\subset \om\}.
\end{align*}

We first present some inclusion results of Morrey spaces. The proofs follow from direct calculations, and we omit them for simplicity.

\begin{lem}\label{Inclusion1}
Assume that $ \om $ is a bounded domain. If $ 0\leq\lda_1\leq\lda_2 $ and $ 1\leq q<+\ift $, then
\begin{align*}
M^{\lda_2,q}(\om)\subset M^{\lda_1,q}(\om)\quad\text{and}\quad
M_{\loc}^{\lda_2,q}(\om)\subset M_{\loc}^{\lda_1,q}(\om).
\end{align*}
In particular, for any $ f\in M^{\lda_2,q}(\om) $,
$$
[f]_{M^{\lda_1,q}(\om)}\leq C(\diam(\om))^{\f{\lda_2-\lda_1}{q}}[f]_{M^{\lda_2,q}(\om)},
$$
where $ C>0 $ depends only on $ \lda_1,\lda_2,n,q_1 $, and $ q_2 $.
\end{lem}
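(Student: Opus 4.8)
The plan is to reduce both inclusions to a single elementary scaling estimate between the two Morrey seminorms. First I would fix $f\in M^{\lda_2,q}(\om)$, a point $x\in\om$, and a radius $0<r<\diam(\om)$, and then simply insert the right power of $r$:
$$
r^{-\lda_1}\int_{\om\cap B_r(x)}|f|^q=r^{\lda_2-\lda_1}\(r^{-\lda_2}\int_{\om\cap B_r(x)}|f|^q\).
$$
Since $\lda_2-\lda_1\geq 0$ and $0<r<\diam(\om)<+\ift$ (here we use that $\om$ is bounded), the prefactor satisfies $r^{\lda_2-\lda_1}\leq(\diam(\om))^{\lda_2-\lda_1}$, while the parenthesized term is at most $[f]_{M^{\lda_2,q}(\om)}^q$ by the definition of the Morrey seminorm. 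Taking the supremum over all admissible $x$ and $r$, and then the $q$-th root, yields
$$
[f]_{M^{\lda_1,q}(\om)}\leq(\diam(\om))^{\f{\lda_2-\lda_1}{q}}[f]_{M^{\lda_2,q}(\om)},
$$
which is the stated inequality (one may absorb a harmless constant $C$ if desired), and in particular gives $M^{\lda_2,q}(\om)\subset M^{\lda_1,q}(\om)$.

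For the local inclusion I would argue by definition: if $f\in M_{\loc}^{\lda_2,q}(\om)$, then $f\in M^{\lda_2,q}(K)$ for every compact $K\subset\subset\om$; applying the global estimate above on each such $K$ (which is itself bounded) gives $f\in M^{\lda_1,q}(K)$, and hence $f\in M_{\loc}^{\lda_1,q}(\om)$. This requires nothing beyond the first part.

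There is essentially no obstacle here; the only points that deserve a moment of care are ensuring that $\diam(\om)$ is finite — guaranteed by the boundedness of $\om$ — and checking that the supremum defining each seminorm is taken over the same range of radii $0<r<\diam(\om)$, which is exactly how both $[\cdot]_{M^{\lda_1,q}(\om)}$ and $[\cdot]_{M^{\lda_2,q}(\om)}$ are defined in \S\ref{App11}, so the comparison of integrals above is legitimate term by term.
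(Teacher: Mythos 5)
Your proof is correct and is exactly the ``direct calculation'' that the paper alludes to while omitting the details: the identity $r^{-\lda_1}=r^{\lda_2-\lda_1}r^{-\lda_2}$ combined with $r<\diam(\om)<+\ift$ gives the seminorm comparison, and the local statement follows by applying it on each compact subset. No gaps; the constant $C$ can indeed be taken to be $1$.
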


\begin{lem}\label{Inclusion}
Assume that $ \om $ is a bounded domain. If $ 1\leq q_1<q_2<+\ift $ and $ \lda\geq 0 $ satisfy $ \f{q_1}{q_2}+\f{\lda}{n}<1 $, then $ M^{\lda,q_1}(\om)\subset L^{q_2}(\om) $. Moreover, for any $ f\in L^{q_2}(\om) $,
$$
[f]_{M^{\lda,q_1}(\om)}\leq C(\diam(\om))^{n(\f{1}{q_1}-\f{1}{q_2})-\f{\lda}{q_1}}\|f\|_{L^{q_2}(\om)},
$$
where $ C>0 $ depends only on $ \lda,n,q_1 $, and $ q_2 $.
\end{lem}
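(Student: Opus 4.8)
The plan is to deduce the whole statement from a single application of Hölder's inequality on balls, in the spirit of the classical embeddings between Lebesgue and Morrey spaces. Only the seminorm estimate really needs to be proved: once it is established for every $f\in L^{q_2}(\om)$, it shows $[f]_{M^{\lda,q_1}(\om)}<+\ift$ and hence yields the inclusion (read in the form $L^{q_2}(\om)\subset M^{\lda,q_1}(\om)$).

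First I would fix $f\in L^{q_2}(\om)$, a point $x\in\om$, and a radius $0<r<\diam(\om)$, and apply Hölder's inequality with the conjugate exponents $\f{q_2}{q_1}$ and $\f{q_2}{q_2-q_1}$ over the set $\om\cap B_r(x)$:
\[
\int_{\om\cap B_r(x)}|f|^{q_1}\leq\(\int_{\om\cap B_r(x)}|f|^{q_2}\)^{\f{q_1}{q_2}}\cL^n(\om\cap B_r(x))^{1-\f{q_1}{q_2}}\leq\w_n^{1-\f{q_1}{q_2}}\,\|f\|_{L^{q_2}(\om)}^{q_1}\,r^{n(1-\f{q_1}{q_2})},
\]
using $\cL^n(\om\cap B_r(x))\leq\cL^n(B_r(x))=\w_nr^n$ in the last step. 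Multiplying by $r^{-\lda}$ gives
\[
r^{-\lda}\int_{\om\cap B_r(x)}|f|^{q_1}\leq\w_n^{1-\f{q_1}{q_2}}\,\|f\|_{L^{q_2}(\om)}^{q_1}\,r^{\beta},\qquad\beta:=n\(1-\f{q_1}{q_2}\)-\lda.
\]

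Next I would observe that the hypothesis $\f{q_1}{q_2}+\f{\lda}{n}<1$ is exactly the assertion $\beta>0$, so the right-hand side above is increasing in $r$ on $(0,\diam(\om))$. Taking the supremum over $x\in\om$ and $0<r<\diam(\om)$ therefore replaces $r^{\beta}$ by $(\diam(\om))^{\beta}$, and extracting $q_1$-th roots — using $\beta/q_1=n(\f{1}{q_1}-\f{1}{q_2})-\f{\lda}{q_1}$ — produces the claimed bound with $C=\w_n^{\f{1}{q_1}-\f{1}{q_2}}$, depending only on $n,q_1,q_2$ (a fortiori only on $\lda,n,q_1,q_2$).

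I do not anticipate any genuine obstacle; this is exactly the kind of direct computation alluded to above. The one point to be careful about is that the strictness of $\f{q_1}{q_2}+\f{\lda}{n}<1$ (as opposed to $\leq 1$) is precisely what makes $\beta>0$, which is what lets the supremum over $r$ be controlled by the single power $(\diam(\om))^{\beta}$; if one only had $\beta\geq 0$ one would still obtain finiteness of the seminorm, but the clean scaling factor would have to be stated more carefully.
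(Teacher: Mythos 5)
Your proof is correct and is precisely the ``direct calculation'' that the paper alludes to but omits: a single application of H\"older's inequality on $\om\cap B_r(x)$, followed by the observation that the hypothesis $\f{q_1}{q_2}+\f{\lda}{n}<1$ makes the resulting exponent of $r$ positive so that the supremum over $r$ is attained at $r=\diam(\om)$. You were also right to read the inclusion as $L^{q_2}(\om)\subset M^{\lda,q_1}(\om)$: the direction printed in the statement is a typo, as both the quantitative estimate and the way the lemma is invoked in the proof of Theorem \ref{main1} (passing from $f\in L^q$ to $f\in M^{2\al+n-4+\ga,2}$) confirm.
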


The lemma below gives the compactness of the Morrey space.

\begin{lem}\label{MorreyL2}
Assume that $ \om $ is a bounded domain. Let $ 1<q<+\ift $ and $ \lda\geq 0 $. If $ \{f_i\} $ is a sequence in $ M^{\lda,q}(\om) $ such that $ [f_i]_{M^{\lda,q}(\om)} $ is uniformly bounded, then there exists $ C>0 $ depending only on $ \lda $ and $ q $ such that
\be
\|f_i\|_{L^q(\om)}\leq C(\diam(\om))^{\f{\lda}{q}}[f_i]_{M^{\lda,q}(\om)},\label{MorreyLpbound}
\ee
and up to a subsequence, $ f_i\wc f_{\ift} $ weakly in $ L^q(\om) $ with $ f_{\ift}\in M^{\lda,q}(\om) $.
\end{lem}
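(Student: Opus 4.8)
The plan is to establish the three assertions in turn, each resting on a soft principle rather than a hard estimate, so no delicate analysis is needed.

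First I would prove the bound \eqref{MorreyLpbound}. Fix any $x_0\in\om$. Every $y\in\om$ satisfies $|y-x_0|\leq\diam(\om)$, so $\om\subset\ol{B}_{\diam(\om)}(x_0)$, and since the sphere $\pa B_{\diam(\om)}(x_0)$ is Lebesgue-null we have $\cL^n(\om\setminus B_{\diam(\om)}(x_0))=0$. Hence, by monotone convergence as $r\uparrow\diam(\om)$,
$$\int_{\om}|f_i|^q=\lim_{r\uparrow\diam(\om)}\int_{\om\cap B_r(x_0)}|f_i|^q\leq\lim_{r\uparrow\diam(\om)}r^{\lda}[f_i]_{M^{\lda,q}(\om)}^q=(\diam(\om))^{\lda}[f_i]_{M^{\lda,q}(\om)}^q,$$
and taking $q$-th roots gives \eqref{MorreyLpbound} (indeed with $C=1$, which depends only on $\lda$ and $q$).

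Next, the assumed uniform bound $\sup_{i\in\Z_+}[f_i]_{M^{\lda,q}(\om)}<+\ift$ together with \eqref{MorreyLpbound} shows that $\{f_i\}$ is bounded in $L^q(\om)$. Since $1<q<+\ift$, the space $L^q(\om)$ is reflexive (and separable, $\om$ having finite measure), so bounded sequences are weakly sequentially precompact; thus, passing to a subsequence not relabelled, there is $f_{\ift}\in L^q(\om)$ with $f_i\wc f_{\ift}$ weakly in $L^q(\om)$.

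Finally I would check $f_{\ift}\in M^{\lda,q}(\om)$ by passing the Morrey seminorm to the weak limit. Fix $x\in\om$ and $0<r<\diam(\om)$. Multiplication by the bounded function $\chi_{\om\cap B_r(x)}$ is continuous from $(L^q(\om),\text{weak})$ to itself, so $f_i\chi_{\om\cap B_r(x)}\wc f_{\ift}\chi_{\om\cap B_r(x)}$ in $L^q(\om)$; as the $L^q(\om)$-norm is convex and strongly continuous it is weakly lower semicontinuous, whence
$$\(\int_{\om\cap B_r(x)}|f_{\ift}|^q\)^{\f{1}{q}}\leq\liminf_{i\to+\ift}\(\int_{\om\cap B_r(x)}|f_i|^q\)^{\f{1}{q}}\leq r^{\f{\lda}{q}}\sup_{i\in\Z_+}[f_i]_{M^{\lda,q}(\om)}.$$
Taking the supremum over all admissible $x$ and $r$ yields $[f_{\ift}]_{M^{\lda,q}(\om)}\leq\sup_{i\in\Z_+}[f_i]_{M^{\lda,q}(\om)}<+\ift$, i.e. $f_{\ift}\in M^{\lda,q}(\om)$. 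There is no genuine obstacle in this argument; the only two points meriting a word of care are the strict inequality $r<\diam(\om)$ in the definition of $[\cdot]_{M^{\lda,q}(\om)}$, handled above via the null-sphere observation, and the weak lower semicontinuity of the localized $L^q$-norms, both of which are standard.
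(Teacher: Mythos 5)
Your proof is correct and follows essentially the same route as the paper: the $L^q$ bound from the definition of the Morrey seminorm, weak compactness of bounded sequences in the reflexive space $L^q(\om)$, and weak lower semicontinuity of the localized $L^q$ norms to conclude $f_{\ift}\in M^{\lda,q}(\om)$. Your extra care with the strict inequality $r<\diam(\om)$ (via the null-sphere and monotone convergence argument) is a detail the paper elides by saying the bound "follows directly from the definition," but it is the same argument.
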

\begin{proof}
The estimate \eqref{MorreyLpbound} follows directly from the definition of $ [\cdot]_{M^{\lda,q}(\om)} $. Since $ 1<q<+\ift $, there exists $ f_{\ift}\in L^q(\om) $ such that up to a subsequence, we have $ f_i\wc f_{\ift} $ weakly in $ L^q(\om) $. For any $ x\in \om $ and $ 0<r<\diam(\om) $, by the property of weak convergence, we have
$$
r^{-\lda}\int_{\om\cap B_r(x)}|f_{\ift}|^q\leq\liminf_{i\to+\ift}\(r^{-\lda}\int_{\om\cap B_r(x)}|f_i|^q\),
$$
which implies that $ f_{\ift}\in M^{\lda,q}(\om) $.
\end{proof}

Using the Campanato space, we have the characterization of H\"{o}lder's space.

\begin{lem}[\cite{GM05}, Theorem 5.5]\label{Campanuse}
Let $ 1\leq q<+\ift $ and $ n<\lda\leq n+q $. Suppose that $ \om\subset\R^n $ is a bounded domain and assume that there exists $ c_0>0 $ such that
$$
\cL^n(\om\cap B_r(x))\geq c_0r^n
$$
for any $ x\in \om $ and $ 0<r<\diam(\om) $. Then $ \M^{\lda,q}(\om)=C^{0,\f{\lda-n}{q}}(\ol{\om}) $. In particular,
$$
\f{1}{C}[f]_{C^{0,\f{\lda-n}{q}}(\ol{\om})}\leq[f]_{\M^{\lda,q}(\om)}\leq C[f]_{C^{0,\f{\lda-n}{q}}(\ol{\om})},
$$
where $ C>0 $ depends only on $ c_0,\lda,n $, and $ q $.
\end{lem}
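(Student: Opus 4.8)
The plan is to prove the two inclusions separately, following the classical Campanato argument (cf. also Morrey and Meyers). Write $\gamma:=\f{\lda-n}{q}$; the hypothesis $n<\lda\leq n+q$ gives $0<\gamma\leq 1$, so $C^{0,\gamma}(\ol{\om})$ is a genuine H\"older space. For $x\in\om$ and $0<r<\diam(\om)$ abbreviate $f_{x,r}:=\dashint_{\om\cap B_r(x)}f$. The elementary inclusion $C^{0,\gamma}(\ol{\om})\subset\M^{\lda,q}(\om)$ is immediate: since $f_{x,r}$ lies between $\inf_{\om\cap B_r(x)}f$ and $\sup_{\om\cap B_r(x)}f$, one has $|f(y)-f_{x,r}|\leq\osc_{\om\cap B_r(x)}f\leq (2r)^{\gamma}[f]_{C^{0,\gamma}(\ol{\om})}$ for $y\in\om\cap B_r(x)$, and then, using only the trivial bound $\cL^n(\om\cap B_r(x))\leq\w_nr^n$ together with $\gamma q=\lda-n$,
$$
r^{-\lda}\int_{\om\cap B_r(x)}|f-f_{x,r}|^q\leq r^{-\lda}\cdot\w_nr^n\cdot 2^{\gamma q}r^{\gamma q}[f]_{C^{0,\gamma}(\ol{\om})}^q=C(n,\lda,q)[f]_{C^{0,\gamma}(\ol{\om})}^q,
$$
whence $[f]_{\M^{\lda,q}(\om)}\leq C[f]_{C^{0,\gamma}(\ol{\om})}$; this direction needs no density hypothesis.

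The substantive direction is $\M^{\lda,q}(\om)\subset C^{0,\gamma}(\ol{\om})$, and this is where the density bound $\cL^n(\om\cap B_r(x))\geq c_0r^n$ is essential. First I would compare consecutive dyadic averages: by Jensen's inequality and this bound, for $0<r<\diam(\om)$,
$$
|f_{x,r}-f_{x,r/2}|^q\leq\dashint_{\om\cap B_{r/2}(x)}|f-f_{x,r}|^q\leq\f{1}{c_0(r/2)^n}\int_{\om\cap B_r(x)}|f-f_{x,r}|^q\leq\f{2^n}{c_0}[f]_{\M^{\lda,q}(\om)}^qr^{\lda-n},
$$
so $|f_{x,r}-f_{x,r/2}|\leq C(c_0,n,\lda,q)[f]_{\M^{\lda,q}(\om)}r^{\gamma}$. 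Since $\gamma>0$, summing the geometric series over $r\mapsto 2^{-j}r$ shows $\{f_{x,2^{-j}r}\}_j$ is Cauchy; denote its limit $\wt{f}(x)$, and telescoping gives $|f_{x,r}-\wt{f}(x)|\leq C[f]_{\M^{\lda,q}(\om)}r^{\gamma}$. The Lebesgue differentiation theorem yields $\wt{f}=f$ a.e., so $\wt{f}$ is the asserted continuous representative. For the H\"older seminorm bound, fix $x,y\in\om$, put $d:=|x-y|$, and note $\om\cap B_d(x)$ is contained in both $\om\cap B_{2d}(x)$ and $\om\cap B_{2d}(y)$; assuming $2d<\diam(\om)$,
$$
|f_{x,2d}-f_{y,2d}|^q\leq C\dashint_{\om\cap B_d(x)}\(|f-f_{x,2d}|^q+|f-f_{y,2d}|^q\)\leq\f{C}{c_0d^n}(2d)^{\lda}[f]_{\M^{\lda,q}(\om)}^q,
$$
hence $|f_{x,2d}-f_{y,2d}|\leq C[f]_{\M^{\lda,q}(\om)}d^{\gamma}$; combining this with the telescoping estimates $|\wt{f}(x)-f_{x,2d}|,\,|\wt{f}(y)-f_{y,2d}|\leq C[f]_{\M^{\lda,q}(\om)}d^{\gamma}$ gives $|\wt{f}(x)-\wt{f}(y)|\leq C[f]_{\M^{\lda,q}(\om)}|x-y|^{\gamma}$, i.e. the left-hand inequality of the statement.

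I expect the only point requiring care — rather than a genuine obstacle — to be the bookkeeping near $\pa\om$ and for radii comparable to $\diam(\om)$: all the estimates above are meaningful only for $0<r<\diam(\om)$, and one must check that the dyadic comparison and the two-point estimate still run when the balls are clipped by the boundary. This is exactly what the hypothesis $\cL^n(\om\cap B_r(x))\geq c_0r^n$ is designed to neutralize, since it keeps $\dashint_{\om\cap B_r(x)}$ comparable to $\dashint_{B_r(x)}$; the complementary case $2d\geq\diam(\om)$ in the two-point estimate is handled by instead working at scale $\diam(\om)$. Finally, the identification $\wt{f}=f$ a.e., together with the routine bound $\|\wt{f}\|_{L^{\ift}(\om)}\leq C\big(\cL^n(\om)^{-1/q}\|f\|_{L^q(\om)}+(\diam(\om))^{\gamma}[f]_{\M^{\lda,q}(\om)}\big)$ obtained by taking $r=\diam(\om)$ in the telescoping estimate, upgrades the seminorm equivalence to the stated equality of the spaces $\M^{\lda,q}(\om)=C^{0,\f{\lda-n}{q}}(\ol{\om})$.
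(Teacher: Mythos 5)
Your proof is correct and is the classical Campanato integral-characterization argument (dyadic comparison of clipped-ball averages, telescoping to a continuous representative, and the two-point estimate at the comparable scale), which is exactly the proof of Theorem 5.5 in the cited reference \cite{GM05}; the paper itself states this lemma in the appendix without reproducing a proof. The boundary/large-radius bookkeeping you flag is indeed the only delicate point, and the measure-density hypothesis handles it as you describe.
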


\subsection{Some results for elliptic equations} We will recall some Liouville-type properties and regularity results for some elliptic equations in this subsection.

\begin{defn}\label{defudeltau}
Let $ \om\subset\R^n $ be a domain. We call $ u\in H_{\loc}^1(\om) $ a stationary solution of the equation $ u\Delta u=0 $ in $ \om $ if the following two properties hold.
\begin{enumerate}
\item $ u\Delta u=0 $ in the weak sense that for any $ \vp\in C_0^{\ift}(\om) $,
\be
\int_{\om}(|\na u|^2\vp+(\na u\cdot\na\vp)u)=0.\label{udeltau1}
\ee
\item For any $ Y\in C_0^{\ift}(\om,\R^n) $,
\be
\int_{\om}(|\na u|^2\op{div}Y-2DY(\na u,\na u))=0.\label{udeltau2}
\ee
\end{enumerate}
\end{defn}

The following lemma is a Liouville-type result for the solutions given by Definition \ref{defudeltau}.

\begin{lem}[\cite{DWW16}, Theorem 2.2]\label{ulapulem}
Let $ u\in(C_{\loc}^{0,\al}\cap H_{\loc}^1)(\R^n) $ be such that $ u\geq 0 $ and
\be
\sup_{R>0}[u]_{C^{0,\al}(\ol{B}_R)}\leq\Lda.\label{BRholder}
\ee
If $ u $ is a stationary solution of $ u\Delta u=0 $ in $ \R^n $, then $ u $ is a constant function.
\end{lem}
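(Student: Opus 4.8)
The plan is to reduce to Liouville's theorem for entire harmonic functions of sublinear growth. First I would show that $u$ is harmonic on the open set $\om^+:=\{u>0\}$. On any ball $B\subset\subset\om^+$ the continuous function $u$ is bounded below by a positive constant, so for $\psi\in C_0^{\ift}(B)$ the function $\psi/u$ lies in $H_0^1(B)$ and is admissible in \eqref{udeltau1}; expanding $\na(\psi/u)$, the two terms carrying $|\na u|^2\psi/u$ cancel and one is left with $\int_B\na u\cdot\na\psi=0$. Hence $u$ is weakly harmonic in $\om^+$, thus smooth there with $\Delta u=0$, and the whole difficulty concentrates on the free boundary $\pa\om^+\subset\{u=0\}$. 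This is precisely where the stationarity \eqref{udeltau2} must enter: without it, objects such as $(x_n)_+$ --- harmonic off a hyperplane, nonnegative, globally Lipschitz --- would already be counterexamples.

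\textbf{Monotonicity formula, Caccioppoli, and homogeneous blow-ups.} The identity \eqref{udeltau2} is exactly the inner-variation (stationary) condition for $\int|\na u|^2$, and testing \eqref{udeltau1} against $\phi_{x,r}$ (rather than $u\phi_{x,r}$) plays, verbatim, the role that \eqref{WeakConMEMS} tested with $u\phi_{x,r}$ plays in the proof of Proposition \ref{MonFor}. Running that computation with the $u^{1-p}$ and $f$ terms deleted shows that the mollified density $\vt(u;x,r):=r^{-2\al}\big(\tfrac12 r^{2-n}\int_{\R^n}|\na u|^2\phi_{x,r}+\al r^{-n}\int_{\R^n}u^2\dot{\phi}_{x,r}\big)$ (i.e. the functionals of Definition \ref{Defdifffuncti} with those terms removed) is nondecreasing in $r$, with $\frac{\ud}{\ud r}\vt(u;x,r)=-2r^{-2\al-n-1}\int_{\R^n}|(y-x)\cdot\na u-\al u|^2\dot{\phi}_{x,r}\,\ud y\geq0$, the analogue of Remark \ref{udthetaudrem}. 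From \eqref{udeltau1} one also gets the Caccioppoli bound $\int|\na u|^2\varphi^2\leq 4\int u^2|\na\varphi|^2$, which with \eqref{BRholder} gives $\int_{B_r(x)}|\na u|^2\leq C\Lda^2 r^{n-2}(r^{\al}+u(x))^2$; hence at any $x_0$ with $u(x_0)=0$ the density $\vt(u;x_0,\cdot)$ is uniformly bounded on $(0,\ift)$ and has finite limits at $0$ and at $+\ift$. Rescaling $u_{x_0,r}(y):=r^{-\al}u(x_0+ry)$ and using the compactness and strong $H_{\loc}^1$ convergence of Proposition \ref{propConv}, blow-ups ($r\to0$) and blow-downs ($r\to\ift$) converge along subsequences to nonnegative stationary solutions of $h\Delta h=0$, still globally $\al$-H\"older, vanishing at $0$, and $\al$-homogeneous about $0$ since $\frac{\ud}{\ud r}\vt$ vanishes in the limit.

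\textbf{Classification, dimension reduction, and Liouville.} I would then classify the homogeneous profiles: on $\{h>0\}$ an $\al$-homogeneous harmonic $h$ reads $h=\rho^{\al}g(\omega)$ with $g\geq0$ a Dirichlet eigenfunction on $\{g>0\}\subset\Ss^{n-1}$ for the eigenvalue $\al(\al+n-2)$, while \eqref{udeltau2} --- the continuity of the energy--momentum tensor across the free boundary --- additionally forces $\pa_\nu g=0$ on the regular part of $\pa\{g>0\}$, so $g$ has vanishing Cauchy data there and unique continuation gives $g\equiv0$; thus every nonnegative $\al$-homogeneous stationary solution is identically zero. Feeding this, the monotonicity formula, and the constancy of the density along the invariance spine of a homogeneous solution (the analogue of Lemma \ref{DWW58}) into Federer's dimension-reduction principle yields the dichotomy: either $u$ is constant, or $\{u=0\}$ is relatively closed with $\dim_{\HH}\{u=0\}\leq n-2$. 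In the latter case $\{u=0\}$ is $\HH^{n-1}$-null and of zero $H^1$-capacity, hence removable for the bounded $H_{\loc}^1$ function $u$ that is harmonic off it: choosing cutoffs $\chi_{\va}$ in shrinking neighbourhoods of $\{u=0\}$ with $\int|\na\chi_{\va}|^2\to0$ and writing $\int\na u\cdot\na\varphi=\int\na u\cdot\na(\chi_{\va}\varphi)\to0$ for $\varphi\in C_0^{\ift}(\R^n)$ shows $u$ is harmonic on all of $\R^n$; being nonnegative with $|u(x)|\leq u(0)+\Lda|x|^{\al}$ and $\al<1$, Liouville forces $u$ to be a nonnegative constant, contradicting nonconstancy. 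I expect the genuine obstacle to be precisely this last block --- the classification of nonnegative $\al$-homogeneous stationary solutions of $h\Delta h=0$ and its dimension-reduction counterpart, in particular making the ``$\pa_\nu g=0$''/unique-continuation step rigorous at possibly irregular free-boundary points --- since it is the only place where the stationary hypothesis does real work; once the Minkowski/Hausdorff control of $\{u=0\}$ is in hand, the removability and Liouville steps are routine.
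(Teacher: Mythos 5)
First, note that the paper does not prove this lemma at all: it is quoted verbatim from \cite{DWW16} (Theorem 2.2) in the appendix, so there is no in-paper argument to compare yours against. Judged on its own, the first half of your proposal is sound: the $\psi/u$ test does show $u$ is harmonic on $\{u>0\}$; testing \eqref{udeltau1} with $\phi_{x,r}$ and \eqref{udeltau2} with $Y=\phi_{x,r}(y)(y-x)$ does reproduce the computation of Proposition \ref{MonFor} with the $u^{1-p}$ and $f$ terms deleted, giving the Weiss-type monotonicity; and the Caccioppoli bound plus \eqref{BRholder} makes the density bounded at rupture points, so homogeneous blow-ups and blow-downs exist (though Proposition \ref{propConv} does not literally apply to $u\Delta u=0$ — the strong $H^1_{\loc}$ convergence needs the separate level-set integration-by-parts argument the paper runs inside the proof of Proposition \ref{propHolder}).

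The genuine gap is exactly where you place it, but the mechanism you propose for closing it is not just incomplete — it fails. You claim stationarity forces $\pa_\nu g=0$ on the regular part of $\pa\{g>0\}$ and then invoke unique continuation. The inner variation only forces the \emph{jump} of the energy–momentum tensor across $\pa\{h>0\}$ to vanish; this reduces to $\pa_\nu h=0$ only when $\{h=0\}$ has nonempty interior on the other side. When the zero set is a slit, the condition is automatically satisfied and carries no information. Concretely, take $n=2$, $\al=\f{1}{2}$ and $h(\rho,\theta)=\rho^{1/2}\sin(\theta/2)\geq 0$ on $\theta\in[0,2\pi]$ (i.e.\ the imaginary part of $\sqrt{z}$ with branch cut on the positive axis): one checks $\f{1}{2}\Delta(h^2)=\f{1}{4\rho}=|\na h|^2$, so \eqref{udeltau1} holds, $h$ is $\f12$-homogeneous and uniformly $\f12$-H\"older, and the one-sided contributions of $|\na h|^2\nu_i-2(\pa_\nu h)\pa_i h$ from the two faces of the slit cancel identically. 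Stationarity fails for this $h$ only through a Dirac mass at the origin (its Hopf differential is $-\f{1}{16z}$, whose $\pa_{\bar z}$ is a point mass), i.e.\ on the \emph{singular} stratum of the free boundary, which your argument never examines. So the classification of nonnegative $\al$-homogeneous stationary profiles — which is essentially equivalent to the lemma itself, since once all profiles vanish the monotonicity pinching at any rupture point already forces $u$ to be homogeneous and hence zero, making your dimension-reduction/removability detour superfluous — remains entirely open in your write-up. Two smaller points: $\dim_{\HH}\{u=0\}\leq n-2$ alone does not give zero $H^1$-capacity (you need locally finite or $\sigma$-finite $\HH^{n-2}$, or a Minkowski bound), so the cutoff construction $\int|\na\chi_{\va}|^2\to 0$ is unjustified as stated; and the Federer reduction itself presupposes the low-dimensional classification you have not supplied.
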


As a direct application of the above lemma, we obtain the following Liouville-type property of harmonic functions. We can also get such a result by applying the average formula for harmonic functions.

\begin{cor}\label{Liouvillecla}
Assume that $ u $ is a harmonic function in $ \R^n $ and satisfies \eqref{BRholder}, then $ u $ is a constant function.
\end{cor}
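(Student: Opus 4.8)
The plan is to prove Corollary \ref{Liouvillecla} directly from classical interior estimates for harmonic functions, using crucially that the H\"older exponent in \eqref{BRholder} is $\al=\al_p=\f{2}{p+1}\in(0,1)$, so that a uniform bound on the $C^{0,\al}$ seminorm forces the oscillation of $u$ on a ball of radius $R$ to grow only like $R^{\al}$, i.e. sublinearly. Alternatively, when $u\geq 0$ one could observe that a harmonic function is a stationary solution of $u\Delta u=0$ in the sense of Definition \ref{defudeltau} — \eqref{udeltau1} follows by integrating the second term by parts and using $\Delta u=0$, and \eqref{udeltau2} is the standard inner-variation (Pohozaev-type) identity for the Dirichlet energy $\int|\na u|^2$ obtained by multiplying $\Delta u=0$ by $Y\cdot\na u$ — and then quote Lemma \ref{ulapulem}. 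However, since Corollary \ref{Liouvillecla} does not impose any sign condition on $u$, the direct argument below is the one I would actually write.

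First I would recall that every harmonic function on $\R^n$ is smooth (Weyl's lemma / interior regularity), so $\na u$ makes sense and each partial derivative $\pa_iu$ is again harmonic on $\R^n$. Next, fix $x\in\R^n$ and $R>0$. Applying the mean value property to the harmonic function $\pa_iu$ and then the divergence theorem, one gets $\pa_iu(x)=\cL^n(B_R(x))^{-1}\int_{B_R(x)}\pa_iu=\cL^n(B_R(x))^{-1}\int_{\pa B_R(x)}u\,\nu_i\,\ud\HH^{n-1}$, where $\nu$ is the outward unit normal. Since $\int_{\pa B_R(x)}\nu_i\,\ud\HH^{n-1}=0$, I may subtract the constant $u(x)$ inside the boundary integral and then estimate with \eqref{BRholder}:
$$|\pa_iu(x)|\leq\f{\HH^{n-1}(\pa B_R(x))}{\cL^n(B_R(x))}\sup_{\pa B_R(x)}|u-u(x)|\leq\f{n}{R}\,[u]_{C^{0,\al}(\ol{B}_R(x))}R^{\al}\leq n\Lda R^{\al-1}.$$
Because $\al-1<0$, letting $R\to+\ift$ yields $\pa_iu(x)=0$ for every $i\in\Z\cap[1,n]$ and every $x\in\R^n$, hence $\na u\equiv 0$ and $u$ is constant.

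The main (and essentially only) point requiring care is that \eqref{BRholder} does \emph{not} make $u$ bounded — one only gets $|u(x)-u(0)|\leq\Lda|x|^{\al}$, which grows — so one cannot invoke the classical Liouville theorem for bounded harmonic functions, nor reduce cleanly to Lemma \ref{ulapulem} (which needs $u\geq 0$ and a globally bounded seminorm but not boundedness of $u$ itself). The remedy used above is to work with the oscillation $\sup_{\pa B_R(x)}|u-u(x)|$ rather than with $\sup|u|$; the decisive gain $R^{\al-1}\to 0$ is exactly where the structural constant $\al=\f{2}{p+1}<1$ of the equation \eqref{MEMSeq} enters. Beyond that the argument is routine, so I do not expect any substantive obstacle.
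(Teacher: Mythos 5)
Your proof is correct, but it takes a different route from the paper. The paper's own proof is a one-line reduction: since $\Delta u=0$, the function $u$ satisfies \eqref{udeltau1} and \eqref{udeltau2}, so Lemma \ref{ulapulem} applies and $u$ is constant. Your argument is instead the classical direct one: differentiate, use the mean value property for the harmonic function $\pa_iu$ together with the divergence theorem to write $\pa_iu(x)$ as a boundary average of $u-u(x)$, and conclude $|\pa_iu(x)|\leq n\Lda R^{\al-1}\to 0$ since $\al<1$. (One small point worth making explicit: \eqref{BRholder} bounds the seminorm on balls $\ol{B}_R$ centered at the origin, so to control $\sup_{\pa B_R(x)}|u-u(x)|$ you should note $B_R(x)\subset B_{R+|x|}$ and use the uniform bound there; this is immediate.) Your approach buys two things. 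It is self-contained and elementary, whereas the paper's route leans on the nontrivial Liouville theorem of \cite{DWW16} quoted as Lemma \ref{ulapulem}. More substantively, as you observe, Lemma \ref{ulapulem} is stated only for $u\geq 0$, while Corollary \ref{Liouvillecla} imposes no sign condition — and the corollary is indeed applied in the paper (e.g.\ in Proposition \ref{Blowprop} and Proposition \ref{propHolder}) to limits such as $w_{\ift}$ or $u_{\ift}$ that are not a priori nonnegative. Your direct argument sidesteps this entirely, so it is arguably the more robust proof of the statement as written.
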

\begin{proof}
Given $ \Delta u=0 $, we see that $ u $ satisfies \eqref{udeltau1} and \eqref{udeltau2}. As a result, it follows from Lemma \ref{ulapulem} that $ u $ is a constant function.
\end{proof}

The following lemma gives the interior regularity estimate for weak solutions of $ -\Delta u=f $, where $ f $ is in the Morrey space. It is analogous to similar results of Chapter 5 in \cite{GM05}, a consequence of the iteration argument and Lemma \ref{Campanuse}.

\begin{lem}\label{regular1}
Let $ 2<\lda<4 $, $ 0<r\leq 1 $ and $ x\in\R^n $. Assume that $ f\in M^{n-\lda,2}(B_{2r}(x)) $ and $ u\in H^1(B_{2r}(x)) $ is a weak solution of $ -\Delta u=f $ in the sense that for any $ \vp\in C_0^{\ift}(B_{2r}(x)) $,
$$
\int_{B_{2r}(x)}\na u\cdot\na\vp=\int_{B_{2r}(x)}f\vp.
$$
Then $ u\in C^{0,\f{4-\lda}{2}}(\ol{B}_r(x)) $ and
$$
[u]_{C^{0,\f{4-\lda}{2}}(\ol{B}_r(x))}\leq C\[\(r^{\lda-n-2}\int_{B_{2r}(x)}|\na u|^2\)^{\f{1}{2}}+[f]_{M^{n-\lda,2}(B_{2r}(x))}\],
$$
where $ C>0 $ depends only on $ \lda $ and $ n $.
\end{lem}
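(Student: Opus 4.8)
The plan is to reduce to the case $r=1$ by a scaling, and then to derive a Campanato-type growth estimate for $u$ via the classical comparison-with-harmonic-functions argument, exactly in the spirit of Chapter~5 of \cite{GM05}; the statement then follows from the Campanato--H\"older identification in Lemma~\ref{Campanuse}.

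First I would rescale. Set $\tilde u(z):=u(x+rz)$ and $\tilde f(z):=r^2f(x+rz)$ on $B_2$. One checks that $-\Delta\tilde u=\tilde f$ in the weak sense, that $\|\na\tilde u\|_{L^2(B_2)}=r^{\f{2-n}{2}}\|\na u\|_{L^2(B_{2r}(x))}$ and $[\tilde f]_{M^{n-\lda,2}(B_2)}\le r^{\f{4-\lda}{2}}[f]_{M^{n-\lda,2}(B_{2r}(x))}$, while $[\tilde u]_{C^{0,\f{4-\lda}{2}}(\ol B_1)}=r^{\f{4-\lda}{2}}[u]_{C^{0,\f{4-\lda}{2}}(\ol B_r(x))}$ (here one uses $2-n-4+\lda=\lda-n-2$ to match exponents). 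Hence it suffices to prove
$$[u]_{C^{0,\f{4-\lda}{2}}(\ol B_1)}\le C\big(\|\na u\|_{L^2(B_2)}+[f]_{M^{n-\lda,2}(B_2)}\big).$$
For this, fix $y\in B_1$ and $0<\sg\le\rho\le1$ (so $B_\rho(y)\subset B_2$) and split $u=h+w$ on $B_\rho(y)$, where $h$ is harmonic with $h=u$ on $\pa B_\rho(y)$ and $w=u-h\in H_0^1(B_\rho(y))$ solves $-\Delta w=f$. Testing the $w$-equation with $w$ and using Poincar\'e gives $\|\na w\|_{L^2(B_\rho(y))}\le C\rho\|f\|_{L^2(B_\rho(y))}$, hence $\|w\|_{L^2(B_\rho(y))}\le C\rho^2\|f\|_{L^2(B_\rho(y))}\le C\rho^{\f{n+4-\lda}{2}}[f]_{M^{n-\lda,2}(B_2)}$. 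For the harmonic part I would invoke the standard decay $\int_{B_\sg(y)}|h-\dashint_{B_\sg(y)}h|^2\le C(\sg/\rho)^{n+2}\int_{B_\rho(y)}|h-\dashint_{B_\rho(y)}h|^2$.

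Writing $\varphi(y,t):=\int_{B_t(y)}|u-\dashint_{B_t(y)}u|^2$, which is nondecreasing in $t$ since $\varphi(y,t)=\min_{c\in\R}\int_{B_t(y)}|u-c|^2$, decomposing $u-\dashint_{B_\sg(y)}u$ into its $h$- and $w$-parts, and estimating $\int_{B_\rho(y)}|h-\dashint_{B_\rho(y)}h|^2$ by $\varphi(y,\rho)$ plus $\|w\|_{L^2(B_\rho(y))}^2$, one arrives at
$$\varphi(y,\sg)\le C\(\f{\sg}{\rho}\)^{n+2}\varphi(y,\rho)+C\rho^{n+4-\lda}[f]_{M^{n-\lda,2}(B_2)}^2,\qquad 0<\sg\le\rho\le1.$$
Since $2<\lda<4$ we have $n<n+4-\lda<n+2$, so the exponent in the error term is strictly smaller than $n+2$, and the standard iteration lemma (see Chapter~5 of \cite{GM05}) applies (with vanishing "$\va$"): $\varphi(y,\sg)\le C\sg^{n+4-\lda}\big(\varphi(y,1)+[f]_{M^{n-\lda,2}(B_2)}^2\big)$ for all $0<\sg\le1$. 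By Poincar\'e, $\varphi(y,1)\le C\int_{B_1(y)}|\na u|^2\le C\int_{B_2}|\na u|^2$. The complementary range $1\le t<\diam(B_1)$ in the Campanato seminorm of $u$ on $B_1$, and the fact that there the averages are taken over $B_1\cap B_t(y)$ rather than $B_t(y)$, are controlled trivially by $\int_{B_2}|\na u|^2$. Thus $u\in\M^{n+4-\lda,2}(B_1)$ with the asserted bound, and since $n<n+4-\lda\le n+2$ and balls satisfy the measure-density condition with a dimensional constant, Lemma~\ref{Campanuse} with $q=2$ identifies $\M^{n+4-\lda,2}(B_1)=C^{0,\f{4-\lda}{2}}(\ol B_1)$ with comparable seminorms. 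Undoing the scaling gives the claim.

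The argument is entirely standard; the only points that need a little care are the bookkeeping of the scaling exponents (the whole content of the exact form of the estimate) and ensuring that every comparison ball $B_\rho(y)$, $y\in B_1$, stays inside the domain $B_2$ — which is precisely why the hypotheses are posed on $B_{2r}(x)$ but the conclusion only on $B_r(x)$. There is no real analytic obstacle beyond the classical machinery.
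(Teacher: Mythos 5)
Your proof is correct and takes essentially the same approach as the paper: the harmonic-replacement decomposition $u=h+w$ with $-\Delta w=f$, $w\in H_0^1$, the energy bound for $w$ obtained by testing with $w$ and Poincar\'e, the iteration lemma (Lemma \ref{iteration}), and the Campanato--H\"older identification of Lemma \ref{Campanuse}. The only immaterial differences are that you first rescale to $r=1$ and iterate directly on the mean oscillation $\varphi(y,t)$ using the harmonic decay with exponent $n+2$, whereas the paper iterates on the Dirichlet energy $\psi(\rho)=\int_{B_\rho(y)}|\na u|^2$ with exponent $n$ and passes to the Campanato seminorm via Poincar\'e only at the end; both variants require exactly the hypothesis $\lda>2$.
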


To show this lemma, we need the following result. It is a consequence of standard iteration arguments.

\begin{lem}[\cite{GM05}, Lemma 5.13]\label{iteration}
Let $ A,\beta_1,\beta_2>0 $, and $ r>0 $ with $ \beta_1>\beta_2 $. Assume that $ \psi:[0,+\ift)\to[0,+\ift) $ is a nondecreasing function satisfying that for any $ 0<\rho\leq r $,
$$
\psi(\rho)\leq A\[\(\f{\rho}{r}\)^{\beta_1}+\va\]\psi(r)+Br^{\beta_2}.
$$
There exist $ \va_0,C>0 $, depending only on $ A,\beta_1,\beta_2 $ such that if $ 0<\va\leq\va_0 $, then for any $ 0\leq\rho\leq r $,
$$
\rho^{-\beta_2}\psi(\rho)\leq C(r^{-\beta_2}\psi(r)+B).
$$
\end{lem}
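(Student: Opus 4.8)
The plan is to run the classical dyadic iteration. Throughout I read the hypothesis in its intended scale-invariant form, namely that $\psi(\rho)\le A\big[(\rho/R)^{\beta_1}+\va\big]\psi(R)+BR^{\beta_2}$ holds for \emph{every} pair $0<\rho\le R\le r$ (this is the version actually used in applications such as Lemma \ref{regular1}, where $\psi$ is a localized energy quantity and the decay estimate is available at all scales below $r$). The first step is to freeze two auxiliary constants depending only on $A,\beta_1,\beta_2$: choose an intermediate exponent $\ga$ with $\beta_2<\ga<\beta_1$, which is possible precisely because $\beta_1>\beta_2$; then choose $\tau\in(0,1)$ so small that $2A\tau^{\beta_1}\le\tau^{\ga}$; and set $\va_0:=\tau^{\beta_1}$, so that $A\va_0=A\tau^{\beta_1}\le\tfrac12\tau^{\ga}$. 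Applying the hypothesis with $\rho=\tau R$ and using $\va\le\va_0$ yields the one-step contraction $\psi(\tau R)\le\big(A\tau^{\beta_1}+A\va\big)\psi(R)+BR^{\beta_2}\le\tau^{\ga}\psi(R)+BR^{\beta_2}$, valid for all $0<R\le r$.

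The second step iterates this contraction. Fixing $R\le r$ and writing $a_k:=\psi(\tau^kR)$, one gets $a_k\le\tau^{\ga}a_{k-1}+BR^{\beta_2}\tau^{(k-1)\beta_2}$; unrolling the recursion and bounding the geometric sum $\sum_{i\ge0}\tau^{i(\ga-\beta_2)}=(1-\tau^{\ga-\beta_2})^{-1}=:C_1$ — finite exactly because $\ga>\beta_2$ — produces $a_k\le C_1'\tau^{k\beta_2}\big(\psi(R)+BR^{\beta_2}\big)$ with $C_1':=\max\{1,C_1\tau^{-\beta_2}\}$ (here I also use $\tau^{k\ga}\le\tau^{k\beta_2}$). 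For an arbitrary $\rho\in(0,R]$ pick the integer $k\ge0$ with $\tau^{k+1}R<\rho\le\tau^kR$; monotonicity of $\psi$ gives $\psi(\rho)\le a_k$, while $\tau^{k\beta_2}\le\tau^{-\beta_2}(\rho/R)^{\beta_2}$, so altogether $\psi(\rho)\le C\big[(\rho/R)^{\beta_2}\psi(R)+B\rho^{\beta_2}\big]$ with $C:=C_1'\tau^{-\beta_2}$ depending only on $A,\beta_1,\beta_2$. Taking $R=r$ and dividing by $\rho^{\beta_2}$ gives $\rho^{-\beta_2}\psi(\rho)\le C\big(r^{-\beta_2}\psi(r)+B\big)$; the endpoint $\rho=0$ is harmless, since letting $\rho\to0^+$ in the displayed estimate forces $\psi(0)=0$.

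Since the statement is classical (\cite{GM05}, Lemma 5.13) there is no genuine analytic obstacle; the only points requiring care are (i) choosing $\ga$ and then $\tau$ \emph{first}, so that $\tau$, $\va_0$, $C_1$, and $C$ depend on $A,\beta_1,\beta_2$ alone and never on $r$, $B$, or $\psi$, which is automatic once this order is respected; and (ii) the bookkeeping in the geometric series, where the strict gap $\beta_2<\ga$ — hence the hypothesis $\beta_1>\beta_2$ — is exactly what keeps $\sum_i\tau^{i(\ga-\beta_2)}$ summable and delivers the gain $\tau^{k\beta_2}$.
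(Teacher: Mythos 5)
Your argument is correct, and it is the standard proof of this lemma: the paper itself gives no proof, quoting [GM05, Lemma 5.13] and remarking only that it follows from iteration, so your dyadic contraction argument is exactly the argument behind the citation. The choice of an intermediate exponent $\gamma\in(\beta_2,\beta_1)$, then $\tau$ with $2A\tau^{\beta_1}\le\tau^{\gamma}$ and $\va_0=\tau^{\beta_1}$, the one-step bound $\psi(\tau R)\le\tau^{\gamma}\psi(R)+BR^{\beta_2}$, the geometric-series bookkeeping (summable precisely because $\gamma>\beta_2$), and the interpolation to arbitrary $\rho$ via monotonicity are all in order, and the constants are fixed in the right order so that $\va_0$ and $C$ depend only on $A,\beta_1,\beta_2$. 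One point deserves emphasis: your rereading of the hypothesis in the scale-invariant form (for every pair $0<\rho\le R\le r$) is not merely a convenience but a necessity. With the hypothesis only at the single outer radius $r$, as the statement is literally written, the conclusion is false: take $\psi\equiv Br^{\beta_2}/(1-A\va)$ constant on $(0,r]$, which satisfies the single-scale inequality while $\rho^{-\beta_2}\psi(\rho)\to+\ift$ as $\rho\to 0^+$. The all-scales hypothesis is what [GM05, Lemma 5.13] actually assumes, and it is what is available in the paper's application in Lemma \ref{regular1}, since the harmonic comparison there can be run on $B_R(y)$ for every $R\le r$, even though the paper writes it only for $R=r$. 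Your handling of the endpoint $\rho=0$ (forcing $\psi(0)=0$ by monotonicity and letting $\rho\to0^+$) is also fine.
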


\begin{proof}[Proof of Lemma \ref{regular1}]
Set $ \Lda:=[f]_{M^{n-\lda,2}(B_{2r}(x))} $. For $ y\in B_r(x) $, we choose $ v\in H^1(B_r(y)) $ such that it is a weak solution of the following Dirichlet problem.
$$
\left\{\begin{aligned}
\Delta v&=0&\text{ in }&B_r(y),\\
v&=u&\text{ on }&\pa B_r(y).
\end{aligned}\right.
$$
Such a weak solution exists due to applying the Lax-Milgram theorem. Let $ w:=u-v $. As a result, $ w\in H_0^1(B_r(y)) $ is a weak solution of the Dirichlet problem
\be
\left\{\begin{aligned}
-\Delta w&=f&\text{ in }&B_r(y),\\
w&=0&\text{ on }&\pa B_r(y).
\end{aligned}\right.\label{wfunction}
\ee
By the interior regularity estimate of harmonic function, we have that for any $ 0<\rho\leq r $,
\be
\int_{B_{\rho}(y)}|\na v|^2\leq C(n)\(\f{\rho}{r}\)^n\int_{B_r(y)}|\na v|^2.\label{harmovuse}
\ee
Define the functional
$$
\psi(\rho):=\int_{B_{\rho}(y)}|\na u|^2,\quad 0<\rho\leq r.
$$
Since $ w $ vanishes on $ \pa B_r(y) $, we can test \eqref{wfunction} with $ w $ itself and obtain from Cauchy's inequality and Poincar\'{e}'s inequality that
\begin{align*}
\int_{B_r(y)}|\na w|^2&\leq\(\int_{B_r(y)}|f|^2\)^{\f{1}{2}}\(\int_{B_r(y)}|w|^2\)^{\f{1}{2}}\leq C(\lda,n)\Lda r^{\f{n-\lda}{2}+1}\(\int_{B_r(y)}|\na w|^2\)^{\f{1}{2}}.
\end{align*}
Thus, we have
$$
\int_{B_r(y)}|\na w|^2\leq C(\lda,n)\Lda^2r^{n-\lda+2}.
$$
This, together with \eqref{harmovuse}, implies that
\begin{align*}
\psi(\rho)&\leq 2\int_{B_{\rho}(y)}|\na v|^2+2\int_{B_{\rho}(y)}|\na w|^2\leq C(\lda,n)\[\(\f{\rho}{r}\)^n\psi(r)+\Lda^2r^{n-\lda+2}\].
\end{align*}
Applying Lemma \ref{iteration} with $ B=\Lda^2 $, $ \beta_1=n $, and $ \beta_2=n-\lda+2 $, it follows that for any $ 0<\rho\leq r $,
\be
\rho^{\lda-n-2}\psi(\rho)\leq C(\lda,n)(r^{\lda-n-2}\psi(r)+\Lda^2).\label{rhon2esti}
\ee
Noting the basic property
$$
\dashint_{B_{\rho}(y)}u:=\underset{a\in\R}{\argmin}\int_{B_{\rho}(y)}|u-a|^2,
$$
Poincar\'{e}'s inequality yields that for any $ 0<\rho<r $,
$$
\rho^{\lda-n-4}\int_{B_r(x)\cap B_{\rho}(y)}\left|u-\dashint_{B_r(x)\cap B_{\rho}(y)}u\right|^2\leq\rho^{\lda-n-4}\int_{B_{\rho}(y)}\left|u-\dashint_{B_{\rho}(y)}u\right|^2\leq C(n)\rho^{\lda-n-2}\psi(\rho).
$$
Combining with \eqref{rhon2esti} and the arbitrariness of $ y\in B_r(x)$, we get by Lemma \ref{Campanuse} that
$$
[u]_{C^{0,\f{4-\lda}{2}}(\ol{B}_r(x))}^2\leq C(\lda,n)\(r^{\lda-n-2}\int_{B_{2r}(x)}|\na u|^2+\Lda^2\),
$$
which completes the proof.
\end{proof}

Next, we recall Harnack's inequality.

\begin{lem}\label{LemHL}
Let $ f\in L^q(B_2) $ with $ q\in(\f{n}{2},+\ift) $. Suppose that $ u\in H^1(B_2) $ is a subsolution of $ -\Delta u=f $ in the weak sense. Namely, for any $ \vp\in C_0^{\ift}(B_2) $ with $ \vp\geq 0 $ in $ B_2 $,
$$
\int_{B_2}\na u\cdot\na\vp\leq\int_{B_2}f\varphi.
$$
Then $ u^{+}=\max\{u,0\}\in L_{\loc}^{\ift}(B_2) $ and satisfies the inequality
$$
\|u^{+}\|_{L^{\ift}(B_1)}\leq C(\|u\|_{L^1(B_2)}+\|f\|_{L^q(B_2)}),
$$
where $ C>0 $ depends only $ \Lda,n,p $, and $ q $.
\end{lem}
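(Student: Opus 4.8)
The statement is the classical local boundedness (De Giorgi--Nash--Moser) estimate for subsolutions of $-\Delta u=f$ with $f$ in the subcritical space $L^q$, $q>n/2$, and the plan is to prove it by Moser iteration; the constant will in fact depend only on $n$ and $q$, hence a fortiori on the listed parameters. First I would absorb the inhomogeneity by a shift: set $\bar u:=u^+ +\kk$ with $\kk:=\|f\|_{L^q(B_2)}$ (when $\kk=0$ replace it by an arbitrary $\va>0$ and let $\va\to0^+$ at the end), so that $\bar u\geq\kk>0$ on $B_2$. Testing the weak subsolution inequality for $u$ against $\vp=\eta^2 G(u)$ for bounded nondecreasing Lipschitz functions $G$ vanishing on $(-\ift,0]$ (which are admissible, being nonnegative and in $H_0^1$ once $\eta\in C_0^\ift(B_2)$), and choosing $G$ to approximate $\bar u^{2\beta-1}$ from below before passing to the limit by monotone convergence, yields for every $\beta\geq1$ the Caccioppoli-type bound
\[
\int_{B_2}\abs{\na(\eta\,\bar u^{\beta})}^2\leq C\beta^2\int_{B_2}\abs{\na\eta}^2\,\bar u^{2\beta}+C\beta^2\int_{B_2}\abs{f}\,\eta^2\,\bar u^{2\beta-1},
\]
with $C=C(n)$.

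The key point is the treatment of the $f$-term. Since $\bar u\geq\kk$ and $\|f\|_{L^q(B_2)}/\kk=1$, H\"older's inequality gives
\[
C\beta^2\int_{B_2}\abs{f}\,\eta^2\,\bar u^{2\beta-1}\leq C\beta^2\kk^{-1}\|f\|_{L^q(B_2)}\,\bigl\|\eta^2\bar u^{2\beta}\bigr\|_{L^{q'}(B_2)}=C\beta^2\,\bigl\|\eta\,\bar u^{\beta}\bigr\|_{L^{2q'}(B_2)}^2.
\]
Because $q>n/2$ one has $q'<\kappa$, where $\kappa:=\tfrac{n}{n-2}$ for $n\geq3$ and $\kappa$ is any fixed number exceeding $\max\{1,q'\}$ for $n=2$; hence $2\leq 2q'<2\kappa$, and interpolating $L^{2q'}$ between $L^2$ and $L^{2\kappa}$, then bounding the $L^{2\kappa}$ norm by $\|\na(\eta\bar u^\beta)\|_{L^2}$ via the Sobolev embedding $H_0^1(B_2)\hookrightarrow L^{2\kappa}(B_2)$, one absorbs $C\beta^2\|\eta\bar u^\beta\|_{L^{2q'}}^2$ into $\tfrac12\int\abs{\na(\eta\bar u^\beta)}^2$ at the cost of an extra term $C_\beta\int_{\supp\eta}\bar u^{2\beta}$, with $C_\beta$ a polynomial in $\beta$ whose precise power is immaterial. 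Feeding this back and applying Sobolev once more, for concentric balls $B_{\rho'}\subset B_\rho\subset B_2$ with a standard cutoff one obtains the reverse-H\"older inequality
\[
\|\bar u\|_{L^{2\kappa\beta}(B_{\rho'})}\leq\Bigl(\frac{C_\beta}{(\rho-\rho')^2}\Bigr)^{1/(2\beta)}\|\bar u\|_{L^{2\beta}(B_\rho)},\qquad \beta\geq1.
\]

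Then comes the iteration. Taking $\beta_j=\kappa^{\,j}$ and $\rho_j=1+2^{-j}$ for $j\geq0$, the exponents telescope and $\sum_j j\,\kappa^{-j}<+\ift$, so the infinite product of prefactors converges and gives $\|\bar u\|_{L^\ift(B_1)}\leq C(n,q)\|\bar u\|_{L^2(B_2)}$. A scaled version of the same argument yields $\|\bar u\|_{L^\ift(B_\rho)}\leq C(R-\rho)^{-n/2}\|\bar u\|_{L^2(B_R)}$ for $1\leq\rho<R\leq2$; combining this with the interpolation $\|\bar u\|_{L^2(B_R)}\leq\|\bar u\|_{L^\ift(B_R)}^{1/2}\|\bar u\|_{L^1(B_R)}^{1/2}$, Young's inequality, and the standard absorption lemma (in the spirit of Lemma \ref{iteration}) lowers the starting exponent from $2$ to $1$: $\|\bar u\|_{L^\ift(B_1)}\leq C(n,q)\|\bar u\|_{L^1(B_2)}$. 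Finally $\|\bar u\|_{L^1(B_2)}\leq\|u\|_{L^1(B_2)}+\cL^n(B_2)\kk\leq C(n)\bigl(\|u\|_{L^1(B_2)}+\|f\|_{L^q(B_2)}\bigr)$ and $u^+\leq\bar u$ give the claim (after sending $\va\to0^+$ in the degenerate case). The main obstacle is entirely technical bookkeeping: threading the inhomogeneity through the iteration — the strict inequality $q'<\kappa$, forced precisely by $q>n/2$, is what leaves the room needed to absorb the $f$-term — together with the level-$M$ truncation that makes $\bar u^{2\beta-1}$ an admissible test function and the final descent from the $L^2$ to the $L^1$ norm; none of these is deep.
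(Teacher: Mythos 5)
Your proposal is correct and is exactly the argument the paper delegates to its reference: the paper proves this lemma by citing Theorem 4.1 of Han--Lin \cite{HL11}, whose proof is the De Giorgi--Nash--Moser iteration you carry out (shift by $\|f\|_{L^q}$, Caccioppoli with $\eta^2\bar u^{2\beta-1}$, absorption of the $f$-term using $q'<\kappa$, iteration $\beta_j=\kappa^j$, and the final descent from $L^2$ to $L^1$). You have simply written out in full what the paper outsources, so there is nothing to compare beyond the level of detail.
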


\begin{proof}
This lemma is a particular case of Theorem 4.1 in \cite{HL11} and is a consequence of the De Giorgi-Moser-Nash iteration method.
\end{proof}

\section*{Acknowledgements}

The authors want to thank Haotong Fu for his enlightening discussions regarding some details in this paper. They also thank Prof. Kelei Wang and Dr. Ke Wu for identifying some errors in the original manuscript and providing valuable advice. The authors are partially supported by the National Key R$\&$D Program of China under Grant 2023YFA1008801 and NSF of China under Grant 12288101.


\begin{thebibliography}{100}

\bibitem{AT15}
\newblock J. Azzam and X. Tolsa,
\newblock Characterization of $ n $-rectifiability in terms of Jones’ square function: Part II,
\newblock \emph{Geometric and Functional Analysis}, \textbf{25} (2015), 1371-1412.

\bibitem{Alp18}
\newblock O. Alper,
\newblock Rectifiability of line defects in liquid crystals with variable degree of orientation,
\newblock \emph{Archive for Rational Mechanics and Analysis}, \textbf{228} (2018), 309-339.

\bibitem{Alp20}
\newblock O. Alper,
\newblock On the singular set of free interface in an optimal partition problem,
\newblock \emph{Communications on Pure and Applied Mathematics}, \textbf{73} (2020), 855-915.

\bibitem{BP98}
\newblock A.L. Bertozzi and M.C. Pugh,
\newblock Long-wave instabilities and saturation in thin film equations,
\newblock \emph{Communications on Pure and Applied Mathematics}, \textbf{51} (1998), 625-661.

\bibitem{BP00}
\newblock A.L. Bertozzi and M.C. Pugh,
\newblock Finite-time blow-up of solutions of some long-wave unstable thin film equations,
\newblock \emph{Indiana University Mathematics Journal}, \textbf{49} (2000), 1323-1366.

\bibitem{Bet93}
\newblock F. Bethuel,
\newblock On the singular set of stationary harmonic maps,
\newblock \emph{Manuscripta Mathematica}, \textbf{78} (1993), 417-443.

\bibitem{Caf98}
\newblock L. Caffarelli,
\newblock The obstacle problem revisited,
\newblock \emph{Journal of Fourier Analysis and Applications}, \textbf{4} (1998), 383-402.


\bibitem{CN13a}
\newblock J. Cheeger and A. Naber,
\newblock Lower bounds on Ricci curvature and quantitative behavior of singular sets,
\newblock \emph{Inventiones mathematicae}, \textbf{191} (2013), 321-339.

\bibitem{CN13b}
\newblock J. Cheeger and A. Naber,
\newblock Quantitative stratification and the regularity of harmonic maps and minimal currents,
\newblock \emph{Communications on Pure and Applied Mathematics}, \textbf{66} (2013), 965-990.

\bibitem{CHN13}
\newblock J. Cheeger, R. Haslhofer and A. Naber,
\newblock Quantitative stratification and the regularity of mean curvature flow,
\newblock \emph{Geometric and Functional Analysis}, \textbf{23} (2013), 828-847.

\bibitem{CHN15}
\newblock J. Cheeger, R. Haslhofer, and A. Naber,
\newblock Quantitative stratification and the regularity of harmonic map flow,
\newblock \emph{Calculus of Variations and Partial Differential Equations}, \textbf{53} (2015), 365-381.

\bibitem{DWW16}
\newblock J. D\'{a}vila, K. Wang, and J. Wei,
\newblock Qualitative analysis of rupture solutions for a MEMS problem,
\newblock \emph{Annales de l'Institut Henri Poincar\'{e} C. Analyse Non Lin\'{e}aire}, \textbf{33} (2016), 221-242.

\bibitem{DP08}
\newblock J. D\'{a}vila and A. Ponce,
\newblock Hausdorff dimension of ruptures sets and removable singularities,
\newblock \emph{Comptes Rendus Math\'{e}matique. Acad\'{e}mie des Sciences. Paris}, \textbf{346} (2008), 27-32.


\bibitem{DW12}
\newblock J. D\'{a}vila and J. Wei, 
\newblock Point ruptures for a MEMS equation with fringing field,
\newblock \emph{Communications in Partial Differential Equations}, \textbf{37} (2012), 1462-1493.

\bibitem{DMSV18}
\newblock C. De Lellis, A. Marchese, E. Spadaro, and D. Valtorta,
\newblock Rectifiability and upper Minkowski bounds for singularities of harmonic $ Q $-valued maps, \newblock \emph{Commentarii Mathematici Helvetici}, \textbf{93} (2018), 737-779.

\bibitem{DH96}
\newblock M. del Pino and G. E. Hernandez,
\newblock Solvability of the Neumann Problem in a Ball for $ -\Delta u+u^{-\nu}=h(|x|) $,
\newblock \emph{journal of differential equations}, \textbf{124} (1996) 108-131.

\bibitem{DG09}
\newblock Y. Du and Z. Guo,
\newblock Positive solutions of an elliptic equation with negative exponent: stability and critical power,
\newblock \emph{Journal of Differential Equations} \textbf{246} (2009), 2387-2414.

\bibitem{DPP06}
\newblock L. Dupaigne, A. Ponce, and A. Porretta,
\newblock Elliptic equations with vertical asymptotes in the nonlinear term.
\newblock \emph{Journal d'Analyse Math\'{e}matique}, \textbf{98} (2006), 349-396.

\bibitem{EE19}
\newblock N. Edelen and M. Engelstein,
\newblock Quantitative stratification for some free-boundary problems,
\newblock \emph{Transactions of the American Mathematical Society}, \textbf{371} (2019), 2043-2072.


\bibitem{EGG10}
\newblock P. Esposito, N. Ghoussoub, and Y. Guo,
\newblock Mathematical analysis of partial differential equations modeling electrostatic MEMS,
\newblock American Mathematical Society, 2010.

\bibitem{Eva91}
\newblock L. C. Evans,
\newblock Partial regularity for stationary harmonic maps into spheres,
\newblock \emph{Archive for Rational Mechanics and Analysis}, \textbf{116} (1991), 203-218.

\bibitem{FS19}
\newblock A. Figalli and J. Serra,
\newblock On the fine structure of the free boundary for the classical obstacle problem,
\newblock \emph{Inventiones Mathematicae}, \textbf{215} (2019), 311-366.

\bibitem{FRS20}
\newblock A. Figalli, X. Ros-Oton, and J. Serra, \newblock Generic regularity of free boundaries for the obstacle problem,
\newblock \emph{Publications Math\'{e}matiques de l'IH\'{E}S}, \textbf{132} (2020), 181-292.

\bibitem{FWZ24}
\newblock H. Fu, W. Wang, and Z. Zhang,
\newblock Quantitative stratification and sharp regularity estimates for supercritical semilinear elliptic equations,
\newblock arXiv preprint, arXiv:2408.06726.

\bibitem{GM05}
\newblock M. Giaquinta and L. Martinazzi,
\newblock An introduction to the regularity theory for elliptic systems, harmonic maps and minimal graphs,
\newblock Edizioni della Normale, Pisa, 2005.

\bibitem{GGL06}
\newblock H. Guo, Z. Guo, and K. Li,
\newblock Positive solutions of a semilinear elliptic equation with singular nonlinearity,
\newblock \emph{Journal of mathematical analysis and applications}, \textbf{323} (2006), 344-359.

\bibitem{GW06}
\newblock Z. Guo and J. Wei,
\newblock Hausdorff dimension of ruptures for solutions of a semilinear elliptic equation with singular nonlinearity,
\newblock \emph{Manuscripta Mathematica}, \textbf{120} (2006), 193-209.

\bibitem{GW07}
\newblock Z. Guo and J. Wei,
\newblock Symmetry of non-negative solutions of a semilinear elliptic equation with singular nonlinearity,
\newblock \emph{Proceedings of the Royal Society of Edinburgh Section A: Mathematics}, \textbf{137} (2007), 963-994.

\bibitem{GW08}
\newblock Z. Guo and J. Wei,
\newblock Asymptotic behavior of touch-down solutions and global bifurcations for an elliptic problem with a singular nonlinearity
\newblock \emph{Communications on Pure and Applied Analysis}, \textbf{7} (2008), 765-786.

\bibitem{HL11}
\newblock Q. Han and F. Lin,
\newblock Elliptic partial differential equations, \newblock American Mathematical Society, 2011.

\bibitem{HSV19}
\newblock J. Hirsch, S. Stuvard, and D. Valtorta,
\newblock Rectifiability of the singular set of multiple-valued energy minimizing harmonic maps,
\newblock \emph{Transactions of the American Mathematical Society}, \textbf{371} (2019), 4303-4352.

\bibitem{JL04}
\newblock H. Jiang and F. Lin,
\newblock Zero set of Sobolev functions with negative power of integrability,
\newblock \emph{Chinese Annals of Mathematics. Series B}, \textbf{25} (2004), 65-72.

\bibitem{LP00a}
\newblock R.S. Laugesen and M. C. Pugh. 
\newblock Properties of steady states for thin film equations,
\newblock \emph{European Journal of Applied Mathematics}, \textbf{11} (2000), 293-351.

\bibitem{LP00b}
\newblock R.S. Laugesen and M.C. Pugh, 
\newblock Linear stability of steady states for thin film and Cahn-Hilliard type equations,
\newblock \emph{Archive for rational mechanics and analysis}, \textbf{154} (2000), 3-51.

\bibitem{LW17}
\newblock P. Lauren\c{c}ot and C. Walker. 
\newblock Some singular equations modeling MEMS,
\newblock \emph{Bulletin of the American Mathematical Society}, \textbf{54} (2017), 437-479.

\bibitem{LGG05}
\newblock K. Li, H. Guo, and Z. Guo,
\newblock Positive single rupture solutions to a semilinear elliptic equation,
\newblock \emph{Applied mathematics letters}, \textbf{18} (2005), 1177-1183.

\bibitem{Lin99}
\newblock F. Lin,
\newblock Gradient estimates and blow-up analysis for stationary harmonic maps,
\newblock \emph{Annals of Mathematics. Second Series}, \textbf{149} (1999), 785-829.

\bibitem{LW99}
\newblock F. Lin and C. Wang, 
\newblock Harmonic and quasi-harmonic spheres,
\newblock \emph{Communications in Analysis and Geometry}, \textbf{7} (1999), 397-429.

\bibitem{LW02}
\newblock F. Lin and C. Wang,
\newblock Harmonic and quasi-harmonic spheres, Part III, rectifiability of the parabolic defect measure and generalized varifold flows,
\newblock \emph{Annales de l'Institut Henri Poincar\`{e} C}, \textbf{19} (2002), 209-259.

\bibitem{LW08}
\newblock F. Lin and C. Wang,
\newblock The analysis of harmonic maps and their heat flows,
\newblock World Scientific Publishing, Hackensack, 2008.

\bibitem{MW08}
\newblock L. Ma and J. Wei,
\newblock Properties of positive solutions to an elliptic equation with negative exponent,
\newblock \emph{Journal of Functional Analysis}, \textbf{254} (2008), 1058-1087.

\bibitem{Mea04}
\newblock A.M. Meadows,
\newblock Stable and singular solutions of the equation $ \Delta u=1/u $,
\newblock \emph{Indiana University Mathematics Journal}, \textbf{53} (2004), 1681-1703.

\bibitem{Mis18}
\newblock M. Mi\'{s}kiewicz,
\newblock Discrete Reifenberg-type theorem,
\newblock \emph{Annales Academi\ae Scientiarum Fennic\ae. Mathematica}, \textbf{43} (2018), 3-19.

\bibitem{Mos03}
\newblock R. Moser, 
\newblock Regularity for the approximated harmonic map equation and application to the heat flow for harmonic maps,
\newblock \emph{Mathematische Zeitschrift}, \textbf{243}, (2003) 263-289.

\bibitem{Mos05}
\newblock R. Moser,
\newblock Partial regularity for harmonic maps and related problems,
\newblock World Scientific, 2005.

\bibitem{Nab20}
\newblock A. Naber,
\newblock Lecture notes on rectifiable Reifenberg for measures,
\newblock \emph{Harmonic analysis and applications, IAS/Park City Math. Ser}, \textbf{27} (2020), 289-345.

\bibitem{NV17}
\newblock A. Naber and D. Valtorta,
\newblock Rectifiable-Reifenberg and the regularity of stationary and minimizing harmonic maps,
\newblock \emph{Annals of Mathematics}, \textbf{85} (2017), 131-227.

\bibitem{NV18}
\newblock A. Naber and D. Valtorta,
\newblock Stratification for the singular set of approximate harmonic maps,
\newblock \emph{Mathematische Zeitschrift}, \textbf{290} (2018), 1415-1455.

\bibitem{NV24}
\newblock A. Naber and D. Valtorta,
\newblock Energy identity for stationary harmonic maps,
\newblock \emph{arXiv preprint arXiv:2401.02242}, (2024).

\bibitem{NYA05}
\newblock A. H. Nayfeh, M. I. Younis, and E. M. Abdel-Rahman,
\newblock Reduced-order models for MEMS applications,
\newblock \emph{Nonlinear dynamics}, \textbf{41} (2005), 211-236.

\bibitem{Pac93}
\newblock F. Pacard, 
\newblock Partial regularity for weak solutions of a nonlinear elliptic equation,
\newblock \emph{Manuscripta Mathematica}, \textbf{79} (1993), 161-172

\bibitem{PB02}
\newblock J. Pelesko and D. Bernstein,
\newblock Modeling MEMS and NEMS,
\newblock CRC press, 2002.

\bibitem{Phi83}
\newblock D. Phillips,
\newblock A minimization problem and the regularity of solutions in the presence of a free boundary,
\newblock \emph{Indiana University Mathematics Journal}, \textbf{32} (1983), 1-17.

\bibitem{Pre87}
\newblock D. Preiss,
\newblock Geometry of measures in $ \R^n $: distribution, rectifiability, and densities,
\newblock \emph{Annals of Mathematics. Second Series}, \textbf{125} (1987), 537-643.

\bibitem{Rei60}
\newblock E. R. Reifenberg,
\newblock Solution of the Plateau problem for $ m $-dimensional surfaces of varying topological type,
\newblock \emph{Acta Mathematica}, \textbf{104} (1960), 1-92.

\bibitem{Sim95}
\newblock L. Simon,
\newblock Rectifiability of the singular set of energy minimizing maps,
\newblock \emph{Calculus of Variations and Partial Differential Equations}, \textbf{3} (1995), 1-65.

\bibitem{Sin18}
\newblock Z. Sinaei,
\newblock Convex functionals and the stratification of the singular set of their stationary points,
\newblock \emph{Advances in Mathematics}, \textbf{338} (2018), 502-548.

\bibitem{Sim96}
\newblock L. Simon,
\newblock Theorems on regularity and sigularity of energy minimizing maps,
\newblock Birkh\"{a}user Verlag, 1996.

\bibitem{ST19}
\newblock N. Soave and S. Terracini,
\newblock The nodal set of solutions to some elliptic problems: singular nonlinearities,
\newblock \emph{Journal de Math\'{e}matiques Pures et Appliqu\'{e}es}, \textbf{128} (2019), 264-296.

\bibitem{TT12}
\newblock H. Tavares and S. Terracini,
\newblock Regularity of the nodal set of segregated critical configurations under a weak reflection law,
\newblock \emph{Calculus of Variations and Partial Differential Equations}, \textbf{45} (2012), 273-317.

\bibitem{SU82}
\newblock R. Schoen and K. Uhlenbeck,
\newblock A regularity theory for harmonic maps,
\newblock \emph{Journal of Differential Geometry}, \textbf{17} (1982), 307-335.

\bibitem{Ved21}
\newblock M. Vedovato,
\newblock Quantitative regularity for p-minimizing maps through a Reifenberg theorem,
\newblock \emph{The Journal of Geometric Analysis}, \textbf{31} (2021), 8271-8317.

\bibitem{Wan12}
\newblock K. Wang,
\newblock Partial regularity of stable solutions to the supercritical equations and its applications, \newblock \emph{Nonlinear Analysis: Theory, Methods $\&$ Applications}, \textbf{75} (2012), 5238-5260.

\bibitem{WY24}
\newblock K. Wang and G. Yi, 
\newblock Blow up analysis for a parabolic MEMS problem, I: H\"{o}lder estimate,
\newblock \emph{Calculus of Variations and Partial Differential Equations}, \textbf{63} (2024), 193-226.

\bibitem{Wan21}
\newblock Y. Wang,
\newblock Quantitative stratification of stationary connections,
\newblock \emph{Journal f\"{u}r die reine und angewandte Mathematik}, \textbf{775} (2021), 39-69.

\bibitem{Wei99}
\newblock G. S. Weiss,
\newblock A homogeneity improvement approach to the obstacle problem,
\newblock \emph{Inventiones Mathematicae}, \textbf{138} (1999), 23-50.

\bibitem{Whi97}
\newblock B. White,
\newblock Stratification of minimal surfaces, mean curvature flows, and harmonic maps,
\newblock \emph{Journal f\"{u}r die Reine und Angewandte Mathematik}, \textbf{488} (1997), 1-35.


\end{thebibliography}
\end{document}